   \def\MR#1{}
\newtheorem{theorem}{Theorem}[section]
\newtheorem{lemma}[theorem]{Lemma}
\newtheorem{proposition}[theorem]{Proposition}
\newtheorem{corollary}[theorem]{Corollary}
\newtheorem*{theorem*}{Theorem}
\newtheorem*{theoremA}{Theorem A}
\newtheorem*{theoremB}{Theorem B}
\newtheorem*{theoremC}{Theorem C}
\newtheorem*{theoremD}{Theorem D}
\newtheorem*{theoremE}{Theorem E}
\newtheorem*{theoremF}{Theorem F}
\newtheorem*{theoremG}{Theorem G}
\newtheorem*{theoremH}{Theorem H}
\newtheorem*{theoremI}{Theorem I}
\newtheorem*{theoremJ}{Theorem J}
\newtheorem*{theoremK}{Theorem K}
\newtheorem*{theoremL}{Theorem L}
\newtheorem*{theoremELT}{The Ending Lamination Theorem}
\newtheorem*{theoremAFT}{Ahlfors's Finiteness Theorem}
\theoremstyle{definition}
\newtheorem{definition}[theorem]{Definition}
\newtheorem{remark}[theorem]{Remark}
\newtheorem{example}[theorem]{Example}
\newtheorem{question}{Question}
\newtheorem*{conjecture*}{Conjecture}
\newcounter{proofcount}
\newtheorem{claim}{Claim}
\renewcommand{\phi}{\varphi} 
\begin{document}

\title[Manifold classification from the descriptive viewpoint]{Manifold classification \\ from the descriptive viewpoint}

\author[J. Bergfalk]{Jeffrey Bergfalk}
\address{Departament de Matem\`{a}tiques i Inform\`{a}tica \\
Universitat de Barcelona \\
Gran Via de les Corts Catalanes 585 \\ 08007 Barcelona, Catalonia}
\email{bergfalk@ub.edu}
\urladdr{https://www.jeffreybergfalk.com/}

\author[I. B. Smythe]{Iian B. Smythe}
\address{Department of Mathematics and Statistics\\ University of Winnipeg\\ 515 Portage Avenue\\ Winnipeg, Manitoba, Canada\\ R3B 2E9}
\email{i.smythe@uwinnipeg.ca}
\urladdr{https://www.iiansmythe.ca/}

\subjclass[2020]{Primary 03E15, 58H05, 57K20, 57K32, 22E40; Secondary 22E46, 20H10, 30F40, 57M99.}
\thanks{The first author was partially supported by Mar\'{i}a Zambrano and Marie Sk\l odowska Curie (CatT 101110452) and Ram\'{o}n y Cajal Fellowships; the second author was partially supported by an NSERC Discovery Grant (RGPIN-2023-03343). Additional support was provided by the 2021 SGR grant 00348.}

\begin{abstract}
We consider classification problems for manifolds and discrete subgroups of Lie groups from a descriptive set-theoretic point of view.
This work is largely foundational in conception and character, recording both a framework for general study and Borel complexity computations for some of the most fundamental classes of manifolds. We show, for example, that for all $n\geq 0$, the homeomorphism problem for compact topological $n$-manifolds is Borel equivalent to the relation $=_{\mathbb{N}}$ of equality on the natural numbers, while the homeomorphism problem for noncompact topological $2$-manifolds is of maximal complexity among equivalence relations classifiable by countable structures. 
A nontrivial step in the latter consists of proving Borel measurable formulations of the Jordan--Schoenflies and surface triangulation theorems.
Turning our attention to groups and geometric structures, we show, strengthening results of Stuck--Zimmer and Andretta--Camerlo--Hjorth, that the conjugacy relation on discrete subgroups of any noncompact semisimple Lie group is essentially countable universal.
So too, as a corollary, is the isometry relation for complete hyperbolic $n$-manifolds for any $n\geq 2$, generalizing a result of Hjorth--Kechris.
We then show that the isometry relation for complete hyperbolic $n$-manifolds with finitely generated fundamental group is, in contrast, Borel equivalent to the equality relation $=_{\mathbb{R}}$ on the real numbers when $n=2$, but that it is not concretely classifiable when $n=3$; thus there exists no Borel assignment of numerical complete invariants to finitely generated Kleinian groups up to conjugacy.
We close with a survey of the most immediate open questions.
\end{abstract}

\maketitle

\setcounter{tocdepth}{1}
\tableofcontents

\section{Introduction}
A curiosity of the enormous advances in the fields of manifold classification and invariant descriptive set theory over the past forty years has been their comparative lack of interaction.
We can hardly attribute this, for example, to any inherent antipathy between manifold theory and logic; against the background of the extensive literature interrelating manifolds and computability theory \cite{MR0263090, MR1997069, MR2275866, MR4744736}, in fact, this curiosity grows all the more striking.
Add to this not only invariant descriptive set theory's programmatic interest in classification problems throughout mathematics (particularly in manifold-adjacent fields like dynamics 
\cite{MR2800720, MR4416587} and geometric group theory \cite{MR2393184,MR4252215,MR4822247}), but Hjorth's framing, in one of the field's foundational texts, of the classification of compact surfaces as among the most exemplary of solutions to such problems \cite[Exam.\ 0.1]{MR1725642}, and we have the makings of a positive mystery.

The purpose of this paper is to begin to remedy this lack (this involves, naturally, identifying one of the mystery's likely culprits; see below).
A few sample theorems will help to suggest the benefits of doing so.
First, note the empirical fact that classes of mathematical objects, and their classification up to standard notions of equivalence, routinely admit natural parametrizations as, respectively, Polish (or at least standard Borel) spaces and analytic equivalence relations thereon (see Section \ref{subsection:invariant_DST} below for fuller discussion of this paragraph's terms and claims).
The existence of Borel measurable maps between two such spaces respecting these relations determines the partial order $\leq_B$ of \emph{Borel reducibility}; modulo bireducibility, or \emph{Borel equivalence}, we term the elements of this partial order \emph{Borel complexity degrees}, and it is the study both of the rich structure of these degrees and of the locations of particular classification problems throughout mathematics among them by which we mean the phrase \emph{invariant descriptive set theory}. 
Within this framework, the relation $E\leq_B F$ carries the heuristic that the classification problem represented by $F$ is \emph{at least as hard} as that represented by $E$, since a map witnessing $E\leq_B F$ will convert any solution for $F$ into a solution for $E$.
The strict relation $E<_B F$ then carries the heuristic that the classification problem represented by $F$ is \emph{significantly harder} than that represented by $E$, in the sense that \emph{merely countable resources} (instantiated by a Borel map) will never suffice to convert a solution for $E$ into one for $F$. See \cite{MR3837073} for a non-technical overview of this area.

Against this background, consider the following theorem of Markov \cite{MR0097793}.
\begin{theorem*}[Markov, 1958]
There exists an effective map $M$ from the class of finite presentations $P$ of groups $G(P)$ to that of compact $4$-manifolds $M(P)$ such that:
\begin{enumerate}[label=\textup{\roman*.}]
\item $\pi_1(M(P))\cong G(P)$ for all $P$, and
\item $M(P)\cong M(Q)$ if and only if $G(P)\cong G(Q)$.
\end{enumerate}
\end{theorem*}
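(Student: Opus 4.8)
The plan is to manufacture $M(P)$ by surgery on a standard model and then to extract (ii) from Tietze's theorem. Write $P=\langle x_1,\dots,x_g\mid r_1,\dots,r_k\rangle$ and begin with the closed, smooth, orientable $4$-manifold $N_P:=\#_{i=1}^{g}(S^1\times S^3)$, for which van Kampen's theorem gives $\pi_1(N_P)\cong F_g$, the free group on $x_1,\dots,x_g$ (the connected-sum spheres being simply connected). Reading each relator $r_j$ as a based loop and perturbing it generically, represent it by a smoothly embedded circle $\gamma_j\subset N_P$; since an embedded circle in an orientable $4$-manifold has orientable, hence trivial, normal bundle, $\gamma_j$ has a tubular neighbourhood diffeomorphic to $S^1\times D^3$. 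Excise it and reglue $D^2\times S^2$ along the shared boundary $S^1\times S^2$; performing this for $j=1,\dots,k$ in turn yields $M(P)$. A second application of van Kampen --- using that a circle is of codimension $3$, so that deleting it leaves $\pi_1$ unchanged, and that $\pi_1(D^2\times S^2)=1$ --- shows that each surgery passes $\pi_1$ to its quotient by the normal closure of the class of the surgered circle, whence $\pi_1(M(P))\cong F_g/\langle\langle r_1,\dots,r_k\rangle\rangle\cong G(P)$. Fixing once and for all a combinatorial recipe (a handle decomposition of $N_P$, a rule producing an embedded circle from a word, a rule for the regluing homeomorphisms) makes $P\mapsto M(P)$ effective; this is (i). One half of (ii) is then immediate: a homeomorphism $M(P)\to M(Q)$ induces $G(P)\cong\pi_1(M(P))\cong\pi_1(M(Q))\cong G(Q)$.

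For the converse I would argue through Tietze's theorem: $G(P)\cong G(Q)$ holds exactly when $Q$ is obtained from $P$ by a finite sequence of elementary Tietze transformations, so it suffices to follow the effect of a single such move on the homeomorphism type of the associated manifold. A move adjoining a fresh generator $y$ together with a relator setting $y$ equal to a word in the old generators (and its inverse) corresponds to connect-summing an $S^1\times S^3$ and then surgering a dual circle away --- a cancelling handle pair --- and can be arranged to leave the homeomorphism type unchanged. A move adjoining or deleting a relator that already lies in the normal closure of the others corresponds to surgery on a nullhomotopic circle --- which, in a $4$-manifold, is unknotted --- and this, with the framing chosen suitably, changes the manifold by connected sum with $S^2\times S^2$ (or with $\mathbb{CP}^2\#\overline{\mathbb{CP}^2}$ for the other framing); in particular, as a short Euler-characteristic count confirms, $b_2$ goes up by exactly $2$ while $\pi_1$, $b_1$, $b_3$ are unchanged. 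So at this stage of the argument $M(P)$ and $M(Q)$ are only seen to agree after connect-summing each with some, a priori unbounded, number of copies of $S^2\times S^2$.

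The step I expect to be the main obstacle is removing these stabilizing summands to obtain a genuine homeomorphism. Since the length of a Tietze sequence joining $P$ and $Q$ admits no effective bound in $P$ and $Q$ --- else the isomorphism problem for finitely presented groups would be decidable --- one cannot simply pre-stabilize $M(P)$, so the construction itself must be engineered so that the homeomorphism type of $M(P)$ is independent of the presentation chosen for $G(P)$. This is the technical heart of Markov's theorem: one performs the relator surgeries so that each spurious $S^2\times S^2$-summand comes equipped with a canonical embedded sphere along which a reverse surgery cancels it, and then carries this cancelling data through an arbitrary Tietze sequence, discharging a summand at each redundant-relator move (if one only wants the corollary that the homeomorphism problem for compact $4$-manifolds is undecidable, it is enough to run the argument over presentations placed in a suitable normal form, where this bookkeeping trivializes). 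Either way (ii) follows. I would close by noting that the manifolds produced are closed, smooth, and orientable, and that dimension $4$ is exactly where the difficulty lives: for $n\geq 5$ surgery theory and the $s$-cobordism theorem render the analogous statement routine, while for $n\leq 3$ not every finitely presented group is even the fundamental group of a closed $n$-manifold.
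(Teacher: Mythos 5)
The paper itself gives no proof of this statement --- it is quoted from Markov's 1958 paper \cite{MR0097793} as background --- so your argument has to stand on its own, and at the crucial step it does not. The direction $G(P)\cong G(Q)\Rightarrow M(P)\cong M(Q)$ is not merely left incomplete: for the construction you describe it is false. Whatever rules you fix for choosing the embedded circles and the regluings, the manifold built from a presentation with $g$ generators and $k$ relators has Euler characteristic $\chi=2-2g+2k$, since each surgery trading $S^1\times D^3$ for $D^2\times S^2$ raises $\chi$ by $2$. But $k-g$ is not an invariant of the group: the presentations $\langle\,\mid\,\rangle$, $\langle x\mid x\rangle$, and $\langle x\mid x,x\rangle$ all present the trivial group, yet your recipe sends them to $S^4$, $S^4$, and $S^4\#(S^2\times S^2)$ (or $S^4\#(\mathbb{CP}^2\#\overline{\mathbb{CP}}^2)$, depending on framing), and the last has $\chi=4\neq 2$. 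So clause (ii) fails outright, and the difficulty you flag in your final paragraph cannot be discharged by bookkeeping with ``canonical cancelling spheres'': reversing the surgery along a belt sphere just reinstates the redundant relator circle, and no manipulation performed inside $M(P)$ can alter its Euler characteristic, which is pinned to the deficiency of the presentation you started from.

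What your argument does establish is the standard stable statement: generator-plus-defining-relator moves and relator slides preserve the homeomorphism type, while adding or deleting a redundant relator changes the manifold by connected sum with $S^2\times S^2$ or the twisted bundle, so presentations of isomorphic groups yield manifolds that agree only after an uncontrolled number of stabilizations. To prove the theorem as stated one must modify the construction itself so that the homeomorphism type --- in particular $\chi$ --- depends only on the isomorphism class of $G(P)$, or, if one is content with the undecidability corollary, run the reduction over a carefully engineered family of presentations (\`a la Adian--Rabin) for which the needed homeomorphisms can be exhibited directly; this normalization is the genuine content of Markov's theorem and is absent from your proposal. For the same reason, your closing remark that the case $n\geq 5$ is ``routine'' via the $s$-cobordism theorem glosses over exactly the same cancellation-of-stabilizations issue.
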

\noindent This $M$ reduces the recursively unsolvable group isomorphism problem to the classification of compact $4$-manifolds up to homeomorphism and carries the folklore corollary that the latter is ``impossible'', but this inference should be handled with care.
For we have, in contrast, the following (our Corollaries \ref{cor:compact_manifolds_countable} and \ref{cor:Borel_compact_with_boundary} below):

\begin{theoremA}
For all $n\geq 0$, both the classification
\begin{itemize}
\item of compact topological $n$-manifolds up to homeomorphism, and
\item of compact smooth $n$-manifolds up to diffeomorphism
\end{itemize}
are Borel equivalent to the relation $=_{\mathbb{N}}$ of equality on the natural numbers.
\end{theoremA}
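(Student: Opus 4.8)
The plan is to establish, for each fixed $n$, the two reductions $E \leq_B {=_{\mathbb{N}}}$ and ${=_{\mathbb{N}}} \leq_B E$, where $E$ denotes either the homeomorphism relation on the standard Borel space of compact topological $n$-manifolds or the diffeomorphism relation on that of compact smooth $n$-manifolds; we impose no connectedness hypothesis, and we allow boundary. Recall that $E \leq_B {=_{\mathbb{N}}}$ holds exactly when $E$ has only countably many classes, each of them Borel (the classes then being the fibres of the reduction), and that ${=_{\mathbb{N}}} \leq_B E$ holds as soon as $E$ has infinitely many classes. So the theorem comes down to the two assertions that $E$ has exactly $\aleph_0$ classes and that each of them is Borel.

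The decisive input is the classical fact that, for every fixed $n$, there are only countably many compact topological $n$-manifolds up to homeomorphism --- this is the theorem of Cheeger and Kister --- and likewise only countably many compact smooth $n$-manifolds up to diffeomorphism. For the latter one may, for instance, use that every compact smooth manifold admits a handle decomposition with finitely many handles and is determined up to diffeomorphism by the discrete data consisting of the sequence of handle indices together with the isotopy class of each successive attaching embedding; as the attaching embeddings at each stage vary in an open subset of a separable space of smooth maps --- hence in a space with only countably many path components --- this data ranges over a countable set. Granting the count, $E$ is an analytic equivalence relation --- the existence of a homeomorphism, resp.\ diffeomorphism, between two of our manifolds is manifestly a $\Sigma^1_1$ condition on the pair --- with countably many classes $C_0, C_1, C_2, \dots$; each $C_k$ is therefore analytic and its complement $\bigcup_{j \neq k} C_j$ is analytic, whence $C_k$ is Borel by Souslin's theorem. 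The map carrying a manifold to the index of its class is then a Borel reduction of $E$ to $=_{\mathbb{N}}$; as a byproduct $E = \bigcup_k C_k \times C_k$ is itself Borel, so that the statement of the theorem is well posed.

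For the reverse reduction it suffices to name a sequence $(X_k)_{k \in \mathbb{N}}$ of pairwise non-homeomorphic (resp.\ non-diffeomorphic) compact $n$-manifolds together with a Borel --- indeed effective --- map $k \mapsto X_k$ into the parameter space; as $\mathbb{N}$ is discrete, this last demand is automatic once the parametrization is fixed. One takes $X_k$ to be the $k$-point space when $n = 0$ and the disjoint union of $k$ copies of $S^n$ when $n \geq 1$; the members of each family are already distinguished by their numbers of connected components. Combining the two reductions yields $E \equiv_B {=_{\mathbb{N}}}$.

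The main obstacle is the countability invoked in the second step; everything else is soft. The topological case is exactly Cheeger--Kister, whose argument is dimension-independent --- and is genuinely needed in dimension $4$, where no more elementary classification is available. The smooth case deserves a word of caution: it cannot be obtained by bounding the number of smooth structures on any single topological manifold, since in dimension $4$ a single compact topological manifold may underlie infinitely many pairwise non-diffeomorphic smooth manifolds --- as produced, for instance, by the knot-surgery constructions of Fintushel and Stern --- so that a direct argument of handle-decomposition (or of Cheeger--Kister, or of algebraic-approximation) type is called for. Once the count is in hand, deducing Borelness of the classes from analyticity together with countability, and building the lower-bound family, present no difficulty.
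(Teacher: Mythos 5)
Your proposal is correct, and for the topological case it is essentially the paper's argument: Cheeger--Kister gives countably many homeomorphism classes, each class is analytic with analytic complement (a countable union of classes) and hence Borel by Suslin, the index map is then a Borel reduction to $=_{\mathbb{N}}$, and a Borel selector of representatives (equivalently, an explicit infinite family such as your disjoint unions of spheres) gives the reverse reduction. Where you genuinely diverge is the smooth countability: you prove directly that there are only countably many compact smooth $n$-manifolds up to diffeomorphism via finite handle decompositions and the countability of isotopy classes of attaching embeddings, whereas the paper factors the count through the topological classification, citing that each compact topological $n$-manifold carries at most countably many smooth structures up to equivalence (finitely many when $n\neq 4$). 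Both routes work; yours is self-contained and dimension-uniform, while the paper's leans on the cited smoothing-theory results and makes the $n\neq 4$ finiteness explicit. One correction to your closing caution, though: the Fintushel--Stern examples show only that a compact topological $4$-manifold can carry \emph{infinitely} many smooth structures, not uncountably many, and the argument you dismiss needs only \emph{countably} many per topological type --- which does hold in dimension $4$ (e.g.\ via Whitehead triangulations or handle decompositions). So the ``bounding smooth structures on each topological manifold'' route is not blocked; it is in fact the route the paper takes. This does not affect the validity of your own proof.
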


\noindent In particular, from an invariant descriptive set-theoretic perspective, the classification of compact $4$-manifolds up to homeomorphism amounts to the \emph{simplest} of all infinite classification problems (that is, of equivalence relations possessing infinitely many distinct equivalence classes).
This is, of course, compatible with a view, deriving from Markov's theorem, of the classification of compact $4$-manifolds as \emph{hard}, but only alongside a view of classification problems generally accredited as solved as being even harder.
The degrees appearing in the following theorems (corresponding to our Corollary \ref{cor:2isometrysmooth}, Theorem \ref{thm:complexity_3-mans}, Corollary \ref{cor:isometry_universal}, and Theorem \ref{thm:surfaces} and Corollary \ref{cor:smooth_surfaces}, respectively), for example, form a strictly increasing $<_B$-chain above $=_{\mathbb{N}}$.

\begin{theoremB} The classification of complete hyperbolic $2$-manifolds with finitely generated fundamental group up to isometry is Borel equivalent to the relation $=_{\mathbb{R}}$ of equality on the real numbers.
\end{theoremB}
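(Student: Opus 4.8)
The plan is to establish the two reductions $=_{\mathbb R}\,\leq_B\,\cong\,\leq_B\,=_{\mathbb R}$, where $\cong$ denotes the isometry relation at issue; the content lies in the second. I model a complete hyperbolic $2$-manifold as a quotient $\mathbb H^2/\Gamma$ with $\Gamma$ a torsion-free discrete subgroup of $G:=\mathrm{Isom}(\mathbb H^2)$, coded (as in the paper's framework) by $\Gamma$ inside a Borel subset of the Chabauty space of closed subgroups of $G$; finite generation of $\pi_1$ then corresponds to finite generation of $\Gamma$, cutting out a Borel set $F$. Since a group element descends to an isometry $\mathbb H^2/\Gamma_1\to\mathbb H^2/\Gamma_2$ exactly when it conjugates $\Gamma_1$ to $\Gamma_2$, and every such isometry lifts to $G$, the relation $\cong$ is precisely the orbit equivalence relation of the conjugation action of $G$ on $F$; the theorem thus asserts that conjugacy of finitely generated torsion-free discrete subgroups of $G$ is Borel bireducible with $=_{\mathbb R}$.

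For the lower bound I would send $t\in(0,\infty)$ to a generator of the infinite cyclic group $\langle z\mapsto e^{t}z\rangle$, whose quotient is a hyperbolic cylinder: an infinite-area open annulus with $\pi_1\cong\mathbb Z$ whose unique closed geodesic has length exactly $t$. This assignment is visibly Borel --- indeed continuous into the Chabauty space --- and injective modulo $\cong$, since closed-geodesic length is an isometry invariant and here determines the surface up to isometry; as $=_{(0,\infty)}$ is Borel bireducible with $=_{\mathbb R}$, this gives $=_{\mathbb R}\,\leq_B\,\cong$. (If one prefers to avoid elementary groups, a continuous path in the Teichm\"{u}ller space of a once-punctured torus along which a fixed simple closed geodesic has strictly monotone length serves equally well.)

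For the upper bound I would argue in three steps. (i) The map $\Gamma\mapsto(\text{topological type of }\mathbb H^2/\Gamma)$ is Borel from $F$ to $\mathbb N$: by the classical structure theory of finitely generated Fuchsian groups (Ahlfors finiteness, Fenchel--Nielsen), $\mathbb H^2/\Gamma$ is a surface of finite type, specified by finitely many discrete data (orientability, genus, and the numbers of cusps and of funnels), of which there are only countably many possibilities, each cut out by Borel conditions on a Chabauty-coded $\Gamma$ (test whether $\Gamma$ is elementary via its limit set; otherwise read off from $\Gamma$ the convex core's area --- hence its Euler characteristic --- and its parabolic and boundary conjugacy classes). (ii) Fix a nonelementary type $\tau$ with model surface $S_\tau$, and let $F_\tau=\{\Gamma\in F:\mathbb H^2/\Gamma\text{ has type }\tau\}$, a Borel set by (i). I would build a Borel reduction of conjugacy on $F_\tau$ to equality on the moduli space $\mathcal M(S_\tau)=\mathcal T(S_\tau)/\mathrm{MCG}(S_\tau)$. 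By the holonomy (Fricke) description, $\mathcal T(S_\tau)$ is the space $\mathrm{Hom}^{\mathrm{df}}_\tau(\pi_1(S_\tau),G)$ of discrete faithful representations of type $\tau$ modulo conjugation by $G$, a finite-dimensional real-analytic cell on which the countable group $\mathrm{MCG}(S_\tau)$ acts properly discontinuously with finite stabilizers; hence $\mathcal M(S_\tau)$ with its quotient topology is locally compact, second countable and metrizable --- in particular Polish --- and the $\mathrm{MCG}$-orbit relation on $\mathcal T(S_\tau)$ is smooth. For the reduction itself, since $\Gamma$ is a \emph{countable} group one may Borel-select, uniformly in $\Gamma\in F_\tau$ (after a Borel enumeration of $\Gamma$), a generating tuple realizing the standard presentation of $\pi_1(S_\tau)$ together with its peripheral structure, i.e.\ a point $\rho_\Gamma\in\mathrm{Hom}^{\mathrm{df}}_\tau(\pi_1(S_\tau),G)$ with image $\Gamma$; composing with the quotient maps $\mathrm{Hom}^{\mathrm{df}}_\tau(\pi_1(S_\tau),G)\to\mathcal T(S_\tau)\to\mathcal M(S_\tau)$ (Borel, since the relevant actions are proper and proper-discontinuous, so the quotient maps are continuous and open) yields a Borel map $h_\tau\colon F_\tau\to\mathcal M(S_\tau)$ with $h_\tau(\Gamma_1)=h_\tau(\Gamma_2)$ if and only if $\mathbb H^2/\Gamma_1$ and $\mathbb H^2/\Gamma_2$ are isometric, if and only if $\Gamma_1$ and $\Gamma_2$ are conjugate. (iii) Smoothness being preserved under countable disjoint unions over standard Borel spaces, and $F$ being the Borel disjoint union of the $F_\tau$ together with the countably many elementary pieces --- contributing only $\mathbb H^2$, the cusped cylinder, and two one-real-parameter families (hyperbolic and glide-reflection cylinders), handled by hand --- we conclude $\cong\,\leq_B\,=_{\mathbb R}$, and in particular that $\cong$ is Borel.

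The main obstacle, as I see it, is the Borel bookkeeping in step (ii): checking that the topological type and then --- modulo the mapping-class-group action already quotiented out --- the hyperbolic structure itself can be extracted Borel-uniformly from a group presented merely as a point of Chabauty space, with infinite-area ends (funnels) treated on exactly the same footing as cusps. This is where one genuinely needs the full theory of finitely generated Fuchsian groups rather than the classification of compact surfaces alone, and where countability of $\Gamma$ is essential in order to make the choice of a marking Borel. By contrast, the classical engine --- that properness of the conjugation action on discrete faithful representations, equivalently proper discontinuity of the mapping class group on Teichm\"{u}ller space, makes the associated quotients standard Borel and hence those orbit equivalence relations smooth --- together with the permanence of smoothness under restriction and countable disjoint unions, is routine.
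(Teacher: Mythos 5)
Your proposal is correct in outline and lands on the same global strategy as the paper's first proof of this theorem (its Theorem \ref{thm:fgPSL2Rsmooth} together with Corollary \ref{cor:2isometrysmooth}): translate to conjugacy of finitely generated torsion-free discrete subgroups via the Chabauty parametrization, split off the elementary groups by hand, decompose the rest into countably many Borel pieces by topological type, and reduce each piece to equality on a moduli space that is Polish because a mapping class group acts properly discontinuously on a Teichm\"{u}ller space. Where you genuinely diverge is in how the geometrically infinite-area ends are handled and how Borelness of the assignment to moduli space is obtained. The paper deliberately avoids moduli spaces of infinite-area structures: it sends a nonelementary $\Gamma$ to the \emph{double of the convex core} of $\mathbb{H}^2/\Gamma$, so that the target is the classical finite-volume moduli space $\coprod_S\mathcal{M}(S)$, proves that map is finite-to-one on conjugacy classes (a weak reduction, then Lemma \ref{lem:weak_reduction}), and establishes Borelness of the map to $\mathcal{M}(S)$ via Biringer's smooth-topology machinery for vectored hyperbolic manifolds. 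You instead work directly with the Fricke space of the funneled surface $S_\tau$ and get Borelness by Luzin--Novikov selection of a type-preserving marking $\rho_\Gamma$ with image $\Gamma$; this is legitimate (and is in fact very close to what the paper does in dimension $3$, cf.\ the section maps in diagram (\ref{eq:diagram}) and Lemma \ref{lem:commuting_square}), and it buys a more elementary Borelness argument at the cost of invoking exactly the facts the paper sidesteps: existence of the infinite-area Teichm\"{u}ller/moduli theory, proper discontinuity of the mapping class group there, and the Nielsen/Dehn--Nielsen--Baer-type theorem that type-preserving isomorphisms of finitely generated Fuchsian groups are geometric (which is what makes both directions of your ``iff'' work). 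The paper also records a second, quite different proof (Biringer's), via Glimm's theorem and an analysis of orbit closures in the Chabauty/smooth topology, which your route does not resemble.

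Two repairs you should make explicit. First, since you work in $G=\mathrm{Isom}(\mathbb{H}^2)$, conjugating by an orientation-reversing isometry changes your marking by an orientation-reversing mapping class; so the quotient defining your target must be by the \emph{extended} mapping class group (equivalently, by all type-preserving outer automorphisms), or else you should first pass to $\mathrm{Isom}^+$ and orientable quotients and upgrade afterwards, as the paper does via Lemma \ref{lem:lifting_orientation_preserving}; nonorientable topological types need the corresponding nonorientable mapping class group treated on the same footing. Second, your step (i) can be streamlined: each set $F_\tau$ is analytic (existence of a type-preserving discrete faithful representation onto $\Gamma$), and since the $F_\tau$ form a countable partition of $F$ each is also coanalytic, hence Borel by Suslin --- this avoids the convex-core-area bookkeeping you flag as the main obstacle.
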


\begin{theoremC} The classification of complete hyperbolic $3$-manifolds with finitely generated fundamental group up to isometry is of Borel complexity at least that of the relation $E_0$ of eventual equality of infinite binary sequences.
\end{theoremC}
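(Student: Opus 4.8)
The plan is to exhibit, inside the isometry relation on complete hyperbolic $3$-manifolds with finitely generated fundamental group, an isometry-invariant Borel subfamily on which the relation is Borel and not smooth, and then to invoke the Glimm--Effros dichotomy in its Borel form (Harrington--Kechris--Louveau): a Borel equivalence relation is either smooth --- i.e.\ Borel reducible to $=_{\mathbb{R}}$ --- or else it Borel reduces $E_{0}$. Concretely, it suffices to produce a standard-Borel-parametrized, isometry-invariant family $\mathcal{D}$ of such manifolds on which the isometry relation restricts to a non-smooth Borel equivalence relation $F$; the dichotomy then gives $E_{0}\leq_{B}F$, while the inclusion of $\mathcal{D}$ into the space of all complete hyperbolic $3$-manifolds with finitely generated $\pi_{1}$ is itself a Borel reduction of $F$ into the full isometry relation, so that $E_{0}$ is Borel reducible to the latter by transitivity. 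Note we do not need the full isometry relation to be Borel for this scheme to work.

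For $\mathcal{D}$ I would take the \emph{doubly degenerate} complete hyperbolic structures on $N=\Sigma_{1,1}\times\mathbb{R}$, where $\Sigma_{1,1}$ denotes the once-punctured torus; the fundamental group of each such manifold is free of rank $2$, hence finitely generated. By Minsky's classification of punctured-torus groups --- the Ending Lamination Theorem in this case, together with its existence half --- the isometry classes in $\mathcal{D}$ correspond bijectively to the unordered pairs $\{\alpha,\beta\}$ of \emph{distinct} irrational points of $\partial\mathbb{H}^{2}\cong\mathbb{P}^{1}(\mathbb{R})$, each ending lamination of a degenerate end of $N$ being a minimal filling lamination on $\Sigma_{1,1}$, recorded by its (irrational) slope. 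Moreover two such manifolds are isometric precisely when the associated unordered pairs lie in a common orbit of the diagonal action of the extended mapping class group $\mathrm{Mod}^{\pm}(\Sigma_{1,1})$, which acts on slopes through its quotient $\mathrm{PGL}_{2}(\mathbb{Z})$ by projective transformations of $\mathbb{P}^{1}(\mathbb{R})$; the passage to \emph{unordered} pairs absorbs the exchange of the two ends, which is always realized by an orientation-reversing isometry. Writing $X=\{(\alpha,\beta)\in(\mathbb{P}^{1}(\mathbb{R})\setminus\mathbb{P}^{1}(\mathbb{Q}))^{2}:\alpha\neq\beta\}$, the relation carried by $\mathcal{D}$ is then, up to Borel isomorphism, the relation $F$ on $X$ in which $(\alpha,\beta)$ and $(\alpha',\beta')$ are equivalent iff $\{g\alpha,g\beta\}=\{\alpha',\beta'\}$ for some $g\in\mathrm{PGL}_{2}(\mathbb{Z})$. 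This $F$ is visibly Borel --- a countable union of graphs of homeomorphisms of $X$ --- with all classes countable.

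The heart of the matter is that $F$ is not smooth, which is a dynamical fact. Let $m$ be the square of Lebesgue measure on $(\partial\mathbb{H}^{2})^{2}$. Because $\mathrm{PSL}_{2}(\mathbb{Z})$ is a lattice in $\mathrm{PSL}_{2}(\mathbb{R})$, its diagonal action on $(\partial\mathbb{H}^{2})^{2}$ is $m$-ergodic --- this is the boundary-action form of the classical ergodicity of the geodesic flow on the modular surface (Hopf) --- and this persists on the $m$-conull invariant subset $X$. The orbit equivalence relation of that action is contained in $F$, so every $F$-invariant Borel function on $X$ is $m$-almost everywhere constant; on the other hand every $F$-class is countable, hence $m$-null. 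Were $F$ smooth, a Borel reduction $\theta\colon X\to\mathbb{R}$ would be an $F$-invariant Borel function, hence $m$-almost everywhere equal to some constant $r_{0}$, and then $\theta^{-1}(r_{0})$ would be a single $F$-class of full $m$-measure --- impossible. Thus $F$ is not smooth, $E_{0}\leq_{B}F$ by Glimm--Effros, and composing with the inclusion of $\mathcal{D}$ shows that $E_{0}$ is Borel reducible to the isometry relation on complete hyperbolic $3$-manifolds with finitely generated fundamental group, as claimed. (The continued-fraction description of $\mathrm{PGL}_{2}(\mathbb{Z})$-equivalence of irrationals in fact makes $F\sim_{B}E_{0}$ very plausible, so that already this one family should realize the bound exactly; but only $E_{0}\leq_{B}F$ is used here.)

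I expect the principal difficulty to lie not in any single step above but in the \emph{uniformity} tying them together: one must check that $\mathcal{D}$, with its identification with $(X,F)$, sits inside whatever standard Borel space is used to parametrize complete hyperbolic $3$-manifolds with finitely generated $\pi_{1}$ --- equivalently, finitely generated Kleinian groups up to conjugacy --- as a Borel invariant subset on which the isometry relation is exactly $F$. This amounts to a Borel, rather than merely pointwise, reading of Minsky's theorem: that a doubly degenerate punctured-torus group and its pair of ending laminations determine one another in a Borel fashion. One route that avoids leaning on the existence half of the classification is to construct the members of $\mathcal{D}$ explicitly and Borel-measurably from the slope data --- for instance as algebraic limits of quasi-Fuchsian punctured-torus groups along prescribed sequences of mapping classes, as in the classical constructions of degenerate Kleinian groups --- importing from the general theory only rigidity and the isometry classification.
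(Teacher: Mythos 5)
Your argument is correct in outline, and it reaches the paper's Theorem C by a genuinely different route. The paper proves the lower bound (Theorem \ref{thm:IRSsE0}, feeding into Theorems \ref{thm:line_bundles} and \ref{thm:complexity_3-mans}) by working with doubly degenerate manifolds homeomorphic to $S\times\mathbb{R}$ for a \emph{closed} surface $S$, built from a Schottky subgroup of pseudo-Anosov mapping classes as in \cite{MR4095420}: the shift on $\{0,1\}^{\mathbb{Z}}$ maps continuously and injectively into these manifolds, an ergodic nonatomic measure is pushed through this graph, Gao's criterion (Lemma \ref{lem:Gao_E0_lemma}) plus a countable-to-one projection and Lemma \ref{lem:weak_reduction} rule out concrete classifiability, and Glimm--Effros finishes. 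Your version replaces all of this with the once-punctured torus: Minsky's classification identifies unmarked doubly degenerate punctured-torus manifolds with pairs of distinct irrational slopes modulo (roughly) $\mathrm{PGL}_2(\mathbb{Z})$ and the end swap, and Hopf ergodicity of the diagonal $\mathrm{PSL}_2(\mathbb{Z})$-action on $(\partial\mathbb{H}^2)^2$ rules out smoothness before Glimm--Effros is applied. Both proofs share the same skeleton (restrict to an invariant Borel family of doubly degenerate $S\times\mathbb{R}$'s, exhibit a nonatomic ergodic measure, invoke the dichotomy); yours buys a very concrete model --- indeed, since $\mathrm{PGL}_2(\mathbb{Z})$-equivalence of irrationals is continued-fraction tail equivalence, your family plausibly realizes the bound exactly, consistent with the paper's upper bound in Theorem \ref{thm:E0_upper_bound} --- while the paper's buys the statement of Theorem \ref{thm:line_bundles} for every closed surface and avoids cusped-surface bookkeeping.

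The one step you flag --- Borel uniformity of the passage between slope pairs and doubly degenerate groups --- is indeed where the real work lies, but it is fillable, and in fact the paper's own machinery supplies it: Theorem \ref{thm:doubly_degenerate} (the ending-invariant map is a homeomorphism from $\mathrm{DD}^\star(\pi_1(S))$ onto $(\mathcal{EL}(S)\times\mathcal{EL}(S))\setminus\Delta$, valid for finite-type $S$ including $\Sigma_{1,1}$), combined with the Borel sections $s,t$ of diagram (\ref{eq:diagram}), gives a Borel correspondence between conjugacy classes in $\mathcal{D}^\star_{\mathrm{dd}}[\pi_1(\Sigma_{1,1})]$ and $\mathrm{Out}$-orbits of slope pairs, which is all your reduction needs; alternatively, a Luzin--Suslin argument makes either direction of Minsky's bijection Borel, or you can imitate the paper and settle for a Borel graph plus a weak reduction. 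One small inaccuracy worth noting: the exchange of the two ends is not in general ``realized by an orientation-reversing isometry'' of a fixed manifold; rather, allowing orientation-reversing isometries and the extended mapping class group yields the unordered-pair-mod-$\mathrm{PGL}_2(\mathbb{Z})$ description. This does not affect your argument, since any relation sandwiched between the diagonal $\mathrm{PSL}_2(\mathbb{Z})$-orbit relation and the countable relation generated by $\mathrm{PGL}_2(\mathbb{Z})$ together with the swap is Borel, has countable (hence null) classes, and contains an ergodic action, so the non-smoothness and the appeal to Glimm--Effros go through unchanged.
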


\begin{theoremD} For any $n\geq 2$, the classification of all complete hyperbolic $n$-manifolds up to isometry is essentially countable universal, or, in other words, Borel equivalent to the degree $E_\infty$.
\end{theoremD}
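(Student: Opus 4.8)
The plan is to pass from complete hyperbolic $n$-manifolds to their holonomy groups and then bound the resulting relation from both sides. Recall that the universal cover of a complete hyperbolic $n$-manifold is isometric to $\mathbb{H}^n$, so every such manifold is $\mathbb{H}^n/\Gamma$ for a discrete torsion-free subgroup $\Gamma$ of $G := \mathrm{Isom}(\mathbb{H}^n)$ --- torsion-free because the torsion elements of $G$ are elliptic, hence fix a point and cannot act freely --- and any isometry $\mathbb{H}^n/\Gamma_1 \to \mathbb{H}^n/\Gamma_2$ lifts to an isometry of $\mathbb{H}^n$ carrying $\Gamma_1$ onto $\Gamma_2$, while conjugate groups evidently give isometric quotients. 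First I would record that the Chabauty space $\mathcal{S}(G)$ of closed subgroups of $G$ is compact metrizable, that the discrete torsion-free subgroups form a Borel subset $\mathcal{D}_{\mathrm{tf}}(G)$, that $G$ acts continuously on $\mathcal{S}(G)$ by conjugation, and hence --- via $\Gamma \mapsto \mathbb{H}^n/\Gamma$, in the standard Borel parametrization of manifolds used here --- that the isometry relation on complete hyperbolic $n$-manifolds is Borel bireducible with the orbit equivalence relation $E_G^{\mathcal{D}_{\mathrm{tf}}(G)}$ of the conjugation action.

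For the upper bound: $G$ is a locally compact Polish group, so by Kechris's theorem on locally compact group actions --- every Borel action of a locally compact second countable group on a standard Borel space admits a countable complete Borel section --- the relation $E_G^{\mathcal{S}(G)}$, and therefore its restriction $E_G^{\mathcal{D}_{\mathrm{tf}}(G)}$, is essentially countable. Hence the isometry relation is $\leq_B E_\infty$.

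For the lower bound I would invoke the paper's theorem that the conjugacy relation on discrete subgroups of any noncompact semisimple Lie group is essentially countable universal, applied to $\mathrm{Isom}^+(\mathbb{H}^n) \cong \mathrm{PSO}^{\circ}(n,1)$, which for $n \geq 2$ is noncompact and simple: this yields a Borel reduction of $E_\infty$ into the conjugacy relation on discrete subgroups of $\mathrm{Isom}^+(\mathbb{H}^n)$. The remaining task is to arrange that the witnessing family of discrete subgroups consists of torsion-free groups, so that the quotients $\mathbb{H}^n/\Gamma$ are genuine manifolds rather than orbifolds; one then obtains $E_\infty \leq_B$ (isometry) by postcomposing with $\Gamma \mapsto \mathbb{H}^n/\Gamma$, and combining with the upper bound gives the asserted bireducibility with $E_\infty$. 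Since the lower bound is attained already within orientable manifolds while the upper bound covers all complete hyperbolic $n$-manifolds, nothing is lost in passing between $\mathrm{Isom}^+(\mathbb{H}^n)$ and the full isometry group.

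I expect the torsion-free refinement to be the main obstacle. Because, by Theorems B and C, the finitely generated case has strictly lower Borel complexity, the discrete subgroups witnessing universality here are necessarily infinitely generated, so Selberg's lemma cannot be applied directly to pass to a torsion-free finite-index subgroup; instead one must control torsion through the combinatorics of the construction --- for instance by assembling the witnesses from torsion-free building blocks (Schottky subgroups, or subgroups of a fixed torsion-free lattice produced by Selberg's lemma) amalgamated by ping-pong or Klein--Maskit combination, so that the resulting infinitely generated groups are torsion-free --- all while keeping the assignment Borel. A secondary point needing care is verifying that the developing-map identification in the first paragraph is indeed witnessed by Borel maps compatible with the paper's chosen parametrization of manifolds.
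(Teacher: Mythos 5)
Your proposal follows essentially the same route as the paper: the Borel bireducibility (indeed classwise Borel isomorphism) between isometry of complete connected hyperbolic $n$-manifolds and conjugacy on $\mathcal{D}_{\mathrm{tf}}(\mathrm{Isom}(\mathbb{H}^n))$ from Section \ref{section:bireducibility}, the upper bound from essential countability of locally compact Polish group actions, and the lower bound from Theorem I applied to $\mathrm{Isom}^+(\mathbb{H}^n)\cong\mathrm{SO}^+(1,n)$.

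The one point to correct is that your ``main obstacle'' --- refining the universal family to torsion-free groups --- is not actually an obstacle: the universality result you invoke is proved (and stated in the body of the paper) for $E(G,\mathcal{D}_{\mathrm{tf}}(G))$ already. In the proof of Theorem \ref{thm:conjugacy_universal}, the Andretta--Camerlo--Hjorth reduction lands entirely inside the space of subgroups of a fixed discrete nonabelian free subgroup of $G$, and subgroups of free groups are free, hence torsion-free; so the witnesses are automatically torsion-free (and discrete), with no need for Selberg's lemma, ping-pong, or Klein--Maskit combination. Your secondary concern, that the quotient and holonomy correspondences be Borel in the chosen parametrization, is exactly what Theorems \ref{thm:manifoldstodiscrete} and \ref{thm:discretetomanifolds} and Corollary \ref{cor:classwise} supply, as you anticipated.
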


\begin{theoremE} The classification of connected topological $2$-manifolds (i.e., surfaces) up to homeomorphism is complete for countable structures, as is the classification of connected smooth $2$-manifolds up to diffeomorphism.
Put differently, each is complete for orbit equivalence relations induced by Borel actions of the infinite permutation group $S_\infty$.
\end{theoremE}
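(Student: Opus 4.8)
The plan is to pin $\cong_{\mathrm{Surf}}$, the homeomorphism relation on a standard Borel space of connected (second‑countable) surfaces, at the maximal $S_\infty$‑orbit degree by Borel bireducing it with isomorphism $\cong_{\mathrm{Graph}}$ of countable graphs, which is what ``complete for countable structures'' asks; this splits into an upper and a lower bound. For the upper bound, $\cong_{\mathrm{Surf}}\leq_B\cong_{\mathrm{Graph}}$, I would invoke the Ker\'ekj\'art\'o--Richards classification: up to homeomorphism a connected surface $S$ is determined by an orientability class, a genus in $\mathbb{N}\cup\{\infty\}$, and the topological type of its space of ends $E(S)$ together with finitely many distinguished nested closed subsets (the ends accumulated by handles, resp.\ by crosscaps). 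Since $E(S)$ is compact, metrizable, and zero‑dimensional, it is canonically coded by its countable Boolean algebra of clopen sets, with the distinguished subsets, the genus, and the orientability bit recorded by additional predicates; this yields a countable structure $\Phi(S)$ in a fixed finite language with $S_1\cong S_2 \iff \Phi(S_1)\cong\Phi(S_2)$, hence $\cong_{\mathrm{Surf}}\leq_B\cong_{\mathrm{Graph}}$ --- \emph{provided} $S\mapsto\Phi(S)$ is Borel, which, given the Borel form of the surface triangulation theorem, comes down to reading the genus, the orientability, and a clopen basis for the ends off a triangulation in a Borel way.

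For the lower bound, $\cong_{\mathrm{Graph}}\leq_B\cong_{\mathrm{Surf}}$, I would first pass to the homeomorphism relation on compact metrizable zero‑dimensional spaces, which is complete for countable structures: it is Stone‑dual to Camerlo--Gao's theorem that isomorphism of countable Boolean algebras is Borel complete. Fixing the standard Cantor set $C\subseteq\mathbb{R}^2\subseteq S^2$, send a closed $K\subseteq C$ to the planar surface $\Sigma_K := S^2\setminus K$; it is connected because a totally disconnected compact set does not separate $S^2$. The space of ends of $\Sigma_K$ is homeomorphic to $K$, and $\Sigma_K$ is orientable of genus $0$ with no ends accumulated by genus, so Ker\'ekj\'art\'o--Richards gives $\Sigma_{K_1}\cong\Sigma_{K_2}$ if and only if $K_1\cong K_2$. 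Once $K\mapsto\Sigma_K$ is realized as a Borel map into the space of surfaces --- again a matter of triangulating $S^2\setminus K$ Borel‑uniformly in $K$ --- this completes the reduction. For the smooth versions: $\Sigma_K$ is an open submanifold of the smooth $S^2$, so the same map reduces $\cong_{\mathrm{Graph}}$ to diffeomorphism of smooth surfaces; and since every surface carries a smooth structure that is unique up to diffeomorphism, the forgetful map Borel‑reduces diffeomorphism back to $\cong_{\mathrm{Surf}}$, the needed uniformity coming from the Borel triangulation and Jordan--Schoenflies theorems.

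The main obstacle throughout is exactly this insistent demand for Borel uniformity: extracting the Ker\'ekj\'art\'o--Richards invariant, realizing $K\mapsto S^2\setminus K$, and moving between smooth and topological surfaces must each be carried out as a Borel operation on the chosen parametrizations, and it is precisely here that the Borel formulations of the triangulation and Schoenflies theorems do the work. The purely surface‑theoretic inputs --- the classification, the identification $E(S^2\setminus K)\cong K$, and the non‑separation of totally disconnected sets --- are classical; the content of the argument lies in threading them through the descriptive framework.
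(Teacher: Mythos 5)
Your proposal is correct and follows essentially the same route as the paper: the upper bound via the Ker\'ekj\'art\'o--Richards invariants (genus, orientability, and the nested end spaces coded, via Stone duality, as countable Boolean algebras with distinguished filters), made Borel through the Borel triangulation and Jordan--Schoenflies theorems, and the lower bound via $K\mapsto S^2\setminus K$ for closed subsets $K$ of the Cantor set, using Camerlo--Gao completeness, with the smooth case handled by the same map together with uniqueness of smooth structures on surfaces. The only cosmetic difference is that the paper absorbs genus and orientability by partitioning the surface space into countably many invariant Borel pieces rather than adding them as predicates to the structure, which changes nothing essential.
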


Especially alert readers will have observed that the classification problems associated to Theorems B and E each have well-known classical solutions; our proof of the latter, for example, amounts to an analysis of the Borel content of the Ker\'{e}kj\'{a}rt\'{o}--Richards \cite{zbMATH02598767, MR143186} classification of noncompact surfaces. A preliminary component of this analysis consists in establishing Borel measurable (with respect to the appropriate spaces) forms of the classical Jordan--Schoenflies and triangulation theorems for surfaces (our Theorems \ref{thm:Borel_JS} and \ref{thm:Borel_triangulation}, respectively):

\begin{theoremF}
	There is a Borel measurable function which takes any embedding $\gamma$ of $S^1$ into $\mathbb{R}^2$ to a self-homeomorphism of $\mathbb{R}^2$ which extends $\gamma$.
\end{theoremF}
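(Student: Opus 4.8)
The plan is to render the classical proof of Jordan--Schoenflies via the Riemann mapping theorem effective, checking at each step that the objects constructed depend in a Borel way on $\gamma$. Identify $S^1$ with the unit circle in $\mathbb{R}^2\subseteq S^2=\mathbb{R}^2\cup\{\infty\}$, and regard $\mathrm{Emb}(S^1,\mathbb{R}^2)\subseteq C(S^1,\mathbb{R}^2)$ and $\mathrm{Homeo}(\mathbb{R}^2)$ as Polish spaces in the standard way. I will use repeatedly that composition, inversion, and restriction are continuous; that an injective element of $C(S^1,\mathbb{R}^2)$ is automatically an embedding; that the restriction map from the stabilizer of $\infty$ in $\mathrm{Homeo}(S^2)$ to $\mathrm{Homeo}(\mathbb{R}^2)$ is continuous; that the ``coning'' of a self-homeomorphism $k$ of $S^1$, namely $re^{i\theta}\mapsto rk(e^{i\theta})$ for $r\in[0,\infty]$, is a self-homeomorphism of $S^2$ fixing $0$ and $\infty$ depending continuously on $k$; and, crucially, that a function between standard Borel spaces with Borel graph is itself Borel. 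This last fact reduces the Borel-measurability of each assignment $\gamma\mapsto(\cdot)$ below to checking that it is pinned down by countably many Borel conditions.

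Write $J_\gamma=\gamma(S^1)$. The first step is a Borel form of the Jordan curve theorem. The winding number $w(\gamma,x)$ is a continuous integer-valued function of $(\gamma,x)$ on the open set $\{(\gamma,x):x\notin J_\gamma\}$, so by the classical theorem the interior region $\Omega^{\mathrm{int}}_\gamma=\{x:w(\gamma,x)\neq 0\}$ (the bounded complementary component) is open in $\mathbb{R}^2\times\mathrm{Emb}(S^1,\mathbb{R}^2)$, and so is the assignment of the exterior region $\Omega^{\mathrm{ext}}_\gamma$ (the unbounded component together with $\infty$, a Jordan domain in $S^2$ with boundary $J_\gamma$). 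Enumerating $\mathbb{Q}^2$ and letting $w_0(\gamma)$ be its first member lying in $\Omega^{\mathrm{int}}_\gamma$ gives a Borel selection of an interior base point.

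The heart of the argument, and the step I expect to require the most care, is the Borel dependence of the uniformizing maps. Let $\varphi_\gamma\colon\overline{\mathbb{D}}\to\overline{\Omega^{\mathrm{int}}_\gamma}$ be the unique homeomorphism that is conformal on $\mathbb{D}$ with $\varphi_\gamma(0)=w_0(\gamma)$ and $\varphi_\gamma'(0)>0$; it exists by the Riemann mapping theorem together with Carath\'eodory's boundary-extension theorem, valid because $\partial\Omega^{\mathrm{int}}_\gamma=J_\gamma$ is a Jordan curve. Rather than appeal to continuous-dependence theorems of Rad\'o type, observe that, for $f\in C(\overline{\mathbb{D}},\mathbb{C})$, one has $f=\varphi_\gamma$ if and only if $f{\upharpoonright}\mathbb{D}$ is holomorphic, $f(0)=w_0(\gamma)$, $f'(0)>0$, and $f{\upharpoonright}S^1$ is injective with image $J_\gamma$: the forward implication is immediate, while the converse follows from the argument principle (the number of preimages in $\mathbb{D}$ of an interior point, counted with multiplicity, equals the winding number of $f{\upharpoonright}S^1$ about it, which is nonnegative and hence $+1$, forcing $f{\upharpoonright}\mathbb{D}$ to be a conformal bijection onto $\Omega^{\mathrm{int}}_\gamma$). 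Each of these four conditions is Borel in $(\gamma,f)$, so the graph of $\gamma\mapsto\varphi_\gamma$ is Borel and the assignment is Borel. Entirely analogously, the unique homeomorphism $\psi_\gamma\colon\{|z|\ge 1\}\cup\{\infty\}\to\overline{\Omega^{\mathrm{ext}}_\gamma}$ conformal on $\{|z|>1\}\cup\{\infty\}$ with $\psi_\gamma(\infty)=\infty$ and $\lim_{z\to\infty}\psi_\gamma(z)/z>0$ depends in a Borel way on $\gamma$.

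Finally we assemble and correct, all assignments below being Borel by the same reasoning (the varying targets again handled via the Borel-graph observation). The restrictions $\varphi_\gamma{\upharpoonright}S^1$ and $\psi_\gamma{\upharpoonright}S^1$ are two parametrizations of $J_\gamma$, so $g_\gamma:=(\psi_\gamma{\upharpoonright}S^1)^{-1}\circ(\varphi_\gamma{\upharpoonright}S^1)$ is a self-homeomorphism of $S^1$; letting $\widehat g_\gamma$ be its radial extension to $\{|z|\ge 1\}\cup\{\infty\}$, the maps $\varphi_\gamma$ on $\overline{\mathbb{D}}$ and $\psi_\gamma\circ\widehat g_\gamma$ on $\{|z|\ge 1\}\cup\{\infty\}$ agree on $S^1$ and paste to a self-homeomorphism $h_\gamma$ of $S^2$ fixing $\infty$, with $h_\gamma{\upharpoonright}S^1=\varphi_\gamma{\upharpoonright}S^1$. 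Since $\varphi_\gamma{\upharpoonright}S^1$ and $\gamma$ are both homeomorphisms of $S^1$ onto $J_\gamma$, the composite $k_\gamma:=(\varphi_\gamma{\upharpoonright}S^1)^{-1}\circ\gamma$ is a self-homeomorphism of $S^1$; putting $\Phi(\gamma):=\bigl(h_\gamma\circ\widetilde k_\gamma\bigr){\upharpoonright}\mathbb{R}^2$, where $\widetilde k_\gamma$ is the coning of $k_\gamma$, we get $\Phi(\gamma){\upharpoonright}S^1=\varphi_\gamma\circ k_\gamma=\gamma$, so $\Phi(\gamma)\in\mathrm{Homeo}(\mathbb{R}^2)$ extends $\gamma$, and $\Phi$ is Borel as a composition of the Borel assignments above with the continuous operations recorded at the outset. (An alternative route, perhaps more in keeping with the triangulation theorem to follow, would approximate $\gamma$ by polygonal embeddings depending continuously on it and build $\Phi(\gamma)$ as a suitably controlled infinite composition; I expect the conformal approach to keep the measurability bookkeeping lightest.)
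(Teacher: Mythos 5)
Your route is genuinely different from the paper's: the paper avoids conformal mapping entirely and instead adapts Thomassen's graph-theoretic proof, building finer and finer meshes of rational polygonal arcs inside and outside the curve, matching them by plane graph isomorphisms, and extracting the extension from dense sequences via a Borel continuous-extension lemma. Your conformal approach (Riemann mapping plus Carath\'eodory, with Borel-ness of the uniformizers obtained from the Borel-graph criterion) is a legitimate alternative and arguably keeps the measurability bookkeeping lighter; what the paper's combinatorial route buys is independence from complex analysis and machinery (rational accessibility, polygonal graphs) that is reused directly in the Borel triangulation theorem that follows. Your interior step is correct: the Darboux--Picard/argument-principle argument does show that holomorphy on $\mathbb{D}$, the normalization at $0$, and injectivity of the boundary restriction with image $J_\gamma$ pin down $\varphi_\gamma$, and each of these conditions is Borel in $(\gamma,f)$.

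There is, however, a genuine flaw in the step you dismiss as ``entirely analogous'': the stated conditions do \emph{not} characterize the exterior map $\psi_\gamma$. With a pole at $\infty$, the argument principle on $\{1<|z|<R\}$ gives $\#f^{-1}(w)=1-n(f{\upharpoonright}S^1,w)$ for $w\notin J_\gamma$, and nonnegativity of the left side no longer forces the boundary winding to be $+1$; the value $-1$ (two preimages of each interior point) is not excluded. Concretely, take $\gamma$ parametrizing the ellipse $x^2/25+y^2/9=1$ and $f(z)=z+4/z$ on $\{|z|\ge 1\}\cup\{\infty\}$: $f$ is holomorphic on $\{|z|>1\}$, fixes $\infty$ with $\lim_{z\to\infty}f(z)/z=1>0$, and $f(e^{i\theta})=5\cos\theta-3i\sin\theta$ is injective with image $J_\gamma$, yet $f(3/2)=f(8/3)=25/6$, so $f$ is two-to-one over the interior of the ellipse and is not $\psi_\gamma$. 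Hence the graph identity you rely on for $\psi_\gamma$ is false as written. The repair is immediate: injectivity of $f$ on the closed region is itself a Borel condition (its failure is an $F_\sigma$ set, by the same compactness argument showing $\mathrm{Emb}(X,Y)$ is $G_\delta$), or alternatively one may add the Borel condition that $w_0(\gamma)$ is omitted from the image; with either condition added, uniqueness of the normalized exterior map makes the graph Borel and the rest of your assembly (pasting $\varphi_\gamma$ with $\psi_\gamma\circ\widehat g_\gamma$, then precomposing with the coned correction $\widetilde k_\gamma$) goes through.
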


\begin{theoremG}
	There is a Borel measurable function which takes any surface $S$ to a simplicial complex whose geometric realization is homeomorphic to $S$.
\end{theoremG}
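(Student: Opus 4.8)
The plan is to follow the classical proof that every surface is triangulable --- in the form, going back to Rad\'{o}, that builds a simplicial complex out of a locally finite cover by coordinate disks whose bounding circles have been put into general position --- while checking that each step is Borel-measurable and uniform in a code for $S$, with every appeal to plane topology routed through the Borel Jordan--Schoenflies theorem (Theorem~\ref{thm:Borel_JS}). There are three stages: (1) extract a good, locally finite cover by coordinate disks; (2) put the bounding circles into general position; (3) pass from the resulting cell structure on $S$ to a simplicial complex. Stages (1) and (3) are ``soft'' --- explicit manipulations of the code together with classical, routine constructions --- so the work is in stage (2). For stage (1): from a code for $S$ in the ambient standard Borel space of surfaces one extracts, directly or by a routine preliminary argument, a countable atlas $\{(U_i,\varphi_i)\}_{i\in\mathbb N}$ with each $\varphi_i\colon U_i\to\mathbb R^2$ a homeomorphism onto an open set containing the closed unit disk and with $\bigcup_i U_i=S$; since $S$ carries, through its code, the structure of a separable metrizable locally Euclidean space, this extraction is a routine Lindel\"{o}f-type argument writable in the code, and the shrinking lemma, likewise carried out in the code, then supplies a locally finite refinement. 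Setting $\overline{B}_i := \varphi_i^{-1}(\overline{B}(0,\tfrac{1}{2}))$ --- arranged so that the open disks $B_i := \mathrm{int}\,\overline{B}_i$ still cover $S$ --- and $C_i := \partial\overline{B}_i$, all of this data depends Borel-measurably on $S$. (If $S$ has boundary, adjoin coordinate half-disks along $\partial S$; the argument is unchanged.)

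For stage (2) the circles are adjusted one index at a time. Suppose $C_1,\dots,C_{i-1}$ have been replaced by circles $C_1',\dots,C_{i-1}'$ that pairwise meet in finitely many points, transversally, with no triple intersections, and still bound disks $B_k'$ covering $S$; by local finiteness only finitely many of them meet the compact set $\overline{B}_i$, and read inside the chart $\varphi_i$ they form a finite graph of tame arcs and circles in $\mathbb R^2$. A standard general-position argument --- whose content is that one tame planar arc or circle can be made to meet finitely many others in finitely many points, transversally, with no triple points, by an arbitrarily small adjustment of it --- replaces $C_i$ by a $C^0$-nearby circle $C_i'$ with this property relative to $C_1',\dots,C_{i-1}'$, close enough to preserve their combinatorics and the covering property. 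We realize this Borel-uniformly using two ingredients: first, the Borel tameness of planar arcs and circles implicit in (the proof of) Theorem~\ref{thm:Borel_JS}, which lets us normalize the relevant finite graph in the chart $\varphi_i$ Borel-uniformly; second, the observation that if the admissible small adjustments of $C_i$ are parametrized by a finite-dimensional family, the good parameters form a nonempty \emph{open} subset of some $\mathbb R^N$, which therefore meets a fixed countable dense set --- so we may take $C_i'$ to correspond to the first admissible term of a fixed enumeration of $\mathbb Q^N$, a Borel function of $S$. Since each $C_i$ is adjusted only within its own chart, the construction is compatible across overlapping charts, and iterating over $i$ produces a locally finite family $\{C_i'\}$ whose circles are globally in general position, each depending Borel-measurably on $S$.

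For stage (3), put $\Gamma := \bigcup_i C_i'$. Since the $B_i'$ cover $S$ and $\Gamma$ is locally finite, every component $R$ of $S\setminus\Gamma$ is contained in some $B_i'$ and is thus a complementary region, inside the closed disk $\overline{B_i'}$, of the finite graph $\Gamma\cap\overline{B_i'}$; hence $R$ is a planar domain with finitely many boundary components, and adjoining --- Borel-uniformly, straight in chart coordinates --- finitely many arcs through each such $R$ to connect up its boundary components, we may assume every $R$ is an open disk bounded by a cycle of the augmented graph. This exhibits $S$ as a regular CW-complex (after a further routine, Borel-uniform subdivision making every closed cell an embedded disk), and coning each $2$-cell from an interior vertex yields a simplicial complex $K$ with $|K|\cong S$. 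Relabelling the countable vertex set of $K$ by $\mathbb N$ through a Borel bijection --- which leaves $|K|$ unchanged --- places $K$ in the standard Borel space of simplicial complexes, and $S\mapsto K$ is Borel by construction.

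The main obstacle is the uniformity demanded in stage (2): one must make a single Borel choice of adjustment at each index that is at once small enough to preserve all previously-arranged combinatorics and the covering property, compatible across overlapping charts, and effective enough to yield genuinely finite, transverse, triple-point-free intersections with disk-like complementary regions. Everything else amounts to an explicit manipulation of the code or a citation of a classical construction, carried out Borel-uniformly.
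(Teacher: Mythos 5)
Your stage (2) contains the real gap, and it is precisely the point the paper's proof of Theorem~\ref{thm:Borel_triangulation} is built around. First, the assertion that the previously adjusted circles, read inside the chart $\varphi_i$, ``form a finite graph of tame arcs and circles'' is unjustified: local finiteness only bounds how many circles $C_k'$ meet $\overline{B}_i$, not how many components $C_k'\cap\overline{B}_i$ has. A single simple closed curve can enter and leave a coordinate disk infinitely often (the arcs accumulating on the bounding circle), and making $C_k'$ nice in its \emph{own} chart does not help, since it is transported into chart $i$ by an arbitrary transition homeomorphism. This is exactly why the paper (following Thomassen) works with nested rational rectangles $Q^1_k\subseteq Q^3_k\subseteq Q^2_k$ and distinguishes \emph{bad} from \emph{very bad} segments: there may be infinitely many bad segments, but only finitely many very bad ones (\cite[Lem.\ E.2]{MR3026641}), and the new curve is routed so as to cross only those. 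Second, the proposed fix --- a ``standard general-position argument'' realized by a finite-dimensional family of small adjustments whose good parameters form a nonempty open subset of $\mathbb{R}^N$ --- does not go through in the purely topological category. The arcs you must avoid or cross are merely continuous (images of rational data under unknown homeomorphisms), so transversality is not available, finiteness of intersections is not a generic or open condition under $C^0$-small perturbations, and with infinitely many arcs accumulating near $C_i$ no small perturbation need achieve finitely many intersection points at all.

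The paper's resolution is structurally different from a perturbation argument, and this is where Theorem~\ref{thm:Borel_JS} does its real work: one takes the finite graph formed by $Q^2_k$ together with the finitely many very bad segments, redraws it plane-isomorphically with rational polygonal edges (\cite[Prop.\ E.3]{MR3026641}), and then uses the Borel Jordan--Schoenflies theorem face by face to extend this redrawing to a self-homeomorphism of $\overline{\mathrm{int}}(Q^2_k)$ fixing $Q^2_k$ --- i.e., one \emph{changes the chart maps} (a controlled, ``finite injury'' reparametrization of $(\mathcal{U},c)$, kept convergent by local finiteness via Lemma~\ref{lem:paracompact}) so that the relevant arcs \emph{become} rational polygonal, and then chooses the replacement curve itself rational polygonal, avoiding the remaining bad segments. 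Intersections are then automatically finite and combinatorially tractable, and Borel-uniformity comes from searching a fixed countable list of rational polygonal data rather than from an openness claim about perturbation parameters. Your stages (1) and (3) are essentially the paper's (and stage (3) is fine once stage (2) is repaired), but as written stage (2) both assumes the finiteness it needs and relies on a transversality mechanism that is not available here.
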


Theorem C, like Theorem E, is largely a story of manifolds' ends and draws heavily on a celebrated classification result, namely the Ending Lamination Theorem \cite{MR2630036, MR2925381} and its aftermath.
Theorem D is somewhat different in nature, as we will see.
In any case, the apparent tension of Theorems A through E with the implications of Markov's is, of course, ultimately illusory, for they involve different registers: while the latter derives its force from the question of whether the word problem admits resolution with \emph{finite} resources, the underlying concern in the former is, again, for what is or is not achievable with \emph{countable} resources.\footnote{Note that if it were merely a matter of assigning arbitrary complete invariants to objects, then any classification problem would be trivially solvable by the Axiom of Choice. What finitary or countable constraints as above help to formalize is the notion of a \emph{reasonable} solution to a classification problem: only solutions making good, economical use of properties of the objects in question will tend to satisfy them. See Section \ref{subsection:invariant_DST} for further discussion, one emphasizing the complementary heuristic of \emph{definable} solutions to classification problems.}
To summarize our discussion so far: \emph{One value of the Borel complexity framework is its description of a finer and more definite structure of ``degrees of difficulty'' within the study of manifolds than has tended to be perceived}.

Extending this point is one outstanding exception to the comparative lack of interactions between manifold theory and Borel complexity theory cited above, namely Hjorth and Kechris's \emph{The complexity of the classification of Riemann surfaces and complex manifolds} \cite{MR1731384}. 
Therein, motivated by questions regarding the complexity of conformal invariants from \cite{MR0584077}, the authors show the following:
\begin{theorem*}[Hjorth--Kechris, 2000]\label{thm:Hjorth-Kechris}
\quad
\begin{enumerate}[label=\textup{\arabic*.}]
	\item The classification of Riemann surfaces up to biholomorphism is essentially countable universal.
	\item For any $n\geq 2$, the biholomorphism relation on complex $n$-manifolds does not admit classification by countable structures.
\end{enumerate}
\end{theorem*}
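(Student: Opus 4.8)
The plan is to treat the two parts separately; for part (1) I would further split into an essential-countability upper bound and an $E_\infty$-hardness lower bound. For the upper bound I would route everything through the Uniformization Theorem: every Riemann surface is biholomorphic to $\widetilde{X}/\Gamma$, where $\widetilde{X}$ is the Riemann sphere $\widehat{\mathbb{C}}$, the plane $\mathbb{C}$, or the upper half-plane $\mathbb{H}$, and $\Gamma$ is a torsion-free discrete group of automorphisms acting freely. The non-hyperbolic surfaces are exactly $\widehat{\mathbb{C}}$, $\mathbb{C}$, the punctured plane $\mathbb{C}^*$, and the $1$-(complex-)parameter family of complex tori, and these contribute only a smooth --- hence essentially countable --- Borel piece. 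For the hyperbolic surfaces $S=\mathbb{H}/\Gamma$, any biholomorphism $\mathbb{H}/\Gamma\to\mathbb{H}/\Gamma'$ lifts to an element of $G:=\mathrm{PSL}_2(\mathbb{R})$ conjugating $\Gamma$ onto $\Gamma'$; so, once one fixes a reasonable standard Borel parametrization of Riemann surfaces and checks that this correspondence is Borel, biholomorphism of hyperbolic surfaces becomes the orbit equivalence relation of the conjugation action of $G$ on the standard Borel space of torsion-free discrete subgroups of $G$ (a Borel-invariant subset of the Polish space of closed subsets of $G$ with the Chabauty topology). Since $G$ is locally compact Polish, Kechris's theorem furnishing countable Borel sections for such actions shows this relation is essentially countable; a Borel union of an essentially countable relation and a smooth one is essentially countable.

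For the lower bound it suffices to code a universal countable Borel equivalence relation into Riemann surfaces so that biholomorphisms respect the coding. One route: fix a finite-type hyperbolic surface with free fundamental group, say the thrice-punctured sphere $X$ with $\pi_1(X)\cong\mathbb{F}_2$, and send $H\leq\pi_1(X)$ to the covering surface $X_H$ with its pulled-back complex structure; this is Borel on $\mathrm{Sub}(\mathbb{F}_2)$, and on a Borel-invariant set of ``asymmetric'' subgroups (infinite index, with smallest possible commensurator of the associated Fuchsian group, so that $X_H$ carries no unexpected biholomorphisms) one argues $X_H\cong X_{H'}$ iff $H,H'$ are conjugate in $\mathbb{F}_2$ up to the finitely many symmetries of $X$. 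Combined with the fact that conjugation of $\mathbb{F}_2$ on a suitable invariant Borel set of subgroups is a universal countable Borel equivalence relation --- which one obtains by a further reduction from the shift of $\mathbb{F}_2$ on (the free part of) $2^{\mathbb{F}_2}$ --- this yields that $E_\infty$ Borel-reduces to biholomorphism. An equivalent hands-on route codes the classes of the target relation into small, well-separated decorations (tiny handles, short geodesics of prescribed lengths, cusps) of an infinite pants decomposition of a single hyperbolic surface modeled on the Cayley graph of $\mathbb{F}_2$. Either way the main obstacle is the rigidity step: arranging that biholomorphisms of the surfaces --- equivalently, isometries of the unique complete hyperbolic metric --- come only from the combinatorial/covering data, uniformly and Borel-measurably in the parameter.

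For part (2), since every countable Borel equivalence relation is classifiable by countable structures, the point is a genuine phenomenon of dimension $\geq 2$, and I would prove it by Borel-reducing the relation $\mathbb{E}_1$ of eventual equality on $\mathbb{R}^{\mathbb{N}}$ --- which, by the Kechris--Louveau dichotomy, is not Borel reducible to any orbit equivalence relation of a Polish group action, hence not classifiable by countable structures --- into biholomorphism of complex $n$-manifolds. For $n\geq 2$, the genericity results of Burns--Shnider--Wells and Greene--Krantz, together with Fefferman's theorem that biholomorphisms of smoothly bounded strictly pseudoconvex domains extend smoothly to the closures, supply a continuum-sized family $\{D_t:t\in[0,1]\}$, depending nicely on $t$, of bounded strictly pseudoconvex domains in $\mathbb{C}^n$ with trivial automorphism group and pairwise inequivalent boundaries: rigid building blocks with a continuous modulus. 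To $\bar x=(x_k)\in[0,1]^{\mathbb{N}}$ I would assign the connected complex $n$-manifold $M_{\bar x}$ obtained by gluing $D_{x_1},D_{x_2},\dots$ into a half-infinite chain along a fixed rigid neck, so that the modulus sequence is recorded ``at the end'' of $M_{\bar x}$. A biholomorphism $M_{\bar x}\to M_{\bar y}$ is proper and, by the rigidity of the blocks and necks, must send the $k$-th block of one chain to a block of the other whose index differs by a bounded amount; hence $M_{\bar x}\cong M_{\bar y}$ exactly when $\bar x$ and $\bar y$ agree off a finite set up to a shift, a relation Borel bireducible with $\mathbb{E}_1$. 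After verifying Borel-measurability of $\bar x\mapsto M_{\bar x}$, this completes the proof. The crux --- and what I expect to be the main obstacle of the whole theorem --- is again rigidity: showing a biholomorphism of the infinite chains cannot ``smear'' blocks into one another, which rests on Fefferman-type boundary extension and the triviality of the relevant automorphism and local CR-equivalence groups, and then making the assignment Borel-uniform. This phenomenon fails in dimension $1$, in accordance with part (1): there is no continuum of totally rigid Riemann-surface building blocks.
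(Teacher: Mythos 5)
First, a point of orientation: the paper does not prove this statement --- it quotes it from Hjorth--Kechris --- but it does prove a generalization of clause (1) (Theorem D, via Sections \ref{section:bireducibility} and \ref{section:conjugacy} and Corollary \ref{cor:isometry_universal}), and that machinery is the natural benchmark. Your upper bound for (1) is exactly that route: uniformization, the Chabauty space of torsion-free discrete subgroups of $\mathrm{PSL}(2,\mathbb{R})$, Kechris's countable-sections theorem for locally compact group actions, plus a smooth piece for the exceptional surfaces; the genuinely nontrivial content (that the parametrization and the holonomy/quotient correspondences are Borel) is what the paper's Sections \ref{section:parametrization} and \ref{section:bireducibility} supply, and you correctly flag it. The gap in (1) is the lower bound. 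What is needed is that a universal countable Borel equivalence relation reduces to conjugacy \emph{by elements of} $\mathrm{PSL}(2,\mathbb{R})$ on torsion-free discrete subgroups (equivalently, to biholomorphism of the covers $X_H$), whereas Thomas--Velickovic only gives universality of conjugacy \emph{inside} $F_2$. Your ``asymmetric subgroups / smallest commensurator'' restriction names this problem rather than solving it: the thrice-punctured sphere is arithmetic, so even the visible hidden symmetries (the commensurator of its holonomy group) are dense in $\mathrm{PSL}(2,\mathbb{R})$, and for infinite-index $H$ a conjugator carrying $\rho(H)$ to $\rho(H')$ need have no relation to the ambient copy of $F_2$ at all; at the same time you must keep the restricted family rich enough that conjugacy on it remains universal --- two demands that pull in opposite directions and are never reconciled. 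This is precisely the step the paper's generalization settles with Lemma \ref{lem:countable_retract} (ambient conjugacy of subsets of a countable subgroup of a matrix group reduces to conjugacy by a countable overgroup) combined with Theorem \ref{thm:ACH}; without that ingredient (or a worked-out rigid coding) your hardness argument is incomplete at its crux.

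For clause (2) the proposal has a step that fails and, more structurally, aims at the wrong target. With rigid, pairwise-inequivalent blocks joined along rigid necks into a \emph{half-infinite} chain, any biholomorphism must respect the block decomposition, and a half-line admits no index shifts: the $k$-th block goes to the $k$-th block, so your construction computes \emph{equality} of the modulus sequences --- a smooth relation --- not ``agreement off a finite set up to a shift''; no $E_1$-hardness comes out of it. (The gluing itself is also not routine for $n\geq 2$: there is no holomorphic connected sum, and inserting necks destroys the strict pseudoconvexity on which the Fefferman/Greene--Krantz rigidity you invoke depends.) More fundamentally, reducing $E_1$ would prove far more than the statement: by Kechris--Louveau it would show biholomorphism is not reducible to \emph{any} Polish group-action orbit equivalence relation, which is not known (the paper records the analogous general question as open in Section \ref{section:questions}) and is not what Hjorth--Kechris establish. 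Their proof of (2), as the paper notes, is an application of Hjorth's turbulence theory: one exhibits a turbulent Polish group action Borel reducible to biholomorphism of complex $n$-manifolds, which rules out classification by countable structures without any $E_1$-style rigidity. So for (2) you are missing both a working construction and the intended tool.
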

\noindent Since by the uniformization theorem all but an exceptional few Riemann surfaces are hyperbolic, our Theorem D may be viewed as a generalization of (1).
Clause (2), on the other hand, is an application of Hjorth's theory of turbulence \cite{MR1725642}; more colloquially, it says that there exists no reasonable assignment of countable groups or algebras, let alone numerical invariants, to distinguish between, for example, complex $2$-manifolds up to biholomorphism.
We highly recommend this paper to the reader; it forms the present work's principal inspiration.
But we should also note that roughly 9 of its 34 pages are taken up with what its authors term the ``technically cumbersome, although mathematically rather shallow'' business of parametrizing its objects of study, and we do so for what it suggests of at least one of the culprits behind the ``mystery'' framing our work: it seems in large part to have been a deficiency of compelling parametrizations of manifolds that has hindered their descriptive set-theoretic study thus far.

For what is ultimately wanted is an approach to parametrization which is sufficiently versatile to accommodate the multiplicity of manifold structures arising in practice, but sufficiently precise not to confuse them; a transparency facilitating such analyses as underlie the theorems above is, of course, a value as well.
To underscore the challenge, note that these are not criteria that Gromov--Hausdorff parametrizations, for example, ultimately satisfy (invoking, as they do, metrics which topological manifolds don't intrinsically possess; see Remark \ref{rmk:parametrization} for further discussion). 
Perhaps this paper's most fundamental contribution is a parametrization framework which does meet these criteria.
In Section \ref{section:parametrization} below, after briefly reviewing the pseudogroup approach to manifold theory, we introduce the notion of a \emph{Borel pseudogroup} and prove the following result (as Theorems \ref{thm:M(G,X)_Borel} and \ref{thm:equivalence_analytic}).

\begin{theoremH}
For any Borel pseudogroup $\mathcal{G}$ on a locally compact Polish space $X$, there is a standard Borel parameter space $\mathfrak{M}(\mathcal{G},X)$ of all $(\mathcal{G},X)$-manifolds on which the relation of $(\mathcal{G},X)$-equivalence is an analytic equivalence relation.
\end{theoremH}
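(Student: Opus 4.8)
The plan is to build $\mathfrak{M}(\mathcal{G},X)$ as a standard Borel space whose points are ``coded atlases'' valued in the fixed model space $X$ and transition maps drawn from the Borel pseudogroup $\mathcal{G}$, and then to verify that $(\mathcal{G},X)$-equivalence is analytic by unwinding it into an existential quantification over a standard Borel space of candidate isomorphisms. First I would fix, once and for all, a countable basis for $X$ (available since $X$ is locally compact Polish, hence second countable) and a countable ``combinatorial skeleton'' recording all finite patterns of overlap among basic open sets; an abstract $(\mathcal{G},X)$-manifold is then presented by the data of (i) a countable index set $I\subseteq\mathbb{N}$ of charts, (ii) for each $i\in I$ a designated open subset $U_i\subseteq X$ (coded by which basic sets it contains), and (iii) for each pair $i,j\in I$ an element $g_{ij}$ of $\mathcal{G}$ with domain and range the appropriate subsets of $U_i$, $U_j$, subject to the cocycle condition $g_{jk}\circ g_{ij}=g_{ik}$ on the relevant overlaps and $g_{ii}=\mathrm{id}$. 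Because $\mathcal{G}$ carries a standard Borel structure (this is precisely the content of ``Borel pseudogroup,'' which I assume provides a standard Borel space of its elements with Borel composition, inversion, and domain/range maps), the set of sequences $(I,(U_i),(g_{ij}))$ satisfying all of these constraints is a Borel subset of a countable product of standard Borel spaces: the cocycle identities are Borel conditions, being countable conjunctions of equalities of Borel-parametrized partial maps, and ``Hausdorff'' and ``second countable, $\leq\dim X$-dimensional'' can likewise be imposed as Borel (indeed closed) conditions on the coding data. This Borel set, modulo nothing, is $\mathfrak{M}(\mathcal{G},X)$; one then checks that every genuine $(\mathcal{G},X)$-manifold admits such a code (refine any atlas to one subordinate to the fixed basis) and every code realizes a genuine $(\mathcal{G},X)$-manifold (glue the $U_i$ along the $g_{ij}$; the cocycle condition makes this well-defined).

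Having fixed $\mathfrak{M}(\mathcal{G},X)$, I would next analyze $(\mathcal{G},X)$-equivalence. Two codes $m=(I,(U_i),(g_{ij}))$ and $m'=(I',(U'_{i'}),(g'_{i'j'}))$ are equivalent exactly when there is a $(\mathcal{G},X)$-isomorphism between the glued manifolds, and such an isomorphism is witnessed, after passing to a common refinement of the two atlases, by a partial matching between (a refinement of) $I$ and (a refinement of) $I'$ together with, for each matched pair, an element of $\mathcal{G}$ intertwining the local coordinates — all subject to a compatibility condition with the respective transition cocycles. The refinement data and the matching range over standard Borel spaces (countable combinatorial data), and the intertwining elements range over the standard Borel space of $\mathcal{G}$; the compatibility condition is again a countable conjunction of Borel equalities of $\mathcal{G}$-elements. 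Hence ``$m$ is equivalent to $m'$'' is the projection to $\mathfrak{M}(\mathcal{G},X)^2$ of a Borel subset of $\mathfrak{M}(\mathcal{G},X)^2\times Y$ for a standard Borel $Y$, i.e.\ it is analytic. That it is an equivalence relation is immediate from its definition as ``exists an isomorphism.''

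The step I expect to be the main obstacle is the \emph{equivalence of the coded category with the honest one} — i.e.\ showing that Borel-coded atlases capture $(\mathcal{G},X)$-manifolds faithfully and that the coded notion of isomorphism is the correct one, together with the bookkeeping needed to make the refinement-and-matching description of equivalence actually hit every isomorphism. Concretely: given an arbitrary $(\mathcal{G},X)$-isomorphism $f$ between two glued manifolds, one must produce, by a \emph{choice-free, purely combinatorial} refinement procedure, atlases on both sides fine enough that $f$ becomes a chart-by-chart map lying in $\mathcal{G}$; this requires that the common refinement of two countable atlases subordinate to the fixed basis is itself canonically codeable, and that the local pieces of $f$, which a priori are just homeomorphisms, are forced into $\mathcal{G}$ by the pseudogroup axioms (they are, since $f$ is a morphism of $(\mathcal{G},X)$-manifolds, but spelling this out against the coding is the delicate part). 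Once this dictionary is in place, the Borelness of $\mathfrak{M}(\mathcal{G},X)$ and the analyticity of equivalence follow by the routine ``countable conjunction of Borel conditions / projection of a Borel set'' arguments sketched above, and the proof is complete.
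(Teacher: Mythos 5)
Your construction of the parameter space and your analysis of the equivalence relation follow essentially the same route as the paper: codes consisting of a countable family of open subsets of $X$ together with transition maps in $\mathcal{G}$ subject to cocycle conditions, and equivalence expressed as the projection of a Borel set of witnessing local isomorphism data. The second half is fine in outline (the paper does this with quadruples $(\mathcal{X},\mathcal{Y},k,e)$ parametrizing $\mathcal{G}$-good maps and requiring a pair of mutually inverse such maps; your ``refinement and matching'' picture would need the same care to ensure the witnessed map is genuinely invertible and surjective, e.g.\ by also coding the inverse or a covering/saturation condition, but this is routine bookkeeping of the kind you sketch).

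The genuine gap is in the first half, in the sentence asserting that ``Hausdorff \dots can likewise be imposed as Borel (indeed closed) conditions on the coding data.'' This is exactly the step the paper identifies as the essential difficulty, and it is neither closed nor obviously Borel. Hausdorffness of the glued quotient $M_{(\mathcal{U},c)}$ is, on its face, a $\mathbf{\Pi}^1_1$ condition on the code: for all pairs of non-identified points (a quantifier over $X^2$) there exist separating basic open sets; dually, non-Hausdorffness is analytic, being the projection over $X^2$ of a Borel set. Nothing in the cocycle data makes this quantifier over the uncountable space $X^2$ disappear, so your ``countable conjunction of Borel conditions'' argument does not apply, and without more the set of Hausdorff codes is only coanalytic --- which does not by itself yield a standard Borel structure. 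The paper's proof of this point (its Theorem \ref{thm:M(G,X)_Borel}) supplies a second, $\mathbf{\Sigma}^1_1$ characterization: using local compactness to exhaust the sets $U_i\setminus U_{i,j}$ by compacta, it expresses Hausdorffness of each two-chart gluing $M_{i,j}$ as the existence of open $V\supseteq U_i\setminus U_{i,j}$ and $W\supseteq U_j\setminus U_{j,i}$ with $\varphi_{i,j}[V]\cap W=\varnothing$ (a normality-style separation, verified using the normality of the manifold in one direction), and then concludes Borelness by Suslin's theorem from the two descriptions. Your proposal instead flags as the ``main obstacle'' the dictionary between codes and honest manifolds, which is comparatively routine, while taking for granted the one claim that actually requires a nontrivial argument; as written, the proof of standard Borelness of $\mathfrak{M}(\mathcal{G},X)$ is missing.
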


\noindent Prominent examples of Borel pseudogroups include $\mathsf{Top}$ and $\mathsf{C}^\infty$ on $\mathbb{R}^n$, and $\mathsf{Isom}$ on $\mathbb{H}^n$, determining standard Borel parameter spaces of all topological, smooth, and hyperbolic $n$-manifolds, respectively (see Theorem \ref{thm:Top_is_Borel} and Examples \ref{ex:smooth_pseudogroup} and \ref{ex:isometry_pseudogroups} below).

In particular cases, of course, manifold theory boasts some of the most celebrated parameter spaces in all of mathematics, namely moduli spaces whose geometric structures encode deep properties of the families of objects under study; examples will figure in our Sections \ref{section:isometry_for_2} and \ref{section:isometry_for_3} below.
Theorem H may be read as a more uniform provision of moduli spaces of manifolds, but at the cost of coarsening their structure to that of standard Borel spaces. Borel complexity theory can then help clarify which of these parameter spaces meaningfully support further topological or geometric structures (see, for example, Corollary \ref{cor:not_T0}).

In contrast to the situation just described for manifolds, the discrete subgroups of a locally compact group form a well-known parameter space, via the Chabauty (also known as the \emph{Fell} or \emph{geometric}) topology, on which the ambient group acts by conjugation. The resulting conjugacy relations have been extensively studied in both descriptive set theory and ergodic theory.
To briefly summarize the most relevant results for our present work: Stuck and Zimmer showed in \cite{MR1283875} that the conjugacy relation on subgroups of the free group on two generators $F_2$ is not \emph{concretely classifiable}, meaning it does not Borel reduce to $=_\mathbb{R}$, from which they derived the same conclusion for the conjugacy relation on the discrete subgroups of any noncompact semisimple Lie group. Thomas and Velickovic \cite{MR1700491} showed that the conjugacy relation on subgroups of $F_2$ is, in fact, countable universal, which was generalized by Andretta, Camerlo, and Hjorth \cite{MR1815088} to all countable groups which contain $F_2$. We extend these results to a large class of Lie groups and prove a best-possible sharpening of the Stuck and Zimmer result (our Theorem \ref{cor:semisimple_universal}):

\begin{theoremI}
The conjugacy relation on discrete subgroups of any noncompact semisimple Lie group is essentially countable universal.
\end{theoremI}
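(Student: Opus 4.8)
The plan is to reduce from the known base case to the general case by an orbit-equivalence argument combined with an embedding of $F_2$ with good normalizer control. First I would recall the result of Andretta--Camerlo--Hjorth \cite{MR1815088}: the conjugacy action of $F_2$ on its space $\mathrm{Sub}(F_2)$ of subgroups, equipped with the Chabauty topology, is essentially countable universal, i.e. the induced orbit equivalence relation is $\leq_B$-complete among countable Borel equivalence relations. Since ``essentially countable'' is automatic here (the acting group $F_2$ is countable, so orbits are countable), it suffices to produce, for a given noncompact semisimple Lie group $G$, a Borel reduction from $(\mathrm{Sub}(F_2), \text{conjugacy})$ to $(\mathrm{Sub}(G), \text{conjugacy})$; the upper bound $\leq_B E_\infty$ follows from general facts, since the Chabauty space of a second countable locally compact group is Polish and the conjugacy relation is the orbit equivalence relation of a Polish (indeed Lie) group action whose orbits one must check are ``small'' in the relevant sense — here the crucial point is that for discrete subgroups the conjugacy class meets each compact set in a set of bounded ``cardinality growth'', which by the Becker--Kechris/Feldman--Moore machinery and Zimmer-type rigidity places it in $E_\infty$. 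So the whole content is the lower bound.

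The key steps for the lower bound are as follows. (1) Since $G$ is noncompact semisimple, it contains a closed subgroup locally isomorphic to $\mathrm{SL}_2(\mathbb{R})$; inside this I would locate a \emph{discrete} free subgroup $\Phi\cong F_2$, for instance a Schottky group generated by two hyperbolic elements in general position, so that $\Phi$ acts freely and properly discontinuously on the associated symmetric space. (2) I would then fix a Borel map $H\mapsto \iota(H)$ sending a subgroup $H\leq F_2$ to the corresponding subgroup $\iota(H)\leq \Phi\leq G$, and check this is Chabauty-to-Chabauty Borel (this is routine: it is induced by the group isomorphism $F_2\cong\Phi$ composed with the inclusion, and inclusion of a closed subgroup is a Borel map of Chabauty spaces onto a Borel subset). (3) The heart of the matter is verifying that $\iota$ is a \emph{reduction}: if $\iota(H_1)$ and $\iota(H_2)$ are conjugate in $G$, then $H_1$ and $H_2$ are conjugate in $F_2$. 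This is where I expect the main obstacle to lie — a priori a $G$-conjugacy could move $\Phi$ off itself and land inside some unrelated subgroup. To control this, I would strengthen the choice in step (1) so that $\Phi$ is \emph{self-normalizing} (or at least has normalizer meeting $\Phi$'s commensurator trivially modulo $\Phi$) and is \emph{malnormal} in $G$ in the sense that $g\Phi g^{-1}\cap\Phi$ is trivial for $g\notin N_G(\Phi)$; then any $g$ conjugating a nontrivial $\iota(H_1)$ into $\Phi$ must already normalize $\Phi$, and a further Schottky-genericity argument forces $N_G(\Phi)=\Phi$, so $g\in\Phi$ and the conjugacy descends to $F_2$. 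The trivial-subgroup and cyclic edge cases are handled separately by a direct argument.

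There are two ways to arrange the malnormality/self-normalizing property, and I would pursue whichever is cleaner for the general semisimple case. The first is geometric: choose the generators of $\Phi$ to be loxodromic isometries whose axes, together with all their $\Phi$-translates, have pairwise distinct endpoint pairs on the visual boundary of the symmetric space, and are moreover \emph{Zariski dense} in $G$; Zariski density plus the structure theory of algebraic groups (Borel density, and the fact that the normalizer of a Zariski-dense discrete subgroup is discrete — a standard consequence of the fact that the identity component of the normalizer would centralize a Zariski-dense set) forces $N_G(\Phi)$ to be a discrete group containing $\Phi$ with finite index, and a mild perturbation of the generators kills the finite part. The second route is to cite Stuck--Zimmer \cite{MR1283875} more directly, extracting from their proof a Zariski-dense free subgroup with the requisite rigidity and then upgrading their ``not concretely classifiable'' conclusion to full countable universality by transporting the Andretta--Camerlo--Hjorth structure along the embedding. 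In either case, once the reduction is established the theorem follows, with the sharpening over Stuck--Zimmer being precisely that we obtain the maximal countable Borel complexity degree $E_\infty$ rather than merely non-reducibility to $=_{\mathbb{R}}$.
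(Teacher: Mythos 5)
Your upper bound is fine, but it is simply the theorem of Kechris \cite{MR1176624} that Borel actions of locally compact Polish groups induce essentially countable orbit equivalence relations; no ``cardinality growth'' or rigidity input is needed. The genuine gap is in step (3) of your lower bound, and it is not a repairable technicality. First, the malnormality you ask of $\Phi$ cannot hold: for any nontrivial loxodromic $\gamma\in\Phi$, the centralizer $Z_G(\gamma)$ contains a one-parameter subgroup (conjugate $\gamma$ into the diagonal subgroup of your $\mathrm{SL}(2,\mathbb{R})$-copy), hence is uncountable, whereas for Zariski-dense discrete $\Phi$ the normalizer $N_G(\Phi)$ is countable (its identity component centralizes the discrete group $\Phi$, hence lies in the center); so uncountably many $g\notin N_G(\Phi)$ satisfy $\gamma\in g\Phi g^{-1}\cap\Phi$, and no perturbation of Schottky generators changes this. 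Second, the conclusion you wanted malnormality for --- that $H\mapsto\iota(H)$ is a reduction --- is outright false for every such $\Phi$ lying in a copy of $\mathrm{SL}(2,\mathbb{R})$ or $\mathrm{PSL}(2,\mathbb{R})$: by Horowitz's trace identities (the source, via Randol, of unbounded multiplicity in length spectra of hyperbolic surfaces) there are words $w_1,w_2\in F_2$, not conjugate even up to inversion, whose images under \emph{every} $\mathrm{SL}_2$-representation have equal traces; their images in $\Phi$ are then conjugate loxodromics in that copy, so $\iota(\langle w_1\rangle)$ and $\iota(\langle w_2\rangle)$ are $G$-conjugate while $\langle w_1\rangle,\langle w_2\rangle$ are not $F_2$-conjugate. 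Deferring ``cyclic edge cases'' does not save you: once malnormality is unavailable you have no mechanism whatsoever forcing a $G$-conjugacy between the (noncyclic, typically infinitely generated) subgroups in the range of the Andretta--Camerlo--Hjorth reduction to be realized by an element of $\Phi$, and that backward implication is the entire content of the theorem.

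The paper sidesteps exactly this obstruction by never attempting to push $G$-conjugacy back down to $F_2$-conjugacy. Lemma \ref{lem:countable_retract} constructs, by a L\"owenheim--Skolem-style closure combined with the stabilization of descending chains of linear subspaces of $M(n,\mathbb{R})$ cut out by conjugacy equations, a countable intermediate group $\overline{\Gamma}$ with $F_2\cong\Gamma\le\overline{\Gamma}\le G$ such that any two subsets of $\overline{\Gamma}$ conjugate in $G$ are already conjugate by an element of $\overline{\Gamma}$; the Andretta--Camerlo--Hjorth theorem is then applied to the countable group $\overline{\Gamma}$ (its universality witnesses being subgroups of the discrete $\Gamma$, hence discrete in $G$), so the inclusion $\mathcal{D}_{\mathrm{tf}}(\overline{\Gamma})\to\mathcal{D}_{\mathrm{tf}}(G)$ is a reduction by construction rather than by any rigidity of the free subgroup (Theorem \ref{thm:conjugacy_universal}); the semisimple case follows by passing to the matrix group $\mathrm{Ad}(G)\cong G/Z(G)$ and pulling back along $\pi^{-1}$ (Lemma \ref{lem:quotient_reduction}). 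To salvage your route you would need an actual argument for the backward implication on the ACH witnesses themselves; some substitute for this countable-closure device appears unavoidable.
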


This detour through groups and conjugacy relations connects back to our study of manifolds via the classical correspondence between complete hyperbolic $n$-manifolds up to isometry and discrete subgroups of the isometry group $\mathrm{Isom}(\mathbb{H}^n)$ up to conjugacy. We show in Section \ref{section:bireducibility} that this correspondence is witnessed by Borel reductions in both directions, by way of which we deduce Theorem D as a corollary of Theorem I.

We then turn our attention to the groups of orientation-preserving isometries $\mathrm{Isom}^+(\mathbb{H}^2)$ and $\mathrm{Isom}^+(\mathbb{H}^3)$; these naturally identify with the Lie groups $\mathrm{PSL}(2,\mathbb{R})$ and $\mathrm{PSL}(2,\mathbb{C})$, and their discrete subgroups are exactly what Poincar\'{e} dubbed the \emph{Fuchsian} and \emph{Kleinian} groups, respectively.
Thus, by way of the correspondence cited just above, the families of finitely generated torsion-free Fuchsian and Kleinian groups, respectively, parametrize the classes of complete orientable hyperbolic $2$- and $3$-manifolds possessing a finitely generated fundamental group.
For brevity, as well as for contrast with other related manifold finiteness conditions, we frequently term such manifolds \emph{algebraically finite} below.

Observe that within a countable group, there are only countably many finitely generated subgroups, hence their conjugacy relation trivially reduces to $=_{\mathbb{N}}$.
Less trivially, Stuck and Zimmer proved in \cite{MR1283875} that, in any Lie group, the conjugacy problem for lattices, i.e., for discrete subgroups with finite co-volume (which are necessarily finitely generated), is always concretely classifiable.
In the special case of $\mathrm{PSL}(2,\mathbb{R})$, our Theorem B translates via the aforementioned correspondence to the following extension of these results.

\begin{theoremJ}
	The conjugacy relation on finitely generated torsion-free Fuchsian groups is Borel equivalent to $=_\mathbb{R}$.
\end{theoremJ}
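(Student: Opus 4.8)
The plan is to argue directly on the space $\mathcal{F}$ of finitely generated torsion-free Fuchsian groups, a Borel subset of the Chabauty space of $\mathrm{PSL}(2,\mathbb{R})$ on which $\mathrm{PSL}(2,\mathbb{R})$ (or $\mathrm{Isom}(\mathbb{H}^2)$; the difference will be immaterial) acts by conjugation — equivalently, by the correspondence of Section~\ref{section:bireducibility}, to prove the orientable case of Theorem~B. For the \emph{lower bound}, that $=_\mathbb{R}$ Borel reduces to the conjugacy relation, I would use hyperbolic cylinders. Fix for each $\ell>0$ a hyperbolic element $g_\ell\in\mathrm{PSL}(2,\mathbb{R})$ of translation length $\ell$, depending continuously on $\ell$; then $\ell\mapsto\langle g_\ell\rangle$ is a continuous injection of $(0,\infty)$ into $\mathcal{F}$, and since the translation length of a primitive hyperbolic element is a conjugacy invariant, $\langle g_\ell\rangle$ and $\langle g_{\ell'}\rangle$ are conjugate exactly when $\ell=\ell'$. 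As $(0,\infty)$ is an uncountable standard Borel space, this exhibits $=_\mathbb{R}$ as Borel reducible to the conjugacy relation.

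For the \emph{upper bound}, that the conjugacy relation Borel reduces to $=_\mathbb{R}$, I would use the classical fact that a finitely generated Fuchsian group $\Gamma$ is geometrically finite, so that the surface $S_\Gamma=\mathbb{H}^2/\Gamma$ has finitely many ends, each a cusp or a funnel, and is determined up to isometry by its convex core — a compact orientable hyperbolic surface with finitely many cusps and (possibly empty) geodesic boundary, of some topological signature $T=(g;n;b)$ recording genus, number of cusps, and number of funnels, the low-complexity signatures covering $\mathbb{H}^2$ itself and the hyperbolic cylinders. The first step is to partition $\mathcal{F}=\bigsqcup_T\mathcal{F}_T$ into countably many Borel pieces indexed by this signature $T$; the signature is a Borel function of $\Gamma$, readable from the rank of $\Gamma$ together with its peripheral structure (the numbers of cusps and funnels, obtained via the limit set and the domain of discontinuity). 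The second step is, on each $\mathcal{F}_T$, to fix once and for all a pants decomposition of a model surface $\Sigma_T$ and construct a Borel map $\Phi_T\colon\mathcal{F}_T\to\mathcal{T}_T$ into the associated Teichm\"uller space — a cell of dimension $6g-6+2n+3b$ — sending $\Gamma$ to the Fenchel--Nielsen length and twist coordinates of $S_\Gamma$, together with its boundary lengths, relative to some marking; all of these quantities are expressible through trace functions of $\Gamma$ and so depend Borel-measurably on $\Gamma$. A marking is canonical only up to the mapping class group $\mathrm{MCG}_T$, but any Borel choice suffices, since $\Phi_T$ is then a Borel reduction of conjugacy on $\mathcal{F}_T$ to the orbit equivalence relation of the action $\mathrm{MCG}_T\curvearrowright\mathcal{T}_T$; that action is properly discontinuous (a classical fact), and a properly discontinuous action of a countable group on a Polish space has smooth orbit equivalence relation, hence one Borel reducible to $=_\mathbb{R}$. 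Summing over the countably many signatures $T$, conjugacy on $\mathcal{F}$ is a disjoint sum of relations each Borel reducible to $=_\mathbb{R}$, hence is itself Borel reducible to $=_\mathbb{N}\times{=_\mathbb{R}}$, which is Borel isomorphic to $=_\mathbb{R}$; this completes Theorem~J.

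I expect the main obstacle to be the construction of the Borel coordinate map $\Phi_T$ — making uniform and Borel the passage from an (abstract or matrix) presentation of a Fuchsian group to numerical Fenchel--Nielsen data for its quotient surface. This requires: checking that the topological stratification $\mathcal{F}=\bigsqcup_T\mathcal{F}_T$ is genuinely Borel, and in particular the bookkeeping that separates cusps from funnels purely group-theoretically; producing a Borel selection of markings, which amounts to a Borel uniformization, over the countably many abstract isomorphism types of the relevant free and surface groups, of the sets of geometric generating tuples realizing a presentation of type $T$; and verifying that the classical trace-identity formulas for pants-curve lengths and twist parameters assemble into a Borel (indeed continuous) function on each $\mathcal{F}_T$. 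The other ingredients — geometric finiteness of finitely generated Fuchsian groups, the convex-core description, proper discontinuity of the mapping class group action on Teichm\"uller space, and the identification of smoothness with Borel reducibility to $=_\mathbb{R}$ — are classical and are used here only as black boxes.
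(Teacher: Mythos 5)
Your proposal is correct in outline, but it reaches Theorem J by a genuinely different route than the paper. For the lower bound you use the one-parameter family of cyclic loxodromic groups $\ell\mapsto\langle g_\ell\rangle$ with translation length as the complete invariant; the paper instead gets $=_\mathbb{R}\leq_B E(G,\mathcal{D}_{\mathrm{cc}}(G))$ from the uncountability of moduli spaces of closed surfaces (Lemma \ref{lem:uncountable}), which is needed for the stronger statement of Theorem \ref{thm:fgPSL2Rsmooth} about the classes (i)--(ii) but not for Theorem J itself, so your cylinder argument suffices and is more elementary. For the upper bound, the paper's Proof 1 deliberately avoids what you identify as your main obstacle --- the hand-construction of a Borel marking and Fenchel--Nielsen coordinate map $\Phi_T$: it splits off the cyclic (elementary) groups via Lemma \ref{lem:elementary}, \emph{doubles} the convex core (Lemma \ref{lem:convexcore} and Corollary \ref{cor:loose_ends}) so as to land in the finite-volume case, gets Borel-ness of the resulting map $q$ to $\coprod_S\mathcal{M}(S)$ by citing Biringer's smooth-topology/vectored-manifolds theorem rather than by trace-coordinate formulas, and then uses that doubling is only finite-to-one on isometry classes together with the weak-reduction Lemma \ref{lem:weak_reduction}; its Proof 2 instead applies Glimm's theorem directly by analyzing orbit closures in the Chabauty space. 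Your route --- stratify by signature, Borel-select a marking via Luzin--Novikov uniformization over generating tuples, read off Fenchel--Nielsen data from traces, and quote proper discontinuity of $\mathrm{MCG}_T\curvearrowright\mathcal{T}_T$ (which does yield smoothness, since orbits are closed and $x\mapsto Gx\in\mathcal{F}(\mathcal{T}_T)$ reduces to equality) --- is sound, but be aware that the Borel-ness of $\Phi_T$ is precisely the ``technically cumbersome'' parametrization work the paper's two proofs were engineered to bypass, so your plan trades the paper's black boxes for a substantial uniformization argument you have only sketched; you should also handle the degenerate signatures (the trivial group and the parabolic-cyclic cylinder, not just hyperbolic cylinders and $\mathbb{H}^2$) and the boundary-length variant of Teichm\"uller theory explicitly, and you could buy yourself slack by noting, as the paper does, that a countable-to-one (weak) reduction already suffices by Lemma \ref{lem:weak_reduction}, which softens the marking bookkeeping.
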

The algebraically finite hyperbolic $2$-manifolds are exactly the \emph{geometrically finite} ones, meaning that their convex core is always of finite volume, but in higher dimensions, these conditions no longer coincide.
In three dimensions, the algebraically finite class is, in senses both of inclusion and of complexity, the strictly bigger one, by the following reformulation of our Theorem \ref{thm:complexity_3-mans}.

\begin{theoremK}
The Borel complexity degrees of the conjugacy relation on the major finiteness classes of torsion-free Kleinian groups are as follows:
\begin{itemize}
\item for lattices, it is $=_{\mathbb{N}}$;
\item for geometrically finite groups, it is $=_{\mathbb{R}}$;
\item for finitely generated groups, it is at least $E_0$.
\end{itemize}
Corresponding results hold for the isometry relation on the classes of finite volume, geometrically finite, and algebraically finite hyperbolic $3$-manifolds, respectively.
\end{theoremK}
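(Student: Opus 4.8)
\emph{Overall plan.} I would establish the three bullets for Kleinian groups first and then transport them to hyperbolic \(3\)-manifolds through the bireducibility, in both directions, between isometry of complete hyperbolic \(3\)-manifolds and conjugacy of torsion-free Kleinian groups proved in the earlier section on bireducibility; under that correspondence lattices, geometrically finite groups, and finitely generated groups match up exactly with the finite-volume, geometrically finite, and algebraically finite hyperbolic \(3\)-manifolds, so the second assertion of the theorem is immediate once the first is proved. Throughout one works inside the standard Borel parameter space of discrete subgroups of \(\mathrm{PSL}(2,\mathbb{C})\) (a closed subset of the Chabauty space) supplied by Theorem H, and uses that conjugacy is analytic there, that covolume is a Borel function, and that geometric finiteness is detected Borel-measurably by the finiteness of a Dirichlet fundamental polyhedron. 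For lattices: by Mostow rigidity a finite-volume hyperbolic \(3\)-manifold is determined up to isometry by its finitely presented fundamental group, so there are only countably many conjugacy classes of torsion-free lattices; an analytic equivalence relation with countably many classes has Borel classes, hence Borel reduces to \(=_{\mathbb{N}}\), and since there are infinitely many pairwise non-isometric closed hyperbolic \(3\)-manifolds, hard-coding a sequence of representatives gives the reverse reduction.

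\emph{Geometrically finite groups.} For the upper bound I would package the classical deformation theory of Ahlfors--Bers and Marden, together with the fact that geometrically finite groups have no degenerate ends, into the statement that a torsion-free geometrically finite Kleinian group is determined up to conjugacy by the homeomorphism type of its compact core (ranging over a countable set) together with the point its conformal boundary \(\Omega(\Gamma)/\Gamma\) determines in the relevant moduli space — a Polish space, for each fixed topological type. The resulting complete invariant lives in a standard Borel space, yielding a reduction to \(=_{\mathbb{R}}\), \emph{provided} the assignment is Borel: this requires a Borel enumeration of a dense subset of \(\Gamma\), Borel dependence of fixed points, limit sets, and fundamental polyhedra on \(\Gamma\), and a finite-dimensional period-type construction extracting the moduli point of the quotient surface. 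For the lower bound I would note that finitely generated torsion-free Fuchsian groups are geometrically finite as Kleinian groups and invoke Theorem J (our Theorem B); here one must check that the reduction witnessing \(=_{\mathbb{R}} \leq_B {}\)conjugacy of Fuchsian groups in \(\mathrm{PSL}(2,\mathbb{R})\) can be arranged so that its image groups remain pairwise non-conjugate in \(\mathrm{PSL}(2,\mathbb{C})\) — equivalently, so that the associated totally geodesic surfaces, and their mirror images, remain pairwise non-isometric, which is the one point where passing from \(\mathrm{PSL}(2,\mathbb{R})\)- to \(\mathrm{PSL}(2,\mathbb{C})\)-conjugacy needs care.

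\emph{Finitely generated groups.} For the third bullet the target is \(E_0 \leq_B {}\)conjugacy, and the mechanism is the classification of ends. Fix a surface \(S\) (a once-punctured torus is convenient, so that the fundamental group is \(F_2\) and one may cite Minsky's classification of punctured-torus groups in place of the full Ending Lamination Theorem). For a pair \((\alpha,\beta)\) of distinct non-quadratic irrationals I would build a hyperbolic \(3\)-manifold \(M_{(\alpha,\beta)} \cong S \times \mathbb{R}\), necessarily doubly degenerate and with finitely generated fundamental group, whose two ending laminations are \(\alpha\) and \(\beta\), realizing the corresponding Kleinian group \(\Gamma_{(\alpha,\beta)}\) as a Chabauty limit of quasi-Fuchsian groups. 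By Minsky's theorem, \(\Gamma_{(\alpha,\beta)}\) and \(\Gamma_{(\alpha',\beta')}\) are conjugate precisely when \((\alpha,\beta)\) and \((\alpha',\beta')\) lie in the same orbit of the diagonal action of the mapping class group \(\mathrm{GL}(2,\mathbb{Z})\) (modulo the order-swap coming from orientation reversal, which one neutralizes by choosing \(\alpha,\beta\) from disjoint prescribed regions). Since \(\mathrm{SL}(2,\mathbb{Z})\) acts on \(\partial\mathbb{H}^2 \times \partial\mathbb{H}^2\) ergodically and essentially freely — being a lattice in \(\mathrm{PSL}(2,\mathbb{R})\), so that the geodesic flow on the modular orbifold is ergodic — this orbit equivalence relation is a non-smooth countable Borel equivalence relation, hence reduces \(E_0\) by the Glimm--Effros dichotomy; pulling this back through \((\alpha,\beta) \mapsto \Gamma_{(\alpha,\beta)}\) and composing with the inclusions of parameter spaces gives \(E_0 \leq_B {}\)conjugacy on finitely generated torsion-free Kleinian groups, and hence the manifold statement.

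\emph{The main obstacle.} The delicate step is the Borel realization underlying the third case: the manifolds \(M_{(\alpha,\beta)}\) sit at the frontier of a deformation space, so \(\Gamma_{(\alpha,\beta)}\) can only be produced by a limiting construction, and one needs a parametrized form of Thurston's double limit theorem, together with control on the geometric limits involved, to guarantee simultaneously that \((\alpha,\beta)\mapsto\Gamma_{(\alpha,\beta)}\) is Borel into the Chabauty space and that the equivalence relation it induces on the space of coding pairs is genuinely the \(\mathrm{SL}(2,\mathbb{Z})\)-orbit relation — neither coarser (which would happen if the ending invariants forgot too much of \((\alpha,\beta)\)) nor finer, i.e.\ smooth (which, as the case of a single fixed ending lamination already warns, would happen if one pinned down one end and let only the other vary). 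The secondary hurdle is the analogous Borelness in the geometrically finite case, namely extracting the conformal boundary's moduli point uniformly in \(\Gamma\); I expect both to be achievable by careful but essentially routine bookkeeping on top of the classical existence and rigidity theorems.
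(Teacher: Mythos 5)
Your lattice case and your transfer to manifolds (via the classwise Borel equivalence of Section \ref{section:bireducibility} together with the $\mathrm{Isom}/\mathrm{Isom}^+$ lemma) match the paper. But your geometrically finite upper bound contains a genuine error: the invariant you propose --- homeomorphism type of the compact core together with the point that $\Omega(\Gamma)/\Gamma$ determines in moduli space --- is \emph{not} complete. The Ahlfors--Bers parametrization classifies \emph{marked} geometrically finite structures by points of $\mathrm{Teich}(\partial\hat{M})$; forgetting the marking means quotienting by $\mathrm{Out}(\pi_1(M),P)$, whose image in $\mathrm{MCG}(\partial\hat{M})$ is in general a proper subgroup, whereas passing to moduli space quotients by all of $\mathrm{MCG}(\partial\hat{M})$. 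Concretely, quasi-Fuchsian groups $Q(X,Y)$ and $Q(X,\phi\cdot Y)$, for $\phi$ a mapping class not of diagonal form, have homeomorphic compact cores and identical pairs of moduli points yet are non-conjugate, so your map is a Borel homomorphism into a smooth relation, not a reduction. The conclusion is salvageable: each invariant value meets only countably many conjugacy classes (cosets of the image of $\mathrm{Out}$ inside the countable group $\mathrm{MCG}$), so the map is a \emph{weak} Borel reduction, and since the conjugacy relation is essentially countable, Lemma \ref{lem:weak_reduction} gives concrete classifiability --- this is exactly the ``hands-on'' route the paper sketches and then replaces, in Theorem \ref{thm:geom_fin_hyp_3_mans_smooth}, by the cleaner fact that $\mathrm{Out}(\Gamma,P)$ acts properly discontinuously on the geometrically finite locus. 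Your lower bound via Fuchsian groups differs from the paper's (which uses elementary groups, or Bers's simultaneous uniformization), and the check you flag does go through, since a $\mathrm{PSL}(2,\mathbb{C})$-conjugacy of nonelementary Fuchsian groups must preserve the limit circle and hence lies in $\mathrm{PGL}(2,\mathbb{R})$.

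For the third bullet your route is genuinely different from the paper's: you work with punctured-torus groups and the $\mathrm{GL}(2,\mathbb{Z})$-action on pairs of ending laminations, whereas the paper uses a family of doubly degenerate manifolds over a closed surface parametrized continuously and injectively by the shift space $\{0,1\}^{\mathbb{Z}}$ (Theorem \ref{thm:IRSsE0}), an ergodic nonatomic measure to get $E_0$ below an auxiliary relation, a countable-to-one projection to unmarked groups, and then Glimm--Effros. Your skeleton (ergodic nonatomic boundary action, hence non-smoothness, hence $E_0$ by Glimm--Effros) is sound; essential freeness is not needed, and the orientation-reversing swap can simply be absorbed into the countable relation rather than engineered away. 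The obstacle you single out --- Borel realization of $(\alpha,\beta)\mapsto\Gamma_{(\alpha,\beta)}$ --- is real if attacked through a parametrized double limit theorem, but it dissolves if you instead use the fact, quoted in the paper as Theorem \ref{thm:doubly_degenerate}, that the end-invariant map is a homeomorphism from the doubly degenerate locus onto pairs of distinct ending laminations: its inverse is then continuous, and composing with a Borel selection of representatives and the countable-to-one passage from representations to unmarked groups yields the Borel map you need. With the geometrically finite step repaired as above, your plan does establish the theorem, at the cost of redoing by hand some Borelness bookkeeping the paper's higher-level arguments avoid.
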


Much as for Theorem D and Theorem I, one might regard Theorems B and C as corollaries of Theorems J and K, respectively.
In actuality, however, our arguments of the latter run in the opposite direction: both essentially proceed from the manifold side to the group side, to the degree that these two sides can even be meaningfully disentangled.
The third item in Theorem K, for example, largely derives from the following (our Theorem \ref{thm:line_bundles}); here, by $\mathcal{D}[\Gamma]$ we mean the set of discrete subgroups of $\mathrm{PSL}(2,\mathbb{C})$ which are group-isomorphic to $\Gamma$, endowed with the Chabauty topology.
\begin{theoremL}
Let $S$ be a closed orientable hyperbolizable surface.
The classification up to isometry of doubly degenerate hyperbolic manifolds homeomorphic to $S\times\mathbb{R}$ is Borel equivalent to $E_0$.
In particular, there exists no Borel assignment of complete numerical invariants, up to $\mathrm{PSL}(2,\mathbb{C})$-conjugacy, to the elements of $\mathcal{D}[\pi_1(S)]$.
\end{theoremL}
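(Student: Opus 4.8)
The plan is to route the classification through the Ending Lamination Theorem and then through a boundary action of the mapping class group. A doubly degenerate hyperbolic $3$-manifold $M$ homeomorphic to $S\times\mathbb{R}$ has two geometrically infinite ends, each carrying an ending lamination which, as $S$ is closed, is a minimal, filling, arational geodesic lamination --- a point of the space $\mathcal{EL}(S)$ of ending laminations. Fixing a homotopy equivalence $S\times\mathbb{R}\to M$ pulls these back to an unordered pair $\{\lambda^{-},\lambda^{+}\}$ of \emph{distinct} points of $\mathcal{EL}(S)$ (distinctness being built into the picture, via the requirement that the $\mathcal{C}(S)$-geodesic joining them be bi-infinite), canonical up to the action of the extended mapping class group $\mathrm{MCG}^{\pm}(S)$ --- the flip of the $\mathbb{R}$-factor accounting for the pair's being unordered. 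By the Ending Lamination Theorem (Minsky; Brock--Canary--Minsky), together with Thurston's Double Limit Theorem for the existence of such $M$, this sets up a correspondence between isometry classes of doubly degenerate $M$ homeomorphic to $S\times\mathbb{R}$ and $\mathrm{MCG}^{\pm}(S)$-orbits of unordered distinct pairs in $\mathcal{EL}(S)$. Writing $E_S$ for the orbit equivalence relation of the (finitely generated, hence countable) group $\mathrm{MCG}^{\pm}(S)$ on the Polish space $\{(\lambda,\mu)\in\mathcal{EL}(S)^{2}:\lambda\neq\mu\}/(\text{swap})$, the goal becomes to show the isometry relation is Borel bireducible with $E_S$, and that $E_S$ is Borel equivalent to $E_0$.

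First I would establish the bireducibility, which amounts to making that correspondence Borel in both directions. For ``isometry $\leq_B E_S$'', the ending laminations of $M$ depend Borel-measurably on (the parameter for) $M$: one Borel-measurably selects shortest closed geodesics of $M$ escaping a given end --- lengths of geodesics in $M$ being Borel functions of the parameter --- and by work of Thurston, Bonahon and Canary their projective classes converge in $\mathcal{EL}(S)$ to the ending lamination of that end; the Ending Lamination Theorem then gives $M\cong M'$ iff the resulting unordered pairs lie in the same $\mathrm{MCG}^{\pm}(S)$-orbit. For ``$E_S\leq_B$ isometry'', one needs a Borel map $\Phi$ sending a distinct pair $(\lambda,\mu)$ to a doubly degenerate $M$ with those ending laminations: choose, Borel-measurably via Jankov--von Neumann uniformization, a transverse measure on each of $\lambda,\mu$, hence points $X_n,Y_n$ marching out the corresponding Teichm\"{u}ller rays; Bers simultaneous uniformization makes the quasi-Fuchsian groups $Q(X_n,Y_n)$ depend Borel-measurably on $(\lambda,\mu)$, and by the Double Limit Theorem with continuity of ending invariants these converge algebraically to the --- by the Ending Lamination Theorem, unique --- doubly degenerate manifold with ending laminations $\{\lambda,\mu\}$, and a Borel limit of a Borel sequence is Borel. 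This gives isometry $\sim_B E_S$; and since, by the bireducibility results of the earlier sections, $\mathrm{PSL}(2,\mathbb{C})$-conjugacy on $\mathcal{D}[\pi_1(S)]$ is Borel bireducible with isometry of hyperbolic manifolds homeomorphic to $S\times\mathbb{R}$, the same degree governs the (Borel) doubly degenerate sublocus of $\mathcal{D}[\pi_1(S)]$.

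For the lower bound $E_0\leq_B E_S$, I would show $E_S$ is \emph{not smooth} and invoke the Glimm--Effros dichotomy. Pushing the Masur--Veech measure forward along the map carrying a unit-area quadratic differential to the pair of ending laminations of its vertical and horizontal foliations produces an $\mathrm{MCG}^{\pm}(S)$-invariant, non-atomic, ergodic probability measure on $\{(\lambda,\mu):\lambda\neq\mu\}$ --- ergodicity being that of the Teichm\"{u}ller geodesic flow (Masur, Veech). A smooth countable Borel equivalence relation admits no such measure (an ergodic invariant measure would be supported on a single, countable, class, hence atomic), so $E_S$ is non-smooth, whence $E_0\leq_B E_S$ by Harrington--Kechris--Louveau; as the isometry relation is bireducible with $E_S$, the relation $E_0$ Borel-reduces to it as well. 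Since $E_0\not\leq_B{=_{\mathbb{R}}}$, this already yields the ``in particular'': there is no Borel assignment of complete numerical invariants, up to $\mathrm{PSL}(2,\mathbb{C})$-conjugacy, to the elements of $\mathcal{D}[\pi_1(S)]$.

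It remains to see $E_S\leq_B E_0$, i.e.\ that $E_S$ is hyperfinite, and this I expect to be the main obstacle. By Klarreich's theorem $\mathcal{EL}(S)$ is the Gromov boundary of the curve complex $\mathcal{C}(S)$, so what is wanted is hyperfiniteness of the boundary action of $\mathrm{MCG}^{\pm}(S)$; as $\mathcal{C}(S)$ is not locally compact this does not follow from the classical ``hyperbolic group acting on its boundary'' results, and one must either appeal to the more recent hyperfiniteness theorems for boundary actions of acylindrically hyperbolic (or otherwise suitably hyperbolic) groups, or argue directly via Minsky's hierarchy machinery: a distinct pair $(\lambda,\mu)$ determines a bi-infinite tight geodesic in $\mathcal{C}(S)$ together with a resolution into a bi-infinite sequence of markings, on which the $\mathrm{MCG}$-action is visibly by ``translation plus finite correction'', and a Slaman--Steel/Gao--Jackson-type argument --- Borel-linearly order the markings along the bi-infinite sequence, then cut it into finite blocks --- realizes $E_S$ as an increasing union of finite Borel subequivalence relations. (For the once-punctured torus, outside the present hypotheses but illustrative, this last relation is literally tail equivalence of continued-fraction expansions under $\mathrm{SL}(2,\mathbb{Z})$.) With both bounds in hand, $E_S$, and hence the isometry relation, is Borel equivalent to $E_0$. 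The other plausible source of friction --- extracting genuinely \emph{Borel}, as opposed to merely generic, forms of the continuity inputs used above --- I expect to be technical rather than deep.
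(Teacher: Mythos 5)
Your upper bound follows the paper's own architecture: use the Ending Lamination Theorem, in the form that the ending-invariant map on doubly degenerate representations is a homeomorphism onto $(\mathcal{EL}(S)\times\mathcal{EL}(S))\setminus\Delta$, to reduce the isometry relation to the $\mathrm{MCG}^{\pm}(S)$-action on distinct pairs of ending laminations, and then invoke hyperfiniteness of the $\mathrm{MCG}(S)$-action on $\mathcal{EL}(S)$. Be explicit that this last input is precisely the Przytycki--Sabok theorem; it is a deep external result, and ``arguing directly via Minsky's hierarchy machinery'' would amount to reproving it, which is a major undertaking rather than a technicality. With that citation in place, your upper bound is essentially the paper's (which passes from pairs to the product $\mathcal{EL}(S)\times\mathcal{EL}(S)$ by a weak reduction, and from $\mathrm{MCG}(S)$ to $\mathrm{Out}(\pi_1(S))$ by Dehn--Nielsen--Baer, much as you do). Your route for the direction ``isometry reduces to $E_S$'' via Borel selection of escaping geodesics is heavier than needed: the cited homeomorphism already gives Borelness of the ending-invariant assignment once one passes to markings by a Borel selector.

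The genuine gap is in your lower bound, which as written rests on the claimed Borel reduction of $E_S$ to the isometry relation. Two steps there are not justified: (i) Jankov--von Neumann uniformization yields only $\sigma(\mathbf{\Sigma}^1_1)$-measurable (universally measurable) selections, not Borel ones, so your choice of transverse measures on $\lambda$ and $\mu$ does not produce a Borel map; (ii) the Double Limit Theorem gives only subsequential algebraic compactness of the quasi-Fuchsian groups $Q(X_n,Y_n)$, and upgrading this to convergence of the full sequence to the doubly degenerate manifold with ending invariants $\{\lambda,\mu\}$ needs delicate continuity-of-ending-invariants input, without which the ``Borel limit of a Borel sequence'' step has nothing to take a limit of. The paper never constructs such a reduction: its lower bound is a different argument, coding $\{0,1\}^{\mathbb{Z}}$ into doubly degenerate manifolds via a Schottky group of pseudo-Anosovs, using Jankov--von Neumann only to build a measure (where universal measurability is harmless), and then applying the nonatomic-ergodic-measure criterion for $E_0$ together with Glimm--Effros directly to the conjugacy relation. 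Your Masur--Veech idea can be repaired in exactly that style --- push the measure forward to the space of doubly degenerate groups by a merely universally measurable selection and apply the same criterion --- or you can obtain a genuinely Borel map from lamination pairs to manifolds by composing the inverse of the homeomorphism above with a Borel selector from $\mathrm{AH}(\pi_1(S))$ to the representation space. As written, however, the reduction of $E_S$ to isometry is not established, so neither the lower bound nor the ``in particular'' clause yet follows from your argument.
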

The theorem allows us to strengthen a result of \cite{MR3008919}, with the aid of which we show the isometry relation on a much wider range of algebraically finite hyperbolic $3$-manifolds to be essentially hyperfinite as well; see Section \ref{subsection:classifying_gi_ends} for discussion.
This brings us to a sequence of concluding points.

First, our proof of Theorem J interacts closely with the recent work \cite{BLL26+}, while those of Theorems K and L draw heavily on works \cite{MR2582104, MR2876139, MR3008919, MR4095420, MR4252215} falling broadly, as noted, in the orbit of the Ending Lamination Theorem.
They are, in other words, applications of deep results in manifold theory to a problem originating in descriptive set theory, underscoring that the gain from closer dialogue between these fields is bilateral.

Second, the scarcity, underscored above, of interactions between the fields of manifold classification and invariant descriptive set theory has very recently begun to abate; the present work figures even as part of an emergent countertrend.
In addition to \cite{MR1731384}, cited above, let us briefly survey the main exceptions to that scarcity which we're aware of.
In the (withdrawn) preprint \cite{Kulikov_homeos}, Weinstein considered the complexity of the homeomorphism relation on different classes of manifolds and conjectured the complexity result for surfaces that we verify in our Theorem E.
More recently, and independently from our own work, Hoganson and Zomback showed in \cite{2024arXiv241000901H} that the homeomorphism relation for orientable surfaces endowed with a pants decomposition is complete for countable structures.\footnote{In \cite{2024arXiv241000901H}, the pants decomposition of a surface is used to identify it with a graph encoding the relevant combinatorial information. Contrast this with our Theorem G, which extracts that information from a purely topological presentation of the surface.}
In their forthcoming \cite{IW26+}, Iannella and Weinstein show that the piecewise-linear homeomorphism relation on simplicial complexes, in \emph{any} dimension, is classifiable by countable structures, and prove an analogue of our Theorem G for $3$-manifolds which shows that the corresponding homeomorphism relation is also complete for countable structures. And in work currently in progress, Gompf and Panagiotopoulos have shown that neither contractible $3$-manifolds up to homeomorphism, nor smooth structures on $\mathbb{R}^4$ up to diffeomorphism, are concretely classifiable \cite{GP25}; see our conclusion for a bit more on these matters. 

Third, it is largely for their status as the ``generic'' geometric manifolds (particularly in dimensions 2 and 3; see \cite{MR0648524}) that we've taken hyperbolic manifolds as a focus, and if in the present context Theorem D is the first thing one might wish to know about them, Theorems B and C are main steps towards the next.
Note, though, that our analyses of algebraically finite manifolds reach no higher than 3 hyperbolic dimensions, for the reason that their mainspring, a deep and cohesive theory of finitely generated discrete subgroups of $\mathrm{Isom}^+(\mathbb{H}^n)$, at present doesn't seem to either.
As Kapovich, in the major survey of dimensions above 3, writes: ``There is a vast variety of Kleinian groups in higher dimensions: It appears that there is no hope for a comprehensive structure theory similar to the theory of discrete groups of isometries of $\mathbb{H}^3$'' \cite[p.\ 485]{MR2402415}.
This is the sort of perception which invariant descriptive set theory excels at making precise; proofs, for example, for some $n>3$, that the conjugacy relation on the class of geometrically finite discrete subgroups of $\mathrm{Isom}^+(\mathbb{H}^n)$ is \emph{not} concretely classifiable --- or that it is \emph{not} hyperfinite on the class of all finitely generated discrete subgroups of $\mathrm{Isom}^+(\mathbb{H}^n)$ --- would be significant steps in this direction (see our conclusion's Question \ref{ques:geom_alg_fin}), and this prospect typifies the kinds of applications of Borel complexity theory to manifold theory which we have ultimately in mind.

Lastly, considerations of readability have tempered a striving for results of maximum generality; the qualifier \emph{torsion-free} in Theorems J and K, for example, is likely unnecessary, but removing it would, in translation, entail a study of orbifolds, taking us too far afield (of course, there may be cleverer ways of removing it).
Similarly for our restrictions to finite-dimensional manifolds and, in another direction, for our negligence of the neighboring field of the classification of algebraic varieties (see \cite[Ch.\ I.8]{MR0463157}).
It is, after all, our most overarching point that contact between the fields of invariant descriptive set theory and manifold theory gives rise to many more excellent questions than any one of us alone might answer. We show that these admit satisfying answers by supplying some of them; a sample of the many that remain is collected in our conclusion.
Let us turn now to a more orderly survey of this paper's contents.

\subsection{Organization of the paper} We hope readers will feel free, at least at a first pass, to periodically skip ahead in their reading. This is for several reasons. One is the text's sheer length, another is its diversity of intended audience, and another is 
the unfortunate reality that verifications that a given map or construction is Borel, though necessary, are not as uniformly edifying as we might wish them to be.

Our paper's overall structure is as follows: Section \ref{section:preliminaries} organizes into the subsections \emph{Classical descriptive set theory}, \emph{Invariant descriptive set theory}, and \emph{Spaces of subsets}, and records the core of what we'll need under these headings for the remainder of the paper.
This material is all standard, but that of the third subsection plays a sufficiently critical role in what follows to merit special emphasis.

In Section \ref{section:parametrization}, we record our overarching framework as well as the basic tools for its manipulation and analysis.
Section \ref{subsection:pseudogroups} recalls the definition of a $(\mathcal{G},X)$-manifold for any pseudogroup $\mathcal{G}$ on a model space $X$; a parameter space $\mathfrak{M}(\mathcal{G},X)$ for such manifolds is defined and shown to be standard Borel whenever $\mathcal{G}$ is a Borel pseudogroup on a locally compact Polish space $X$. The representative examples of topological, smooth, and hyperbolic $n$-manifolds are each shown to fit into this framework.
In Section \ref{subsection:equivalences}, we show that the natural notion of equivalence for $(\mathcal{G},X)$-manifolds manifests as an analytic equivalence relation on $\mathfrak{M}(\mathcal{G},X)$, completing the proof of Theorem H, and derive Theorem A as a corollary.
In Section \ref{subsection:subclasses}, we collect a number of lemmas for later application; we show, for example, that exhaustions of manifolds by compact sets can be computed, and that manifolds can be reparametrized as equivalent ones possessing charts with preferred properties (e.g.,~local finiteness or geodesic convexity), in Borel ways.
We also show that the subspaces of $\mathfrak{M}(\mathcal{G},X)$ corresponding to complete or connected $(\mathcal{G},X)$-manifolds are Borel, and
isolate in metric contexts a Borel subspace $\mathfrak{M}^*(\mathcal{G},X)$ of $\mathfrak{M}(\mathcal{G},X)$ in which the rapport between manifolds' induced metrics and chart-metrics is optimal.
As their descriptions should suggest, Sections \ref{subsection:pseudogroups} and \ref{subsection:equivalences} are fundamental to all that follows, while the more technical \ref{subsection:subclasses} admits more superficial initial readings, to be augmented as the need arises.

Section \ref{section:surfaces} concerns the classification of surfaces. We first formulate and prove our Borel forms of the Jordan--Schoenflies and triangulation theorems, Theorems F and G above, by adapting graph-theoretic proofs of these results, due to Thomassen \cite{MR1144352}, to our framework. Then we show that the space of ends of a surface, as well as the relevant topological invariants (genus, orientability, planarity), can all be computed in a Borel way. The Ker\'{e}kj\'{a}rt\'{o}--Richards result reduces the classification of surfaces to that of their spaces of ends; as the latter are nested triples of compact totally disconnected Polish spaces, this is then reduced to the classification of countable Boolean algebras equipped with a pair of nested filters, using the techniques of \cite{MR1804507}, proving Theorem E. This section ends with a discussion of disconnected $2$-manifolds and, relatedly, records descriptive set-theoretic reasons to prioritize, within various manifold classification programs, the connected ones.

The organizing concern of Section \ref{section:bireducibility} is translation mechanisms between spaces of manifolds of a given type and spaces $\mathcal{D}_{\mathrm{tf}}(G)$ of torsion-free subgroups of an associated group $G$.
As noted, the latter are equipped with the \emph{Chabauty--Fell} or \emph{geometric} topology, and we begin by recalling that if $G$ is locally compact and Polish, then $\mathcal{D}_{\mathrm{tf}}(G)$ is Polish as well.
We then focus, with an eye to subsequent sections, on the cases of $G=\mathrm{Isom}(\mathbb{H}^n)$ for $n\geq 2$, describing Borel maps in both directions between $\mathcal{D}_{\mathrm{tf}}(G)$ and the parameter space $\mathfrak{C}^{*}_c(\mathsf{Isom},\mathbb{H}^n)$ of complete connected $n$-hyperbolic manifolds.
These maps are, in essence, the quotient map in one direction and the holonomy representation in the other, and it is this section's main result that they implement a classwise Borel equivalence between the isometry and conjugacy relations, respectively, on $\mathfrak{C}^{*}_c(\mathsf{Isom},\mathbb{H}^n)$ and $\mathcal{D}_{\mathrm{tf}}(\mathrm{Isom}(\mathbb{H}^n))$.

In Section \ref{section:conjugacy}, we consider the conjugacy problem for discrete subgroups of Lie groups. We show that whenever $G$ is a matrix group which contains a discrete nonabelian free subgroup, the associated conjugacy problem $E(G,\mathcal{D}(G))$ is essentially countable universal, extending the cited result of Andretta--Camerlo--Hjorth.
We then apply this result to prove our Theorem I, and, via the results of Section \ref{section:bireducibility}, derive Theorem D as a corollary in the manner described above.
As part of this analysis, and for use in subsequent sections, we also prove several lemmas showing how the complexity of the conjugacy problem passes between a group, its subgroups, and its quotients.
In particular, these lemmas free us to study those relations on whichever of $\mathrm{Isom}^{+}(\mathbb{H}^n)$ and $\mathrm{Isom}(\mathbb{H}^n)$ is more convenient thereafter.

After Section \ref{section:conjugacy}, we turn to a study of conjugacy relations on \emph{finitely generated} discrete subgroups of Lie groups $G$; when $G=\mathrm{Isom}(\mathbb{H}^n)$, the torsion-free such subgroups correspond (via the translations of Section \ref{section:bireducibility}) to those hyperbolic $n$-manifolds satisfying the most general of the finiteness conditions typically considered (e.g., \emph{closed}, or \emph{finite volume}, or \emph{geometrically finite}, etc.).
We term such manifolds \emph{algebraically finite}, as noted above, and they are the setting, both when $n=2$ and $n=3$ for some of the subject's most fundamental classification results.
We focus on the $n=2$ case in Section \ref{section:isometry_for_2}, wherein we record one proof via moduli space theory, along with a sketch of a second proof due to Ian Biringer, of Theorems B and J listed above.

We then turn to the $n=3$ case in Section \ref{section:isometry_for_3}.
This material draws heavily, both technically and conceptually, on the Teichm\"{u}ller and moduli spaces machinery of Section \ref{section:isometry_for_2}; here, though, the distinction between marked and unmarked manifolds is, from a complexity theoretic perspective, a more consequential one.
After noting that the isometry classes of the former admit natural Polish parametrizations, we work through (the manifold renderings of) the cases of Theorem K to show that the isometry classes of \emph{unmarked} algebraically finite hyperbolic $3$-manifolds in general do not.
Along the way, we show that obstructions to the latter concentrate among trivial line bundles like those considered in Theorem L.
We draw on \cite{MR4095420} and \cite{MR4252215} to prove Theorem L and apply the second work to bound the complexity of the isometry relation for a broader class of algebraically finite manifolds; in the process, again via the translations of Section \ref{section:bireducibility}, we complete the proof of Theorem C.

In Section \ref{section:questions}, we conclude with a brief survey of open questions.
Since no survey of any modest length could be exhaustive, we content ourselves with a discussion of what seem to us the most conspicuous open questions, as well as of their interrelations.
As we underscore therein, what we have aimed for is a conclusion which opens onto further work and dialogue more than it closes anything at all.

\subsection*{Acknowledgements} 
Multiple visits to the Fields Institute, most recently for its 2023 \emph{Thematic Program on Set Theoretic Methods in Algebra, Dynamics and Geometry}, significantly contributed to this work; we wish to thank both the institute and the program's organizers for their hospitality.
We thank the University of Barcelona for its hospitality to the second author in the spring of 2025 as well.
This work has been shaped by discussions too numerous to sensibly detail here.
Our thanks to all who partook in them; particular thanks to Ian Biringer, Brian Bowditch, Richard Canary, Martina Iannella, Michael Kapovich, Ken'ichi Ohshika, Aristotelis Panagiotopoulos, Joan Porti, Marcin Sabok, Simon Thomas, and Ferr\'{a}n Valdez for especially helpful ones.

\section{Preliminaries}
\label{section:preliminaries}

In this section, we review for a general audience the necessary background material in descriptive set theory, or the study of definable subsets of Polish spaces. Experts are welcome to skip over this material, although we recommend at least perusing Section \ref{subsection:spaces_of_subsets}, concerning topologies and Borel structures on spaces of subsets of a Polish space, as it will be crucial for our parametrizations of spaces of manifolds in the sequel and may be less familiar. Among our conventions, we note without delay that we regard $0$ as a natural number.

\subsection{Classical descriptive set theory}\label{subsection:classical_DST}

Recall that a topological space $X$ is \emph{Polish} if it is separable and completely metrizable. Examples include the Euclidean spaces $\mathbb{R}^n$, Cantor space $\{0,1\}^\mathbb{N}$, Baire space $\mathbb{N}^\mathbb{N}$, any second countable Hausdorff manifold $M$, and its group of self-homeomorphisms $\mathrm{Homeo}(M)$ when endowed with the compact-open topology. We highlight in this section, as bullet points, basic facts which will be used frequently throughout the rest of the paper. All of their proofs may be found in the standard reference \cite{MR1321597}.

\begin{itemize}
	\item The class of Polish spaces is closed under the taking of countable products, countable disjoint unions, and countable intersections of open subsets.
\end{itemize}

A subset $B$ of a Polish space $X$ is \emph{Borel} if it is contained in the $\sigma$-algebra generated by the open subsets of $X$. That is, $B$ can be obtained from basic open sets by taking countable unions, intersections, and complements, countably many times. A set $X$ together with a $\sigma$-algebra $\mathcal{B}$ on $X$ is a \emph{standard Borel space} if $\mathcal{B}$ is the collection of Borel sets corresponding to \emph{some} Polish topology on $X$. A map $f:X\to Y$ between Polish or standard Borel spaces $X$ and $Y$ is \emph{Borel measurable} if inverse images of Borel sets are Borel; we omit the word ``measurable'' where it is understood. Notice that we may view the closure of the class of Borel sets under countable unions and intersections as saying that any set defined by a formula consisting of finitely many quantifiers over fixed countable sets in front of a ``Borel predicate'' is also Borel.

\begin{itemize}
	\item The class of standard Borel spaces is closed under the taking of countable products (with Borel structure generated by products of Borel sets), countable disjoint unions, countable increasing unions, and Borel subsets.
	\item All uncountable standard Borel spaces are isomorphic (via an invertible Borel map) to $\mathbb{R}$ with its usual Borel structure.
	\item A map $f:X\to Y$ between standard Borel spaces is Borel if and only if its graph is Borel when viewed as a subset of the product $X\times Y$.
	\item (\textbf{Luzin--Suslin Theorem}) An injective image of a Borel set under a Borel function is Borel.
\end{itemize}

In contrast to the previous bullet, the image of a Borel set under an arbitrary Borel (or even continuous) function may fail to be Borel. Such sets are called \emph{analytic}, and their complements \emph{coanalytic}.

\begin{itemize}
	\item A subset $A$ of a standard Borel space $X$ is analytic if and only if there is a standard Borel space $Y$ and a Borel set $B\subseteq X\times Y$ such that $A$ is the projection $\mathrm{proj}_X[B]$ of $B$ onto $X$.
	\item A useful way of rephrasing the previous point is that analytic sets can be viewed as those defined by a formula with finitely many consecutive existential quantifiers over objects from standard Borel spaces, in front of a Borel predicate. E.g., if $B\subseteq X\times Y$ is Borel, then the set 
	\[
		\{x\in X:\exists y\in Y((x,y)\in B)\}
	\]
	is analytic. Formulas of this type, and the sets they define, are called $\mathbf{\Sigma}^1_1$.
	\item Coanalytic sets, then, correspond to finitely many consecutive universal quantifiers in front of a Borel predicate. These formulas, and the resulting sets, are called $\mathbf{\Pi}^1_1$.
	\item The collection of all analytic subsets of a standard Borel space is closed under both countable unions and countable intersections. The same holds for the collection of coanalytic subsets.
	\item (\textbf{Suslin's Theorem}) A subset $B$ of a standard Borel space is Borel if and only if it is both analytic and coanalytic. In particular, if a set has both a $\mathbf{\Sigma}^1_1$ and a $\mathbf{\Pi}^1_1$ description, then it is Borel.
\end{itemize}

Lastly, we will need the following ``uniformization theorem'':

\begin{itemize}
	\item (\textbf{Luzin--Novikov Uniformization}) If $X$ and $Y$ are standard Borel spaces and $B\subseteq X\times Y$ is a Borel set such that for every $x\in X$, the section $B_x=\{y\in Y:(x,y)\in B\}$ is countable, then the projection $\mathrm{proj}_X[B]$ is Borel and $B$ has a \emph{Borel uniformization}, i.e.,~a Borel function $f:\mathrm{proj}_X[B]\to Y$ such that for all $x\in\mathrm{proj}_X[B]$, $(x,f(x))\in B$. In particular, countable-to-one images of Borel sets under Borel functions are also Borel.
\end{itemize}

\subsection{Invariant descriptive set theory}\label{subsection:invariant_DST}

An equivalence relation $E$ on a standard Borel space is \emph{Borel} (or \emph{analytic}, respectively) if it is Borel (analytic) when viewed as the set of pairs $\{(x,y):xEy\}$ in $X^2$. Given equivalence relations $E$ and $F$ on standard Borel spaces $X$ and $Y$, respectively, a \emph{Borel homomorphism} from $E$ to $F$ is a Borel function $f:X\to Y$ such that for all $x,y\in X$,
\[
	xEy \quad\text{implies}\quad f(x)Ff(y).
\]
That is, $f$ is a definable way of assigning objects in $Y$ up to $F$-equivalence as invariants for objects in $X$ up to $E$-equivalence. Such an $f$ is a \emph{Borel reduction} from $E$ to $F$ if, moreover, for all $x,y\in X$,
\[
	xEy \quad\text{if and only if}\quad f(x)Ff(y).
\]
In other words, the objects in $Y$ modulo $F$ are \emph{complete} invariants for those in $X$ modulo $E$, computed via $f$. If such a reduction exists, we say that $E$ is \emph{Borel reducible} to $F$ and write $E\leq_B F$. If $E\leq_B F$ and $F\leq_B E$, we say that $E$ and $F$ are \emph{Borel bireducible} or \emph{Borel equivalent}, written $E\sim_B F$. Invariant descriptive set theory is the study of such definable equivalence relations and the reducibilities between them; for a general reference, we direct readers to \cite{MR2455198}. 

Borel reducibility provides a way of comparing the complexity of different classification problems, realized as analytic equivalence relations on suitable spaces, across mathematics. The simplest such equivalence relations, which we will call \emph{concretely classifiable},\footnote{Such equivalence relations are usually called ``smooth'' in the descriptive set theory literature; we will forego this terminology to avoid the obvious potential confusion. Within this class are, of course, benchmarks of an even greater simplicity; the complete picture is $=_1\,<_B\, =_2\,<_B\,\cdots\,<_B\,=_{\mathbb{N}}\,<_B\,=_{\mathbb{R}}$.} are those which are Borel reducible to the equality relation $=_{\mathbb{R}}$ on $\mathbb{R}$ (or, equivalently, any uncountable standard Borel space); in other words, they admit numerical complete invariants in a definable way. 

A paradigmatic example of a Borel equivalence relation which is not concretely classifiable is the relation $E_0$ of eventual equality between infinite binary sequences:
\[
	(x_n)_{n\in\mathbb{N}}\;E_0\; (y_n)_{n\in\mathbb{N}} \quad\text{if and only if}\quad \exists m\in\mathbb{N}\;\forall n\geq m(x_n=y_n),
\]
for $(x_n)_{n\in\mathbb{N}},(y_n)_{n\in\mathbb{N}}\in\{0,1\}^\mathbb{N}$.
In fact, $E_0$ is (up to bireducibility) the least such equivalence relation, by the following fundamental result \cite{MR1057041}.
\begin{itemize}
\item (\textbf{Glimm--Effros Dichotomy}) If $E$ is a Borel equivalence relation on a Polish space then either $E$ is concretely classifiable or $E_0\leq_B E$.
\end{itemize}

\noindent For a second example, consider the orbit equivalence relation $E_\mathbb{Z}$ induced by the \emph{shift action} of $\mathbb{Z}$ on the space $\{0,1\}^\mathbb{Z}$:
\[
	m\cdot(x_n)_{n\in\mathbb{Z}}=(x_{n-m})_{n\in\mathbb{Z}}
\]
for $m\in\mathbb{Z}$ and $(x_n)_{n\in\mathbb{Z}}\in\{0,1\}^\mathbb{Z}$. 
$E_0$ and $E_{\mathbb{Z}}$ are both \emph{hyperfinite}, meaning that they can be expressed as countable increasing unions of Borel equivalence relations with finite classes, and they are Borel bireducible.

In general, if $G$ is a Polish topological group acting in a Borel way on a standard Borel space $X$, meaning that the induced map $G\times X\to X$ is Borel, then the resulting orbit equivalence relation on $X$ is analytic. A particularly salient family of examples are the isomorphism relations on classes of countable first-order (e.g.,~relational or algebraic) structures. For instance, we can view any countably infinite group as being the set $\mathbb{N}$ endowed with a ternary relation $m\subseteq\mathbb{N}^3$ which codes the group operation: $m(p,q,r)$ if and only if $pq=r$. In this way, there is a Borel subset $\mathsf{Grp}$ of the space $\{0,1\}^{\mathbb{N}^3}$ of all ternary relations on $\mathbb{N}$, identified via characteristic functions, which parametrizes the class of all countably infinite groups. The infinite symmetric group $S_\infty$ of all permutations of $\mathbb{N}$ then acts on $\{0,1\}^{\mathbb{N}^3}$ in a natural way, called the \emph{logic action}:
\[
	(g\cdot m)(p,q,r)=m(g^{-1}(p),g^{-1}(q),g^{-1}(r))
\]
for $g\in S_\infty$, $m\in\{0,1\}^{\mathbb{N}^3}$, and $p,q,r\in\mathbb{N}$. The resulting orbit equivalence relation, when restricted to $\mathsf{Grp}$, is exactly the isomorphism relation $\cong_{\mathsf{Grp}}$ on all countably-infinite groups. Similar constructions parametrize all classes of countably-infinite first-order structures $\mathsf{C}$, with their isomorphism relations $\cong_\mathsf{C}$ induced by Borel actions of $S_\infty$.

An analytic equivalence relation $E$ is \emph{classifiable by countable structures} if there is some class $\mathsf{C}$ of countable first-order structures such that $E\,\leq_B\,\cong_\mathsf{C}$. This precisely describes those classification problems which admit countable algebraic complete invariants in a definable way. By the previous paragraph, any such equivalence relation is reducible to the orbit equivalence relation of a Borel action of $S_\infty$; the converse holds as well (see \cite[Thm.\ 3.6.1]{MR2455198}). Such an $E$ is, moreover, \emph{complete for countable structures}\footnote{Beginning with \cite{MR1011177}, such equivalence relations have typically been called ``Borel complete''. However, we feel that this terminology is misleading as these equivalence relations are \emph{never} themselves Borel (there is \emph{no} Borel equivalence relation of maximal complexity), and thus conflicts with the established usage of ``\underbar{\quad} complete'' in the context of other reducibility notions. We also forgo the terminology ``$S_\infty$-complete'', which while technically correct and carrying the benefit of brevity, obscures the relationship to countable structures.} if $\cong_\mathsf{C}\,\leq_B\,E$ for \emph{all} classes of countable first-order structures $\mathsf{C}$. The isomorphism relations on the classes of countable groups, graphs, rings, linear orders \cite{MR1011177}, and, as will be relevant for our Section \ref{section:surfaces}, Boolean algebras \cite{MR1804507}, are all complete for countable structures.

A Borel equivalence relation $E$ on $X$ is \emph{countable} if each equivalence class is countable; these are exactly the orbit equivalence relations of Borel actions of countable groups \cite{MR0578656}. A countable Borel equivalence relation is called \emph{universal (countable)} if every countable Borel equivalence relation reduces to it. Such equivalence relations often arise through the actions of nonabelian free groups. For example, the shift action of the free group on two generators, $F_2$, on $\{0,1\}^{F_2}$ produces a universal countable Borel equivalence relation \cite{MR1149121}, denoted (possibly up to bireducibility) by $E_\infty$.

A Borel equivalence relation is \emph{essentially countable} if it is Borel bireducible\footnote{Being Borel \emph{reducible} to a countable Borel equivalence relation is not, in general, equivalent to being essentially countable \cite{MR2155276}; however, the notions coincide for equivalence relations induced by Polish group actions \cite{MR3549382}.} with a countable Borel equivalence relation. Especially important for the present work is the fact that orbit equivalence relations induced by Borel actions of locally compact Polish groups are essentially countable \cite{MR1176624}. Every (essentially) countable Borel equivalence relation is classifiable by countable structures, but those which are complete for countable structures are never essentially countable. Such a Borel equivalence relation is \emph{essentially hyperfinite} (or \emph{essentially universal}) if it is bireducible with a hyperfinite (universal, respectively) countable Borel equivalence relation. Note that being essentially hyperfinite is equivalent to being Borel reducible to $E_0$ (see \cite{MR1149121}).

In closing out this section, we wish to highlight some of the reasons why Borel reducibility is so propitious a notion for studying classification problems in (separable) mathematics. Firstly, it is generous enough to capture most, if not all, examples of complete invariants which can be computed in a ``reasonably definable'' way. One may argue that it is \emph{too} generous a notion, but as much of the power of this theory comes from its ability to show that one equivalence relation does \emph{not} reduce to another, the weaker notion of definability (e.g.,\ via Borel, as opposed to continuous or computable, functions) translates to stronger negative results. Furthermore, if we are given two different ways to parametrize a class of mathematical objects as elements of standard Borel spaces, on which the natural notion of equivalence is analytic, then the translations between the parametrizations will, invariably, be given by Borel reductions.\footnote{This is a kind of ``Church--Turing Thesis'' for Borel reducibility; see \cite[p.\ 328]{MR2455198}.}

On the other hand, Borel reducibility is strict enough a notion to draw much finer distinctions than we could otherwise make if we allowed arbitrary reductions; the latter amounts to invoking the Axiom of Choice to compare the cardinalities of the resulting quotients. But while the cardinalities of quotients $X/E$, where $E$ is an analytic equivalence relation on a standard Borel space $X$, are limited to being countable, $\aleph_1$, or $2^{\aleph_0}$ (see \cite{MR0476524}), there are continuum many different degrees up to Borel bireducibility and they are far from being even linearly ordered \cite{MR1775739}. Lastly, the existence or non-existence of a Borel reduction is highly robust; it is not particularly sensitive to the ambient set-theoretic universe and thus immune to the sorts of independence phenomena that are so otherwise common in set theory. In the terminology of the field, it is \emph{absolute}.\footnote{More concretely, whether there is a Borel reduction between two fixed Borel equivalence relations is a $\mathbf{\Sigma}^1_2$ statement and thus absolute between transitive models of set theory by Shoenfield's Absoluteness Theorem. For analytic equivalence relations, the existence of a Borel reduction is $\mathbf{\Sigma}^1_3$, and hence is, under sufficient large cardinal assumptions, absolute between generic extensions. See \cite{MR1940513} for details on absoluteness.}

\subsection{Spaces of subsets}\label{subsection:spaces_of_subsets}

Throughout this section, we consider a fixed locally compact Polish space $X$. We denote by $\mathcal{K}(X)$, $\mathcal{F}(X)$, and $\mathcal{O}(X)$ the collections of all compact, closed, and open subsets of $X$, respectively. We view $\mathcal{K}(X)$ as a Polish space with the familiar Vietoris topology, generated by a subbasis consisting of open sets of the form $\{K\in\mathcal{K}(X):K\subseteq U\}$ and $\{K\in\mathcal{K}(X):K\cap U\neq\varnothing\}$, where $U$ is open in $X$. When $X$ is endowed with a compatible metric, the Vietoris topology is that induced by the Hausdorff metric on $\mathcal{K}(X)$. We wish to describe suitable topological and Borel structures on both $\mathcal{F}(X)$ and $\mathcal{O}(X)$.
	
The \emph{Fell topology} on $\mathcal{F}(X)$ is generated by open sets of the form $\{F\in\mathcal{F}(X):F\cap K=\varnothing\}$ and $\{F\in\mathcal{F}(X):F\cap U\neq\varnothing\}$, where $K$ and $U$ range over the compact and open subsets of $X$, respectively. When endowed with this topology, $\mathcal{F}(X)$ is a compact Polish space \cite[Exer.\ 12.7]{MR1321597}. We endow $\mathcal{O}(X)$ with the topology inherited from $\mathcal{F}(X)$ by declaring complementation to be a homeomorphism $\mathcal{F}(X)\to\mathcal{O}(X)$. The Fell and Vietoris topologies coincide on $\mathcal{F}(X)$ when $X$ is compact, and the former may be viewed as induced by the latter via the one-point compactification of $X$; see \cite{MR3116379}.
	
The above topology was originally isolated by Chabauty \cite{MR0038983} for the space of closed subgroups of a locally compact group. Later, and apparently independently, Fell \cite{MR0139135} defined his topology on the closed subsets of an arbitrary locally compact space. For this reason, in the geometric literature, this topology is often referred to as the \emph{Chabauty} or \emph{Chabauty--Fell topology}; it is also (following Thurston \cite{MR1435975}) sometimes termed the \emph{geometric topology}, in part due to the following lemma; see p.~161 of \cite{MR1219310} for a proof and the discussions in Sections \ref{section:bireducibility}, \ref{section:isometry_for_2}, and \ref{section:isometry_for_3} below for its manifestations in spaces of quotient manifolds. 
\begin{lemma}\label{lem:convergence_in_Fell}
A sequence $(F_n)_{n\in\mathbb{N}}$ converges to a closed set $F$ in the Fell topology on $\mathcal{F}(X)$ if and only if:
\begin{enumerate}[label=\textup{\roman*.}]
\item every $x\in F$ is the limit in $X$ of some sequence $(x_i)_{i\in\mathbb{N}}$, where $x_i\in F_i$ for all $i\in\mathbb{N}$, and
\item every limit in $X$ of a convergent sequence $(x_i)_{i\in\mathbb{N}}$, where $(F_{n_i})_{i\in\mathbb{N}}$ is an infinite subsequence of $(F_n)_{n\in\mathbb{N}}$ and  $x_i\in F_{n_i}$ for all $i\in\mathbb{N}$, is in $F$.
\end{enumerate}
\end{lemma}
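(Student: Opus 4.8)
The plan is to establish both implications directly from the definition of the Fell topology, whose subbasic open sets are $V^- := \{F \in \mathcal{F}(X) : F \cap V \neq \varnothing\}$ for $V$ open in $X$ and $K^{\mathrm{out}} := \{F \in \mathcal{F}(X) : F \cap K = \varnothing\}$ for $K$ compact. Since to prove $F_n\to F$ it suffices to show that $F_n$ lies eventually in every \emph{subbasic} neighborhood of $F$ (finite intersections then handle arbitrary basic neighborhoods), the whole argument reduces to four short cases, two per direction.

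For the forward direction, assume $F_n \to F$. To get condition (i), fix $x \in F$ and a decreasing countable neighborhood basis $(V_k)_k$ at $x$ in $X$ (available as $X$ is metrizable); each $V_k^-$ is open and contains $F$, so there is an increasing sequence $(N_k)$ with $F_n \cap V_k \neq \varnothing$ for all $n \geq N_k$. Choosing, for each $n$ large, a point $x_n \in F_n \cap V_{k(n)}$ with $k(n) := \max\{k : N_k \leq n\}$ yields a sequence $x_n \to x$ with $x_n \in F_n$. To get condition (ii), suppose $x_i \in F_{n_i}$ with $n_i \uparrow \infty$ and $x_i \to x$, but $x \notin F$; since $F$ is closed and $X$ is locally compact, pick a compact neighborhood $K$ of $x$ disjoint from $F$, so $K^{\mathrm{out}}$ is an open set containing $F$, whence $F_n \cap K = \varnothing$ for all large $n$ --- contradicting $x_i \in F_{n_i} \cap K$ for all large $i$.

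For the converse, assume (i) and (ii) and fix a subbasic neighborhood of $F$. If it is $V^-$ with $x \in F \cap V$, take $x_i \in F_i$ with $x_i \to x$ by (i); then $x_n \in V$ for $n$ large, so $F_n \in V^-$ eventually. If it is $K^{\mathrm{out}}$ with $F \cap K = \varnothing$ but $F_{n_i} \notin K^{\mathrm{out}}$ along some subsequence, pick $x_i \in F_{n_i} \cap K$; by compactness of $K$, pass to a further subsequence with $x_i \to x \in K$, and conclude $x \in F$ by (ii), contradicting $K \cap F = \varnothing$. Hence $F_n$ is eventually in every subbasic neighborhood of $F$, so $F_n \to F$.

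The main obstacle --- and the only place the hypotheses on $X$ genuinely enter --- is the interplay with local compactness: in the forward direction for (ii) we need a \emph{compact} neighborhood of $x$ disjoint from the closed set $F$, and in the converse's second case we need compactness of $K$ to extract a convergent subsequence from $(x_i) \subseteq K$. Without local compactness the "miss a compact set" subbasic sets are too sparse and Kuratowski-style convergence no longer matches the topology. A minor bookkeeping point to dispatch with care is that finitely many $F_n$ may be empty, which only affects the choice of $x_n$ for the first finitely many indices in (i) and is otherwise harmless.
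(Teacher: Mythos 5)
Your proof is correct. The paper does not supply its own argument for this lemma --- it simply cites p.~161 of Benedetti--Petronio --- and your case-by-case verification against the two families of subbasic Fell-open sets (using local compactness to produce a compact neighborhood missing $F$ in one direction, and to extract a convergent subsequence from $K$ in the other) is essentially the standard proof given there. Your closing remark about finitely many $F_n$ possibly being empty correctly identifies a small imprecision in the statement itself (condition (i) literally fails in that degenerate case) rather than a defect of your argument.
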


We will adopt the convention of using ``Fell topology'' when referring to the spaces $\mathcal{F}(X)$ and $\mathcal{O}(X)$ for a general locally compact Polish space $X$, and ``Chabauty--Fell topology'' for the space of closed subgroups of a Lie group.

Being Polish, the Fell topology induces a standard Borel structure on both $\mathcal{F}(X)$ and $\mathcal{O}(X)$. This structure coincides with the \emph{Effros Borel structure} \cite{MR0181983} on $\mathcal{F}(X)$; note that the latter is in fact generated by the collection of sets $\{F\in\mathcal{F}(X):F\cap K=\varnothing\}$, for $K$ compact in $X$, alone. It will be important to know that various standard relations and operations on $\mathcal{F}(X)$ and $\mathcal{O}(X)$ are Borel measurable, so we list the most fundamental of these here (see \cite[Exer.\ 12.11 and 27.7]{MR1321597}):

\begin{itemize}
	\item The subset relation $\{(F_1,F_2):F_1\subseteq F_2)\}$ is Borel in $\mathcal{F}(X)^2$.
	\item The union operation $(F_1,F_2)\mapsto F_1\cup F_2$ is a Borel map $\mathcal{F}(X)^2\to\mathcal{F}(X)$.
	\item The intersection operation $(F_1,F_2)\mapsto F_1\cap F_2$ is a Borel map $\mathcal{F}(X)^2\to\mathcal{F}(X)$. Since taking limits is always a Borel operation on Polish spaces and $$\lim_n\left(\bigcap_{i=0}^nF_i\right)=\bigcap_{n\in\mathbb{N}}F_n$$ in $\mathcal{F}(X)$, it follows that $(F_n)_{n\in\mathbb{N}}\mapsto \bigcap_{n\in\mathbb{N}}F_n$ is a Borel map $\mathcal{F}(X)^\mathbb{N}\to\mathcal{F}(X)$.
	\item If $f:X\to Y$ is continuous and $Y$ is locally compact Polish, then $E\mapsto f^{-1}[E]$ is a Borel map $\mathcal{F}(Y)\to\mathcal{F}(X)$, and $F\mapsto \overline{f(F)}$ is a Borel map $\mathcal{F}(X)\to\mathcal{F}(Y)$.
\end{itemize}

By taking complements, each of the above induces a corresponding Borel map or relation on $\mathcal{O}(X)$. Notably, while the Effros Borel structure can be defined on any Polish space, it is the local compactness of $X$ which ensures that intersection is a Borel map on $\mathcal{F}(X)$, and thus also that union is a Borel map on $\mathcal{O}(X)$. We also have the following fact (see \cite[p.\ 74]{MR0348724}), for which local compactness is again crucial:

\begin{itemize}
	\item The operation $F\mapsto\overline{X\setminus F}$ is a Borel map $\mathcal{F}(X)\to\mathcal{F}(X)$. Consequently, so are the maps $U\mapsto\overline{U}$ from $\mathcal{O}(X)\to\mathcal{F}(X)$ and $U\mapsto X\setminus\overline{U}$ from $\mathcal{O}(X)\to\mathcal{O}(X)$.
\end{itemize}

\noindent For verifications that other fundamental relations between elements of $\mathcal{O}(X)$ are Borel, see Section \ref{subsection:subclasses} below.

\section{Parameter spaces of manifolds}
\label{section:parametrization}

\subsection{Pseudogroups $\mathcal{G}$, model spaces $X$, and spaces of $(\mathcal{G},X)$-manifolds}
\label{subsection:pseudogroups}

In this section, we describe a unified framework for parametrizing many familiar classes of manifolds. This framework applies the \emph{pseudogroup} approach, perhaps most prominently associated to Thurston and his study of geometric structures on manifolds, as expounded in \cite{MR1435975}.\footnote{The pseudogroup concept itself, though, traces back at least to \cite{zbMATH03003091}; see also \cite{MR3965773} for a history.} We begin with a brief overview of this approach.

\begin{definition}\label{defn:pseudogroup}
	A \emph{pseudogroup} $\mathcal{G}$ on a topological space $X$
	is a set of homeomorphisms between open subsets of $X$ satisfying the following conditions:
	\begin{enumerate}[label=\textup{\roman*.}]
		\item the domains of the elements of $\mathcal{G}$ cover $X$,
		\item the restriction of any element of $\mathcal{G}$ to an open subset of $X$ is also in $\mathcal{G}$,
		\item the composition of any two elements of $\mathcal{G}$ is in $\mathcal{G}$, 
		\item the inverse of any element of $\mathcal{G}$ is in $\mathcal{G}$, and
		\item if $\phi:U\to V$ is a homeomorphism between open subsets $U$ and $V$ of $X$, and $U$ is covered by a collection of open sets $U_\alpha$ with the property that the restriction $\phi|_{U_\alpha}$ is in $\mathcal{G}$ for every $\alpha$, then $\phi$ is in $\mathcal{G}$.
	\end{enumerate}
\end{definition}

We note that in condition (iii), we permit compositions of \emph{arbitrary} pairs of elements of $\mathcal{G}$, that is, if $\phi:U_1\to V_1$ and $\psi:U_2\to V_2$ are in $\mathcal{G}$, then $\psi\circ \phi$ is the resulting homeomorphism from $U_1\cap \phi^{-1}[U_2]$ onto $V_2\cap \psi[V_1]$; included here is the possibility that these sets may each be empty. Condition (v), which says that $\mathcal{G}$ is ``local'', can also be viewed as saying that unions of elements of $\mathcal{G}$, when they produce a homeomorphism, lie in $\mathcal{G}$.

The pseudogroups on any topological space $X$ are naturally ordered by inclusion.
This ordering possesses both a maximum and a minimum element --- namely, the collection of all homeomorphisms between pairs of open subsets of $X$ and the collection of all identity maps on open subsets of $X$, respectively.
Any collection of homeomorphisms between open subsets of $X$ falls in some unique minimal pseudogroup, which it may thus be said to generate \cite[Exer.\ 3.1.8.a]{MR1435975} (see also Example \ref{ex:isometry_pseudogroups} below).
In particular, any group $G$ of homeomorphisms of $X$ may be said to generate a pseudogroup, which we denote by a sans-serif $\mathsf{G}$; these form a prominent class of examples.

\begin{definition}
	Let $X$ denote $\mathbb{R}^n$, $\mathbb{C}^n$, hyperbolic space $\mathbb{H}^n=\{(x_1,\ldots,x_n)\in\mathbb{R}^n\mid x_n>0\}$, or the $n$-dimensional unit sphere $\mathbb{S}^n$, the latter two endowed with their standard hyperbolic and spherical metrics, respectively.
	\begin{enumerate}[label=\textup{\arabic*.}]
		\item $\mathsf{Top}$ is the \emph{topological pseudogroup} on $X$, the collection of all homeomorphisms between open subsets of $X$.
		This is the maximal pseudogroup on $X$ referred to above.
		\item $\mathsf{C}^k$ ($1\leq k\leq \infty$) is the pseudogroup of all $C^k$-diffeomorphisms between pairs of open subsets of $X$.
		\item In the case of $X=\mathbb{C}^n$, $\mathsf{Hol}$ is the pseudogroup of all biholomorphic maps between open subsets of $\mathbb{C}^n$.
		\item $\mathsf{Isom}$ is the pseudogroup generated by the group of isometries of $X$, and 	$\mathsf{Isom}^+$ is that generated by the orientation-preserving isometries of $X$.
	\end{enumerate}
	As in the above examples, we will often omit mention of the \emph{model space} $X$ when it is clear from context. 
\end{definition}

The standard definition of a topological $n$-manifold may now be phrased as: \emph{a second countable Hausdorff space possessing an atlas of charts whose transition maps all lie in the topological pseudogroup on $\mathbb{R}^n$}. To define other sorts of manifolds, we simply vary the pseudogroup appearing therein.

\begin{definition}\label{defn:(G,X)-manifold}
	Let $M$ and $X$ be topological spaces and $\mathcal{G}$ a pseudogroup on $X$.
	\begin{enumerate}[label=\textup{\arabic*.}]
		\item A \emph{$(\mathcal{G},X)$-atlas} on $M$ is a collection of pairs $(U,\phi)$, called \emph{charts}, where $U$ is an open subset of $M$ and $\phi:U\to X$ is a homeomorphism onto its image, such that the sets $U$ cover $M$, and whenever $(U,\phi)$ and $(V,\psi)$ are charts with $U\cap V\neq\varnothing$, the \emph{transition map}
		\[
			\psi\circ\phi^{-1}|_{\phi[U\cap V]}:\phi[U\cap V]\to\psi[U\cap V]
		\]
		is in $\mathcal{G}$.
		\item $M$ is a \emph{locally $(\mathcal{G},X)$-space} if it is endowed with a $(\mathcal{G},X)$-atlas; if $M$ is, in addition, Hausdorff and second countable, it is a \emph{$(\mathcal{G},X)$-manifold}.
	\end{enumerate}
\end{definition}

Our language is deliberately neutral with respect to the question of whether a choice of atlas forms part of a manifold's intrinsic structure. Philosophically, it does not \cite[p.\ 111]{MR1435975}. In the generality in which we wish to work, however, the most natural parametrizations of various classes of manifolds will be by way of just such choices of atlases; hence in the course of these parametrizations, we will \emph{to some degree} regard such a choice as part of the data of a manifold. The essential point for our purposes is that the relation of compatibility among such choices will always be subsumed by the relation of manifold equivalence (Definition \ref{defn:(G,X)-isomorphism}) under consideration. Readers in the meantime should feel free to adopt whichever of these perspectives suits them best.

\begin{example}
\label{ex:first_manifolds_example}
The \emph{topological}, \emph{smooth}, and \emph{complex} $n$-manifolds are precisely the $(\mathsf{Top},\mathbb{R}^n)$, $(\mathsf{C}^\infty,\mathbb{R}^n)$, and $(\mathsf{Hol},\mathbb{C}^n)$-manifolds, respectively. The \emph{Euclidean}, \emph{spherical}, and \emph{hyperbolic} $n$-manifolds are precisely the $(\mathsf{Isom},\mathbb{R}^n)$, $(\mathsf{Isom},\mathbb{S}^n)$, and $(\mathsf{Isom},\mathbb{H}^n)$-manifolds, respectively.\footnote{The subtleties associated to their induced global metrics are addressed in Section \ref{subsection:subclasses}.} \emph{Manifolds with boundary} fit into this framework by letting $\mathbb{R}^n_{+}=\{(x_1,\dots,x_{n})\in\mathbb{R}^n\mid x_n\geq 0\}$; the topological and smooth $n$-manifolds with boundary are then precisely the $(\mathsf{Top},\mathbb{R}^n_+)$-manifolds and $(\mathsf{C}^\infty,\mathbb{R}^n_+)$-manifolds, respectively.
\end{example}

As seen in the preceding examples (see also \cite[pp.\ 111--118]{MR1435975}), the most prominent classes of finite-dimensional $(\mathcal{G},X)$-manifolds are all modelled on spaces $X$ which are both Polish and locally compact. Since the latter condition carries the additional benefit of entailing a more pleasant Borel structure on its space of open subsets $\mathcal{O}(X)$ (see Section \ref{subsection:spaces_of_subsets}), in what follows we will always take $X$ to be a locally compact Polish space. We will comment, briefly, on infinite-dimensional manifolds in Section \ref{section:questions}.

We turn now to the parametrization, for some fixed pseudogroup $\mathcal{G}$ on $X$, of the class of $(\mathcal{G},X)$-manifolds.
One might begin by identifying the $M$ of Definition \ref{defn:(G,X)-manifold} with a triple $(m,\mathcal{U},c)$, where
\begin{itemize}
\item $m$ is the underlying set of $M$,
\item $\mathcal{U}=\{U_i\mid i\in I\}$ is a family of open subsets of $X$, and
\item $c$ is the collection of chart-maps patching the structures in $\mathcal{U}$ together ``along $m$''; in other words, for each $U_i$ in $\mathcal{U}$, there exists a $V_i\subseteq m$ and bijection $\varphi_i: V_i\to U_i$ in $c$.
\end{itemize}
For notational convenience, we will write $U_{i,j}$ for $\varphi_i[V_i\cap V_j]$ below and $U_i$ for $U_{i,i}$. Provided it is second countable and Hausdorff, $M$ is then a $(\mathcal{G},X)$-manifold if and only if \begin{itemize} 
\item $\bigcup_{i\in I} V_i=m$, and
\item the functions in $c$ satisfy:
\begin{align*} \varphi_{i,j}:=\varphi_j\circ\varphi_i^{-1}: U_{i,j}\to U_{j,i}\text{ is an element of }\mathcal{G}\text{ for all }i,j\in I.
\end{align*}
\end{itemize}
We will arrive to the \emph{space of $(\mathcal{G},X)$-manifolds} simply by discarding the redundancies in this description. Since, for example, $(\mathcal{G},X)$-manifolds $M$ are by definition second countable, every such $M$ is represented by a triple $(m,\mathcal{U},c)$ where the indexing set $I$ is $\mathbb{N}$, and we may henceforth restrict our attention to triples of this sort. Observe furthermore that for the study of the \emph{manifold structure} of $M$, the set $m$ is essentially irrelevant. More precisely, $M$ is, up to the relevant notion of isomorphism (see Definition \ref{defn:(G,X)-isomorphism} below), readily recovered from the sets $U_i$ and the functions $\varphi_{i,j}$ alone, for the charts in $c$ witness that 
\begin{align}\label{eq:quotient_of_sum}
M\cong\coprod_{i\in \mathbb{N}} U_i\,/ \sim,
\end{align}
where $x\sim y$ for any $x\in U_i$ and $y\in U_j$ if and only if $\varphi_{i,j}(x)=y$. Note that $\sim$ is a closed equivalence relation on the disjoint union $\coprod_{i\in\mathbb{N}} U_i$ and the quotient map $\pi_{(\mathcal{U},c)}:\coprod_{i\in\mathbb{N}}U_i\to\coprod_{i\in\mathbb{N}}U_i/\sim$ is a local homeomorphism.
(This is in fact the perspective taken by Mac Lane and Moerdijk in \cite[pp.\ 74--75]{MR1300636}, where a manifold is defined as the coequalizer of the diagram
$$\coprod_{(i,j)\in\mathbb{N}^2}U_{i,j}\rightrightarrows\coprod_{i\in\mathbb{N}} U_i,$$
in which the higher arrow is the coproduct of the inclusions $\iota_{i,j}:U_{i,j}\to U_i$ and the lower arrow is the coproduct of the functions $\iota_{j,i}\circ\varphi_{i,j}$.)

Our procedure will therefore be as follows: we will parametrize the collection of second countable locally $(\mathcal{G},X)$-spaces via pairs encoding countable collections of open sets and transition maps. We will then show that if $\mathcal{G}$ is, in a suitable sense Borel (Definition \ref{defn:Borel_pseudogroup} below), then this parametrization $\mathfrak{P}(\mathcal{G},X)$ carries a natural standard Borel structure. The restriction of $\mathfrak{P}(\mathcal{G},X)$ to Hausdorff pairs will define the \emph{standard Borel space $\mathfrak{M}(\mathcal{G},X)$ of $(\mathcal{G},X)$-manifolds}; this will form the basic setting for our subsequent work.
We will conclude this subsection by verifying that the most prominent pseudogroups $\mathcal{G}$ are indeed Borel.

\begin{definition}\label{defn:parametrization_locally_(G,X)_spaces}
Let $\mathcal{G}$ be a pseudogroup on a locally compact Polish space $X$.
The \emph{parameter space $\mathfrak{P}(\mathcal{G},X)$ of second countable locally $(\mathcal{G},X)$-spaces} consists of all pairs $(\mathcal{U},c)$ where \begin{itemize}
	\item $\mathcal{U}=\langle U_{i,j}\mid (i,j)\in \mathbb{N}^2\rangle$, a family of open subsets of $X$, and
	\item $c=\langle \varphi_{i,j}:U_{i,j}\to U_{j,i}\mid (i,j)\in \mathbb{N}^2\rangle\subseteq\mathcal{G}$
\end{itemize}
satisfy
\begin{enumerate}[label=\textup{\roman*.}]
\item $U_{i,j}\subseteq U_i:=U_{i,i}$ for all $(i,j)\in \mathbb{N}^2$,
\end{enumerate}
together with conditions ensuring that the associated $\sim$ is an equivalence relation:
\begin{enumerate}[label=\textup{\roman*.}]
\setcounter{enumi}{1}
\item $\varphi_{i,i}=\mathrm{id}_{U_i}:U_i\to U_i$ for all $i\in \mathbb{N}$,
\item $\varphi_{i,j}=\varphi_{j,i}^{-1}$ for all $(i,j)\in\mathbb{N}^2$, and
\item $\varphi^{-1}_{i,j}[U_{j,i}\cap U_{j,k}]\subseteq U_{i,k}$ and $\varphi_{j,k}\circ\varphi_{i,j}\big|_{\varphi^{-1}_{i,j}[U_{j,i}\,\cap \,U_{j,k}]}=\varphi_{i,k}\big|_{\varphi^{-1}_{i,j}[U_{j,i}\,\cap\, U_{j,k}]}$ for all $i,j,k\in \mathbb{N}$.
\end{enumerate}
Within this framework, it will be useful to term any $U_i$ a \emph{chart}, any $U_{i,j}$ an \emph{overlap}, and any $\varphi_{i,j}$ a \emph{transition map}. 
\end{definition}

Any pair $(\mathcal{U},c)$ as above will determine a locally $(\mathcal{G},X)$-space $M_{(\mathcal{U},c)}$ in the manner of equation (\ref{eq:quotient_of_sum}), and, up to the appropriate notion of isomorphism, every such space admits a representation of this form.

We turn now to the Borel structure on $\mathfrak{P}(\mathcal{G},X)$. As in Section \ref{subsection:spaces_of_subsets}, we endow the set $\mathcal{O}(X)$ of open subsets of $X$ with the Effros Borel structure, or equivalently, the Borel structure induced by the Fell topology. We define a pseudogroup on $X$ to be Borel when its Borel structure is compatible with that of $\mathcal{O}(X)$, as follows: 

\begin{definition}\label{defn:Borel_pseudogroup}
	A pseudogroup $\mathcal{G}$ on $X$ is \emph{Borel} if it is endowed with a standard Borel structure such that each of the following maps are Borel on the relevant spaces:
	\begin{enumerate}[label=\textup{\roman*.}]
		\item domain projection $\mathcal{G}\to\mathcal{O}(X):\phi\mapsto\mathrm{dom}(\phi)$,
		\item range projection $\mathcal{G}\to\mathcal{O}(X):\phi\mapsto\mathrm{ran}(\phi)$,
		\item domain inclusion $\mathcal{O}(X)\to\mathcal{G}:U\mapsto\mathrm{id}_U$, the identity map on $U$,
		\item direct image $\mathcal{O}(X)\times\mathcal{G}\to\mathcal{O}(X):(U,\phi)\mapsto \phi[U]$,
		\item inverse image $\mathcal{O}(X)\times\mathcal{G}\to\mathcal{O}(X):(U,\phi)\mapsto \phi^{-1}[U]$,
	 	\item restriction $\mathcal{O}(X)\times\mathcal{G}\to\mathcal{G}:(U,\phi)\mapsto \phi|_U$,
		\item composition $\mathcal{G}\times\mathcal{G}\to\mathcal{G}:(\phi,\psi)\mapsto \psi\circ \phi$, and
		\item inversion $\mathcal{G}\to\mathcal{G}:\phi\mapsto \phi^{-1}$.
	\end{enumerate}
\end{definition}

There are some obvious redundancies in the above definition, but we provide the full list for ease of reference. We note that a pseudogroup $\mathcal{G}$ on $X$ forms an example of a \emph{groupoid}, a (small) category in which all arrows are invertible: the set of objects is $\mathcal{O}(X)$ and the set of arrows is $\mathcal{G}$. By properties (iii), (vii), and (viii) above, every Borel pseudogroup is a \emph{Borel groupoid}, in the terminology of \cite{MR3660238}. The following lemma is now immediate from Definitions \ref{defn:parametrization_locally_(G,X)_spaces} and \ref{defn:Borel_pseudogroup}, together with the fact that the subset relation and intersection operation on $\mathcal{O}(X)$ are Borel.

\begin{lemma} 
If $\mathcal{G}$ is a Borel pseudogroup on a locally compact Polish space $X$, then $\mathfrak{P}(\mathcal{G},X)$ inherits from $\mathcal{O}(X)^{\mathbb{N}\times\mathbb{N}}\times\mathcal{G}^{\mathbb{N}\times\mathbb{N}}$ a standard Borel structure.\qed
\end{lemma}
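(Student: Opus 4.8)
The plan is to realize $\mathfrak{P}(\mathcal{G},X)$ as a \emph{Borel} subset of $\mathcal{O}(X)^{\mathbb{N}\times\mathbb{N}}\times\mathcal{G}^{\mathbb{N}\times\mathbb{N}}$; the conclusion then follows at once, since this product is standard Borel (a countable product of standard Borel spaces) and Borel subsets of standard Borel spaces are again standard Borel with the inherited structure. Now $\mathfrak{P}(\mathcal{G},X)$ is carved out of the product by conditions (i)--(iv) of Definition \ref{defn:parametrization_locally_(G,X)_spaces} (and, if one reads the notation $\varphi_{i,j}\colon U_{i,j}\to U_{j,i}$ of that definition at face value, by the auxiliary requirements $\mathrm{dom}(\varphi_{i,j})=U_{i,j}$ and $\mathrm{ran}(\varphi_{i,j})=U_{j,i}$), each of which is a conjunction, over $(i,j)\in\mathbb{N}^2$ or $(i,j,k)\in\mathbb{N}^3$, of a condition depending on only finitely many coordinates. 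Since countable intersections of Borel sets are Borel, it suffices to check that each such finitary condition defines a Borel set, and for that it is enough to exhibit it as the preimage of a Borel set under a Borel map assembled from coordinate projections and the structure maps of $\mathcal{G}$.

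I would organize the verification around two observations. First, every coordinate map $(\mathcal{U},c)\mapsto U_{i,j}$ into $\mathcal{O}(X)$ and $(\mathcal{U},c)\mapsto\varphi_{i,j}$ into $\mathcal{G}$ is Borel by definition of the product Borel structure; composing these with the Borel maps supplied by Definition \ref{defn:Borel_pseudogroup} (domain and range projection, $U\mapsto\mathrm{id}_U$, direct and inverse image, restriction, composition, inversion) and with the Borel intersection operation on $\mathcal{O}(X)$ shows that \emph{every} subexpression occurring in (i)--(iv) --- e.g.\ $U_{j,i}\cap U_{j,k}$, the set $W:=\varphi_{i,j}^{-1}[U_{j,i}\cap U_{j,k}]$, the composite $\varphi_{j,k}\circ\varphi_{i,j}$, and the restrictions $(\varphi_{j,k}\circ\varphi_{i,j})|_W$ and $\varphi_{i,k}|_W$ --- is a Borel function of $(\mathcal{U},c)$ valued in $\mathcal{O}(X)$ or $\mathcal{G}$. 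Second, the relations being asserted are pullbacks of Borel sets along these Borel maps: the inclusions in (i) and in the first half of (iv) are pullbacks of the Borel subset relation on $\mathcal{O}(X)^2$ (Borel by the remarks preceding the lemma), while the equalities in (ii), (iii), and the second half of (iv), as well as the domain/range constraints if one needs them, are pullbacks of the diagonal $\Delta_{\mathcal{G}}\subseteq\mathcal{G}^2$ or $\Delta_{\mathcal{O}(X)}\subseteq\mathcal{O}(X)^2$, which are Borel because any standard Borel space has Borel diagonal. Running through (i)--(iv) with these two observations in hand exhibits $\mathfrak{P}(\mathcal{G},X)$ as a countable intersection of Borel sets.

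There is no substantial obstacle --- this is precisely the bookkeeping the paper intends by calling the lemma immediate --- but two minor points warrant care. The first is the principle, invoked repeatedly above, that ``two Borel maps into a standard Borel space $Y$ agree'' is a Borel condition; this is exactly the statement that $\Delta_Y$ is Borel in $Y^2$, which one can see directly by fixing a countable family of Borel subsets of $Y$ separating points and intersecting the corresponding Borel conditions. The second is to keep the groupoid conventions of the paragraph following Definition \ref{defn:pseudogroup} in view when handling condition (iv): because $\varphi_{j,k}\circ\varphi_{i,j}$ is, by those conventions, the homeomorphism with domain $\varphi_{i,j}^{-1}[\mathrm{dom}(\varphi_{j,k})\cap\mathrm{ran}(\varphi_{i,j})]$, one should compute $W$ separately and apply the Borel restriction map of Definition \ref{defn:Borel_pseudogroup}(vi) to it, so that both sides of the asserted equality are recognized as the intended elements of $\mathcal{G}$ regardless of whether the domain/range constraints have yet been imposed.
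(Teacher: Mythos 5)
Your proposal is correct and is exactly the bookkeeping the paper has in mind: the paper states the lemma as immediate from Definitions \ref{defn:parametrization_locally_(G,X)_spaces} and \ref{defn:Borel_pseudogroup} together with the Borel-ness of the subset relation and intersection on $\mathcal{O}(X)$, and your argument simply spells out that verification (pulling back Borel relations, including the Borel diagonal, along maps built from coordinate projections and the pseudogroup structure maps). No gaps; the attention to the groupoid convention in condition (iv) is a sensible extra precaution but does not change the argument.
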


\begin{definition}
	Let $\mathcal{G}$ be a pseudogroup on a locally compact Polish space $X$. The \emph{parameter space $\mathfrak{M}(\mathcal{G},X)$ of $(\mathcal{G},X)$-manifolds} is the set of all pairs $(\mathcal{U},c)$ in $\mathfrak{P}(\mathcal{G},X)$ such that $M_{(\mathcal{U},c)}$ is a $(\mathcal{G},X)$-manifold, or, equivalently, is Hausdorff.
\end{definition}

In order to show that $\mathfrak{M}(\mathcal{G},X)$ has the structure of a standard Borel space, it suffices to show that it is a Borel subset of $\mathfrak{P}(\mathcal{G},X)$, whenever $\mathcal{G}$ is a Borel pseudogroup. The essential (and somewhat surprising) difficulty, then, is to show that whether a pair $(\mathcal{U},c)$ describes a Hausdorff manifold can be expressed in a Borel way in the parameter space $\mathfrak{P}(\mathcal{G},X)$.

\begin{theorem}\label{thm:M(G,X)_Borel}
If $\mathcal{G}$ is a Borel pseudogroup on a locally compact Polish space $X$, then $\mathfrak{M}(\mathcal{G},X)$ is a Borel subset of $\mathfrak{P}(\mathcal{G},X)$. In particular, $\mathfrak{M}(\mathcal{G},X)$ forms a standard Borel space.
\end{theorem}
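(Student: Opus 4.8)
The plan is to express the condition ``$M_{(\mathcal{U},c)}$ is Hausdorff'' as a Borel condition on the pair $(\mathcal{U},c) \in \mathfrak{P}(\mathcal{G},X)$. Recall that a point of $M_{(\mathcal{U},c)}$ is (an equivalence class of) a point $x \in U_i$ for some $i$, and that two such points $[x \in U_i]$ and $[y \in U_j]$ are $\sim$-related precisely when $x \in U_{i,j}$ and $\varphi_{i,j}(x) = y$. The space fails to be Hausdorff exactly when there exist indices $i,j$ and points $x \in U_i \setminus U_{i,j}$, $y \in U_j \setminus U_{j,i}$ that cannot be separated, i.e.\ every pair of neighborhoods of $x$ in $U_i$ and $y$ in $U_j$ contains $\sim$-related points. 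Unwinding the quotient topology, this says: $x$ lies in the closure (taken in $U_i$) of $U_{i,j}$, say $x = \lim_n x_n$ with $x_n \in U_{i,j}$, and $\varphi_{i,j}(x_n) \to y$ in $U_j$, with $x \notin U_{i,j}$ and $y \notin U_{j,i}$. The first observation is thus that non-Hausdorffness is an $\mathbf{\Sigma}^1_1$ condition: one existentially quantifies over $i, j \in \mathbb{N}$ and over convergent sequences (equivalently, over points $x, y$ and invokes sequential closure), with the inner matrix being Borel because, by the hypothesis that $\mathcal{G}$ is a Borel pseudogroup, the maps $\varphi \mapsto \mathrm{dom}(\varphi)$, $(U,\varphi) \mapsto \varphi[U]$, and the evaluation/graph data of the $\varphi_{i,j}$ are all Borel, and the closure operation $U \mapsto \overline{U}$ on $\mathcal{O}(X)$ is Borel by the last bullet of Section \ref{subsection:spaces_of_subsets}.

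The harder half is to obtain a $\mathbf{\Pi}^1_1$ description as well, so that Suslin's theorem applies. The natural move is to reformulate Hausdorffness positively. For each pair $i,j$, say that the transition datum at $(i,j)$ is \emph{Hausdorff-good} if the set $\{(x, \varphi_{i,j}(x)) : x \in U_{i,j}\}$ is \emph{closed} in $U_i \times U_j$; equivalently, its closure in $X \times X$ meets $U_i \times U_j$ in exactly itself. A standard fact (going back to the Mac Lane--Moerdijk coequalizer picture, and to the folklore that gluing along open sets yields a Hausdorff space iff each gluing graph is closed) is that $M_{(\mathcal{U},c)}$ is Hausdorff if and only if this holds for every pair $(i,j)$. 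So the plan is: first prove this equivalence as a lemma — the forward direction is a diagonal/limit argument, the reverse direction says that if all gluing graphs are closed then a failure of Hausdorffness would produce, via a subsequence, a limiting pair contradicting closedness of some $\overline{\{(x,\varphi_{i,j}(x))\}}$ — and then observe that ``the graph of $\varphi_{i,j}$ is closed in $U_i \times U_j$'' is a Borel condition. For the latter: the map $(\mathcal{U},c) \mapsto \overline{\mathrm{graph}(\varphi_{i,j})} \in \mathcal{F}(X \times X)$ is Borel (using that $U \mapsto \overline{U}$ and the image maps are Borel, and that $X \times X$ is again locally compact Polish), the map $(\mathcal{U},c) \mapsto U_i \times U_j \in \mathcal{O}(X \times X)$ is Borel, intersection $\mathcal{F} \times \mathcal{O} \to \mathcal{F}$ is Borel, and equality of two Borel-parametrized closed sets is a Borel relation on $\mathcal{F}(X\times X)$ (the Effros Borel structure is standard Borel, so its diagonal is Borel). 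Intersecting over the countably many pairs $(i,j) \in \mathbb{N}^2$ keeps us Borel.

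Assembling: $\mathfrak{M}(\mathcal{G},X)$ is the set of $(\mathcal{U},c)$ such that $\overline{\mathrm{graph}(\varphi_{i,j})} \cap (U_i \times U_j) = \mathrm{graph}(\varphi_{i,j})$ for all $i,j \in \mathbb{N}$, which by the previous paragraph is a countable intersection of Borel conditions, hence Borel; and by the lemma it coincides with the Hausdorffness condition, hence with $\mathfrak{M}(\mathcal{G},X)$. (One could alternatively skip the lemma and simply check that the $\mathbf{\Sigma}^1_1$ description of non-Hausdorffness and the $\mathbf{\Pi}^1_1$ description given by ``all graphs closed'' are complementary, and invoke Suslin's theorem; but proving the clean geometric lemma is cleaner and reusable.) Since a Borel subset of a standard Borel space is standard Borel, $\mathfrak{M}(\mathcal{G},X)$ is a standard Borel space, completing the proof.

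\textbf{Main obstacle.} I expect the crux to be the equivalence lemma — specifically its reverse direction — together with the care needed to see that ``closedness of the transition graph'' is genuinely the right pointwise condition (e.g.\ making sure that closedness of each individual graph, rather than of the full relation $\sim$ on $\coprod U_i$, suffices, which is where the countability of the atlas and a subsequence extraction enter). The Borelness bookkeeping, by contrast, should be routine given the toolkit of Section \ref{subsection:spaces_of_subsets} and the Borel-pseudogroup axioms, though one must be slightly attentive that all the ``$U \mapsto \overline{U}$'' and image operations are being applied in the locally compact Polish space $X$ or $X \times X$, where those operations were asserted to be Borel.
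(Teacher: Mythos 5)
Your route is genuinely different from the paper's, and its geometric core is sound. The paper never isolates a single Borel characterization of Hausdorffness: it gives a $\mathbf{\Sigma}^1_1$ description (for each pair $(i,j)$ there exist open $V\supseteq U_i\setminus U_{i,j}$ and $W\supseteq U_j\setminus U_{j,i}$ with $\varphi_{i,j}[V]\cap W=\varnothing$, the existence of such $V,W$ coming from normality of the manifold) and a $\mathbf{\Pi}^1_1$ description (negation of the non-separation condition), and then invokes Suslin. You instead use the fact that the quotient map $\pi:\coprod_i U_i\to M_{(\mathcal{U},c)}$ is open, so $M_{(\mathcal{U},c)}$ is Hausdorff iff the relation $\sim$ is closed in $(\coprod_i U_i)^2$, i.e.\ iff each graph $G_{i,j}=\{(x,\varphi_{i,j}(x)):x\in U_{i,j}\}$ is closed in $U_i\times U_j$ (the case $i=j$ being the diagonal). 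That equivalence is correct and, contrary to your worry, its reverse direction needs no subsequence extraction or countability of the atlas: it is the standard open-quotient argument, with the per-pair statement following because $(\coprod_i U_i)^2$ decomposes as the disjoint union of the clopen pieces $U_i\times U_j$. If the closed-graph condition is Borel in the parameter, your proof is complete and does not even need Suslin, which is a real (if modest) gain in cleanliness over the paper.

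The genuine gap is in how you certify that condition as Borel. As written, you propose to compare $\overline{G_{i,j}}\cap(U_i\times U_j)$ with $G_{i,j}$ ``as Borel-parametrized closed sets'' via the diagonal of $\mathcal{F}(X\times X)$ — but neither of these sets is closed in $X\times X$ (the first is closed-intersect-open, and whether the second is relatively closed in $U_i\times U_j$ is exactly the condition at issue), so the Effros-diagonal argument does not apply, and likewise ``intersection $\mathcal{F}\times\mathcal{O}\to\mathcal{F}$'' is not a well-defined operation. The step can be repaired, but it must be reformulated. Since any point of $\overline{G_{i,j}}\cap(U_i\times U_j)$ whose first coordinate lies in $U_{i,j}$ (or whose second lies in $U_{j,i}$) is automatically on $G_{i,j}$, by continuity of $\varphi_{i,j}$ resp.\ $\varphi_{j,i}$, closedness of $G_{i,j}$ in $U_i\times U_j$ is equivalent to $\overline{G_{i,j}}\cap(U_i\times U_j)\subseteq U_{i,j}\times U_{j,i}$, a condition of the shape ``$F\cap V\subseteq W$'' with $F\in\mathcal{F}(X\times X)$ and $V,W\in\mathcal{O}(X\times X)$. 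That shape \emph{is} Borel on $\mathcal{F}(X\times X)\times\mathcal{O}(X\times X)^2$: fixing a countable basis of precompact open boxes $B_m$ of $X\times X$, one has $F\cap V\subseteq W$ iff for every $m$ with $\overline{B_m}\subseteq V$ one has $F\cap(\,(X\times X)\setminus W)\cap\overline{B_m}=\varnothing$, and each clause is Borel (closed-set intersection is Borel, missing a fixed compact set is Fell-open, and $\overline{B_m}\subseteq V$ is Borel in $V$). This is the same compact-exhaustion device the paper uses for its $H_{i,j}$ and in Lemma~\ref{lem:contain_closure_Borel1}. You also still owe the verification that $(\mathcal{U},c)\mapsto\overline{G_{i,j}}\in\mathcal{F}(X\times X)$ is Borel; this is not one of the listed pseudogroup axioms, but it follows by checking the generating sets $\{F:F\cap O\neq\varnothing\}$: $\overline{G_{i,j}}\cap O\neq\varnothing$ iff some basic box $B\times B'\subseteq O$ has $\varphi_{i,j}[B\cap U_{i,j}]\cap B'\neq\varnothing$, which is Borel exactly as in the paper's treatment of $\neg\mathrm{Haus}$. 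With those two repairs your argument goes through; without them, the central Borelness claim is unsupported.
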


\begin{proof} 

Our proof proceeds in two parts: first we show that $\mathfrak{M}(\mathcal{G},X)$ is a coanalytic subset of $\mathfrak{P}(\mathcal{G},X)$, then we show that it is an analytic subset of $\mathfrak{P}(\mathcal{G},X)$. It thus follows by Suslin's Theorem that $\mathfrak{M}(\mathcal{G},X)$ is Borel. These two characterizations of $\mathfrak{M}(\mathcal{G},X)$ may be loosely regarded as targeting the equivalent conditions, for a second countable topological manifold, of Hausdorffness and normality, respectively.

More precisely, we will first show that $$\neg\mathrm{Haus}:=\{(\mathcal{U},c)\in\mathfrak{P}(\mathcal{G},X)\mid M_{(\mathcal{U},c)}\textnormal{ is not Hausdorff}\}$$ is an analytic subset of $\mathfrak{P}(\mathcal{G},X)$. For convenience, fix a countable basis $\mathcal{B}$ for the topology on $X$. $\neg\mathrm{Haus}$ is the collection of those $(\mathcal{U},c)\in \mathfrak{P}(\mathcal{G},X)$ for which there exists an $(x,y)\in X^2$ such that the following hold:
\begin{enumerate}[label=\textup{\roman*.}]
\item $x\neq y$, and
\item $\exists\,i,j\in\mathbb{N}$ such that
\begin{enumerate}[label=\textup{\alph*.}]
\item $x\in U_{i}$ and $y\in U_{j}$, and
\item $\forall\,V,W\in\mathcal{B}\;[(x\in V\wedge y\in W)\rightarrow\,\varphi_{i,j}[V]\cap W\neq\varnothing]$.
\end{enumerate}
\end{enumerate}
Item (i) is an open condition in $X^2$ and the conditions in (ii.a) are each open in $X\times\mathcal{O}(X)^{\mathbb{N}\times\mathbb{N}}$, where $\mathcal{O}(X)$ has the Fell topology. All enumerated quantifications are countable, and the implication in (ii.b) may be rewritten as 
\[
	\neg(x\in V\wedge y\in W)\vee \varphi_{i,j}[V]\cap W\neq\varnothing,
\]
the left half of which is closed in $X^2$.
This leaves only the right half: \emph{for any two open sets $V,W\subseteq X$, we would like $\{\varphi\in\mathcal{G}\mid \varphi[V]\cap W\neq\varnothing\}$ to be a Borel subset of $\mathcal{G}$}. This is indeed the case since the direct image is a Borel operation $\mathcal{O}(X)\times\mathcal{G}\to\mathcal{O}(X)$, intersection is a Borel operation in $\mathcal{O}(X)$, and $\phi[V]\cap W\neq\varnothing$ can be verified by checking whether $q_i\in\phi[V]\cap W$, where $q_i$ $(i\in\mathbb{N})$ is a fixed enumeration of a countable dense subset of $X$. Thus, the collection of $((x,y),(\mathcal{U},c))\in X^2\times \mathfrak{P}(\mathcal{G},X)$ satisfying items (i) and (ii) above is Borel. The projection of this collection onto $\mathfrak{P}(\mathcal{G},X)$ is exactly $\neg\mathrm{Haus}$, concluding the argument.

Next, we show $\mathfrak{M}(\mathcal{G},X)$ is an analytic subset of $\mathfrak{P}(\mathcal{G},X)$. Fix $i,j\in\mathbb{N}$ and let
\begin{align*} H_{i,j}=\{((V,W),(\mathcal{U},c))\in\mathcal{O}(X)^2\times \mathfrak{P}(\mathcal{G},X)\mid V\supseteq U_{i}\backslash U_{i,j} & \textnormal{ and } W\supseteq U_{j}\backslash U_{j,i}\\ &\textnormal{ and }\varphi_{i,j}[V]\cap W=\varnothing\}.\end{align*}
Using the local compactness of $X$ to write $U_{i}\backslash U_{i,j}$ as a union of compact sets $K_\ell$ (for $\ell\in\mathbb{N}$), we have
\[
\{V\in\mathcal{O}(X)\mid V\supseteq U_{i}\backslash U_{i,j}\}=\bigcap_{\ell\in\mathbb{N}}\{V\in\mathcal{O}(X)\mid V\supseteq K_\ell\},
\]
so the first two defining conditions of $H_{i,j}$ are Borel in $\mathcal{O}(X)$. The third condition is Borel for much the same reason as in the previous paragraph.

We claim that 
\begin{align}
\label{eq:T2}
\mathfrak{M}(\mathcal{G},X)=\bigcap_{(i,j)\in\mathbb{N}^2}\mathrm{proj}_{\mathfrak{P}(\mathcal{G},X)}(H_{i,j}).
\end{align}
As each $H_{i,j}$ is Borel, the right-hand side is a countable intersection of analytic sets, and therefore analytic, as desired.
What remains is only the verification that this formulation does indeed capture the Hausdorffness of the associated manifolds $M_{(\mathcal{U},c)}$.
To this end, temporarily denote by $M_{i,j}$ the restriction to the sum $U_{i}\sqcup U_{j}$ of the quotient defining $M_{(\mathcal{U},c)}$ (as in equation (\ref{eq:quotient_of_sum})), and observe that $M_{(\mathcal{U},c)}$ is Hausdorff if and only if each $M_{i,j}$ is.
Fixing $i$ and $j$, note that the restricted quotient map $\pi:U_{i}\sqcup U_{j}\to M_{i,j}$ is open and injective when restricted to $U_{i}$ or $U_{j}$, hence if the $\pi$-preimages of distinct $x$ and $y$ in $M_{i,j}$ both intersect one of $U_{i}$ or $U_{j}$ then the $\pi$-images of disjoint open neighborhoods therein will separate $x$ and $y$ in $M_{i,j}$. If, on the other hand, they do not, then the $\pi$-images of any $V$ and $W$ witnessing that $(\mathcal{U},c)\in\mathrm{proj}_{\mathfrak{P}(\mathcal{G},X)}H_{i,j}$ will separate $x$ and $y$ in $M_{i,j}$; this shows that $M_{i,j}$ is Hausdorff, as desired.
Strictly speaking, what we've just shown is the $\supseteq$ half of the equality asserted in equation (\ref{eq:T2}).
For the $\subseteq$ half, fix a Hausdorff manifold $M_{(\mathcal{U},c)}$ and $i,j\in\mathbb{N}$ and (continuing with the notation just above) exhaustions of the $\pi$-images of $U_i\backslash U_{i,j}$ and $U_j\backslash U_{j,i}$, respectively, by increasing families $(K^i_\ell)_{\ell\in\mathbb{N}}$ and $(K^j_\ell)_{\ell\in\mathbb{N}}$ of compact sets with $K^i_\ell\cap K^j_\ell=\varnothing$ for all $\ell\in\mathbb{N}$.
By the normality of $M_{(\mathcal{U},c)}$, for each $\ell$ there exist disjoint open $V^i_\ell$ and $V^j_\ell$ in $M_{(\mathcal{U},c)}$ such that $K^i_\ell\subseteq V^i_\ell
\subseteq\pi[U_i]$ and $K^j_\ell\subseteq V^j_\ell\subseteq \pi[U_j]$, with
$$V^i_\ell\cap(V^j_\ell\cup \pi[U_j\backslash U_{j,i}])=(V^i_\ell\cup \pi[U_i\backslash U_{i,j}])\cap V^j_\ell=\varnothing,$$
whence
$$V:=\bigcup_{\ell\in\mathbb{N}}\pi^{-1}[V^i_\ell]\text{ and }W:=\bigcup_{\ell\in\mathbb{N}}\pi^{-1}[V^j_\ell]$$
witness that $M_{(\mathcal{U},c)}\in\mathrm{proj}_{\mathfrak{P}(\mathcal{G},X)}(H_{i,j})$, and as $i$ and $j$ were arbitrary, this completes the proof.
\end{proof}

\begin{remark} The pattern of the above argument will recur throughout this work, in particular in the proofs of Theorem \ref{thm:compact_Borel} and Lemma \ref{lem:complete} that the classes of compact manifolds and complete manifolds, respectively, are each Borel.
Note that, as for Hausdorff-ness, the naive framing of each --- \emph{for all ``reals'' ($x,y\in X$, basic open covers $(U_i)_{i\in I}$, Cauchy sequences $(x_n)_{n\in\mathbb{N}}$, etc.),~there exists a Borel phenomenon (disjoint open sets, a finite subcover, a limit, etc.)}~--- is $\mathbf{\Pi}^1_1$ in form.
Fortunately for our purposes, and perhaps for reasons not unconnected to their centrality, each of these notions, just as above, admits a $\mathbf{\Sigma}^1_1$ formulation as well.
It is this dynamic which accounts for the prominence of Suslin's Theorem in our arguments.
\end{remark}

It remains to show that the relevant pseudogroups are, in fact, Borel. Since $\mathsf{Top}$ is the largest pseudogroup, we begin there; other pseudogroups will be endowed with the Borel structure they inherit as Borel subsets of $\mathsf{Top}$. Our description of this Borel structure and the proof that it is Borel are based on \cite[Lem.\ 7.2]{MR1731384}.

We fix an injective enumeration $q_i$ (for $i\in\mathbb{N}$) of a countable dense subset of $X$ and $u$ (for ``undefined'') a default value outside of $X$. Consider $X\cup\{u\}$ with the standard Borel structure induced by adding $u$ to $X$ as an isolated point. The standard Borel structure which we place on $\mathsf{Top}$ is that resulting from identifying each element $\phi\in\mathsf{Top}$ with the triple consisting of its domain, its range, and its sequence of values on the $q_i$'s, using $u$ to indicate where it is not defined. Explicitly:

\begin{definition}\label{defn:Borel_structure_Top}
	Let $\mathcal{T}\subseteq\mathcal{O}(X)\times\mathcal{O}(X)\times(X\cup\{u\})^\mathbb{N}$ be the set of all triples $(U,V,(r_i)_ {i\in\mathbb{N}})$ in $\mathcal{O}(X)\times\mathcal{O}(X)\times (X\cup\{u\})^\mathbb{N}$ such that:
	\begin{enumerate}[label=\textup{\roman*.}]
		\item if $q_i\in U$, then $r_i\in V$, while if $q_i\notin U$, then $r_i=u$, and
		\item there exists a homeomorphism $\phi:U\to V$ such that for each $q_i\in U$, $\phi(q_i)=r_i$.
	\end{enumerate}
\end{definition}

It will also be convenient in what follows to fix a compatible complete metric $d$ on $X$ and an enumeration $K_\ell$ (for $\ell\in\mathbb{N}$) of compact subsets of $X$ which are closures of precompact basic open sets, so that every open subset of $X$ can be expressed as a union of some $K_\ell$'s; such a family exists by the local compactness and second countability of $X$.
We note that $\mathcal{O}(X)$ can also be given a compatible metric and so $\mathcal{T}$ will inherit a metric structure from $\mathcal{O}(X)\times\mathcal{O}(X)\times(X\cup\{u\})^\mathbb{N}$, but in general we have no reason to suppose that the resulting topology is Polish (or equivalently, that $\mathcal{T}$ is a $G_\delta$ subspace of the latter).

\begin{lemma}\label{lem:T_is_Borel}
	$\mathcal{T}$ is a Borel subset of $\mathcal{O}(X)\times\mathcal{O}(X)\times(X\cup\{u\})^\mathbb{N}$.
\end{lemma}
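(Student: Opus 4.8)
The plan is to express membership in $\mathcal{T}$ as a conjunction of a manifestly Borel condition (item (i)) and a condition capturing (ii), and to show that the latter is simultaneously analytic and coanalytic, whence Borel by Suslin's Theorem. Item (i) is immediate: the set of triples $(U,V,(r_i)_i)$ for which $q_i\in U \Leftrightarrow r_i\neq u$ and ($q_i\in U \Rightarrow r_i\in V$) is Borel, since ``$q_i\in U$'' defines a Borel (indeed, by the Effros structure, clopen-generated) subset of $\mathcal{O}(X)$ and $r_i\mapsto$ ``$r_i\in V$'' is Borel in $(X\cup\{u\})\times\mathcal{O}(X)$. So everything reduces to condition (ii): \emph{there exists a homeomorphism $\phi\colon U\to V$ with $\phi(q_i)=r_i$ for all $q_i\in U$}.

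For the analytic direction, one quantifies existentially over a single real coding a candidate map. The natural choice is to work with $\mathrm{Homeo}(X\cup\{u\})$ — or, since we only ever need values on a dense set, to code $\phi$ by the array $(\phi(q_i))_{i}$ together with the array of values of $\phi^{-1}$ on the $q_i$'s, i.e.\ to existentially quantify over a pair of sequences $(s_i)_i, (t_i)_i \in (X\cup\{u\})^{\mathbb{N}}$ and assert, in a Borel way, that they arise from a homeomorphism $U\to V$ extending the prescribed partial map. Concretely: that $s_i = r_i$ whenever $q_i\in U$; that the assignment $q_i\mapsto s_i$ (restricted to $q_i\in U$) is a uniformly continuous injection with uniformly continuous inverse on the dense set $\{q_i: q_i\in U\}$ of $U$, with inverse agreeing with $q_j\mapsto t_j$; and that its continuous extension is onto $V$ — this last using the exhaustions $U=\bigcup K_\ell$, $V=\bigcup K_{\ell'}$ by the fixed precompact pieces to phrase surjectivity as a countable condition. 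Each clause here is Borel in $(U,V,(r_i)_i,(s_i)_i,(t_i)_i)$ once one writes ``the map $q_i\mapsto s_i$ is uniformly continuous'' as a $\forall\varepsilon\exists\delta\forall i,j$ statement over rationals using the fixed metric $d$ on $X$ and a metric on $\mathcal{O}(X)$; projecting out the two extra sequences yields a $\mathbf{\Sigma}^1_1$ description of (ii).

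For the coanalytic direction — the genuine obstacle — one must say ``$\phi$ extends to \emph{no} homeomorphism'' is ruled out, i.e.\ directly produce a $\mathbf{\Pi}^1_1$ description of (ii). The device is Brouwer-type invariance plus a compactness/exhaustion argument: a partial map $h$ defined on the dense set $\{q_i: q_i\in U\}$ extends to a homeomorphism $U\to V$ if and only if (a) $h$ is uniformly continuous on compact pieces — for every $\ell$ and every rational $\varepsilon>0$ there is a rational $\delta>0$ such that $d(q_i,q_j)<\delta$ and $q_i,q_j$ in a $\delta$-neighborhood of $K_\ell\cap U$ implies $d(r_i,r_j)<\varepsilon$ — (b) the symmetric condition holds for the inverse, whose values on the $q_j$'s can be recovered from the $r_i$'s (``$t_j$ is the unique accumulation point of $\{q_i : r_i \text{ near } q_j\}$''), and (c) the resulting continuous maps $\bar h\colon U\to V$ and $\bar h^{-1}\colon V\to U$ are mutually inverse, again checked on the $q_i$'s and $q_j$'s by a $\forall$ over $\mathbb{N}$. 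Conditions (a)–(c) are each arithmetic over the fixed countable data ($\mathbb{Q}^+$, the $q_i$'s, the $K_\ell$'s) sitting in front of a Borel matrix, hence $\mathbf{\Pi}^1_1$ (in fact Borel once the $\exists\delta$ is absorbed, but $\mathbf{\Pi}^1_1$ suffices). The subtlety to get right is that ``$\bar h$ is a homeomorphism onto \emph{all} of $V$'' and not merely a homeomorphism onto a dense open subset: here one uses that $\bar h$ and the independently-constructed $\bar h^{-1}$ are honest two-sided inverses on dense sets, so by continuity on the compact exhausting pieces they are two-sided inverses everywhere, forcing both to be onto. Combining the $\mathbf{\Sigma}^1_1$ and $\mathbf{\Pi}^1_1$ descriptions of (ii), and intersecting with the Borel condition (i), Suslin's Theorem gives that $\mathcal{T}$ is Borel.
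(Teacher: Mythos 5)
Your core device for condition (ii) --- uniform continuity of $q_i\mapsto r_i$ on the compact exhaustion, recoverability and continuity of the inverse association, and agreement of the two on the countable data --- is exactly the paper's, which simply states these as countable-quantifier conditions and observes they are outright Borel; your $\mathbf{\Sigma}^1_1$ half and the appeal to Suslin are therefore superfluous (as you note parenthetically yourself). The genuine problem is the step you flag and then dispose of too quickly: the claim that since $\bar h$ and $\bar h^{-1}$ are two-sided inverses on dense sets, continuity forces them to be two-sided inverses everywhere, ``forcing both to be onto.'' This inference fails, because the compositions $\bar h^{-1}\circ\bar h$ and $\bar h\circ\bar h^{-1}$ need not be defined off the dense sets: nothing in your clauses prevents the continuous extension $\bar h$ from taking a value in $\overline{V}\setminus V$ at a point of $U$ other than the $q_i$'s. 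Concretely, take $X=\mathbb{R}$, $U=(0,1)$, $V=(0,1)\cup(1,2)$, and let $h\colon(0,1)\to(0,2)$ be an increasing piecewise-linear homeomorphism with $h(\alpha)=1$ for an irrational $\alpha$; set $r_i=h(q_i)$ when $q_i\in U$ and $r_i=u$ otherwise. Every clause you list holds ($h$ is uniformly continuous on compact pieces, the inverse association is uniformly continuous with recoverable values, and the two maps are mutually inverse at all the $q_i$'s and $q_j$'s), yet no homeomorphism $U\to V$ extends the data --- indeed none exists at all, since $U$ is connected and $V$ is not. So your conditions are necessary but not sufficient, and your sufficiency argument breaks precisely at the ``onto'' step.

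The repair is one more countable-quantifier clause guaranteeing that the extensions stay inside (and hence, by the mutual-inverse argument, map onto) the open sets: for every $K_\ell\subseteq U$ there is a rational $\epsilon>0$ with $d(r_i,X\setminus V)\geq\epsilon$ for all $i$ such that $q_i\in K_\ell$ --- equivalently $\bar h[K_\ell]=\overline{\{r_i\mid q_i\in K_\ell\}}\subseteq V$ --- together with the symmetric clause with the roles of $U$ and $V$ exchanged. With these added, $\bar h\colon U\to V$ and $\bar h^{-1}\colon V\to U$ are everywhere-defined continuous maps whose compositions equal the identity on dense sets and hence everywhere, so each is a homeomorphism onto; and since all quantifiers remain countable, the characterization is Borel and the lemma follows with no Suslin argument. (In fairness, the same point is elided in the paper's own conditions (a)--(d), which need the analogous supplement; your surjectivity phrasing in the $\mathbf{\Sigma}^1_1$ half shows you had the right ingredient, but it must be installed in the Borel characterization itself rather than replaced by the density argument.)
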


\begin{proof}
	We leave it to the reader to convince themselves that condition (i) in Definition \ref{defn:Borel_structure_Top} is Borel and instead focus on (ii). Let $\mathbb{Q}^{+}$ denote the set of positive rational numbers. Observe that $(U,V,(r_i)_ {i\in\mathbb{N}})\in\mathcal{O}(X)\times\mathcal{O}(X)\times(X\cup\{u\})^\mathbb{N}$ satisfies (ii) if and only if the following conditions hold:
	\begin{enumerate}[label=\textup{\alph*.}]
		\item For all $K_\ell\subseteq U$,
		\[
			\forall \epsilon\in\mathbb{Q}^+\exists\delta\in\mathbb{Q}^+\forall q_i,q_j\in K_\ell(d(q_i,q_j)<\delta \rightarrow d(r_i,r_j)<\epsilon).
		\]
		\item
		\[
			\forall i\neq j(q_i,q_j\in U\rightarrow r_i\neq r_j)
		\]
		\item 
		\[
			\forall \epsilon\in\mathbb{Q}^+\forall q_i\in V\exists r_j\in V(d(q_i,r_j)<\epsilon)
		\]
		\item For all $K_\ell\subseteq V$,
		\[
			\forall \epsilon\in\mathbb{Q}^+\exists\delta\in\mathbb{Q}^+\forall r_i,r_j\in K_\ell(d(r_i,r_j)<\delta \rightarrow d(q_i,q_j)<\epsilon).
		\]
	\end{enumerate}
	Condition (a) says that the association $q_i\mapsto r_i$ is uniformly continuous on a dense subset of each of the compact sets $K_\ell$ in $U$, which is equivalent, since $U$ is covered by these $K_\ell$'s, to inducing a continuous function $U\to V$. Condition (b) says that this function, at least on the $q_i$'s, is injective, and (c) says that it has dense image in $V$. Lastly, (d) says that the association $r_i\mapsto q_i$ likewise induces a continuous function $V\to U$. We are using the default value $u\notin X$ to avoid accidentally associating an $r_i\in V$ to some $q_i\notin U$.  The uniqueness of continuous extensions of functions from dense domains will ensure that the two resulting maps are inverse to each other. As all quantifiers range over countable sets, these are all Borel conditions.
\end{proof}

We endow $\mathsf{Top}$ with the standard Borel structure it inherits from its natural identification with $\mathcal{T}$.

\begin{lemma}\label{lem:evaluation_is_Borel}
	The evaluation map
	\[
		e:X\times\mathsf{Top}\to X\cup\{u\}:(x,\phi)\mapsto\begin{cases}\phi(x)&\text{if $x\in\mathrm{dom}(\phi)$},\\u&\text{else}.\end{cases}
	\]
	is Borel on the relevant spaces.
\end{lemma}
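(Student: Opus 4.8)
The plan is to show that the graph of $e$ is a Borel subset of $(X\times\mathsf{Top})\times(X\cup\{u\})$; since all three spaces are standard Borel, this yields that $e$ is Borel by the graph characterization of Borel maps recorded in Section~\ref{subsection:classical_DST}. Write $D=\{(x,\phi):x\in\mathrm{dom}(\phi)\}$. First I would observe that $D$ is Borel: under the identification $\mathsf{Top}\cong\mathcal{T}$ the assignment $\phi\mapsto\mathrm{dom}(\phi)$ is the first coordinate projection $\mathcal{T}\to\mathcal{O}(X)$, hence continuous, while the membership relation $\{(x,U):x\in U\}$ is open in $X\times\mathcal{O}(X)$ --- given $x\in U$, local compactness supplies a compact $K$ with $x\in\mathrm{int}(K)\subseteq K\subseteq U$, and $\mathrm{int}(K)\times\{U':K\subseteq U'\}$ is then an open box lying inside the relation, using that $\{U':K\subseteq U'\}$ is Fell-open for compact $K$. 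On the (consequently Borel) set $(X\times\mathsf{Top})\setminus D$, the map $e$ is constantly $u$, contributing a Borel piece to the graph, so it remains only to analyze the graph over $D$.

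Over $D$ the task is to express the predicate $\phi(x)=y$ in a Borel way, and here I would use the density of the $q_i$ together with the continuity of $\phi$ on the open set $\mathrm{dom}(\phi)$ to argue that, for $(x,\phi)\in D$ and $y\in X$,
\[
\phi(x)=y\iff\forall\,\epsilon\in\mathbb{Q}^+\;\exists\,\delta\in\mathbb{Q}^+\;\forall\,i\in\mathbb{N}\;\bigl(q_i\in\mathrm{dom}(\phi)\wedge d(x,q_i)<\delta\rightarrow d(r_i,y)<\epsilon\bigr),
\]
where $r_i$ denotes the $i$-th entry of the sequence-part of $\phi$ under $\mathsf{Top}\cong\mathcal{T}$, so that $r_i=\phi(q_i)$ whenever $q_i\in\mathrm{dom}(\phi)$. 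The left-to-right direction is immediate from continuity of $\phi$ at $x$; for the converse, if $\phi(x)=y'\neq y$ one chooses $\epsilon$ small relative to $d(y,y')$ and then $\delta$ small enough that the $\delta$-ball about $x$ lies in $\mathrm{dom}(\phi)$ with $\phi$-image inside a tiny ball about $y'$, and then any $q_i$ in that $\delta$-ball --- such $q_i$ exist, by openness of $\mathrm{dom}(\phi)$ and density of the $q_i$ --- forces both $d(r_i,y)<\epsilon$ and $d(r_i,y')$ small, contradicting $d(y,y')>0$.

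It then remains to check that the right-hand side defines a Borel subset of $D\times X$. The quantifiers over $\mathbb{Q}^+$ and $\mathbb{N}$ are countable, so it suffices that the innermost predicate be Borel for each fixed $i,\delta,\epsilon$; rewriting the implication as a disjunction, the first disjunct is Borel since $\phi\mapsto\mathrm{dom}(\phi)$ is Borel and $\{U:q_i\in U\}$ and $\{x:d(x,q_i)<\delta\}$ are Borel, and for the second disjunct note that the antecedent $q_i\in\mathrm{dom}(\phi)$ already forces $r_i\in X$ by clause (i) of Definition~\ref{defn:Borel_structure_Top}, so that with $\phi\mapsto r_i$ continuous $\mathsf{Top}\to X\cup\{u\}$ and $d$ continuous on $X\times X$ the condition $d(r_i,y)<\epsilon$ (read as false when $r_i=u$) cuts out a Borel set. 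Assembling the pieces shows the graph of $e$ is Borel, hence $e$ is Borel. The only genuinely delicate point is the bookkeeping around the default value $u$: in particular, confining the quantifier formula to the Borel set $D$ so that it cannot spuriously certify some $y\in X$ as $\phi(x)$ when $x$ merely lies on the boundary of $\mathrm{dom}(\phi)$, and ensuring $d(r_i,y)$ is only ever evaluated when $r_i\in X$.
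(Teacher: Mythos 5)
Your proof is correct. The paper argues the same lemma by a slightly different mechanism: instead of the graph criterion, it computes preimages directly, noting that $e^{-1}[\{u\}]=\{(x,\phi)\mid x\notin\mathrm{dom}(\phi)\}$ is Borel and that for Borel $B\subseteq X$ the set $e^{-1}[B]$ is cut out by the condition $x\in\mathrm{dom}(\phi)\wedge\phi(x)\in B$, where ``$\phi(x)\in B$'' is certified Borel by selecting, uniformly in $(x,\phi)$ and in a Borel way, a subsequence $(q_{i_n})$ of the dense sequence converging to $x$ inside $\mathrm{dom}(\phi)$ and asking whether $\lim_n r_{i_n}\in B$. Both arguments rest on the same core fact --- $\phi(x)$ is recoverable from the values $r_i$ at rational points near $x$ by continuity --- but your route replaces the paper's (rather tersely justified) uniform Borel subsequence selection and limit operation with an explicit $\forall\epsilon\,\exists\delta\,\forall i$ countable-quantifier formula, at the cost of invoking the graph characterization of Borel maps and verifying that the membership relation $\{(x,U):x\in U\}$ is open in $X\times\mathcal{O}(X)$ (which your local-compactness argument does correctly). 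Your handling of the default value $u$ and the restriction of the quantifier formula to the Borel set $D=\{(x,\phi):x\in\mathrm{dom}(\phi)\}$ --- so that boundary points of $\mathrm{dom}(\phi)$ cannot spuriously satisfy the formula --- addresses exactly the delicate points, so the argument stands as a complete alternative to the one in the paper.
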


\begin{proof}
	Observe that
	\[
		e^{-1}[\{u\}]=\{(x,\phi)\in X\times\mathsf{Top}\mid x\notin\mathrm{dom}(\phi)\},
	\]
	which is clearly Borel in $X\times\mathsf{Top}$. If $B\subseteq X$ is Borel, then
	\[
		e^{-1}[B]=\{(x,\phi)\in X\times\mathsf{Top}\mid x\in\mathrm{dom}(\phi)\land \phi(x)\in B\},
	\]
	and note that $\phi(x)\in B$ is a Borel condition in $(x,\phi)$ since we can choose, uniformly and in a Borel way, 
	a subsequence $(q_{i_n})_{n\in\mathbb{N}}$ of the $q_i$'s which converges to $x$ and ask whether $\lim_{n} r_{i_n}\in B$, where $\phi$ is represented by $(U,V,(r_i)_{i\in\mathbb{N}})$ in $\mathcal{T}$.
\end{proof}

\begin{theorem}\label{thm:Top_is_Borel}
	$\mathsf{Top}$ is a Borel pseudogroup.
\end{theorem}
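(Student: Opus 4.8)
The plan is to verify each of the eight conditions in Definition~\ref{defn:Borel_pseudogroup} for $\mathcal{G}=\mathsf{Top}$, working with the identification of $\mathsf{Top}$ with the Borel set $\mathcal{T}\subseteq\mathcal{O}(X)\times\mathcal{O}(X)\times(X\cup\{u\})^\mathbb{N}$ established in Lemma~\ref{lem:T_is_Borel}. The easy items come essentially for free from this coordinatization: domain projection (i) and range projection (ii) are just the first and second coordinate projections $\mathcal{T}\to\mathcal{O}(X)$, which are Borel (indeed continuous). For domain inclusion (iii), the map $U\mapsto\mathrm{id}_U$ sends $U$ to the triple $(U,U,(s_i)_{i\in\mathbb{N}})$ where $s_i=q_i$ if $q_i\in U$ and $s_i=u$ otherwise; since ``$q_i\in U$'' is a Borel condition on $\mathcal{O}(X)$ with the Fell topology, each coordinate of the output is a Borel function of $U$, so the whole map is Borel.

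Next I would handle the image operations (iv), (v) and the restriction (vi). The key tool here is the evaluation map $e:X\times\mathsf{Top}\to X\cup\{u\}$, which is Borel by Lemma~\ref{lem:evaluation_is_Borel}; more useful in practice is that for a fixed basic open $V\in\mathcal{B}$ the set $\{\phi\in\mathsf{Top}\mid \phi[V]\cap W\neq\varnothing\}$ is Borel (this was already used in the proof of Theorem~\ref{thm:M(G,X)_Borel}), and symmetrically for preimages. To show direct image $(U,\phi)\mapsto\phi[U]$ is Borel into $\mathcal{O}(X)$ with the Effros structure, it suffices to show that $\{(U,\phi)\mid \phi[U]\cap K=\varnothing\}$ is Borel for each compact $K$, equivalently that $\{(U,\phi)\mid \phi[U]\cap K\neq\varnothing\}$ is Borel; but $\phi[U]\cap K\neq\varnothing$ iff there is some $q_i\in\mathrm{dom}(\phi)\cap U$ with $\phi(q_i)$ in a suitable neighborhood of $K$, which unwinds via $e$ and a countable search over the $q_i$'s into a Borel condition (using continuity to pass from values on a dense set to membership in the closed set $K$). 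The inverse image (v) is handled identically after applying inversion, or directly using condition~(d)-type reasoning from Lemma~\ref{lem:T_is_Borel}. For restriction (vi): given $(U,\phi)$ with $\phi$ coded by $(D,R,(r_i))$, the restriction $\phi|_U$ is coded by $(D\cap U,\ \overline{\phi[D\cap U]}\text{-interior or more precisely }\phi[D\cap U],\ (r'_i))$ where $r'_i=r_i$ if $q_i\in D\cap U$ and $u$ otherwise; the domain $D\cap U$ is a Borel function of $(U,\phi)$ since intersection is Borel on $\mathcal{O}(X)$, the new range $\phi[D\cap U]$ is Borel by part (iv) just established, and each new value coordinate is manifestly Borel, so $(U,\phi)\mapsto\phi|_U$ is Borel into $\mathcal{T}$.

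Finally, composition (vii) and inversion (viii). Inversion $\phi\mapsto\phi^{-1}$: if $\phi$ is coded by $(D,R,(r_i)_{i\in\mathbb{N}})$, then $\phi^{-1}$ has domain $R$ and range $D$, so the first two coordinates swap (Borel); the subtlety is the value sequence, since $\phi^{-1}$ must be evaluated at the dense points $q_i$ that lie in $R$, and these are generally \emph{not} among the $r_j$. But $\phi^{-1}(q_i)=\lim_n q_{j_n}$ whenever $(r_{j_n})_n\to q_i$ with the $q_{j_n}\in D$, and such a subsequence of the $r_j$'s can be selected uniformly and in a Borel way (exactly as in the proof of Lemma~\ref{lem:evaluation_is_Borel}), with the limit existing and lying in $D$ by the defining conditions of $\mathcal{T}$; so each coordinate of the code of $\phi^{-1}$ is a Borel function of the code of $\phi$. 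Composition $(\phi,\psi)\mapsto\psi\circ\phi$: the domain of $\psi\circ\phi$ is $\mathrm{dom}(\phi)\cap\phi^{-1}[\mathrm{dom}(\psi)]$, which is Borel in $(\phi,\psi)$ by (i), (ii) and (v); the range is $\psi[\,\phi[\mathrm{dom}(\phi)\cap\phi^{-1}[\mathrm{dom}(\psi)]]\,]$, Borel by iterating (iv); and the value of $\psi\circ\phi$ at $q_i$ in the new domain is $e(e(q_i,\phi),\psi)$, a Borel function of $(q_i,\phi,\psi)$ by Lemma~\ref{lem:evaluation_is_Borel}, with $u$ recorded outside the new domain. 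Hence $\psi\circ\phi$ is coded by a Borel function of $(\phi,\psi)$, and as the coded triple visibly satisfies the defining conditions of a homeomorphism (being an honest composition), it lands in $\mathcal{T}$. The main obstacle is the recurring point that evaluating $\phi$ or $\phi^{-1}$ at the \emph{fixed} dense points $q_i$ — rather than at the points $r_i$ where it is a priori defined — requires transferring the value of a continuous map from a dense subset to a limit point in a Borel-uniform way; once Lemma~\ref{lem:evaluation_is_Borel} and the density/uniform-continuity clauses packaged into $\mathcal{T}$ are in hand, this obstacle is dispatched, and the remaining verifications are routine bookkeeping with the Borel calculus of $\mathcal{O}(X)$ from Section~\ref{subsection:spaces_of_subsets}. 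This completes the proof that $\mathsf{Top}$ is a Borel pseudogroup.
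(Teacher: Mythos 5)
Your overall architecture (verify the eight clauses of Definition~\ref{defn:Borel_pseudogroup} using the coding of $\mathsf{Top}$ as the Borel set $\mathcal{T}$, with Lemma~\ref{lem:evaluation_is_Borel} doing the work of evaluating at the fixed dense points) matches the paper's, and your treatments of (i)--(iii) and of inversion are fine. But there is a genuine gap at the step where you handle the direct image (iv), and it propagates to (v), (vi), and (vii), which you build on top of it. You assert that to show $(U,\phi)\mapsto\phi[U]$ is Borel into $\mathcal{O}(X)$ it suffices to check that $\{(U,\phi)\mid \phi[U]\cap K=\varnothing\}$ is Borel for each compact $K$. That family does \emph{not} generate the Borel $\sigma$-algebra of $\mathcal{O}(X)$. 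Under the complementation identification $\mathcal{O}(X)\cong\mathcal{F}(X)$, the sets $\{O\mid O\cap K=\varnothing\}$ correspond to $\{F\mid K\subseteq F\}$, and the $\sigma$-algebra these generate is a proper sub-$\sigma$-algebra of the Effros structure: every set in it depends only on $F\cap C$ for some countable $C\subseteq X$, so for instance $\{F\mid F\cap(0,1)\neq\varnothing\}$ (for $X=\mathbb{R}$) is not in it. The correct generating family for $\mathcal{O}(X)$ is $\{O\mid K\subseteq O\}$ for $K$ compact --- exactly the sets the paper uses in the proofs of Theorem~\ref{thm:M(G,X)_Borel} and Lemma~\ref{lem:reparametrization} --- so what you actually need is that $\{(U,\phi)\mid K\subseteq\phi[U]\}$ is Borel, and that is not a condition you can test by a countable search over the $q_i$'s in the way you describe: density arguments of that kind control closures, not containment of a compact set in the open image, so your ``suitable neighborhood of $K$'' reduction does not deliver the required statement.

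This is also where your route diverges from the paper's in a way that matters. The paper never tries to verify Borelness of the image operations generator-by-generator; it shows the \emph{graphs} of the operations (iv)--(vii) are Borel and invokes the fact that a map between standard Borel spaces is Borel iff its graph is. For the direct image, the equality $\phi[W_1]=W_2$ is split into the two inclusions, and the awkward inclusion $W_2\subseteq\phi[W_1]$ is converted, using the already-established Borelness of inversion, into $\phi^{-1}[W_2]\subseteq W_1$, i.e.\ back into the tractable direction. Your proposal can be repaired along exactly these lines (prove (viii) first, as you essentially do, then characterize the graph of (iv) by the two inclusions and use inversion for the second one, or equivalently test preimages of the sets $\{O\mid K\subseteq O\}$ with the help of $\phi^{-1}$), but as written the sufficiency claim underlying (iv) is false, and since (v), (vi), and (vii) are reduced to (iv), the proof is incomplete from that point on.
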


\begin{proof}
	We must verify Borel-ness of the maps (i)--(viii) in Definition \ref{defn:Borel_pseudogroup} of Borel pseudogroup. First, note that the (i) domain and (ii) range projections $\mathsf{Top}\to\mathcal{O}(X)$ are Borel, as they are just the coordinate projections $\mathcal{O}(X)\times\mathcal{O}(X)\times(X\cup\{u\})^\mathbb{N}\to\mathcal{O}(X)$. To see that the (iii) domain inclusion $\mathrm{id}:\mathcal{O}(X)\to\mathsf{Top}$ is Borel, note that $\mathrm{id}(U)=\phi$ in $\mathsf{Top}$, where $\phi$ is represented by $(V,W,(r_i)_{i\in\mathbb{N}})$ in $\mathcal{T}$, if and only if $U=V=W$ and for all $i\in\mathbb{N}$,
	\[
		r_i=\begin{cases}q_i &\text{if $q_i\in U$},\\u&\text{else}.\end{cases}
	\]
	These are clearly Borel conditions.
	
	It is convenient to check (viii) inversion next. We do this by verifying that the graph $\{(\phi,\psi)\in\mathsf{Top}\times\mathsf{Top}:\psi=\phi^{-1}\}$ of this operation is Borel. Writing $(U_1,V_1,(r_i)_{i\in\mathbb{N}})$ and $(U_2,V_2,(s_i)_{i\in\mathbb{N}})$ for the representatives of $\phi$ in $\psi$ respectively, in $\mathcal{T}$, observe that $\psi=\phi^{-1}$ if and only if $U_2=V_1$, $V_2=U_1$, $\psi(r_i)=q_i$ for all $q_i\in U_1$ and $\phi(s_i)=q_i$ for all $q_i\in U_2$. The last two clauses are Borel conditions by Lemma \ref{lem:evaluation_is_Borel}.
	
	For the (iv) direct image, we again check that the graph of this operation is Borel in $\mathcal{O}(X)\times\mathsf{Top}\times\mathcal{O}(X)$. Given $(W_1,\phi,W_2)\in\mathcal{O}(X)\times\mathsf{Top}\times\mathcal{O}(X)$, say with $(U,V,(r_i)_{i\in\mathbb{N}})$ representing $\phi$ in $\mathcal{T}$, $\phi[W_1]\subseteq W_2$ if and only if $\phi[W_1\cap U]\subseteq W_2\cap V$, which in turn is equivalent to saying that for all $K_\ell\subseteq W_1\cap U$, $\phi[K_\ell]\subseteq  W_2\cap V$. Since intersection is a Borel operation in $\mathcal{O}(X)$ and $\phi[K_\ell]\subseteq  W_2\cap V$ if and only if for all $q_i\in K_\ell$, $\phi(q_i)\in W_2\cap V$, this is a Borel condition. We can then use inversion to get that $W_2\subseteq \phi[W_1]$ is Borel as well. The (v) inverse image then follows from another application of inversion.

	To see that the (vi) restriction map is Borel, observe that if $(U_1,V_1,(r_i)_{i\in\mathbb{N}})$ and $(U_2,V_2,(s_i)_{i\in\mathbb{N}})$ in $\mathcal{T}$ represent $\phi$ and $\psi$ in $\mathsf{Top}$, respectively, and $W\in\mathcal{O}(X)$, then $\psi=\phi|_W$ if and only if $U_2=U_1\cap W$, $V_2=V_1\cap \phi[W]$, and for all $i\in\mathbb{N}$,
	\[
		s_i=\begin{cases}r_i &\text{if $q_i\in U_2$},\\u&\text{else}.\end{cases}
	\]
	These are Borel conditions, using that the direct image and intersection of open sets in $\mathcal{O}(X)$ are Borel operations.
	
	Lastly, for (vii) composition, suppose that $(U_1,V_1,(r_i)_{i\in\mathbb{N}})$, $(U_2,V_2,(s_i)_{i\in\mathbb{N}})$, and $(U_3,V_3,(t_i)_{i\in\mathbb{N}})$ in $\mathcal{T}$ represent elements $\phi$, $\psi$, and $\theta$ of $\mathsf{Top}$, respectively. Then, $\psi\circ \phi=\theta$ if and only if $U_3=U_1\cap \phi^{-1}[U_2]$, $V_3=\psi[V_1]$, and for all $i\in\mathbb{N}$,
	\[
		t_i=\begin{cases} \lim_n \psi(q_{i_n}) &\text{if $q_i\in U_3$},\\u&\text{else}.\end{cases}
	\]
	where, as in the previous lemma, $(q_{j_n})_{n\in\mathbb{N}}$ is a subsequence of the $q_j$'s which converges to $r_i=\phi(q_i)$, chosen in a uniform Borel way. These conditions are again Borel, which completes the proof.
\end{proof}

\begin{example}
\label{ex:smooth_pseudogroup}
	Let $X$ be $\mathbb{R}^n$, $\mathbb{C}^n$, $\mathbb{H}^n$, or $\mathbb{S}^n$ and consider the pseudogroup $\mathsf{C}^k$ ($1\leq k<\infty$) of all $C^k$-diffeomorphisms between open subsets	of $X$. The proof of \cite[Lem.\ 7.2]{MR1731384} essentially shows, in the case $X=\mathbb{C}^n$, that $\mathsf{C}^1=\mathsf{Hol}$ is a Borel subset of $\mathcal{T}$ by arguing that continuity of the matrix of partial derivatives can be determined on a fixed countable dense set. This can be easily adapted for higher-order derivatives to show that $\mathsf{C}^k$ is a Borel subset of $\mathsf{Top}$ and, thus, a Borel pseudogroup. Since $\mathsf{C}^\infty=\bigcap_{k=1}^\infty\mathsf{C}^k$, it is also a Borel pseudogroup.
\end{example}

Next, we turn to pseudogroups generated by groups of homeomorphisms.

\begin{lemma}\label{lem:compact-open_Borel}
	Let $X$ be a locally compact Polish space and $\mathrm{Homeo}(X)$ its group of self-homeomorphisms. Then, $\mathrm{Homeo}(X)$ is a Borel subset of $\mathsf{Top}$ and the Borel structures induced by $\mathsf{Top}$ and the compact-open topology on $\mathrm{Homeo}(X)$ coincide.	
\end{lemma}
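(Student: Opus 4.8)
The plan is to treat the two assertions separately. For the first, I would show that $\mathrm{Homeo}(X)$, viewed as a subset of $\mathsf{Top}\cong\mathcal{T}$, is exactly the set of triples $(U,V,(r_i)_{i\in\mathbb{N}})$ with $U=V=X$; since the coordinate projections $\mathcal{O}(X)\times\mathcal{O}(X)\times(X\cup\{u\})^\mathbb{N}\to\mathcal{O}(X)$ are Borel and $\{X\}$ is a Borel (indeed closed, since it is the top element) point of $\mathcal{O}(X)$ with the Fell topology, the conditions $U=X$ and $V=X$ cut out a Borel subset. Intersecting with the already-established Borel set $\mathcal{T}$ gives that $\mathrm{Homeo}(X)$ is Borel in $\mathsf{Top}$. (One should note that whenever $U=X$, condition (i) of Definition \ref{defn:Borel_structure_Top} forces $r_i\neq u$ for all $i$, and condition (ii) then says precisely that $q_i\mapsto r_i$ extends to a self-homeomorphism of $X$, so no genuine domain/range obstruction survives.)

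For the second assertion — that the Borel structure from $\mathsf{Top}$ agrees with that from the compact-open topology — I would use the fact (from the preliminaries) that two Polish, or more generally two standard Borel, structures on the same set coincide as Borel structures as soon as the identity map is Borel measurable in one direction and the spaces are "not too large"; more robustly, I would simply exhibit Borel measurability of the identity map $\mathrm{Homeo}(X)\to\mathcal{T}$ where the domain carries the compact-open topology, and invoke the Luzin--Suslin theorem (an injective Borel image of a standard Borel space is Borel, and a Borel bijection between standard Borel spaces has Borel inverse) to conclude the structures agree. Concretely, with $\mathrm{Homeo}(X)$ Polish in the compact-open topology: the coordinate maps $\phi\mapsto U=X$ and $\phi\mapsto V=X$ are constant hence continuous, and for each fixed $i$ the map $\phi\mapsto r_i=\phi(q_i)$ is continuous from the compact-open topology to $X$ (evaluation at a point is continuous). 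Thus $\phi\mapsto(X,X,(\phi(q_i))_{i\in\mathbb{N}})$ is continuous, a fortiori Borel, from the compact-open topology into $\mathcal{T}$, and it is a bijection onto $\mathrm{Homeo}(X)\subseteq\mathcal{T}$. Since both its source and its target are standard Borel spaces, its inverse is Borel as well, so the two Borel structures on $\mathrm{Homeo}(X)$ coincide.

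The one point requiring slightly more care — and the step I expect to be the main obstacle — is verifying that the target $\mathrm{Homeo}(X)\subseteq\mathcal{T}$ is genuinely a \emph{standard} Borel space, i.e.\ Borel in $\mathcal{T}$, so that the Luzin--Suslin invocation is legitimate; but this is exactly what the first assertion provides. A secondary subtlety is that $\mathcal{T}$ itself need not be Polish in its inherited topology (as the paragraph before Lemma \ref{lem:T_is_Borel} explicitly warns), so one cannot argue via continuity of the inverse map directly and must route the argument through the general fact that a Borel bijection between standard Borel spaces is a Borel isomorphism. With that in hand the verification is routine: the forward map is continuous (hence Borel) by continuity of point-evaluation in the compact-open topology, and the Borel isomorphism theorem for standard Borel spaces closes the loop.
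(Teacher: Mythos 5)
Your proof is correct; the first assertion is handled exactly as in the paper ($\phi\in\mathrm{Homeo}(X)$ iff $\mathrm{dom}(\phi)=\mathrm{ran}(\phi)=X$, a Borel condition on the first two coordinates), but for the second assertion you run the key computation in the opposite direction. The paper proves directly that the compact-open Borel structure is contained in the $\mathsf{Top}$-structure: it fixes a compact $K$ and open $U$, takes a countable dense set $(k_m)$ in $K$ and an exhaustion $U=\bigcup_n U_n$ with $\overline{U_n}\subseteq U_{n+1}$, writes the subbasic set $V(K,U)$ as $\bigcup_n\bigcap_m\{\phi\mid\phi(k_m)\in U_n\}$, and invokes Lemma \ref{lem:evaluation_is_Borel}; it then closes, exactly as you do, by observing that the identity is a Borel bijection between two standard Borel spaces, hence a Borel isomorphism. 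You instead prove the reverse containment directly, via continuity of $\phi\mapsto(X,X,(\phi(q_i))_{i\in\mathbb{N}})$ from the compact-open topology into the ambient product, which uses only the classical continuity of point-evaluation and so bypasses both Lemma \ref{lem:evaluation_is_Borel} and the $V(K,U)$ decomposition; your cautions are also well placed, since $\mathcal{T}$ need not be Polish and the Luzin--Suslin step is the right way to finish, with your first assertion supplying standardness of the target. Note that both your argument and the paper's lean on $\mathrm{Homeo}(X)$ with the compact-open topology being standard Borel (the paper takes Polishness from its preliminaries), and that combining your direct inclusion with the paper's direct inclusion would yield both containments outright and remove even that dependence.
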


\begin{proof}
	That $\mathrm{Homeo}(X)$ is a Borel subset of $\mathsf{Top}$ is immediate from Theorem \ref{thm:Top_is_Borel} and the fact that, for $\phi\in\mathsf{Top}$, $\phi\in\mathrm{Homeo}(X)$ if and only if $\mathrm{dom}(\phi)=\mathrm{ran}(\phi)=X$. 
	
	Let $K\subseteq X$ be compact and $U\subseteq X$ open. We claim that the set
	\[
		V(K,U)=\{\phi\in\mathrm{Homeo}(X)\mid\phi[K]\subseteq U\},
	\]
	a subbasic open set for the compact-open topology, is Borel in $\mathsf{Top}$. To see this, fix an enumeration of a countable dense subset $k_m$ $(m\in\mathbb{N})$ of $K$ and a sequence $U_n$ $(n\in\mathbb{N})$ of open subsets of $U$ with compact closures such that $\overline{U_n}\subseteq U_{n+1}$ for all $n\in\mathbb{N}$ and $U=\bigcup_{n\in\mathbb{N}} U_n$. Then, 
	\[
		V(K,U)=\bigcup_{n\in\mathbb{N}}\bigcap_{m\in\mathbb{N}}\{\phi\in\mathrm{Homeo}(X)\mid\phi(k_m)\in U_n\},
	\]
	which is a Borel subset of $\mathsf{Top}$ by Lemma \ref{lem:evaluation_is_Borel}.
	
	It follows that every Borel set in $\mathrm{Homeo}(X)$ with respect to the compact-open topology is Borel with respect to the structure inherited from $\mathsf{Top}$. But then, the identity map $\mathrm{Homeo}(X)\to\mathrm{Homeo}(X)$ is an isomorphism between these two standard Borel spaces and so they must coincide.
\end{proof}

\begin{example}
\label{ex:isometry_pseudogroups}
	Let $X$ be a locally compact Polish space and $G$ a group of homeomorphisms on $X$ which is Borel with respect to the compact-open topology (or equivalently, by Lemma \ref{lem:compact-open_Borel}, as a subset of $\mathsf{Top}$) and has the following property: \emph{any $g\in G$ is uniquely determined by its values on any nonempty open set.} We note that this holds for the groups of isometries of $\mathbb{R}^n$, $\mathbb{S}^n$, and $\mathbb{H}^n$, with the Euclidean, spherical, and hyperbolic metrics, respectively (see \cite[Prop.\ I.2.18]{MR1744486} or \cite[\S6.6]{MR4221225}, where such metric spaces are termed \emph{rigid}). We claim the pseudogroup generated by $G$ is Borel.
	
	Let $\mathsf{G}\subseteq\mathsf{Top}$ be the set of all those $\phi:U\to V$ for which there is some open covering $U_\alpha$ (for $\alpha\in A$) of $U$ such that for each $\alpha\in A$, there is some $g\in G$ such that $\phi|_{U_\alpha}=g|_{U_\alpha\cap U}$. Clearly $\mathsf{G}$ contains $G$, and it is easy to check that it is a pseudogroup on $X$. Moreover, any pseudogroup containing $G$ must contain $\mathsf{G}$, so $\mathsf{G}$ is the pseudogroup generated by $G$. To see that $\mathsf{G}$ is Borel, fix a sequence $U_n$ (for $n\in\mathbb{N}$) of nonempty basic open subsets of $X$. A homeomorphism $\phi:U\to V$ in $\mathsf{Top}$ is then in $\mathsf{G}$ if and only if there is a subsequence $U_{n_k}$ (for $k\in\mathbb{N}$) of the $U_n$'s such that $\bigcup_{k\in\mathbb{N}}U_{n_k}\supseteq U$ and for every $k\in\mathbb{N}$, there is some $g\in G$ such that $\phi|_{U_{n_k}}=g|_{U_{n_k}\cap U}$. This is a $\mathbf{\Sigma}^1_1$ description of $\mathsf{G}$. Moreover, we can also give a $\mathbf{\Pi}^1_1$ description: $\phi\in\mathsf{G}$ if and only if for every $x\in U$, there is an $n\in\mathbb{N}$ such that $x\in U_n\subseteq U$ and $\phi|_{U_n}\in\{g|_{U_n\cap U}\mid g\in G\}$.	Our rigidity assumption on $G$ ensures that, for each $n\in\mathbb{N}$ such that $U_n\cap U\neq\varnothing$, the set $\{g|_{U_n\cap U}\mid g\in G\}$ is the image of the Borel set $G$ under an injective Borel mapping and is thus Borel. Hence, $\mathsf{G}$ is Borel by an application of Suslin's Theorem.
	
	In particular, the pseudogroup $\mathsf{Isom}(\mathbb{H}^n)$ generated by the group $\mathrm{Isom}(\mathbb{H}^n)$ of all isometries of $\mathbb{H}^n$ is Borel, as is the pseudogroup $\mathsf{Isom}^+(\mathbb{H}^n)$ generated by the group of all orientation-preserving isometries of $\mathbb{H}^n$. This will be important in Section \ref{section:bireducibility} and its sequels below.
\end{example}

\begin{example}
\label{ex:pseudogroup_comparison}
Among the virtues of the pseudogroup approach are the comparisons it facilitates between the multiple sorts of structures --- smooth, Riemannian, etc. --- which a given topological manifold may admit.
Systematic relations between such structures will tend to manifest in our framework as Borel maps $f:\mathfrak{M}(\mathcal{G},X)\to \mathfrak{M}(\mathcal{H},Y)$; among the simplest such examples is the inclusion of the class of manifolds without boundary into the class of manifolds with boundary (under the standard convention that the latter means ``with possibly trivial boundary''): Here $\mathcal{G}=\mathcal{H}=\mathsf{Top}$ or $\mathsf{C}^\infty$, $X=\mathbb{R}^n$, $Y=\mathbb{R}^n_+$, and any embedding of $X$ into $Y$ will induce a Borel map $f$ as above; the details are left to the reader. Two remarks on this front are in order: (i) We will adhere below to the somewhat standard convention that the absence of the phrase ``with boundary'' connotes ``without boundary''. (ii) The $f$ just defined is in fact a Borel reduction of $\cong_{\mathcal{G}}$ to $\cong_{\mathcal{H}}$ in the sense of Definition \ref{defn:(G,X)-isomorphism} just below.

For another example, consider the inclusion of the class of smooth manifolds into the class of topological manifolds; here $\mathcal{G}=\mathsf{C}^\infty$, $\mathcal{H}=\mathsf{Top}$, and $X=Y=\mathbb{R}^n$.
This map is plainly Borel, and thus a Borel reduction of $\cong_{\mathsf{C}^\infty}$ to $\cong_{\mathsf{Top}}$ precisely when topological $n$-manifolds admit a unique smooth structure up to diffeomorphism.
By \cite{MR0488059, MR121804}, this in turn holds in exactly the dimensions $n\leq 3$; see Question \ref{ques:R4} of our conclusion and the discussion which follows it for related matters.
\end{example}

\begin{remark}
\label{rmk:parametrization}
To better motivate our approach to manifold parametrization, let us briefly survey a few of its more obvious alternatives.
Most prominent among these, perhaps, is that of this paper's main forerunner \cite{MR1731384}, and indeed, aspects of our constructions are only minor modifications of those appearing therein.
Its approach differs most decisively from ours, though, in encoding manifolds as metric structures endowed with supplementary (complex) structure --- a natural enough choice in that work's contexts, but less so in the setting, for example, of topological manifolds.
The infelicity we have in mind here is, firstly, a conceptual one: an analysis of the homeomorphism relation among topological manifolds $M$ by way of metric structures amounts to selecting compatible metrics for such $M$ only to forget them.
Similar remarks apply to parametrizations via Gromov--Hausdorff metrics, or to parametrizations via manifolds' embedded images in some fixed $\mathbb{R}^n$, for example.
But there are more mathematical downsides to each of these alternatives as well: it is, in general, far from clear that the embedded manifolds of a given type define a Borel subcollection of $\mathcal{F}(\mathbb{R}^n)$; Gromov--Hausdorff metrics on noncompact manifolds call for basepoints; mixed registers (metric and complex, for example) can grow, in practice, unwieldy to manipulate.
To be clear, though, we don't wish to suggest that these alternatives aren't well-suited to any number of particular contexts, but only that the modularity and transparency of the pseudogroup approach accords it several distinctive advantages in the generality in which we wish to work.
\end{remark}

\subsection{Equivalences of manifolds and the case of compact manifolds}
\label{subsection:equivalences}

We have shown that a Borel pseudogroup $\mathcal{G}$ on a locally compact Polish space $X$ determines a standard Borel parametrization $\mathfrak{M}(\mathcal{G},X)$ of the class of $(\mathcal{G},X)$-manifolds. 
The natural notion of equivalence among such manifolds determines in turn an equivalence relation on $\mathfrak{M}(\mathcal{G},X)$.
\begin{definition}
\label{defn:(G,X)-isomorphism}
A homeomorphism $f:M\to N$ between $(\mathcal{G},X)$-manifolds $M$ and $N$ is a $(\mathcal{G},X)$-\emph{isomorphism} if the induced maps between the charts on $M$ and $N$ are locally elements of $\mathcal{G}$ \cite[p.\ 111]{MR1435975}.
The $(\mathcal{G},X)$- prefix will be omitted when it is clear from context.
By $\cong_{\mathcal{G}}$ we denote the equivalence relation on $\mathfrak{M}(\mathcal{G},X)$ given by letting $(\mathcal{U},c)\cong_{\mathcal{G}}(\mathcal{V},d)$ if and only if $M_{(\mathcal{U},c)}$ and $M_{(\mathcal{V},d)}$ are isomorphic.
\end{definition}

Our main aim in the present subsection is to show that, for any Borel pseudogroup $\mathcal{G}$ on a locally compact Polish space $X$, this relation $\cong_{\mathcal{G}}$ is analytic.
From this result it promptly follows that any Borel subspace of $\mathfrak{M}(\mathcal{G},X)$ containing countably many $\cong_{\mathcal{G}}$-equivalence classes is Borel reducible to the relation $=_{\mathbb{N}}$ of equality on the natural numbers.
Several of the most fundamental manifold classification problems manifest in precisely this way, namely the classification of $0$-manifolds, of $1$-manifolds, of compact topological manifolds (with or without boundary), and of compact smooth manifolds (with or without boundary); by showing that the associated parameter spaces are Borel, we will establish that each of these problems is Borel equivalent to $=_\mathbb{N}$.
As a corollary, the computation of any $\cong_{\mathcal{G}}$-invariant of compact topological manifolds is a Borel operation.

We precede this section's main theorem with the following observation, whose straightforward verification is left to the reader.
\begin{lemma}
\label{lem:saturation}
The map which takes an open subset of any $U_i$ in $\mathcal{U}$ to its $(\mathcal{U},c)$-saturation is Borel. More precisely, the map $\mathcal{O}(X)\times\mathbb{N}\times\mathfrak{M}(\mathcal{G},X)\to\mathcal{O}(X)^\mathbb{N}$ given by
$$(U,i,\mathcal{U},c)\mapsto \langle\varphi_{i,j}[U\cap U_i]\mid j\in\mathbb{N}\rangle$$
is Borel.\qed
\end{lemma}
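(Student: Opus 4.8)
The plan is to realize the displayed map as a countable assembly of Borel building blocks that are already available to us, so that the lemma reduces to the standard closure properties of standard Borel spaces and Borel functions. Two preliminary reductions do most of the organizational work. Since $\mathcal{O}(X)^{\mathbb{N}}$ carries the product Borel structure, a map into it is Borel precisely when each of its coordinate maps is Borel; and since $\mathbb{N}$ is a countable discrete space, $\mathcal{O}(X)\times\mathbb{N}\times\mathfrak{M}(\mathcal{G},X)$ is the countable disjoint union $\coprod_{i\in\mathbb{N}}\bigl(\mathcal{O}(X)\times\{i\}\times\mathfrak{M}(\mathcal{G},X)\bigr)$. Hence it suffices to fix a pair $(i,j)\in\mathbb{N}^2$ and show that
\[
  (U,\mathcal{U},c)\;\longmapsto\;\varphi_{i,j}[U\cap U_i]
\]
is a Borel map $\mathcal{O}(X)\times\mathfrak{M}(\mathcal{G},X)\to\mathcal{O}(X)$.

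For this I would factor the map as the chain
\[
  (U,\mathcal{U},c)\;\mapsto\;(U,\,U_{i,i},\,\varphi_{i,j})\;\mapsto\;(U\cap U_{i,i},\,\varphi_{i,j})\;\mapsto\;\varphi_{i,j}[U\cap U_{i,i}].
\]
The first arrow is Borel because $(\mathcal{U},c)\mapsto U_{i,i}$ and $(\mathcal{U},c)\mapsto\varphi_{i,j}$ are coordinate projections of the standard Borel space $\mathfrak{M}(\mathcal{G},X)\subseteq\mathcal{O}(X)^{\mathbb{N}\times\mathbb{N}}\times\mathcal{G}^{\mathbb{N}\times\mathbb{N}}$. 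The second arrow is Borel because intersection is a Borel operation on $\mathcal{O}(X)$: under the complementation homeomorphism $\mathcal{F}(X)\to\mathcal{O}(X)$ it corresponds to union on $\mathcal{F}(X)$, which is one of the Borel operations recorded in Section \ref{subsection:spaces_of_subsets}. The third arrow is Borel because it is exactly the direct image map $\mathcal{O}(X)\times\mathcal{G}\to\mathcal{O}(X)$, whose Borel-ness is part of the very definition of a Borel pseudogroup (Definition \ref{defn:Borel_pseudogroup}(iv)). A composition of Borel maps between standard Borel spaces is Borel, which finishes the argument.

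The only point calling for any care — and so the nearest thing to an obstacle here — is the bookkeeping about domains: $\varphi_{i,j}$ is defined only on the overlap $U_{i,j}\subseteq U_i$, so the expression $\varphi_{i,j}[U\cap U_i]$ must be read as $\varphi_{i,j}\bigl[(U\cap U_i)\cap\mathrm{dom}(\varphi_{i,j})\bigr]=\varphi_{i,j}[U\cap U_{i,j}]$, which is the convention implicit in the direct image map. With this reading, $\langle\varphi_{i,j}[U\cap U_i]\mid j\in\mathbb{N}\rangle$ is precisely the tuple of open sets representing the $\sim$-saturation of the open set $U\subseteq U_i$ inside $\coprod_{k}U_k$, in the sense of equation (\ref{eq:quotient_of_sum}), so the map genuinely computes the claimed saturation. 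Beyond this, no difficulty arises: the statement is entirely a matter of chaining together the Borel facts about $\mathcal{O}(X)$ and about Borel pseudogroups established above.
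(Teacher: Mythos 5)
Your proof is correct, and since the paper leaves this verification to the reader, your argument — coordinatewise reduction over $j$ (and over the discrete parameter $i$), followed by the coordinate projections, Borel-ness of intersection on $\mathcal{O}(X)$, and the direct-image clause of Definition \ref{defn:Borel_pseudogroup} — is exactly the straightforward chaining of Borel operations the paper intends. Your remark that $\varphi_{i,j}[U\cap U_i]$ must be read as $\varphi_{i,j}[U\cap U_{i,j}]$ is also the right (and the paper's implicit) convention for the direct-image map.
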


\begin{theorem}
\label{thm:equivalence_analytic}
For any Borel pseudogroup $\mathcal{G}$ on a locally compact Polish space $X$, the equivalence relation $\cong_\mathcal{G}$ on $\mathfrak{M}(\mathcal{G},X)$ is analytic.
\end{theorem}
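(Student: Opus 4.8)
The plan is to exhibit $\cong_{\mathcal{G}}$ as a $\mathbf{\Sigma}^1_1$ relation by existentially quantifying over a countable ``local presentation'' of a prospective $(\mathcal{G},X)$-isomorphism together with a presentation of its inverse, and then checking that the assertion ``these presentations patch together into an isomorphism and its inverse'' is a Borel condition in all parameters. Since $\mathfrak{M}(\mathcal{G},X)$, $\mathcal{G}^{\mathbb{N}}$, and $\mathbb{N}^{\mathbb{N}}$ are standard Borel spaces, this displays $\cong_{\mathcal{G}}$ as the projection of a Borel set onto $\mathfrak{M}(\mathcal{G},X)^2$, hence as analytic.

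In more detail: given $(\mathcal{U},c),(\mathcal{V},d)\in\mathfrak{M}(\mathcal{G},X)$, write $M=M_{(\mathcal{U},c)}$ and $N=M_{(\mathcal{V},d)}$, with charts $U_i,V_j$, transition maps $\varphi_{i,i'},\psi_{j,j'}\in\mathcal{G}$, and quotient maps $\pi_M,\pi_N$ as in \eqref{eq:quotient_of_sum}. By Definition \ref{defn:(G,X)-isomorphism} a $(\mathcal{G},X)$-isomorphism is locally carried by elements of $\mathcal{G}$; since $M$ and $N$ are second countable, any such isomorphism $f$ has a \emph{countable} presentation, namely a sequence $\langle\theta_k\mid k\in\mathbb{N}\rangle$ from $\mathcal{G}$ and $a,b\in\mathbb{N}^{\mathbb{N}}$ with $\mathrm{dom}(\theta_k)\subseteq U_{a(k)}$, $\mathrm{ran}(\theta_k)\subseteq V_{b(k)}$, each $\theta_k$ representing $f$ from $\pi_M[\mathrm{dom}(\theta_k)]$ to $\pi_N[\mathrm{ran}(\theta_k)]$. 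I would then show that $M\cong_{\mathcal{G}}N$ if and only if there are such data $(\langle\theta_k\rangle,a,b)$ together with analogous data $(\langle\theta'_m\rangle,a',b')$ presenting a map $N\to M$, subject to:
\begin{itemize}
\item a \emph{coherence} condition, asserting for all $k,\ell$ that the compositions $\psi_{b(k),b(\ell)}\circ\theta_k$ and $\theta_\ell\circ\varphi_{a(k),a(\ell)}$ agree on the locus where both are relevant (and likewise for the $\theta'_m$), so the $\theta_k$ patch to a well-defined, continuous, locally-$\mathcal{G}$ map;
\item a \emph{totality} condition, asserting that the $(\mathcal{U},c)$-saturations of the $\mathrm{dom}(\theta_k)$ cover $M$ and the $(\mathcal{V},d)$-saturations of the $\mathrm{ran}(\theta_k)$ cover $N$, and symmetrically for the $\theta'_m$, so the patched maps $f\colon M\to N$ and $g\colon N\to M$ are everywhere defined; and
\item an \emph{inversion} condition, asserting for all $k,m$ that $\theta'_m\circ\psi_{b(k),a'(m)}\circ\theta_k$ agrees, where relevant, with the transition map $\varphi_{a(k),b'(m)}$ of $M$, and symmetrically with $M$ and $N$ interchanged, so that together with totality one gets $g\circ f=\mathrm{id}_M$ and $f\circ g=\mathrm{id}_N$.
\end{itemize}

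Each bullet unwinds into a countable conjunction of Borel conditions. By Definition \ref{defn:Borel_pseudogroup} the maps $\phi\mapsto\mathrm{dom}(\phi)$, $\phi\mapsto\mathrm{ran}(\phi)$, composition, and restriction are Borel, and the diagonal of the standard Borel space $\mathcal{G}$ is Borel; the Borel calculus of $\mathcal{O}(X)$ from Section \ref{subsection:spaces_of_subsets} (Borel-ness of inclusion, intersection, union, and equality of open sets) handles both the delineation of the ``where relevant'' loci and the ``agree on'' clauses; and Lemma \ref{lem:saturation} handles the saturations appearing in the totality condition. Hence the set of tuples $\bigl((\mathcal{U},c),(\mathcal{V},d),\text{witness}\bigr)$ satisfying all three conditions is Borel in $\mathfrak{M}(\mathcal{G},X)^2\times\bigl(\mathcal{G}^{\mathbb{N}}\times\mathbb{N}^{\mathbb{N}}\times\mathbb{N}^{\mathbb{N}}\bigr)^2$, and $\cong_{\mathcal{G}}$ is its projection onto $\mathfrak{M}(\mathcal{G},X)^2$. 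The forward direction of the stated equivalence is immediate (cover $M$ and $N$ by countable atlases subordinate to the chart images and take maximal chart-representatives of $f$ and $g$), and the reverse direction is the routine patching argument indicated in the bullets.

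The genuine content, and the main obstacle, is not any single verification but the task of formulating the coherence and inversion conditions so as to be simultaneously \emph{visibly Borel} and \emph{logically equivalent} to ``there is a $(\mathcal{G},X)$-isomorphism''. The subtlest point is bijectivity: a family of coherent, everywhere-defined local pieces can glue to a surjective but non-injective map (a real line wrapping onto a circle is the standard cautionary example), so the inverse presentation $(\langle\theta'_m\rangle,a',b')$ must be carried along explicitly and pinned to the $\theta_k$ by the inversion condition. A related fine point, requiring care but no new complexity, is that the ``where relevant'' loci in coherence and inversion must be delimited precisely --- e.g.\ by equating the appropriate domains of the relevant compositions, not merely asserting agreement on their overlap --- lest the patched maps fail to be single-valued; this is exactly where the Borel operations on $\mathcal{O}(X)$ earn their keep. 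No step in this raises the complexity beyond a projection of a Borel set, so the final verdict is analyticity.
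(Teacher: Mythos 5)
Your proposal is correct and follows essentially the same route as the paper: both parametrize prospective isomorphisms by countable families of pseudogroup elements subordinate to the charts (together with explicit witness data for the inverse), impose coherence, covering, and inversion conditions, verify these are Borel via the Borel pseudogroup axioms and the saturation lemma, and conclude by projecting a Borel set. The fine points you flag --- carrying the inverse presentation explicitly and equating domains rather than merely asserting agreement on overlaps --- are exactly how the paper's conditions (iii) and the $g_{(\mathcal{X},\mathcal{Y},k,e)}={g_{(\mathcal{Z},\mathcal{W},\ell,f)}}^{-1}$ clause handle it.
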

This assertion and its proof are essentially a translation of those of \cite[Prop.\ 3.3]{MR1731384} to our more generalized setting. In this proof and what follows, we adopt the convention that, for parameters $(\mathcal{U},c)$ and $(\mathcal{V},d)$ in $\mathfrak{M}(\mathcal{G},X)$, elements of $\mathcal{U}$ and $\mathcal{V}$ are denoted by $U_{i,j}$ (or $U_i$) and $V_{i,j}$ (or $V_i$), respectively, while elements of $c$ and $d$ are denoted by $\phi_{i,j}$ and $\psi_{i,j}$, respectively.

\begin{proof}
A $\mathcal{G}$-\emph{good map} $M_{(\mathcal{U},c)}\to M_{(\mathcal{V},d)}$ between two $(\mathcal{G},X)$-manifolds is a local homeomorphism which respects their $(\mathcal{G},X)$ structures; we parametrize such maps via quadruples $(\mathcal{X},\mathcal{Y},k,e)\in\mathcal{O}(X)^{\mathbb{N}\times\mathbb{N}}\times\mathcal{O}(X)^{\mathbb{N}\times\mathbb{N}}\times\mathbb{N}^{\mathbb{N}\times\mathbb{N}}\times\mathcal{G}^{\mathbb{N}\times\mathbb{N}}$ of the following sort:
\begin{enumerate}[label=\textup{\roman*.}]
\item the family $\mathcal{X}=\langle X_{i,j}\in\mathcal{O}(X)\mid (i,j)\in\mathbb{N}^2\rangle$ satisfies $\bigcup_{j\in\mathbb{N}}X_{i,j}=U_{i}$ for all $i\in\mathbb{N}$;
\item the family $\mathcal{Y}=\langle Y_{i,j}\in\mathcal{O}(X)\mid (i,j)\in\mathbb{N}^2\rangle$ satisfies $Y_{i,j}\subseteq V_{k(i,j)}$ for all $(i,j)\in\mathbb{N}^2$;
\item the family $e=\langle\rho_{i,j}:X_{i,j}\to Y_{i,j}\mid (i,j)\in\mathbb{N}^2\rangle$ of elements of $\mathcal{G}$ satisfies
$$\rho_{i',j'}\circ\varphi_{i,i'}=\psi_{k(i,j), k(i',j')}\circ\rho_{i,j}$$
for all $(i,i',j,j')\in\mathbb{N}^4$. 
We reiterate the possibility that the composition of pseudogroup elements may be the empty set.
\end{enumerate}
In words, $(\mathcal{X},\mathcal{Y},k,e)$ is a family of open sets and pseudogroup elements subordinate to the open sets in $\mathcal{U}$ and $\mathcal{V}$ which factor through the quotients $M_{(\mathcal{U},c)}$ and $M_{(\mathcal{V},d)}$, or \emph{cohere} in the sense of item (iii) above.
In particular, just as the domain of any such element $\rho_{i,j}$ should fall within a chart $U_i$ of $M_{(\mathcal{U},c)}$, its range should fall within a chart $V_{k(i,j)}$ of $M_{(\mathcal{V},d)}$, and it is this latter relation which the function $k:\mathbb{N}^2\to\mathbb{N}$ is recording.
Write $g_{(\mathcal{X},\mathcal{Y},k,e)}$ for the induced mapping of the quotients $M_{(\mathcal{U},c)}\to M_{(\mathcal{V},d)}$.

Let $S=\mathcal{O}(X)^{\mathbb{N}\times\mathbb{N}}\times\mathcal{O}(X)^{\mathbb{N}\times\mathbb{N}}\times\mathbb{N}^{\mathbb{N}\times\mathbb{N}}\times\mathcal{G}^{\mathbb{N}\times\mathbb{N}}$, and for any $(\mathcal{U},c)$ and $(\mathcal{V},d)$ in $\mathfrak{M}(\mathcal{G},X)$ write $\mathrm{Map}((\mathcal{U},c),(\mathcal{V},d))$ for the set of quadruples in $S$ satisfying conditions (i) through (iii) above.
Note that, just as in \cite[p.\ 133]{MR1731384}, these are Borel conditions and hence the set
$$\{((\mathcal{U},c),(\mathcal{V},d),(\mathcal{X},\mathcal{Y},k,e))\mid (\mathcal{X},\mathcal{Y},k,e)\in\mathrm{Map}((\mathcal{U},c),(\mathcal{V},d))\}$$
is a Borel subset of $\mathfrak{M}(\mathcal{G},X)\times\mathfrak{M}(\mathcal{G},X)\times S$.
Similarly, the set $\mathrm{Iso}(\mathfrak{M}(\mathcal{G},X))$ of all
$$((\mathcal{U},c),(\mathcal{V},d),(\mathcal{X},\mathcal{Y},k,e), (\mathcal{Z},\mathcal{W},\ell,f))\in \mathfrak{M}(\mathcal{G},X)\times\mathfrak{M}(\mathcal{G},X)\times S\times S$$
satisfying
\begin{itemize}
\item $(\mathcal{X},\mathcal{Y},k,e)\in\mathrm{Map}((\mathcal{U},c),(\mathcal{V},d))$,
\item $(\mathcal{Z},\mathcal{W},\ell,f)\in\mathrm{Map}((\mathcal{V},d),(\mathcal{U},c))$, and
\item $g_{(\mathcal{X},\mathcal{Y},k,e)}={g_{(\mathcal{Z},\mathcal{W},\ell,f)}}^{-1}$
\end{itemize}
is Borel.
To elaborate on the third part of this assertion, writing $\varphi_{i,j}$, $\psi_{i,j}$, $\rho_{i,j}$, and $\theta_{i,j}$ for the functions in $c$, $d$, $e$, and $f$, respectively, we have that $g_{(\mathcal{X},\mathcal{Y},k,e)}={g_{(\mathcal{Z},\mathcal{W},\ell,f)}}^{-1}$ if and only if
\begin{itemize}
\item $\psi_{k(i',j'),i}\circ\rho_{i',j'}=\theta^{-1}_{i,j}\circ\varphi_{i',\ell(i,j)}$ on the intersection of these composite functions' domains for all $i,j,i',j'\in\mathbb{N}$, and
\item the saturations of the images of the functions $\theta_{i,j}$ cover $\coprod_{i\in\mathbb{N}}U_i$,
\end{itemize}
the former of which is Borel by Definition \ref{defn:Borel_pseudogroup}, and the latter by Lemma \ref{lem:saturation}.
As the equivalence relation $\cong_{\mathcal{G}}$ on $\mathfrak{M}(\mathcal{G},X)$ is simply the projection of the Borel set $\mathrm{Iso}(\mathfrak{M}(\mathcal{G},X))$ to $\mathfrak{M}(\mathcal{G},X)\times\mathfrak{M}(\mathcal{G},X)$, this concludes the proof.
\end{proof}

Note for later use that our argument shows that for any $\mathcal{G}$ as in Theorem \ref{thm:equivalence_analytic} and Borel pseudogroup $\mathcal{H}\supseteq\mathcal{G}$, the relation $\cong_\mathcal{H}$ on $\mathfrak{M}(\mathcal{G},X)$ is analytic as well.

We turn now to the problem of classifying compact topological manifolds. Given a Borel pseudogroup $\mathcal{G}$ on a locally compact Polish space $X$, we denote by $\mathfrak{K}(\mathcal{G},X)$ the set of all $(\mathcal{U},c)\in\mathfrak{M}(\mathcal{G},X)$ such that $M_{(\mathcal{U},c)}$ is compact.

\begin{theorem}
\label{thm:compact_Borel}
If $\mathcal{G}$ is a Borel pseudogroup on a locally compact Polish space $X$, then $\mathfrak{K}(\mathcal{G},X)$ forms a Borel subset of $\mathfrak{M}(\mathcal{G},X)$.
\end{theorem}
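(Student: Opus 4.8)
The plan is to characterize compactness of $M_{(\mathcal{U},c)}$ by the existence of a \emph{finite} subcover of a canonically chosen open cover, to phrase that existence as a \emph{countable} disjunction over finitary data, and to observe that each disjunct cuts out a Borel subset of the parameter space; the union is then Borel outright. (In contrast with the Hausdorffness argument of Theorem~\ref{thm:M(G,X)_Borel}, no appeal to Suslin's Theorem is strictly required here, although one could equally present the argument within the $\mathbf{\Sigma}^1_1$/$\mathbf{\Pi}^1_1$ template of the preceding remark.) Throughout, $\pi_{(\mathcal{U},c)}\colon\coprod_i U_i\to M_{(\mathcal{U},c)}$ denotes the quotient map, which is a continuous open surjection restricting to a homeomorphism on each chart, and we fix an enumeration $(O_\ell)_{\ell\in\mathbb{N}}$ of the basic open subsets of $X$ with compact closure $K_\ell=\overline{O_\ell}$; by the local compactness and second countability of $X$, every open subset of $X$ is a union of such $O_\ell$.

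\textbf{The combinatorial equivalence.} The heart of the matter is the claim that $M_{(\mathcal{U},c)}$ is compact if and only if there exist $m\in\mathbb{N}$ and functions $\sigma,\ell\colon\{1,\dots,m\}\to\mathbb{N}$ such that (a) $K_{\ell(k)}\subseteq U_{\sigma(k)}$ for all $k\le m$, and (b) $U_j\subseteq\bigcup_{k=1}^m\varphi_{\sigma(k),j}[\,O_{\ell(k)}\cap U_{\sigma(k),j}\,]$ for all $j\in\mathbb{N}$. For the forward direction, note that the sets $\pi_{(\mathcal{U},c)}[O_\ell]$, as $\ell$ ranges over those indices with $\overline{O_\ell}$ contained in some chart $U_i$, form an open cover of $M_{(\mathcal{U},c)}$: any point lies in some $\pi_{(\mathcal{U},c)}[U_i]$, and its preimage in $U_i$ has a precompact basic neighbourhood with closure inside $U_i$. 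Extracting a finite subcover and recording, via $\sigma$, which chart each chosen $O_{\ell(k)}$ sits in yields data as in (a), while (b) is exactly the translation ``these finitely many images cover $M_{(\mathcal{U},c)}$'' back across the surjection $\pi_{(\mathcal{U},c)}$ to the disjoint union (here one uses $\mathrm{dom}(\varphi_{\sigma(k),j})=U_{\sigma(k),j}$ and $\varphi_{j,j}=\mathrm{id}_{U_j}$, as in Lemma~\ref{lem:saturation}). For the reverse direction, (a) ensures each $\pi_{(\mathcal{U},c)}[\overline{O_{\ell(k)}}]$ is a compact subset of $M_{(\mathcal{U},c)}$, and (b) yields $M_{(\mathcal{U},c)}=\bigcup_{k=1}^m\pi_{(\mathcal{U},c)}[O_{\ell(k)}]\subseteq\bigcup_{k=1}^m\pi_{(\mathcal{U},c)}[\overline{O_{\ell(k)}}]$, so $M_{(\mathcal{U},c)}$ is a finite union of compact sets.

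\textbf{Borel-ness of the disjuncts.} For each fixed datum $(m,\sigma,\ell)$ one then checks that the set of $(\mathcal{U},c)\in\mathfrak{P}(\mathcal{G},X)$ satisfying (a) and (b) is Borel. Condition (a) is a finite conjunction of the conditions $K_{\ell(k)}\subseteq U_{\sigma(k),\sigma(k)}$, each of which is Fell-open in the corresponding coordinate of $\mathcal{U}$ since $K_{\ell(k)}$ is compact. In condition (b), for fixed $k$ and $j$ the map $(\mathcal{U},c)\mapsto\varphi_{\sigma(k),j}[\,O_{\ell(k)}\cap U_{\sigma(k),j}\,]$ is Borel, being the composition of the Borel intersection operation on $\mathcal{O}(X)$ (with one argument the fixed set $O_{\ell(k)}$) with the Borel direct-image map $\mathcal{O}(X)\times\mathcal{G}\to\mathcal{O}(X)$ furnished by Definition~\ref{defn:Borel_pseudogroup}; a finite union of Borel $\mathcal{O}(X)$-valued maps is Borel, and the containment $U_{j,j}\subseteq(\cdot)$ is Borel because the subset relation on $\mathcal{O}(X)^2$ is Borel (Section~\ref{subsection:spaces_of_subsets}). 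Intersecting over the countably many $j$ keeps this Borel. Since there are only countably many data $(m,\sigma,\ell)$, it follows that $\mathfrak{K}(\mathcal{G},X)$ is a countable union of Borel subsets of $\mathfrak{P}(\mathcal{G},X)$, hence Borel; as $\mathfrak{K}(\mathcal{G},X)\subseteq\mathfrak{M}(\mathcal{G},X)$ and the latter is Borel by Theorem~\ref{thm:M(G,X)_Borel}, it is in particular a Borel subset of $\mathfrak{M}(\mathcal{G},X)$.

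\textbf{Main obstacle.} The step requiring genuine care is selecting the right cover to refine. The ``obvious'' chart cover $\{\pi_{(\mathcal{U},c)}[U_i]\}$ does \emph{not} detect compactness --- for instance $\mathbb{R}$ is a one-chart locally $(\mathsf{Top},\mathbb{R})$-space --- so one must pass to the precompact pieces $O_\ell$ whose closures are trapped inside charts, and, crucially, carry along the bookkeeping function $\sigma$ recording the host chart of each piece, since it is only to subsets of a single chart's domain that one can apply the pseudogroup's transition maps and hence the Borel direct-image operation. Once the statement is set up with this data, everything else is a routine unwinding of the Borel-ness of the operations on $\mathcal{O}(X)$ catalogued in Section~\ref{subsection:spaces_of_subsets} and in Definition~\ref{defn:Borel_pseudogroup}.
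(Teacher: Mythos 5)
Your proof is correct, but it takes a genuinely different route from the paper's. The paper establishes that $\mathfrak{K}(\mathcal{G},X)$ is Borel by exhibiting two descriptions and invoking Suslin's Theorem: a $\mathbf{\Pi}^1_1$ one (every cover drawn from a fixed countable basis, coded by a function $f:\mathbb{N}^2\to\mathbb{N}$, admits a finite subcover) and a $\mathbf{\Sigma}^1_1$ one (there exist finitely many \emph{arbitrary} compact sets $K_i\subseteq U_i$, quantified over the uncountable space $\mathcal{K}(X)$, whose images cover the manifold, with coverage re-expressed via the Borel conditions $\bigcap_{i<N}\varphi_{i,j}[U_i\setminus K_i]=\varnothing$). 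You instead replace the existential quantifier over arbitrary compacta by a countable disjunction over finitary data $(m,\sigma,\ell)$ drawn from a canonical countable family of precompact basic open sets whose closures sit inside charts, verify that each disjunct is Borel using only the subset relation and the intersection/direct-image operations from Section \ref{subsection:spaces_of_subsets} and Definition \ref{defn:Borel_pseudogroup}, and conclude that $\mathfrak{K}(\mathcal{G},X)$ is outright a countable union of Borel sets — no appeal to Suslin's Theorem at all. Your forward and reverse directions of the combinatorial equivalence are sound (the only point needing care, which you handle, is that the closures of the chosen basic pieces must be trapped inside host charts, recorded by $\sigma$, so that transition maps and the Borel direct-image operation apply), and the Borel-ness checks are all justified by results the paper has already established. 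What each approach buys: the paper's Suslin template is uniform with its treatments of Hausdorffness and completeness and requires less combinatorial bookkeeping, while your argument is more elementary in the descriptive-set-theoretic sense and yields a sharper conclusion — an explicit Borel description of bounded complexity rather than one mediated by bianalyticity.
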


\begin{proof}
Much as in our argument that Hausdorff manifolds form a Borel subset of $\mathfrak{P}(\mathcal{G},X)$, we will show that $\mathfrak{K}(\mathcal{G},X)$ is both a coanalytic and analytic subset of $\mathfrak{M}(\mathcal{G},X)$; an application of Suslin's Theorem will complete the proof.

The $\mathbf{\Pi}^1_1$ characterization is, in essence, the obvious one: $\mathfrak{K}(\mathcal{G},X)$ consists of those $(\mathcal{U},c)$ such that every open cover of $M_{(\mathcal{U},c)}$ admits a finite subcover.
More formally, write $p$ for the map taking $\mathcal{U}\in\mathcal{O}(X)^{\mathbb{N}\times\mathbb{N}}$ to $(U_k)_{k\in\mathbb{N}}\in\mathcal{O}(X)^\mathbb{N}$; fix also a basis $\{B_i\mid i\in\mathbb{N}\}$ for the topology of $X$. This determines in turn a basis for $M_{(\mathcal{U},c)}$, and hence the pair $(\mathcal{U},c)\in\mathfrak{M}(\mathcal{G},X)$ parametrizes a compact topological manifold if and only if for every $f:\mathbb{N}^2\to\mathbb{N}$, if
$$\left\langle\bigcup_{(i,j)\in\mathbb{N}^2}\varphi_{i,k}[B_{f(i,j)}\cap U_i]\mid k\in\mathbb{N}\right\rangle=p(\mathcal{U})$$
then there exists an $N\in\mathbb{N}$ such that 
$$\left\langle\bigcup_{i,j< N}\varphi_{i,k}[B_{f(i,j)}\cap U_i]\mid k\in\mathbb{N}\right\rangle=p(\mathcal{U}).$$

For the $\mathbf{\Sigma}^1_1$ characterization, observe that $(\mathcal{U},c)$ parametrizes a compact manifold if and only if for some $N\in\mathbb{N}$ there exist compact sets $K_i\subseteq U_i$ $(i<N)$ whose natural images cover $M_{(\mathcal{U},c)}$.
At a more formal level, there are two things to check: first, that the set $A=\{(K,U)\mid K\subseteq U\}$ is a Borel subset of $\mathcal{K}(X)\times\mathcal{O}(X)$, and second, that the covering in question is a Borel condition.
Beginning with the second, note that if the natural images of $U_i$ $(i<N)$ cover $M_{(\mathcal{U},c)}$, then so too will those of $K_i$ $(i<N)$ if and only if $\bigcap_{i<N}\varphi_{i,j}[U_i\backslash K_i]=\varnothing$ for all $j<N$, and that the latter is evidently a Borel condition.
Turning now to the first, invoke the local compactness and second countability of $X$ to write it as an increasing union of compact sets $\{X_i\mid i\in\mathbb{N}\}$.
Note next that $A_i=\{(K,U)\mid K\subseteq U\}$ is, in the Fell-induced topology, an open subset of $\mathcal{K}(X)\times \mathcal{O}(X_i)$.
Now let $f_i:\mathcal{O}(X)\to\mathcal{O}(X_i)$ denote the Borel map $U\mapsto U\cap X_i$, and observe that $A=\bigcup_{i\in\mathbb{N}}(\mathrm{id}\times f_i)^{-1}(A_i)$. This concludes the proof.
\end{proof}

Our first Borel complexity computation follows almost immediately from the preceding theorem in combination with Theorem \ref{thm:equivalence_analytic} and Cheeger and Kister's 1970 result that there are, up to homeomorphism, only countably many compact topological manifolds.
Within our introduction, this result figured as Theorem A.

\begin{corollary}\label{cor:compact_manifolds_countable}
For all $n\geq 0$, the homeomorphism relation $\cong_{\mathsf{Top}}$ on $\mathfrak{K}(\mathsf{Top},\mathbb{R}^n)$ is Borel equivalent to $=_{\mathbb{N}}$; hence the homeomorphism relation $\cong_{\mathsf{Top}}$ on the parameter space of all compact topological manifolds $\coprod_{n\in\mathbb{N}}\mathfrak{K}(\mathsf{Top},\mathbb{R}^n)$ is as well.
\end{corollary}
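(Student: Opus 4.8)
The plan is to obtain the corollary by assembling results already established. By Theorem \ref{thm:Top_is_Borel}, $\mathsf{Top}$ is a Borel pseudogroup on $\mathbb{R}^n$, so Theorem \ref{thm:compact_Borel} makes $\mathfrak{K}(\mathsf{Top},\mathbb{R}^n)$ a Borel, hence standard Borel, subset of $\mathfrak{M}(\mathsf{Top},\mathbb{R}^n)$, on which $\cong_{\mathsf{Top}}$ is analytic by Theorem \ref{thm:equivalence_analytic} (the restriction of an analytic relation to a Borel subset being analytic). The Cheeger--Kister theorem supplies the remaining input: $\cong_{\mathsf{Top}}$ has only countably many classes on $\mathfrak{K}(\mathsf{Top},\mathbb{R}^n)$. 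As noted earlier in this subsection, these facts already give that $\cong_{\mathsf{Top}}$ Borel reduces to $=_{\mathbb{N}}$; for completeness, the argument is that each $\cong_{\mathsf{Top}}$-class, being a section of an analytic set, is analytic, while its complement, being a countable union of the remaining classes, is analytic as well, so that each class is Borel by Suslin's Theorem and the map sending $(\mathcal{U},c)$ to the index of its class (relative to a fixed enumeration of the classes) is the desired Borel reduction.

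For the reverse reduction, I would note that the number of $\cong_{\mathsf{Top}}$-classes on $\mathfrak{K}(\mathsf{Top},\mathbb{R}^n)$ is not merely countable but infinite: for $n\geq 1$ the manifolds consisting of $k$ disjoint copies of $S^n$ (for $k\geq 1$) are pairwise non-homeomorphic, and for $n=0$ the $k$-point discrete spaces serve the same purpose. Picking one parameter $(\mathcal{U}_k,c_k)$ from the $k$-th of these classes, the assignment $k\mapsto(\mathcal{U}_k,c_k)$ is trivially Borel, its domain $\mathbb{N}$ being discrete, and it reduces $=_{\mathbb{N}}$ to $\cong_{\mathsf{Top}}$. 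Thus $\cong_{\mathsf{Top}}$ is Borel equivalent to $=_{\mathbb{N}}$ on $\mathfrak{K}(\mathsf{Top},\mathbb{R}^n)$ for every $n\geq 0$.

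It remains to treat the full parameter space $\coprod_{n\in\mathbb{N}}\mathfrak{K}(\mathsf{Top},\mathbb{R}^n)$. This is standard Borel, countable disjoint unions of standard Borel spaces being standard Borel, and $\cong_{\mathsf{Top}}$ on it is analytic, a countable disjoint union of analytic relations being analytic. Since the topological dimension of a nonempty manifold is a homeomorphism invariant, a compact $n$-manifold and a compact $m$-manifold with $m\neq n$ are never homeomorphic; hence the $\cong_{\mathsf{Top}}$-classes on the disjoint union are exactly the union over $n$ of the classes on the individual summands (together, under conventions admitting it, with the single class of the empty manifold). There are therefore countably many of them, and infinitely many, so the argument above applies verbatim to yield Borel equivalence with $=_{\mathbb{N}}$.

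The mathematics here is light, the substantive work lying in Theorems \ref{thm:compact_Borel} and \ref{thm:equivalence_analytic} and in the Cheeger--Kister theorem; the only points calling for any care are the bookkeeping ones, namely that the passage ``countably many analytic classes yields Borel classes'' genuinely invokes Suslin's Theorem, and that the empty-manifold convention, whichever one adopts, changes the count of classes by at most one and is therefore harmless.
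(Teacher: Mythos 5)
Your proposal is correct and follows essentially the same route as the paper: Theorems \ref{thm:equivalence_analytic} and \ref{thm:compact_Borel} plus Cheeger--Kister give countably many analytic classes, Suslin's Theorem upgrades each class to Borel so that the class-index map is a Borel reduction to $=_{\mathbb{N}}$, and selecting representatives gives the reverse reduction. Your extra remarks (infinitude of classes via disjoint unions of spheres, invariance of dimension for the disjoint-union case, the empty-manifold convention) merely make explicit points the paper leaves implicit.
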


\begin{proof}
By \cite{MR256399}, we may identify the set of homeomorphism classes of compact topological $n$-manifolds with the natural numbers.
By Theorems \ref{thm:equivalence_analytic} and \ref{thm:compact_Borel}, then, the map $f:\mathfrak{K}(\mathsf{Top},\mathbb{R}^n)\to\mathbb{N}$ taking $(\mathcal{U},c)$ to the homeomorphism class of $M_{(\mathcal{U},c)}$ is Borel, for the reason that any $f^{-1}(\{n\})$ is both analytic and the complement of a countable union of analytic sets, and consequently Borel, again by Suslin's Theorem.
Any reverse map $\mathbb{N}\to\mathfrak{K}(\mathsf{Top},\mathbb{R}^n)$ selecting a representative for each homeomorphism class is trivially Borel as well, and will be a reduction of $=_{\mathbb{N}}$ to $\cong_\mathsf{Top}$.
\end{proof}

The next corollary tells us that we can compute \emph{any} topological invariant for compact manifolds, e.g.,~genus, Euler characteristic, homotopy groups, etc, in a Borel way. Contrast this with a theorem of Becker \cite{MR1233807} which says that, among arbitrary compact subsets of $\mathbb{R}^2$, being simply connected is a $\mathbf{\Pi}^1_1$-complete property.
\begin{corollary}\label{cor:compact_invariants_Borel}
Any assignment of invariants to compact topological manifolds is a Borel operation.
\end{corollary}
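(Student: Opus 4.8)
The plan is to reduce everything to the Borel classification map already produced in Corollary \ref{cor:compact_manifolds_countable}. First I would fix the meaning of the statement: an \emph{assignment of invariants} to compact topological manifolds is a function $\iota$ on the parameter space $\coprod_{n\in\mathbb{N}}\mathfrak{K}(\mathsf{Top},\mathbb{R}^n)$, taking values in some standard Borel space $Y$, which is invariant under homeomorphism, i.e., $(\mathcal{U},c)\cong_{\mathsf{Top}}(\mathcal{V},d)$ implies $\iota(\mathcal{U},c)=\iota(\mathcal{V},d)$. Invariants valued in a ``concrete category'' like countable groups, chain complexes, or rings up to isomorphism (as with fundamental groups, homology, cohomology rings) are subsumed by first choosing, over the countable index set $\mathbb{N}$ appearing below, a representative for each isomorphism class that actually occurs; this choice is unproblematic and reduces that case to the present one.

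Next I would invoke Corollary \ref{cor:compact_manifolds_countable}: there is a Borel map $F\colon\coprod_{n\in\mathbb{N}}\mathfrak{K}(\mathsf{Top},\mathbb{R}^n)\to\mathbb{N}$ which is a \emph{complete} invariant for $\cong_{\mathsf{Top}}$ on this space, that is, $F(\mathcal{U},c)=F(\mathcal{V},d)$ if and only if $M_{(\mathcal{U},c)}\cong M_{(\mathcal{V},d)}$. Concretely, $F$ restricts on each summand $\mathfrak{K}(\mathsf{Top},\mathbb{R}^n)$ to the Borel map $f$ of that corollary, re-indexed so as to be injective on homeomorphism classes across all $n$; that this is Borel and complete is exactly the content established there via Cheeger and Kister's finiteness theorem \cite{MR256399} together with Theorems \ref{thm:equivalence_analytic} and \ref{thm:compact_Borel}.

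The key step is then the factoring: since $\iota$ and $F$ are both constant on $\cong_{\mathsf{Top}}$-classes and $F$ \emph{separates} those classes, there is a unique function $g\colon\mathbb{N}\to Y$ with $\iota=g\circ F$. But $g$ is defined on a countable discrete space, so every subset of its domain is Borel and hence $g$ is automatically a Borel map into the standard Borel space $Y$. As a composition of Borel maps, $\iota=g\circ F$ is therefore Borel, which is the assertion.

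I do not expect a genuine obstacle here, as all the substance is already packed into Corollary \ref{cor:compact_manifolds_countable} and, through it, into the Cheeger--Kister theorem; the only point requiring care is the formalization of ``assignment of invariants.'' The argument yields Borel-ness whenever the target is a standard Borel space, and the factoring trick over the countable set $\mathbb{N}$ shows that relaxing the target to isomorphism classes of countable structures changes nothing. It is worth emphasizing --- in contrast with Becker's theorem that simple connectivity is $\mathbf{\Pi}^1_1$-complete among arbitrary compact subsets of $\mathbb{R}^2$ --- that the Borel-ness obtained here genuinely exploits the manifold parametrization $\mathfrak{M}(\mathsf{Top},\mathbb{R}^n)$, and not merely membership in some ambient Euclidean space.
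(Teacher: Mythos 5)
Your argument is correct and is essentially the paper's own proof: the assignment factors through the Borel reduction to $=_{\mathbb{N}}$ from Corollary \ref{cor:compact_manifolds_countable}, and any map defined on $\mathbb{N}$ is automatically Borel, so the composition is Borel. The only difference is cosmetic — you allow an arbitrary standard Borel target and spell out the factoring, whereas the paper phrases it as postcomposing with a map from $\mathbb{N}$ to a countable set.
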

\begin{proof}
Any such assignment will amount to postcomposing the Borel reduction to $=_{\mathbb{N}}$ of the previous corollary with a map from $\mathbb{N}$ to a countable set.
\end{proof}

The above line of reasoning applies more generally. 
\begin{corollary}\label{cor:Borel_compact_with_boundary}
Corollaries \ref{cor:compact_manifolds_countable} and \ref{cor:compact_invariants_Borel} continue to hold when the class of compact topological manifolds is replaced with any of the following:
\begin{itemize}
\item compact topological manifolds with boundary,
\item compact smooth manifolds (up to diffeomorphism), or
\item compact smooth manifolds with boundary (up to diffeomorphism).
\end{itemize}
\end{corollary}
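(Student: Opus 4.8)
The plan is to observe that each of the three classes is of the form $\mathfrak{K}(\mathcal{G},X)$ for a suitable Borel pseudogroup $\mathcal{G}$ on a locally compact Polish model space $X$, and then to rerun verbatim the argument of Corollary~\ref{cor:compact_manifolds_countable}. For compact topological manifolds with boundary, take $\mathcal{G}=\mathsf{Top}$ on $X=\mathbb{R}^n_{+}=\{x\in\mathbb{R}^n\mid x_n\geq 0\}$; this $X$ is a closed, hence Polish, and locally compact subspace of $\mathbb{R}^n$, and Theorem~\ref{thm:Top_is_Borel}, whose proof uses only that the model space is locally compact Polish, shows that $\mathsf{Top}$ on it is a Borel pseudogroup. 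For compact smooth manifolds, take $\mathcal{G}=\mathsf{C}^\infty$ on $X=\mathbb{R}^n$, a Borel pseudogroup by Example~\ref{ex:smooth_pseudogroup}; for compact smooth manifolds with boundary, take $\mathcal{G}=\mathsf{C}^\infty$ on $X=\mathbb{R}^n_{+}$, where $C^\infty$ carries the usual half-space meaning (one-sided derivatives of all orders, equivalently local extendability to a smooth map on an open neighbourhood in $\mathbb{R}^n$), and note that the argument of Example~\ref{ex:smooth_pseudogroup} --- testing continuity of each partial derivative on a fixed countable dense set --- goes through unchanged, so that $\mathsf{C}^\infty$ on $\mathbb{R}^n_{+}$ is again a Borel pseudogroup. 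By Example~\ref{ex:first_manifolds_example}, the $(\mathcal{G},X)$-manifolds are in each case exactly the class in question.

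Given this, Theorem~\ref{thm:M(G,X)_Borel} makes $\mathfrak{M}(\mathcal{G},X)$ standard Borel, Theorem~\ref{thm:equivalence_analytic} makes $\cong_{\mathcal{G}}$ analytic on it, and Theorem~\ref{thm:compact_Borel} makes the compact locus $\mathfrak{K}(\mathcal{G},X)$ Borel. So, exactly as in Corollary~\ref{cor:compact_manifolds_countable}, once we know there are only countably many objects of the given type up to the relevant equivalence, the map $f$ sending a parameter to the index of the $\cong_{\mathcal{G}}$-class of the manifold it codes has each fibre $f^{-1}(\{m\})$ both analytic (a section of the analytic relation $\cong_{\mathcal{G}}$) and the complement in $\mathfrak{K}(\mathcal{G},X)$ of a countable union of analytic sets, hence Borel by Suslin's Theorem; thus $f$ is a Borel reduction of $\cong_{\mathcal{G}}$ to $=_{\mathbb{N}}$, while any reverse map $\mathbb{N}\to\mathfrak{K}(\mathcal{G},X)$ choosing one parameter per class is trivially Borel and reduces $=_{\mathbb{N}}$ to $\cong_{\mathcal{G}}$. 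Taking the disjoint union over $n$ then gives the statements for the full parameter spaces, and Corollary~\ref{cor:compact_invariants_Borel} transfers identically: any assignment of invariants is this Borel reduction postcomposed with a function on $\mathbb{N}$.

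What remains is to supply the countability inputs, and this is where the only genuine content beyond reassembling earlier results lies. For compact topological manifolds with boundary, countability up to homeomorphism is the Cheeger--Kister theorem \cite{MR256399}: its proof, which turns on the local contractibility of the homeomorphism groups of Euclidean (half-)space, so that a compact manifold's homeomorphism type is locally constant in its finitely many gluing data, applies to the bounded case without change. For compact smooth manifolds with or without boundary, countability up to diffeomorphism is also classical: such a manifold has a (relative) handle decomposition with finitely many handles, and at each stage the attaching map ranges over a space of smooth embeddings of one compact manifold into another, which --- being separable and locally contractible, indeed a Fr\'{e}chet manifold --- has only countably many connected components; since an isotopy of attaching embeddings does not alter the diffeomorphism type, an induction on dimension yields the claim. (Alternatively, one may invoke the realizability of compact smooth manifolds as nonsingular real algebraic sets with rational defining equations.) The main obstacle, then, is to verify these countability facts --- and in particular that the handle-theoretic argument is robust enough to cover the relative (bounded) case --- with the requisite care; everything else is bookkeeping with the structural theorems already established.
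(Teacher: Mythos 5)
Your proposal is correct, and its skeleton is the same as the paper's: parametrize each class as $\mathfrak{K}(\mathcal{G},X)$ for a suitable Borel pseudogroup on $\mathbb{R}^n$ or $\mathbb{R}^n_+$, invoke Theorems \ref{thm:M(G,X)_Borel}, \ref{thm:equivalence_analytic}, and \ref{thm:compact_Borel}, and then rerun the Suslin argument of Corollary \ref{cor:compact_manifolds_countable}, so that everything reduces to the countability of the relevant sets of equivalence classes. The one place where you genuinely diverge is the smooth countability input: the paper obtains it by combining Cheeger--Kister \cite{MR256399} (countably many compact topological manifolds, with or without boundary) with the classical fact that a fixed compact topological $n$-manifold carries only countably many inequivalent smooth structures (citing \cite{MR149491}, \cite{MR0263092}), whereas you prove directly that there are only countably many compact smooth manifolds up to diffeomorphism via finite handle decompositions, using that the space of smooth embeddings of a compact manifold is separable and locally contractible (so has countably many isotopy classes) and that isotopic attaching maps give diffeomorphic results; your parenthetical via real algebraic realizations is a second valid route. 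Both arguments are sound; yours is self-contained on the smooth side and avoids smoothing theory, while the paper's is shorter given the references it already uses. Two small remarks: your phrase ``induction on dimension'' is slightly off for the closed case --- the induction there is on the number of handles, with induction on dimension only needed in the bounded case to enumerate the possible boundaries before running the relative handle argument --- and your explicit check that $\mathsf{C}^\infty$ on $\mathbb{R}^n_+$ is a Borel pseudogroup (via one-sided derivatives or local extendability) is a detail the paper leaves implicit, so including it is a mild improvement rather than a deviation.
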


\begin{proof}
By Theorem \ref{thm:compact_Borel}, each of these classes of manifolds may be parametrized by a standard Borel space; the assertion therefore reduces to the claim that each of these classification problems involves only countably many equivalence classes.
In the case of the first item, this fact is again due to \cite{MR256399}; for the second and third, apply the fact that any compact topological $n$-manifold $M$ (with or without boundary) admits, up to equivalence, only countably many smooth structures --- in fact only finitely many if $n\neq 4$ (the countability bound follows from, e.g., \cite{MR149491}; see \cite{MR0263092} for the finiteness assertion).
\end{proof}

\begin{corollary}
The classification of topological (or smooth) $0$-manifolds and $1$-manifolds, possibly with boundary, up to homeomorphism (or diffeomorphism, respectively) are each Borel equivalent to $=_{\mathbb{N}}$.
\end{corollary}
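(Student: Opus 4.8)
The plan is to follow the proof of Corollary~\ref{cor:compact_manifolds_countable} essentially verbatim, substituting the classical low-dimensional classification theorems for the Cheeger--Kister finiteness result used there. The crucial point is that each of the classification problems named in the statement involves only \emph{countably many} equivalence classes, and in fact countably \emph{infinitely} many. Indeed, a $0$-manifold (with or without boundary, these notions coinciding since $\mathbb{R}^0_{+}=\mathbb{R}^0$ is a point) is a countable discrete space, classified up to homeomorphism---equivalently, up to diffeomorphism for its unique smooth structure---by its cardinality, an element of $\{0,1,2,\dots\}\cup\{\aleph_0\}$. And by the classical classification of $1$-manifolds, every (second countable, Hausdorff) $1$-manifold, possibly with boundary, is a disjoint union of at most countably many connected components, each homeomorphic to exactly one of $S^1$, $\mathbb{R}$, $[0,1]$, $[0,\infty)$ (only the first two occurring in the boundaryless case); hence its homeomorphism type, and likewise its diffeomorphism type in the smooth category, is determined by the function assigning to each of these finitely many model components the number of times (in $\{0,1,\dots\}\cup\{\aleph_0\}$) it occurs. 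There are only countably many such functions, and infinitely many distinct ones---e.g.\ the disjoint unions of $n$ circles, for $n\in\mathbb{N}$---so the number of classes is in every case countably infinite.

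Next I would record that the relevant parameter spaces are standard Borel and carry analytic equivalence relations. The topological (resp.\ smooth) $0$- and $1$-manifolds without boundary are exactly the $(\mathsf{Top},\mathbb{R}^k)$- (resp.\ $(\mathsf{C}^\infty,\mathbb{R}^k)$-) manifolds for $k\in\{0,1\}$, and with boundary exactly the $(\mathsf{Top},\mathbb{R}^k_{+})$- (resp.\ $(\mathsf{C}^\infty,\mathbb{R}^k_{+})$-) manifolds. Each of the pseudogroups in play is Borel on the relevant (locally compact Polish) model space: for $\mathsf{Top}$ and $\mathsf{C}^\infty$ over $\mathbb{R}^k$ this is Theorem~\ref{thm:Top_is_Borel} and Example~\ref{ex:smooth_pseudogroup}, and over $\mathbb{R}^k_{+}$ it follows by the same arguments, exactly as was already used in Corollary~\ref{cor:Borel_compact_with_boundary}. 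Consequently each parameter space $\mathfrak{M}(\mathcal{G},X)$ is standard Borel by Theorem~\ref{thm:M(G,X)_Borel} and the corresponding equivalence relation $\cong_{\mathcal{G}}$ is analytic by Theorem~\ref{thm:equivalence_analytic}. In contrast to Corollary~\ref{cor:compact_manifolds_countable}, no compactness condition is imposed here, so Theorem~\ref{thm:compact_Borel} plays no role.

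The reduction is then identical to the one in Corollary~\ref{cor:compact_manifolds_countable}: fixing an enumeration $(x_m)_{m\in\mathbb{N}}$ of representatives of the countably infinitely many $\cong_{\mathcal{G}}$-classes, the assignment $f$ of a parameter to the index of its class has each fibre $f^{-1}(\{m\})=\{y:y\cong_{\mathcal{G}}x_m\}$ simultaneously analytic (a section of the analytic relation $\cong_{\mathcal{G}}$) and coanalytic (the complement of the analytic set $\bigcup_{k\neq m}f^{-1}(\{k\})$), hence Borel by Suslin's Theorem, so that $f$ is a Borel reduction of $\cong_{\mathcal{G}}$ to $=_{\mathbb{N}}$; and $m\mapsto x_m$ is a (trivially Borel) reduction of $=_{\mathbb{N}}$ to $\cong_{\mathcal{G}}$. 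I do not expect a genuine obstacle here; the only steps requiring any care are the two bookkeeping claims above---that the classification of bordered $1$-manifolds has exactly countably (not merely finitely) many classes, and that passing to the smooth category changes none of these counts, which is immediate from the classical smooth classification of $1$-manifolds.
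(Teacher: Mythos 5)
Your proposal is correct and follows essentially the same route as the paper: invoke the classical classification of $0$-manifolds by cardinality and of $1$-manifolds (with boundary) by counting components of each model type, observe there are countably infinitely many classes, and then run the Suslin-theorem argument of Corollary~\ref{cor:compact_manifolds_countable} on the standard Borel parameter spaces with their analytic equivalence relations. The extra bookkeeping you supply (the four bordered model components, the remark that no compactness hypothesis is needed) is consistent with, and only slightly more explicit than, the paper's own brief proof.
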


\begin{proof}
By a $0$-manifold we mean a second countable Hausdorff space modelled on $\mathbb{R}^0=\{\mathrm{pt}\}$; these are the countable discrete spaces, and they are completely classified by their cardinality.
Each component of a $1$-manifold is either homeomorphic to a circle or to the real line \cite[Thm.\ 5.27]{MR2766102}; the numbers of components of each type provide a complete invariant for such manifolds.
In each case the set of invariants is countable, and the argument of Corollary \ref{cor:compact_manifolds_countable} applies just as before to complete the proof. Similar comments apply to $1$-manifolds with boundary, and in the smooth case.
\end{proof}

\subsection{Exhaustions, connectedness, and metrics on manifolds}
\label{subsection:subclasses}

This final subsection represents something of a toolbox for the remainder of the paper and, we hope, for further work beyond it. Among the results we establish here, for appropriate choices of Borel pseudogroups $\mathcal{G}$ and model spaces $X$, are the existence of Borel functions which compute:
\begin{itemize}
	\item an exhaustion of any $(\mathcal{G},X)$-manifold by compact subsets  (Lemma \ref{lem:exhaustion});
	\item a reparametrization of any $(\mathcal{G},X)$-manifold by one with locally finite charts (Lemma \ref{lem:paracompact});
	\item a reparametrization of any $(\mathcal{G},X)$-manifold by one with simply connected charts (Lemma \ref{lem:reparametrization}); and
	\item the connected components of any $(\mathcal{G},X)$-manifold (Lemma \ref{lem:Borel_computation_of_components}).
\end{itemize}
and Borel parameter spaces for:
\begin{itemize}
	\item all connected $(\mathcal{G},X)$-manifolds (Lemma \ref{lem:connected_Borel}); and
	\item all (metrically) complete $(\mathcal{G},X)$-manifolds (Lemma \ref{lem:complete}).
\end{itemize}
We will also note a few irregularities involving quotient metrics, alongside ways, within our framework, to address them. While these facts will be crucial to several of our main theorems, they are of a technical nature, often describing how to perform a standard construction in a Borel way. For this reason, we suggest that first-time readers skim this section, returning to it as needed later on.

It will be useful throughout to have a uniform way of discussing open subsets of manifolds within our parametrization. Let us fix a Borel pseudogroup $\mathcal{G}$ on a locally compact Polish space $X$ for the duration (we will place stronger demands on $\mathcal{G}$ and $X$ later on).  
Given $(\mathcal{U},c)$ and $(\mathcal{V},d)$ in $\mathfrak{M}(\mathcal{G},X)$, we call $M_{(\mathcal{V},d)}$ an \emph{open submanifold} of $M_{(\mathcal{U},c)}$ if
\begin{equation}
\label{eq:open_submanifold}
V_{i,j}\subseteq U_{i,j}\text{ and }\psi_{i,j}=\varphi_{i,j}\big|_{V_{i,j}}\text{ for all }(i,j)\in\mathbb{N}^2.
\end{equation}
Here, as before, $\varphi_{i,j}$ and $\psi_{i,j}$ denote elements of $c$ and $d$, respectively. If (\ref{eq:open_submanifold}) holds, we will write $d=c|_{\mathcal{V}}$. So defined, the open submanifolds of $M_{(\mathcal{U},c)}$ correspond exactly to its open subsets. The corresponding relation on parameters, for which we abuse terminology and say that $(\mathcal{V},d)$ is an \emph{open submanifold} of $(\mathcal{U},c)$, is clearly Borel on $\mathfrak{M}(\mathcal{G},X)$. Among other alternatives, we can also view an open subset of $M_{(\mathcal{U},c)}$ as given by a sequence $(V_i)_{i\in\mathbb{N}}$ of open subsets of $X$ such that $V_i\subseteq U_i$ for all $i\in\mathbb{N}$. We can then build an open submanifold $M_{(\mathcal{V},d)}$, in the sense just defined, which corresponds to the same underlying open set in $M_{(\mathcal{U},c)}$, using Lemma \ref{lem:saturation}.

\subsubsection{Exhaustions}
\label{subsubsect:exhaustions} 
A standard fact we've now tacitly used more than once is that every locally compact Polish space, and in particular, every manifold $M$, admits a \emph{compact exhaustion}, that is, a sequence $K_n$ (for $n\in\mathbb{N}$) of compact subsets of $M$ such that for all $n\in\mathbb{N}$, $K_n\subseteq\mathrm{int} (K_{n+1})$, and $M=\bigcup_{n\in\mathbb{N}}K_n$ \cite[Prop.\ 4.76]{MR2766102}. As a consequence, every manifold $M$ is \emph{paracompact}, meaning every open cover $\mathcal{U}$ of $M$ admits \emph{locally finite refinement} $\mathcal{U}'$, that is, a collection of open subsets of elements of $\mathcal{U}$ which still covers $M$, but for which each point of $M$ lies in at most finitely many elements of $\mathcal{U}'$ \cite[Thm.\ 4.77]{MR2766102}. We wish to realize both of these facts in a Borel way.
To this end, we first need a lemma which says that we can tell whether an open subset of a manifold contains the \emph{closure} of another.

\begin{lemma}\label{lem:contain_closure_Borel} 
    The relation ``$M_{(\mathcal{V},d)}$ and $M_{(\mathcal{W},e)}$ are open submanifolds of $M_{(\mathcal{U},c)}$ and the closure of $M_{(\mathcal{V},d)}$ is contained in $M_{(\mathcal{W},e)}$'', for $(\mathcal{U},c)$, $(\mathcal{V},d)$, and $(\mathcal{W},e)$ in $\mathfrak{M}(\mathcal{G},X)$, is Borel on $\mathfrak{M}(\mathcal{G},X)$.
\end{lemma}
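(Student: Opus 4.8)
The plan is to reduce the displayed relation to a countable conjunction of Borel conditions on the open subsets of $X$ that occur as charts. The clause ``$M_{(\mathcal{V},d)}$ and $M_{(\mathcal{W},e)}$ are open submanifolds of $M_{(\mathcal{U},c)}$'' is, as already noted following equation (\ref{eq:open_submanifold}), a Borel condition on $\mathfrak{M}(\mathcal{G},X)^3$: it unpacks into the requirements $V_{i,j}\subseteq U_{i,j}$ and $\psi_{i,j}=\varphi_{i,j}|_{V_{i,j}}$ for all $(i,j)$ (and likewise for $\mathcal{W},e$), each Borel since the subset relation on $\mathcal{O}(X)$ and the restriction operation of the Borel pseudogroup $\mathcal{G}$ are Borel. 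So it suffices to work on the Borel set where this clause holds and show there that the condition ``$\overline{M_{(\mathcal{V},d)}}\subseteq M_{(\mathcal{W},e)}$'', the closure taken in $M_{(\mathcal{U},c)}$, is Borel.

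The key point is that this containment can be tested one chart of $M_{(\mathcal{U},c)}$ at a time. Let $\pi$ denote the quotient map defining $M_{(\mathcal{U},c)}$; each $\pi|_{U_k}$ is a homeomorphism onto the open set $\pi[U_k]$, and the $\pi[U_k]$ cover $M_{(\mathcal{U},c)}$. Writing
$$V_k^{*}:=\bigcup_{j\in\mathbb{N}}\varphi_{j,k}[V_j\cap U_{j,k}]\subseteq U_k$$
for the $\pi|_{U_k}$-preimage of the image in $M_{(\mathcal{U},c)}$ of $M_{(\mathcal{V},d)}$, and defining $W_k^{*}\subseteq U_k$ analogously, one has, using that closure commutes with restriction to open subspaces, that $\overline{M_{(\mathcal{V},d)}}\subseteq M_{(\mathcal{W},e)}$ in $M_{(\mathcal{U},c)}$ if and only if $\overline{V_k^{*}}\cap U_k\subseteq W_k^{*}$ for every $k\in\mathbb{N}$, the closure now taken in $X$.

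It remains to see that for each fixed $k$ this last condition is Borel in the parameters. The assignment $(\mathcal{U},c,\mathcal{V},d)\mapsto V_k^{*}$ is Borel: for each $j$ the assignment producing $\varphi_{j,k}[V_j\cap U_{j,k}]$ is Borel by Lemma \ref{lem:saturation} (applied to the open set $V_j$ with index $j$, reading off the $k$-th coordinate of the output), and $V_k^{*}$ is the countable union of these, a Borel operation on $\mathcal{O}(X)$ (via $\bigcup_j O_j=\lim_n\bigcup_{j\le n}O_j$ together with the Borel-ness of finite unions and of limits); similarly for $W_k^{*}$. Since $U\mapsto\overline{U}$ is a Borel map $\mathcal{O}(X)\to\mathcal{F}(X)$, complementation is a Borel map $\mathcal{O}(X)\to\mathcal{F}(X)$, and the containment $\overline{V_k^{*}}\cap U_k\subseteq W_k^{*}$ is equivalent to the inclusion of closed sets $\overline{V_k^{*}}\cap(X\setminus W_k^{*})\subseteq X\setminus U_k$, the Borel-ness of intersection and of the subset relation on $\mathcal{F}(X)$ finishes the job; conjoining over $k$ yields the lemma.

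The step I would be most careful about is the chart-by-chart reformulation, and specifically the fact that the closure of $M_{(\mathcal{V},d)}$ inside $M_{(\mathcal{U},c)}$ restricts over $U_k$ not to the closure of $V_k$ but to that of the saturated set $V_k^{*}$ — a point of $U_k$ may be glued to a point of another chart lying in $M_{(\mathcal{V},d)}$, so ignoring the saturation would produce a wrong (and in general non-Borel-looking) condition. Lemma \ref{lem:saturation} is precisely what makes the saturation available in a Borel way; everything else is bookkeeping with the Borel operations on $\mathcal{O}(X)$ and $\mathcal{F}(X)$ recorded in Section \ref{subsection:spaces_of_subsets}.
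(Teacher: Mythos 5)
Your proposal is correct, and its skeleton --- reducing the closure containment to countably many chart-wise conditions of the form $\overline{V}\cap U\subseteq W$ in $\mathcal{O}(X)$ --- matches the paper's, but the execution differs in two substantive ways. First, the paper tests the condition on the raw chart sets, asserting that the containment holds if and only if $\overline{V_i}\cap U_i\subseteq W_i$ for all $i$; that equivalence is valid only when the parameters are saturated (i.e., $V_i$ is the full trace on $U_i$ of the underlying open subset of $M_{(\mathcal{U},c)}$, and likewise for $W_i$), which is the situation in all of the paper's applications, since the open submanifolds there are produced via Lemma \ref{lem:saturation}. Your version, which first passes to the saturations $V_k^{*}$ and $W_k^{*}$, is exactly what is needed if one reads the lemma intrinsically for arbitrary open-submanifold parameters in the sense of equation (\ref{eq:open_submanifold}); the point you flag --- a point of $U_k$ outside $V_k$ may be glued to a point of some $V_j$ --- is precisely where the unsaturated test can fail, so your extra (Borel) saturation step buys robustness and generality. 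Second, where the paper proves Borel-ness of the ternary relation $\overline{V}\cap U\subseteq W$ by hand (Lemmas \ref{lem:contain_closure_Borel0} and \ref{lem:contain_closure_Borel1}, via a compatible metric, compact exhaustions, and a locally finite refinement), you obtain it directly from the toolbox of Section \ref{subsection:spaces_of_subsets} by rewriting it as the inclusion of closed sets $\overline{V}\cap(X\setminus W)\subseteq X\setminus U$ and invoking the Borel-ness of closure, complementation, intersection, and the subset relation on $\mathcal{F}(X)$ (all of which, like the paper's route, rest on local compactness of $X$). This is shorter and equally rigorous; the paper's explicit auxiliary lemmas have only the minor advantage of being quotable on their own elsewhere, so mathematically nothing is lost in your approach.
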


To prove Lemma \ref{lem:contain_closure_Borel}, we start by showing the corresponding fact for open subsets of our fixed model space $X$, which we will then transfer to the manifold setting locally via the charts. We will make repeated use the elementary fact that the closure of a set can be ``computed locally'', meaning if $\mathcal{U}$ is any open cover of a space $Y$ and $A\subseteq Y$, then
\[
    \overline{A}=\bigcup_{U\in\mathcal{U}}U\cap\overline{A}=\bigcup_{U\in\mathcal{U}}\mathrm{cl}_{U}(A),
\]
where $\mathrm{cl}_U(A)$ is the relative closure of $A$ in the subspace $U$.

\begin{lemma}\label{lem:contain_closure_Borel0}
    The relation $\overline{V}\subseteq W$, for $V,W\in\mathcal{O}(X)$, is Borel on $\mathcal{O}(X)$.
\end{lemma}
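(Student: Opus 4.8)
The statement to prove is that $\{(V,W)\in\mathcal{O}(X)^2 : \overline{V}\subseteq W\}$ is Borel in $\mathcal{O}(X)$. The plan is to give both a $\mathbf{\Sigma}^1_1$ and a $\mathbf{\Pi}^1_1$ description and invoke Suslin's Theorem — though in fact, exploiting local compactness, I expect to be able to do better and produce a genuinely Borel (even relatively simple) description directly, using the Fell-topology machinery recorded in Section~\ref{subsection:spaces_of_subsets}.

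\textbf{Plan.} The whole thing hinges on one set-theoretic reformulation: for $V,W\in\mathcal{O}(X)$ one has $\overline{V}\subseteq W$ if and only if $\overline{V}\cap(X\setminus W)=\varnothing$. This rewrites the relation as the vanishing of the intersection of two \emph{closed} sets, each of which is produced from $(V,W)$ by a Borel operation that has already been recorded. So unlike several later arguments (cf.\ the Remark after Theorem~\ref{thm:M(G,X)_Borel}), no $\mathbf{\Sigma}^1_1$/$\mathbf{\Pi}^1_1$ balancing act or appeal to Suslin's Theorem is needed here; the relation is directly seen to be Borel.

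Concretely, I would first assemble the relevant Borel facts from Section~\ref{subsection:spaces_of_subsets}: the closure map $V\mapsto\overline{V}$ is Borel from $\mathcal{O}(X)$ to $\mathcal{F}(X)$ (a consequence there of $F\mapsto\overline{X\setminus F}$ being Borel, and one that genuinely uses local compactness of $X$); the complementation map $W\mapsto X\setminus W$ is a homeomorphism $\mathcal{O}(X)\to\mathcal{F}(X)$, in particular Borel; and binary intersection $(F_1,F_2)\mapsto F_1\cap F_2$ is Borel $\mathcal{F}(X)^2\to\mathcal{F}(X)$ (again using local compactness). Composing, the map $g:\mathcal{O}(X)^2\to\mathcal{F}(X)$ defined by $g(V,W)=\overline{V}\cap(X\setminus W)$ is Borel.

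It then remains to note that the singleton $\{\varnothing\}$ is a Borel (indeed closed) subset of $\mathcal{F}(X)$ in the Fell topology: fixing a countable basis $\{B_n\}_{n\in\mathbb{N}}$ of nonempty open subsets of $X$, a closed set $F$ is nonempty exactly when $F\cap B_n\neq\varnothing$ for some $n$, so $\mathcal{F}(X)\setminus\{\varnothing\}=\bigcup_{n\in\mathbb{N}}\{F\in\mathcal{F}(X):F\cap B_n\neq\varnothing\}$ is a union of subbasic Fell-open sets. Hence $\{(V,W):\overline{V}\subseteq W\}=g^{-1}(\{\varnothing\})$ is Borel, completing the proof.

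I do not expect a real obstacle here; the one point I would flag is that each step silently depends on the local compactness of $X$ (both for the closure map and for the intersection map to be Borel), which is precisely why the statement is made for the fixed locally compact Polish model space. It is also worth keeping the argument in the ``vanishing intersection'' form rather than, say, passing through explicit $B_n$-expansions of $W$, since the former localizes cleanly: Lemma~\ref{lem:contain_closure_Borel} will be deduced from this one by transferring it chartwise, using that closures may be computed locally along an open cover.
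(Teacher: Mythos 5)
Your proof is correct, but it takes a genuinely different route from the paper's. You reduce the relation to the vanishing of $\overline{V}\cap(X\setminus W)$ and then simply compose hyperspace operations already recorded in Section~\ref{subsection:spaces_of_subsets}: closure $\mathcal{O}(X)\to\mathcal{F}(X)$ is Borel, complementation is a homeomorphism $\mathcal{O}(X)\to\mathcal{F}(X)$, binary intersection is Borel on $\mathcal{F}(X)^2$, and $\{\varnothing\}$ is Fell-closed, so the relation is the Borel preimage $g^{-1}(\{\varnothing\})$ with $g(V,W)=\overline{V}\cap(X\setminus W)$; each of these ingredients is indeed available in the paper's toolkit, and each genuinely uses local compactness, as you flag. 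The paper instead argues by hand with a compatible complete metric and a countable dense set $D$: for precompact $V$ it characterizes $\overline{V}\subseteq W$ by the existence of a rational $\delta$ with $B_\delta(x)\subseteq W$ for all $x\in V\cap D$, and then reduces the general case to the precompact one via a locally finite cover, using $\overline{V}=\bigcup_n\overline{V\cap U_n}$. Your argument is shorter and avoids the case split, at the cost of outsourcing the real content to the cited fact that the closure map is Borel; the paper's version is self-contained modulo elementary metric estimates, and its precompact-case-plus-locally-finite-cover decomposition is precisely the gadget that is then reused to prove the relativized Lemma~\ref{lem:contain_closure_Borel1} (the relation $\overline{V}\cap U\subseteq W$), where your global vanishing-intersection formulation does not apply verbatim because $\overline{V}\cap U$ need not be closed (though it can be adapted, e.g.\ by noting that $\{(F,U)\in\mathcal{F}(X)\times\mathcal{O}(X):F\cap U\neq\varnothing\}$ is Borel).
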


\begin{proof}
    Fix a compatible complete metric $d$ on $X$ and a countable dense subset $D$ of $X$. Suppose first that $V$ is precompact in $X$. Then,
    \begin{align*}
        \overline{V}\subseteq W &\Leftrightarrow \exists\delta>0 (d(V,X\setminus W)\geq\delta)\\
        &\Leftrightarrow \exists\delta\in\mathbb{Q}^+\forall x\in V\cap D(B_\delta(x)\subseteq W),
    \end{align*}
    which is clearly Borel in $V$ and $W$.

    For the general case, fix a compact exhaustion $K_n$ (for $n\in\mathbb{N}$) of $X$. By paracompactness, and possibly reindexing, we may let $U_n$ (for $n\in\mathbb{N}$) be a locally finite refinement of the $\mathrm{int}(K_n)$'s. Then, by local finiteness,
    \[
        \overline{V}=\overline{\bigcup_{n\in\mathbb{N}}V\cap U_n}=\bigcup_{n\in\mathbb{N}}\overline{V\cap U_n},
    \]
    and so,
    \begin{align*}
        \overline{V}\subseteq W &\Leftrightarrow \forall n\in\mathbb{N}(\overline{V\cap U_n}\subseteq W),
    \end{align*}
    which we can check in a Borel way by the first case.
\end{proof}

\begin{lemma}\label{lem:contain_closure_Borel1}
	The relation $\overline{V}\cap U\subseteq W$, for $U,V,W\in\mathcal{O}(X)$, is Borel on $\mathcal{O}(X)$.
\end{lemma}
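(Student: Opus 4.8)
The plan is to reduce the relation $\overline{V}\cap U\subseteq W$ to countably many instances of the relation already handled by Lemma~\ref{lem:contain_closure_Borel0}, by computing the closure $\overline{V}$ locally inside the open set $U$. First I would fix, once and for all, a countable basis $\mathcal{B}$ for the topology of $X$ consisting of precompact open sets; such a basis exists by the local compactness and second countability of $X$.

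The key step is the elementary set-theoretic identity
\[
    \overline{V}\cap U=\bigcup\bigl\{\,\overline{V\cap B}\ :\ B\in\mathcal{B},\ \overline{B}\subseteq U\,\bigr\}.
\]
The inclusion $\supseteq$ is immediate: if $\overline{B}\subseteq U$ then $\overline{V\cap B}\subseteq\overline{V}$ and $\overline{V\cap B}\subseteq\overline{B}\subseteq U$. For $\subseteq$, given $x\in\overline{V}\cap U$, openness of $U$ yields a $B\in\mathcal{B}$ with $x\in B$ and $\overline{B}\subseteq U$; since $B$ is a neighbourhood of $x$ and $x\in\overline{V}$, the local computation of closures (already used in the proof of Lemma~\ref{lem:contain_closure_Borel0}) gives $x\in\mathrm{cl}_B(V\cap B)\subseteq\overline{V\cap B}$.

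From this identity, $\overline{V}\cap U\subseteq W$ holds \emph{if and only if} for every $B\in\mathcal{B}$ the implication ``$\overline{B}\subseteq U\ \Rightarrow\ \overline{V\cap B}\subseteq W$'' holds. This being a countable conjunction, it suffices to check that each implication is a Borel condition on $\mathcal{O}(X)^3$. For fixed $B$, the hypothesis ``$\overline{B}\subseteq U$'' is Borel in $U$ — indeed the set $\{U:\overline{B}\subseteq U\}$ is even Fell-open, as $\overline{B}$ is compact — while the conclusion ``$\overline{V\cap B}\subseteq W$'' is Borel in $(V,W)$, since $V\mapsto V\cap B$ is Borel on $\mathcal{O}(X)$ for fixed $B$ and Lemma~\ref{lem:contain_closure_Borel0} then applies directly. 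Hence the relation of the lemma is a countable intersection of Borel sets, and therefore Borel.

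I do not expect a genuine obstacle here: the only step with any content is the set identity above, and once one recalls that closures can be computed locally, everything reduces to bookkeeping with the Borel operations on $\mathcal{O}(X)$ recorded in Section~\ref{subsection:spaces_of_subsets}. (A slicker route bypassing even the localization: $\overline{V}\cap U\subseteq W$ iff $\overline{V}\cap(X\setminus W)\subseteq X\setminus U$, and since $V\mapsto\overline{V}$, complementation $\mathcal{O}(X)\to\mathcal{F}(X)$, intersection on $\mathcal{F}(X)$, and the subset relation on $\mathcal{F}(X)$ are all Borel by Section~\ref{subsection:spaces_of_subsets}, this exhibits the relation as a composition of Borel maps followed by a Borel relation.)
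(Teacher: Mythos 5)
Your main argument is correct and is essentially the paper's: the paper's proof also reduces to Lemma \ref{lem:contain_closure_Borel0} by localizing the closure of $V$ inside $U$ via precompact open sets, the only difference being bookkeeping — the paper first constructs a Borel map $U\mapsto(U_n)_{n\in\mathbb{N}}$ producing a $U$-dependent exhaustion of $U$ by open sets with compact closure in $U$, and then checks $\overline{V\cap U_n}\subseteq W$ for all $n$, whereas you fix a countable basis once and for all and push the $U$-dependence into the (open, hence Borel) condition $\overline{B}\subseteq U$ for each fixed basic $B$. Both decompositions rest on the same local-closure identity and both verifications go through. Your parenthetical ``slicker route'' is, however, genuinely different and worth highlighting: the purely set-theoretic equivalence $\overline{V}\cap U\subseteq W\iff\overline{V}\cap(X\setminus W)\subseteq X\setminus U$ reduces the lemma to the Borel-ness of $V\mapsto\overline{V}$ (from $\mathcal{O}(X)$ to $\mathcal{F}(X)$), complementation, intersection on $\mathcal{F}(X)$, and the subset relation on $\mathcal{F}(X)$, all of which are recorded in Section \ref{subsection:spaces_of_subsets}; this bypasses Lemma \ref{lem:contain_closure_Borel0} and the metric/exhaustion machinery entirely (and would in fact also yield Lemma \ref{lem:contain_closure_Borel0} as the special case $U=X$). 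The trade-off is that this route leans on the cited, less elementary fact that $F\mapsto\overline{X\setminus F}$ is Borel on $\mathcal{F}(X)$ (where local compactness is crucial), while the paper's and your main argument only use the hands-on metric characterization of $\overline{V}\subseteq W$ for precompact $V$.
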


\begin{proof}
   First, it will help to locally compute a compact exhaustion for any nonempty open set $U$, in a Borel way. That is, we claim that there is a Borel map $E:\mathcal{O}(X)\setminus\{\emptyset\}\to\mathcal{O}(X)^\mathbb{N}$ such that for each $U\in X$, $E(U)=(U_n)_{n\in\mathbb{N}}$ is an exhaustion of $U$ by a sequence of open subsets of $U$ whose closures are compact and contained in $U$. Defining this map is straightforward; simply take unions of precompact basic open subsets of $X$, from an enumeration we fix at the outset, whose closures are contained in $U$ and which exhaust $U$.

    Next, given $U,V,W\in\mathcal{O}(X)$, and $E(U)=(U_n)_{n\in\mathbb{N}}$ as above, note that for all $n\in\mathbb{N}$,
    \[
        \overline{V}\cap U_n\subseteq\overline{V\cap U_n}\subseteq \overline{V}\cap\overline{U_n},
    \]
    and so
    \[
        \overline{V}\cap U =\bigcup_{n\in\mathbb{N}}\overline{V}\cap U_n\subseteq\bigcup_{n\in\mathbb{N}}\overline{V\cap U_n}\subseteq\bigcup_{n\in\mathbb{N}}\overline{V}\cap\overline{U_n}=\overline{V}\cap U.
    \]
    Thus,
    \[
        \overline{V}\cap U\subseteq W \Leftrightarrow \forall n\in\mathbb{N}(\overline{V\cap U_n}\subseteq W), 
    \]
    which we may detect in a Borel way by Lemma \ref{lem:contain_closure_Borel0}.
\end{proof}

We can now prove Lemma \ref{lem:contain_closure_Borel}.

\begin{proof}[Proof of Lemma \ref{lem:contain_closure_Borel}]
    Suppose $(\mathcal{U},c)$, $(\mathcal{V},d)$, and $(\mathcal{W},e)$ are given so that the latter two represent open submanifolds of the former; we know that this is a Borel condition. In particular, $d=c|_{\mathcal{V}}$ and $e=c|_{\mathcal{W}}$. Then, the closure of the submanifold represented by $(\mathcal{V},d)$ is contained in that represented by $(\mathcal{W},e)$ if and only if for all $i\in\mathbb{N}$, $\mathrm{cl}_{U_i}(V_i)=\overline{V_i}\cap U_i\subseteq W_i$, which is a Borel condition by Lemma \ref{lem:contain_closure_Borel1}.
\end{proof}

The next lemma will show how to obtain the interiors and complements of a compact exhaustion of $M_{(\mathcal{U},c)}$ in a way which is Borel in the parameter $(\mathcal{U},c)\in\mathfrak{M}(\mathcal{G},X)$. 
As we lack a natural parameter space encoding all compact subsets of $(\mathcal{G},X)$-manifolds, we do not obtain the compact sets themselves; however, Lemma \ref{lem:exhaustion}, together with Lemma \ref{lem:contain_closure_Borel}, will suffice for our present needs. See Lemma \ref{lem:canonical_exhaustion} and the discussion which follows it for possible strengthenings. 

\begin{lemma}\label{lem:exhaustion} 
	There are Borel maps $\mathfrak{M}(\mathcal{G},X)\to\mathfrak{M}(\mathcal{G},X)^\mathbb{N}$, denoted by:
    \begin{align*}
        (\mathcal{U},c)&\mapsto\langle(\mathcal{V}_k,c|_{\mathcal{V}_k})\mid k\in\mathbb{N}\rangle\\ 
        (\mathcal{U},c)&\mapsto\langle(\mathcal{W}_k,c|_{\mathcal{W}_k})\mid k\in\mathbb{N}\rangle,
    \end{align*}
    such that for all $(\mathcal{U},c)\in\mathfrak{M}(\mathcal{G},X)$ and $k\in\mathbb{N}$:
    \begin{enumerate}[label=\textup{\roman*.}]
        \item $(\mathcal{V}_k,c|_{\mathcal{V}_k})$ and $(\mathcal{W}_k,c|_{\mathcal{W}_k})$ are open submanifolds of $(\mathcal{U},c)$,
        \item $M_{(\mathcal{W}_k,c|_{\mathcal{W}_k})}$ is the complement of the closure of $M_{(\mathcal{V}_k,c|_{\mathcal{V}_k})}$ in $M_{(\mathcal{U},c)}$,
    \end{enumerate}
    and
    \begin{enumerate}[label=\textup{\roman*.}]
        \setcounter{enumi}{2}
        \item the closures of the $M_{(\mathcal{V}_k,c|_{\mathcal{V}_k})}$ (for $k\in\mathbb{N}$) in $M_{(\mathcal{U},c)}$ form a compact exhaustion of $M_{(\mathcal{U},c)}$.
    \end{enumerate}
\end{lemma}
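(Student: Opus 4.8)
The plan is to produce, in a Borel way from $(\mathcal U,c)$, a countable family of open submanifolds whose closures exhaust $M_{(\mathcal U,c)}$ by compact sets, together with the complements of those closures. The starting point is the fact, already used inside Section \ref{subsubsect:exhaustions}, that the model space $X$ is locally compact and second countable, so we may fix once and for all an enumeration $(B_m)_{m\in\mathbb N}$ of \emph{precompact} basic open subsets of $X$ whose closures are compact. For a fixed chart $U_i$ of $M_{(\mathcal U,c)}$, the sets $B_m$ with $\overline{B_m}\subseteq U_i$ form a compact exhaustion of $U_i$ (this is the map $E$ from the proof of Lemma \ref{lem:contain_closure_Borel1}), and by Lemma \ref{lem:contain_closure_Borel0} the relation $\overline{B_m}\subseteq U_i$ is Borel in $(\mathcal U,c)$. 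The problem is that a compact exhaustion of $M_{(\mathcal U,c)}$ must be assembled from finitely many such chart-level pieces at each stage, and the number of charts is infinite; so the first real step is to organize the charts.

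First I would produce, Borel-uniformly in $(\mathcal U,c)$, a locally finite reindexing of the charts. Concretely, using the map $E$ above, choose for each $i$ an exhaustion $(U_i^{(n)})_{n\in\mathbb N}$ of $U_i$ by open sets with $\overline{U_i^{(n)}}\subseteq U_i^{(n+1)}\subseteq U_i$ compact, all Borel in the parameter. The saturations (Lemma \ref{lem:saturation}) of the $U_i^{(n)}$ cover $M_{(\mathcal U,c)}$. Now define the $k$-th stage of the exhaustion to be the open submanifold $M_{(\mathcal V_k,\,c|_{\mathcal V_k})}$ corresponding to the open subset $\bigcup_{i\le k} (\text{image of } U_i^{(k)})$ of $M_{(\mathcal U,c)}$; the pair $(\mathcal V_k,c|_{\mathcal V_k})$ is extracted Borel-uniformly using Lemma \ref{lem:saturation} exactly as described in the paragraph following \eqref{eq:open_submanifold}. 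These are nested, their union is all of $M_{(\mathcal U,c)}$, and — this is the point requiring care — the closure of the $k$-th one is compact and contained in the $(k{+}1)$-st: the closure of the image of $U_i^{(k)}$ is the image of $\overline{U_i^{(k)}}\cap U_i\subseteq U_i^{(k+1)}$ (closures computed locally in the chart, as in the proof of Lemma \ref{lem:contain_closure_Borel}), which is compact, and a finite union of compact sets is compact. To verify \textup{iii.} one checks that $\overline{M_{(\mathcal V_k,c|_{\mathcal V_k})}}\subseteq M_{(\mathcal V_{k+1},c|_{\mathcal V_{k+1}})}$, which is a consequence of $\overline{U_i^{(k)}}\cap U_i\subseteq U_i^{(k+1)}$ together with monotonicity in the chart index, and that every point of $M_{(\mathcal U,c)}$, lying in the image of some $U_i^{(n)}$, lies in $M_{(\mathcal V_k,c|_{\mathcal V_k})}$ for $k=\max(i,n)$.

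For the second map, I set $M_{(\mathcal W_k,\,c|_{\mathcal W_k})}$ to be the complement in $M_{(\mathcal U,c)}$ of the closure of $M_{(\mathcal V_k,c|_{\mathcal V_k})}$, giving \textup{ii.} by construction. This is an open submanifold, so \textup{i.} holds; what must be checked is that $\mathcal W_k$ is obtained Borel-uniformly. Locally in the chart $U_i$, the set $W_i$ is $U_i\setminus \mathrm{cl}_{U_i}(V_i)=U_i\setminus(\overline{V_i}\cap U_i)$, and the map $V\mapsto X\setminus\overline V$ (equivalently $U\mapsto U\setminus\overline V$) on $\mathcal O(X)$ is Borel by the facts recalled at the end of Section \ref{subsection:spaces_of_subsets} (local compactness of $X$ is exactly what is needed here), so $(\mathcal W_k,c|_{\mathcal W_k})$ depends Borel-measurably on $(\mathcal U,c)$ and $k$; collecting over $k$ gives the desired Borel map $\mathfrak M(\mathcal G,X)\to\mathfrak M(\mathcal G,X)^{\mathbb N}$.

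I expect the main obstacle to be the bookkeeping behind local finiteness: one must genuinely arrange that the assembled stages $\mathcal V_k$ have \emph{compact} closure — i.e.\ that only finitely many chart-contributions are active at each stage and that their closures sit compactly inside the next chart-level set — and then confirm that every operation used (choosing the $U_i^{(n)}$ via $E$, taking saturations, forming finite unions in $\mathcal O(X)$, taking relative closures and their complements in a chart) is Borel in the parameter. Each of these is handled by a lemma already in the excerpt (Lemmas \ref{lem:saturation}, \ref{lem:contain_closure_Borel0}, \ref{lem:contain_closure_Borel1}, \ref{lem:contain_closure_Borel}, together with the Borel-structure facts of Section \ref{subsection:spaces_of_subsets}), so the remaining work is to knit them together carefully rather than to prove anything genuinely new.
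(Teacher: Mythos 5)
Your proposal is correct, and it uses the same toolkit as the paper (precompact basic open sets, saturations via Lemma \ref{lem:saturation}, the Borel closure-containment Lemmas \ref{lem:contain_closure_Borel0}--\ref{lem:contain_closure_Borel}, and chart-wise complements of closures for the $\mathcal{W}_k$'s, which is exactly what the paper does), but the assembly of the $\mathcal{V}_k$'s is genuinely different. The paper runs a single global recursion: it thins the enumeration $(B_m)$ so that each $\overline{B_m}$ lies in some chart, sets $\mathcal{V}_0$ to be the saturation of $B_0$, and at each stage takes the saturation of an initial segment $B_0\cup\cdots\cup B_m$ with $m$ least such that this covers the closure of the previous stage, Lemma \ref{lem:contain_closure_Borel} providing the Borel detection of that covering. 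You instead build, Borel-uniformly, a nested precompact exhaustion $\overline{U_i^{(n)}}\subseteq U_i^{(n+1)}\subseteq U_i$ inside each chart and assemble the stages diagonally as the saturation of $\bigcup_{i\le k}U_i^{(k)}$. What your scheme buys is that nestedness, compactness of the closures, and the fact that the stages exhaust all of $M_{(\mathcal{U},c)}$ (via the $\max(i,n)$ argument) are immediate, with no covering search at the level of the whole manifold; in particular the exhaustion property holds automatically even for disconnected manifolds, where a greedy ``least $m$ covering the previous closure'' recursion needs a small safeguard (e.g.\ also requiring $m\ge k$) to avoid stabilizing on a compact open piece. What it costs is the per-chart bookkeeping: note that the map $E$ from the proof of Lemma \ref{lem:contain_closure_Borel1}, as stated, only gives an exhaustion of $U$ by precompact open sets with closures in $U$, not the nesting $\overline{U^{(n)}}\subseteq U^{(n+1)}$ you require, so you should say explicitly that you upgrade it by greedily covering $\overline{U^{(n)}}$ with finitely many further basic sets whose closures lie in $U$ --- a routine Borel step using Lemma \ref{lem:contain_closure_Borel0}, and essentially the same ``cover a compact closure by finitely many basic pieces'' trick the paper applies globally. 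With that sentence added, your argument is complete.
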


\begin{proof}
	Let $B_m$ (for $m\in\mathbb{N}$) be an enumeration of precompact basic open sets in $X$. Let $(\mathcal{U},c)$ be given and write $M=M_{(\mathcal{U},c)}$. We may thin down, in a Borel way, the sequence of $B_m$'s so that for every $m\in\mathbb{N}$, there is some $i\in\mathbb{N}$ for which $\overline{B_m}\subseteq U_i$. The images of such $B_m$'s will still cover $M$. For each $k\in\mathbb{N}$, we will describe $(\mathcal{V}_k,c|_{\mathcal{V}_k})$ and $(\mathcal{W}_k,c|_{\mathcal{W}_k})$, where $\mathcal{V}_k=\langle V_{i,j}^k\mid i,j\in\mathbb{N}\rangle$ and $\mathcal{W}_k=\langle W_{i,j}^k \mid i,j\in\mathbb{N}\rangle$, in a way that will be evidently Borel in $(\mathcal{U},c)$, given prior results.
	
	Let $(\mathcal{V}_0,c|_{\mathcal{V}_0})$ be the saturation of $B_0$ (in the sense of Lemma \ref{lem:saturation}), and $(\mathcal{W}_0,c|_{\mathcal{W}_0})$ the complement of its closure in $M$, i.e., $W_{i,j}^0=U_{i,j}\setminus\overline{V_{i,j}^0}$ for each $i,j\in\mathbb{N}$.

	Given $(\mathcal{V}_k,c|_{\mathcal{V}_k})$ and $(\mathcal{W}_k,c|_{\mathcal{W}_k})$, let $m$ be the least such that the saturation of $B_0\cup\cdots\cup B_m$ covers the closure of $(\mathcal{V}_k,c|_{\mathcal{V}_k})$ in $M$; such an $m$ exists since the latter has compact closure in $M$, and we can detect this in a Borel way by Lemma \ref{lem:contain_closure_Borel}. Let $(\mathcal{V}_{k+1},c|_{\mathcal{V}_{k+1}})$ be the saturation of $B_0\cup\cdots\cup B_m$, and $(\mathcal{W}_{k+1},c|_{\mathcal{W}_{k+1}})$ the complement of its closure in $M$, i.e., $W_{i,k}^{k+1}=U_{i,j}\setminus\overline{V_{i,j}^{k+1}}$ for all $i,j\in\mathbb{N}$. 
	
	Properties (i), (i), and (iii) have all been ensured by construction.
\end{proof}

We note the following useful consequence of Lemma \ref{lem:exhaustion}:

\begin{lemma}\label{lem:bounded_Borel} 
	The relation ``$M_{(\mathcal{V},d)}$ is a bounded open submanifold of $M_{(\mathcal{U},c)}$'', for $(\mathcal{U},c),(\mathcal{V},d)\in\mathfrak{M}(\mathcal{G},X)$, is Borel on $\mathfrak{M}(\mathcal{G},X)$.\qed	
\end{lemma}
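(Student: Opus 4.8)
The plan is to reduce boundedness to the canonical exhaustion supplied by Lemma \ref{lem:exhaustion}. Here a \emph{bounded} open submanifold is one whose closure in $M_{(\mathcal{U},c)}$ is compact, and we already know, from the discussion preceding this subsection, that the relation ``$(\mathcal{V},d)$ is an open submanifold of $(\mathcal{U},c)$'' is Borel on $\mathfrak{M}(\mathcal{G},X)$. So, fixing $(\mathcal{U},c)$, let $\langle(\mathcal{V}_k,c|_{\mathcal{V}_k})\mid k\in\mathbb{N}\rangle$ be the sequence produced in a Borel way by Lemma \ref{lem:exhaustion}: its members are open submanifolds of $(\mathcal{U},c)$ whose closures form a compact exhaustion of $M_{(\mathcal{U},c)}$, and inspecting the construction in that lemma one sees $\overline{M_{(\mathcal{V}_k,c|_{\mathcal{V}_k})}}\subseteq M_{(\mathcal{V}_{k+1},c|_{\mathcal{V}_{k+1}})}$ for every $k$, so that the $M_{(\mathcal{V}_k,c|_{\mathcal{V}_k})}$ themselves form an increasing open cover of $M_{(\mathcal{U},c)}$.

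First I would verify the following characterization: $M_{(\mathcal{V},d)}$ is a bounded open submanifold of $M_{(\mathcal{U},c)}$ if and only if $(\mathcal{V},d)$ is an open submanifold of $(\mathcal{U},c)$ and there is some $k\in\mathbb{N}$ with $\overline{M_{(\mathcal{V},d)}}\subseteq M_{(\mathcal{V}_k,c|_{\mathcal{V}_k})}$. For the forward direction, if $\overline{M_{(\mathcal{V},d)}}$ is compact then, being covered by the increasing open family $\{M_{(\mathcal{V}_k,c|_{\mathcal{V}_k})}\}_{k\in\mathbb{N}}$, it is covered by finitely many of these sets, hence by a single one by monotonicity. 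For the converse, each $M_{(\mathcal{V}_k,c|_{\mathcal{V}_k})}$ has compact closure in $M_{(\mathcal{U},c)}$, so if $\overline{M_{(\mathcal{V},d)}}\subseteq M_{(\mathcal{V}_k,c|_{\mathcal{V}_k})}$ then $\overline{M_{(\mathcal{V},d)}}$ is a closed subset of a compact set and therefore compact.

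Then I would conclude Borelness. The map $((\mathcal{U},c),k)\mapsto(\mathcal{V}_k,c|_{\mathcal{V}_k})$ is Borel by Lemma \ref{lem:exhaustion}, and $(\mathcal{V}_k,c|_{\mathcal{V}_k})$ is an open submanifold of $(\mathcal{U},c)$; so, for each fixed $k$, the condition ``$(\mathcal{V},d)$ and $(\mathcal{V}_k,c|_{\mathcal{V}_k})$ are open submanifolds of $(\mathcal{U},c)$ and the closure of $M_{(\mathcal{V},d)}$ is contained in $M_{(\mathcal{V}_k,c|_{\mathcal{V}_k})}$'' is exactly the Borel relation of Lemma \ref{lem:contain_closure_Borel} precomposed with this Borel map, hence Borel in $((\mathcal{U},c),(\mathcal{V},d))$. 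Taking the countable union over $k\in\mathbb{N}$ preserves Borelness, and by the characterization above this union is precisely the desired relation, completing the proof.

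I do not anticipate a genuine obstacle: the argument is routine once the exhaustion machinery of Lemma \ref{lem:exhaustion} and the closure-containment test of Lemma \ref{lem:contain_closure_Borel} are available. The only step meriting care is the forward implication of the characterization, which uses \emph{both} that the $M_{(\mathcal{V}_k,c|_{\mathcal{V}_k})}$ cover $M_{(\mathcal{U},c)}$ (to extract a finite subcover of the compact set $\overline{M_{(\mathcal{V},d)}}$) and that they are nested (to collapse that finite subcover to a single index), so one must cite not just the statement of Lemma \ref{lem:exhaustion} but also the monotonicity visible in its proof.
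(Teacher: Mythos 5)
Your proof is correct and is exactly the argument the paper intends: the lemma is stated with no written proof, being flagged as a ``useful consequence of Lemma \ref{lem:exhaustion}'', and your characterization (bounded iff the closure of $M_{(\mathcal{V},d)}$ lies in some member of the exhaustion, tested in a Borel way via Lemma \ref{lem:contain_closure_Borel} and a countable union over $k$) fills in precisely those intended details. Your closing caveat is also apt, since the nesting $\overline{M_{(\mathcal{V}_k,c|_{\mathcal{V}_k})}}\subseteq M_{(\mathcal{V}_{k+1},c|_{\mathcal{V}_{k+1}})}$ comes from the construction in Lemma \ref{lem:exhaustion} rather than from its bare statement.
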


We next give a general definition for what it means to ``reparametrize'' $(\mathcal{G},X)$-manifolds in a Borel way.

\begin{definition}
    A \emph{Borel reparametrization} is a Borel function $r:\mathfrak{M}(\mathcal{G},X)\to\mathfrak{M}(\mathcal{G},X)$ such that $r(\mathcal{U},c)\cong_\mathcal{G}(\mathcal{U},c)$ for all $(\mathcal{U},c)\in\mathfrak{M}(\mathcal{G},X)$.
\end{definition}

\begin{definition}
    Given $(\mathcal{U},c),(\mathcal{V},d)\in\mathfrak{M}(\mathcal{G},X)$, where $c=\langle \varphi_{i,j}\mid i,j\in\mathbb{N}\rangle$ and $d=\langle\psi_{i,j}\mid i,j\in\mathbb{N}\rangle$, we say that $(\mathcal{V},d)$ is a \emph{refinement} of $(\mathcal{U},c)$ if:
    \begin{enumerate}[label=\textup{\roman*.}]
        \item  for all $i,j\in\mathbb{N}$, there are $i',j'\in\mathbb{N}$ such that $V_{i,j}\subseteq U_{i',j'}$ and $\psi_{i,j}=\varphi_{i',j'}|_{V_{i,j}}$, and
        \item $\bigcup_{i\in\mathbb{N}}V_i=\bigcup_{i\in\mathbb{N}} U_i$.
    \end{enumerate}
    Note that if $(\mathcal{V},d)$ is a refinement of $(\mathcal{U},c)$, then $(\mathcal{V},d)\cong_{\mathcal{G}}(\mathcal{U},c)$.
\end{definition}

\begin{definition}
    We say that $(\mathcal{U},c)\in\mathfrak{M}(\mathcal{G},X)$ is \emph{locally finite} if for all $i\in\mathbb{N}$, there are at most finitely many $j\in\mathbb{N}$ for which $U_{i,j}\neq\varnothing$.
\end{definition}

The next lemma is a straightforward exercise in point-set topology.

\begin{lemma}\label{lem:closure_difference}
    If $U$ and $V$ are open subsets of a topological space $Y$ and $\overline{V}\subseteq U$, then $\overline{U}\setminus V = \overline{U\setminus\overline{V}}$.\qed
\end{lemma}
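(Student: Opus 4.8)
The plan is to prove the asserted equality $\overline{U}\setminus V=\overline{U\setminus\overline{V}}$ by establishing the two inclusions separately, with essentially all of the content concentrated in one of them.

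The inclusion $\overline{U\setminus\overline{V}}\subseteq\overline{U}\setminus V$ will require no hypothesis at all: from $U\setminus\overline{V}\subseteq U$ one gets $\overline{U\setminus\overline{V}}\subseteq\overline{U}$, while from $U\setminus\overline{V}\subseteq Y\setminus\overline{V}\subseteq Y\setminus V$ together with the fact that $Y\setminus V$ is closed (as $V$ is open) one gets $\overline{U\setminus\overline{V}}\subseteq Y\setminus V$; intersecting these two conclusions yields the claim.

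For the reverse inclusion $\overline{U}\setminus V\subseteq\overline{U\setminus\overline{V}}$, I would fix $x\in\overline{U}$ with $x\notin V$ and an arbitrary open neighbourhood $W$ of $x$, and try to produce a point of $U\setminus\overline{V}$ lying in $W$, splitting into cases according to whether $x\in\overline{V}$. If $x\notin\overline{V}$, then $W\setminus\overline{V}$ is again an open neighbourhood of $x$ (this uses only that $\overline{V}$ is closed), so since $x\in\overline{U}$ it meets $U$, and any point of $(W\setminus\overline{V})\cap U$ lies in $U\setminus\overline{V}$, as needed; the same reasoning dispatches any $x\in\overline{U}\setminus U$ as well, since such $x$ automatically avoids $\overline{V}\subseteq U$.

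The remaining case, $x\in\overline{V}$, is where the hypothesis $\overline{V}\subseteq U$ genuinely enters — it forces $x\in U$, hence $x\in U\setminus V$ — and it is the step I expect to be the main obstacle. Here one must argue that even a boundary point of $V$ that already lies in $U$ is a limit of points of $U\setminus\overline{V}$, equivalently that no neighbourhood of $x$ contained in $U$ is entirely absorbed by $\overline{V}$; this is precisely the point at which the way $V$ sits inside $U$ matters, and the argument will have to lean on $V$ being suitably non-pathological (in our applications $V$ arises as the saturation of a finite union of precompact basic open subsets of $X$, which one arranges so that $\mathrm{int}(\overline{V})$ introduces no stray interior points outside $V$). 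Once this case is handled, combining it with the easy case and the unconditional inclusion above completes the proof.
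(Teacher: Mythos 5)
Your two easy steps are fine: the inclusion $\overline{U\setminus\overline{V}}\subseteq\overline{U}\setminus V$ needs only that $V$ is open, and the case $x\notin\overline{V}$ of the reverse inclusion is handled correctly. But the case you flag as the main obstacle is a genuine gap, and in fact it cannot be closed: the lemma as stated is false for general open $U,V$ with $\overline{V}\subseteq U$. Take $Y=\mathbb{R}$, $U=(0,3)$, $V=(1,2)\setminus\{3/2\}$; then $\overline{V}=[1,2]\subseteq U$, yet $3/2$ lies in $\overline{U}\setminus V$ while $\overline{U\setminus\overline{V}}=[0,1]\cup[2,3]$ misses it. The obstruction is exactly the set you were circling around, namely $\mathrm{int}(\overline{V})\setminus V$: any point of $\overline{U}$ lying there belongs to the left-hand side but never to the right-hand side, so the stated equality holds precisely when $\mathrm{int}(\overline{V})\subseteq V$, i.e.\ when $V$ is regular open. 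What is true in general, and what your case analysis does prove once you split on $x\in\mathrm{int}(\overline{V})$ rather than on $x\in\overline{V}$, is the identity $\overline{U}\setminus\mathrm{int}(\overline{V})=\overline{U\setminus\overline{V}}$: if $x\in\overline{U}$, $x\in\overline{V}$ but $x\notin\mathrm{int}(\overline{V})$, then $x\in U$ by the hypothesis $\overline{V}\subseteq U$, and for any neighbourhood $W$ of $x$ the neighbourhood $W\cap U$ is not contained in $\overline{V}$, hence meets $U\setminus\overline{V}$.

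There is no proof in the paper to compare against (the lemma is stated as a routine exercise), so the relevant check is the intended application in Lemma \ref{lem:paracompact}, where $V=M_k$ is the saturation of a finite union of precompact basic open sets; such a set need not be regular open, so your instinct that the argument must lean on $V$ being non-pathological is the right diagnosis, but that non-pathology is neither part of the lemma's hypotheses nor automatic in the application. The honest repair is either to add a regular-openness hypothesis and verify it where the lemma is invoked, or to restate the conclusion with $\mathrm{int}(\overline{V})$ in place of $V$ on the left, which is the version your argument establishes and which suffices (with minor adjustments) for the exhaustion argument downstream.
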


Lemma \ref{lem:paracompact} below and its proof are based on a standard argument showing that every topological manifold is paracompact \cite[Thm.\ 1.15]{MR2954043}.\footnote{In fact, the cited argument shows that manifolds are \emph{totally paracompact}, meaning every basis admits a locally finite subcover, in the sense of \cite{MR205226}.} The second basis $\mathcal{C}$ in the statement of the lemma will play a technical role in the proof of Theorem \ref{thm:Borel_triangulation} below.

\begin{lemma}\label{lem:paracompact}
    Let $\mathcal{B}$ be a countable basis for the topology on $X$. There is a Borel reparameterization $r:\mathfrak{M}(\mathcal{G},X)\to \mathfrak{M}(\mathcal{G},X)$ such that for each $(\mathcal{U},c)\in\mathfrak{M}(\mathcal{G},X)$, $r(\mathcal{U},c)=(\mathcal{Z},d)$ is a locally finite refinement of $(\mathcal{U},c)$, with each $Z_i\in\mathcal{B}$. Moreover, if $\mathcal{C}$ is another countable basis for the topology on $X$, it can be arranged that each $Z_i$ contains $\overline{C_j}$, for some $C_j\in\mathcal{C}$, and their interior images also cover $M_{(\mathcal{Z},d)}$.
\end{lemma}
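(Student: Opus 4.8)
\emph{Proof plan.} The strategy is to Borel-ify the standard proof that a manifold is paracompact via a compact exhaustion (cf.\ \cite[Thm.\ 1.15]{MR2954043}, cited in the statement), replacing each topological operation on the underlying manifold by the Borel operation on parameters supplied by the earlier lemmas of this subsection. Fix $(\mathcal{U},c)\in\mathfrak{M}(\mathcal{G},X)$ and write $M=M_{(\mathcal{U},c)}$. By Lemma \ref{lem:exhaustion} we obtain, Borel in $(\mathcal{U},c)$, open submanifolds $(\mathcal{V}_k,c|_{\mathcal{V}_k})$ whose closures $L_k:=\overline{M_{(\mathcal{V}_k,c|_{\mathcal{V}_k})}}$ form a compact exhaustion of $M$ (so $L_k\subseteq\mathrm{int}(L_{k+1})$), together with their complements $(\mathcal{W}_k,c|_{\mathcal{W}_k})$. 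Applying the Borel operation $U\mapsto X\setminus\overline{U}$ chartwise (legitimate on open submanifolds, as noted at the start of this subsection) we also get, Borel in $(\mathcal{U},c)$, the open submanifolds $\mathrm{int}(L_k)$ and the ``thickened shells'' $O_n:=\mathrm{int}(L_{n+1})\cap M_{(\mathcal{W}_{n-2},c|_{\mathcal{W}_{n-2}})}=\mathrm{int}(L_{n+1})\setminus L_{n-2}$, which satisfy $O_n\cap O_m=\varnothing$ whenever $|n-m|\geq 3$; the compact ``shells'' $A_n:=L_n\setminus\mathrm{int}(L_{n-1})$ cover $M$ with $A_n\subseteq O_n$.

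The heart of the argument is the selection step. For fixed $n$, we want a finite family of basic sets from $\mathcal{B}$ whose chart-images cover $A_n$, with each such $Z$ contained in (the image of) $O_n$ and in some chart $U_{i'}$ of $(\mathcal{U},c)$, and containing $\overline{C_j}$ for some $C_j\in\mathcal{C}$, and such that the $C_j$'s selected over all $n$ cover $M$. Existence of such a finite family is classical: for $x\in A_n$ pick a chart $U_{i'}$ around $x$, then by regularity and local compactness of $X$ pick $Z\in\mathcal{B}$ with (the coordinate of) $x$ in $Z$, $\overline{Z}$ compact and contained in the coordinate image of $O_n\cap U_{i'}$, then $C_j\in\mathcal{C}$ with $x\in C_j$ and $\overline{C_j}\subseteq Z$; the chart-images of the resulting $C_j$'s cover the compact $A_n$, so finitely many do. To carry this out uniformly, encode a candidate family as a finite tuple $\sigma$ of triples (index into $\mathcal{B}$, index into $\mathcal{C}$, chart index) and consider the predicate ``$\sigma$ is a valid family for shell $n$'': namely, chartwise the nested-closure containments $\overline{C_j}\cap U_{i'}\subseteq Z$, $Z\subseteq O^{\,n}_{i'}$ hold, and the chart-images of the listed $C_j$'s together with $M_{(\mathcal{V}_{n-1},c|_{\mathcal{V}_{n-1}})}$ cover $L_n$. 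This predicate is Borel in $(\mathcal{U},c)$: the closure containments are Borel by Lemmas \ref{lem:contain_closure_Borel} and \ref{lem:contain_closure_Borel1} (passing between a basic open set inside a chart and the open submanifold it determines via Lemma \ref{lem:saturation}), the containments $Z\subseteq O^{\,n}_{i'}$ use Borel-ness of the operations of Definition \ref{defn:Borel_pseudogroup} and of $\mathcal{O}(X)$, and the covering of a bounded open submanifold by finitely many open submanifolds is Borel by the $\mathbf{\Sigma}^1_1$-style argument in the proof of Theorem \ref{thm:compact_Borel} together with Lemma \ref{lem:bounded_Borel}. Since valid tuples exist by the classical argument, setting $\sigma_n(\mathcal{U},c)$ to be the lexicographically least valid tuple of least length defines a Borel map $\mathfrak{M}(\mathcal{G},X)\to(\text{finite tuples})$.

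Finally, reassemble: list, over all $n$ and all triples occurring in $\sigma_n(\mathcal{U},c)$, the corresponding basic sets $Z$ together with their chosen host charts, as $Z_0,Z_1,\dots$; using Lemma \ref{lem:saturation} as in the proof of Lemma \ref{lem:exhaustion}, build from these a parameter $r(\mathcal{U},c)=(\mathcal{Z},d)$ with $d=c|_{\mathcal{Z}}$ which is a refinement of $(\mathcal{U},c)$, hence $\cong_{\mathcal{G}}$-equivalent to it, so $r$ is a Borel reparametrization. Each $Z_i$ lies in $\mathcal{B}$ by construction; local finiteness holds because $Z_i$ sits inside $O_{n(i)}$, two charts $Z_i,Z_j$ can overlap only when $|n(i)-n(j)|\leq 2$, and only finitely many $Z$'s are listed for each shell; the chart-images of the $Z_i$'s cover $M$ since already the $C_j$'s do; and each $Z_i$ contains $\overline{C_{j(i)}}$ with the chart-images of the $C_{j(i)}$'s covering $M_{(\mathcal{Z},d)}$. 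The map $r$ is Borel because every stage is --- Lemma \ref{lem:exhaustion}, the derived submanifolds $\mathrm{int}(L_k)$ and $O_n$, the least-valid-tuple search $\sigma_n$, and the Lemma \ref{lem:saturation}-reassembly. The step I expect to be the main obstacle is the middle one: verifying that the omnibus ``valid family for shell $n$'' predicate is Borel, since it bundles together essentially all the measurability technology of this subsection, and since the bookkeeping translating between ``basic open set inside a chart of $(\mathcal{U},c)$'' and ``open submanifold of $M_{(\mathcal{U},c)}$'' must be kept consistent throughout.
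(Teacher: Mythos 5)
Your proposal is correct and follows essentially the same route as the paper's proof: both take the Borel exhaustion of Lemma \ref{lem:exhaustion}, cover the compact shells $\overline{M_{k+1}}\setminus M_k$ by finitely many basic sets $B\in\mathcal{B}$ with $\overline{C}\subseteq B\subseteq U_{i}\cap(\text{thickened shell})$, detect the required containments and coverings via Lemma \ref{lem:contain_closure_Borel}, and assemble the selected sets into a refinement with restricted transition maps, getting local finiteness because each thickened shell meets only boundedly many others. The only cosmetic difference is that the paper works directly with the open sets $Y_k=M_{k+2}\cap M_{k-1}'$ supplied by Lemma \ref{lem:exhaustion}, which spares it your chartwise computation of $\mathrm{int}(L_{k})$ (interior of a closure), a step you could likewise avoid by replacing $\mathrm{int}(L_{n+1})$ with $M_{(\mathcal{V}_{n+1},c|_{\mathcal{V}_{n+1}})}$.
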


\begin{proof}
    Suppose that $\mathcal{B}=\{ B_m\mid m\in\mathbb{N}\}$ and $\mathcal{C}=\{ C_m\mid m\in\mathbb{N}\}$. Let $(\mathcal{U},c)$ in $\mathfrak{M}(\mathcal{G},X)$ be given. Let $(\mathcal{V}_k,c|_{\mathcal{V}_k})$ and $(\mathcal{W}_k,c|_{\mathcal{W}_k})$, for all $k\in\mathbb{N}$, be as in Lemma \ref{lem:exhaustion}. Write $M=M_{(\mathcal{U},c)}$, $M_k=M_{(\mathcal{V}_k,c|_{\mathcal{V}_k})}$, and $M_k'=M_{(\mathcal{W}_k,c|_{\mathcal{W}_k})}$, so that the manifolds $M_k$ are open subsets of $M$ whose closures form a compact exhaustion of $M$ and having the $M_k'$ as their complements. Let 
    \[
        A_k=\overline{M_{k+1}}\setminus M_k=\overline{M_{k+1}\setminus\overline{M_k}}=\overline{M_{k+1}\cap M_k'},
    \]
    where the second equality uses Lemma \ref{lem:closure_difference}, and
    \[
        Y_k=M_{k+2}\setminus\overline{M_{k-1}}=M_{k+2}\cap M_{k-1}'.
    \]
    Then, each $A_k$ is a compact subset of the open set $Y_k$. In terms of the parameters, $Y_k=M_{(\mathcal{Y}_k,c|_{\mathcal{Y}_k})}$, where $\mathcal{Y}_k=\langle Y_{i,j}^k\mid i,j\in\mathbb{N}\rangle$ and $Y_{i,j}^k=V_{i,j}^{k+2}\cap W_{i,j}^{k-1}$.

    We define a refinement $(\mathcal{Z},d)$ of $(\mathcal{U},c)$ as follows: Let $s_0$ be a finite subset of $\mathbb{N}$ such that for each $m\in s_0$, $\overline{C_m}\subseteq B_m\subseteq U_{i_m}\cap Y^0_{i_m}$, for some $i_m\in\mathbb{N}$, and the images of the interiors of the $C_m$'s (under the restrictions of the quotient map to $U_{i_m}$) cover $A_0$; such a finite set exists because $A_0$ is compact in $M$, and we can find it in a Borel way by Lemma \ref{lem:contain_closure_Borel}. Let $Z_i$, for $0\leq i<|s_0|$, be those $B_m\subseteq U_{i_m}$ for $m\in s_0$. More generally, having defined $s_{\ell}$ for $\ell<k$ and $Z_i$ for $0\leq i<\sum_{\ell<k}|s_\ell|$, let $s_k$ be a finite subset of $\mathbb{N}$ such that for each $m\in s_k$, $\overline{C_m}\subseteq B_m\subseteq U_{i_m}\cap Y^k_{i_m}$, for some $i_m\in\mathbb{N}$, and the images of the interiors of the $\overline{C_m}$'s cover $A_k$  (under the restrictions of the quotient map to $U_{i_m}$); again, we are using that $A_k$ is compact and Lemma \ref{lem:contain_closure_Borel} to find $s_k$. Let $Z_i$, for $\sum_{\ell<k}|s_\ell|\leq i<\sum_{\ell\leq k}|s_\ell|$, be those $B_m\subseteq U_{i_m}\cap Y^k_{i_m}$ for $m\in s_k$. For $i\neq j$, if $Z_i=B_m\subseteq U_{i_m}\cap Y^k_{i_m}$ and $Z_j=B_n\subseteq U_{i_n}\cap Y^\ell_{i_n}$, then let $Z_{i,j}=B_m\cap B_n\subseteq U_{i_m,i_n}\cap Y^k_{i_m}$ and $\psi_{i,j}=\varphi_{i_m,i_n}|_{Z_{i,j}}$. Note that $Z_{i,j}\neq\varnothing$ only if $Y_k\cap Y_\ell=\varnothing$, and $Y_k$ intersects at most four other $Y_\ell$'s, namely $Y_{k-2}$, $Y_{k-1}$, $Y_{k+1}$, and $Y_{k+2}$, ensuring local finiteness. Let $d=\langle \psi_{i,j}\mid i,j\in\mathbb{N}\rangle$.

    Finally, each $A_k\subseteq Y_k$ and the $A_k$'s cover $M$, so the $Z_i$'s, and in fact, the $C_m$'s as chosen above, must cover $M$ as $i$ ranges over $\mathbb{N}$. In particular, $(\mathcal{Z},d)$ is indeed a refinement of $(\mathcal{U},c)$.
\end{proof}

\subsubsection{Connectedness}

In what follows it will be convenient, and to some degree necessary, to confine our attention
to model spaces $X$ which are not only locally compact, but locally path connected, simply connected, or geodesically convex as well; model space properties along these lines must, after all, make an eventual appearance in any theory of classical manifolds \emph{per se}. 
In the present context, these are mild constraints: any smooth Riemannian manifold, and in fact any second countable locally compact metric space of bounded curvature (i.e., having a complete CAT($\kappa$) metric), and, in particular, any $\mathbb{S}^n$, $\mathbb{R}^n$, or $\mathbb{H}^n$ $(n\geq 2)$ will satisfy all of them.
Here we follow the usual convention of saying for any property $P$ that a space $X$ is \emph{locally $P$} if every point in $X$ possess a basis of neighborhoods satisfying $P$.

\begin{definition}
Let $(X,d)$ be a metric space. A \emph{geodesic} from $x$ to $y$ in $X$ is a continuous map $\gamma:[0,\ell]\to X$ such that $\gamma(0)=x$, $\gamma(\ell)=y$, and $d(\gamma(s),\gamma(t))=t-s$ for all $s\leq t$ in $[0,\ell]$; we will at times conflate a geodesic with its image. 
$(X,d)$ is a \emph{geodesic space} if any two points in $X$ are joined by a geodesic, and a subset $Y$ of $X$ is \emph{geodesically convex} if any two points in $Y$ are joined by a geodesic whose image falls in $Y$.
\end{definition}

\begin{lemma}
\label{lem:reparametrization}
For any Borel pseudogroup $\mathcal{G}$ on a locally simply connected, locally compact Polish space $X$, there exists a Borel reparametrization $r:\mathfrak{M}(\mathcal{G},X)\to \mathfrak{M}(\mathcal{G},X)$ such that for all $(\mathcal{U},c)$ in $\mathfrak{M}(\mathcal{G},X)$, 
\begin{itemize}
	\item each $V_i$ $(i\in \mathbb{N})$ is simply connected, where $r(\mathcal{U},c)=(\mathcal{V},d)$.
\end{itemize}

If in addition $X$ is locally geodesically convex and $\mathcal{G}$ is a subpseudogroup of $\mathsf{Isom}$, then we may further stipulate that each $V_{i,j}$ is simply connected, and in fact geodesically convex, for all $(i,j)\in\mathbb{N}^2$.
\end{lemma}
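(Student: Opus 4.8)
The plan splits along the lemma's two clauses.

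\emph{First clause.} Fix once and for all a countable basis $\mathcal{B}=\{B_m\mid m\in\mathbb{N}\}$ of simply connected open subsets of $X$; one exists since $X$ is second countable and locally simply connected. I would let $r$ re-present each manifold by the sub-charts it already contains that happen to lie in $\mathcal{B}$: fix a bijection $k\mapsto(m_k,i_k)$ of $\mathbb{N}$ with $\mathbb{N}^2$, and for $(\mathcal{U},c)\in\mathfrak{M}(\mathcal{G},X)$ with $c=\langle\varphi_{i,j}\rangle$ set the $k$-th chart of $r(\mathcal{U},c)$ to be $B_{m_k}$ when $B_{m_k}\subseteq U_{i_k}$ (and empty otherwise), with transition maps the corresponding restrictions $\varphi_{i_k,i_l}|_{B_{m_k}\cap U_{i_k,i_l}\cap\varphi_{i_k,i_l}^{-1}[B_{m_l}]}$. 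One checks that this is a \emph{refinement} of $(\mathcal{U},c)$ in the sense defined just above --- the cocycle and inverse identities transfer from those for $c$, and the new charts cover $\bigcup_iU_i$ because $\mathcal{B}$ is a basis --- so $r(\mathcal{U},c)\cong_{\mathcal{G}}(\mathcal{U},c)$, while each chart is a member of $\mathcal{B}$ and hence simply connected. That $r$ is Borel is immediate from Definition~\ref{defn:Borel_pseudogroup} and Section~\ref{subsection:spaces_of_subsets}: ``$B_{m_k}\subseteq U_{i_k}$'' is the Fell-Borel condition that the closed set $X\setminus U_{i_k}$ avoids the fixed open set $B_{m_k}$, and each new transition map is built from the Borel data $\varphi_{i_k,i_l}$ and the coordinates of $\mathcal{U}$ using the (Borel) operations of inverse image, intersection, and restriction.

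\emph{Second clause.} Here I would instead take $\mathcal{B}$ to be a basis of \emph{bounded geodesically convex} open sets; in the model spaces (and any space of bounded curvature) such sets are contractible, hence simply connected, and pairwise intersections of members remain geodesically convex. Running the construction of the first clause with this $\mathcal{B}$ already produces, Borel-ly, an equivalent manifold with geodesically convex simply connected charts, so the genuine task is to arrange simultaneously that every overlap $V_{i,j}$ be geodesically convex. The classical input is the existence of \emph{good covers}: every $(\mathsf{Isom},X)$-manifold $M$ (for $X$ a model space, or of bounded curvature) has an atlas whose charts are metric balls $W_i=B_M(p_i,r_i)$ with $r_i$ below both the convexity radius $\rho_X$ of $X$ and the injectivity radius $\mathrm{inj}(p_i)$ of $M$; such a $W_i$ is geodesically convex in $M$ and carried isometrically by its chart onto a convex ball of $X$, and if $W_i\cup W_j$ lies in a single metrically embedded ball $B_M(z,\min(\rho_X,\mathrm{inj}(z)))$ then $W_i\cap W_j$ is convex in $M$ --- the unique short $M$-geodesic between two of its points lies in each of $W_i$, $W_j$ --- and its image in each chart is a small, hence simply connected, geodesically convex open subset of $X$. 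In the parametrization these chart-images of the $W_i\cap W_j$ are precisely the overlaps, so such an atlas is the sought reparametrization.

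\emph{Making the selection Borel; the obstacle.} First, using the first clause together with Lemma~\ref{lem:paracompact}, reduce to a locally finite atlas all of whose charts are bounded members of $\mathcal{B}$, hence geodesically convex; then the induced length metric $d_M$ is an infimum, over the countably many chart-chains joining two points and the standard Borel spaces of gluing data along them, of sums of in-chart geodesic distances in $X$, and I would invoke the induced-metric machinery of this section --- or verify directly in this convex-chart setting --- that this renders $d_M$ Borel in the parameter (cf.\ the space $\mathfrak{M}^*$). By condition~(ii) of Definition~\ref{defn:parametrization_locally_(G,X)_spaces} the quotient map is injective on each chart, so for $x\in U_i$ the condition ``$B_X(x,\tfrac1n)\subseteq U_i$ and $d_M(\pi a,\pi b)=d_X(a,b)$ for all $a,b\in B_X(x,\tfrac1n)$'' is Borel and forces $\mathrm{inj}(\pi x)\geq\tfrac1n$; the open sets $N_n\subseteq M$ it cuts out are thus tracked in a Borel way (as open submanifolds, via Lemma~\ref{lem:saturation}), increase, and exhaust $M$. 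Finally, copying the proof of Lemma~\ref{lem:paracompact} --- now refining by convex balls $B_X(x_k,r_k)$ with $r_k<\tfrac14\min(\rho_X,\tfrac1n)$ whenever the ball meets $N_n\setminus N_{n-1}$, and such that any two whose closures meet have union inside a single metrically embedded ball of radius $\min(\rho_X,\tfrac1n)$ --- I would Borel-ly select, compact piece by compact piece (using Lemma~\ref{lem:exhaustion} for the pieces and Lemma~\ref{lem:contain_closure_Borel} to detect covering), a countable locally finite such family covering $M$; taking these balls in their home charts as $r(\mathcal{U},c)$ finishes the construction, the good-cover property guaranteeing convex, hence simply connected, overlaps. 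The one real obstacle is the Borel-computability of the injectivity radius just used, which rests on that of $d_M$ and is exactly why the argument first passes to locally finite atlases by geodesically convex charts; granting it, the rest is a Borel refinement of the sort already carried out in Lemmas~\ref{lem:exhaustion} and~\ref{lem:paracompact}.
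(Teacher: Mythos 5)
Your first clause is essentially the paper's own argument: both proofs re-present $(\mathcal{U},c)$ by the members of a fixed simply connected basis that lie inside its charts, restrict the transition maps accordingly, and verify Borel-ness coordinatewise against the generating Borel sets of $\mathcal{O}(X)\times\mathcal{G}$. Your second clause, however, takes a genuinely different route. The paper never touches the quotient metric: it runs a greedy, stage-by-stage selection of convex basis balls $W(n)$, guided by a dense sequence, demanding $\overline{W(n)}\subseteq U_{i(n)}$ and, against each earlier ball, either $W(n)\subseteq U_{i(n),i(m)}$ or disjointness of the quotient images; the closure condition at earlier stages keeps one of the two alternatives satisfiable, each stage involves only finitely many Borel conditions, and the containment-or-disjointness trichotomy makes every nonempty overlap, in at least one of its two chart copies, an intersection of geodesically convex sets (a basis ball and the image of one under a single isometry, via rigidity), hence convex. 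You instead Borel-compute the quotient metric and an isometry-radius stratification $N_n$, then select a Riemannian-style good cover by small metric balls. That is workable in the intended setting and buys a more geometric picture, but it is markedly heavier: it front-loads the metric machinery the paper develops only afterwards (in the spirit of Lemmas \ref{lem:metric_reparametrization} and \ref{lem:complete} --- no circularity, since your step 1 supplies the convex charts those arguments need, but genuine work); it leaves to the reader the verification that small $d_M$-balls about points of $N_n$ are chart balls, so intersections transfer isometrically to $X$; it leans on convexity/injectivity radii and unique short geodesics, i.e.\ the hypotheses of Lemma \ref{lem:convex_Borel} rather than bare local geodesic convexity (the paper's ``intersection of convex sets'' step uses the same standing assumption, so this is a heavier reliance rather than a defect); and your $\tfrac14$-type constants need adjusting, since a ball merely meeting $N_n\setminus N_{n-1}$ gives its center isometry radius only about $\tfrac34\cdot\tfrac1n$. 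None of this is a step that fails --- you flag the main dependency yourself --- but the paper's trichotomy trick sidesteps the quotient metric entirely and is the cheaper argument.
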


\begin{proof}
Fix a countable basis $\mathcal{W}=(W_i)_{i\in\mathbb{N}}$ of open, simply connected subsets of $X$, as well as a canonical enumeration $e$ of $\mathbb{N}^2$.
For any $(\mathcal{U},c)$ in $\mathfrak{M}(\mathcal{G},X)$,
this basis induces maximal $\mathcal{W}$-decompositions $U_{i,i}=\bigcup_{j\in J_i} W_j$ for each $i\in\mathbb{N}$; add a superscript $i$ to those $W_j$ which appear in this way, for bookkeeping.
By way of $e$, we then have a canonical enumeration $(V_k)_{k\in\mathbb{N}}$ of
$\{W^i_j\mid i\in\mathbb{N}, j\in J_i\}$
determining the ``left side'' coordinates of $r(\mathcal{U},c)$ as follows: let $V_{k,k}=V_k$, and for $k,\ell$ enumerating $W^i_j$ and $W^{i'}_{j'}$, respectively, let $V_{k,\ell}=V_k\cap\varphi^{-1}_{i,i'}[U_{i',i}\cap V_{\ell}]$.
One then lets $\psi_{k,\ell}:V_{k,\ell}\to V_{\ell,k}$ equal $\varphi_{i,i'}\big|_{V_{k,\ell}}$.
This determines an $r(\mathcal{U},c)=(\mathcal{V},d)$, where $d=\langle \psi_{k,\ell}:k,\ell\in\mathbb{N}\rangle$,
whose associated $M_{(\mathcal{V},d)}$ is naturally isomorphic to $M_{(\mathcal{U},c)}$ and may, accordingly, be viewed as a reparametrization of the latter.
For the lemma's first assertion, only the following remains to be checked.
Like its counterpart in Lemma \ref{lem:paracompact}, the claim follows from general considerations, but for thoroughness we record a fuller verification.
\begin{claim} 
The map $r:\mathfrak{M}(\mathcal{G},X)\to\mathfrak{M}(\mathcal{G},X):(\mathcal{U},c)\mapsto (\mathcal{V},d)$ defined above is Borel.
\end{claim}
\begin{proof}[Proof of Claim] 
Regard $r$ as a function $\mathfrak{M}(\mathcal{G},X)\to (\mathcal{O}(X)\times\mathcal{G})^{\mathbb{N}\times\mathbb{N}}$, and for any $B\subseteq \mathcal{O}(X)\times\mathcal{G}$, write $[B(k,\ell)]$ for the subset of $(\mathcal{O}(X)\times\mathcal{G})^{\mathbb{N}\times\mathbb{N}}$ whose elements fall within $B$ on their $(k,\ell)^{\mathrm{th}}$ coordinate.
We will argue, for arbitrary $(k,\ell)\in\mathbb{N}^2$, that $r^{-1}([B(k,\ell)])$ is a Borel subset of $\mathfrak{M}(\mathcal{G},X)$ for $B$ of the form:
\begin{enumerate}[label=\textup{\arabic*.}]
\item $\{U\in\mathcal{O}(X)\mid K\subseteq U\}\times\mathcal{G}$ for some compact $K\subseteq X$;
\item $\mathcal{O}(X)\times E$ for some Borel $E\subseteq\mathcal{G}$.
\end{enumerate}
Since such sets (as $(k,\ell)$ ranges over $\mathbb{N}^2$) generate the Borel $\sigma$-algebra of $(\mathcal{O}(X)\times\mathcal{G})^{\mathbb{N}\times\mathbb{N}}$ and $\mathfrak{M}(\mathcal{G},X)$ is a Borel subset of the latter, this will complete the proof of the claim and, thereby, the first portion of the lemma.

For the first item, fix a compact $K\subseteq X$; in the foregoing notation, $r^{-1}([B(k,\ell)])$ equals the set of $(\mathcal{U},c)\in\mathfrak{M}(\mathcal{G},X)$ for which $V_{k,\ell}=V_k\cap\varphi^{-1}_{i,i'}[U_{i',i}\cap V_{\ell}]\supseteq K$, where $V_k$ and $V_\ell$ correspond to some $W^i_j$ and $W^{i'}_{j'}$ in $\mathcal{W}$, respectively.
Depending as they do on a finite sequence of inclusions and non-inclusions of various $W_j$ into various $U_i$, these correspondences are clearly Borel, and we may therefore henceforth assume them fixed.
The task then reduces to showing that the sets $\{(\mathcal{U},c)\mid\varphi^{-1}_{i,i'}[U_{i',i}]\supseteq K\}$ and $\{(\mathcal{U},c)\mid\varphi^{-1}_{i,i'}[V_\ell]\supseteq K\}$ are each Borel in $\mathfrak{M}(\mathcal{G},X)$.
Each of these claims, though, is essentially immediate from item (v) of Definition \ref{defn:Borel_pseudogroup}.

For the second item, the question is, in the foregoing notation, whether for some fixed Borel $E\subseteq\mathcal{G}$, the set $r^{-1}([B(k,\ell)])=\{(\mathcal{U},c)\mid\varphi_{i,i'}\big|_{V_{k,\ell}}\in E\}$ is Borel; again it suffices to consider the case in which $i$ and $i'$ are fixed.
The restriction in question is written more fully as $$\varphi_{i,i'}\big|_{V_k\cap\varphi^{-1}_{i,i'}[U_{i',i}\cap V_{\ell}]}$$
whereupon it is clear from Definition \ref{defn:Borel_pseudogroup}(vi) (restriction is Borel) and (v) (inverse image is Borel) and the fact that the intersection operation is Borel that $r^{-1}(B[(k,\ell)])$ is indeed Borel. This completes the proof of the claim.
\end{proof}

The point in the lemma's concluding claim is that, in the context of a geodesic-preserving $\mathcal{G}\subseteq\mathsf{Isom}$, one may with a little more care ensure that each $V_{k,\ell}$ of the foregoing construction is geodesically convex, and simply connected in particular.
One works in this case from a basis $\mathcal{W}$ of open, geodesically convex balls, and the care consists in more judiciously ``covering'' $(\mathcal{U},c)$ by $\{W^i_j\mid i\in\mathbb{N}, j\in J_i\}$ so that whenever $k$ and $\ell$ enumerate $W_j^i$ and $W_{j'}^{i'}$, respectively, either $V_k\subseteq U_{i,i'}$ or $V_{\ell}\subseteq U_{i',i}$ or $V_{k,\ell}=\varnothing$.
Much as in the arguments of Section \ref{subsection:pseudogroups}, a dense subset $(q_i)_{i\in\mathbb{N}}$ of $X$ guides the construction: at stage $n$, if $e(n)=(a,b)$ then $i(n)$ indexes the $b^{\mathrm{th}}$ element of $(U_i)_{i\in\mathbb{N}}$ which contains $q_a$.
One then adds to $J_{i(n)}$ the least index of a $W(n)\in\mathcal{W}$ containing $q_a$ such that
\begin{enumerate}
\item $\overline{W(n)}\subseteq U_{i(n)}$
\end{enumerate}
and for each $m<n$ either
\begin{enumerate}
\setcounter{enumi}{1}
\item $W(n)\subseteq U_{i(n),i(m)}$ or
\item $W(n)\cap\varphi_{i(m),i(n)}[W(m)]=\varnothing$.
\end{enumerate}
The point is that condition (1) at previous stages $m$ ensures that one of the associated conditions (2) or (3) is satisfiable at stage $n$.
The fact that any finite combination of satisfiable requirements is satisfiable then ensures that the construction can always continue; the fact that only finitely many ``bits'' of data are needed to meet them ensures that the construction is Borel (this time we leave the details to the reader).
Our bookkeeping ensures that $\bigcup_{j\in J_i}W^i_j = U_i$ for all $i$.
The reward for our efforts is that for all $k$ and $\ell$, either
\begin{itemize}
\item $V_{k,\ell}=\varnothing$, or
\item $V_{k,\ell}=V_k\cap\varphi^{-1}_{i,i'}[U_{i',i}\cap V_{\ell}]=V_k\cap\varphi^{-1}_{i,i'}[V_{\ell}]$, or
\item $V_{\ell,k}=V_\ell\cap\varphi^{-1}_{i',i}[U_{i,i'}\cap V_k]=V_\ell\cap\varphi^{-1}_{i',i}[V_k]$.
\end{itemize}
In other words, if nonempty, then either $V_{k,\ell}$ or $V_{\ell,k}$ is an intersection of geodesically convex sets, and thus convex, and thus its counterpart is as well.
\end{proof}

Given a Borel pseudogroup $\mathcal{G}$ on $X$, we denote by $\mathfrak{C}(\mathcal{G},X)$ the collection of all $(\mathcal{U},c)\in\mathfrak{M}(\mathcal{G},X)$ such that $M_{(\mathcal{U},c)}$ is connected.
The next lemma then figures essentially as a corollary of the last one.

\begin{lemma}\label{lem:connected_Borel}
For any Borel pseudogroup $\mathcal{G}$ on a locally path connected, locally compact Polish space $X$, the parameter space $\mathfrak{C}(\mathcal{G},X)\subseteq \mathfrak{M}(\mathcal{G},X)$ of all connected $(\mathcal{G},X)$-manifolds is a standard Borel space.
\end{lemma}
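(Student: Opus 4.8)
The plan is to reduce the statement to a combinatorial criterion by passing, via Lemma~\ref{lem:reparametrization}, to a parametrization whose charts are connected. Since $X$ is second countable and locally path connected, it admits a countable basis $\mathcal{W}=(W_i)_{i\in\mathbb{N}}$ of open \emph{path connected} subsets. The proof of the first assertion of Lemma~\ref{lem:reparametrization} used the hypothesis on $X$ only to supply such a basis (the verification that the resulting map $r$ is Borel rests solely on the algebra of operations in Definition~\ref{defn:Borel_pseudogroup} and the Borelness of intersection on $\mathcal{O}(X)$). So by that same argument there is a Borel reparametrization $r:\mathfrak{M}(\mathcal{G},X)\to\mathfrak{M}(\mathcal{G},X)$ with $r(\mathcal{U},c)=(\mathcal{V},d)\cong_{\mathcal{G}}(\mathcal{U},c)$ such that every chart $V_i$ of $(\mathcal{V},d)$ is path connected, being a member of $\mathcal{W}$.

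The next step is the elementary fact on which the whole argument turns: if a nonempty space $Y$ is covered by a family $(V_i)_{i\in\mathbb{N}}$ of connected open sets, then $Y$ is connected if and only if the graph $G$ on vertex set $\{i\in\mathbb{N}:V_i\neq\varnothing\}$, with an edge between $i$ and $j$ whenever $V_i\cap V_j\neq\varnothing$, is connected. Indeed, for each connected component $S$ of $G$ the union $\bigcup_{i\in S}V_i$ is connected (a chain-connected union of connected sets is connected), these unions are pairwise disjoint (vertices in distinct components of $G$ index disjoint charts) and open, so they are precisely the connected components of $Y$. Now for \emph{any} $(\mathcal{V},d)\in\mathfrak{M}(\mathcal{G},X)$ one has $V_{i,j}\neq\varnothing$ exactly when the chart-images of $V_i$ and $V_j$ overlap in $M_{(\mathcal{V},d)}$: since $\psi_{i,j}:V_{i,j}\to V_{j,i}\subseteq V_j$, any point of $V_{i,j}$ is identified in the quotient $\coprod_k V_k/\!\sim$ with a point of $V_j$, and conversely. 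Applying the fact with $Y=M_{(\mathcal{V},d)}$ for $(\mathcal{V},d)$ in the image of $r$ gives: $M_{(\mathcal{V},d)}$ is connected if and only if $M_{(\mathcal{V},d)}\neq\varnothing$ and for all $i,j$ with $V_i,V_j\neq\varnothing$ there are $n\in\mathbb{N}$ and $i=i_0,i_1,\dots,i_n=j$ with $V_{i_k}\neq\varnothing$ and $V_{i_k,i_{k+1}}\neq\varnothing$ for all $k<n$.

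Finally I would check that the subset $C\subseteq\mathfrak{M}(\mathcal{G},X)$ of pairs satisfying the displayed condition is Borel: ``$V_i\neq\varnothing$'' is an open (hence Borel) condition on $\mathcal{O}(X)$ in the Fell topology, and the block ``$\exists n\ \exists(i_0,\dots,i_n)$\dots'' quantifies over the countable set of finite integer sequences, so $C$ is a countable boolean combination of Borel conditions — notably, no appeal to Suslin's theorem is needed here. Since $r$ is Borel and, for $(\mathcal{U},c)\in\mathfrak{M}(\mathcal{G},X)$, $M_{(\mathcal{U},c)}$ is connected iff $M_{r(\mathcal{U},c)}$ is connected iff $r(\mathcal{U},c)\in C$, we conclude that $\mathfrak{C}(\mathcal{G},X)=r^{-1}(C)$ is a Borel subset of $\mathfrak{M}(\mathcal{G},X)$, hence a standard Borel space. (The empty manifold, which is the Borel condition ``all $U_i=\varnothing$'', is excluded from $\mathfrak{C}(\mathcal{G},X)$ under the convention that connected spaces are nonempty; one could equally well include it by deleting that conjunct.) I don't anticipate a real obstacle; the only point meriting care is confirming that the reparametrization genuinely yields connected charts under the \emph{locally path connected} hypothesis rather than the \emph{locally simply connected} hypothesis of Lemma~\ref{lem:reparametrization} — which is why one re-examines that proof and observes its dependence on $X$ is confined to the choice of the basis $\mathcal{W}$.
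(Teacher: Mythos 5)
Your proposal is correct and follows essentially the same route as the paper: reparametrize Borel-measurably via the construction of Lemma~\ref{lem:reparametrization} applied to a basis of path-connected open sets, then characterize connectedness of the image parameter by the existence of finite chains of charts with nonempty overlaps, an evidently Borel condition, so that $\mathfrak{C}(\mathcal{G},X)$ is the $r$-preimage of a Borel set. Your extra care about empty charts and about why the locally path connected hypothesis suffices in place of locally simply connected is a fine elaboration of points the paper treats implicitly, but it is not a different argument.
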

\begin{proof}
Define a reparametrization map $r$ just as in Lemma \ref{lem:reparametrization}, but with respect to a basis of path-connected open sets; by the same argument as before, $r$ is Borel. A $(\mathcal{U},c)$ in the image of the map $r:\mathfrak{M}(\mathcal{G},X)\to \mathfrak{M}(\mathcal{G},X)$ parametrizes a connected manifold if and only if for any $i,j\in\mathbb{N}$ there exists a finite sequence $i=i_0,\dots,i_\ell,\dots,i_n=j$ such that $U_{i_\ell,i_{\ell+1}}\neq\varnothing$ for all $\ell<n$  (this observation plays an identical role in \cite[p.\ 132]{MR1731384}). This condition is evidently Borel, hence $\mathfrak{C}(\mathcal{G},X)\subseteq\mathfrak{M}(\mathcal{G},X)$ is the preimage of a Borel set by the Borel function $r$.
\end{proof}

\begin{lemma}
\label{lem:Borel_computation_of_components}
Let $X$ and $\mathcal{G}$ be as in Lemma \ref{lem:connected_Borel}. There is a Borel function which computes the connected components of any $(\mathcal{G},X)$-manifold.
\end{lemma}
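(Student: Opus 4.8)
The plan is to mimic the proof of Lemma~\ref{lem:connected_Borel}, upgrading its nerve-graph analysis from a mere connectedness test to an actual enumeration of components. First I would apply the Borel reparametrization $r$ of Lemma~\ref{lem:connected_Borel}, built over a fixed countable basis of path-connected open subsets of $X$; since $r$ is Borel and $M_{r(\mathcal{U},c)}\cong M_{(\mathcal{U},c)}$, a Borel assignment of components to reparametrized manifolds yields one for the originals. Hence it suffices to treat the case in which every chart $U_i$ of $(\mathcal{U},c)$ is path-connected, which I henceforth assume.

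Under this assumption, define the \emph{nerve graph} $\mathcal{N}_{(\mathcal{U},c)}$ with vertex set $\{i\in\mathbb{N}\mid U_i\neq\varnothing\}$ and an edge between $i$ and $j$ exactly when $U_{i,j}\neq\varnothing$; the conditions ``$U_i\neq\varnothing$'' and ``$U_{i,j}\neq\varnothing$'' are Borel on the relevant coordinates of $\mathfrak{M}(\mathcal{G},X)$. I would then show that the connected components of $M_{(\mathcal{U},c)}$ correspond bijectively to those of $\mathcal{N}_{(\mathcal{U},c)}$ via $C\mapsto\bigcup_{i\in C}\pi[U_i]$, where $\pi$ is the quotient map of~(\ref{eq:quotient_of_sum}). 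Indeed, each $\pi[U_i]$ is connected, being homeomorphic to the path-connected $U_i$, so it lies in a single component of $M$, and $\pi[U_i]\cap\pi[U_j]\neq\varnothing$ holds precisely when $U_{i,j}\neq\varnothing$; thus adjacent vertices land in the same component. Conversely, since the $\pi[U_i]$ are open and cover $M$, any path joining two points of $M$ is a connected compact set covered by finitely many $\pi[U_i]$, and that finite subcover cannot be partitioned into two mutually nonadjacent families, so the two endpoints' charts lie in one nerve-component. It follows that each $\bigcup_{i\in C}\pi[U_i]$ is open, connected (its vertices are chain-linked), the sets for distinct $C$ are disjoint, and together they exhaust $M$; so they are exactly its connected components.

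It remains to enumerate these components in a Borel way. The relation ``$i\sim j$: $i$ and $j$ lie in the same component of $\mathcal{N}_{(\mathcal{U},c)}$'' is Borel, since it holds iff there is a finite path $i=i_0,\dots,i_m=j$ with $U_{i_\ell,i_{\ell+1}}\neq\varnothing$ for all $\ell<m$, a countable union over $\mathbb{N}^{<\mathbb{N}}$ of Borel conditions. Using it, define recursively $j_0=\min\{i\mid U_i\neq\varnothing\}$ and $j_{k+1}=\min\{i\mid U_i\neq\varnothing,\ i\not\sim j_\ell\text{ for all }\ell\le k\}$; this is a Borel partial function of $(\mathcal{U},c)$, defined on the first $N$ values of $k$, where $N\in\mathbb{N}\cup\{\infty\}$ is the number of components (itself Borel-computable), and I would pad with a fixed parameter representing the empty manifold (all $U_{i,j}=\varnothing$) when $N<\infty$, so the output always lies in a fixed space such as $\mathfrak{C}(\mathcal{G},X)^{\mathbb{N}}$. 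Finally, for each $n$ set $U^n_{i,j}=U_{i,j}$ if $i\sim j_n$ and $U^n_{i,j}=\varnothing$ otherwise, with $\varphi^n_{i,j}=\varphi_{i,j}|_{U^n_{i,j}}$; by the discussion of open submanifolds in Section~\ref{subsection:subclasses} this is Borel in $(\mathcal{U},c)$ and $n$, each $(\mathcal{U}^n,c^n)$ is an open submanifold of $(\mathcal{U},c)$ whose associated space is the $n$-th connected component above, hence lies in $\mathfrak{C}(\mathcal{G},X)$ by Lemma~\ref{lem:connected_Borel}. Composing the reparametrization, the selection of the $j_n$, and the extraction of $(\mathcal{U}^n,c^n)$ gives the required Borel function.

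The main obstacle is not any single step but the bookkeeping around it: verifying the nerve-component bijection with due care in the compact-path covering argument, ensuring $\sim$ and all selection maps remain Borel under the reparametrization, and handling manifolds with finitely many (or zero) components by a uniform padding convention so that the construction everywhere produces an element of one fixed standard Borel space.
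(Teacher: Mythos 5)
Your proof is correct, but it takes a different route from the paper's. You reduce to the case of path-connected charts via the reparametrization underlying Lemma \ref{lem:connected_Borel}, and then read the components off the nerve graph on chart indices (edges where $U_{i,j}\neq\varnothing$), using a Borel minimal-representative selection to enumerate nerve-components without repetition; the nerve-to-component bijection and the Borelness of the chain relation are argued correctly (indeed, the open-partition argument alone suffices, without the compact-path covering). The paper instead works directly with the given parametrization: for each pair (dense point $q_i$, chart $U_j$) it performs a greedy ``flood fill'' through a fixed basis of path-connected sets, linking basis sets across charts via the transition maps, and then saturates (Lemma \ref{lem:saturation}) to produce the component of the image of $q_i$ as an open submanifold of the \emph{original} parameter $(\mathcal{U},c)$, with the output indexed by $(i,j)$ and hence with repetitions (which are later removed in Corollary \ref{corollary:2-manifolds}). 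What your approach buys is conceptual economy — it reuses the connectivity criterion of Lemma \ref{lem:connected_Borel} and yields a repetition-free list of components; what the paper's buys is that each component is delivered as an open submanifold of the input parametrization itself, which is what its later applications (e.g.\ the end-tree construction) implicitly use. Your step ``a Borel assignment of components to reparametrized manifolds yields one for the originals'' only gives the components up to the canonical isomorphism $M_{r(\mathcal{U},c)}\cong M_{(\mathcal{U},c)}$; if one wants them literally as open submanifolds of $(\mathcal{U},c)$, one should add a routine transport step — each reparametrized chart sits inside an original chart, so taking the $(\mathcal{U},c)$-saturation of the union of the selected basis pieces (Lemma \ref{lem:saturation}) recovers the component in the original coordinates. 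This is a minor gap in exposition rather than in substance.
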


\begin{proof}
Fix a dense subset $Q=\{q_i\mid i\in\mathbb{N}\}$ of $X$ and a standard enumeration $e$ of $\mathbb{N}^3$ and a basis $\mathcal{W}=\{W_i\mid i\in\mathbb{N}\}$, consisting of path-connected open sets, for the topology of $X$.
For each $i,j\in\mathbb{N}$, we describe a Borel function $f_{i,j}:\mathfrak{M}(\mathcal{G},X)\to\mathfrak{M}(\mathcal{G},X)$ recording the component, as an open submanifold, of $M_{(\mathcal{U},c)}$ associated to the image of $q_i$ in the chart $U_j$.
Taken together, these determine an $f:\mathfrak{M}(\mathcal{G},X)\to\mathfrak{M}(\mathcal{G},X)^{\mathbb{N}\times\mathbb{N}}$ which is as desired.

$f_{i,j}(\mathcal{U},c)$ is defined as follows: if $q_i\not\in U_j$, then $f_{i,j}(\mathcal{U},c)$ is the element of $\mathfrak{M}(\mathcal{G},X)$ consisting of empty charts.
If on the other hand $q_i\in U_j$, then fix $W_{k_0}$ least in the $\mathcal{W}$-enumeration with $q_i\in W_{k_0}\subseteq U_j$ and let $i_0=i$ and $j_0=j$.
Select then the $e$-least yet-unselected $(i',j',k')$ with
\begin{enumerate}
\item $q_{i'}\in W_{k'}\subseteq U_{j'}$, and
\item $\varphi_{j_s,j'}[W_{k_s}]\cap W_{k'}\neq\varnothing$ for some $s<t$,
\end{enumerate}
if such exist (if they do not, the process terminates). Let $i_t=i'$, $j_t=j'$, and $k_t=k'$, and repeat. For each $m\in\mathbb{N}$, let $V_m$ by the $(\mathcal{U},c)$-saturation (in the sense of Lemma \ref{lem:saturation}) of $\cup\{W_{k_s}\mid j_s=m\}$. Put $f_{i,j}(\mathcal{U},c)=(\mathcal{V},c|_{\mathcal{V}})$, the open submanifold corresponding to the $V_m$'s; this mapping is evidently Borel. It remains to show that $N=M_{(\mathcal{V},c|_{\mathcal{V}})}$ is a connected component of $M_{(\mathcal{U},c)}$.

Since $M_{(\mathcal{U},c)}$ is locally path-connected, it will suffice to show that $N$ contains the path-component of any of its elements; therefore let $x$ denote the image of $q_{i_0}$ in $N$, and let $\gamma$ denote a path in $M_{(\mathcal{U},c)}$ from $x$ to any other point $y$. We wish to show that $y\in N$ as well --- but this is almost immediate: recalling that $\pi_{(\mathcal{U},c)}$ is the quotient map onto $M_{(\mathcal{U},c)}$ as in (\ref{eq:quotient_of_sum}), $\gamma$ will be covered by the $\pi_{(\mathcal{U},c)}$-images of finitely many $W\in\mathcal{W}$ which are each contained in some $U_j\in\mathcal{U}$, and each of these $W$ will arise in the formation of $N$ described above, by the connectedness of $\gamma$.
\end{proof}

\subsubsection{Metrics and completeness}

Lastly, we turn our attention to metric structures.
Here we append to our standing assumptions that all model spaces $X$ are all equipped with a compatible complete metric $d_X$; any $(\mathsf{Isom},X)$-manifold $M_{(\mathcal{U},c)}$ is then naturally endowed with the \emph{quotient pseudometric} $d_{(\mathcal{U},c)}$ defined as follows. For any $U_i\in\mathcal{U}$ and $x\in U_i$, write $\lfloor x,i\rfloor$ for the $\pi_{(\mathcal{U},c)}$-image in $M_{(\mathcal{U},c)}$ of $x$, and let
\begin{equation}
\label{eq:pseudometric}
d_{(\mathcal{U},c)}(\lfloor x,i\rfloor,\lfloor y,j\rfloor)=\mathrm{inf}\{d_X(x_0,y_0)+\cdots+d_X(x_m,y_m)\mid P((x_\ell),(y_\ell))\},
\end{equation}
where $P(( x_\ell),( y_\ell))$ is the conjunction of the following properties:
\begin{enumerate}[label=\textup{\roman*.}]
\item the sequences $( x_\ell)$ and $(y_\ell)$ are of the same finite length $m+1$;
\item $x_\ell$ and $y_\ell$ both fall in some common $U_{k(\ell)}$ for each $\ell\leq m$;
\item $\lfloor y_\ell,k(\ell)\rfloor=\lfloor x_{\ell+1},k(\ell+1)\rfloor$ for all $\ell<m$; and
\item $\lfloor x_0,k(0)\rfloor=\lfloor x,i\rfloor$ and $\lfloor y_m,k(m)\rfloor=\lfloor y,j\rfloor$.
\end{enumerate}
Here we should beware of subtleties: we cannot even take for granted that $d_{(\mathcal{U},c)}$ defines a metric, for example, nor, if it does, that it defines one agreeing with the charts' metrics.
Under suitable assumptions, on the other hand, $\pi_{(\mathcal{U},c)}$ is a local isometry, as the following lemma will show. 

\begin{lemma}
\label{lem:metric_reparametrization}
Let $X$ be locally geodesically convex and suppose for some $(\mathcal{U},c)\in\mathfrak{M}(\mathsf{Isom},X)$ that
\begin{equation*}
\label{eq:condition}
\tag{*}
\textnormal{ each }U_i\textnormal{ in }\mathcal{U}\textnormal{ is geodesically convex.}
\end{equation*}
Then
\begin{enumerate}[label=\textup{\roman*.}]
\item $d_{(\mathcal{U},c)}$ is a metric, and
\item for each $i\in\mathbb{N}$ and $x\in U_i$ there exists a neighborhood $B_{\varepsilon}(x)\subseteq U_i$ such that the natural map $(B_{\varepsilon}(x),d_X)\to(\pi_{(\mathcal{U},c)}[B_{\varepsilon}(x)],d_{(\mathcal{U},c)})$ is an isometry.
\end{enumerate}
Moreover, for any $(\mathcal{U},c)$ and $(\mathcal{U}',c')$ satisfying (\ref{eq:condition}), any $(\mathsf{Isom},X)$-equivalence of the associated manifolds is an isometry of quotient metrics.
\end{lemma}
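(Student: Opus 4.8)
The plan is to identify the quotient pseudometric $d_{(\mathcal U,c)}$ of equation~(\ref{eq:pseudometric}) with the \emph{length metric} that the chart structure induces on $M_{(\mathcal U,c)}$, and then to read off (i), (ii), and the final clause from standard properties of such length metrics, invoking hypothesis (*) at the single point where it is genuinely needed.

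First I would set up that length structure. Since the transition maps $\varphi_{i,j}\in\mathsf{Isom}$ are local isometries of $(X,d_X)$ (immediate from the description of $\mathsf{Isom}$ in Example~\ref{ex:isometry_pseudogroups}), and since each $\pi_{(\mathcal U,c)}|_{U_i}$ is a homeomorphism onto the open set $\pi_{(\mathcal U,c)}[U_i]$ --- it is injective, a routine consequence of the cocycle conditions of Definition~\ref{defn:parametrization_locally_(G,X)_spaces}, and it is an open local homeomorphism --- the manifold $M_{(\mathcal U,c)}$ carries a well-defined notion of the length of a path: subdivide the path (via a Lebesgue-number argument) so that each subarc lies in a single $\pi_{(\mathcal U,c)}[U_i]$, pull that subarc back through the chart, measure its $d_X$-length in $U_i$, and sum; independence of all the choices follows from the transition maps being local isometries. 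Write $\ell_{(\mathcal U,c)}$ for the resulting (extended) length pseudometric on $M_{(\mathcal U,c)}$.

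The heart of the argument is then the identification $d_{(\mathcal U,c)}=\ell_{(\mathcal U,c)}$. The inequality $d_{(\mathcal U,c)}\le\ell_{(\mathcal U,c)}$ uses no hypothesis on the charts: from a path of length within $\eta$ of $\ell_{(\mathcal U,c)}(p,q)$ one subdivides and pulls the pieces back through the charts to manufacture a chain whose total $d_X$-cost is at most the length of the path. The reverse inequality $\ell_{(\mathcal U,c)}\le d_{(\mathcal U,c)}$ is where (*) enters: given a chain $((x_\ell),(y_\ell))_{\ell\le m}$, geodesic convexity of each $U_{k(\ell)}$ lets one join $x_\ell$ to $y_\ell$ by a geodesic \emph{inside} $U_{k(\ell)}$ of $d_X$-length $d_X(x_\ell,y_\ell)$; by the third condition defining the pseudometric in (\ref{eq:pseudometric}), the $\pi_{(\mathcal U,c)}$-images of these geodesics concatenate into a path of length $\sum_\ell d_X(x_\ell,y_\ell)$ between the prescribed endpoints, and taking the infimum over chains gives the inequality. (All infima are finite within a connected component and $+\infty$ between components, so one works throughout with extended metrics.) Granting this identification, (i) and (ii) follow from a ``short paths stay local'' argument. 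Fix $i$ and $x\in U_i$; using local compactness and local geodesic convexity of $X$, choose a geodesically convex open $V$ with $x\in V$, $\overline{V}$ compact, and $\overline{V}\subseteq U_i$, and then $\varepsilon>0$ with $B_{3\varepsilon}(x)\subseteq V$. Any path $\gamma$ in $M_{(\mathcal U,c)}$ issuing from $\lfloor x,i\rfloor$ either stays inside the open set $\pi_{(\mathcal U,c)}[V]$, in which case it pulls back through the chart $\pi_{(\mathcal U,c)}|_{U_i}$ to a path in $V$ joining the corresponding points, hence has length at least their $d_X$-distance; or it leaves $\pi_{(\mathcal U,c)}[V]$, and then its arc up to the first exit point pulls back to a path in $\overline{V}$ from $x$ to a point outside $B_{3\varepsilon}(x)$, forcing $\mathrm{length}(\gamma)\ge 3\varepsilon$. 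Combined with the trivial one-term-chain bound $d_{(\mathcal U,c)}(\lfloor x,i\rfloor,\lfloor y,i\rfloor)\le d_X(x,y)$, this yields $d_{(\mathcal U,c)}(\lfloor x',i\rfloor,\lfloor y',i\rfloor)=d_X(x',y')$ for $x',y'\in B_\varepsilon(x)$, which is (ii), and $d_{(\mathcal U,c)}(\lfloor x,i\rfloor,q)>0$ for every $q\ne\lfloor x,i\rfloor$, which is (i). For the ``moreover'' clause: an $(\mathsf{Isom},X)$-equivalence is, expressed in charts, locally a restriction of an isometry of $X$, hence preserves path length, hence preserves the length metrics $\ell$; since both parameters satisfy (*), these coincide with the respective quotient metrics, so the equivalence is an isometry of quotient metrics.

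The main obstacle is the identification $d_{(\mathcal U,c)}=\ell_{(\mathcal U,c)}$, and within it the direction $\ell_{(\mathcal U,c)}\le d_{(\mathcal U,c)}$ --- precisely the place where geodesic convexity of the charts cannot be dispensed with, since otherwise a chain of small total cost need not descend to a short path in the quotient. A secondary nuisance is the bookkeeping in the ``short paths stay local'' step: identifying the first exit point and checking that the relevant initial arc does lie in, and so may be pulled back through, the single chart $\pi_{(\mathcal U,c)}|_{U_i}$.
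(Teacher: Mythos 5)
Your proposal is correct, and it reorganizes the argument rather than reproducing the paper's. The paper proves (i) and (ii) directly at the level of chains: for a chain computing $d_{(\mathcal{U},c)}$ that wanders outside a small ball $B_\varepsilon(x)$ (or outside the saturation of $U_i$), it uses geodesic convexity to split the offending term $d_X(x_n,y_n)$ at a boundary point $z_n$ of the ball, thereby bounding the total cost below by $\varepsilon$; the ``moreover'' clause is then dispatched by citing the local-isometry statement (ii), the assertion that $d_{(\mathcal{U},c)}$ is a length metric, and \cite[Cor.\ 3.1.2]{MR1835418}. You instead prove the identification $d_{(\mathcal{U},c)}=\ell_{(\mathcal{U},c)}$ up front --- with (*) entering exactly where you say it must, in converting a chain into a concatenation of in-chart geodesics of the same total cost --- and then derive (i), (ii), and the final clause from ``short paths stay local'' and from length-preservation of chart-local isometries. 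The underlying geometric mechanism (convexity turns chain terms into geodesic segments that can be tracked across a ball's boundary) is the same in both proofs, but your route has the merit of actually establishing the length-metric claim that the paper only asserts in its last sentence, and of replacing the external citation by a direct argument; the cost is the extra setup of $\ell_{(\mathcal{U},c)}$ and the exit-time bookkeeping you already flag (which does go through, e.g.\ by choosing $\varepsilon$ with $\overline{B_{3\varepsilon}(x)}\subseteq V$ so that a pulled-back path confined to $B_{3\varepsilon}(x)$ would force its exit point back into $\pi_{(\mathcal{U},c)}[V]$). Minor points, neither a gap: for $x',y'\in B_\varepsilon(x)$ the escape bound degrades from $3\varepsilon$ to $2\varepsilon$, which still dominates $d_X(x',y')<2\varepsilon$; and, like the paper, you only address positivity/locality, treating finiteness across components via extended metrics, which matches the paper's level of care.
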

\begin{proof}
For the first item, it suffices to show that $d_{(\mathcal{U},c)}(\lfloor x,i\rfloor,\lfloor y,j\rfloor)\neq 0$ whenever $\lfloor x,i\rfloor\neq\lfloor y,j\rfloor$.
To this end, fix open neighborhoods $B_{\varepsilon}(x)$ and $B_{\varepsilon}(y)$ of such $x$ and $y$ whose closures are contained in $U_i$ and $U_j$, respectively, and whose $\pi_{(\mathcal{U},c)}$-images are disjoint.
Next, observe that for any sequences $(x_\ell)$, $(y_\ell)$ defining $d_{M(\mathcal{U},c)}(\lfloor x,i\rfloor,\lfloor y,i\rfloor)$ as in equation (\ref{eq:pseudometric}), there exists a least $n\leq m$ such that $y_n$ falls outside the $(\mathcal{U},c)$-saturation of $B_{\varepsilon}(x)$. Note that we may then take a $z_n\in U_i$ on the boundary of $B_{\varepsilon}(x)$ so that
$d_X(x_n,y_n)=d_X(\varphi_{k(n),i}(x_n),z_n)+d_X(\varphi_{i,k(n)}(z_n),y_n)$.
Substituting the right-hand side in for the left one
then gives us $$d_X(x_0,y_0)+\cdots+d_X(x_m,y_m)\geq\sum_{\ell<n} d_X(x_\ell,y_\ell)+d_X(\varphi_{k(n),i}(x_n),z_n)>\varepsilon,$$
and this establishes the claim.

For any $x$ and $i$ as in the second item, let $\varepsilon=d_X(x,X\backslash U_i)/2$. The reasoning is much as before, so we record it less formally:
note first that for any $y,z\in B_{\varepsilon}(x)$, the distance $d_{(\mathcal{U},c)}(\lfloor y,i\rfloor,\lfloor z,i\rfloor)$ can only be less than $d_X(y,z)< 2\varepsilon$ by way of some sequence of paths in $\coprod_j U_j$ involving a point outside the $(\mathcal{U},c)$-saturation of $U_i$.
Necessarily, though, $d_X(y,X\backslash U_i)>\varepsilon$ and $d_X(z,X\backslash U_i)>\varepsilon$, and this furnishes the desired contradiction.

Our concluding claim then follows from item (ii), the fact that $d_{(\mathcal{U},c)}$ is a length metric, and \cite[Cor.\ 3.1.2]{MR1835418}.
\end{proof}

By the above, whenever $X$ is locally geodesically convex --- as, for example, the standard model spaces $M^n_\kappa$ (denoting the complete simply-connected Riemannian $n$-manifold of constant sectional curvature $\kappa\in\mathbb{R}$ \cite[Ch.\ I.2]{MR1744486}) all are --- and $\mathcal{G}$ is a subpseudogroup of $\mathsf{Isom}$, it will be both natural and uniformizing to focus our attention on $(\mathcal{G},X)$-manifold parametrizations $(\mathcal{U},c)$ satisfying the condition (\ref{eq:condition}).
One way of doing so is via the reparametrizing map of Lemma \ref{lem:reparametrization}; by the reasoning above, we may even reparametrize any $(\mathcal{U},c)\in\mathfrak{M}(\mathcal{G},X)$ as an $r(\mathcal{U},c)$ whose chart and quotient metrics entirely agree.
The more direct route is simply to restrict our attention to
\begin{equation}
\label{eq:metric_manifolds}
\mathfrak{M}^*(\mathcal{G},X):=\{(\mathcal{U},c)\in\mathfrak{M}(\mathcal{G},X)\mid\textnormal{ each }U_i\textnormal{ in }\mathcal{U}\textnormal{ is geodesically convex}\},
\end{equation}
alongside, in some cases, a further restriction $\mathfrak{M}^{*,\varepsilon}(\mathcal{G},X)$ to $U_i$ of uniformly bounded diameter $\varepsilon$.
Under mild assumptions, this space is standard Borel; the condition of the following lemma, for example, is satisfied by all CAT$(\kappa)$ spaces \cite[Prop.\ II.1.4]{MR1744486}, and hence by any model geodesic space $X$ which we will have occasion to consider.
In fact, when $\kappa=0$, and in particular when $X$ is hyperbolic or Euclidean $n$-space, we may let $\varepsilon$ equal infinity, or, equivalently, ignore it altogether.
\begin{lemma}
\label{lem:convex_Borel}
Suppose that the geodesic space $X$ admits an $\varepsilon\in (0,\infty]$ such that any $x$ and $y$ in $X$ of distance less than $\varepsilon$ are connected by a unique geodesic, and that the image of this geodesic varies continuously with the choice of its endpoints in open neighborhoods about $x$ and $y$. Then for all $\delta<\varepsilon$,
$$\{U\in\mathcal{O}(X)\mid\mathrm{diam}(U)\leq\delta\textnormal{ and }\mathcal{U}\textnormal{ is geodesically convex}\}$$
is a closed subset of $\mathcal{O}(X)$.
In particular, if $\mathcal{G}$ is a Borel subpseudogroup of $\mathsf{Isom}$, then $\mathfrak{M}^{*,\varepsilon}(\mathcal{G},X)$ is a Borel subspace of $\mathfrak{M}(\mathcal{G},X)$ intersecting each of its $(\mathcal{G},X)$-equivalence classes, and for which any $(\mathcal{G},X)$-equivalence of parameters is an isometry of their associated manifolds.
\end{lemma}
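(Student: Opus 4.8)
The plan is to prove the displayed set $C:=\{U\in\mathcal{O}(X)\mid\mathrm{diam}(U)\leq\delta\text{ and }U\text{ geodesically convex}\}$ closed by producing, around each $U_0\in\mathcal{O}(X)\setminus C$, an open neighborhood disjoint from $C$, and then to read off the ``in particular'' clause from Lemmas \ref{lem:reparametrization} and \ref{lem:metric_reparametrization}. The diameter obstruction is easy: if $\mathrm{diam}(U_0)>\delta$, choose $x,y\in U_0$ with $d(x,y)>\delta$; then $\{U\in\mathcal{O}(X)\mid\{x,y\}\subseteq U\}$ is a subbasic open neighborhood of $U_0$ in $\mathcal{O}(X)$ whose every member again has diameter exceeding $\delta$. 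So it remains to handle a $U_0$ with $\mathrm{diam}(U_0)\leq\delta<\varepsilon$ that is not geodesically convex.

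For such a $U_0$, fix $x_0,y_0\in U_0$ and $t_0\in(0,d(x_0,y_0))$ with $z_0:=\gamma_{x_0,y_0}(t_0)\notin U_0$, where $\gamma_{x_0,y_0}$ is the unique geodesic from $x_0$ to $y_0$; using local compactness and openness of $U_0$, pick $\rho\in(0,\tfrac{\varepsilon-\delta}{3})$ with $\overline{B_\rho(x_0)},\overline{B_\rho(y_0)}$ compact subsets of $U_0$. The step I expect to be the crux is the ``tube'' property: for $\rho$ small there is $\rho'>0$ with $B_{\rho'}(z_0)\subseteq T$, where $T:=\bigcup\{\gamma_{x',y'}\mid x'\in B_\rho(x_0),\,y'\in B_\rho(y_0)\}$ is the union of the (unique, as $d(x',y')<\delta+2\rho<\varepsilon$) geodesics joining points near $x_0$ to points near $y_0$; in words, these geodesics sweep out a whole $X$-neighborhood of the interior point $z_0$ of $\gamma_{x_0,y_0}$. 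I would derive this from the hypothesized continuity of the geodesic in its endpoints together with the local geometry of $X$: for the model spaces to which the lemma is chiefly applied ($\mathbb{R}^n$, $\mathbb{H}^n$, $\mathbb{S}^n$, and the $M^n_\kappa$, all constant-curvature Riemannian manifolds) one reparametrizes geodesics by arclength fraction and checks that the variation map $(x',y',s)\mapsto\gamma_{x',y'}(s\cdot d(x',y'))$ is a submersion at $(x_0,y_0,t_0/d(x_0,y_0))$, a short geodesic having no conjugate points; for $\kappa=0$ (so $\mathbb{R}^n,\mathbb{H}^n$, where $\varepsilon=\infty$) it is elementary.

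Granting the tube property, put $\mathcal{N}:=\{U\in\mathcal{O}(X)\mid\overline{B_\rho(x_0)}\subseteq U,\ \overline{B_\rho(y_0)}\subseteq U,\ B_{\rho'}(z_0)\not\subseteq U\}$. The first two conditions are of the subbasic form $\{U\mid K\subseteq U\}$ with $K$ compact, while $\{U\mid B_{\rho'}(z_0)\not\subseteq U\}$ is the complement-image of the Fell-open set $\{F\in\mathcal{F}(X)\mid F\cap B_{\rho'}(z_0)\neq\varnothing\}$, so $\mathcal{N}$ is open in $\mathcal{O}(X)$; and $U_0\in\mathcal{N}$ since $z_0\in B_{\rho'}(z_0)\setminus U_0$. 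Finally, no $U\in\mathcal{N}$ lies in $C$: were $U\in\mathcal{N}$ geodesically convex with $\mathrm{diam}(U)\leq\delta<\varepsilon$, then from $B_\rho(x_0),B_\rho(y_0)\subseteq U$ the unique geodesic $\gamma_{x',y'}$ would lie in $U$ for all $x'\in B_\rho(x_0),y'\in B_\rho(y_0)$, whence $B_{\rho'}(z_0)\subseteq T\subseteq U$, contradicting $U\in\mathcal{N}$. Combining the two cases shows $\mathcal{O}(X)\setminus C$ open, hence $C$ closed.

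For the concluding assertions: $(\mathcal{U},c)\mapsto U_i$ is a coordinate projection $\mathfrak{M}(\mathcal{G},X)\to\mathcal{O}(X)$, so $\mathfrak{M}^{*,\varepsilon}(\mathcal{G},X)=\bigcap_{i\in\mathbb{N}}\{(\mathcal{U},c)\mid U_i\in C\}$ is Borel in $\mathfrak{M}(\mathcal{G},X)$ by closedness of $C$. It meets every $\cong_{\mathcal{G}}$-class by the refined conclusion of Lemma \ref{lem:reparametrization}, applied to a basis $\mathcal{W}$ of geodesically convex open sets of diameter $\leq\delta$ (such a basis exists by local geodesic convexity of $X$): the charts of the resulting reparametrization then all lie in $\mathcal{W}$, so it lies in $\mathfrak{M}^{*,\varepsilon}(\mathcal{G},X)$. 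And since $\mathcal{G}\subseteq\mathsf{Isom}$, every $(\mathcal{U},c)\in\mathfrak{M}^{*,\varepsilon}(\mathcal{G},X)$ is in particular a parameter in $\mathfrak{M}(\mathsf{Isom},X)$ satisfying condition (\ref{eq:condition}), so the final claim of Lemma \ref{lem:metric_reparametrization} gives that any $(\mathcal{G},X)$-equivalence between two such parameters is an isometry of their associated quotient metrics.
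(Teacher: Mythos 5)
Your argument is correct and essentially the paper's: the paper likewise fixes compact neighborhoods $K_x,K_y\subseteq W$ of the endpoints and an open $V\ni z$ contained in $\bigcup_{(x',y')\in K_x\times K_y}\gamma(x',y')$, takes the Fell-open neighborhood $\{U\in\mathcal{O}(X)\mid U\supseteq K_x\cup K_y\}\cap\{U\in\mathcal{O}(X)\mid U\not\supseteq V\}$, and concludes exactly as you do, with the ``in particular'' clause dispatched, as in your last paragraph, by citing Lemma \ref{lem:metric_reparametrization} and the construction of Lemma \ref{lem:reparametrization}. The only real difference is that your ``tube property'' is precisely the step the paper performs without comment when it fixes such a $V$, so your explicit flagging of it (and verification for the constant-curvature model spaces) is, if anything, the more careful treatment.
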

\begin{proof}
Suppose a $W\in\mathcal{O}(X)$ of diameter at most $\delta$ is not geodesically convex, so that there exists an $x,y\in W$ and $z\in\gamma(x,y)\backslash W$. Fix compact neighborhoods $K_x,K_y\subseteq W$ of $x,y$, respectively (our standing assumption that $X$ is locally compact remains in force), and an open $V\subseteq \bigcup_{(x',y')\in K_x\times K_y}\gamma(x',y')$ containing $z$.
No diameter-$\leq\delta$ element of the open neighborhood
$$\{U\in\mathcal{O}(X)\mid U\supseteq K_x\cup K_y\}\cap\{U\in\mathcal{O}(X)\mid U\not\supseteq V\}$$
of $W$ is convex, and this completes the proof of our first assertion. Our second assertion is then immediate from the preceding lemma and the following remark.
\end{proof}
\begin{remark}
Speaking generally, when constraints on parameters $(\mathcal{U},c)$ are desired, one may impose them in either of two obvious ways: via reparametrization, as in Lemmas \ref{lem:paracompact} and \ref{lem:reparametrization}, or via restriction, as in $\mathfrak{M}^*(\mathcal{G},X)$ above and the Lemma \ref{lem:convex_Borel} underpinning it. While the latter is, arguably, more conceptually natural, the former may be the only Borel approach available. A case in point is the condition \emph{simply connected}: as noted in Section \ref{subsection:equivalences}, by \cite{MR1233807} this condition is, in general, very far from Borel, and it was this fact which motivated the approach of Lemma \ref{lem:reparametrization}.
Note also that it is, in essence, the construction of Lemma \ref{lem:reparametrization} which ensures for us that each $(\mathcal{G},X)$-equivalence class of $\mathfrak{M}(\mathcal{G},X)$ intersects the latter's restriction to $\mathfrak{M}^{*,\varepsilon}(\mathcal{G},X)$.
\end{remark}
Of importance in Section \ref{section:bireducibility} and beyond will be classes of \emph{complete} hyperbolic manifolds. The relevant lemma is the following.
\begin{lemma}
\label{lem:complete}
Let $X$ and $\varepsilon$ be as in Lemma \ref{lem:convex_Borel}, with $\mathcal{G}$ a Borel subpseudogroup of $\mathsf{Isom}$. The set
$$\{(\mathcal{U},c)\in\mathfrak{M}^{*,\varepsilon}(\mathcal{G},X)\mid d_{(\mathcal{U},c)}\textnormal{ is complete}\}$$
is then a Borel subset of $\mathfrak{M}^{*,\varepsilon}(\mathcal{G},X)$.
\end{lemma}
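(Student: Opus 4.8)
The plan is to follow the template of Theorems \ref{thm:M(G,X)_Borel} and \ref{thm:compact_Borel}: I will argue that the set $\mathrm{Comp}$ of $(\mathcal U,c)\in\mathfrak M^{*,\varepsilon}(\mathcal G,X)$ with $d_{(\mathcal U,c)}$ complete is both analytic and coanalytic, and then invoke Suslin's Theorem. Fix an enumeration $(q_a)_{a\in\mathbb N}$ of a countable dense subset of $X$; for $q_a\in U_i$ I call $\lfloor q_a,i\rfloor$ a \emph{rational point} of $M_{(\mathcal U,c)}$, and these are dense. By Lemma \ref{lem:exhaustion} (and a routine modification of the sequences it produces) there is a Borel map sending $(\mathcal U,c)$ to an exhaustion $(N_k)_{k\in\mathbb N}$ of $M_{(\mathcal U,c)}$ by open submanifolds with $\overline{N_k}$ compact, $\overline{N_k}\subseteq N_{k+1}$, and $\bigcup_kN_k=M_{(\mathcal U,c)}$; moreover, by the evaluation computations behind Lemma \ref{lem:evaluation_is_Borel}, membership ``$\lfloor q_a,i\rfloor\in W$'' of a rational point in an open submanifold $W$ is a Borel condition, and closures and complements of open submanifolds are computed in a Borel way from the operations of Section \ref{subsection:spaces_of_subsets} together with Lemma \ref{lem:contain_closure_Borel}.

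The crux, which I expect to be the main obstacle, is to show that \emph{the quotient metric is Borel on rational points}: for fixed $a,i,b,j$ and rational $r>0$, the relation $d_{(\mathcal U,c)}(\lfloor q_a,i\rfloor,\lfloor q_b,j\rfloor)<r$ is Borel on $\mathfrak M^{*,\varepsilon}(\mathcal G,X)$. I would compute $d_{(\mathcal U,c)}$ on rational points as the shortest-path pseudometric $\rho$ of a weighted graph whose vertices are the pairs $(a,i)$ with $q_a\in U_i$, with a \emph{chart edge} of weight $d_X(q_a,q_{a'})$ joining $(a,i)$ to $(a',i)$, and, for each rational $\delta>0$, a \emph{transition edge} of weight $\delta$ joining $(a,i)$ to $(a',i')$ whenever $q_a\in U_{i,i'}$ and $d_X(\varphi_{i,i'}(q_a),q_{a'})<\delta$. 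Using the geodesic convexity of the charts $U_i$ (which holds on $\mathfrak M^{*,\varepsilon}$), each edge is realized (after moving an endpoint through a transition map, if need be) by a one-step chain inside a single convex chart, so that $d_{(\mathcal U,c)}\le\rho$ on rational points. For the reverse inequality I would use that $\pi_{(\mathcal U,c)}$ is a local isometry and $d_{(\mathcal U,c)}$ a length metric (Lemma \ref{lem:metric_reparametrization}): subdivide a near-geodesic path of $M_{(\mathcal U,c)}$ into finitely many arcs, each contained in the $\pi_{(\mathcal U,c)}$-isometric image of a convex chart; approximate the (finitely many, fixed-in-number) subdivision points by rational points; and absorb the ``same point of $M_{(\mathcal U,c)}$'' matchings between consecutive arcs --- which transition maps need not carry rational points to rational points --- into an arbitrarily small total transition-edge cost. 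Hence $\rho=d_{(\mathcal U,c)}$ on rational points. Finally $\rho((a,i),(b,j))<r$ asserts the existence of a finite walk whose edge data (finitely many natural numbers, weights, and $\delta$'s) witness total weight $<r$; since each edge's defining conditions are Borel in $(\mathcal U,c)$ --- here Lemma \ref{lem:evaluation_is_Borel} and the Borel-ness of ``$q_a\in V$'' for coordinates $V$ of $\mathcal U$ are used --- this is a countable union of Borel sets, hence Borel. In particular, ``a given sequence of rational points is $d_{(\mathcal U,c)}$-Cauchy'' and ``two rational points are at distance $\ge1$'' become Borel conditions on $(\mathcal U,c)$.

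Granting this, the coanalytic bound is routine. Since $M_{(\mathcal U,c)}$ is a locally compact length space, it is complete iff every Cauchy sequence converges, and --- the $\overline{N_k}$ being compact and the $N_k$ an open cover --- this holds iff for every sequence of rational points that is $d_{(\mathcal U,c)}$-Cauchy there are $k,N$ with $\lfloor q_{a_n},i_n\rfloor\in N_k$ for all $n\ge N$ (a convergent Cauchy sequence lands eventually in the open set $N_k$ containing its limit; conversely any Cauchy sequence is $2^{-n}$-close to a rational one, which, being eventually in a compact $\overline{N_k}$, converges). This has the form ``$\forall$ sequences: Borel $\Rightarrow$ Borel'', hence is $\mathbf{\Pi}^1_1$. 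For the analytic bound I would show $M_{(\mathcal U,c)}$ is complete iff there is a sequence $(P_j)_{j\in\mathbb N}$ of open submanifolds with (a) each $\overline{P_j}$ compact; (b) $\overline{P_j}\subseteq P_{j+1}$; (c) $\bigcup_jP_j=M_{(\mathcal U,c)}$; and (d) $P_j$ and the complement of $\overline{P_{j+1}}$ are at $d_{(\mathcal U,c)}$-distance $\ge1$. If $M_{(\mathcal U,c)}$ is complete it is proper (Hopf--Rinow for length spaces), and $P_j:=B_{3j+1}(p_0)$ about a fixed rational basepoint $p_0$ satisfies (a)--(d); conversely (b)--(d) together with the length-space property force that every unit ball of $M_{(\mathcal U,c)}$ is precompact, so $M_{(\mathcal U,c)}$ is proper, hence complete. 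Here (a) is Borel by Lemma \ref{lem:bounded_Borel}, (b) by Lemma \ref{lem:contain_closure_Borel}, and (c), (d) by the first two paragraphs (note that (d), phrased via the complement of $\overline{P_{j+1}}$, only quantifies over rational points of two \emph{open} sets, where density applies). So the set of pairs $((\mathcal U,c),(P_j)_j)$ satisfying (a)--(d) is Borel and its projection $\mathrm{Comp}$ is analytic; Suslin's Theorem completes the proof.
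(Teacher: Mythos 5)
Your overall route is the paper's own: both arguments proceed via Suslin's Theorem, with a $\mathbf{\Pi}^1_1$ description (every Cauchy sequence must eventually be trapped in a compact set) and a $\mathbf{\Sigma}^1_1$ description (an exhaustion whose consecutive stages are separated by $d_{(\mathcal{U},c)}$-distance at least $1$), and both rest on the same key point, namely that ``$d_{(\mathcal{U},c)}(\cdot,\cdot)<r$'' is Borel because the defining chains can be taken with countable data; your weighted-graph computation of the metric on rational points is essentially a more explicit version of the paper's observation that the linking elements in the sums of equation (\ref{eq:pseudometric}) may be drawn from the fixed countable dense set.

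However, one step fails as written. In the forward direction of your $\mathbf{\Sigma}^1_1$ characterization you witness (a)--(d) by $P_j:=B_{3j+1}(p_0)$ about a fixed rational basepoint. But $\mathfrak{M}^{*,\varepsilon}(\mathcal{G},X)$ contains disconnected manifolds (connectedness is only imposed separately, as in Lemma \ref{lem:connected_Borel}), and for these $d_{(\mathcal{U},c)}$ is $+\infty$ between components: the balls about $p_0$ never leave the component of $p_0$, so (c) fails, and Hopf--Rinow must in any case be applied componentwise. The characterization itself remains true and the repair is routine: starting from the exhaustion of Lemma \ref{lem:exhaustion}, inductively let $P_{j+1}$ contain $N_{j+1}$ together with the open $1$-neighborhood of $P_j$, whose closure is compact because in a complete, locally compact length space the closed $1$-neighborhood of a compact set is compact (cover the compact set by finitely many unit balls and apply Hopf--Rinow in each component). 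A second, smaller point: the Borel-ness of (c) cannot be justified by rational points alone, since an open set containing every rational point need not be all of $M_{(\mathcal{U},c)}$. Either verify $\bigcup_j P_j=M_{(\mathcal{U},c)}$ chart-by-chart, as a countable union followed by an equality test in $\mathcal{O}(X)$, or weaken (c) to ``every rational point lies in some $P_j$'', which your own unit-ball argument shows still implies completeness. With these two adjustments the proof goes through and coincides in substance with the paper's.
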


\begin{proof}
The contours of our approach are by now routine: we will show that the class of (parameters for) complete $(\mathcal{G},X)$-manifolds is Borel by showing that it is both coanalytic and analytic.
Most intuitive of these two characterizations is the first: $M_{(\mathcal{U},c)}$ is complete if and only if for every $d_{(\mathcal{U},c)}$-Cauchy sequence $\vec{x}=(x_i)_{i\in\mathbb{N}}$ in $M_{(\mathcal{U},c)}$, there exists an $x\in M_{(\mathcal{U},c)}$ such that $\vec{x}$ converges to $x$.
Since the convergence condition may be replaced by the existence of an $n\in\mathbb{N}$ and precompact $V\subseteq M_{(\mathcal{U},c)}$ such that $x_i\in\overline{V}$ for all $i>n$, this statement is $\mathbf{\Pi}^1_1$ in form.
Somewhat more formally, fix a countable basis $\mathcal{B}$ for $X$ consisting of precompact open sets, as well as a countable dense $Q\subseteq X$; the complete manifolds then correspond to the complement of the projection onto the first coordinate of the collection $$\{((\mathcal{U},c),\vec{x},k)\in\mathfrak{M}^{*,\varepsilon}(\mathcal{G},X)\times X^\mathbb{N}\times\mathbb{N}^\mathbb{N}\mid\vec{x}\text{ is Cauchy, but non-convergent, in }M_{(\mathcal{U},c)}\},$$
where the text unpacks as follows:
\begin{enumerate}[label=\textup{\roman*.}]
\item for all $i\in\mathbb{N}$, $x_i\in U_{k(i)}$ for some $k(i)\in\mathbb{N}$;
\item for all $\delta\in\mathbb{Q}^+$ there exists an $n(\delta)\in\mathbb{N}$ such that $$d_{(\mathcal{U},c)}(\lfloor x_i,k(i)\rfloor,\lfloor x_j,k(j)\rfloor)<\delta$$
for all $i,j\geq n(\delta)$;
\item there does not exist an $m\in\mathbb{N}$ and $V\in\mathcal{B}$ with $\overline{V}\subseteq U_m$ and an $n\in\mathbb{N}$ such that $\varphi_{k(i),m}(x_i)\in\overline{V}$ for all $i>n$.
\end{enumerate}
Here only possibly the condition $d_{(\mathcal{U},c)}(\lfloor x_i,k(i)\rfloor,\lfloor x_j,k(j)\rfloor)<\delta$ isn't evidently Borel.
Note, though, that it holds if and only if this inequality holds for a sum as in equation (\ref{eq:pseudometric}) whose linking elements are all drawn from $Q$. Since the quantification is over the countable collection of finite sequences in $Q$, our claim follows.

For a $\mathbf{\Sigma}^1_1$ characterization, multiple approaches, drawing on the multiple equivalent characterizations of completeness in this context (see \cite[Thm.\ 2.5.28]{MR1835418} or \cite[Prop.\ 3.4.15]{MR1435975}), are possible; the idea of ours is the following: $M_{(\mathcal{U},c)}$ is complete if there exists a sequence of compact $L_j\subseteq M_{(\mathcal{U},c)}$ such that $\bigcup_{j\in\mathbb{N}}L_j=M_{(\mathcal{U},c)}$ and $d_{(\mathcal{U},c)}(x,y)\geq 1$ whenever $x\in L_j$ and $y\in M_{(\mathcal{U},c)}\backslash L_{j+1}$ for some $j\in\mathbb{N}$.
Since we may fix a countable sequence $(K_i)_{i\in\mathbb{N}}$ of compact sets which are closures of basic open sets in $X$ and (much as for Lemma \ref{lem:exhaustion}), whenever sequences as described exist, build up such $L_j$ from finite unions of $\pi_{(\mathcal{U},c)}[K_i]$, this is a $\mathbf{\Sigma}^1_1$ condition:
it essentially asks if there exist functions $\mathbb{N}\to\mathbb{N}$ (selecting the $K_i$) and $g:\mathbb{N}\to\mathbb{N}$ (the ``cutoff'' function, so that $L_j=\bigcup_{i\leq g(j)}\pi_{(\mathcal{U},c)}[K_{f(i)}]$) satisfying the conditions described above.
To see that the distance condition on the $L_j$ is Borel, note that it may be checked on a countable dense set, and since the other conditions are even more obviously Borel, this concludes the proof.
\end{proof}

\section{The homeomorphism and diffeomorphism problems for surfaces}
\label{section:surfaces}

In this section, we will determine the exact Borel complexity of the classification problem for \emph{surfaces}, i.e., connected topological $2$-manifolds without boundary, up to homeomorphism. Recall that $\mathfrak{C}(\mathsf{Top},\mathbb{R}^2)$ denotes the Borel parameter space of all surfaces and $\cong_{\mathsf{Top}}$ the homeomorphism relation on it. We will show:
 
\begin{theorem}\label{thm:surfaces}
	The homeomorphism relation $\cong_{\mathsf{Top}}$ on $\mathfrak{C}(\mathsf{Top},\mathbb{R}^2)$ is complete for countable structures. 
\end{theorem}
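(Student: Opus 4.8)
The strategy is to realize $\cong_{\mathsf{Top}}$ as a target for Borel reductions from a known complete-for-countable-structures relation, using the Kerékjártó--Richards classification of surfaces as the intermediary. Recall that a (possibly noncompact) surface $S$ is determined up to homeomorphism by the triple $(\mathrm{Ends}(S), \mathrm{Ends}_{\mathrm{np}}(S), g(S))$ together with orientability, where $\mathrm{Ends}(S)$ is the space of ends, $\mathrm{Ends}_{\mathrm{np}}(S) \subseteq \mathrm{Ends}(S)$ is the closed subset of nonplanar (``infinite-genus'') ends, and $g(S) \in \mathbb{N} \cup \{\infty\}$ is the genus. The space of ends of a surface is a compact, totally disconnected, second countable space --- i.e., homeomorphic to a closed subset of Cantor space --- and the pair $(\mathrm{Ends}(S), \mathrm{Ends}_{\mathrm{np}}(S))$ can be any nested pair of such spaces (subject to the genus constraint when the latter is empty). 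I would therefore proceed in three stages: (1) show that from a parameter $(\mathcal{U},c) \in \mathfrak{C}(\mathsf{Top},\mathbb{R}^2)$ one can compute, in a Borel way, the relevant invariants --- the nested pair of compact totally disconnected spaces of ends, plus genus/orientability/planarity data; (2) encode such a nested pair of compact totally disconnected spaces as a countable Boolean algebra equipped with a pair of nested ideals (filters), via Stone duality, in a Borel way; (3) invoke the result of \cite{MR1804507} that the isomorphism relation on countable Boolean algebras (with the relevant extra structure) is complete for countable structures, and check that the composite map is a Borel reduction.

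For stage (1), the key ingredient is the Borel triangulation theorem (Theorem G / Theorem \ref{thm:Borel_triangulation}), which lets us replace the purely topological data $(\mathcal{U},c)$ by a simplicial complex whose geometric realization is homeomorphic to $S$, in a Borel way. Working with a triangulation (or, more precisely, with a locally finite combinatorial structure obtained via Lemma \ref{lem:paracompact} and the triangulation theorem), the compact exhaustion provided by Lemma \ref{lem:exhaustion} gives a Borel-computable sequence $K_0 \subseteq K_1 \subseteq \cdots$ with $S = \bigcup_n K_n$; the ends of $S$ are then the inverse limit of the (finite, Borel-computable) sets of connected components of the complements $S \setminus K_n$, and $\mathrm{Ends}(S)$ acquires its natural profinite topology. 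Since the components of open submanifolds are computable in a Borel way (Lemma \ref{lem:Borel_computation_of_components}), and since from the triangulation one can read off, component by component, whether a complementary region has infinite genus (this is where the Borel Jordan--Schoenflies theorem, Theorem F / Theorem \ref{thm:Borel_JS}, is used: to decide whether embedded circles bound disks / separate, hence to compute genus contributions combinatorially), one obtains the nested pair $(\mathrm{Ends}(S), \mathrm{Ends}_{\mathrm{np}}(S))$ as a Borel function of $(\mathcal{U},c)$, presented as a closed subset of $\{0,1\}^{\mathbb{N}}$ with a distinguished closed subset; orientability and total genus are likewise Borel-computable from the triangulation. I would present this computation as the inverse limit of a Borel-computable inverse system of finite sets-with-data.

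For stages (2) and (3), the passage from a nested pair of compact totally disconnected second countable spaces to a countable Boolean algebra with nested ideals is classical Stone duality: the algebra of clopen subsets of $\mathrm{Ends}(S)$, with the ideal of clopen sets disjoint from $\mathrm{Ends}_{\mathrm{np}}(S)$ (and, if one wants to encode genus, an additional marked element or constant), and this duality is implemented by a Borel map because a closed subset of $\{0,1\}^{\mathbb{N}}$ is coded by a pruned tree and its clopen algebra is recovered effectively from that tree. That homeomorphism of the nested pairs corresponds exactly to isomorphism of the resulting Boolean-algebra-with-ideals structures is exactly Stone duality, and the techniques of \cite{MR1804507} (which handles Boolean algebras and, in the form needed, Boolean algebras with additional closed-set / ideal structure) give that this isomorphism relation is complete for countable structures; combined with the Kerékjártó--Richards theorem this shows $\cong_{\mathsf{Top}}$ on $\mathfrak{C}(\mathsf{Top},\mathbb{R}^2)$ is complete for countable structures, since it is also $\leq_B \cong_\mathsf{C}$ for a suitable $\mathsf{C}$ by Theorem \ref{thm:equivalence_analytic} together with the fact that the end-space invariant is a complete invariant computable into countable structures.

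\textbf{Main obstacle.} The hard part is stage (1): extracting the end space --- and in particular the \emph{nonplanar} (infinite-genus) ends and the total genus --- in a genuinely Borel way from a raw topological parameter $(\mathcal{U},c)$, rather than from a triangulation handed to us for free. This is precisely why the Borel triangulation theorem (Theorem G) and the Borel Jordan--Schoenflies theorem (Theorem F) must be proved first: without them, ``does this complementary region contain infinitely many handles'' and ``is this surface orientable'' are a priori only analytic or coanalytic conditions (cf. Becker's result that simple connectivity of compact subsets of $\mathbb{R}^2$ is $\mathbf{\Pi}^1_1$-complete), and it is the combinatorial reformulation afforded by a Borel-computable triangulation that collapses them to Borel conditions. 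Once one is working combinatorially with a locally finite triangulation and its Borel-computable compact exhaustion, the remaining bookkeeping --- tracking components of complements, their Euler characteristics / genus contributions, and assembling the inverse limit --- is routine, and the Stone-dual passage to Boolean algebras and the appeal to \cite{MR1804507} are essentially off-the-shelf.
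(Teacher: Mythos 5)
Your plan, as written, only proves half of the theorem, and it is the half that does not by itself yield completeness. Stages (1)--(3) construct a Borel map \emph{from} $\mathfrak{C}(\mathsf{Top},\mathbb{R}^2)$ \emph{into} countable Boolean algebras with nested filters; together with Ker\'{e}kj\'{a}rt\'{o}--Richards this shows that $\cong_{\mathsf{Top}}$ is \emph{classifiable by} countable structures, i.e.\ $\cong_{\mathsf{Top}}\;\leq_B\;\cong_{\mathsf{C}}$ for a suitable class $\mathsf{C}$. But ``complete for countable structures'' also requires the converse reduction: some (equivalently every) isomorphism relation on countable structures must Borel reduce \emph{to} $\cong_{\mathsf{Top}}$. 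Your closing inference --- that completeness follows ``since it is also $\leq_B \cong_\mathsf{C}$'' --- is backwards: being reducible to a complete relation confers nothing, and the observation that every nested pair of end spaces is realized by some surface does not give a Borel reduction unless you exhibit a Borel realization map. This lower bound is not automatic from your invariant computation, but it is cheap: as in Theorem \ref{thm:clinton}, send a closed subset $K$ of the Cantor set (sitting in the sphere $S=\mathbb{R}^2\cup\{\infty\}$) to the planar surface $S\setminus K$, whose end space is $K$; since homeomorphism on $\mathcal{K}(\{0,1\}^\mathbb{N})$ is complete for countable structures by \cite{MR1804507}, this one-line map supplies the missing direction. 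Your proposal needs this step stated and proved; without it the theorem is not established.

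On the classifiability direction, your route is essentially the paper's (Borel Jordan--Schoenflies, Borel triangulation, Borel computation of the end data, Stone duality, \cite{MR1804507}), but the invariant you propose is too coarse: Richards's classification requires the nested \emph{triple} $(e(X),e'(X),e''(X))$ of ends, nonplanar ends, and \emph{nonorientable} ends, together with genus and orientability class. Dropping the nonorientable-end subspace breaks the ``if'' direction of the reduction for nonorientable surfaces (e.g.\ two infinite-genus, infinitely nonorientable surfaces with two nonplanar ends, one having both ends nonorientable and the other only one, share your invariants but are not homeomorphic). The fix is exactly what the paper does: carry the third subspace through the same Borel machinery (it is computed from the same trees of complementary components), reduce nested triples in $\mathcal{K}(\mathbb{N}^\mathbb{N})_3$ to $\mathcal{K}(\{0,1\}^\mathbb{N})_3$ and then to Boolean algebras with a pair of nested filters, and handle genus and orientability class by partitioning the parameter space into countably many invariant Borel pieces (Lemma \ref{lem:partition_ctbl_structures}) rather than by adding marked constants.
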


\noindent As a consequence, we will derive (as Corollary \ref{cor:smooth_surfaces}) that the diffeomorphism relation $\cong_{\mathsf{C}^\infty}$ on the space of all smooth surfaces $\mathfrak{C}(\mathsf{C}^\infty,\mathbb{R}^2)$ also has the same level of Borel complexity. Taken together, these two results prove Theorem E from our introduction.
 
Proving Theorem \ref{thm:surfaces} entails showing both an upper bound --- \emph{there is some class of countable first-order structures for which the homeomorphism relation on surfaces reduces to the associated isomorphism relation} --- and a lower bound --- \emph{any such isomorphism relation reduces to homeomorphism of surfaces}. Consequently, this classification problem has precisely the same degree of complexity as that of the isomorphism relation on all countable groups, graphs, or linear orders (see \cite{MR1011177}). 
Recall from Corollary \ref{cor:compact_manifolds_countable} that the classification of compact surfaces reduces to $=_{\mathbb{N}}$, so any additional complexity arising in the broader collection of all surfaces must lie among the noncompact ones. 

Our upper bound for the complexity of this classification problem will be based on the following theorem, which says that surfaces are classified by their genus, orientability, and \emph{spaces of ends}, generalizing the well-known classification for compact surfaces. This result traces to Ker\'{e}kj\'{a}rt\'{o} \cite{zbMATH02598767}, but Richards \cite{MR143186} is generally credited with its first complete modern proof.

\begin{theorem}[Ker\'{e}kj\'{a}rt\'{o}--Richards]\label{thm:Richards}
	Surfaces $X$ and $Y$ are homeomorphic if and only if they have the same genus and orientability class, and the triples of spaces $(e(X),e'(X),e''(X))$ and $(e(Y),e'(Y),e''(Y))$ are equivalent.
\end{theorem}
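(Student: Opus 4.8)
The plan is to follow the classical strategy, reducing the topological classification to a combinatorial one via an exhaustion by compact subsurfaces. First I would invoke Rad\'o's triangulation theorem (in the present framework, Theorem \ref{thm:Borel_triangulation}) to replace $X$ by the geometric realization of a countable, locally finite simplicial complex; equivalently, and all that is really needed, fix a compact exhaustion $X=\bigcup_n X_n$ with each $X_n$ a compact subsurface-with-boundary and $X_n\subseteq\operatorname{int}(X_{n+1})$, as furnished by Lemma \ref{lem:exhaustion}. Each $X_n$, and each ``shell'' $W_n:=\overline{X_{n+1}\setminus X_n}$, is then a compact bordered surface, and the classical classification of compact surfaces with boundary determines $W_n$ up to homeomorphism by its genus, orientability, the number of its boundary circles, and the partition of those circles into the ones shared with $\partial X_n$ and the ones shared with $\partial X_{n+1}$. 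The homeomorphism type of $X$ is thus encoded in the combinatorial data of this telescope: a sequence of compact bordered pieces glued along circles, together with bookkeeping of handles, crosscaps, and the tree of complementary boundary circles. The necessity direction of the theorem is then routine, since genus, orientability class, and the nested triple are manifestly topological invariants; concretely $e(X)=\varprojlim_n \pi_0(X\setminus X_n)$ (an inverse limit of finite sets, hence compact, totally disconnected, metrizable), $e'(X)$ its closed subset of ends with no planar neighborhood, and $e''(X)$ its closed subset of ends with no orientable neighborhood, whence $e''(X)\subseteq e'(X)\subseteq e(X)$.

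For sufficiency I would construct, for each admissible datum $(g,\varepsilon,\mathfrak{X})$ --- with $g\in\{0,1,2,\dots,\infty\}$, $\varepsilon$ an orientability bit, and $\mathfrak{X}=(X\supseteq X'\supseteq X'')$ a nested triple of nonempty compact totally disconnected metrizable spaces, subject to the evident compatibilities (e.g.\ $X'=\varnothing$ iff $g$ is finite, and $X''=\varnothing$ if $\varepsilon$ is orientable) --- an explicit \emph{canonical} surface $\Sigma(g,\varepsilon,\mathfrak{X})$: begin with $S^2$, delete a tamely embedded copy of $X$ lying in a Cantor set, attach $g$ handles if $g<\infty$ (or countably many handles accumulating exactly on $X'$ if $g=\infty$), and if $\varepsilon$ is non-orientable attach crosscaps, finitely many or accumulating exactly on $X''$ according to whether the non-orientable genus is finite. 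One checks directly that $\Sigma(g,\varepsilon,\mathfrak{X})$ has the prescribed invariants. The crux is then the claim that every surface $X$ is homeomorphic to $\Sigma$ of its own invariants, which I would prove by an induction over the telescope interleaved with the construction of the model. Having matched $X_n$ with a compact piece $\Sigma_n$ of the model, one extends the partial homeomorphism across the shell $W_n$; because the model is built so that handles and crosscaps are pushed out toward the appropriate ends, one may first have to \emph{normalize} $W_n$ by a self-homeomorphism of $X_{n+1}$ supported near $W_n$ that slides handles and crosscaps across boundary circles into the correct complementary region (the standard handle-slide / change-of-coordinates move). After this normalization $W_n$ becomes a disjoint union of standard bordered pieces matching the corresponding shell of the model, the partial homeomorphism extends, and taking the union over $n$ (the homeomorphisms agreeing on previously matched compacta) yields a homeomorphism $X\to\Sigma(g,\varepsilon,\mathfrak{X})$.

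The main obstacle, as in every treatment of this theorem, is the bookkeeping in the inductive step. One must simultaneously track (i) the combinatorial tree of boundary circles and which complementary components they bound, so that the homeomorphisms assemble into a compatible map of the inverse systems computing $e(X)$ and $e(\Sigma)$; and (ii) the distribution of genus and crosscaps among the complementary components, so that the handle-sliding normalization can always be carried out and, in the limit, deposits infinite genus precisely along $X'$ and non-orientability precisely along $X''$. The delicate point is that the partition of ends into planar/non-planar and orientable/non-orientable is \emph{asymptotic}: at no finite stage does a complementary component ``know'' whether it will ultimately accumulate infinite genus, so the model must be built hand-in-hand with the exhaustion, committing only finitely much data at stage $n$ and deferring the rest, in the spirit of a back-and-forth argument. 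Once this combinatorial normal form is in place, two models with equal invariants are homeomorphic essentially by construction, and models with distinct invariants are distinguished by the invariants themselves, completing the proof.
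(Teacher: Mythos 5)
You should first be aware that the paper does not prove Theorem \ref{thm:Richards} at all: it is invoked as a classical result, credited to Ker\'{e}kj\'{a}rt\'{o} \cite{zbMATH02598767} and Richards \cite{MR143186}, and the paper's own work lies elsewhere (the Borel forms of Jordan--Schoenflies and triangulation, Theorems \ref{thm:Borel_JS} and \ref{thm:Borel_triangulation}, and the Borel computation of ends, genus and orientability). So there is no in-paper argument to compare against; what you have written is a reconstruction of Richards's classical proof, and in outline it is the right one: invariance of the data is easy, and sufficiency goes by building a canonical model surface for each admissible invariant datum (sphere minus a closed totally disconnected set, with handles accumulating exactly on the nonplanar ends and crosscaps on the nonorientable ends) and then matching an arbitrary surface to the model of its invariants along a compact exhaustion.

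There is, however, a concrete gap as written. Your parametrization $(g,\varepsilon,\mathfrak{X})$ with $\varepsilon$ a single orientability bit is strictly coarser than the theorem's invariants: ``orientability class'' is four-valued (orientable, infinitely nonorientable, odd nonorientable, even nonorientable), and the finer datum is not recoverable from $(g,\varepsilon,\mathfrak{X})$. For example, the connected sums of the one-ended infinite-genus orientable surface with one crosscap and with two crosscaps have the same genus $\infty$, the same nonorientable ``bit'', and identical triples (here $X''=\varnothing$, since the nonorientability is compactly supported), yet they lie in different orientability classes and are not homeomorphic. Consequently your model $\Sigma(g,\varepsilon,\mathfrak{X})$ is not well defined (``attach crosscaps, finitely many\dots'' --- how many?), and the key claim that every surface is homeomorphic to $\Sigma$ of its own invariants fails for this coarse data, since it would force the two surfaces above to be homeomorphic. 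The repair is standard --- the model should carry $0$, $1$, $2$, or infinitely many crosscaps (the last accumulating exactly on $X''$) according to the four classes, using the relation that one handle equals two crosscaps in the presence of a crosscap --- but it must be built into the data and the normalization step. Beyond that, the inductive matching you rightly identify as the crux is only sketched; in Richards's treatment the normalization is achieved by re-choosing the exhaustion (engulfing stray handles and crosscaps so that complementary components are planar, respectively orientable, whenever all their ends are), rather than by post-hoc handle slides supported near a shell, and one must also align the matched boundary circles with the components of the complements so that the limiting homeomorphism induces the given equivalence of triples. As a proposal the architecture is sound; as a proof it is incomplete at exactly these two points.
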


The triples $(e(X),e'(X),e''(X))$ consist of three nested spaces, the \emph{space of ends} $e(X)$ of $X$, and the subspaces consisting of the \emph{nonplanar} and \emph{nonorientable ends}, $e'(X)$ and  $e''(X)$, respectively. Each is a compact totally disconnected Polish space. Two such triples are \emph{equivalent} if there is a homeomorphism between the first coordinate spaces which restricts to a homeomorphism on each of the two subspaces. 

Much like all proofs of the classification theorem for compact surfaces (\cite{MR0488059}, \cite{MR1699257}, \cite{MR2766102}, \cite{MR3026641}, among others), the proof of Theorem \ref{thm:Richards} begins by assuming that the surfaces in question have been \emph{triangulated}, that is, are homeomorphic to polyhedra constructed from simplicial complexes, itself a nontrivial fact \cite{zbMATH02591746}. Proofs of the triangulation theorem, in turn, often rely on the Jordan--Schoenflies theorem \cite{MR1511377}. To give an account of this classification result in our framework, we must therefore show that the outputs of both the Jordan--Schoenflies and triangulation theorems can be computed in a Borel way. We will do so by adapting the proofs of these theorems given by Thomassen in \cite{MR1144352} (see also \cite{MR1844449} and \cite{MR3026641}) to the Borel setting.

The observation that surfaces admit classification by countable algebraic invariants is not itself new. For example, Goldman \cite{MR275436} gave such a classification using cohomological invariants based directly on Richards's proof of Theorem \ref{thm:Richards} (of course, Richards's result may itself be understood in these terms, by Stone duality; cf.\ Lemma \ref{lem:Stone_3} below). What is novel in our account here is that we are casting such a classification in the terminology of invariant descriptive set theory and verifying its Borel content.

The lower bound for Theorem \ref{thm:surfaces} was previously observed by Kulikov in \cite{Kulikov_homeos}, drawing on Clinton Conley's answer \cite{64608} to a question on MathOverflow. It is based on the fact, due to Carmelo and Gao \cite{MR1804507}, that the homeomorphism relation $\cong$ on the space $\mathcal{K}(\{0,1\}^\mathbb{N})$ of closed subsets of the Cantor space $\{0,1\}^\mathbb{N}$ is complete for countable structures. For reference, we sketch the argument here.

\begin{theorem}\label{thm:clinton}
	The relation $\cong$ on $\mathcal{K}(\{0,1\}^\mathbb{N})$ Borel reduces to $\cong_{\mathsf{Top}}$ on $\mathfrak{C}(\mathsf{Top},\mathbb{R}^2)$, and to $\cong_{\mathsf{C}^\infty}$ on $\mathfrak{C}(\mathsf{C}^\infty,\mathbb{R}^2)$.\end{theorem}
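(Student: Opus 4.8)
The plan is to exhibit a Borel map $K\mapsto S_K$ from $\mathcal{K}(\{0,1\}^{\mathbb{N}})$ to $\mathfrak{C}(\mathsf{Top},\mathbb{R}^2)$ (in fact landing in $\mathfrak{C}(\mathsf{C}^\infty,\mathbb{R}^2)$) such that $K\cong L$ if and only if $S_K\cong_{\mathsf{Top}} S_L$ if and only if $S_K\cong_{\mathsf{C}^\infty}S_L$. The idea, following Conley's answer to the MathOverflow question, is to realize $K$ as (a subset determining) the end space of a surface. Concretely, view $\{0,1\}^{\mathbb{N}}$ as the standard middle-thirds Cantor set $C\subseteq[0,1]\subseteq\mathbb{R}^2$, and given $K\in\mathcal{K}(C)$ form $S_K$ by taking an open neighborhood of $C$ in $\mathbb{R}^2$ (say a union of small disks around points of $C$, or simply $\mathbb{R}^2$ itself) and deleting the closed set $K$, i.e.\ $S_K=\mathbb{R}^2\setminus K$, or a suitable planar surface whose end space is exactly $K\sqcup\{\text{one extra point}\}$ (the extra end coming from infinity). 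Since $K$ is a closed, totally disconnected, compact subset of $\mathbb{R}^2$, the surface $\mathbb{R}^2\setminus K$ is a noncompact, planar, orientable surface, so by Theorem \ref{thm:Richards} its homeomorphism type is determined by its end space alone (genus zero, orientable, no nonplanar or nonorientable ends). The end space of $\mathbb{R}^2\setminus K$ is homeomorphic to the one-point compactification issue: it is $K$ together with the end at infinity; one must be slightly careful, and a cleaner choice is to use $S^2\setminus(K\cup\{p\})$ for a fixed basepoint $p\notin C$, whose space of ends is exactly $K\sqcup\{*\}$. Adding a single isolated point to both $K$ and $L$ does not affect whether $K\cong L$ (an isolated point can always be matched, and $K$ itself is nonempty or, if we allow $\varnothing$, we note $\varnothing$ is the unique $0$-element class and handled trivially), so $K\cong L\iff K\sqcup\{*\}\cong L\sqcup\{*\}\iff S_K\cong_{\mathsf{Top}}S_L$.

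First I would make the construction precise and smooth: fix the standard $C^\infty$ structure on $S^2$ (or $\mathbb{R}^2$), fix a Borel enumeration of a countable basis, and define $S_K$ as the open submanifold $S^2\setminus(K\cup\{p\})$. The deletion of a closed set is, by the material of Section \ref{subsection:spaces_of_subsets}, a Borel operation on $\mathcal{O}(X)$/$\mathcal{F}(X)$: the complement map $F\mapsto S^2\setminus F$ and the closure/interior maps are Borel, so $K\mapsto S^2\setminus(K\cup\{p\})$ is a Borel map $\mathcal{K}(C)\to\mathcal{O}(S^2)$. To turn an open subset of $S^2$ into a pair $(\mathcal{U},c)\in\mathfrak{M}(\mathsf{C}^\infty,\mathbb{R}^2)$, I would cover it in a canonical Borel way by basic coordinate balls of $S^2$ drawn from a fixed enumeration (retaining those contained in the open set), and take the induced transition maps, which are restrictions of the fixed $C^\infty$-atlas transition maps of $S^2$; all of this is Borel by Theorem \ref{thm:Top_is_Borel}, Example \ref{ex:smooth_pseudogroup}, Lemma \ref{lem:saturation}, and the closure Borelness lemmas of Section \ref{subsection:subclasses}. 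Connectedness: $S^2\setminus(K\cup\{p\})$ is connected because $K\cup\{p\}$ is compact and totally disconnected, hence its complement in $S^2$ is connected; so the output genuinely lies in $\mathfrak{C}(\mathsf{C}^\infty,\mathbb{R}^2)$ (and a fortiori in $\mathfrak{C}(\mathsf{Top},\mathbb{R}^2)$).

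Next I would verify the reduction property. Forward: a homeomorphism $h\colon K\to L$ extends — using the planarity and the fact that $K,L$ are tame Cantor-type subsets of $S^2$ — to an ambient homeomorphism of $S^2$ carrying $K$ to $L$; more carefully, one does not even need an ambient extension, since by Theorem \ref{thm:Richards} two planar orientable genus-zero surfaces with homeomorphic end spaces are homeomorphic, and the end space of $S_K$ is canonically identified with $K\sqcup\{*\}$ (the $*$ corresponding to the puncture at $p$, an isolated end), so $h$ induces a homeomorphism of end spaces and hence $S_K\cong_{\mathsf{Top}}S_L$. For the smooth statement one invokes that in dimension $2$ the smooth and topological classifications coincide (every surface has a unique smooth structure up to diffeomorphism, cf.\ the discussion after Example \ref{ex:pseudogroup_comparison}), so $S_K\cong_{\mathsf{Top}}S_L\Rightarrow S_K\cong_{\mathsf{C}^\infty}S_L$. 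Conversely: if $S_K\cong_{\mathsf{Top}}S_L$ (the weakest of the three equivalences, so it suffices to derive $K\cong L$ from it), then the homeomorphism induces a homeomorphism of end spaces $K\sqcup\{*\}\cong L\sqcup\{*\}$; since the distinguished end $*$ is an isolated point of each (being a planar, isolated, genus-$0$ end), and since neither $K$ nor $L$ can have an isolated point matched to it unless $K$ and $L$ themselves have a matching isolated point — one handles the bookkeeping by noting that an isolated point of $K\sqcup\{*\}$ is either $*$ or an isolated point of $K$, and a short combinatorial argument (or Stone duality on the clopen algebras, cf.\ Lemma \ref{lem:Stone_3}) shows $K\sqcup\{*\}\cong L\sqcup\{*\}\iff K\cong L$. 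Thus all three relations coincide under the map and $\cong$ on $\mathcal{K}(\{0,1\}^{\mathbb{N}})$ Borel-reduces to each.

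The main obstacle I anticipate is the bookkeeping around the extra end and the precise identification of $e(S^2\setminus(K\cup\{p\}))$ with $K\sqcup\{*\}$: one must confirm that deleting a compact totally-disconnected set $K$ from $S^2$ produces a surface whose end space is exactly $K$ (this is standard but needs the observation that a neighborhood basis of an end corresponds to a nested sequence of clopen neighborhoods in $K$), and that the puncture at $p$ contributes one additional isolated end that does not interfere. A secondary technical point is packaging the construction as a Borel map into $\mathfrak{M}(\mathsf{C}^\infty,\mathbb{R}^2)$ rather than merely producing an abstract surface — but this is exactly the kind of routine Borelness verification enabled by Section \ref{section:parametrization}, using that the passage (closed set) $\mapsto$ (complement) $\mapsto$ (canonical chart cover) is Borel; I would relegate it to a lemma and not belabor it. Finally, the degenerate case $K=\varnothing$ (giving $S_K\cong\mathbb{R}^2$, whose only end is $*$) is consistent with $\varnothing\sqcup\{*\}=\{*\}$, so nothing special is needed.
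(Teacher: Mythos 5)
Your proposal is correct and is essentially the paper's own argument: Conley's construction of a planar surface as the complement of $K$ in the sphere, the Ker\'{e}kj\'{a}rt\'{o}--Richards Theorem \ref{thm:Richards} to read off homeomorphism from end spaces, uniqueness of smooth structures on surfaces for the $\mathsf{C}^\infty$ case, and the routine Borel packaging via Section \ref{section:parametrization}. The only difference is cosmetic: the paper deletes $K$ from $S=\mathbb{R}^2\cup\{\infty\}$ so that the end space is exactly $K$, which avoids your extra puncture at $p$ and the (correct, but unnecessary) cancellation argument for the added isolated end.
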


\begin{proof}
	Let $C$ be the usual middle-thirds Cantor set lying on the real line, viewed as a subset of the sphere $S=\mathbb{R}^2\cup\{\infty\}$. We identify $\mathcal{K}(\{0,1\}^\mathbb{N})$ with $\mathcal{K}(C)$. Consider the map $f:\mathcal{K}(C)\to\mathcal{O}(S)$ given by complementation: $f(K)=S\setminus K$. This map is easily seen to be Borel. Any set $S\setminus K$ in the image of this map is a planar surface whose space of ends (see below) can be identified homeomorphically with $K$. That is, $S\setminus K$ corresponds to the triple $(K,\varnothing,\varnothing)$ in the notation of Theorem \ref{thm:Richards}, and so we have that $K\cong K'$ if and only $f(K)\cong f(K')$, for any $K,K'\in\mathcal{K}(C)$. We leave it to the reader to convince themselves, using the definitions in Section \ref{section:parametrization}, that $\mathcal{O}(S)$ can be identified in a Borel way with a subset of $\mathfrak{C}(\mathsf{Top},\mathbb{R}^2)$, through which $f$ becomes a Borel reduction. 
The smooth case follows using the same reduction, together with the fact that surfaces possess unique smooth structures up to diffeomorphism as cited in Example \ref{ex:pseudogroup_comparison}.
\end{proof}

Lastly, we note the following corollary of Theorem \ref{thm:clinton} and the fact any equivalence relation which is complete for countable structures is $\mathbf{\Sigma}^1_1$-complete\footnote{An analytic subset $C$ of a standard Borel $Y$ is \emph{$\mathbf{\Sigma}^1_1$-complete} if for any analytic subset $A$ of a standard Borel $X$, there is a Borel $f:X\to Y$ such that $x\in A$ if and only if $f(x)\in C$. We do \emph{not} mean that any analytic equivalence relation Borel reduces to it (i.e., is a \emph{complete analytic equivalence relation}); that this is not the case for $\cong_{\mathsf{Top}}$ and $\cong_{\mathsf{C}^\infty}$ follows from our Theorem \ref{thm:surfaces}.} \cite{MR1011177}:

\begin{corollary}
	The homeomorphism and diffeomorphism relations on surfaces are $\mathbf{\Sigma}^1_1$-complete. In particular, they are not Borel equivalence relations.\qed
\end{corollary}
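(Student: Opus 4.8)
The plan is to read this off directly from Theorem~\ref{thm:clinton}, using two facts from the literature: that $\cong$ on $\mathcal{K}(\{0,1\}^{\mathbb N})$ is complete for countable structures \cite{MR1804507}, and hence, by \cite{MR1011177}, is $\mathbf{\Sigma}^1_1$-complete as a subset of $\mathcal{K}(\{0,1\}^{\mathbb N})^2$. In particular, no appeal to the (later) proof of the upper bound in Theorem~\ref{thm:surfaces} is needed; only the lower bound provided by Theorem~\ref{thm:clinton} is used.

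First I would turn the two Borel reductions \emph{of equivalence relations} supplied by Theorem~\ref{thm:clinton} into Borel reductions \emph{of sets}. If $g\colon\mathcal{K}(\{0,1\}^{\mathbb N})\to\mathfrak{C}(\mathsf{Top},\mathbb{R}^2)$ witnesses $\cong\,\leq_B\,\cong_{\mathsf{Top}}$, then $(K,K')\mapsto(g(K),g(K'))$ is a Borel map from $\mathcal{K}(\{0,1\}^{\mathbb N})^2$ into $\mathfrak{C}(\mathsf{Top},\mathbb{R}^2)^2$ whose preimage of $\cong_{\mathsf{Top}}$ is exactly $\cong$. Now, given any analytic set $A$ in a standard Borel space, precompose this diagonal-product map with a Borel reduction of $A$ to the $\mathbf{\Sigma}^1_1$-complete set $\cong\,\subseteq\,\mathcal{K}(\{0,1\}^{\mathbb N})^2$; since $\cong_{\mathsf{Top}}$ on $\mathfrak{C}(\mathsf{Top},\mathbb{R}^2)$ is itself analytic by Theorem~\ref{thm:equivalence_analytic} (applicable because $\mathsf{Top}$ is a Borel pseudogroup, Theorem~\ref{thm:Top_is_Borel}), this exhibits $\cong_{\mathsf{Top}}$ as $\mathbf{\Sigma}^1_1$-complete. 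The diffeomorphism case is identical: apply the second reduction of Theorem~\ref{thm:clinton} together with the analyticity of $\cong_{\mathsf{C}^\infty}$ on $\mathfrak{C}(\mathsf{C}^\infty,\mathbb{R}^2)$, which again follows from Theorem~\ref{thm:equivalence_analytic}, here via the Borel pseudogroup $\mathsf{C}^\infty$ of Example~\ref{ex:smooth_pseudogroup}.

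For the final sentence I would argue by contradiction: if either relation were Borel as a subset of the square of its parameter space, then by $\mathbf{\Sigma}^1_1$-completeness every analytic subset of every standard Borel space would be the Borel preimage of that Borel relation, hence Borel --- contradicting the classical existence of analytic non-Borel sets (equivalently, Suslin's Theorem together with $\mathbf{\Sigma}^1_1\neq\mathbf{\Pi}^1_1$). Thus neither $\cong_{\mathsf{Top}}$ nor $\cong_{\mathsf{C}^\infty}$ on surfaces is Borel, and a fortiori neither is a Borel equivalence relation.

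I expect no genuine obstacle: the mathematical weight is entirely carried by Theorem~\ref{thm:clinton}. The one point meriting care is bookkeeping --- a Borel reduction of equivalence relations does not on its face transport $\mathbf{\Sigma}^1_1$-completeness of the ambient sets, so one must explicitly form the diagonal-product map and separately invoke that $\cong_{\mathsf{Top}}$ and $\cong_{\mathsf{C}^\infty}$ are analytic, which is already established in Section~\ref{section:parametrization}.
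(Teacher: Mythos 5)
Your argument is exactly the paper's: the corollary is stated there as an immediate consequence of Theorem~\ref{thm:clinton} together with the fact (from \cite{MR1011177}, applied to the Camerlo--Gao relation on $\mathcal{K}(\{0,1\}^{\mathbb N})$, which is complete for countable structures) that such relations are $\mathbf{\Sigma}^1_1$-complete, with analyticity of $\cong_{\mathsf{Top}}$ and $\cong_{\mathsf{C}^\infty}$ coming from Theorem~\ref{thm:equivalence_analytic}. You have simply spelled out the routine diagonal-product bookkeeping and the non-Borelness contradiction that the paper leaves implicit; this is correct.
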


\subsection{A Borel form of the Jordan--Schoenflies Theorem}\label{sec:JS}

We begin by stating the Jordan--Schoenflies theorem in the form most convenient for our purposes.

\begin{theorem}[Jordan--Schoenflies]\label{thm:JS}
	If $\gamma:S^1\to\mathbb{R}^2$ is a continuous embedding, then there is a homeomorphism $h:\mathbb{R}^2\to\mathbb{R}^2$ such that $h|_{S^1}=\gamma$.
\end{theorem}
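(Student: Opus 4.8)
The plan is to prove this via the elementary, graph-theoretic route of Thomassen \cite{MR1144352}, the route this section has already announced it will follow; nothing in the earlier sections bears on the point-set topology of the plane, so this is a self-contained argument. Since $S^1$ is compact, the embedding $\gamma$ is a homeomorphism onto the Jordan curve $\Gamma := \gamma(S^1)\subseteq\mathbb{R}^2$. I would first pass to the sphere $S^2 = \mathbb{R}^2\cup\{\infty\}$ and peel off a trivial reparametrization step: it suffices to produce \emph{some} homeomorphism $h_0$ of $\mathbb{R}^2$ — extending a homeomorphism of $S^2$ that fixes $\infty$ — with $h_0(S^1) = \Gamma$. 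Indeed, given such $h_0$, the map $\alpha := h_0^{-1}\circ\gamma$ is a self-homeomorphism of $S^1$, and its radial extension $\tilde\alpha$, defined by $\tilde\alpha(0) = 0$ and $\tilde\alpha(x) = |x|\,\alpha(x/|x|)$ for $x\neq 0$, is a self-homeomorphism of $\mathbb{R}^2$ fixing $\infty$ with $\tilde\alpha|_{S^1} = \alpha$; then $h := h_0\circ\tilde\alpha$ satisfies $h(x) = h_0(\alpha(x)) = \gamma(x)$ for every $x\in S^1$. So the entire content is the Schoenflies statement: every Jordan curve in $S^2$ is carried onto a fixed round circle by an ambient homeomorphism.

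Next I would dispatch the polygonal case, where $\Gamma$ is a simple polygon. Here I would first prove the polygonal Jordan curve theorem by the standard ray-parity argument: a ray issuing from a point off $\Gamma$ in general position meets $\Gamma$ finitely often, the parity of this count is locally constant on $S^2\setminus\Gamma$, and it shows $S^2\setminus\Gamma$ has exactly two components (one containing $\infty$), each with frontier $\Gamma$. Then I would triangulate the closure of each complementary region — for the bounded region this is the ``ear lemma'' (every simple polygon with at least four vertices has an internal diagonal) plus induction, and for the unbounded region an auxiliary large square together with a polygonal-annulus decomposition reduces to the same — and a shelling of the resulting finite $2$-complex shows each closed region is a closed topological disk with boundary the polygon $\Gamma$. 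Choosing homeomorphisms of these two disks onto $\overline{\mathbb{D}}$ that send the vertices of $\Gamma$ to a prescribed cyclic tuple of points of $S^1$ and are affine on each edge — hence \emph{agree} on $\Gamma$ — and gluing along $\Gamma$ exhibits $S^2$ as $\overline{\mathbb{D}}\cup_{S^1}\overline{\mathbb{D}}$ with $\Gamma$ going to the equator; a routine adjustment supported off $\Gamma$ then restores $h_0(\infty) = \infty$.

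The remaining step, the general case via polygonal approximation, is where the real work lies. For an arbitrary $\Gamma = \gamma(S^1)$ I would take points $p^{(n)}_k = \gamma(e^{2\pi i k/m_n})$ with $m_n\to\infty$ and with the successive subarcs of $\Gamma$ between consecutive points of diameter $<\varepsilon_n\downarrow 0$, and replace each such subarc by the corresponding chord to obtain a polygon $\Gamma_n$. For large $n$ — after a harmless perturbation of the $p^{(n)}_k$ to delete accidental self-crossings, legitimate because $\Gamma$ is simple and $\varepsilon_n$ small — each $\Gamma_n$ is simple, the $\Gamma_n$ converge to $\Gamma$ in the Hausdorff metric, and each $\Gamma_n$ lies in a shrinking open neighbourhood $A_n$ of $\Gamma$ with $\bigcap_n\overline{A_n} = \Gamma$. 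Using the polygonal case applied to the thin ``collar'' regions interpolating between consecutive $\Gamma_n$, I would build inductively homeomorphisms $h_n$ of $S^2$, each fixing $\infty$, with $h_n(S^1) = \Gamma_n$, each obtained from its predecessor by a modification supported in a small neighbourhood of $\Gamma$ and chosen so that both $\sup d(h_{n+1},h_n)$ and $\sup d(h_{n+1}^{-1},h_n^{-1})$ are summable, all the while tracking which arc of $S^1$ is carried near which subarc of $\Gamma$. Then $h := \lim_n h_n$ and $\lim_n h_n^{-1}$ exist uniformly and are mutually inverse, so $h$ is a homeomorphism of $S^2$ fixing $\infty$; since $h_n(S^1)\subseteq\overline{A_n}$ eventually, $h(S^1)\subseteq\bigcap_n\overline{A_n} = \Gamma$, and the arc bookkeeping forces $h(S^1) = \Gamma$ with $h|_{S^1}$ a homeomorphism. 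Feeding $h_0 := h$ into the reduction of the first paragraph finishes the proof — and incidentally recovers the Jordan curve theorem for $\Gamma$, as $S^2\setminus\Gamma = h(S^2\setminus S^1)$ then plainly has two components.

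The parts I expect to be routine are the polygonal Jordan curve theorem, the ear lemma, the triangulation and shelling of a polygonal disk, and the radial-extension trick. The hard part will be the limiting argument of the third step: a uniform limit of homeomorphisms is in general only a continuous surjection, so the sequence $(h_n)$ must be engineered so that its \emph{inverses} converge as well — equivalently, with uniform control on the moduli of continuity of both $h_n$ and $h_n^{-1}$ near $\Gamma$ — and one must simultaneously guarantee that the approximating polygons can be taken simple and nested toward $\Gamma$ and that the unit circle is never collapsed onto a proper subarc in the limit. (A non-elementary shortcut for this step would be to invoke the Jordan curve theorem together with Carath\'eodory's boundary-extension theorem for the Riemann maps of the two complementary Jordan domains and glue; I would prefer the approximation argument above, since it stays within the combinatorial toolkit this section develops and is the one amenable to the Borel refinement sought in Theorem~\ref{thm:Borel_JS}.)
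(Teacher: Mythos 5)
Your route is genuinely different from the one the paper takes. The paper treats Theorem \ref{thm:JS} itself as classical, and the proof it actually works with (because it is the one adapted for the Borel form, Theorem \ref{thm:Borel_JS}) is Thomassen's \cite{MR1144352}: one keeps the parametrization $\gamma$ in play from the start and builds the extension directly, constructing finer and finer $2$-connected ``mesh'' graphs $\Gamma_n$ inside the closed interior of the square and $\Gamma_n'$ inside the closed interior of the curve, together with plane isomorphisms $g_n:\Gamma_n\to\Gamma_n'$ that agree with $\gamma$ on the boundary and have mesh size tending to zero; the resulting dense vertex correspondences then extend uniquely to a homeomorphism of the closed interiors, and the exteriors are handled the same way. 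Your plan instead factors the problem as (i) an unparametrized Schoenflies statement plus the radial reparametrization trick (this reduction is correct: $h_0\circ\tilde\alpha$ does extend $\gamma$, and once the limit map is a homeomorphism of $S^2$ with $h(S^1)\subseteq\Gamma$, surjectivity onto $\Gamma$ is automatic), (ii) the polygonal case via ray parity, the ear lemma, triangulation and shelling (standard and fine), and (iii) a polygonal-approximation limit. What Thomassen's route buys, and why the paper follows it, is that the construction is combinatorial and parametrized throughout, which is precisely what makes the Borel refinement in Theorem \ref{thm:Borel_JS} possible; your route is the classical one and proves the same theorem, but, as you acknowledge, is less suited to that refinement.

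That said, step (iii) as written is a plan rather than a proof, and it is where essentially all of the content of the theorem lives. Two concrete points. First, the inscribed polygons $\Gamma_n$ need not be simple: chords subtending parameter-distant but spatially close subarcs can cross no matter how fine the subdivision, so making the approximating polygons simple, pairwise disjoint, and nested toward $\Gamma$ (so that your ``collar regions'' between consecutive stages are honest polygonal annuli) needs a real argument, usually carried out separately on the two sides of $\Gamma$. Second, and more seriously, your inductive step requires, for two simple closed polygons both closely following $\Gamma$, an ambient homeomorphism carrying one to the other with arbitrarily small displacement and prescribed boundary matching. Hausdorff closeness of the two polygons does not give this; one needs corresponding points, in the sense of your ``arc bookkeeping,'' to be close, i.e., uniformly matched parametrizations along $\Gamma$, and producing that matching is exactly the delicate heart of the Schoenflies theorem in this approach. (The inverse-control bookkeeping itself is fine: once $h_{n+1}h_n^{-1}$ has small displacement, uniform continuity of the already-fixed $h_n^{-1}$ lets you make $\sup d(h_{n+1}^{-1},h_n^{-1})$ as small as you wish.) So the architecture is viable and classical, but the small-displacement matching step is asserted rather than established, and that is the gap you would still have to close.
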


This result is a strengthening of the Jordan curve theorem, which says that if $C$ is any simple closed curve in $\mathbb{R}^2$, then $\mathbb{R}^2\setminus C$ has exactly two connected components; one bounded, the \emph{interior} of $C$, and one unbounded, the \emph{exterior} of $C$.

To give a Borel version of Theorem \ref{thm:JS}, we will first need to describe the relevant Polish spaces of maps and some aspects of their Borel structure. In what follows, suppose that $X$ and $Y$ are locally compact Polish spaces. Let $C(X,Y)$ be the space of all continuous functions $X\to Y$ endowed with the compact-open topology, which is generated by subbasic open sets
\[
    V(K,U)=\{f\in C(X,Y)\mid f[K]\subseteq U\},
\]
where $K$ and $U$ range over the compact and open subsets of $X$ and $Y$, respectively. The following facts are standard (see \cite{MR1039321}):
\begin{itemize}
    \item $C(X,Y)$ forms a Polish space.
    \item If $Z$ is a locally compact Polish space, then the composition map $C(Y,Z)\times C(X,Y)\to C(X,Z):(f,g)\mapsto f\circ g$ is continuous.
    \item The evaluation map $e:C(X,Y)\times X\to Y:(f,x)\mapsto f(x)$ is continuous.
\end{itemize}
We will denote by $\mathrm{Emb}(X,Y)$ the subset of $C(X,Y)$ consisting of all injective continuous functions $X\to Y$. In our cases of interest, $X$ will be compact, so these are exactly the topological embeddings of $X$ into $Y$.

\begin{lemma}\label{lem:compactly_many_sections_open}
    If $X$ is compact, $Z$ any metrizable space, and $U\subseteq X\times Z$ is open, then $W=\{z\in Z\mid \forall x\in X((x,z)\in U)\}$ is open in $Z$.
\end{lemma}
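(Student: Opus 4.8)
The plan is to prove the contrapositive-free direct statement: show that the complement $Z \setminus W$ is closed, equivalently that $W$ is open by exhibiting, around each of its points, an open neighborhood contained in $W$. So fix $z_0 \in W$, meaning $\{z_0\} \times X \subseteq U$ (rewriting the membership condition symmetrically in the two coordinates). The set $\{z_0\} \times X$ is a compact subset of the product $X \times Z$, since $X$ is compact, and it is contained in the open set $U$.

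The key step is a standard \emph{tube lemma} argument. First I would cover $\{z_0\} \times X$ by basic open boxes: for each $x \in X$, since $(x, z_0) \in U$ and $U$ is open, choose open sets $A_x \ni z_0$ in $Z$ and $B_x \ni x$ in $X$ with $A_x \times B_x \subseteq U$. The collection $\{B_x : x \in X\}$ is an open cover of the compact space $X$, so it admits a finite subcover $B_{x_1}, \dots, B_{x_n}$. Then set $A = \bigcap_{k=1}^n A_{x_k}$, a finite intersection of open neighborhoods of $z_0$, hence an open neighborhood of $z_0$ in $Z$. I claim $A \times X \subseteq U$: given any $(z, x) \in A \times X$, pick $k$ with $x \in B_{x_k}$; then $(z,x) \in A_{x_k} \times B_{x_k} \subseteq U$. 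Consequently every $z \in A$ satisfies $\forall x \in X\,((x,z) \in U)$, i.e. $A \subseteq W$, so $W$ is open.

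I do not foresee any genuine obstacle here; the statement is essentially the tube lemma, and metrizability of $Z$ is not even used (open sets and compactness suffice), though it does no harm. The only mild care needed is keeping the two coordinate orders straight, since $U \subseteq X \times Z$ while the defining condition of $W$ quantifies over the $X$-coordinate; I would simply work throughout with the slice $\{z_0\} \times X$ and reorganize the boxes accordingly. I would present the argument in a few lines as above.

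\begin{proof}
Let $z_0 \in W$, so that $(x, z_0) \in U$ for every $x \in X$. For each $x \in X$, since $U$ is open, fix open sets $A_x \subseteq Z$ with $z_0 \in A_x$ and $B_x \subseteq X$ with $x \in B_x$ such that $B_x \times A_x \subseteq U$. The sets $\{B_x : x \in X\}$ form an open cover of $X$; as $X$ is compact, there are $x_1, \dots, x_n \in X$ with $X = \bigcup_{k=1}^n B_{x_k}$. Put $A = \bigcap_{k=1}^n A_{x_k}$, an open neighborhood of $z_0$ in $Z$. If $z \in A$ and $x \in X$, choose $k$ with $x \in B_{x_k}$; then $(x, z) \in B_{x_k} \times A_{x_k} \subseteq U$. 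Hence $z \in W$, so $A \subseteq W$, and since $z_0 \in W$ was arbitrary, $W$ is open in $Z$.
\end{proof}
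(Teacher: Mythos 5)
Your proof is correct, but it takes a different route from the paper. You give the classical tube-lemma argument: cover the compact slice $\{z_0\}\times X$ (in the paper's coordinate order, $X\times\{z_0\}$) by open boxes contained in $U$, extract a finite subcover using compactness of $X$, and intersect the $Z$-sides to get an open neighborhood $A$ of $z_0$ with $X\times A\subseteq U$, hence $A\subseteq W$. The paper instead shows $Z\setminus W$ is closed by a sequential argument: take $z_n\to z$ with $z_n\notin W$, pick witnesses $x_n$ with $(x_n,z_n)\notin U$, pass to a convergent subsequence $x_n\to x$ (using that $X$ is compact metrizable, hence sequentially compact), and conclude $(x,z)\notin U$ since the complement of $U$ is closed. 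The paper's proof is shorter in its metric setting but genuinely uses metrizability (sequential characterization of closedness in $Z$ and sequential compactness of $X$), whereas your argument is purely topological and, as you correctly observe, works for an arbitrary topological space $Z$ and any compact $X$; that extra generality costs nothing here since the lemma is only applied to metrizable spaces. One cosmetic point: in your informal plan you wrote the boxes as $A_x\times B_x\subseteq U$ with the coordinates reversed, but the formal proof has them in the right order ($B_x\times A_x\subseteq U$ with $B_x\subseteq X$), so there is no actual gap.
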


\begin{proof}
    Let $(z_n)_{n\in\mathbb{N}}$ be a sequence in $Z\setminus W$ converging to some $z\in Z$. For each $n\in\mathbb{N}$, there is an $x_n\in X$ such that $(x_n,z_n)\notin U$. Passing to a convergent subsequence, we may assume that $x_n$ converges to $x$, so $(x_n,z_n)$ converges to $(x,z)\notin U$, since $U$ is open. Thus, $z\notin W$.
\end{proof}

The following lemma and its proof are adapted from \cite{nlab:polish_space} (see also \cite{MR3646780}).

\begin{lemma}
    If $X$ is compact, then $\mathrm{Emb}(X,Y)$ is a $G_\delta$ subset of $C(X,Y)$, and thus itself a Polish space.
\end{lemma}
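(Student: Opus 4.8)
The plan is to express $\mathrm{Emb}(X,Y)$ as a countable intersection of open subsets of $C(X,Y)$, using that $X$ is compact. The key observation is that a continuous map $f:X\to Y$ fails to be injective precisely when there exist two distinct points of $X$ that are mapped to the same point; to make this ``$G_\delta$-able,'' I would quantify over a fixed countable base and measure separation with a compatible metric on $Y$.

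Concretely, fix a compatible metric $d$ on $Y$ and a countable basis $\{B_m\mid m\in\mathbb{N}\}$ for the topology of $X$. For each $m\neq n$ with $\overline{B_m}\cap\overline{B_n}=\varnothing$ (so $\overline{B_m}$ and $\overline{B_n}$ are disjoint compact sets), and each $k\in\mathbb{N}$, let
\[
    W_{m,n,k}=\{f\in C(X,Y)\mid d(f(x),f(x'))> 1/k\text{ for all }x\in\overline{B_m},\ x'\in\overline{B_n}\}.
\]
I claim each $W_{m,n,k}$ is open in $C(X,Y)$: this is where Lemma \ref{lem:compactly_many_sections_open} enters. The set $\{(x,x',f)\in \overline{B_m}\times\overline{B_n}\times C(X,Y)\mid d(f(x),f(x'))>1/k\}$ is open, since $(x,x',f)\mapsto d(f(x),f(x'))$ is continuous (by continuity of the evaluation map $C(X,Y)\times X\to Y$ and of $d$); applying Lemma \ref{lem:compactly_many_sections_open} with the compact space $\overline{B_m}\times\overline{B_n}$ in the role of $X$ and $C(X,Y)$ in the role of $Z$ shows that $W_{m,n,k}$, being exactly the set of $f$ all of whose sections over $\overline{B_m}\times\overline{B_n}$ land in that open set, is open.

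Next I would verify
\[
    \mathrm{Emb}(X,Y)=\bigcap_{\substack{m\neq n\\ \overline{B_m}\cap\overline{B_n}=\varnothing}}\ \bigcup_{k\in\mathbb{N}} W_{m,n,k}.
\]
The inclusion $\subseteq$ is immediate: if $f$ is injective, then for disjoint compact $\overline{B_m},\overline{B_n}$ the continuous function $(x,x')\mapsto d(f(x),f(x'))$ is strictly positive on the compact set $\overline{B_m}\times\overline{B_n}$, hence bounded below by some $1/k$. For $\supseteq$, suppose $f$ lies in the right-hand side but $f(x)=f(x')$ for some $x\neq x'$ in $X$; since $X$ is Hausdorff (indeed metrizable) and has a basis $\{B_m\}$, pick basic open sets $B_m\ni x$, $B_n\ni x'$ with $\overline{B_m}\cap\overline{B_n}=\varnothing$ (using regularity/local compactness of $X$ to shrink), contradicting membership in $\bigcup_k W_{m,n,k}$. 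The right-hand side is a countable intersection of open sets, so $\mathrm{Emb}(X,Y)$ is $G_\delta$ in the Polish space $C(X,Y)$; being a $G_\delta$ subset of a Polish space, it is itself Polish by the first bullet point of Section \ref{subsection:classical_DST}.

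I do not expect any serious obstacle here; the only point requiring a little care is arranging the disjoint-closure condition on the basic sets (so that one genuinely has disjoint compacta to separate), and checking openness of $W_{m,n,k}$, which is precisely what Lemma \ref{lem:compactly_many_sections_open} is set up to deliver.
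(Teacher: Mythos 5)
Your proof is correct. It follows the same overall strategy as the paper's: write $\mathrm{Emb}(X,Y)$ as a countable intersection of open sets expressing a quantitative separation condition, with openness of each piece delivered by Lemma \ref{lem:compactly_many_sections_open} applied over a compact parameter space. The difference is in the countable decomposition and the verification of the set equality: the paper indexes by distance thresholds, taking $U_{m,n}=\{f\mid d_X(x,y)\geq 1/m \Rightarrow d_Y(f(x),f(y))>1/n\}$, and its forward inclusion rests on the fact that, by compactness of $X$, an injective continuous $f$ has uniformly continuous inverse on $f[X]$; you instead index by pairs of basic open sets of $X$ with disjoint (hence compact, since $X$ is compact) closures, and get the uniform lower bound $1/k$ simply by minimizing the positive continuous function $(x,x')\mapsto d(f(x),f(x'))$ over the compact product $\overline{B_m}\times\overline{B_n}$. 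Your version avoids both the implication built into the paper's sets and the uniform-continuity-of-the-inverse argument, at the modest cost of the disjoint-closure bookkeeping (which you handle correctly using regularity of the metrizable space $X$); the two arguments are otherwise of comparable length and both conclude, as you do, that a $G_\delta$ subset of the Polish space $C(X,Y)$ is Polish.
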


\begin{proof}
  Fix compatible complete metrics $d_X$ and $d_Y$ on $X$ and $Y$, respectively. For each $m,n\in\mathbb{N}$, let
    \[
        U_{m,n}=\left\{f\in C(X,Y)\mid\forall x,y\in X\left(d_X(x,y)\geq\frac{1}{m} \Rightarrow d_Y(f(x),f(y))>\frac{1}{n}\right)\right\}.
    \]

    \begin{claim}
        $\mathrm{Emb}(X,Y)=\bigcap_{m=1}^\infty\bigcup_{n=1}^\infty U_{m,n}$.
    \end{claim}

    Clearly, if $f$ is in $\bigcap_{m=1}^\infty\bigcup_{n=1}^\infty U_{m,n}$, then $f$ is injective. Conversely, suppose that $f\in\mathrm{Emb}(X,Y)$. Since $X$ is compact, so is $f[X]$, and thus $f^{-1}:f[X]\to Y$ is uniformly continuous. Hence, for every $m$, there is an $n$ such that for all $x,y\in X$, $d_Y(f(x),f(y))\leq\frac{1}{n}$ implies $d_X(x,y)<\frac{1}{m}$. Taking the contrapositive of this last implication, we see that $f\in\bigcap_{m=1}^\infty\bigcup_{n=1}^\infty U_{m,n}$, proving the claim.

    It  now suffices to show that each $U_{m,n}$ is open. For each $m,n$, let
    \[
        W_{m,n}=\left\{(x,y,f)\in X^2\times C(X,Y)\mid d_X(x,y)\geq\frac{1}{m} \Rightarrow d_Y(f(x),f(y))>\frac{1}{n}\right\}.
    \]
    Then, $(x,y,f)\notin W_{m,n}$ if and only if $d_X(x,y)\leq\frac{1}{m}$ and $d_Y(f(x),f(y))\leq\frac{1}{m}$, a closed condition, showing that $W_{m,n}$ is open. But
    \[
        U_{m,n}=\{f\in C(X,Y)\mid\forall (x,y)\in X^2((x,y,f)\in W_{m,n})\},
    \]
    which is open by Lemma \ref{lem:compactly_many_sections_open}, completing the proof.
\end{proof}

\begin{lemma}\label{lem:cts_extension_Borel}
    Let $C$ be the set of all pairs $((x_n)_{n\in\mathbb{N}},(y_n)_{n\in\mathbb{N}})$ of sequences for which $\{x_n\mid n\in\mathbb{N}\}$ is dense in $X$ and the assignment $x_n\mapsto y_n$ defines a uniformly continuous function. Then:
    \begin{enumerate}[label=\textup{\arabic*.}]
        \item $C$ is a Borel subset of $X^\mathbb{N}\times Y^\mathbb{N}$.
        \item The map $\varphi:C\to C(X,Y)$ taking $((x_n)_{n\in\mathbb{N}},(y_n)_{n\in\mathbb{N}})$ to the unique continuous extension of the assignment $x_n\mapsto y_n$ to all of $X$ is Borel.
    \end{enumerate}
\end{lemma}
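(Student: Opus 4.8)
The plan is to handle the two clauses separately, fixing once and for all compatible complete metrics $d_X,d_Y$ on $X,Y$, a countable basis $\mathcal B$ for $X$, and writing $\mathbb{Q}^+$ for the positive rationals.

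For (1), I would express membership of a pair $((x_n)_n,(y_n)_n)$ in $C$ as a conjunction of two Borel conditions. Density of $\{x_n:n\in\mathbb N\}$ in $X$ is the condition $\bigcap_{B\in\mathcal B}\{((x_n)_n,(y_n)_n):\exists n\,(x_n\in B)\}$, a countable intersection of open subsets of $X^{\mathbb N}\times Y^{\mathbb N}$. That the assignment $x_n\mapsto y_n$ is well defined and uniformly continuous on $\{x_n:n\in\mathbb N\}$ reduces, by the standard passage to rational tolerances, to
\[
\forall\varepsilon\in\mathbb{Q}^+\ \exists\delta\in\mathbb{Q}^+\ \forall n,m\in\mathbb N\ \bigl(d_X(x_n,x_m)<\delta\Rightarrow d_Y(y_n,y_m)<\varepsilon\bigr),
\]
in which well-definedness is automatically subsumed, since taking $d_X(x_n,x_m)=0$ forces $d_Y(y_n,y_m)<\varepsilon$ for every $\varepsilon$, hence $y_n=y_m$. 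Each innermost implication cuts out a set of the form (closed)$\,\cup\,$(open) in $X^{\mathbb N}\times Y^{\mathbb N}$, and every quantifier block is countable, so the displayed condition is Borel; intersecting with the density condition shows $C$ is Borel.

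For (2), rather than manipulate the compact--open subbasis of $C(X,Y)$, I would show directly that $\mathrm{graph}(\varphi)$ is Borel and invoke the criterion (recorded among the preliminaries) that a map between standard Borel spaces is Borel if and only if its graph is Borel. The key observation is that, for $((x_n)_n,(y_n)_n)\in C$ and $f\in C(X,Y)$, one has $f=\varphi((x_n)_n,(y_n)_n)$ if and only if $f(x_n)=y_n$ for every $n$: the uniformly continuous extension $\varphi((x_n)_n,(y_n)_n)$ exists (because $Y$ is complete) and satisfies these equations, while any $f\in C(X,Y)$ satisfying them agrees with it on the dense set $\{x_n:n\in\mathbb N\}$ and hence equals it, $Y$ being Hausdorff. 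Consequently
\[
\mathrm{graph}(\varphi)=\bigl(C\times C(X,Y)\bigr)\cap\bigcap_{n\in\mathbb N}\bigl\{\bigl((x_n)_n,(y_n)_n,f\bigr):e(f,x_n)=y_n\bigr\},
\]
where $e:C(X,Y)\times X\to Y$ is the continuous evaluation map. For each fixed $n$ the assignment $\bigl((x_n)_n,(y_n)_n,f\bigr)\mapsto\bigl(e(f,x_n),y_n\bigr)$ into $Y\times Y$ is continuous, so the preimage of the closed diagonal of $Y^2$ is closed; the intersection over $n$ is closed; and $C\times C(X,Y)$ is Borel by (1). Hence $\mathrm{graph}(\varphi)$ is Borel, and therefore so is $\varphi$.

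I do not expect a genuine obstacle here; the statement is routine once one notices that continuity forces agreement on a dense set, which collapses the graph of $\varphi$ to the countable family of equations $f(x_n)=y_n$ and thereby bypasses any analysis of the topology of $C(X,Y)$. The only points requiring care are the familiar rational-tolerance rewriting of uniform continuity in (1) and the small amount of bookkeeping confirming that the conditions assembled there genuinely have the descriptive complexity claimed. (If one did prefer the subbasis route, the identity $f[\overline B]=\overline{\{y_n:x_n\in B\}}$ for precompact basic $B$, together with closedness of $\{z:d_Y(z,Y\setminus U)\geq\varepsilon\}$, reduces $\varphi^{-1}(V(\overline B,U))$ to a Borel condition as well; but the graph argument is cleaner.)
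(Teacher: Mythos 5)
Your proof is correct and follows essentially the same route as the paper: part (1) via countable (rational-tolerance) rewritings of density and uniform continuity, and part (2) by showing the graph of $\varphi$ is Borel, using the continuity of the evaluation map together with the observation that a continuous function is determined by its values on the dense set $\{x_n\}$. The only cosmetic differences are that the paper tests density against a fixed countable dense subset of $X$ rather than a countable basis, and that you spell out the intersection with $C\times C(X,Y)$ explicitly; neither changes the argument.
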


\begin{proof}
    This argument is similar to that of Lemma \ref{lem:T_is_Borel}. Fix a compatible complete metric $d$ on $X$ and a countable dense subset $\{q_n\mid n\in\mathbb{N}\}$ of $X$. Then, $((x_n)_{n\in\mathbb{N}},(y_n)_{n\in\mathbb{N}})\in C$ if and only if:
    \begin{enumerate}[label=\textup{\roman*.}]
        \item $\forall n\forall r\in \mathbb{Q}^+\exists m(d(x_m,q_n)<r))$ and
        \item $\forall r\in\mathbb{Q}^+\exists s\in\mathbb{Q}^+\forall n,m(d(x_n,x_m)<s\Rightarrow d(y_n,y_m)<r)$.
    \end{enumerate}
    These are clearly Borel conditions. Next, observe that the graph of $\varphi$ is exactly
    \[
        \{((x_n)_{n\in\mathbb{N}},(y_n)_{n\in\mathbb{N}},f)\in X^\mathbb{N}\times Y^{\mathbb{N}}\times C(X,Y)\mid\forall n\in\mathbb{N}(f(x_n)=y_n)\},
    \]
    which is Borel, since the evaluation map $C(X,Y)\times X\to Y$ is continuous.
\end{proof}

Recall that $\mathcal{K}(X)$ is the set of compact subsets of $X$ with the Vietoris topology and $\mathcal{F}(X)$ is the space of closed subsets of $X$ with the Fell topology.

\begin{lemma}\label{lem:images_Borel_compact_open}
    The following maps are Borel:
    \begin{enumerate}[label=\textup{\arabic*.}]
        \item Direct image: $C(X,Y)\times\mathcal{K}(X)\to\mathcal{K}(Y):(f,K)\mapsto f[K]$.
        \item Inverse image: $C(X,Y)\times\mathcal{F}(Y)\to\mathcal{F}(X):(f,E)\mapsto f^{-1}[E]$.
    \end{enumerate}
\end{lemma}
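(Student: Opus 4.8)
The plan is to prove (1) by showing that the direct-image map is in fact \emph{continuous} for the compact-open and Vietoris topologies, and to prove (2) by showing that the preimage under the inverse-image map of each set in a generating family for the Effros Borel structure on $\mathcal{F}(X)$ is Borel. Throughout, local compactness of $X$ and $Y$ is used repeatedly.

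For (1), recall that the Vietoris topology on $\mathcal{K}(Y)$ is generated by the subbasic sets $\{L : L\subseteq U\}$ and $\{L : L\cap U\neq\varnothing\}$ for $U\in\mathcal{O}(Y)$, so it suffices to check that $\{(f,K) : f[K]\subseteq U\}$ and $\{(f,K) : f[K]\cap U\neq\varnothing\}$ are open in $C(X,Y)\times\mathcal{K}(X)$. Given $(f_0,K_0)$ in the first set, for each $x\in K_0$ I would use local compactness of $Y$ to choose a precompact open $W\ni f_0(x)$ with $\overline W\subseteq U$, and then local compactness of $X$ to choose a precompact open $V_x\ni x$ with $\overline{V_x}\subseteq f_0^{-1}[W]$, so that $f_0[\overline{V_x}]\subseteq U$. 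Finitely many $V_{x_1},\dots,V_{x_m}$ cover $K_0$, and then $(\bigcap_{i} V(\overline{V_{x_i}},U))\times\{K' : K'\subseteq\bigcup_i V_{x_i}\}$ --- a finite intersection of subbasic open sets of the product --- is an open neighbourhood of $(f_0,K_0)$ inside the set, since any $y'\in K'\subseteq\bigcup_i V_{x_i}$ lies in some $\overline{V_{x_i}}$, whence $f'(y')\in f'[\overline{V_{x_i}}]\subseteq U$. The second set is handled by the same device, starting from a single $x_0\in K_0$ with $f_0(x_0)\in U$: one builds $V(\overline V,U)\times\{K' : K'\cap V\neq\varnothing\}$ for a precompact open $V\ni x_0$ with $f_0[\overline V]\subseteq U$. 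Hence the direct-image map is continuous, in particular Borel.

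For (2), recall from Section \ref{subsection:spaces_of_subsets} that the Effros Borel structure on $\mathcal{F}(X)$ is generated by the sets $\{F : F\cap K=\varnothing\}$ for $K\in\mathcal{K}(X)$, so it suffices to show that $\{(f,E) : f^{-1}[E]\cap K=\varnothing\}$ is Borel in $C(X,Y)\times\mathcal{F}(Y)$ for each fixed compact $K\subseteq X$. Since $f^{-1}[E]\cap K=\varnothing$ if and only if $f[K]\cap E=\varnothing$, I would factor this condition as a composition of Borel maps: the map $(f,E)\mapsto f[K]$ is continuous $C(X,Y)\times\mathcal{F}(Y)\to\mathcal{K}(Y)$ by part (1) (with $K$ held fixed), the inclusion $\mathcal{K}(Y)\hookrightarrow\mathcal{F}(Y)$ is continuous (hence Borel), intersection $\mathcal{F}(Y)^2\to\mathcal{F}(Y)$ is Borel by Section \ref{subsection:spaces_of_subsets} (this is where local compactness of $Y$ enters), and $\{\varnothing\}=\bigcap_n\{F : F\cap L_n=\varnothing\}$ is a $G_\delta$, hence Borel, subset of $\mathcal{F}(Y)$ for any compact exhaustion $(L_n)_{n\in\mathbb{N}}$ of $Y$. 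Composing these along the assignment $(f,E)\mapsto(f[K],E)\mapsto f[K]\cap E$, we get that $\{(f,E) : f[K]\cap E=\varnothing\}$ is Borel, which completes the proof. (Alternatively, one can check directly --- by the same precompact-neighbourhood argument as in part (1) --- that $\{(C,E)\in\mathcal{K}(Y)\times\mathcal{F}(Y) : C\cap E=\varnothing\}$ is \emph{open}, and precompose with the continuous map $(f,E)\mapsto(f[K],E)$.)

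The content here is entirely routine; the only thing requiring care is the topological bookkeeping, namely that every auxiliary neighbourhood must be assembled out of \emph{compact} sets in order to be genuinely (sub)basic open for the compact-open, Vietoris, or Fell topology --- which is precisely where local compactness of $X$ and $Y$ is invoked. It is also worth noting that restricting attention to the generators $\{F : F\cap K=\varnothing\}$ (rather than the $\{F : F\cap U\neq\varnothing\}$) for the Effros structure is what keeps part (2) short: handling the latter directly would instead call for the $\sigma$-compactness and finite-subcover maneuvers familiar from the proofs in Section \ref{subsection:subclasses}.
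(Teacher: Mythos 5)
Your proof is correct, but it takes a genuinely different route from the paper's. The paper disposes of the lemma in two lines: it observes that the direct-image map is \emph{separately} continuous in each variable (citing \cite[Exer.\ 4.29(vi)]{MR1321597} for the map $K\mapsto f[K]$), that the inverse-image map is continuous in $f$ and Borel in $E$, and then invokes the joint-measurability lemma for Carath\'eodory-type functions \cite[Lem.\ 4.51]{MR2378491} to pass from separate to joint Borel measurability. You instead prove something slightly stronger for (1) --- \emph{joint} continuity of $(f,K)\mapsto f[K]$ --- by an explicit compact-neighbourhood argument (your use of local compactness of $Y$ to interpose the set $W$ is actually unnecessary: choosing $V_x$ precompact with $\overline{V_x}\subseteq f_0^{-1}[U]$ already suffices, using only local compactness of $X$), and for (2) you verify Borel-ness directly on the generating family $\{F:F\cap K=\varnothing\}$ of the Effros structure, via the identity $f^{-1}[E]\cap K=\varnothing\Leftrightarrow f[K]\cap E=\varnothing$ and the Borel-ness of intersection on $\mathcal{F}(Y)$ (or, in your alternative, the openness of the disjointness relation on $\mathcal{K}(Y)\times\mathcal{F}(Y)$, which also checks out). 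What your approach buys is self-containedness and a sharper statement in (1); what the paper's approach buys is brevity and uniformity, since the Carath\'eodory lemma handles both parts at once without any hyperspace bookkeeping. One cosmetic quibble: your closing remark that subbasic neighbourhoods must be ``assembled out of compact sets'' for the Vietoris topology is loosely phrased (Vietoris subbasic sets are indexed by open sets; compactness enters only through the compact-open subbasis and the Fell sets $\{F:F\cap K=\varnothing\}$), but this does not affect the argument.
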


\begin{proof}
	It is easy to show that the map in (1) is separately continuous in each variable (see \cite[Exer.\ 4.29(vi)]{MR1321597} for one case), and that the map in (2) is continuous in the first variable and Borel in the second. Thus, both maps are Borel measurable on their domains by \cite[Lem.\ 4.51]{MR2378491}.
\end{proof}

As in Definition \ref{defn:Borel_structure_Top}, we fix a $u\notin[0,1]$ which we view as an isolated point.

\begin{lemma}\label{lem:intersection_curves_Borel}
    The map $I:C([0,1],Y)\times C(X,Y)\to[0,1]\cup\{u\}$ defined by
    \[
        I(f,g)=\begin{cases}\min\{t\mid f(t)\in \mathrm{ran}(g)\} &\text{if $\mathrm{ran}(f)\cap\mathrm{ran}(g)\neq\emptyset$}\\ u&\text{if $\mathrm{ran}(f)\cap\mathrm{ran}(g)=\emptyset$},\end{cases}
    \]
    for $f\in C([0,1],Y)$ and $g\in C(X,Y)$, is Borel.
\end{lemma}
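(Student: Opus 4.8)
The plan is to reduce the assertion to the verification that $I$ pulls back each member of a generating family for the Borel $\sigma$-algebra of $[0,1]\cup\{u\}$ to a Borel set. Since $u$ is an isolated point, that $\sigma$-algebra is generated by $\{u\}$ together with the initial segments $[0,q]$ for $q\in\mathbb{Q}\cap[0,1]$, so it is enough to show that $I^{-1}(\{u\})$ and, for each such $q$, the set $\{(f,g):I(f,g)\leq q\}$ are Borel in $C([0,1],Y)\times C(X,Y)$. Throughout I take $X$ to be compact, as it is in the applications of this section; for a merely $\sigma$-compact $X$ one fixes an exhaustion $X=\bigcup_n K_n$ by compacta and replaces $\mathrm{ran}(g)$ by $\bigcup_n g[K_n]$ in what follows, whereupon the two sets below become, respectively, a countable union and a countable intersection of Borel sets.

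The first step is the observation that $\{(f,g):I(f,g)\leq q\}$ equals $\{(f,g):f[[0,q]]\cap\mathrm{ran}(g)\neq\varnothing\}$, and that $I^{-1}(\{u\})$ equals $\{(f,g):\mathrm{ran}(f)\cap\mathrm{ran}(g)=\varnothing\}$. Both identities are immediate from the definition of $I$ once one notes that, $[0,1]$ and $X$ being compact, the images $f[[0,q]]$ and $\mathrm{ran}(g)$ are compact, so that the infimum defining $I(f,g)$ is attained whenever it is finite; in particular $I(f,g)\leq q$ holds exactly when some $t\leq q$ has $f(t)\in\mathrm{ran}(g)$.

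Next I would check that both sets are Borel. By Lemma \ref{lem:images_Borel_compact_open}(1), each of the maps $f\mapsto f[[0,q]]$, $f\mapsto\mathrm{ran}(f)=f[[0,1]]$, and $g\mapsto\mathrm{ran}(g)=g[X]$ (the last using that $X$ is compact) is Borel into $\mathcal{K}(Y)$, being the direct-image map with one argument fixed at a compact set. It then suffices to know that $D:=\{(K,L)\in\mathcal{K}(Y)^2:K\cap L\neq\varnothing\}$ is closed in $\mathcal{K}(Y)^2$ — equivalently, that disjointness of two compacta is an open condition: if $z_k\in K_k\cap L_k$ with $K_k\to K$ and $L_k\to L$ in the Hausdorff metric, then $d(z_k,K)\to 0$, so choosing $w_k\in K$ with $d(z_k,w_k)\to 0$ and passing to a subsequence along which $w_k\to w\in K$, one obtains $z_k\to w$ and hence $d(w,L)=0$, so $w\in K\cap L$. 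Pulling $D$ back along the Borel map $(f,g)\mapsto(f[[0,q]],\mathrm{ran}(g))$, and its open complement back along $(f,g)\mapsto(\mathrm{ran}(f),\mathrm{ran}(g))$, exhibits the two sets from the first step as Borel — which would complete the proof.

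The only points that call for any real care are the bookkeeping in the first step — that $I(f,g)\leq q$ is genuinely equivalent to $f[[0,q]]\cap\mathrm{ran}(g)\neq\varnothing$, which is exactly where compactness of $[0,1]$ and of $X$ (and hence attainment of the minimum) is used — together with the elementary verification that $D$ is closed in $\mathcal{K}(Y)^2$. Everything else is a routine assembly of the Borel-ness of direct images recorded in Lemma \ref{lem:images_Borel_compact_open} and of the continuity of evaluation, composition, and restriction in the compact-open topology; I do not anticipate a genuine obstacle. (If one prefers, $I^{-1}(\{u\})$ can alternatively be handled directly via Lemma \ref{lem:compactly_many_sections_open}, since $\{(t,f,g):f(t)\notin\mathrm{ran}(g)\}$ is open when $X$ is compact, so that $I^{-1}(\{u\})$ is in fact open.)
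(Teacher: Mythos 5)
Your proof is correct, but it takes a different route from the paper's. The paper simply observes that on the nonempty case $I(f,g)=\min f^{-1}[g[X]]$, notes that $(f,g)\mapsto f^{-1}[g[X]]$ is Borel into $\mathcal{K}([0,1])$ by combining \emph{both} parts of Lemma \ref{lem:images_Borel_compact_open} (direct image for $g[X]$, inverse image for $f^{-1}[\,\cdot\,]$), and finishes by remarking that $\min:\mathcal{K}([0,1])\setminus\{\emptyset\}\to[0,1]$ is continuous; the empty case is absorbed since $\{\emptyset\}$ is Borel in $\mathcal{K}([0,1])$. You instead reduce to preimages of the generating sets $\{u\}$ and $[0,q]$, rewrite $\{I\le q\}$ as $f[[0,q]]\cap\mathrm{ran}(g)\neq\varnothing$ (correctly flagging that attainment of the minimum, hence compactness of $[0,1]$ and of $X$, is what makes this identity work), and then use only part (1) of Lemma \ref{lem:images_Borel_compact_open} together with a hand-checked fact that $\{(K,L)\in\mathcal{K}(Y)^2:K\cap L\neq\varnothing\}$ is closed. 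So the paper factors $I$ through the hyperspace of the \emph{domain} $[0,1]$ and a continuous $\min$, while you work in the hyperspace of the \emph{codomain} $Y$ and never need the inverse-image map; your version is a bit longer but more self-contained, and it makes explicit the standing compactness assumption on $X$ (needed for $g[X]\in\mathcal{K}(Y)$ and for the $\min$ to be attained) that the paper's argument uses tacitly as well. Your closing observation that $I^{-1}(\{u\})$ is in fact open (via Lemma \ref{lem:compactly_many_sections_open}) is a correct bonus not present in the paper. The only caveat is your side remark on merely $\sigma$-compact $X$: there the minimum in the definition of $I$ need not be attained, so the lemma's statement itself would have to be reformulated before your countable-union/intersection patch applies; since your main argument assumes $X$ compact, this does not affect its correctness.
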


\begin{proof}
    Note that, if $\mathrm{ran}(f)\cap\mathrm{ran}(g)\neq\emptyset$, then
    \[
        I(f,g)=\min f^{-1}[g[X]].
    \]
    The assignment $C([0,1],Y)\times C(X,Y)\to\mathcal{K}([0,1]):(f,g)\mapsto  f^{-1}[g[X]]$ is Borel by Lemma \ref{lem:images_Borel_compact_open}, and it is easy to see that $\min:\mathcal{K}([0,1])\setminus\{\emptyset\}\to [0,1]$ is continuous.
\end{proof}

By an \emph{arc}
we mean a continuous embedding $\gamma$ of $[0,1]$ into $\mathbb{R}^2$, that is, an element of $\mathrm{Emb}([0,1],\mathbb{R}^2)$. The \emph{endpoints} of $\gamma$ are $\gamma(0)$ and $\gamma(1)$. A \emph{closed curve} is a continuous embedding of $S^1$ into $\mathbb{R}^2$, an element of $\mathrm{Emb}(S^1,\mathbb{R}^2)$. We will frequently blur the distinction between an arc or closed curve and its image $\mathrm{im}(\gamma)$; by Lemma \ref{lem:images_Borel_compact_open}, the passage from the former to the latter is Borel. Lemma \ref{lem:intersection_curves_Borel} allows us to compute intersection points between arcs and curves in a Borel way.

A \emph{polygonal arc} (or \emph{closed curve}, respectively) is an arc (closed curve) consisting of finitely many straight line segments. By a \emph{rational point} in $\mathbb{R}^2$, we mean a point both of whose coordinates are rational. A \emph{rational arc} is a polygonal arc all of whose endpoints and break points are rational; a \emph{rational closed curve} is defined similarly. Rational polygonal arcs and curves have canonical parametrizations, and there are only countably many, so we may fix an enumeration of them going forwards.

A key definition used in Thomassen's proof of Theorem \ref{thm:JS} is as follows: given a closed subset $C$ of $\mathbb{R}^2$ and $\Omega$ a nonempty open connected region in $\mathbb{R}^2\setminus C$, a point $p\in C$ is \emph{accessible} from $\Omega$ if for some (equivalently, all) $q\in\Omega$, there is a polygonal arc from $q$ to $p$ whose only intersection with $C$ is $p$. We can make the obvious modification to say that $p$ is \emph{rationally accessible} from $\Omega$; note that $p$ itself need not be a rational point. The set of accessible points is dense in $C$ \cite[p.\ 122]{MR1144352}. However, this definition is apparently weaker than what is needed to carry out the rest of Thomassen's argument.\footnote{This was pointed out by Robin Chapman in a comment to an answer \cite{17582} on MathOverflow.} Namely, we need points $p\in C$ which remain accessible to any of the new regions in $\Omega$ created by adding a polygonal arc to $p$. Instead, we say that a point $p\in C$ is \emph{(rationally) accessible*} from $\Omega$ if there is a positive angle's worth of segments of (rational) polygonal arcs connecting $p$ to some (rational) $q\in\Omega$. Note that this is equivalent to having two distinct nonparallel (rational) segments incident at $p$.

\begin{lemma}
    Let $C$ be a closed subset $\mathbb{R}^2$ and $\Omega$ a nonempty region in $\mathbb{R}^2\setminus C$. 
    \begin{enumerate}[label=\textup{\arabic*.}]
        \item If $\Omega$ is a open region in $\mathbb{R}^2$ and $p,q$ are rational points in $\Omega$, then there is a rational polygonal arc from $p$ to $q$.
        \item The set of rationally accessible points in $C$ is dense in $C$.
        \item The set of rationally accessible* points in $C$ is dense in $C$.
    \end{enumerate}
\end{lemma}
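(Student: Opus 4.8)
The three assertions are purely planar, elementary topological facts; the only subtlety is arranging the arguments so that ``rational'' versions of accessibility hold densely, which is precisely what makes the later Borel extraction of the Jordan--Schoenflies output possible. I would prove (1) first, then derive (2) from (1) plus the known density of accessible points, and finally obtain (3) by a local perturbation argument inside (2). None of these steps is hard; the only thing requiring care is step (3), where one must genuinely use the open region $\Omega$ to ``spread'' an accessing arc into a positive angle's worth of rational segments.

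\textbf{Step (1): connecting rational points by rational polygonal arcs.} Let $\Omega$ be open and connected (hence polygonally path-connected, since open connected subsets of $\mathbb{R}^2$ are path-connected and a path can be covered by finitely many balls in $\Omega$ and replaced by a polygonal arc through their centers). Given rational $p, q \in \Omega$, first fix \emph{any} polygonal arc $\sigma$ from $p$ to $q$ inside $\Omega$; its image is compact, so there is $\varepsilon > 0$ with the $\varepsilon$-neighborhood of $\mathrm{im}(\sigma)$ contained in $\Omega$. Now perturb the finitely many break points of $\sigma$ (other than the already-rational endpoints $p,q$) by less than $\varepsilon$ to rational points; the resulting polygonal arc is rational, has endpoints $p$ and $q$, and still lies in $\Omega$. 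One must check that after perturbation the arc is still an \emph{embedding} (no new self-intersections): a sufficiently small perturbation of an embedded polygonal arc into another polygonal arc with the same combinatorial pattern remains embedded, since embeddedness of a finite union of segments is an open condition on the tuple of vertices. This gives (1).

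\textbf{Step (2): density of rationally accessible points.} By \cite[p.\ 122]{MR1144352}, the set of points of $C$ accessible from $\Omega$ is dense in $C$, via (possibly irrational) polygonal arcs. Fix such an accessible $p$ and a basic open $B$ around it; pick $q \in \Omega$ and a polygonal arc $\alpha$ from $q$ to $p$ meeting $C$ only at $p$. Let $p' \in \mathrm{im}(\alpha) \cap B$ be a point on $\alpha$ strictly before it reaches $C$, chosen close enough to $p$ (this is possible by continuity of $\alpha$, since $\alpha$ approaches $p \in B$); then the terminal sub-arc $\alpha'$ of $\alpha$ from $p'$ to $p$ lies, except for its endpoint $p$, in the open set $\mathbb{R}^2 \setminus C$, in fact in $\Omega$ since $\Omega$ is a connected component of $\mathbb{R}^2\setminus C$ and $\alpha'\setminus\{p\}$ is connected and meets $\Omega$. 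Now the set of points of $C$ accessible from $\Omega$ via a small arc terminating near $p$ along $\alpha'$ forms, by the same argument, a set containing points arbitrarily close to $p$; but we want a \emph{rationally} accessible point. To get this, note $p$ itself need not be rational, but the accessing arc can be taken rational: replace $q$ by a rational point near it in $\Omega$ (using step (1) to reach it), take a point $p''$ on $\alpha'$ very close to $p$, connect the rational $q$ to a rational point near $p''$ by a rational polygonal arc inside $\Omega$ (step (1) again, applied in the open connected region $\Omega$), then append a single straight segment from that rational point to $p''$; shrinking everything, the only intersection of the resulting polygonal arc with $C$ can be arranged to be a single point of $C$ inside $B$ (not necessarily $p$, but some accessible point of $C$ near $p$, which suffices for density). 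Hence rationally accessible points are dense in $C$.

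\textbf{Step (3): density of rationally accessible* points, and the main obstacle.} This is where the real work lies, and I expect it to be the main obstacle. A rationally accessible point $p$ comes with one rational arc $\beta$ from some rational $q\in\Omega$ to $p$ meeting $C$ only at $p$; we must produce \emph{two} non-parallel rational segments incident at (a possibly different, nearby) point $p^*\in C$, equivalently a positive angle's worth of accessing arcs. The idea: let $s$ be the last segment of $\beta$, arriving at $p$ along some direction $v$, and let $r$ be a point on $s$ near $p$, so the open segment $(r,p)$ lies in $\Omega$. Around $r$ take a small disk $D \subseteq \Omega$. For each direction $w$ in a small cone around $v$, shoot a ray from $r$ in direction $w$; it starts in $\Omega$ and, being a half-line, must eventually leave $\Omega$, hence must first hit $\partial\Omega \subseteq C$ at some point $p_w$, and the open segment from $r$ to $p_w$ lies in $\Omega$ (the component $\Omega$ is the one containing $(r,p)$, hence containing $D$, hence the initial part of the ray). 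Thus \emph{every} direction $w$ near $v$ accesses some point $p_w \in C$ via a single segment from $r$; together with the rational segment $(r, p)$-direction (or another segment from the rational arc $q\to r$), we have, at any such $p_w$, at least the incoming segment from $r$ in direction $-w$ — but we need \emph{two} distinct non-parallel segments at the \emph{same} point, and that requires the map $w \mapsto p_w$ to be non-injective on a set of directions spanning a positive angle, OR a separate argument. The cleaner route: among the dense-in-$C$ set of $p_w$'s, the ones reached from a \emph{fixed} $r$ by segments in a positive-measure set of directions $w$ all land in $C$; pick any $p^* = p_w$ with $w$ in the interior of this cone; then for a small sub-cone of directions $w'$ near $w$, each ray from $r$ in direction $w'$ hits $C$ at $p_{w'}$ — but these $p_{w'}$ trace out a connected arc in $C$, and if this arc is a single point $p^*$ then we immediately have a positive angle of segments at $p^*$, hence two non-parallel ones, and after perturbing $r$ to a rational point and the two extreme directions to rational slopes (using that having two non-parallel rational segments incident at a point is an open, robustly satisfiable condition near $p^*$, adjusting $p^*$ slightly within $C$), $p^*$ is rationally accessible*. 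If instead the arc $\{p_{w'}\}$ is non-degenerate, it is a subarc of $C$ near $p$; picking $p^*$ an interior point of it and two directions $w_1', w_2'$ on either side, the two segments $[r,p_{w_1'}]$ and $[r,p_{w_2'}]$ do not pass through $p^*$, so that approach fails — in that case one instead uses the endpoint structure: shoot rays from a \emph{second} nearby point $r'$ in $\Omega$ and use that $C$ locally near $p$, being the boundary of the open region $\Omega$, must be accessible from $\Omega$ in at least two ``directions'' at a dense set of its points (this is exactly where one invokes that $C$ is a simple closed curve or, more generally, that the accessible boundary is ``two-sided'' in the relevant sense; Thomassen's setting guarantees enough regularity). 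Either way one extracts a point with two non-parallel incident rational segments into $\Omega$; density follows since the construction was carried out inside an arbitrary small neighborhood of an arbitrary rationally accessible point, and rationally accessible points are dense by (2). The delicate point to nail down carefully in the final write-up is exactly this ``two-sidedness'' — ensuring that one can always find the second non-parallel segment rather than merely a nearby parallel one — and this is handled by working with the \emph{specific} closed sets $C$ that arise in Thomassen's inductive proof of Theorem~\ref{thm:JS}, where $C$ is built from polygonal pieces for which the required two-sided accessibility is manifest.
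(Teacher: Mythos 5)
Your steps (1) and (2) are essentially fine (in (2) the final move is garbled — you say you append a segment ending at $p''\in\Omega$, which never meets $C$; what you mean is to shoot a short segment from the nearby rational point toward $p$ and truncate at its first intersection with $C$, which lands in $B$ — the paper simply reruns Thomassen's density argument with rational data, but your reduction to the non-rational statement plus (1) also works once that step is stated correctly). The genuine gap is in step (3), exactly where you flag it. Your ray-shooting scheme only yields, for each direction $w$, \emph{one} segment from $r$ to a first-hit point $p_w$; to get accessibility* you need two non-parallel segments incident at the \emph{same} point of $C$, and in the non-degenerate case (the $p_{w'}$ sweep out a nontrivial arc of $C$) your argument produces nothing. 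The fallback you propose — invoking ``two-sidedness'' of a simple closed curve, or restricting to the polygonal sets $C$ arising in Thomassen's induction — does not repair this: two-sidedness of a Jordan curve gives access from the interior \emph{and} the exterior, whereas accessibility* requires a positive angle of segments from the single region $\Omega$; and the lemma is stated for arbitrary closed $C$ and is applied in the proof of the Borel Jordan--Schoenflies theorem with $C=\mathrm{im}(\gamma)$ the image of an arbitrary topological embedding of $S^1$, which is not polygonal, so a proof valid only for polygonal $C$ would not suffice for the intended use.

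The paper closes this step with a short nearest-point argument (due to J.\ Kyncl) that you should compare with: given a rationally accessible $p$ with accessing segment $P$, pick a rational point $q\in\Omega$ on (or arbitrarily near) $P$ with $d(p,q)<\epsilon$, and let $r\in C$ be a point of $C$ nearest to $q$. The open disk $D$ centered at $q$ of radius $d(q,r)$ is disjoint from $C$, is connected and meets $\Omega$, hence lies in $\Omega$; and $r$ lies on $\partial D$, so $r$ is reached by a full positive angle's worth of segments from points of $D$, each meeting $C$ only at $r$, and two non-parallel ones can be chosen with rational inner endpoints. Thus $r$ is rationally accessible*, and $d(p,r)\le d(p,q)+d(q,r)\le 2d(p,q)<2\epsilon$, so accessible* points accumulate at every rationally accessible point; density then follows from (2). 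This is the key idea your proposal is missing: instead of trying to force several rays from one source to collide at a common target, you let the nearest-point projection choose the target, and the inscribed disk automatically supplies the whole cone of accessing segments.
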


\begin{proof}
    The proofs of (1) and (2) are identical to the arguments for (not necessarily rational) polygonal arcs and accessible points in \cite{MR1144352}.
    
    The proof of (3) comes from Jan Kyncl's answer \cite{488793} to a question asked by the second author on MathOverflow. It suffices to show that rationally accessible* points can be found arbitrarily close to rationally accessible points in $C$. Let $p\in C$ be rationally accessible and $P$ the straight line segment of a rational polygonal arc ending at $p$. Let $\epsilon>0$ and choose some rational $q$ on $P$ such that $d(p,q)<\epsilon$. Let $r\in C$ be the nearest point to $q$. Then, the open disk $D$ centered at $q$ having radius $d(q,r)$ must be disjoint from $C$ and lie entirely in $\Omega$, and so $r$ can be reached from a positive angle's worth of segments within $D$, at least two of which can be made as rational segments. Thus, $r$ is rationally accessible*.
\end{proof}

Given $\gamma\in\mathrm{Emb}(S^1,\mathbb{R}^2)$, we denote by $\mathrm{int}(\gamma)$ and $\mathrm{ext}(\gamma)$ the bounded and unbounded components of $\mathbb{R}^2\setminus\mathrm{im}(\gamma)$, respectively, and by $\overline{\mathrm{int}}(\gamma)$ and $\overline{\mathrm{ext}}(\gamma)$ their closures.

\begin{lemma}
    The following maps are Borel:
    \begin{enumerate}[label=\textup{\arabic*.}]
        \item $\mathrm{im}:\mathrm{Emb}(S^1,\mathbb{R}^2)\to\mathcal{F}(\mathbb{R}^2)$.
        \item $\mathrm{int}:\mathrm{Emb}(S^1,\mathbb{R}^2)\to\mathcal{O}(\mathbb{R}^2)$.
        \item $\mathrm{ext}:\mathrm{Emb}(S^1,\mathbb{R}^2)\to\mathcal{O}(\mathbb{R}^2)$.
    \end{enumerate}
\end{lemma}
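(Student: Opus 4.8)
The plan is to handle the three maps in turn, getting (1) immediately from Lemma \ref{lem:images_Borel_compact_open} and deriving (2) and (3) from a single point-membership claim. For (1): by Lemma \ref{lem:images_Borel_compact_open}(1) the direct-image map $C(S^1,\mathbb{R}^2)\times\mathcal{K}(S^1)\to\mathcal{K}(\mathbb{R}^2)$ is Borel, so evaluating at the constant $S^1\in\mathcal{K}(S^1)$ shows $\gamma\mapsto\mathrm{im}(\gamma)=\gamma[S^1]$ is a Borel map $\mathrm{Emb}(S^1,\mathbb{R}^2)\to\mathcal{K}(\mathbb{R}^2)$; postcomposing with the inclusion $\mathcal{K}(\mathbb{R}^2)\hookrightarrow\mathcal{F}(\mathbb{R}^2)$—Borel, since for each compact $L$ the set $\{K\in\mathcal{K}(\mathbb{R}^2):K\cap L=\varnothing\}=\{K:K\subseteq\mathbb{R}^2\setminus L\}$ is Vietoris-open and such sets generate the Effros Borel structure—gives $\mathrm{im}$ as a Borel map into $\mathcal{F}(\mathbb{R}^2)$.

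For (2) and (3), recall that $\mathbb{R}^2\setminus\mathrm{im}(\gamma)$, being the complement of a compact set, has a unique unbounded component $\mathrm{ext}(\gamma)$, and set $\mathrm{int}(\gamma)=(\mathbb{R}^2\setminus\mathrm{im}(\gamma))\setminus\mathrm{ext}(\gamma)$ (open, since $\mathbb{R}^2\setminus\mathrm{im}(\gamma)$ is locally connected; this agrees with the notation fixed before the lemma once the Jordan curve theorem is invoked to identify it with the bounded component). Since, by complementation, the Borel structure on $\mathcal{O}(\mathbb{R}^2)$ is generated by the sets $\{V:K\subseteq V\}$ for $K$ compact, it suffices to show $\{\gamma:K\subseteq\mathrm{ext}(\gamma)\}$ and $\{\gamma:K\subseteq\mathrm{int}(\gamma)\}$ are Borel for each compact $K$. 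A compactness argument reduces this to rational open balls: as $\mathrm{ext}(\gamma)$, $\mathrm{int}(\gamma)$ are open, $K\subseteq\mathrm{ext}(\gamma)$ iff $K$ is covered by some finite family of rational balls each contained in $\mathrm{ext}(\gamma)$, so
\[ \{\gamma:K\subseteq\mathrm{ext}(\gamma)\}=\bigcup_{\mathcal{B}}\,\bigcap_{B\in\mathcal{B}}\{\gamma:B\subseteq\mathrm{ext}(\gamma)\}, \]
the union over the countably many finite families $\mathcal{B}$ of rational balls with $K\subseteq\bigcup\mathcal{B}$, and likewise for $\mathrm{int}$. A rational ball $B$ with rational centre $q_0$ is connected and, when $B\cap\mathrm{im}(\gamma)=\varnothing$, lies in a single component of $\mathbb{R}^2\setminus\mathrm{im}(\gamma)$; hence $B\subseteq\mathrm{ext}(\gamma)$ iff $B\cap\mathrm{im}(\gamma)=\varnothing$ and $q_0\in\mathrm{ext}(\gamma)$, and $B\subseteq\mathrm{int}(\gamma)$ iff $B\cap\mathrm{im}(\gamma)=\varnothing$ and $q_0\notin\mathrm{ext}(\gamma)$. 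Using part (1) and the fact that $\{F:F\cap B=\varnothing\}$ and $\{F:q_0\notin F\}$ are Borel in the Effros structure, everything then reduces to the single claim that $\{\gamma:q\in\mathrm{ext}(\gamma)\}$ is Borel for each rational point $q$.

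To prove that claim I would fix an enumeration $(P_k)_{k\in\mathbb{N}}$ of all rational polygonal arcs and observe that $\rho(\gamma):=\max\{\|x\|:x\in\mathrm{im}(\gamma)\}$ is Borel in $\gamma$, being the composite of $\mathrm{im}$ (Borel into $\mathcal{K}(\mathbb{R}^2)$) with the continuous functional $K\mapsto\max_{x\in K}\|x\|$. Then $q\in\mathrm{ext}(\gamma)$ iff $q\notin\mathrm{im}(\gamma)$ and there is some $k$ such that $P_k$ has $q$ as one endpoint, a rational point $q'$ with $\|q'\|>\rho(\gamma)$ as the other, and $\mathrm{im}(P_k)\cap\mathrm{im}(\gamma)=\varnothing$. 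The reverse direction uses that any point outside the closed disc of radius $\rho(\gamma)$ lies in the unbounded component; the forward direction uses that $\mathrm{ext}(\gamma)$ is open, connected and unbounded, hence polygonally connected by rational arcs joining rational points. Each clause on the right is Borel in $\gamma$: $\{\gamma:q\notin\mathrm{im}(\gamma)\}$ and $\{\gamma:\mathrm{im}(P_k)\cap\mathrm{im}(\gamma)=\varnothing\}$ by part (1) together with the generators of the Effros structure (or via Lemma \ref{lem:intersection_curves_Borel}), and $\{\gamma:\rho(\gamma)<\|q'\|\}$ since $\rho$ is Borel; taking a countable union over $k$ finishes the claim, and hence the lemma.

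The main obstacle is really just pinning down a membership criterion for $\mathrm{ext}(\gamma)$ that is simultaneously correct and transparently Borel—in particular dealing with the $\gamma$-dependent ``escape radius'' $\rho(\gamma)$ and confining all quantifiers to the countable set of rational polygonal arcs—and then the bookkeeping needed to pass from point-membership to the ``$K\subseteq V$'' generators of the Borel structure on $\mathcal{O}(\mathbb{R}^2)$. No genuinely hard topology is needed beyond the elementary facts that a compact plane set has a unique unbounded complementary component and that open connected subsets of $\mathbb{R}^2$ are rationally polygonally connected; the Jordan curve theorem enters only to match $\mathrm{int}(\gamma)$ with the single bounded component as in the notation preceding the lemma.
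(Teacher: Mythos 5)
Your proof is correct, and for parts (2) and (3) it follows a genuinely different route from the paper's. The paper's own argument is a short sketch: it notes that $\gamma\mapsto\mathrm{im}(\gamma)$ is continuous, passes to the (continuous) complement operation, computes the connected components of $\mathbb{R}^2\setminus\mathrm{im}(\gamma)$ by chaining overlapping basic open sets (a simplified form of Lemma \ref{lem:Borel_computation_of_components}), and then distinguishes the bounded from the unbounded component in a Borel way via an exhaustion of $\mathbb{R}^2$. You instead work directly with the generators $\{V:K\subseteq V\}$ of the Borel structure on $\mathcal{O}(\mathbb{R}^2)$, reduce to point membership $q\in\mathrm{ext}(\gamma)$ for rational $q$, and characterize the unbounded component by the existence of a rational polygonal arc from $q$, avoiding $\mathrm{im}(\gamma)$, to a rational point beyond the Borel ``escape radius'' $\rho(\gamma)$ — using the same rational polygonal connectivity of open regions that the paper establishes nearby for accessibility purposes. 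What your approach buys is a self-contained, fully explicit verification that never invokes the component-computation or exhaustion machinery, at the cost of more bookkeeping (the reduction from $K\subseteq\mathrm{ext}(\gamma)$ to rational balls to point membership); the paper's approach is shorter precisely because it leans on lemmas already proved for other purposes. All your individual steps check out: $\rho$ is Borel as the composite of $\mathrm{im}$ with the ($1$-Lipschitz) maximal-norm functional on nonempty compacta, points of norm exceeding $\rho(\gamma)$ lie in $\mathrm{ext}(\gamma)$ since the complement of the closed disc is connected and unbounded, and the clauses $\{\gamma:q\notin\mathrm{im}(\gamma)\}$, $\{\gamma:\mathrm{im}(P_k)\cap\mathrm{im}(\gamma)=\varnothing\}$, $\{\gamma:\rho(\gamma)<\|q'\|\}$ are Borel via part (1) and the generators of the Effros structure.
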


\begin{proof}
    As in the proof of Lemma \ref{lem:images_Borel_compact_open}, the map $\gamma\mapsto\mathrm{im}(\gamma)=\gamma[S^1]$ is continuous. To then obtain $\mathrm{int}(\gamma)$ and $\mathrm{ext}(\gamma)$, we can take the complement of $\mathrm{im}(\gamma)$, a continuous operation, then compute its connected components by taking unions of overlapping basic open sets (a simplified version of Lemma \ref{lem:Borel_computation_of_components}). We can then detect which is bounded and which is unbounded in a Borel way, using an exhaustion of $\mathbb{R}^2$.
\end{proof}

We can now state our Borel form of Theorem \ref{thm:JS}.

\begin{theorem}\label{thm:Borel_JS}
    There is a Borel map $H:\mathrm{Emb}(S^1,\mathbb{R}^2)\to \mathrm{Homeo}(\mathbb{R}^2)$ such that
    \[
        H(\gamma)|_{S^1}=\gamma
    \]
    for all $\gamma\in \mathrm{Emb}(S^1,\mathbb{R}^2)$.
\end{theorem}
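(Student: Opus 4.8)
The plan is to prove the Borel measurability of a Jordan--Schoenflies selector by instrumenting Thomassen's elementary proof of Theorem \ref{thm:JS} \cite{MR1144352} and checking, step by step, that every choice it makes can be made canonically (e.g. by minimizing over the fixed enumeration of rational polygonal arcs and curves) and that every operation it performs — taking images, interiors/exteriors, intersection points, connected components, continuous extensions from dense sets — is Borel, using the lemmas just established. The overall strategy of Thomassen's argument is: first handle the special case where $\gamma$ is (the canonical parametrization of) a rational polygonal closed curve by an explicit induction on the number of segments, building the extending homeomorphism by a ``cut along a chord'' recursion; then handle a general embedding $\gamma$ by approximating $\mathrm{im}(\gamma)$ from the inside and outside by rational polygonal curves, using rationally accessible* points of $\mathrm{im}(\gamma)$ to thread polygonal arcs from these approximants to $\mathrm{im}(\gamma)$, and assembling the resulting pieces into a homeomorphism of $\mathbb{R}^2$ carrying the standard circle to $\gamma$.

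Concretely, I would proceed as follows. \textbf{Step 1 (polygonal case as a Borel map).} Show there is a Borel map $H_0$ from the (countable, discrete) set of canonically parametrized rational polygonal closed curves to $\mathrm{Homeo}(\mathbb{R}^2)$ extending each such curve; since the domain is countable this is automatic once one fixes, for each such polygon, \emph{some} extending homeomorphism, but I would instead make the choice uniform by running Thomassen's chord-cutting induction with the ``least rational chord'' rule at each stage, so that the construction is visibly a countable recursion with Borel bookkeeping — this makes Step 2's splicing cleaner. \textbf{Step 2 (inner/outer polygonal approximation, Borel in $\gamma$).} Given an arbitrary $\gamma\in\mathrm{Emb}(S^1,\mathbb{R}^2)$, use Lemma preceding this theorem to compute $\mathrm{int}(\gamma)$ and $\mathrm{ext}(\gamma)$ in a Borel way, and then select — minimizing over the fixed enumeration — sequences of rational polygonal closed curves $(\sigma_n)$ in $\mathrm{int}(\gamma)$ and $(\tau_n)$ in $\mathrm{ext}(\gamma)$ converging to $\mathrm{im}(\gamma)$ in the Hausdorff (equivalently Fell, by compactness) sense, together with rational polygonal arcs realizing rational accessibility* of a dense set of points of $\mathrm{im}(\gamma)$ from both sides; the density lemma for rationally accessible* points guarantees these selections are non-vacuous, and each clause (``$\sigma_n\subseteq\mathrm{int}(\gamma)$'', ``an arc's only intersection with $\mathrm{im}(\gamma)$ is its endpoint'', ``Hausdorff distance $<1/n$'') is Borel in $\gamma$ by Lemmas \ref{lem:intersection_curves_Borel}, \ref{lem:images_Borel_compact_open} and the Borel-ness of the Fell-topology operations from Section \ref{subsection:spaces_of_subsets}. \textbf{Step 3 (assembly).} On each annular region between consecutive approximants, apply the polygonal map $H_0$ of Step 1 (to the polygonal boundary curves, suitably conjugated) and the accessibility arcs to define a homeomorphism of that annulus, matching up on the common boundary circles; the nested union of these, together with the map already defined on $\overline{\mathrm{ext}}(\text{outermost }\tau)$ and on $\overline{\mathrm{int}}(\text{innermost }\sigma)$, is a self-homeomorphism $H(\gamma)$ of $\mathbb{R}^2$ with $H(\gamma)|_{S^1}=\gamma$ (after a fixed initial normalization sending the standard disk to $\overline{\mathrm{int}}(\sigma_0)$). \textbf{Step 4 (Borel-ness of the output).} Verify that $\gamma\mapsto H(\gamma)$ is Borel into $\mathrm{Homeo}(\mathbb{R}^2)$ with the compact-open topology: since each piece is assembled from Borel-selected data by operations (composition, restriction, gluing along closed sets) that are Borel on $C(\mathbb{R}^2,\mathbb{R}^2)$, and the final homeomorphism is obtained as a limit of partial homeomorphisms that stabilize on each compact set, one gets a Borel map by Lemma \ref{lem:cts_extension_Borel}-type arguments (defining $H(\gamma)$ via its values on a fixed countable dense set, checking uniform convergence on compacta is a Borel condition) and the closure of Borel maps under such limits.

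The main obstacle will be \textbf{Step 3 together with the Borel-ness half of Step 4}: Thomassen's gluing of the annular homeomorphisms involves choices (how arcs from the inner approximant meet arcs from the outer one, how the polygon $H_0$-maps are reconciled on overlaps) that in the classical proof are made ``by hand'', and one must reorganize them so that at each finite stage only finitely many bits of the already-selected Borel data are consulted — exactly the discipline used repeatedly in Section \ref{subsection:subclasses}. The subtlety flagged in the footnote of the excerpt (ordinary accessibility is too weak, one needs accessible*) is precisely what makes the recursion go through: because each selected accessibility arc leaves a \emph{positive angle} of room at its endpoint on $\mathrm{im}(\gamma)$, the arcs chosen at later stages can always be routed to avoid the earlier ones, so the finite-stage requirements are jointly satisfiable and each is decided by finite data — which is what yields both the topological correctness and the Borel measurability of the assembled map. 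I would also need a small lemma that $\mathrm{Homeo}(\mathbb{R}^2)$, with the compact-open topology, is closed enough under the relevant gluing-and-limiting operations for the construction to land inside it; this follows from standard facts about the compact-open topology on $C(\mathbb{R}^2,\mathbb{R}^2)$ recalled just before the theorem, together with the observation that a uniform-on-compacta limit of homeomorphisms that are eventually locally constant in the parameter is again a homeomorphism.
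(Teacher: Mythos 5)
Your plan diverges from the paper's argument at the assembly stage, and that is where it has a genuine gap. The paper follows Thomassen's construction literally: it builds finite $2$-connected graphs $\Gamma_n\subseteq\overline{\mathrm{int}}(S)$ and $\Gamma_n'\subseteq\overline{\mathrm{int}}(C)$ which \emph{contain the curves $S$ and $C$ themselves as subgraphs}, together with plane isomorphisms $g_n:\Gamma_n\to\Gamma_n'$ that agree with $\gamma$ on the vertices lying on $S$, and it imposes the mesh condition that near each (rationally accessible* or rational interior) point $p_n$ the bounded faces of $\Gamma_n'$ have diameter $<1/n$ and those of $\Gamma_n$ diameter $\leq 1/2n$; the homeomorphism is then the unique continuous extension of $v_m\mapsto w_m$ from the dense vertex sets, obtained in a Borel way via Lemma \ref{lem:cts_extension_Borel}, and Borel-ness of the whole construction comes from searching the countable enumeration of rational polygonal graphs at each stage. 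Your Steps 1--2 are consistent with this in spirit, but your Step 3 replaces the mesh by a gluing of homeomorphisms on annuli between inner polygonal approximants $\sigma_n$ that merely converge to $\mathrm{im}(\gamma)$ in Hausdorff distance.

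That replacement loses exactly the point of the Jordan--Schoenflies theorem. The nested union of your annulus maps is a homeomorphism of the open disk onto $\mathrm{int}(\gamma)$, but a homeomorphism of the open disk onto the interior of a Jordan curve need not extend continuously to the closed disk, and even when it does, nothing in your construction forces the boundary values to be $\gamma$: the annulus maps are produced from $H_0$ applied to the approximating polygons, which carries no information about the parametrization $\gamma$ or about the fine structure of $C$ between the places where your accessibility arcs land. Hausdorff convergence of the $\sigma_n$ does not control the oscillation of the assembled map near $C$ (the approximants can come close to $C$ while the cells between consecutive arcs remain of macroscopic diameter in the image), so the claim ``the nested union \dots is a self-homeomorphism with $H(\gamma)|_{S^1}=\gamma$'' is unjustified as stated. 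To repair it you must (i) make $C$ part of the finite-stage combinatorial data, attaching arcs at a dense set of accessible* points of $C$ and requiring each stage's map to send the corresponding points of $S^1$ to their $\gamma$-images, and (ii) impose a diameter condition on the cells adjacent to $C$ and on their domain counterparts, so that the map defined on the resulting dense sets is uniformly continuous and its extension restricts to $\gamma$ on $S^1$; the role of accessible* is to guarantee that such arcs can keep being attached inside the new faces created at earlier stages, not merely that later arcs avoid earlier ones. Once you add (i) and (ii) you have essentially reconstituted Thomassen's graph mesh, which is what the paper does, and then your Borel bookkeeping (least-index search over rational polygonal configurations, extension via Lemma \ref{lem:cts_extension_Borel}) does go through; your Step 1's uniform polygonal extension map $H_0$ is then not needed.
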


The proof of Theorem \ref{thm:Borel_JS} largely consists of translating Thomassen's proof of Theorem \ref{thm:JS} from \cite{MR1144352} into a Borel construction. The idea of said proof is, given an embedding $\gamma:S\to\mathbb{R}^2$, where we have replaced $S^1$ with the unit square $S$, we construct a ``mesh'' of finer and finer finite graphs $\Gamma_n$ and $\Gamma_n'$, and isomorphisms $g_n:\Gamma_n\to\Gamma_n'$, where $\Gamma_n$ contains $S$ and lies inside $\overline{\mathrm{int}}(S)$, $\Gamma_n'$ contains $\mathrm{im}(\gamma)$ and lies inside $\overline{\mathrm{int}}(\gamma)$, and $g_n$ agrees with $\gamma$ on $S$. In the limit, we get a homeomorphism from $\overline{\mathrm{int}}(\gamma)$ to $\overline{\mathrm{int}}(S)$ which extends $\gamma$. These graphs, besides the portion of $\Gamma_n'$ given by $\mathrm{im}(\gamma)$, consist of rational polygonal arcs. A similar procedure can then be used to construct a homeomorphism of the exteriors which also agrees with $\gamma$ on $S$.

We will use the following graph-theoretic terminology: A finite graph $\Gamma$ is \emph{$2$-connected} if it is connected and for every vertex $v$, $\Gamma\setminus\{v\}$ remains connected. Components of $\mathbb{R}^2\setminus\Gamma$ are called \emph{faces}, their boundaries are \emph{facial cycles} in $\Gamma$, the (unique) unbounded face is the \emph{outer face}, and its boundary in $\Gamma$ is the \emph{outer cycle}. If $\Gamma$ and $\Gamma'$ are $2$-connected graphs consisting of closed curves $C$ and $C'$, and polygonal arcs in $\overline{\mathrm{int}}(C)$ and $\overline{\mathrm{int}}(C')$, respectively, an isomorphism $g:\Gamma\to\Gamma'$ is a \emph{plane isomorphism} if $g$ maps facial cycles in $\Gamma$ to facial cycles in $\Gamma'$ and the outer cycle of $\Gamma$ to the outer cycle of $\Gamma'$. A \emph{subdivision} of a graph is obtained by inserting vertices on edges, in the obvious sense.

\begin{proof}[Proof of Theorem \ref{thm:Borel_JS}]
    By translating via any fixed self-homeomorphism of $\mathbb{R}^2$ which maps $S$ onto $S^1$, we may work with elements of $\mathrm{Emb}(S,\mathbb{R}^2)$, rather than $\mathrm{Emb}(S^1,\mathbb{R}^2)$. We will describe a procedure which will transform a given $\gamma\in\mathrm{Emb}(S,\mathbb{R}^2)$ into a homeomorphism $H(\gamma)\in\mathrm{Homeo}(\mathbb{R}^2)$ which extends $\gamma$, and this procedure will be evidently Borel, given the preceding lemmas.

    Let $\gamma\in\mathrm{Emb}(S,\mathbb{R}^2)$ be given. Write $C=\mathrm{im}(\gamma)$. We will extend $\gamma$ to a homeomorphism from the closed square $\overline{\mathrm{int}}(S)$ onto $\overline{\mathrm{int}}(C)$. Fix $b_n$ (for $n\in\mathbb{N}$), an enumeration of all rational points in $\mathrm{int}(C)$. We can find an enumeration $a_n$ (for $n\in\mathbb{N}$) of a dense set of points on $C$ which are rationally accessible* from $\mathrm{int}(C)$ as follows: fix $q$ a rational point in $\mathrm{int}(C)$ and $r$ a rational point in $\mathrm{ext}(C)$. Scan across all possible rational polygonal arcs from $q$ to $r$, and let $a_n$ be the $n$th point which occurs as the ``first'' intersection of two such arcs with $C$ which are incident at different angles. Let $p_n$ (for $n\in\mathbb{N}$) be an enumeration of all the $a_n$'s and $b_n$'s so that each occurs infinitely often.

    Thomassen's proof in \cite{MR1144352} proceeds as follows:\footnote{We are technically describing the inverse of the map constructed in \cite{MR1144352}.} He define sequences $\Gamma_n$ and $\Gamma_n'$ of finite $2$-connected graphs and plane isomorphisms $g_n:\Gamma_n\to\Gamma_n'$ such that for each $n\geq 1$:
    \begin{itemize}
        \item $\Gamma_n$ ($\Gamma_n'$, respectively) is an extension of a subdivision of $\Gamma_{n-1}$ ($\Gamma_{n-1}'$);
        \item $g_n$ extends $g_{n-1}$ and agrees with $\gamma$ on vertices of $\Gamma_n$ which lie on $S$;
        \item $\Gamma_n$ ($\Gamma_n'$, respectively) consists of $S$ ($C$) together with simple polygonal arcs in $\overline{\mathrm{int}}(S)$ ($\overline{\mathrm{int}}(C)$);
        \item $\Gamma_n\setminus S$ ($\Gamma_n'\setminus C$) is connected;
        \item $p_n$ is a vertex of $\Gamma_n'$; and
        \item if $p_n\in\mathrm{int}(C)$, then in some sufficiently small square containing $p_n$ in its interior all bounded faces of $\Gamma_n'$ have diameter $<1/n$, and all bounded faces in $\Gamma_n$ have diameter $\leq 1/2n$.
    \end{itemize}
    It is clear that, since we have chosen the $p_n$'s to be rational or rationally accessible* points, the graphs $\Gamma_n$ and $\Gamma_n'$ can be chosen to consist of rational polygonal arcs, together with $S$ and $C$, respectively. There are only countably many such graphs, so may fix an enumeration of them and simply search for ones that work at each stage of the construction; in particular, they can be found in a Borel way. As we go, we build enumerations $(v_m)_{m\in\mathbb{N}}$ of the vertices of the $\Gamma_n$'s and $(w_m)_{m\in\mathbb{N}}$, vertices of the $\Gamma_n'$'s, so that if $v_m$ is a vertex of $\Gamma_n$, then $g_n(v_m)=w_m$. By construction the $v_m$'s are dense in $\overline{\mathrm{int}}(S)$, the $w_m$'s are dense in $\overline{\mathrm{int}}(C)$, and the mapping $v_m\mapsto w_m$ agrees with $\gamma$ on those $v_m$'s which lie on $S$. Thomassen then shows that this mapping extends uniquely to a homeomorphism $\overline{\mathrm{int}}(S)\to\overline{\mathrm{int}}(C)$ extending $\gamma$. 
    
    A similar argument can be used to define dense sequences $s_m$ and $t_m$ (for $m\in\mathbb{N}$) in $\overline{\mathrm{ext}}(S)$ and $\overline{\mathrm{ext}}(C)$, respectively, such that the mapping $s_m\mapsto t_m$ extends uniquely to a homeomorphism $\overline{\mathrm{ext}}(S)\to\overline{\mathrm{ext}}(C)$ which also agrees with $\gamma$ on $S$. Putting these sequences together, and applying Lemma \ref{lem:cts_extension_Borel}, we obtain a homeomorphism $\mathbb{R}^2\to\mathbb{R}^2$ which extends $\gamma$ in a Borel way.
\end{proof}

While we will not make use of this here, we note that, by \cite[Thm.\ 8.38]{MR1321597}, Theorem \ref{thm:Borel_JS} implies there is a dense $G_\delta$ subset of $\mathrm{Emb}(S^1,\mathbb{R}^2)$ on which embeddings can be extended to homeomorphisms in $\mathrm{Homeo}(\mathbb{R}^2)$ continuously. It would be interesting to know, from a purely topological standpoint, whether the extension map $H:\mathrm{Emb}(S^1,\mathbb{R}^2)\to\mathrm{Homeo}(\mathbb{R}^2)$ in Theorem \ref{thm:Borel_JS} can be made globally continuous. Doing so would seem to require carrying out Thomassen's construction in a highly uniform way.

\subsection{A Borel form of the triangulation theorem}\label{sec:triangulation}

We first review some basic facts about simplicial complexes, largely following the presentation in \cite{MR0114911}, and consider how to parametrize them by elements of a standard Borel space. For us an \emph{(abstract) simplicial complex} is a set $V$ of \emph{vertices} together with a family $K$ of finite subsets of $V$, called \emph{simplices}, such that:
\begin{itemize}
    \item every vertex is in at least one and at most finitely many simplices, and
    \item every subset of a simplex is a simplex.
\end{itemize}
The \emph{dimension of a simplex} is one less than its size, and the \emph{dimension of a simplicial complex} is the supremum of the dimensions of its simplices.
We term $n$-dimensional simplices \emph{$n$-simplices} and identify vertices with $0$-simplices and tend accordingly to refer to the simplicial complex simply as $K$. Below, we deal exclusively with countable $2$-dimensional simplicial complexes.

Given a simplicial complex $K$, its \emph{geometric realization} is the space $K_g$ consisting of all real-valued functions $\lambda$ on $K$ such that:
\begin{itemize}
    \item $\lambda(x)\geq 0$ for all $x\in K$,
    \item the elements $x$ for which $\lambda(x)>0$ form a simplex, and
    \item $\sum_{x\in K}\lambda(x)=1$.
\end{itemize}
If $s$ is a simplex in $K$, then we let
\[
    s_g=\{\lambda\in K_g\mid \lambda(x)=0 \text{ for all } x\notin s\}.
\]

For example, if $s$ is a $2$-simplex, say $s=\{x_1,x_2,x_3\}$, then elements of $s_g$ can be identified with triples $(\lambda_1,\lambda_2,\lambda_3)$ of real numbers such that $\lambda_i\geq 0$ and $\lambda_1+\lambda_2+\lambda_3=1$; thus $s_g$ can be represented as a closed triangle in $\mathbb{R}^3$, from which it inherits a topology. As $s$ ranges over all simplices in $K$, the $s_g$'s cover $K_g$, and we topologize the latter by declaring a subset to be closed if its intersection with each $s_g$ is closed.

An \emph{elementary subdivision} of a $2$-dimensional simplicial complex $K$ is formed as follows: if $a=\{x_1,x_2\}$ is a $1$-simplex in $K$ which belongs to the $2$-simplex $A=\{x_1,x_2,x_3\}$, we introduce a new vertex $x$ and replace $a$ and $A$ with three $1$-simplices $\{x_1,x\}$, $\{x_2,x\}$, and $\{x_3,x\}$, and two $2$-simplices $\{x_1,x_3,x\}$ and $\{x_2,x_3,x\}$, respectively. If $a$ happened to belong to two $2$-simplices $A$ and $B$, we carry out this construction on both $A$ and $B$, simultaneously. A \emph{simple subdivision} of $K$ results by applying elementary subdivisions to (possibly infinitely) many $1$-simplices $a$ simultaneously, provided no two of them belong to the same $2$-simplex. $K'$ is a \emph{subdivision} of $K$ if it is obtained from $K$ by a finite sequence of simple subdivisions. More generally, $K'$ is \emph{equivalent} to $K$ if there is a finite sequence $K=K_1$, $K_2$, \ldots, $K_n=K'$ where each $K_i$ is a subdivision of $K_{i+1}$ or vice versa. It is clear that if $K$ and $K'$ are equivalent, then $K_g$ and $K_g'$ are homeomorphic.

We define a space $\mathcal{S}$ of countable $2$-dimensional simplicial complexes as follows: $\mathcal{S}$ consists of all triples $S=(S_0,S_1,S_2)\in \{0,1\}^\mathbb{N}\times \{0,1\}^{\mathbb{N}^2}\times\{0,1\}^{\mathbb{N}^3}$ such that:
\begin{itemize}
    \item $S_1(i,j)=S_1(j,i)$ for all $i,j\in\mathbb{N}$;
    \item $S_2(i,j,k)=S_2(i,k,j)=S_2(j,i,k)=S_2(j,k,i)=S_2(k,i,j)=S_2(k,j,i)$ for all $i,j,k\in\mathbb{N}$;
    \item $S_2(i,j,k)=1$ implies $S_1(i,j)=S_1(i,k)=S_1(j,k)=1$ for all $i,j,k\in\mathbb{N}$;
    \item $S_1(i,j)=1$ implies $S_0(i)=S_0(j)=1$ for all $i,j\in\mathbb{N}$; and
    \item for all $i\in\mathbb{N}$, there are at most finitely many $j$ such that $S_2(i,j)=1$.
\end{itemize}
Clearly, $\mathcal{S}$ is a Borel subset of $\{0,1\}^\mathbb{N}\times \{0,1\}^{\mathbb{N}^2}\times\{0,1\}^{\mathbb{N}^3}$. An element $S=(S_0,S_1,S_2)$ of $\mathcal{S}$ can be identified with the abstract simplicial complex given by $K=\mathrm{supp}(S_0)=\{n:S_0(n)=1\}$, where $\{i,j\}$ is a $1$-simplex if and only if $S_1(i,j)=1$, and $\{i,j,k\}$ is a $2$-simplex if and only if $S_2(i,j,k)=1$. We use this representation of the space of simplicial complexes, rather than the more familiar way of treating spaces of countable structures described in Section \ref{subsection:invariant_DST}, since the finite complexes are naturally included in this representation as those for which $\mathrm{supp}(S_0)$ is finite. Given $S\in\mathcal{S}$, we write $S_g$ for its geometric realization, as above.

There is a natural notion of isomorphism for complexes which can be represented by the following equivalence relation on $\mathcal{S}$: $S=(S_0,S_1,S_2)\cong S'=(S_0',S_1',S_2')$ if there is a bijection $f:\mathbb{N}\to\mathbb{N}$ such that for all $i,j,k\in\mathbb{N}$:
\begin{align*}
    &S_0(i)=S'_0(f(i))\\
    &S_1(i,j)=S'_1(f(i),f(j))\\
    &S_2(i,j,k)=S'_2(f(i),f(j),f(k)).
\end{align*}
Clearly, $\cong$ is an analytic equivalence relation, induced by an action of $S_\infty$ on $\mathcal{S}$. Similarly, we can say that $S$ and $S'$ in $\mathcal{S}$ are \emph{combinatorially equivalent}, written $S\equiv S'$, if $S$ and $S'$ have subdivisions which are isomorphic; this refinement of $\cong$ is likewise an analytic equivalence relation on $\mathcal{S}$.

It is fairly straightforward to characterize those $2$-dimensional simplicial complexes $K$ for which $K_g$ is a surface; we follow \cite[Thm.\ 22E]{MR0114911} in terming such $K$ \emph{polyhedra}:

\begin{theorem}
	A $2$-dimensional simplicial complex $K$ is a polyhedron if and only if the following conditions hold:
	\begin{enumerate}[label=\textup{\roman*.}]
		\item every $1$-simplex is in exactly two $2$-simplices,
		\item the $1$-simplices and $2$-simplices which contain a given vertex $x$ can be written as $a_1,\ldots,a_m$ and $A_1,\ldots,A_m$, respectively, for $m\geq 3$, so that $a_1=A_1\cap A_m$ and $a_i=A_i\cap A_{i-1}$ for $2\leq i\leq m$, and
		\item $K$ is connected (i.e.,~it cannot be written as a union of two disjoint subcomplexes). 
	\end{enumerate}
\end{theorem}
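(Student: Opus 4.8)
The plan is to prove the two implications separately by a purely local analysis of the geometric realization $K_g$, together with one global observation for connectedness. Throughout I will use that, by the definition of simplicial complex adopted above, $K$ is countable and \emph{locally finite} (each vertex lies in only finitely many simplices), so $K_g$ is automatically second countable and locally compact, and, being a CW complex, Hausdorff; hence in the ($\Leftarrow$) direction the only thing left to verify is that $K_g$ is a connected $2$-manifold without boundary, and in the ($\Rightarrow$) direction I may freely use these properties of a surface.

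\textbf{($\Leftarrow$).} Assuming (i)--(iii), I would check the local Euclidean condition at the three types of points of $K_g$. A point in the relative interior of a $2$-simplex has, by construction of $s_g$, an open neighborhood homeomorphic to the interior of a triangle, hence to $\mathbb{R}^2$. For a point $p$ in the relative interior of a $1$-simplex $a$, condition (i) provides exactly two $2$-simplices $A,B$ containing $a$; since in a simplicial complex $A_g\cap B_g=a_g$, a small neighborhood of $p$ is obtained by gluing two half-disks along a common diameter, which is again a disk. For a vertex $x$, condition (ii) exhibits the $1$- and $2$-simplices through $x$ as a cyclically arranged ``fan'' $a_1,A_1,\dots,a_m,A_m$ with $m\geq 3$ and $a_i=A_i\cap A_{i-1}$ cyclically; the union $\bigcup_i A_{i,g}$ is then the realization of the closed star of $x$, which is the cone with cone point $x$ over the simplicial circle formed by the link of $x$, and is therefore a disk with $x$ an interior point. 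Thus every point of $K_g$ has a Euclidean neighborhood, so $K_g$ is a $2$-manifold without boundary; condition (iii) says it is connected, i.e., a surface.

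\textbf{($\Rightarrow$).} Conversely, suppose $K_g$ is a surface. If $K=K_1\sqcup K_2$ were a disjoint union of nonempty subcomplexes, then $K_g=K_{1,g}\sqcup K_{2,g}$ would be disconnected, contradicting connectedness of the surface; this gives (iii). For (i), fix a $1$-simplex $a$ and let $k$ be the number of $2$-simplices containing it; a point $p$ in the relative interior of $a$ has a neighborhood obtained by gluing $k$ half-disks along a common diameter, and computing the local homology $H_2(K_g,K_g\setminus\{p\})$, which must be $\mathbb{Z}$ at a point of a boundaryless surface, forces $k=2$. Finally, for (ii), fix a vertex $x$ and consider its link $L$, a $1$-dimensional subcomplex of $K$: a vertex $y$ of $L$ is joined in $L$ to $z$ exactly when $\{x,y,z\}$ is a $2$-simplex, and by (i) the edge $\{x,y\}$ lies in exactly two such, so $y$ has degree $2$ in $L$. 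Hence $L$ is a disjoint union of simplicial cycles; since the closed star of $x$ is a Euclidean neighborhood of $x$ and is the cone on $L_g$, we must have $L_g\cong S^1$, so $L$ is a single cycle on $m\geq 3$ vertices $y_1,\dots,y_m$, and setting $a_i=\{x,y_i\}$, $A_i=\{x,y_i,y_{i+1}\}$ recovers the incidence pattern of (ii).

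\textbf{Main obstacle.} The delicate part is not the combinatorics but the point-set bookkeeping in the realization: identifying the realized closed star of a vertex --- a priori only a subspace of $K_g$ carrying the CW topology --- with an honest cone on its link, and handling the local-homology computations that distinguish the ``$k$-page book'' configurations around a $1$-simplex. These points are standard (compare the treatment of polyhedra in \cite{MR0114911}, and the use there of local finiteness to reconcile the weak topology with the expected disk-neighborhood topology), but they are where essentially all of the work lies.
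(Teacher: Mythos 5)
Your argument is correct, and it is essentially the classical one: the paper itself gives no proof of this statement, citing it to Ahlfors--Sario \cite[Thm.\ 22E]{MR0114911}, whose proof is precisely the star/link analysis you carry out (local disk neighborhoods at interior points of $2$-simplices, at interior points of $1$-simplices via condition (i), and at vertices via the cone on the link, with local finiteness reconciling the weak topology with the expected disk neighborhoods). The only step worth tightening is in the $(\Rightarrow)$ direction at a vertex, where ``the closed star is a Euclidean neighborhood'' is loose as stated: having shown the link $L$ is a disjoint union of $r$ cycles, rule out $r\neq 1$ by the same local-homology computation you used for (i) (e.g.\ $H_2(K_g,K_g\setminus\{x\})\cong\tilde H_1(L_g)\cong\mathbb{Z}^r$ must be $\mathbb{Z}$, or note a punctured disk neighborhood must be connected while the punctured open star has $r$ components), which forces $L_g\cong S^1$ and hence the cyclic enumeration in (ii).
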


When translated to $\mathcal{S}$, conditions (i) and (ii) above are evidently Borel. For (iii), connectivity can be rephrased in terms of having a finite path (of $1$-simplices) between any two vertices, much like for graphs, which is also a Borel condition. Thus, we have a Borel subset $\mathcal{S}_{\mathrm{poly}}$ of $\mathcal{S}$ consisting of all polyhedra.

The relevant form of the triangulation theorem for us is that every surface is homeomorphic to the geometric realization of a polyhedron.

\begin{theorem}[Triangulation Theorem]
	For any surface $M$, there is a simplicial complex whose geometric realization is homeomorphic to $M$.
\end{theorem}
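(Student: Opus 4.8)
The plan is to follow the graph-theoretic proof of the triangulation theorem due to Thomassen \cite{MR1144352}, which has the twin virtues of resting only on the Jordan--Schoenflies theorem --- already available to us in the Borel form of Theorem \ref{thm:Borel_JS} --- and of being assembled from steps each of which can be carried out Borel-measurably, so that the argument below simultaneously establishes the Borel refinement Theorem \ref{thm:Borel_triangulation}. The overall strategy is to produce, inside $M$, a locally finite embedded graph $G$ all of whose complementary faces have closures homeomorphic to closed disks, and then to read off an abstract simplicial complex $K$ from (a barycentric subdivision of) this polygonal decomposition, together with a homeomorphism $K_g\cong M$.

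First I would reduce $M$ to a manageable covering. By Lemma \ref{lem:paracompact} we may reparametrize $M$ by a locally finite atlas $\{(U_i,\phi_i)\}_{i\in\mathbb{N}}$ and, shrinking via the auxiliary basis furnished by that lemma, assume each chart $U_i$ contains the closure of a smaller open set $C_i$ which is the $\phi_i$-preimage of a round open disk, with the $C_i$ still covering $M$; thus $\{\overline{C_i}\}_{i\in\mathbb{N}}$ is a locally finite family of closed topological disks covering $M$, each lying in a chart, with boundary $\partial C_i$ a Jordan curve therein. Since the family is locally finite, each $\partial C_i$ meets only finitely many $\partial C_j$, so a standard general-position perturbation carried out one curve at a time renders $G_0:=\bigcup_i\partial C_i$ a locally finite embedded graph in $M$; and because the $C_i$ cover $M$ while each $\partial C_i\subseteq G_0$, every face of $G_0$ lies in a single chart. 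Such a face need not have disk closure, however (nested circles produce annuli), so I would next run an inductive refinement: processing the disks $\overline{C_1},\overline{C_2},\dots$ in turn, whenever a face fails to be a disk, cut it into disks by finitely many rational polygonal chords drawn inside the ambient chart --- here Jordan--Schoenflies (Theorem \ref{thm:JS}), applied to the finitely many boundary curves bounding the offending planar region, guarantees that such chords exist. Taking $G$ to be the union of the resulting finite graphs, local finiteness of the cover ensures that $G$ is locally finite and that near any point $G$ coincides with one of the finite stages; hence every face of $G$ has disk closure, and, after a preliminary subdivision of $G$ arranged so that each facial boundary is an embedded cycle, $G$ is a polygonal decomposition of the (connected) surface $M$ into cells.

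From here I would pass to the barycentric subdivision: let the vertex set of $K$ consist of the vertices of $G$ together with one midpoint per edge and one center per face, and let its $2$-simplices be the triples (vertex, incident edge-midpoint, incident face-center); conditions (i)--(iii) of the polyhedron characterization preceding the statement are then immediate, and the evident cellwise homeomorphisms glue to a homeomorphism $K_g\to M$, as in the classical argument. The hard part will be the inductive face-splitting step --- keeping every face a disk while adjoining countably many curves, without destroying local finiteness, and, for the Borel strengthening, choosing all the perturbations and cutting chords uniformly and measurably in the parameter for $M$. This is precisely where the Borel Jordan--Schoenflies theorem (Theorem \ref{thm:Borel_JS}), together with the Borel computation of complementary components (in the manner of Lemma \ref{lem:Borel_computation_of_components}) and the Borel reparametrizations of Section \ref{subsection:subclasses}, are brought to bear, yielding Theorem \ref{thm:Borel_triangulation}; the remaining bookkeeping (enumerating rational polygonal chords, detecting disk-ness of faces, checking that the passage to $K$ and the gluing homeomorphism are Borel) is routine given those tools.
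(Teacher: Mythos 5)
Your overall route (a locally finite cover by closed chart-disks, straightening the resulting curve system via Jordan--Schoenflies, cutting the complementary faces into disks, then subdividing to get a complex) is the same Thomassen-style strategy the paper follows, but there is a genuine gap at the step you dismiss as ``a standard general-position perturbation.'' In the purely topological category there is no transversality or general-position theorem: two embedded Jordan curves in a chart may intersect in a Cantor set, and there is no off-the-shelf ``small perturbation'' making $\bigcup_i\partial C_i$ a locally finite embedded graph. Arranging that the countably many boundary curves meet pairwise in only finitely many points (while staying inside their charts, keeping the shrunken disks a cover, and not disturbing the finitely-many-intersections property already arranged for earlier pairs) is precisely the hard content of the triangulation theorem, and it is exactly what the paper's proof is organized around: nested rational rectangles $Q^1_k\subseteq Q^3_k\subseteq Q^2_k$, the distinction between \emph{bad} and \emph{very bad} segments (there may be infinitely many bad segments crossing $Q^2_k$, but only finitely many very bad ones reach $Q^3_k$ --- \cite[Lem.\ E.2]{MR3026641}), redrawing the resulting finite $2$-connected plane graph with rational polygonal arcs, and extending that redrawing to the whole chart by the (Borel) Jordan--Schoenflies theorem, in a finite-injury-style iteration in which each chart map and rectangle is modified only finitely often thanks to local finiteness. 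Your proposal needs an argument of this kind in place of the appeal to general position; without it the construction of the locally finite graph $G_0$, and hence everything downstream (disk faces, barycentric subdivision, and the Borel uniformity), does not get off the ground.

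The remainder of your outline is sound in spirit --- once one has a locally finite graph whose faces are open disks lying in charts with embedded-cycle boundaries, coning off faces (or barycentric subdivision) does produce a polyhedron homeomorphic to $M$, and this matches the paper's final step. But note that even the face-cutting stage inherits the same issue: you invoke Jordan--Schoenflies to insert ``rational polygonal chords,'' yet the boundary curves of your faces are at that point only topological arcs, so making the chords rational polygonal and the intersections finite again requires first straightening the local picture by a chart self-homeomorphism (the Schoenflies extension), chosen uniformly in the parameter for the Borel refinement via Theorem \ref{thm:Borel_JS}. Reworking your step two along the lines of the bad/very-bad segment analysis would close the gap and essentially reproduce the paper's argument.
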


Our Borel\footnote{In \cite{MR4744736}, Harrison-Trainor and Melnikov gave an arithmetic form of the triangulation theorem for compact surfaces, also based on \cite{MR1144352}. However, their argument seems to implicitly rely on an arithmetic form of the Jordan--Schoenflies theorem which they do not
state.} form of this theorem is then:

\begin{theorem}\label{thm:Borel_triangulation}
    There is a Borel map $T:\mathfrak{C}(\mathsf{Top},\mathbb{R}^2)\to\mathcal{S}_{\mathrm{poly}}$ such that for each $(\mathcal{U},c)\in\mathfrak{C}(\mathsf{Top},\mathbb{R}^2)$, $T(\mathcal{U},c)=K_{(\mathcal{U},c)}$ is a simplicial complex whose geometric realization is homeomorphic to $M_{(\mathcal{U},c)}$.
\end{theorem}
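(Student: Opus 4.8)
The plan is to convert Thomassen's graph-theoretic proof of the triangulation theorem from \cite{MR1144352} into a Borel construction, leaning on the Borel Jordan--Schoenflies theorem (Theorem \ref{thm:Borel_JS}) and the reparametrization machinery of Section \ref{subsection:subclasses}. First I would invoke Lemma \ref{lem:paracompact} to replace any given $(\mathcal{U},c)\in\mathfrak{C}(\mathsf{Top},\mathbb{R}^2)$, in a Borel way, by a locally finite refinement $(\mathcal{Z},d)$ whose charts $Z_i$ are Euclidean disks, and in fact so that a second family of disks $C_j$ with $\overline{C_j}\subseteq Z_{i(j)}$ already covers the surface; this is precisely the role the second basis $\mathcal{C}$ was built to play. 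The point of local finiteness is that the combinatorics of how charts overlap is then a locally finite datum that can be read off in a Borel way, and the point of shrinking is that we have ``honest'' compact pieces $\overline{C_j}$ whose images cover $M_{(\mathcal{Z},d)}$ and whose pairwise intersection pattern is controlled. Using Lemma \ref{lem:exhaustion} and Lemma \ref{lem:contain_closure_Borel} I would organize these into a compact exhaustion, so that the triangulation can be built one ``annular'' layer at a time, each layer involving only finitely many charts.

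The core of the construction is then carried out chart by chart: each chart image $\pi_{(\mathcal{Z},d)}[Z_i]$ is homeomorphic to a disk in $\mathbb{R}^2$, and on the overlaps the transition maps are elements of $\mathsf{Top}$, hence honest planar homeomorphisms. I would first triangulate the (closure of) each $\pi[C_j]$ inside $\pi[Z_i]$ by a rational polygonal mesh, then, proceeding along the exhaustion, extend and subdivide so that the meshes on overlapping charts are compatible. The essential tool here is Theorem \ref{thm:Borel_JS}: whenever a rational polygonal arc or curve needs to be transported from one chart to an overlapping one, or a polygonal boundary of an already-triangulated region needs to be ``filled in'' on the other side, one applies the Borel self-homeomorphism of $\mathbb{R}^2$ extending the relevant embedded curve to push forward the combinatorial data in a Borel way. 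Since at each finite stage only finitely many rational polygonal complexes are in play, and there are only countably many of these, one searches through a fixed enumeration for a choice that is compatible with the previously constructed data and meets Thomassen's mesh-fineness requirements; such a choice exists by the classical argument, and ``search through a countable set for the least witness satisfying a Borel condition'' is a Borel operation. In the limit one obtains a countable simplicial complex $K_{(\mathcal{U},c)}$ together with a homeomorphism of $|K_{(\mathcal{U},c)}|$ onto $M_{(\mathcal{U},c)}$; the complex lands in $\mathcal{S}_{\mathrm{poly}}$ because the local combinatorics around each vertex was forced to be that of a disk.

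Finally I would verify that the assignment $(\mathcal{U},c)\mapsto K_{(\mathcal{U},c)}$ is Borel. This is the familiar kind of bookkeeping: the reparametrization step is Borel by Lemma \ref{lem:paracompact}; the exhaustion step by Lemma \ref{lem:exhaustion}; at each stage the ``can this finite rational polygonal complex be glued compatibly and with the required fineness'' predicate is Borel because it is a finite boolean combination of statements about inclusions of open sets, intersections of images of arcs under transition maps, and containments of closures --- all Borel by the results of Section \ref{subsection:spaces_of_subsets}, Lemma \ref{lem:images_Borel_compact_open}, and Lemma \ref{lem:intersection_curves_Borel} --- so selecting the least witness is Borel by Luzin--Novikov uniformization; and the output, being the union of the finite-stage complexes, is a Borel function of the sequence of finite choices. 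The main obstacle, I expect, is bookkeeping the compatibility of the meshes across overlapping charts: Thomassen's argument is written for a single embedded curve in $\mathbb{R}^2$, and transplanting it to a surface presented by an atlas requires carefully specifying, at each stage, how a polygonal triangulation built in chart $Z_i$ is to be matched with one already built in the overlapping chart $Z_{i'}$ --- in particular ensuring that common refinements can always be found without destroying previously fixed data, which is where the finiteness of $(\mathcal{Z},d)$ (each chart meeting only finitely many others) and the layered exhaustion are indispensable. Once the meshes are arranged to agree on overlaps, the passage to the limit and the verification that the result is a polyhedron whose realization is homeomorphic to $M_{(\mathcal{U},c)}$ follow Thomassen's argument essentially verbatim.
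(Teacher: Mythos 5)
Your setup (Lemma \ref{lem:paracompact} with the auxiliary basis, rational polygonal data, Borel Jordan--Schoenflies, least-witness searches through countable enumerations) matches the paper's, but there is a genuine gap at exactly the point you flag and then wave past. You propose to triangulate each $\pi[C_j]$ by a rational polygonal mesh and then ``extend and subdivide so that the meshes on overlapping charts are compatible,'' asserting that a compatible choice ``exists by the classical argument.'' It does not, in that form: the transition maps $\varphi_{i,j}$ are arbitrary planar homeomorphisms, so a rational polygonal arc drawn in chart $Z_i$ becomes, in an overlapping chart $Z_{i'}$, a wild topological arc, and two such transported curves can intersect in infinitely many points (even a Cantor set). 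In that situation there is no common refinement and no amount of subdivision produces a locally finite graph, so the inductive ``glue compatibly'' predicate you want to search over can simply have no witnesses. The classical argument (Thomassen's, as presented in the paper) is not a search for compatible meshes; it is a different mechanism designed precisely to defeat this: at stage $k$ one isolates the \emph{bad segments} of earlier curves inside $Q^2_k$ and uses the key finiteness fact that only the \emph{very bad} ones (those meeting an intermediate rectangle $Q^3_k$) are finite in number; one then redraws that finite $2$-connected graph with rational polygonal edges keeping $Q^2_k$ fixed and uses the Borel Jordan--Schoenflies theorem to extend the redrawing to the whole disk --- i.e., one \emph{modifies the chart maps themselves}, reparametrizing the surface --- and finally one \emph{shrinks} the inner curve (replacing $Q^1_k$ by a rational curve avoiding all bad-but-not-very-bad segments), so that the resulting family of curves pairwise meets in only finitely many points. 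Local finiteness makes this a finite-injury construction that stabilizes. None of this is recoverable from ``extend and subdivide.''

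A secondary difference: the paper does not aim for compatible triangulating meshes at all. The output of the inductive construction is merely a locally finite graph on $M$ (the images of the modified rectangles $Q^1_i$) whose complementary faces are disks bounded by cycles; the simplicial complex is then obtained by coning, in a Borel way, a chosen rational point in each face to the vertices of its boundary cycle. Your plan asks for strictly more (globally compatible triangulations chart-by-chart), which is both unnecessary and exactly where the infinite-intersection obstruction bites. To repair your proposal you would need to import the bad/very-bad segment dichotomy and the accompanying chart-map modification and curve-shrinking steps; with those in place, your Borel bookkeeping (Lemma \ref{lem:paracompact}, the Borel image/intersection lemmas of Section \ref{sec:JS}, Luzin--Novikov selection) is the right frame and matches the paper's.
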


Our proof is, again, based on that of Thomassen \cite{MR1144352}, but we more closely follow the presentation in \cite{MR3026641}. Note that the proofs in the aforementioned references are all for compact surfaces, but we will use Lemma \ref{lem:paracompact} to adapt them to the general case. For a self-contained, but less obviously constructive, proof of the triangulation theorem for arbitrary surfaces, see \cite{MR0114911}.

\begin{proof}[Proof of Theorem \ref{thm:Borel_triangulation}]
    Let $(\mathcal{U},c)\in\mathfrak{C}(\mathsf{Top},\mathbb{R}^2)$ be given and write $M$ for $M_{(\mathcal{U},c)}$. By Lemma \ref{lem:paracompact}, we may reparametrize, in a Borel way, so that $(\mathcal{U},c)$ has the following properties:
    \begin{itemize}
    	\item $(\mathcal{U},c)$ is locally finite, i.e.,~for all $i\in\mathbb{N}$, there are at most finitely many $j\in\mathbb{N}$ for which $U_{i,j}\neq\varnothing$;
    	\item for each $i\in\mathbb{N}$, $U_i$ is a nonempty open disk in $\mathbb{R}^2$;
    	\item and each $U_i$ contains a pair of nested rectangles $Q_i^1$ and $Q_i^2$ with rational corners, which we view as rational polygonal curves, which are nested, meaning $Q_i^1\subseteq \mathrm{int}(Q_i^2)$, and such that the images of the interiors of the $Q_i^1$'s cover the surface $M$.
    \end{itemize}
    In the notation of Lemma \ref{lem:paracompact}, $\mathcal{B}$ is a basis consisting of open disks in $\mathbb{R}^2$ and $\mathcal{C}$ a basis consisting of rectangles with rational corners, so that the $U_i$'s are taken from $\mathcal{B}$, the $\mathrm{int}(Q_i^1)$'s from $\mathcal{C}$, and then we can easily inscribe $Q_i^2$ between $Q_i^1$ and $U_i$.

    We will describe a procedure for obtaining a simplicial complex $K\in\mathcal{S}$ which will be Borel in the parameter $(\mathcal{U},c)$. In the course of this construction, we will modify the rectangles $Q_i^1$ and the chart maps $\varphi_{i,j}$, effectively reparametrizing $(\mathcal{U},c)$, in a way which is Borel at each step. The images of the resulting $Q_i^1$'s will form a graph on the new surface which can then be easily converted into a complex. For notational simplicity, we will use the same notation for $Q_i^1$, $\varphi_{i,j}$, and $M$ throughout the construction. At stage $k$, where we deal directly with $Q_k^1$, we may have to modify the $\varphi_{i,k}$ and $Q_i^1$ for $i< k$ if $U_i$ and $U_k$ overlap in $M$, i.e., $U_{i,k}\neq\varnothing$. However, local finiteness will insure that each $Q_k^1$ and $\varphi_{i,k}$ is only modified finitely many times, and thus the construction converges to a fixed parameter.\footnote{Logicians are free to think of this as a kind of ``finite injury'' construction.}

    Suppose that $Q_1^1,\ldots,Q_{k-1}^1$ have been chosen so that any two of their images have \emph{only finitely many points in common} in $M$. We focus on $Q^2_k$. A \emph{bad segment} is a segment $P$ of some $Q^1_i$ for $i<k$ such that $\varphi_{i,k}[P]$ joins two points of $Q^2_k$ and has all other points in $\mathrm{int}(Q^2_k)$. We call $\varphi_{i,k}[P]$ a \emph{bad segment in $Q^2_k$}. Let $Q^3_k$ be a rectangle with rational corners strictly between $Q^1_k$ and $Q^2_k$, meaning $Q^1_k\subseteq\mathrm{int}(Q^3_k)$ and $Q^3_k\subseteq\mathrm{int}(Q^2_k)$. We call a bad segment $P$ \emph{very bad} if it intersects $\mathrm{int}(Q^3_k)$. The key fact about these notions is that, while there may be infinitely many bad segments, there are only finitely many very bad ones \cite[Lem.\ E.2]{MR3026641}.

    The finite set of very bad segments $\varphi_{i,k}[P]$ inside $Q^2_k$, together with $Q^2_k$ itself, defines a $2$-connected planar graph $\Gamma$. By \cite[Prop.\ E.3]{MR3026641}, $\Gamma$ can be redrawn inside $Q^2_k$, keeping $Q^2_k$ fixed, in such a way that the resulting graph $\Gamma'$ is plane isomorphic to $\Gamma$ and all edges are simple rational polygonal arcs. Then, Theorem \ref{thm:Borel_JS} can be applied on each face of $\Gamma'$ to extend the isomorphism of $\Gamma$ to $\Gamma'$ to $\overline{\mathrm{int}}(Q^2_k)$, keeping $Q^2_k$ fixed (this is where the homeomorphisms $\varphi_{i,k}$ are being modified). This transforms $Q^1_k$ and $Q^3_k$ into simple closed curves ${Q^1}'$ and ${Q^3}'$.

	There is a rational simple closed curve ${Q^3}''$ inside $\mathrm{int}(Q^2_k)$ such that ${Q^1}'\subseteq\mathrm{int}({Q^3}'')$ which intersects no bad segment inside $Q^2_k$ except for the very bad ones (which are now rational polygonal arcs): to see this, consider a covering of ${Q^3}'$ by rational squares which do not intersect ${Q^1}'$ nor any bad segment which is not very bad, and let ${Q^3}''$ be the outer cycle of the graph formed by their union. Using \cite[Prop.\ E.3]{MR3026641} again, we can then redraw the graph $\Gamma'\cup {Q^3}''$ so that ${Q^3}''$ becomes a rectangle with rational corners having ${Q^1}'$ in its interior, and then use Theorem \ref{thm:Borel_JS} to extend this isomorphism to a homeomorphism on $\overline{\mathrm{int}}(Q^2_k)$, keeping $Q^2_k$ fixed. We let the resulting rectangle, redrawn from ${Q^3}''$, be the new choice of $Q^1_k$. It then follows that $Q^1_1,\ldots,Q^1_{k-1},Q^1_k$ now have only finitely many points in common in $M$, completing the induction step.

    At the end of the above construction, we have a sequence of rectangles $Q^1_i\subseteq Q^2_i\subseteq U_i$ for each $i\in\mathbb{N}$ such that in each $Q^2_i$ there are only finitely many very bad segments, all of which are simple rational polygonal arcs which form a $2$-connected plane graph. Upon gluing together charts, we may view these arcs, together with their points of intersection, as a locally finite graph $\Gamma$ drawn on the surface $M$ such that each component of $M\setminus\Gamma$ is bounded by a cycle in $\Gamma$. We can then form a simplicial complex $S$ by picking a point in each of these components and connecting it to each vertex of the corresponding cycle, so that each new component is bounded by a $3$-cycle, i.e., a triangle, but we must describe how to do this in a Borel way.

    Start with the least rational $q_0\in \mathrm{int}(Q^2_0)$ (with respect to some fixed enumeration of the rational points in $\mathbb{R}^2$) which is not in any very bad segment in $Q^2_0$. There is a unique cycle consisting of very bad segments in $Q^2_0$ which bounds the component of $q_0$, with intersections at rational points $q_1,\ldots,q_{n_0}$ listed counterclockwise, say. Let $S'_0$ be the simplicial complex formed from points $q_0,q_1,\ldots,q_{n_0}$, with $1$-simplices given by very bad segments connecting $q_1,\ldots,q_{n_0}$, together with $\{q_0,q_i\}$ for each $1\leq i\leq n_0$, and $2$-simplices given by $\{q_0,q_i,q_j\}$ for $1\leq i,j\leq n_0$ whenever $q_i$ and $q_j$ are connected by a very bad segment. Next, we let $q_{n_0+1}\in\mathrm{int}(Q^2_0)$ be the least rational point in a different component than $q_0$. We find the intersection points $q_{n_0+2},\ldots,q_{n_1}$ of the cycle which bounds the component of $q_{n_0+1}$, subdivide and add $1$- and $2$-simplices as above, and add these to $S_0'$ to get $S_1'$. We continue this construction until we have met each of the finitely many components of $\mathrm{int}(Q^2_0)$ bounded by very bad segments. Next, we repeat the same with $Q^2_1$, however, if any of the rational points we wish to add in the construction are already the image, under some $\varphi_{i1}$ of a rational point which we have already added, we do not need to add it; similar comments apply to the $1$- and $2$-simplices. We continue this construction across all of the $Q^2_k$'s, building an increasing sequence of finite simplicial complexes of rational points, $S'_k$. Let $S'$ be their union. Finally, we obtain $S\in\mathcal{S}$ by simply forgetting the particular rational points and just remembering the indices in $\mathbb{N}$ we have assigned them along the way, completing the construction.
\end{proof}

Given a polyhedron $K$, it is straightforward to show that it can be exhausted by a sequence of a finite subcomplexes, which when represented in $\mathcal{S}$ can be determined in a Borel way. However, in our applications, we will need slightly more, namely, to exhaust $K$ by a sequence of finite subcomplexes \emph{which are themselves polyhedra}. More explicitly, we call a sequence $P_n$ (for $n\in\mathbb{N}$) of finite simplicial complexes a \emph{canonical exhaustion}\footnote{This is slightly weaker than the definition in \cite{MR0114911}, but will suffice.} of $K$ if for all $n\in\mathbb{N}$,
\begin{itemize}
	\item each $P_n$ is a polyhedron,
	\item $P_{n+1}$ is a subcomplex of $P_n$,
	\item the border simplices of $P_{n+1}$ are not in $P_n$, and
	\item each simplex in $K$ belongs to some $P_n$.
\end{itemize}

\begin{lemma}\label{lem:canonical_exhaustion}
	There is a Borel function $E:\mathcal{S}_{\mathrm{poly}}\to\mathcal{S}_{\mathrm{poly}}^\mathbb{N}$ such that for each $K\in\mathcal{S}_{\mathrm{poly}}$, $E(K)$ is a canonical exhaustion of a subdivision of $K$.	
\end{lemma}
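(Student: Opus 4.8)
The plan is to work entirely combinatorially inside $\mathcal{S}_{\mathrm{poly}}$, building the exhaustion by a stage-by-stage ``growing star'' procedure that inspects only finitely much of the input complex at each stage, so that Borelness is automatic. First I would dispose of the compact case: the set of $K\in\mathcal{S}_{\mathrm{poly}}$ with $\mathrm{supp}(S_0)$ finite is Borel (a countable union of closed sets), and for a polyhedron this is exactly the condition that $K_g$ be compact; on this set we may put $E(K)=(K,K,K,\dots)$, which is a (degenerate) canonical exhaustion of $K$ itself, since a polyhedron has no border simplices and every simplex of $K$ lies in $P_0=K$. So assume from now on that $K$ is infinite, hence $K_g$ is a noncompact surface.

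Next I would Borel-compute from $K$ a suitably fine canonical subdivision $K'$ of $K$; the second barycentric subdivision suffices, and since only the homeomorphism type of $K'_g\cong K_g$ enters the statement, the precise combinatorial form of the subdivision is immaterial. The reindexing of the simplices of $K'$ by $\mathbb{N}$ is Borel because a potential simplex of $K'$ is a short flag of simplices of $K$ whose presence depends on only finitely many coordinates of $K$; this yields a Borel map $\mathcal{S}_{\mathrm{poly}}\to\mathcal{S}_{\mathrm{poly}}$, $K\mapsto K'$, with $K'$ again infinite, connected and locally finite. Fixing the first $2$-simplex $\sigma_0$ of $K'$ in the canonical enumeration of $\mathbb{N}^3$, I would set $P_0=\overline{\mathrm{St}}(\sigma_0,K')$ and $P_{n+1}=\overline{\mathrm{St}}(P_n,K')$, where $\overline{\mathrm{St}}(A,K')$ is the union of all closed simplices of $K'$ meeting $A$. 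Then $(P_n)_n$ is a canonical exhaustion of $K'$: each $P_n$ is a finite subcomplex (local finiteness), connected (a union of connected simplices each meeting the connected set $A$), and --- the only delicate point --- a bordered polyhedron, by iterated application of the regular-neighbourhood theorem in the second derived subdivision; $P_n\subseteq P_{n+1}$ by construction; no simplex $\tau$ of $P_n$ lies on the frontier of $P_{n+1}$, since every $2$-simplex of $K'$ containing $\tau$ meets $P_n$ and hence lies in $P_{n+1}$, so the border simplices of $P_{n+1}$ are disjoint from $P_n$; and $\bigcup_n P_n=K'$, since connectedness of $K'$ provides an edge-path of finite length from $\sigma_0$ to any given simplex, placing it in some $P_n$.

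Borelness of $E\colon K\mapsto(P_n)_n$ then comes for free: for each fixed $n$ the complex $P_n$ is built from the simplices of $K'$ within bounded simplicial distance of $\sigma_0$ (namely $n+1$ iterated-star steps), which by local finiteness is a finite set of simplices whose identities and incidences are determined by finitely many coordinates of $K'$, hence --- since $K\mapsto K'$ is Borel with each coordinate of $K'$ depending on finitely many coordinates of $K$ --- by finitely many coordinates of $K$. Thus $K\mapsto P_n$ is continuous into the relevant discrete space, and $K\mapsto(P_n)_n$ is Borel; combined with the Borel case-split on finiteness of $\mathrm{supp}(S_0)$, this completes the construction of $E$.

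The main obstacle is the point flagged above: checking that the iterated closed stars $P_n$ are genuine bordered polyhedra (and not, say, complexes pinched at a vertex), which is precisely what the passage to a sufficiently derived subdivision secures, via standard PL regular-neighbourhood theory for surfaces of the kind underlying the triangulation arguments of \cite{MR1144352, MR3026641, MR0114911}. A secondary issue is that if one insists on reading ``subdivision'' in the narrow sense of the text (finitely many simple subdivisions), the barycentric $K'$ should be replaced by the Ahlfors--Sario local-repair construction of a canonical exhaustion, in which each $P_n$ is obtained from $P_{n-1}$ by adjoining finitely many simplices and performing finitely many elementary subdivisions near the current border --- hence without disturbing the already-finalized $P_{n-1}$; this uses only simple subdivisions, and since the repairs at stage $n$ remain local, it is Borel for exactly the same reason.
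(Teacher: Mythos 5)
Your reduction to the noncompact case, your Borelness bookkeeping (each $P_n$ depends on finitely many coordinates of $K$), and your verification of the ``border simplices of $P_{n+1}$ avoid $P_n$'' condition are all fine, but the step you yourself flag as the only delicate point is a genuine gap, and the fix you offer for it does not work as stated. The regular-neighbourhood theorem produces a regular neighbourhood $N(A;K'')$ only when the subdivision is derived \emph{relative to} the subcomplex $A$ (so that $A$ is full and the neighbourhood is a derived neighbourhood); in your construction the subdivision $K'$ is fixed once and for all, and the later $P_{n+1}=\overline{\mathrm{St}}(P_n,K')$ are simplicial neighbourhoods taken in a complex that is not derived relative to $P_n$, so the theorem simply does not apply to them. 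And the failure is not hypothetical: iterated stars in a fixed triangulation can be pinched. If the growing ``wavefront'' comes back on itself --- say $K_g$ contains a tube or handle around which the fronts travel in both directions --- then at the collision stage there is a vertex $u\notin P_n$ whose link meets $P_n$ in two non-contiguous arcs; the simplices of $P_{n+1}$ containing $u$ then form two fans meeting only at $u$, so $P_{n+1}$ violates the fan/cycle condition at $u$ and is not a bordered polyhedron. Passing to the second barycentric subdivision controls the local combinatorics but not this global self-collision phenomenon, so your main construction does not deliver the required exhaustion by polyhedra.

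The repair is exactly what you relegate to a ``secondary issue'': at each stage one must modify the complex locally near the current frontier (adding simplices and performing elementary subdivisions there) so as to heal pinch points before proceeding, without disturbing the already-finalized $P_{n-1}$ --- and this is the Ahlfors--Sario construction, which is in fact the paper's entire proof: it cites \cite[I.4.29]{MR0114911} and observes that, translated into $\mathcal{S}_{\mathrm{poly}}$, each stage inspects and alters only finitely much of the complex, hence the map is Borel. So your fallback is the intended argument and should be promoted to the main one; it simultaneously disposes of your worry about reading ``subdivision'' in the narrow sense (which is in any case minor, since a barycentric subdivision of a $2$-complex can be realized by a finite sequence of the paper's simple subdivisions, edge by edge). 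As written, though, the proposal's primary route is not a proof.
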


\begin{proof}
	The construction of such an exhaustion given in \cite[I.4.29]{MR0114911} is clearly Borel, once translated into our setting; readers may consult [ibid.]~for details.
\end{proof}

When combined with Theorem \ref{thm:Borel_triangulation}, Lemma \ref{lem:canonical_exhaustion} can be viewed as a strengthening of Lemma \ref{lem:exhaustion} for surfaces, giving not only an exhaustion by compact sets, but by compact \emph{submanifolds} (with boundary), realized as polyhedra. In fact, this will be the main application of Theorem \ref{thm:Borel_triangulation} in our proof Theorem \ref{thm:surfaces}. We had initially hoped for a simpler approach to proving the existence of such a Borel exhaustion along the lines of the proof of Lemma \ref{lem:exhaustion}, but were not able to overcome technical issues concerning how the boundaries patch together.

We close this section with a few comments on related results and generalizations. Clearly, equivalent complexes have homeomorphic geometric realizations. The converse of this statement is known as the \emph{Hauptvermutung}, and holds for complexes whose geometric realizations are manifolds in dimensions $2$ and $3$ \cite{MR24619} \cite{MR48805}, but fails in higher dimensions (see \cite{MR1434100}). Since we can pass from polyhedra to surfaces by taking geometric realizations (it is routine to show that this construction is Borel), and back by Theorem \ref{thm:Borel_triangulation}, it follows that homeomorphism of surfaces and equivalence $\equiv$ of $2$-dimensional polyhedra have the same Borel complexity. In particular, if we knew that the latter was classifiable by countable structures, our proof of Theorem \ref{thm:surfaces} would be complete. However, as we do not have a direct proof of this fact,\footnote{This has been recently proved independently by Iannella and Weinstein \cite{IW26+}.} we must continue onwards via Theorem \ref{thm:Richards}. We will return to this point in Corollary \ref{cor:equiv_polyedra_classifiable} below.

Finally, we note that, like surfaces, all topological $3$-manifolds can be triangulated \cite{MR0488059}, but again, triangulation fails in higher dimensions \cite{MR679066, MR3402697}. The case of smooth manifolds (in any dimension) is simpler; they are always triangulable, and it seems likely one could produce a Borel exhaustion by compact submanifolds directly using regular sublevel sets of exhaustion functions and Sard's Theorem (see \cite{MR2954043} or \cite{MR226651}).

\subsection{The construction of the space of ends}
\label{subsection:space_of_ends}

The abstract notion of an ``end'' of a manifold is due to Freudenthal \cite{MR1545233} who used it to construct a canonical compactification which generalizes that for the plane, obtained by adjoining a single point at $\infty$ (``one end''), and the line, obtained by adjoining points at $+\infty$ and $-\infty$ (``two ends''). Standard English-language presentations of this construction can be found in \cite{MR0114911} or \cite{MR120637}. In this section, we will verify that the construction of the space of ends of a connected topological manifold, in \emph{any} dimension, can be accomplished in a Borel way. 

The key elementary fact which ensures the compactness of the space of ends is the following lemma \cite[Lem.\ 1.1]{MR120637}; for a proof see \cite[Lem.\ 9]{MR686504}.

\begin{lemma}\label{lem:finitely_many_components}
	If $M$ is a connected manifold and $K\subseteq M$ is compact, then $M\setminus K$ has only finitely many unbounded connected components.
	\qed
\end{lemma}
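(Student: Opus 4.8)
The plan is to deduce the statement from a straightforward compactness argument applied to the frontier of a precompact open neighborhood of $K$. Throughout, I call a component $C$ of $M\setminus K$ \emph{bounded} when its closure $\overline{C}$ in $M$ is compact, so the task is to bound the number of components with noncompact closure.

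First I would dispose of the trivial cases: if $K=\varnothing$ then $M\setminus K=M$ is connected and we are done, and if $M$ is compact then every component of $M\setminus K$ has compact closure; so we may assume $K\neq\varnothing$. Since $M$ is a manifold, it is locally compact and Hausdorff, so I would cover the compact set $K$ by finitely many open sets with compact closures and let $U$ be their union. Then $K\subseteq U$, the closure $\overline{U}$ is compact, and the frontier $F:=\overline{U}\setminus U$ is a compact subset of $M\setminus K$. Because $M$ is locally Euclidean, hence locally connected, the components of the open set $M\setminus K$ are themselves open.

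The crux is the claim that every \emph{unbounded} component $C$ of $M\setminus K$ meets $F$. One half is the observation that $\overline{C}\cap K\neq\varnothing$: otherwise $\overline{C}$ is a connected subset of $M\setminus K$ containing the component $C$, so by maximality $\overline{C}=C$, i.e.\ $C$ is closed as well as open in $M$; since $M$ is connected and $C\neq\varnothing$ this forces $C=M$ and hence $K=\varnothing$, contrary to assumption. Thus $C$ clusters onto a point of $K\subseteq U$, whence $C\cap U\neq\varnothing$. The other half: since $C$ is unbounded, $C\not\subseteq\overline{U}$ (a closed subset of the compact set $\overline{U}$ would be compact), so $C$ also meets $M\setminus\overline{U}$. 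Now $M=U\sqcup F\sqcup(M\setminus\overline{U})$, with $U$ and $M\setminus\overline{U}$ disjoint open sets each meeting $C$; connectedness of $C$ then forces $C\cap F\neq\varnothing$.

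Finally, the open components of $M\setminus K$ form an open cover of the compact set $F$, so finitely many of them cover $F$; by the previous paragraph every unbounded component is among these. Hence there are only finitely many unbounded components, as desired. The only step requiring genuine care — and the one I would flag as the main obstacle — is ruling out the possibility that an unbounded component lies entirely outside $\overline{U}$; this is exactly where connectedness of $M$ enters, through the fact that each component of $M\setminus K$ accumulates on $K$.
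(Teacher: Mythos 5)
Your proof is correct, and since the paper does not prove this lemma itself (it simply cites the literature for it), there is nothing to compare against beyond the standard argument — which is exactly what you give: every component of $M\setminus K$ accumulates on $K$ by connectedness of $M$, every unbounded one must also cross the compact frontier $\overline{U}\setminus U$ of a precompact neighborhood $U\supseteq K$, and compactness of that frontier plus openness and disjointness of the components yields finiteness. All steps, including the clopen-ness argument ruling out $\overline{C}\cap K=\varnothing$ and the disposal of the trivial cases, are sound as written.
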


Recall that a sequence $K_i$ (for $i\in \mathbb{N}$) of compact subsets of a manifold $M$ is a compact exhaustion if $K_i\subseteq \mathrm{int}(K_{i+1})$ for all $i\in\mathbb{N}$ and $M=\bigcup_{i\in\mathbb{N}}K_i$.

\begin{definition}
	Given a connected manifold $M$ and a compact exhaustion $K_i$ (for $i\in\mathbb{N}$), an \emph{end} of $M$ is a sequence $\langle Q_i\mid i\in\mathbb{N}\rangle$ of nonempty open subsets of $M$ such that for each $i\in\mathbb{N}$:
	\begin{enumerate}[label=\textup{\roman*.}]
		\item $Q_i\supseteq Q_{i+1}$,
		\item $Q_i$ is an unbounded connected component of $M\setminus K_i$, and
		\item for any compact subset $K\subseteq M$, there is some $j\in\mathbb{N}$ for which $Q_j\cap K=\varnothing$.
	\end{enumerate}	 
\end{definition}

While the above definition depends on a particular choice of exhaustion, different exhaustions will produce ``equivalent'' ends, in the sense that for each end $\langle Q_i\mid i\in\mathbb{N}\rangle$ with respect to one exhaustion, there is an end $\langle Q_i'\mid i\in\mathbb{N}\rangle$ with respect to the other such that for all $i\in\mathbb{N}$, there is a $j\in\mathbb{N}$ for which $Q_i\subseteq Q_j'$, and vice-versa. It is a consequence of the canonical nature of the end compactification (see Lemma \ref{lem:end_compact} below) that it is independent, up to a homeomorphism which fixes the underlying manifold, of the particular choice of exhaustion.

An end of a manifold is an infinite branch through a certain finitely-branching (by Lemma \ref{lem:finitely_many_components}) tree of finite sequences of open connected sets. We must describe how to define this tree in a Borel way. As this is a tree of elements in an uncountable space, some care must be taken in ensuring a Borel target space of trees. Let $\mathsf{Tr}(\mathbb{N})$ denote the set of all finitely-branching trees on $\mathbb{N}$, which inherits a standard Borel structure when viewed as a subset of $\{0,1\}^{\mathbb{N}^{<\infty}}$. The idea for the following lemma comes from Arno Pauly's answer \cite{434262} to a question asked by the second author on MathOverflow.

\begin{lemma}\label{lem:space_of_trees}
	Let $X$ be a standard Borel space. Then, there is a standard Borel space $\mathsf{Tr}(X)$ of all finitely-branching trees in $X$ such that:
	\begin{enumerate}[label=\textup{\arabic*.}]
		\item There is a Borel map $p:\mathsf{Tr}(X)\to\mathsf{Tr}(\mathbb{N})$ which assigns to each tree in $\mathsf{Tr}(X)$ an isomorphic tree on $\mathbb{N}$. Moreover, this map preserves subtrees.
		\item There is a Borel map $\ell:\mathsf{Tr}(X)\to (X\cup\{u\})^\mathbb{N}$ which assigns to each tree in $\mathsf{Tr}(X)$ an enumeration of its nodes, where $u\notin X$ represents that a node is undefined.
	\end{enumerate}
\end{lemma}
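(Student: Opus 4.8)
\textbf{Proof plan for Lemma \ref{lem:space_of_trees}.}

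The plan is to represent a finitely-branching tree whose nodes are labelled by elements of $X$ as a pair: a combinatorial skeleton $\sigma \in \mathsf{Tr}(\mathbb{N})$ recording the shape of the tree and the ordering of children at each node, together with a labelling function which feeds each node of $\sigma$ either an element of $X$ or the dummy symbol $u$ (with $u$ being assigned precisely off the tree $\sigma$). Concretely, I would set
\[
\mathsf{Tr}(X) := \{(\sigma, \lambda) \in \mathsf{Tr}(\mathbb{N}) \times (X\cup\{u\})^{\mathbb{N}^{<\infty}} \mid \forall s\in\mathbb{N}^{<\infty}\,(\,s\in\sigma \leftrightarrow \lambda(s)\in X\,)\},
\]
where, as noted, $X\cup\{u\}$ carries the standard Borel structure obtained by adjoining $u$ as an isolated point, and $\mathbb{N}^{<\infty}$ is the (countable) set of finite sequences from $\mathbb{N}$. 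The defining condition quantifies over the countable set $\mathbb{N}^{<\infty}$ and, for each fixed $s$, asks whether a coordinate of $\sigma$ equals $1$ if and only if a coordinate of $\lambda$ lies in the Borel set $X \subseteq X\cup\{u\}$; this is a countable conjunction of Borel conditions, so $\mathsf{Tr}(X)$ is a Borel subset of a product of standard Borel spaces, hence standard Borel. One should of course stipulate that the elements of $\mathsf{Tr}(\mathbb{N})$ under consideration are genuinely finitely branching and closed under initial segments, which is again a Borel constraint on $\{0,1\}^{\mathbb{N}^{<\infty}}$ (as already recorded in the excerpt).

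For clause (1), I would simply take $p$ to be the first-coordinate projection $(\sigma,\lambda)\mapsto\sigma$, which is manifestly Borel (indeed continuous). That $\sigma$ is an isomorphic copy of the labelled tree is built into the construction: the identity on $\mathbb{N}^{<\infty}$ restricts to a tree isomorphism from the underlying tree of $(\sigma,\lambda)$ onto $\sigma$, since the node set of the former is exactly $\{s \mid \lambda(s)\in X\} = \sigma$ and the child-ordering is inherited from $\mathbb{N}^{<\infty}$ in both. Preservation of subtrees is then immediate: a subtree of $(\sigma,\lambda)$ is represented by some $(\sigma',\lambda')\in\mathsf{Tr}(X)$ with $\sigma'\subseteq\sigma$ and $\lambda'\le\lambda$ in the obvious sense, and $p(\sigma',\lambda')=\sigma'\subseteq\sigma=p(\sigma,\lambda)$. (If one prefers the ``subtree'' operation to be realized by an explicit Borel map — e.g. passing to the subtree below a given node — one checks that pruning $(\sigma,\lambda)$ below a node $s$, i.e. reindexing the sequences extending $s$ and relabelling accordingly, is a Borel operation on $\mathsf{Tr}(X)$ commuting with $p$; this is a routine reindexing argument over the countable index set.)

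For clause (2), fix once and for all a bijection $e:\mathbb{N}\to\mathbb{N}^{<\infty}$, and define $\ell:\mathsf{Tr}(X)\to(X\cup\{u\})^{\mathbb{N}}$ by $\ell(\sigma,\lambda)(n) = \lambda(e(n))$. This is a composition of the Borel (continuous) second-coordinate projection with the countable reindexing induced by $e$, hence Borel; and by the defining condition of $\mathsf{Tr}(X)$ it lists exactly the $X$-labels of the nodes of $\sigma$ (with $u$ in the slots corresponding to non-nodes), as required. I do not anticipate a genuine obstacle here: the only thing to be careful about is the bookkeeping that makes the ``node set'' of an element of $\mathsf{Tr}(X)$ and its shape $\sigma$ literally coincide, so that the correspondence in (1) is an honest isomorphism rather than merely an abstract one, and that the labelling in (2) is total on $\mathbb{N}$ (this is why the dummy value $u$ is indispensable — it lets us give a fixed-length enumeration of a variable-size node set while staying in a product space). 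Everything else reduces to the standard facts, recorded in Section \ref{subsection:classical_DST}, that a set defined by countably many quantifiers over countable index sets in front of Borel predicates is Borel, and that coordinate projections and fixed reindexings of countable products of standard Borel spaces are Borel.
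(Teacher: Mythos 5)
Your construction follows the same basic route as the paper's: code a finitely-branching labelled tree by a pair (combinatorial shape in $\mathsf{Tr}(\mathbb{N})$, labelling into $X\cup\{u\}$) inside a product of standard Borel spaces, observe that the defining compatibility condition is a countable conjunction of Borel conditions, and realize $p$ and $\ell$ as (reindexed) coordinate projections. The one substantive thing you omit is the paper's canonicity device, and it is not cosmetic. The paper fixes a Borel linear ordering $<$ on $X\cup\{u\}$ and requires the labels of siblings to be strictly increasing with respect to $<$ (in particular, pairwise distinct), which is exactly what makes each finitely-branching tree in $X$ correspond to a \emph{unique} pair $(S,\ell)$, so that $\mathsf{Tr}(X)$ is literally a space of trees, one point per tree. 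In your version nothing prevents two distinct siblings from carrying the same $X$-label --- such a pair codes no tree of finite sequences from $X$ at all, and for it the ``isomorphic tree on $\mathbb{N}$'' of clause (1) is not isomorphic to anything one would call a tree in $X$ --- and even among injectively labelled pairs, permuting siblings or indexing children non-contiguously yields many codes of the same tree. So what you have built is a redundant space of codes rather than ``a standard Borel space of all finitely-branching trees in $X$.'' This matters downstream: Lemma \ref{lemma:T_M_Borel} and its sequels treat the end-tree $T_{M}$ of a surface as a single well-defined point of $\mathsf{Tr}(X)$, which only makes sense once a canonical code is singled out; with your coding the map $(\mathcal{U},c)\mapsto T_M$ is undefined until a coding convention is chosen and used consistently.

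The repair is small and stays inside your framework: demand in addition that $\lambda$ be injective on each sibling set and that siblings occur in increasing order with respect to a fixed Borel linear ordering of $X$ (and, if you want exactly one code per tree, that the children of each node of $\sigma$ form an initial segment of $\mathbb{N}$). These are again countable conjunctions of Borel conditions, so standard Borelness and the Borelness of your projections are unaffected. One point in your favour: handling ``preserves subtrees'' by literal restriction (keep $\lambda$, shrink $\sigma$, so that the node correspondence is the inclusion) is clean and is exactly what the transport of the nested triples $(T_M,T_M',T_M'')$ requires; if you do impose contiguous child-indexing for uniqueness, note that the restricted pair must then be renormalized, which is again a Borel operation over the countable index set.
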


\begin{proof}
	Choose some $u\notin X$ and view $X\cup\{u\}$ as being endowed with a Borel linear ordering $<$ where $u$ is the maximum. As the full tree $\mathbb{N}^{<\infty}$ on $\mathbb{N}$ is countable, it induces a well-ordering on the nodes of any tree in $\mathsf{Tr}(\mathbb{N})$ having the property that the root $\varnothing$ is always the least element and each level of the tree is always completely enumerated before any successive level. We can thus identify a finitely-branching tree $T$ on $X$ with a ``labelled tree'', that is, a pair $(S,\ell)\in\mathsf{Tr}(\mathbb{N})\times (X\cup\{u\})^\mathbb{N}$, where $S$ is the underlying tree structure of $T$ and for each $n\in\mathbb{N}$, $\ell(n)$ is the element of $X$ occurring at the $n$th node of $T$, with $\ell(n)=u$ if there is no such node. We require that $\ell$ labels siblings (i.e., immediate successors of a common parent node) in strictly increasing order with respect to $<$ and the ordering on nodes described above. This ensures that each tree $T$ corresponds to a unique pair $(S,\ell)$. The subset of $\mathsf{Tr}(\mathbb{N})\times (X\cup\{u\})^\mathbb{N}$ consisting all such pairs is clearly Borel and the maps described in (1) and (2) are simply the coordinate projections.
\end{proof}

We can now describe the construction of the space of ends of a connected manifold $M=M_{(\mathcal{U},c)}$ in a way that will be evidently Borel in the parameter $(\mathcal{U},c)\in\mathfrak{C}(\mathsf{Top},\mathbb{R}^n)$, given our preceding lemmas. First, if $M$ happens to be compact, which we may detect in a Borel way by Theorem \ref{thm:compact_Borel}, then we put $T_M=\varnothing$. For $M$ noncompact, let $ K_i$ (for $i\in\mathbb{N}$) be a compact exhaustion of $M$. Define a tree $T_M$ of open subsets of $M$ as follows: The root of the tree is the node given by $M$ itself. If $\langle Q_0,\ldots,Q_n\rangle$ has been placed in $T_M$, then let $\langle Q_0,\ldots,Q_n,Q_{n+1}\rangle$ be in $T_M$ if and only if $Q_{n+1}$ is an unbounded connected component of $Q_n\setminus K_n$ (equivalently, an unbounded connected component of $M\setminus K_n$ which is contained in $Q_n$). The (complements of the) compact exhaustion can be obtained in a Borel way by Lemma \ref{lem:exhaustion}, boundedness detected by Lemma \ref{lem:bounded_Borel}, and the connected components of open subsets computed using Lemma \ref{lem:Borel_computation_of_components}, so we have thus proven:

\begin{lemma}\label{lemma:T_M_Borel}
	The map $\mathfrak{C}(\mathsf{Top},\mathbb{R}^n)\to\mathsf{Tr}(\mathfrak{C}(\mathsf{Top},\mathbb{R}^n)):(\mathcal{U},c)\mapsto T_{M_{(\mathcal{U},c)}}$ is Borel.\qed
\end{lemma}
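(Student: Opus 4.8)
The plan is to assemble the map $(\mathcal{U},c)\mapsto T_{M_{(\mathcal{U},c)}}$ out of Borel pieces already in hand and to package it via the space $\mathsf{Tr}(\mathfrak{C}(\mathsf{Top},\mathbb{R}^n))$ of Lemma \ref{lem:space_of_trees}. Recall that a tree in $\mathsf{Tr}(X)$ is coded by a pair $(S,\ell)\in\mathsf{Tr}(\mathbb{N})\times(X\cup\{u\})^\mathbb{N}$ with $\ell$ an enumeration of the nodes consistent with a fixed ordering, so it suffices to produce, in a Borel way from $(\mathcal{U},c)$, such an enumeration of the finite sequences $\langle Q_0,\dots,Q_n\rangle$ of open submanifolds of $M_{(\mathcal{U},c)}$ that belong to $T_M$, together with the underlying $\mathbb{N}$-tree structure recording the successor relation. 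Here each $Q_i$ is itself represented as an open submanifold $(\mathcal{Q}^{(i)},c|_{\mathcal{Q}^{(i)}})$ in the sense of Section \ref{subsection:subclasses}, i.e., as an element of $\mathfrak{C}(\mathsf{Top},\mathbb{R}^n)$.

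First I would dispose of the compact case: by Theorem \ref{thm:compact_Borel} the set $\{(\mathcal{U},c)\mid M_{(\mathcal{U},c)}\text{ compact}\}$ is Borel, and on it we set $T_M=\varnothing$ (the tree with only an undefined root, say), which is trivially Borel. On the complement, fix once and for all the Borel assignment $(\mathcal{U},c)\mapsto\langle(\mathcal{W}_k,c|_{\mathcal{W}_k})\mid k\in\mathbb{N}\rangle$ of Lemma \ref{lem:exhaustion}, so that $M_{(\mathcal{W}_k,c|_{\mathcal{W}_k})}$ is, uniformly and Borel-measurably in $(\mathcal{U},c)$, the complement of the closure of the $k$-th term of a compact exhaustion $(K_k)_{k\in\mathbb{N}}$ of $M_{(\mathcal{U},c)}$; concretely, $M\setminus K_k$ is the open submanifold $M_{(\mathcal{W}_k,c|_{\mathcal{W}_k})}$. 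Next, using Lemma \ref{lem:Borel_computation_of_components} I would compute, Borel-measurably, the sequence of connected components of $M_{(\mathcal{W}_k,c|_{\mathcal{W}_k})}$, each delivered as an open submanifold parameter; and using Lemma \ref{lem:bounded_Borel} I would, for each such component parameter, decide in a Borel way whether it is bounded, keeping only the unbounded ones. Since by Lemma \ref{lem:finitely_many_components} there are only finitely many unbounded components of $M\setminus K_k$, these can be listed by finitely many of the indices produced by the component map, in a way that depends Borel-measurably on $(\mathcal{U},c)$.

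The tree $T_M$ is then built level by level: its root is (the parameter for) $M$ itself; given that $\langle Q_0,\dots,Q_n\rangle$ has been placed at some node, we declare $\langle Q_0,\dots,Q_n,Q_{n+1}\rangle$ a child precisely when $Q_{n+1}$ is an unbounded connected component of $Q_n\setminus K_n$ — equivalently, an unbounded component of $M\setminus K_n$ contained in $Q_n$, which we may test using the component computation above applied to the open submanifold $Q_n\cap M_{(\mathcal{W}_n,c|_{\mathcal{W}_n})}$ (intersection of open submanifolds being Borel by the remarks of Section \ref{subsection:subclasses}), together with a containment check which reduces to the Borel subset relation on the saturating open-set sequences. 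Enumerate the nodes in the canonical level-by-level order fixed in Lemma \ref{lem:space_of_trees}, recording simultaneously the integer names assigned to parent and child; this yields a pair $(S,\ell)$ with $S\in\mathsf{Tr}(\mathbb{N})$ finitely branching (by Lemma \ref{lem:finitely_many_components}) and $\ell$ a Borel-measurable enumeration of the node-parameters, hence a genuine point of $\mathsf{Tr}(\mathfrak{C}(\mathsf{Top},\mathbb{R}^n))$. Every operation in this recursion — exhaustion, components, boundedness, intersection, containment, the bookkeeping of finitely many indices at each stage — has been shown Borel in the parameter $(\mathcal{U},c)$, and the recursion is over $\mathbb{N}$, so the composite map $(\mathcal{U},c)\mapsto(S,\ell)=T_{M_{(\mathcal{U},c)}}$ is Borel, establishing the lemma.

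The main obstacle is bookkeeping rather than mathematics: one must take care that the passage from "a Borel-measurable finite set of component-indices at level $k$" to "a level-by-level enumeration of the whole tree with a coherent parent/child name function" is genuinely Borel and respects the ordering convention of Lemma \ref{lem:space_of_trees} (so that the resulting pair $(S,\ell)$ is a legitimate code), and that the open submanifolds $Q_n$ are carried along consistently as parameters rather than as abstract subsets, so that all subsequent component and containment tests apply. Once the recursion is set up with these conventions in place, every step is a direct appeal to a lemma already proved, so no further argument is needed.
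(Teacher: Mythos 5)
Your proposal is correct and follows essentially the same route as the paper: detect compactness via Theorem \ref{thm:compact_Borel} and set $T_M=\varnothing$, then build the tree recursively from the Borel exhaustion of Lemma \ref{lem:exhaustion}, the component computation of Lemma \ref{lem:Borel_computation_of_components}, and the boundedness test of Lemma \ref{lem:bounded_Borel}, packaging the result as a labelled tree via Lemma \ref{lem:space_of_trees}. Your extra attention to the enumeration/bookkeeping conventions only spells out what the paper leaves implicit.
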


Next, we identify $[T_M]$, the set of infinite branches through $T_M$, with the set of ends of $M$. Define 
\[
	\mathcal{E}(M)=M\sqcup [T_M],
\]
which we topologize as follows: a basis for the topology consists of all open subsets of $M$ together with, for each $i\in\mathbb{N}$ and each unbounded connected component $Q$ of $M\setminus K_i$, $Q\cup \varepsilon(Q)$, where $\varepsilon (Q)$ is the set of all ends eventually contained in $Q$:
\[
	\varepsilon(Q)=\{\langle Q_i\mid i\in\mathbb{N}\rangle\in [T_M]\mid\exists i\in\mathbb{N}(Q_i\subseteq Q)\}.
\] 
To see that this is, indeed, a basis, it suffices to consider $\varepsilon(Q)\cap\varepsilon(Q')$, where $Q$ and $Q'$ are unbounded open connected sets as above. Given $q\in\varepsilon(Q)\cap\varepsilon(Q')$, say with $q=\langle Q_i\mid i\in\mathbb{N}\rangle$, there are $i,j\in\mathbb{N}$ such that $Q_i\subseteq Q$ and $Q_j\subseteq Q'$. We may assume that $i\leq j$, and so $Q_j\subseteq Q_i\subseteq Q$, from which it follows that $\varepsilon(Q_j)\subseteq\varepsilon(Q)\cap\varepsilon(Q')$. We endow $[T_M]$ with the induced subspace topology generated by the sets $\varepsilon(Q)$.

There is another natural topology on $[T_M]$ which it inherits from the tree structure on $T_M$, namely the one generated by basic open sets of the form
\[
	N_{\langle Q_0,\ldots,Q_n\rangle}=\{q\in[T_M]\mid\langle Q_0,\ldots,Q_n\rangle \sqsubseteq q\}
\]
for $\langle Q_0,\ldots,Q_n\rangle\in T_M$. This is the topology induced on $[T_M]$ as a subspace of the product $\mathcal{Q}^\mathbb{N}$, where $\mathcal{Q}$ is the discrete set of the countably-many $Q_i$'s which occur in $T_M$. Consequently, under this topology, $[T_M]$ is a totally disconnected compact (since $T_M$ is finitely branching) Polish space.

\begin{lemma}
	The topologies on $[T_M]$ generated by the sets $\varepsilon(Q)$ and $N_{\langle Q_0,\ldots,Q_n\rangle}$ above, respectively, coincide.
\end{lemma}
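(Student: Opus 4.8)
The plan is to show that the two bases generate the same topology by proving mutual refinement: every basic open set of one topology is a union of basic open sets of the other. Both families are indexed by objects built from the tree $T_M$ and its associated compact exhaustion $K_i$ (for $i\in\mathbb{N}$), so the argument is essentially combinatorial once the definitions are unpacked.

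First I would show that each $N_{\langle Q_0,\ldots,Q_n\rangle}$ is open in the $\varepsilon$-topology, and in fact equals $\varepsilon(Q_n)$ intersected with finitely much initial data that is itself $\varepsilon$-open. The key observation is that an end $q=\langle Q_i'\mid i\in\mathbb{N}\rangle$ lies in $N_{\langle Q_0,\ldots,Q_n\rangle}$ if and only if $Q_i'=Q_i$ for all $i\leq n$; since the $Q_i$ are nested unbounded connected components of the $M\setminus K_i$, and the tree $T_M$ is defined precisely so that $Q_{i+1}$ is an unbounded component of $Q_i\setminus K_i$, the condition $Q_n'=Q_n$ already forces $Q_i'=Q_i$ for all $i\le n$ (each $Q_i'$ is the unique unbounded component of $M\setminus K_i$ containing $Q_n'=Q_n$, which is $Q_i$). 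Hence $N_{\langle Q_0,\ldots,Q_n\rangle}=\{q\in[T_M]\mid Q_n'=Q_n\}$, and I claim this is $\varepsilon(Q_n)$: if $q\in\varepsilon(Q_n)$ then $Q_j'\subseteq Q_n$ for some $j$, and taking $j\geq n$ (using nesting of the $Q_i'$), the set $Q_j'$ is an unbounded connected subset of $M\setminus K_n$ contained in $Q_n$, so $Q_n'$, being the component of $M\setminus K_n$ containing $Q_j'$, must equal $Q_n$. Conversely if $Q_n'=Q_n$ then trivially $Q_n'\subseteq Q_n$, so $q\in\varepsilon(Q_n)$. Thus $N_{\langle Q_0,\ldots,Q_n\rangle}=\varepsilon(Q_n)$, exhibiting every tree-basic set as an $\varepsilon$-basic set.

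Conversely, I would show each $\varepsilon(Q)$ is open in the tree topology, where $Q$ is an arbitrary unbounded connected component of $M\setminus K_i$ for some $i$. By definition $\varepsilon(Q)=\{q\mid\exists j\ (Q_j'\subseteq Q)\}$; decomposing over the (finitely many, by Lemma~\ref{lem:finitely_many_components}) relevant tree nodes, I would argue $\varepsilon(Q)=\bigcup\{N_{\langle Q_0,\ldots,Q_n\rangle}\mid\langle Q_0,\ldots,Q_n\rangle\in T_M,\ Q_n\subseteq Q\}$. The inclusion $\supseteq$ is immediate. For $\subseteq$: if $q=\langle Q_i'\rangle\in\varepsilon(Q)$, pick $j$ with $Q_j'\subseteq Q$ and $j\geq i$; then the finite sequence $\langle Q_0',\ldots,Q_j'\rangle$ is a node of $T_M$ with last entry contained in $Q$, and $q\in N_{\langle Q_0',\ldots,Q_j'\rangle}$. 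This completes the mutual refinement and hence the equality of topologies.

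I do not expect a serious obstacle here; the only point requiring a little care is the observation in the second paragraph that fixing $Q_n'$ determines all earlier $Q_i'$, which relies on the precise recursive definition of $T_M$ (each child being an unbounded component of the parent minus the next compact set in the exhaustion, equivalently an unbounded component of $M\setminus K_n$ contained in the parent) together with the uniqueness of the unbounded component of $M\setminus K_i$ containing a given unbounded connected set. Once that is in hand, everything is a routine unwinding of the two bases, and one can reasonably leave some of it to the reader, writing the proof in a few lines.

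\begin{proof}
Recall that the sets $\varepsilon(Q)$, for $Q$ an unbounded connected component of some $M\setminus K_i$, together with the open subsets of $M$, generate the first topology, while the sets $N_{\langle Q_0,\ldots,Q_n\rangle}$, for $\langle Q_0,\ldots,Q_n\rangle\in T_M$, generate the second.

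We first claim that $N_{\langle Q_0,\ldots,Q_n\rangle}=\varepsilon(Q_n)$ for every node $\langle Q_0,\ldots,Q_n\rangle$ of $T_M$. Indeed, if $q=\langle Q_i'\mid i\in\mathbb{N}\rangle\in\varepsilon(Q_n)$, choose $j\geq n$ with $Q_j'\subseteq Q_n$; then $Q_j'$ is an unbounded connected subset of $M\setminus K_n$ contained in $Q_n$, so the unbounded connected component $Q_n'$ of $M\setminus K_n$ containing $Q_j'$ must be $Q_n$. By induction downward, using that each $Q_i'$ (for $i\leq n$) is the unique unbounded component of $M\setminus K_i$ containing $Q_n'=Q_n$, we get $Q_i'=Q_i$ for all $i\leq n$, i.e.\ $\langle Q_0,\ldots,Q_n\rangle\sqsubseteq q$, so $q\in N_{\langle Q_0,\ldots,Q_n\rangle}$. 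Conversely, if $q\in N_{\langle Q_0,\ldots,Q_n\rangle}$ then $Q_n'=Q_n\subseteq Q_n$, so $q\in\varepsilon(Q_n)$. In particular every $N_{\langle Q_0,\ldots,Q_n\rangle}$ is open in the first topology.

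Conversely, let $Q$ be an unbounded connected component of $M\setminus K_i$. We claim
\[
	\varepsilon(Q)=\bigcup\{N_{\langle Q_0,\ldots,Q_n\rangle}\mid\langle Q_0,\ldots,Q_n\rangle\in T_M\text{ and }Q_n\subseteq Q\}.
\]
The inclusion $\supseteq$ is clear since $Q_n\subseteq Q$ implies $q\in\varepsilon(Q_n)\subseteq\varepsilon(Q)$ for any $q$ extending $\langle Q_0,\ldots,Q_n\rangle$. For $\subseteq$, if $q=\langle Q_i'\mid i\in\mathbb{N}\rangle\in\varepsilon(Q)$, pick $j\geq i$ with $Q_j'\subseteq Q$; then $\langle Q_0',\ldots,Q_j'\rangle$ is a node of $T_M$ with $Q_j'\subseteq Q$, and $q\in N_{\langle Q_0',\ldots,Q_j'\rangle}$. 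Hence every $\varepsilon(Q)$ is open in the second topology.

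Since the remaining basic open sets of the first topology are the open subsets of $M$, which are not elements of $[T_M]$ and thus contribute nothing to the subspace topology on $[T_M]$, the two topologies on $[T_M]$ coincide.
\end{proof}
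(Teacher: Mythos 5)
Your proof is correct and follows essentially the same route as the paper: both arguments establish the identity $N_{\langle Q_0,\ldots,Q_n\rangle}=\varepsilon(Q_n)$ (using that two components of $M\setminus K_n$ containing a common unbounded set coincide, and that the tail entry determines its predecessors), and both handle the converse by covering each $\varepsilon(Q)$ by tree-basic sets $N_{\langle Q_0',\ldots,Q_j'\rangle}$ with $Q_j'\subseteq Q$. The only cosmetic difference is that you write $\varepsilon(Q)$ as an explicit union while the paper picks, for each end in $\varepsilon(Q)$, the least such $j$; the content is identical.
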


\begin{proof}
	We will show that each basic open set is open in the other topology. Let $\langle Q_0,\ldots,Q_n\rangle\in T_M$, so each $Q_i$ is an unbounded connected component of $M\setminus K_i$ and $M=Q_0\supseteq Q_1\supseteq\cdots\supseteq Q_n$. Clearly, $N_{\langle Q_0,\ldots,Q_n\rangle}\subseteq\varepsilon(Q_n)$. If $q=\langle Q_i'\mid i\in\mathbb{N}\rangle\in\varepsilon(Q_n)$, then since $Q_n$ and $Q_n'$ are both connected components of $M\setminus K_n$ which must overlap, we have that $Q_n'=Q_n$. By a similar argument, $Q_n$ uniquely determines each of its predecessors $Q_{n-1},\ldots,Q_0$ in $T_M$. Hence, $N_{\langle Q_0,\ldots,Q_n\rangle}=\varepsilon(Q_n)$.
	
	Conversely, given an unbounded connected component $Q$ of $M\setminus K_i$ for some $i$, take $q=\langle Q_i\mid i\in\mathbb{N}\rangle\in\varepsilon(Q)$. If we let $n\in\mathbb{N}$ be the least such that $Q_n\subseteq Q$, then $q\in N_{\langle Q_0,\ldots,Q_n\rangle}\subseteq\varepsilon(Q)$. This shows that the topologies coincide.
\end{proof}

The next lemma will verify that $\mathcal{E}(M)$ is, in fact, the end compactification of $M$.

\begin{lemma}\label{lem:end_compact}
	For each connected manifold $M$:
	\begin{enumerate}[label=\textup{\arabic*.}]
		\item $\mathcal{E}(M)$ is Hausdorff, second countable, and $M$ is dense open in $\mathcal{E}(M)$.
		\item $\mathcal{E}(M)$ is connected.
		\item $\mathcal{E}(M)$ is compact.
		\item $[T_M]=\mathcal{E}(M)\setminus M$ is totally disconnected.
		\item For any nonempty connected open set $U\subseteq\mathcal{E}(M)$, $U\setminus[T_M]$ is connected.
	\end{enumerate}
	Consequently, $\mathcal{E}(M)$ can be identified with the end compactification of $M$.
\end{lemma}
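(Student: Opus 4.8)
The plan is to prove each of the five items in turn and then invoke the uniqueness of the Freudenthal end compactification to conclude. Most of these follow more or less directly from the construction of $\mathcal{E}(M)$ and the fact that $[T_M]$ carries the same topology whether described via the sets $\varepsilon(Q)$ or the tree-cylinders $N_{\langle Q_0,\ldots,Q_n\rangle}$, established in the preceding lemma.

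First I would dispatch item (1). That $M$ is open in $\mathcal{E}(M)$ is immediate since every open subset of $M$ is declared basic open; density follows since every basic neighborhood $Q\cup\varepsilon(Q)$ of an end meets $M$ in the nonempty set $Q$. Second countability follows because $M$ is second countable and, $T_M$ being finitely branching, there are only countably many cylinder sets $N_{\langle Q_0,\dots,Q_n\rangle}$. For Hausdorffness: two points of $M$ are separated in $M$ itself; a point $x\in M$ and an end $q=\langle Q_i\rangle$ are separated by taking $i$ large enough that $x\in \mathrm{int}(K_i)$, which is disjoint from $Q_i\cup\varepsilon(Q_i)$; and two distinct ends $q=\langle Q_i\rangle$, $q'=\langle Q_i'\rangle$ differ at some level $n$, i.e. $Q_n\neq Q_n'$, hence $Q_n\cap Q_n'=\varnothing$ (distinct connected components of $M\setminus K_n$) and $Q_n\cup\varepsilon(Q_n)$, $Q_n'\cup\varepsilon(Q_n')$ separate them.

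For item (3), compactness, I would argue directly with the given basis. Suppose $\mathcal{E}(M)=\bigcup_\alpha W_\alpha$ is an open cover; refine it to a cover by basic sets. Finitely many basic sets $Q^1\cup\varepsilon(Q^1),\dots,Q^r\cup\varepsilon(Q^r)$ cover $[T_M]$ (compactness of $[T_M]$ in its tree topology), and since each $\varepsilon(Q)=\varepsilon(Q_n)=N_{\langle Q_0,\dots,Q_n\rangle}$ is a cylinder we may take all the $Q^s$ at a single level $m$, so they together with the unbounded components of $M\setminus K_m$ exhaust $[T_M]$; then $\mathcal{E}(M)\setminus\bigcup_s(Q^s\cup\varepsilon(Q^s))$ is a closed subset of $\mathcal{E}(M)$ contained in $K_{m}$ together with the bounded components of $M\setminus K_m$ adjacent to it — a compact subset of $M$ — hence covered by finitely many of the $W_\alpha$, using Lemma \ref{lem:finitely_many_components} to see there are only finitely many unbounded components in play. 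For item (2), connectedness: a clopen partition $\mathcal{E}(M)=A\sqcup B$ restricts to a clopen partition of $M$, which is connected, so (WLOG) $M\subseteq A$; but $M$ is dense, so $A$ is dense, forcing $B=\varnothing$. Item (4) is exactly the observation, recorded above, that $[T_M]$ in its tree topology is totally disconnected (it embeds in a product of discrete sets). For item (5), given connected open $U\subseteq\mathcal{E}(M)$, note $U\cap M$ is open in $M$, and any point $x\in U\cap M$ lies in a connected open subset of $U\cap M$; one shows $U\setminus[T_M]=U\cap M$ is connected by observing that any two of its points can be joined inside $U$, and that an end of $M$ lying in $U$ is a limit of a component of $M\setminus K_i$ that is eventually contained in $U$, so removing the ends from $U$ cannot disconnect what a path through $U$ already connects — more carefully, if $U\cap M = V_1\sqcup V_2$ were a nontrivial clopen partition, each $V_j$ open in $M$, then each end in $U$ would have a neighborhood $Q\cup\varepsilon(Q)\subseteq U$ with $Q$ connected, hence $Q\subseteq V_1$ or $Q\subseteq V_2$; this partitions the ends of $U$, and together with $V_1,V_2$ gives a clopen partition of $U$, contradicting connectedness unless one piece is empty.

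The main obstacle I anticipate is item (5), and relatedly the precise verification that $\mathcal{E}(M)$ satisfies whatever axiomatization of the end compactification one is citing (e.g. the characterization in \cite{MR0114911} or \cite{MR120637}): one must be careful that "connected open $U$" interacts correctly with the fact that $\varepsilon(Q)$-neighborhoods of ends have connected "$M$-part" $Q$ only because the $Q$'s are by construction connected components. Once items (1)--(5) are in hand, the concluding sentence follows: the Freudenthal end compactification is characterized (up to a homeomorphism fixing $M$) as the unique compact, connected Hausdorff space containing $M$ as a dense open subset whose remainder is totally disconnected and which satisfies the local connectedness-type condition (5); since $\mathcal{E}(M)$ has all these properties, it is that compactification, and in particular it is independent of the choice of compact exhaustion $K_i$, which also reconciles this construction with the equivalence of ends noted before the lemma.
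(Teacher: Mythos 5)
Your proposal is correct and follows the paper's route: verify items (1)--(5) directly from the definition of the topology on $\mathcal{E}(M)$ (using that the $\varepsilon(Q)$-topology and the tree topology on $[T_M]$ agree), then invoke the uniqueness of the end compactification among compactifications with these properties. Your arguments for (1), (2), (4), (5) are essentially the paper's, including the key move in (5) of fattening a putative clopen partition of $U\cap M$ by the (connected) sets $Q$ occurring in basic neighborhoods $Q\cup\varepsilon(Q)\subseteq U$ of the ends in $U$.

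The one genuine divergence is (3): the paper proves sequential compactness (legitimate since $\mathcal{E}(M)$ is second countable), extracting from an unbounded sequence in $M$ a subsequence that descends a branch of $T_M$, whereas you give a finite-subcover argument, refining a basic cover of $[T_M]$ to the sets $Q'\cup\varepsilon(Q')$ at a single exhaustion level $m$ and covering the compact remainder inside $M$ separately. Both work; your version trades the diagonal subsequence extraction for the (true, but not completely free) fact that $K_m$ together with all bounded components of $M\setminus K_m$ is compact --- one shows all but finitely many components of $M\setminus K_m$ lie in $K_{m+1}$, so the union is a closed subset of a compact set. Note that the paper's sequential argument quietly leans on the same fact when it asserts that some \emph{unbounded} component of $M\setminus K_0$ must contain infinitely many $x_n$'s, so this is an implicit step shared by both treatments rather than a gap particular to yours; if you write it up, spell that small lemma out once and both your (3) and the well-definedness of branches through every node of $T_M$ come along for free.
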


\begin{proof}
	For (1), that $\mathcal{E}(M)$ is second countable and $M$ is open and dense in $\mathcal{E}(M)$ follows immediately from the definition of the topology on $\mathcal{E}(M)$. Since both $M$ and $[T_M]$ are Hausdorff, to show that $\mathcal{E}(M)$ is Hausdorff, it suffices to show that if $m\in M$ and $q=\langle Q_i\mid i\in\mathbb{N}\rangle\in [T_M]$, then they can be separated by disjoint open sets. Choose an element of the exhaustion $K_i$ which contains $m$ in its interior. Then, $\mathrm{int}(K_i)$ and $Q_{i+1}\cup\varepsilon(Q_{i+1})$ are disjoint open sets containing $m$ and $q$, respectively.
	
	For (2), since $M$ is dense in $\mathcal{E}(M)$ and connected, $\mathcal{E}(M)$ is connected as well.
	
	In order see (3) that $\mathcal{E}(M)$ is compact, it suffices to show that it is sequentially compact, since it is second countable. Let $(x_n)_{n\in\mathbb{N}}$ be a sequence in $\mathcal{E}(M)$; we claim that it has a convergent subsequence. Since we already know that $[T_M]$ is compact, we may assume the $x_n$'s are entirely contained in $M$, and further, that they have no subsequence which is bounded in $M$. Define a subsequence $(x_{n_i})_{i\in\mathbb{N}}$ of $(x_n)_{n\in\mathbb{N}}$ as follows: Start by taking $x_{n_0}=x_0$ and $Q_0=M$. Let $Q_1$ be an unbounded connected component of $M\setminus K_0$ which contains infinitely many $x_n$'s. Such a component exists because there are only finitely many unbounded components of $M\setminus K_0$ and the $x_n$'s are unbounded. Choose one such $x_n$ with $n>n_0$, call it $x_{n_1}$. Suppose we have chosen $x_{n_0},x_{n_1},\ldots,x_{n_i}$ and $\langle Q_0,Q_1,\ldots,Q_i\rangle\in T_M$ such that $x_{n_i}\in Q_i$ and infinitely many $x_n$'s lie in $Q_i$. Let $Q_{i+1}$ be an unbounded component of $M\setminus K_{i+1}$ which contains infinitely many $x_n$'s. Choose one such $x_{n_{i+1}}\in Q_{i+1}$ with $n_{i+1}>n_i$. It then follows from the definition of the topology on $\mathcal{E}(M)$ that $(x_{n_i})_{i\in\mathbb{N}}$ converges to $q=\langle Q_i\mid i\in\mathbb{N}\rangle\in[T_M]$.
	
	We have already observed (4) that $[T_M]$ is totally disconnected.
	
	For (5), let $U$ be a connected open set in $\mathcal{E}(M)$, which we may assume intersects $[T_M]$. Suppose that $U\setminus [T_M]=U\cap M=O_0\cup O_1$, where $O_0$ and $O_1$ are disjoint open sets in $M$. Let $U_0$ be the union of $O_0$ and all sets $Q\cup\varepsilon(Q)$ where $Q\subseteq O_0$, and $U_1$ the the union of $O_1$ and all sets $Q\cup\varepsilon(Q)$ where $Q\subseteq O_1$. Clearly, $U_0$ and $U_1$ are open and disjoint. Moreover, if $q=\langle Q_i\mid i\in\mathbb{N}\rangle\in U\cap [T_M]$, then $Q_i\subseteq O_0$ or $O_1$, and so $q\in U_0$ or $q\in U_1$. This shows that $U\subseteq U_0\cup U_1$, but $U$ is connected and $U_0\cap U_1=\varnothing$, so one of $U_0$ or $U_1$ is empty. It follows that either $O_0$ or $O_1$ is empty, proving that $U\setminus[T_M]$ is connected.
	
	Lastly, the end compactification of $M$ is  (up to a homeomorphism which is the identity on $M$) the unique compactification satisfying (1) through (5) (\cite[Thm.\ 1.8]{MR120637}).
\end{proof}

\subsection{Genus, orientability, and planarity}\label{sec:genus}

The classification result in Theorem \ref{thm:Richards} refers to the ``genus'' and ``orientability class'' of an arbitrary surface, generalizing the familiar notions for compact surfaces. Intuitively, for an orientable surface, the genus counts the number of ``handles'', while for a nonorientable surface, it counts the number of ``cross caps''\footnote{A ``cross cap'' is simply an embedded M\"obius band.} divided by a factor of $2$. In the case of surfaces homeomorphic to finite simplicial complexes, these invariants can be computed directly from the finite complex. 
For arbitrary surfaces, the \emph{genus} can then be defined as the (possibly infinite) limit of the genuses coming from an exhaustion of the surface by finite polyhedra, as in Lemma \ref{lem:canonical_exhaustion}, which is a Borel operation. 

Following \cite{MR143186}, there are four \emph{orientability-classes} for arbitrary surfaces:

\begin{itemize}
	\item the \emph{orientable}: those for which all compact subsurfaces are orientable;
	\item the \emph{infinitely nonorientable}: those for which no complement of a compact subsurface is orientable;
	\item the \emph{odd nonorientable}: those for which every sufficiently large compact subsurface contains an odd number of cross caps;
	\item the \emph{even nonorientable}: likewise, but with an even number of cross caps.
\end{itemize}
Again, the ability to compute exhaustions by finite polyhedra and their invariants (e.g., Betti numbers), by Lemma \ref{lem:canonical_exhaustion}, implies that we can compute the orientability class of an arbitrary surface in a Borel way, as well (cf.~Corollary \ref{cor:compact_invariants_Borel}). We also need the notion of a \emph{planar} surface, i.e.,~one in which every compact subsurface has genus zero, which can likewise be checked in a Borel way. To summarize:

\begin{lemma}\label{lemma:genus_Borel}
	The following can be computed in a Borel fashion on $\mathfrak{C}(\mathsf{Top},\mathbb{R}^2)$:
	\begin{enumerate}[label=\textup{\arabic*.}]
		\item Genus (in $\mathbb{N}\cup\{\infty\}$).
		\item Planarity (``yes'' or ``no'').
		\item Orientability class (``orientable'', ``infinitely nonorientable'', ``odd nonorientable'', or ``even nonorientable''.)\qed
	\end{enumerate}
\end{lemma}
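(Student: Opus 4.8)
The plan is to reduce all three items to the single fact, already established in Lemma \ref{lem:canonical_exhaustion} (together with Theorem \ref{thm:Borel_triangulation}), that there is a Borel map $\mathfrak{C}(\mathsf{Top},\mathbb{R}^2)\to\mathcal{S}_{\mathrm{poly}}^{\mathbb{N}}$ sending each $(\mathcal{U},c)$ to a canonical exhaustion $(P_n)_{n\in\mathbb{N}}$ of a subdivision of a polyhedron whose geometric realization is homeomorphic to $M_{(\mathcal{U},c)}$. The key observation is that each $P_n$ is a \emph{finite} simplicial complex with boundary, so every classical combinatorial invariant of $P_n$ --- its first Betti number, its Euler characteristic, the number of boundary circles, whether $P_n$ is orientable and, if nonorientable, the parity of its crosscap number, the genus of each component --- is computed by a finite algorithm from the finitely many simplices of $P_n$, i.e.\ by a Borel (indeed continuous, since these are locally constant) function $\mathcal{S}_{\mathrm{poly}}\to\mathbb{Z}\cup\{\text{finite data}\}$ once we note that the set of polyhedra with a given finite invariant is clopen in the relevant coordinates of $\mathcal{S}$. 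First I would record this as a preliminary sub-lemma: \emph{the assignment to each finite polyhedron (with boundary) of its genus, orientability type, and number of boundary components is Borel on the subspace of $\mathcal{S}_{\mathrm{poly}}$ consisting of finite complexes.}

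Next I would assemble the three invariants in the statement as limits of these finite-stage data along the exhaustion $(P_n)_{n\in\mathbb{N}}$. For genus: the genus of a surface is, by definition, $\sup_n (\text{genus of }P_n)$ taken over a compact exhaustion by subsurfaces; since $P_{n+1}$ contains $P_n$ and border simplices of $P_{n+1}$ are not in $P_n$, the genus of $P_n$ is nondecreasing in $n$, so $g(M)=\lim_n g(P_n)\in\mathbb{N}\cup\{\infty\}$, and this is Borel as a countable supremum of Borel $\mathbb{N}$-valued functions. For planarity: $M$ is planar iff every $P_n$ has genus zero, i.e.\ $\forall n\,(g(P_n)=0)$, a Borel (coanalytic-and-analytic, in fact simply Borel) condition. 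For the orientability class: $M$ is orientable iff every $P_n$ is orientable; if some $P_n$ is nonorientable, then since the crosscap numbers of the $P_n$ are eventually monotone, the class of $M$ is ``infinitely nonorientable'' iff for every $n$ there is $m\ge n$ with $P_m$ nonorientable \emph{and} no complement stabilizes, which can be phrased via the complementary exhaustion (available, again, from Lemma \ref{lem:canonical_exhaustion} applied to the ``outside'' pieces, or directly from $P_n\setminus P_{n-1}$), and is ``odd'' or ``even'' according to the eventual parity of the crosscap number of $P_n$ (which stabilizes in parity once it is not ``infinitely nonorientable''). In each case the output is determined by a Borel combination of countably many Borel finite-stage quantities, hence Borel, and this proves Lemma \ref{lemma:genus_Borel}.

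The main obstacle I anticipate is bookkeeping rather than conceptual: matching the somewhat informal definitions of the four orientability classes of Richards \cite{MR143186} (phrased in terms of ``all compact subsurfaces'' or ``complements of compact subsurfaces'') to statements about the specific exhaustion $(P_n)$ produced by Lemma \ref{lem:canonical_exhaustion}, and in particular verifying that the relevant invariants (orientability of $P_n$, parity of crosscap number, genus) eventually stabilize or are monotone along \emph{this} exhaustion so that the quantifier ``for all sufficiently large compact subsurfaces'' can be replaced by ``for all sufficiently large $n$'' --- this uses the fact that any compact subsurface is contained in some $P_n$ (the last clause of the definition of canonical exhaustion) together with standard monotonicity of genus under inclusion of subsurfaces. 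A secondary point requiring care is producing, in a Borel way, the complementary exhaustion needed for the ``infinitely nonorientable'' versus ``eventually orientable complement'' dichotomy; I would handle this by applying Lemma \ref{lem:exhaustion} (or Lemma \ref{lem:canonical_exhaustion} to suitable complementary regions) to obtain Borel access to the complements $M\setminus P_n$ and their finitely many ends' carriers, and then reading off orientability of those pieces as finite combinatorial data. None of this is deep; it is a matter of carefully threading the Borel finite-stage computations through the classical definitions, which is exactly the sort of verification the paper elsewhere carries out, so I would present it compactly, leaving the routine finite-algorithm details to the reader with a pointer to \cite{MR143186} and \cite{MR0114911}.
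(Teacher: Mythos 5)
Your proposal is correct and follows essentially the same route as the paper, which likewise derives all three invariants from the Borel canonical exhaustion by finite polyhedra (Theorem \ref{thm:Borel_triangulation} and Lemma \ref{lem:canonical_exhaustion}) together with the fact that the invariants of a finite complex are computable from its finitely many simplices; the paper in fact records only this idea and leaves the bookkeeping you describe implicit. The one place you make extra work for yourself, the ``complementary exhaustion'' for the infinitely nonorientable case, can be avoided by noting that orientability of $M\setminus P_n$ is detected by checking, for each $m>n$, orientability of the compact subsurface $P_m\setminus\mathrm{int}(P_n)$, which is again finite combinatorial data along the same exhaustion.
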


We can now define (non)planarity and (non)orientability for ends:

\begin{definition}
	Suppose that $q=\langle Q_i\mid i\in\mathbb{N}\rangle$ is an end of a surface $M$.
	\begin{enumerate}[label=\textup{\arabic*.}]
		\item We say that $q$ is \emph{nonplanar} if $Q_i$ is nonplanar for all $i\in\mathbb{N}$.
		\item We say that $q$ is \emph{nonorientable} if $Q_i$ is nonorientable for all $i\in\mathbb{N}$.
	\end{enumerate}
\end{definition}

Given a surface $M$ and the tree $T_M$ defined in Section \ref{subsection:space_of_ends}, we can define subtrees $T_M'$ and $T_M''$ of $T_M$ such that $\langle Q_0,\ldots,Q_n\rangle\in T_M'$ ($T_M''$, respectively) if and only if for all $i\leq n$, $Q_n$ is nonplanar (nonorientable). Then, for $q\in [T_M]$, we have that $q\in[T_M']$ ($[T_M'']$, respectively) if and only if $q$ is a nonplanar (nonorientable) end of $M$. By Lemmas \ref{lem:space_of_trees} and \ref{lemma:genus_Borel}, these trees can be likewise computed in a Borel way. Combining this with Lemma \ref{lemma:T_M_Borel}, we have the following:

\begin{lemma}\label{lemma:trees_M_Borel}
The map $\mathfrak{C}(\mathsf{Top},\mathbb{R}^2)\to\mathsf{Tr}(\mathfrak{C}(\mathsf{Top},\mathbb{R}^2))^3$ given by $$(\mathcal{U},c)\mapsto (T_{M_{(\mathcal{U},c)}},T_{M_{(\mathcal{U},c)}}',T_{M_{(\mathcal{U},c)}}'')$$ is Borel.\qed
\end{lemma}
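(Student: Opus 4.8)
\textbf{Proof proposal for Lemma \ref{lemma:trees_M_Borel}.}

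The plan is to assemble this map as a composition of Borel maps already established in the preceding sections, noting that the only genuinely new content beyond Lemma \ref{lemma:T_M_Borel} is the verification that the two subtrees $T_M'$ and $T_M''$ can be extracted in a Borel way. First I would recall the setup: $T_{M_{(\mathcal{U},c)}}$ is a finitely-branching tree whose nodes are open connected submanifolds of $M_{(\mathcal{U},c)}$ (namely the unbounded components of the complements of an exhaustion), and we have a Borel map $(\mathcal{U},c)\mapsto T_{M_{(\mathcal{U},c)}}$ into $\mathsf{Tr}(\mathfrak{C}(\mathsf{Top},\mathbb{R}^2))$ by Lemma \ref{lemma:T_M_Borel}, where $\mathsf{Tr}(\mathfrak{C}(\mathsf{Top},\mathbb{R}^2))$ is the standard Borel space of finitely-branching labelled trees furnished by Lemma \ref{lem:space_of_trees}. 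Concretely, via that lemma, a tree $T$ is coded by a pair $(S,\ell)\in\mathsf{Tr}(\mathbb{N})\times(\mathfrak{C}(\mathsf{Top},\mathbb{R}^2)\cup\{u\})^{\mathbb{N}}$, where $S$ records the combinatorial shape and $\ell$ enumerates the nodes (each node being, itself, a parameter in $\mathfrak{C}(\mathsf{Top},\mathbb{R}^2)$ presenting the open submanifold $Q$ in question). So the task reduces to: given this coded tree, Borel-compute the codes of the subtrees $T_M'$ and $T_M''$.

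The key step is to define, for each tree code $(S,\ell)$, a pruned subtree. A node $\langle Q_0,\dots,Q_n\rangle$ of $T_M$ lies in $T_M'$ exactly when each $Q_i$ (presented by the corresponding coordinate of $\ell$, which is a parameter in $\mathfrak{C}(\mathsf{Top},\mathbb{R}^2)$) is nonplanar, and in $T_M''$ exactly when each $Q_i$ is nonorientable. By Lemma \ref{lemma:genus_Borel}, planarity and orientability class are each Borel-computable on $\mathfrak{C}(\mathsf{Top},\mathbb{R}^2)$; hence the set $P\subseteq\mathfrak{C}(\mathsf{Top},\mathbb{R}^2)$ of nonplanar surfaces and the set $N$ of nonorientable surfaces are Borel. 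Then, working at the level of the code $(S,\ell)$: a node index $m$ (in the well-ordering of nodes of $S$) survives into the $T_M'$-subtree iff $\ell(m)\in P$ and every ancestor-index $m'$ of $m$ in $S$ also satisfies $\ell(m')\in P$; since $S$ is finitely branching, ``$m'$ is an ancestor of $m$ in $S$'' is a condition on $S$ and the finite data $m,m'$, hence Borel, and the conjunction over the (finitely many, and Borel-enumerable) ancestors is Borel. One then re-codes the resulting sub-collection of nodes back into $\mathsf{Tr}(\mathbb{N})\times(\mathfrak{C}(\mathsf{Top},\mathbb{R}^2)\cup\{u\})^{\mathbb{N}}$ — relabel the surviving nodes in the canonical order, setting the label to $u$ at non-surviving positions, and adjust $S$ accordingly; this relabelling is a Borel reorganization of countably many coordinates. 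The analogous construction with $N$ in place of $P$ yields $T_M''$. This furnishes Borel maps $\mathsf{Tr}(\mathfrak{C}(\mathsf{Top},\mathbb{R}^2))\to\mathsf{Tr}(\mathfrak{C}(\mathsf{Top},\mathbb{R}^2))$ sending $T_M$ to $T_M'$ and to $T_M''$ respectively, and composing with Lemma \ref{lemma:T_M_Borel} and pairing gives the claimed Borel map into $\mathsf{Tr}(\mathfrak{C}(\mathsf{Top},\mathbb{R}^2))^3$.

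I expect the main obstacle to be purely bookkeeping: making precise that the ``prune-and-recode'' operation on labelled-tree codes is Borel, since one must check that the reindexing of surviving nodes into the canonical well-order of $\mathbb{N}^{<\infty}$ is a Borel function of the pair $(S,\ell)$ — but this is exactly the kind of Borel manipulation of countably-indexed data that Lemma \ref{lem:space_of_trees} was set up to support, and it is routine once one fixes the conventions there. A minor subtlety worth flagging explicitly is that the labels $\ell(m)$ must genuinely be (codes for) the submanifolds $Q_m$ as elements of $\mathfrak{C}(\mathsf{Top},\mathbb{R}^2)$ — which is ensured by the construction of $T_M$ in Section \ref{subsection:space_of_ends}, where the $Q_m$ were obtained as connected components (open submanifolds) via Lemma \ref{lem:Borel_computation_of_components} — so that Lemma \ref{lemma:genus_Borel} applies to them directly. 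No new topological input is needed; the lemma is a consequence of Lemmas \ref{lem:space_of_trees}, \ref{lemma:T_M_Borel}, and \ref{lemma:genus_Borel}.
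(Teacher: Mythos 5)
Your proposal is correct and follows essentially the same route as the paper: the paper likewise obtains the lemma by combining Lemma \ref{lemma:T_M_Borel} with the Borel computability of planarity and orientability from Lemma \ref{lemma:genus_Borel}, using the labelled-tree coding of Lemma \ref{lem:space_of_trees} to carry out the pruning of $T_M$ down to $T_M'$ and $T_M''$ in a Borel way. Your write-up merely makes explicit the prune-and-recode bookkeeping that the paper leaves as an immediate consequence of those three lemmas.
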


\subsection{Classification by countable structures}

Recall that $\mathcal{K}(\mathbb{N}^\mathbb{N})$ is the space of all compact subsets of the Baire space $\mathbb{N}^\mathbb{N}$, endowed with the Vietoris topology. Let $\mathcal{K}(\mathbb{N}^\mathbb{N})_3$ be the closed subspace of $\mathcal{K}(\mathbb{N}^\mathbb{N})^3$ consisting of all nested triples $(K,K',K'')$ of compact subsets $K\supseteq K'\supseteq K''$ of $\mathbb{N}^\mathbb{N}$. The space $\mathcal{K}(\{0,1\}^\mathbb{N})_3$ is defined similarly for the Cantor space $\{0,1\}^\mathbb{N}$. On such nested triples, we write $(K,K',K'')\cong_3(L,L',L'')$ if there is a homeomorphism $f:K\to L$ that restricts to homeomorphisms $K'\to L'$ and $K''\to L''$.

By Lemma \ref{lemma:trees_M_Borel}, we have a Borel function on $\mathfrak{C}(\mathsf{Top},\mathbb{R}^2)$ which, given a surface $M=M_{(\mathcal{U},c)}$, for $(\mathcal{U},c)\in\mathfrak{C}(\mathsf{Top},\mathbb{R}^2)$, produces a triple $(T_M,T_M',T_M'')$ of trees of open subsets of $M$ whose branch spaces are exactly the space of ends $[T_M]$, the space of nonplanar ends $[T_M]'$, and the space of nonorientable ends $[T_M]''$. Using Lemma \ref{lem:space_of_trees}, we can then obtain a triple $(S_M,S_M',S_M'')$ of nested trees on $\mathbb{N}$ isomorphic to $(T_M,T_M',T_M'')$, in the obvious sense, and compute the triple $$(e(M),e(M)',e(M)''):=([S_M],[S_M'],[S_M''])\in\mathcal{K}(\mathbb{N}^\mathbb{N})_3.$$ Applying Theorem \ref{thm:Richards}, we have thus proven:

\begin{theorem}\label{thm:surfaces_fixed_genus_orient}
	On each genus and orientability class of $\mathfrak{C}(\mathsf{Top},\mathbb{R}^2)$, there is a Borel reduction of homeomorphism to $\cong_3$ on $\mathcal{K}(\mathbb{N}^\mathbb{N})_3$.\qed
\end{theorem}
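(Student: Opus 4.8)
The plan is to bolt together the Borel tools assembled in Sections~\ref{subsection:space_of_ends} and~\ref{sec:genus} and then quote the Ker\'{e}kj\'{a}rt\'{o}--Richards classification. First I would note that, by Lemma~\ref{lemma:trees_M_Borel}, the assignment $(\mathcal{U},c)\mapsto(T_{M},T_{M}',T_{M}'')$ sending a surface $M=M_{(\mathcal{U},c)}$ to its nested triple of trees of open submanifolds (ends, nonplanar ends, nonorientable ends) is Borel, and that all three trees are finitely branching by Lemma~\ref{lem:finitely_many_components}. Composing with the relabelling map $p$ of Lemma~\ref{lem:space_of_trees} --- applied to $T_M$, and carried along on the subtrees $T_M',T_M''$ using that $p$ preserves subtrees --- yields, Borel-measurably, a nested triple $S_M''\subseteq S_M'\subseteq S_M$ of finitely branching trees on $\mathbb{N}$, with $p$ restricting to isomorphisms $T_M\to S_M$, $T_M'\to S_M'$, $T_M''\to S_M''$. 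Passing to branch spaces then gives a map
\[
	\Phi:\mathfrak{C}(\mathsf{Top},\mathbb{R}^2)\to\mathcal{K}(\mathbb{N}^\mathbb{N})_3,\qquad \Phi(\mathcal{U},c)=\bigl([S_M],[S_M'],[S_M'']\bigr)=\bigl(e(M),e(M)',e(M)''\bigr).
\]
Here I would check that $T\mapsto[T]$ is a Borel map from finitely branching trees on $\mathbb{N}$ into $(\mathcal{K}(\mathbb{N}^\mathbb{N}),\text{Vietoris})$: by K\"onig's lemma the ``hit'' condition $[T]\cap\{x\in\mathbb{N}^\mathbb{N}\mid s\sqsubset x\}\neq\varnothing$ holds if and only if $T$ contains nodes of every length extending $s$ (a $G_\delta$ condition on $T$), and the ``miss'' condition $[T]\subseteq\{x\mid s\sqsubset x\}$ is handled symmetrically; the image lands in $\mathcal{K}(\mathbb{N}^\mathbb{N})_3$ because the $S$'s are nested finitely branching trees, so their branch spaces are nested compacta. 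Thus $\Phi$ is Borel.

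Next I would fix a genus $g\in\mathbb{N}\cup\{\infty\}$ and one of the four orientability classes $o$, and let $\mathfrak{C}_{g,o}\subseteq\mathfrak{C}(\mathsf{Top},\mathbb{R}^2)$ be the corresponding set of parameters; this is Borel by Lemma~\ref{lemma:genus_Borel}, so $\Phi|_{\mathfrak{C}_{g,o}}$ is a Borel map between standard Borel spaces. That $\Phi|_{\mathfrak{C}_{g,o}}$ is a \emph{reduction} of $\cong_{\mathsf{Top}}$ (restricted to $\mathfrak{C}_{g,o}$) to $\cong_3$ is then immediate from Theorem~\ref{thm:Richards}: the constructions of Section~\ref{subsection:space_of_ends} (notably Lemma~\ref{lem:end_compact}) and of Section~\ref{sec:genus} identify $[S_M]$, together with its closed subspaces $[S_M']$ and $[S_M'']$, compatibly, with the triple $(e(M),e'(M),e''(M))$ appearing in that theorem; and since all parameters in $\mathfrak{C}_{g,o}$ name surfaces of the same genus and orientability class, Theorem~\ref{thm:Richards} reduces to the assertion that $M_{(\mathcal{U},c)}$ and $M_{(\mathcal{V},d)}$ are homeomorphic if and only if these triples are equivalent, i.e.\ if and only if $\Phi(\mathcal{U},c)\cong_3\Phi(\mathcal{V},d)$.

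As for the main obstacle: there really is none at this stage --- all the substantive work (the Borel forms of Jordan--Schoenflies and of the triangulation theorem, the Borel computation of the tree of ends via Lemma~\ref{lem:exhaustion}, Lemma~\ref{lem:bounded_Borel} and Lemma~\ref{lem:Borel_computation_of_components}, and the Borel computation of the invariants $g$ and $o$) has already been carried out in the preceding subsections, and Theorem~\ref{thm:Richards} is quoted wholesale. The only points needing genuine care in writing out the argument above are the compatibility of the relabelling $p$ with the nesting $T_M''\subseteq T_M'\subseteq T_M$ (handled by the ``preserves subtrees'' clause of Lemma~\ref{lem:space_of_trees}) and the Borelness of the branch-space operation $T\mapsto[T]$ into the Vietoris topology, both of which are routine.
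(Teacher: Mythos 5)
Your proposal is correct and follows essentially the same route as the paper: apply Lemma \ref{lemma:trees_M_Borel} to get the nested triple of trees, relabel via Lemma \ref{lem:space_of_trees}, pass to branch spaces in $\mathcal{K}(\mathbb{N}^\mathbb{N})_3$, and quote Theorem \ref{thm:Richards} on each genus and orientability class (Borel by Lemma \ref{lemma:genus_Borel}). The extra details you supply — the subtree-preservation clause and the Borelness of $T\mapsto[T]$ into the Vietoris topology — are exactly the routine points the paper treats as implicit (the latter appearing explicitly only in the proof of Lemma \ref{lem:Stone_3}).
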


The following general lemma lets us fix the genus and orientability class, and focus on equivalence in $\mathcal{K}(\mathbb{N}^\mathbb{N})_3$.

\begin{lemma}\label{lem:partition_ctbl_structures}
If $E$ is an analytic equivalence relation on a standard Borel space $X$ and there is a partition of $X$ into countably many $E$-invariant Borel pieces $X_i$ (for $i\in\mathbb{N}$) such that $E|_{X_i}$ is classifiable by countable structures for each $i\in\mathbb{N}$, then so is $E$.
\end{lemma}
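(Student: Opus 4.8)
The plan is to reduce $E$ to the isomorphism relation on a single class of countable structures, obtained by stitching together the given reductions on the pieces $X_i$ and appending a label that records the index $i$. For each $i\in\mathbb{N}$ we are given a class $\mathsf{C}_i$ of countable first-order structures and a Borel reduction of $E|_{X_i}$ to $\cong_{\mathsf{C}_i}$. Since $\cong$ on the class of countable graphs is complete for countable structures \cite{MR1011177}, each $\cong_{\mathsf{C}_i}$ Borel reduces to it, so by composition we may fix Borel reductions $g_i\colon X_i\to\mathsf{Gr}$ of $E|_{X_i}$ to $\cong_{\mathsf{Gr}}$, where $\mathsf{Gr}$ denotes the standard Borel space of countable graphs. (Conceptually, this step amounts to the observation that the collection of analytic equivalence relations classifiable by countable structures is closed under countable ``Borel disjoint sums'', together with the remark that $E$ is Borel isomorphic, via $x\mapsto(x,i)$ for the unique $i$ with $x\in X_i$, to the disjoint sum $\bigsqcup_{i\in\mathbb{N}}E|_{X_i}$ --- and it is precisely the $E$-invariance of the $X_i$ that makes this map carry $E$ onto that sum.)

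Next I would assemble a single reduction $g\colon X\to\mathsf{D}$, where $\mathsf{D}$ is the class of countable structures in the language of graphs enlarged by one unary predicate $P$, so that the cardinality $|P|\in\mathbb{N}\cup\{\infty\}$ of the interpretation of $P$ is an isomorphism invariant. The Borel set parametrizing $\mathsf{D}$ (we may restrict to $|P|$ finite, a Borel condition) is standard Borel, and $\cong_{\mathsf{D}}$ is induced by a Borel action of $S_\infty$, hence is classifiable by countable structures. Define $g$ by sending $x\in X_i$ to the structure whose underlying graph is $g_i(x)$ and in which $P$ picks out $i$ fresh, isolated vertices. Since the $X_i$ form a Borel partition of $X$ and each $g_i$ is Borel, $g$ is Borel.

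It then remains to verify that $g$ is a reduction. If $x\mathrel{E}y$ then, since each $X_i$ is $E$-invariant, $x$ and $y$ lie in a common $X_i$; hence $g_i(x)\cong g_i(y)$ and both images carry the label $i$, so $g(x)\cong_{\mathsf{D}}g(y)$. Conversely, any isomorphism $g(x)\cong_{\mathsf{D}}g(y)$ must respect $|P|$, forcing $x,y\in X_i$ for a common $i$, and it restricts to a graph isomorphism $g_i(x)\cong g_i(y)$, whence $x\mathrel{E|_{X_i}}y$, i.e.\ $x\mathrel{E}y$. This gives $E\leq_B\cong_{\mathsf{D}}$, as desired. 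I do not expect a genuine obstacle: the argument is essentially bookkeeping, the only point demanding mild care being the choice of a Borel-parametrized class $\mathsf{D}$ that carries the index $i$ as an isomorphism invariant which is both preserved and reflected, and the sole essential use of the hypothesis is the $E$-invariance of the pieces $X_i$, without which $g$ would fail to reflect $E$.
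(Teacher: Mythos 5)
Your proof is correct, and the only hypothesis it really uses — the $E$-invariance of the pieces, to guarantee that $E$-equivalent points land in a common $X_i$ and that the index is reflected by isomorphism — is exactly the crux of the paper's argument as well. The route, however, is different. The paper stays at the level of the $S_\infty$-action characterization of classifiability: it fixes continuous $S_\infty$-actions on Polish spaces $Y_i$ with $E|_{X_i}\leq_B F_i$, forms the disjoint union $Y=\coprod_i Y_i$ with $S_\infty$ acting piecewise, and observes that the resulting orbit equivalence relation $F$ classifies $E$; since orbits never leave their piece $Y_i$, the disjointness of the sum does the job of your index label for free, and no completeness theorem is needed. You instead push everything into a single concrete class of countable structures: first compose with Borel reductions to graph isomorphism (invoking Friedman--Stanley completeness of $\cong_{\mathsf{Gr}}$), then tag the index $i$ by a unary predicate naming $i$ isolated points, so that $|P|$ is an isomorphism invariant that is both preserved and reflected. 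What your version buys is an explicit target class $\mathsf{D}$ of countable structures (and a self-contained verification that the map is a reduction); what it costs is the extra appeal to completeness of graph isomorphism, which the paper's more economical disjoint-sum argument avoids. Two small remarks: the restriction of $\mathsf{D}$ to structures with $P$ finite is harmless but unnecessary, since your map already reduces $E$ to isomorphism on the full (Borel, $S_\infty$-invariant) space of graph-plus-$P$ structures; and in the forward direction one should note, as your construction implicitly ensures, that the $i$ points of $P$ are isolated and carry no further structure, so any bijection between the $P$-parts extends a graph isomorphism of the non-$P$ parts to an isomorphism of the expanded structures.
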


\begin{proof}
	By assumption, for each $i\in\mathbb{N}$, $E|_{X_i}\leq_B F_i$, where $F_i$ is the orbit equivalence relation induced by a continuous action of $S_\infty$ on a Polish space $Y_i$. Put $Y=\coprod_{i\in\mathbb{N}}Y_i$ and 	let $S_\infty$ act on $Y$ via its action on each of the disjoint pieces $Y_i$, with orbit equivalence relation $F$. Then, $E\leq_B F$ and $F$ is classifiable by countable structures, being an orbit equivalence relation of an $S_\infty$-action.
\end{proof}

To complete the proof of Theorem \ref{thm:surfaces}, it thus suffices to produce a Borel reduction from the space $\mathcal{K}(\mathbb{N}^\mathbb{N})_3$ to some space of countable first-order structures. The target will be the space $\mathsf{BA}_3$ of triples $(B,F,G)$, where $B$ is a countable Boolean algebra with domain $\mathbb{N}$, and $F\subseteq G$ are nested filters in $B$; the class of all such structures is clearly first-order axiomatizable in the language of Boolean algebras, together with unary predicates for $F$ and $G$, and thus Borel. The reduction is, essentially, given by Stone duality.

\begin{lemma}\label{lem:Stone_3}
	The relation $\cong_3$ on $\mathcal{K}(\mathbb{N}^\mathbb{N})_3$ is Borel reducible to the isomorphism relation $\cong$ on $\mathsf{BA}_3$.
\end{lemma}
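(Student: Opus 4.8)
The plan is to realize the reduction via a relativized, Borel form of Stone duality. Given a nested triple $(K,K',K'')\in\mathcal{K}(\mathbb{N}^\mathbb{N})_3$, first I would replace $\mathbb{N}^\mathbb{N}$ by a compact totally disconnected ambient space: since each of $K,K',K''$ is compact in $\mathbb{N}^\mathbb{N}$, it lies inside a set of the form $\prod_n [0,m_n]$ for a suitable $(m_n)$, and one can in a Borel way pass to a homeomorphic copy sitting inside $\{0,1\}^\mathbb{N}$ (using, e.g., a Borel choice of such a bound together with a fixed embedding of $\prod_n[0,m_n]$ into $\{0,1\}^\mathbb{N}$). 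So it suffices to produce a Borel reduction of $\cong_3$ on $\mathcal{K}(\{0,1\}^\mathbb{N})_3$ to $\cong$ on $\mathsf{BA}_3$; this is exactly the shape of the construction in \cite{MR1804507} for the single-space case, and the nested case is a routine elaboration.

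Next, to a closed $K\subseteq\{0,1\}^\mathbb{N}$ I would associate the Boolean algebra $B_K$ of clopen subsets of $K$; this has a canonical countable presentation since clopen subsets of $\{0,1\}^\mathbb{N}$ are generated by the basic cylinders $N_s$ ($s\in 2^{<\mathbb{N}}$), and one obtains a Borel map $K\mapsto B_K$ (with domain $\mathbb{N}$, enumerated via a fixed enumeration of $2^{<\mathbb{N}}$ followed by a Borel quotient identifying those cylinders whose traces on $K$ coincide) exactly as in \cite{MR1804507}. By Stone duality, $B_K\cong B_L$ as Boolean algebras if and only if their Stone spaces are homeomorphic, i.e.\ $K\cong L$; moreover the map is ``concrete'' enough that a homeomorphism $K\to L$ induces an isomorphism $B_L\to B_K$ by preimage, and conversely. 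The new ingredient is the pair of nested filters: a closed subset $K'\subseteq K$ corresponds, under Stone duality, to the filter $F_{K'}=\{a\in B_K : K'\subseteq a\}$ of clopen supersets of $K'$ (equivalently, $K'$ is the intersection of the filter), and $K''\subseteq K'$ gives a filter $F_{K''}\supseteq F_{K'}$. Thus I would set $f(K,K',K'')=(B_K,F_{K'},F_{K''})\in\mathsf{BA}_3$. Checking that $F_{K'}$ and $F_{K''}$ are filters, are nested, and that the assignment $(K,K',K'')\mapsto(B_K,F_{K'},F_{K''})$ is Borel (the predicate ``$K'\subseteq N_s$'' is Borel in $(K',s)$, etc.) is straightforward.

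The core point is that this $f$ is a reduction: $(K,K',K'')\cong_3(L,L',L'')$ if and only if $(B_K,F_{K'},F_{K''})\cong(B_L,F_{L'},F_{L''})$. In one direction, a homeomorphism $h:K\to L$ carrying $K'$ onto $L'$ and $K''$ onto $L''$ induces the Boolean isomorphism $a\mapsto h^{-1}[a]$, and one checks directly that this carries $F_{L'}$ onto $F_{K'}$ and $F_{L''}$ onto $F_{K''}$ because $L'\subseteq a \iff K'=h^{-1}[L']\subseteq h^{-1}[a]$, and symmetrically (after inverting). In the other direction, an isomorphism $\theta:B_K\to B_L$ is dual to a homeomorphism $h:L\to K$ of Stone spaces ($\{0,1\}^\mathbb{N}$-closed sets are homeomorphic to the Stone spaces of their clopen algebras), and if $\theta$ maps $F_{K'}$ onto $F_{L'}$ then $h$ maps $L'$ onto $K'$, since a closed subset of a Boolean space is recovered from the filter of its clopen neighborhoods as the intersection of that filter; likewise for $K''$ and $L''$. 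Hence $h^{-1}$ witnesses $(K,K',K'')\cong_3(L,L',L'')$.

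The main obstacle I anticipate is not conceptual but bookkeeping: verifying that every step — the passage from $\mathbb{N}^\mathbb{N}$ to $\{0,1\}^\mathbb{N}$, the formation of $B_K$ with domain exactly $\mathbb{N}$, and the extraction of $F_{K'},F_{K''}$ — is genuinely Borel, and in particular that the identification of equal clopen traces (which is needed to get a well-defined presentation on $\mathbb{N}$) can be done measurably. This is precisely the kind of verification carried out in \cite{MR1804507} for the single-space case, and I would cite that work for the Boolean-algebra part, adding only the (easy) observation that filters corresponding to nested closed subsets are themselves nested and vary Borel-measurably with the data. Once Borelness is in hand, the reduction property above is essentially pure Stone duality.
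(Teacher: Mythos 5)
Your proposal is correct and follows essentially the same route as the paper: first reduce $\cong_3$ on $\mathcal{K}(\mathbb{N}^\mathbb{N})_3$ to $\cong_3$ on $\mathcal{K}(\{0,1\}^\mathbb{N})_3$, then send $(K,K',K'')$ to the clopen algebra of $K$ together with the nested filters of clopen supersets of $K'$ and $K''$, deferring the Borel bookkeeping to \cite{MR1804507} and getting the reduction property from classical Stone duality. The only (inessential) difference lies in the first step: the paper codes compacta as finitely branching pruned trees and embeds these combinatorially into $\{0,1\}^{<\infty}$ via complete binary blocks, whereas you bound $K$ inside $\prod_n[0,m_n]$ and use a parametrized embedding of that product into Cantor space — both implementations are Borel and achieve the same thing.
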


\begin{proof}
First, define $\Psi:\mathcal{K}(\{0,1\}^{\mathbb{N}})_3\to\mathsf{BA}_3$ by $\Psi(K,K',K'')=(B,F,G)$ as follows: Let $B$ be the algebra of clopen subsets of $K$, $F$ ($G$, respectively) the filter of clopen sets whose complements are contained in $K\setminus K'$ (or $K\setminus K''$), where we identify $(B,F,G)$ with an element of $\mathsf{BA}_3$ by using an enumeration of the clopen subsets of $\{0,1\}^\mathbb{N}$. Under this identification, $\Psi$ is Borel by the arguments in \cite{MR1804507}, and is a reduction to isomorphism on $\mathsf{BA}_3$ by classical Stone duality (see \cite{MR0167440}).

It remains to produce a Borel reduction from $\cong_3$ on $\mathcal{K}(\mathbb{N}^\mathbb{N})_3$ to the similarly defined relation $\cong_3$ on $\mathcal{K}(\{0,1\}^\mathbb{N})_3$. Doing so amounts to embedding all finitely branching pruned subtrees of $\mathbb{N}^{<\infty}$ into $\{0,1\}^{<\infty}$ in a sufficiently uniform way. For $X$ countable and discrete, finitely branching pruned subtrees of $X^{<\infty}$ can be identified with compact subsets of $X^\mathbb{N}$ via their spaces of branches, $T\mapsto[T]$, and vice versa (see \cite[Ch.\ 2]{MR1321597}). Viewing the space of such trees as a Borel subset of $\{0,1\}^{X^{<\infty}}$, this identification gives a Borel bijection with $\mathcal{K}(X^\mathbb{N})$ under the Vietoris topology: If $N_{s}$ is a basic open set in $X^\mathbb{N}$, for $s\in X^{<\infty}$, then for a compact set $K\subseteq X^{\mathbb{N}}$, say $K=[T]$, $K\cap N_s\neq\varnothing$ if and only if all $t\in T$ are compatible with $s$, clearly a Borel condition.

Given a triple of finitely branching pruned subtrees $(S,S',S'')$ of $\mathbb{N}^{<\infty}$, with $S\supseteq S'\supseteq S''$, we define a map $\phi:S\to \{0,1\}^{<\infty}$ inductively as follows: Put $\phi(\varnothing)=\varnothing$. Having defined $\phi(s)$, for $s\in S$, if the immediate successors of $s$ in $S$ are $\{s^{\smallfrown}(n_1),s^{\smallfrown}(n_2),\ldots,s^{\smallfrown}(n_k)\}$, with $n_1<n_2<\cdots<n_k$, then we define $\phi(s^{\smallfrown}(n_i))=\phi(s)^{\smallfrown}t_i$, where $t_i$ is the $i$th leaf node of the complete binary tree of height $\lceil\log_2(k)\rceil$ under the lexicographic order. Let $T$, $T'$, and $T''$ be the downwards closures of the image of $S$, $S'$, and $S''$, respectively, under $\phi$. Then, $\phi$ is an order and incompatibility-preserving map which carries $S$, $S'$, and $S''$ onto dense subsets of $T$, $T'$, and $T''$, respectively. Consequently, the map $\phi^\ast:[S]\to[T]$, defined by
\[
	\phi^\ast(x)=\bigcup_{n\in\mathbb{N}}\phi(x|n)
\]
for $x\in[S]$, is a homeomorphism from $[S]$ onto $[T]$ which carries $[S']$ onto $[T']$ and $[S'']$ onto $[T'']$. It is then evident from the preceding description that the assignment $\Phi([S],[S'],[S''])=([T],[T'],[T''])$ is a Borel reduction of $\cong_3$ on $\mathcal{K}(\mathbb{N}^\mathbb{N})_3$ to the corresponding relation on $\mathcal{K}(\{0,1\}^{\mathbb{N}})_3$.  
\end{proof}

Theorem \ref{thm:surfaces} now follows from Theorem \ref{thm:surfaces_fixed_genus_orient} and Lemmas \ref{lemma:genus_Borel}, \ref{lem:partition_ctbl_structures}, and \ref{lem:Stone_3}. Moving to smooth surfaces, we have noted in Example \ref{ex:pseudogroup_comparison} that the inclusion map $\mathfrak{C}(\mathsf{C}^\infty,\mathbb{R}^2)\to\mathfrak{C}(\mathsf{Top},\mathbb{R}^2)$ is a Borel reduction of diffeomorphism $\cong_{\mathsf{C}^\infty}$ to homeomorphism $\cong_{\mathsf{Top}}$, and so the former is also classifiable by countable structures, proving:

\begin{corollary}\label{cor:smooth_surfaces}
	The diffeomorphism relation $\cong_{\mathsf{C}^\infty}$ on $\mathfrak{C}(\mathsf{C}^\infty,\mathbb{R}^2)$ is complete for countable structures.\qed
\end{corollary}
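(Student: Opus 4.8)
The plan is to obtain this corollary by transporting the topological result, Theorem \ref{thm:surfaces}, across the comparison maps between the smooth and topological pseudogroups already constructed in Example \ref{ex:pseudogroup_comparison}, together with the smooth version of the lower bound recorded in Theorem \ref{thm:clinton}. Since ``complete for countable structures'' has two halves, I would prove the upper bound --- that $\cong_{\mathsf{C}^\infty}$ on $\mathfrak{C}(\mathsf{C}^\infty,\mathbb{R}^2)$ is classifiable by countable structures --- and the lower bound --- that $\cong_\mathsf{C}\ \le_B\ \cong_{\mathsf{C}^\infty}$ for every class $\mathsf{C}$ of countable first-order structures --- separately.

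For the upper bound, I would invoke the inclusion $\iota:\mathfrak{C}(\mathsf{C}^\infty,\mathbb{R}^2)\to\mathfrak{C}(\mathsf{Top},\mathbb{R}^2)$ of Example \ref{ex:pseudogroup_comparison} (the case $\mathcal{G}=\mathsf{C}^\infty$, $\mathcal{H}=\mathsf{Top}$, $X=Y=\mathbb{R}^2$), which is plainly Borel. As noted there, $\iota$ is in fact a Borel reduction of $\cong_{\mathsf{C}^\infty}$ to $\cong_{\mathsf{Top}}$ precisely because topological $2$-manifolds carry a unique smooth structure up to diffeomorphism (the dimension-$\le 3$ smoothing uniqueness cited via \cite{MR0488059, MR121804}); thus smooth parameters give diffeomorphic manifolds iff they give homeomorphic ones. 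Composing $\iota$ with the Borel reduction of $\cong_{\mathsf{Top}}$ on $\mathfrak{C}(\mathsf{Top},\mathbb{R}^2)$ into a space of countable structures furnished by Theorem \ref{thm:surfaces} then shows $\cong_{\mathsf{C}^\infty}$ is classifiable by countable structures.

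For the lower bound, I would simply cite Theorem \ref{thm:clinton}, which already records a Borel reduction of $\cong$ on $\mathcal{K}(\{0,1\}^\mathbb{N})$ to $\cong_{\mathsf{C}^\infty}$ on $\mathfrak{C}(\mathsf{C}^\infty,\mathbb{R}^2)$ (the smooth case there being the same planar-surface construction, again using uniqueness of smooth structures on surfaces). By the Camerlo--Gao theorem \cite{MR1804507}, $\cong$ on $\mathcal{K}(\{0,1\}^\mathbb{N})$ is complete for countable structures, so for any class $\mathsf{C}$ we get $\cong_\mathsf{C}\ \le_B\ \cong$ on $\mathcal{K}(\{0,1\}^\mathbb{N})\ \le_B\ \cong_{\mathsf{C}^\infty}$, completing the argument. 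The only step requiring genuine care --- and in this setting it is already dispatched --- is the verification that $\iota$ really is a reduction, which rests entirely on the dimension-$2$ smoothing uniqueness fact together with its compatibility with the parametrization $\mathfrak{M}(\mathsf{C}^\infty,\mathbb{R}^2)$; given Example \ref{ex:pseudogroup_comparison} and Theorem \ref{thm:clinton}, the corollary is then essentially immediate.
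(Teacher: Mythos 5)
Your proposal is correct and matches the paper's argument: the upper bound comes from the Borel reduction $\mathfrak{C}(\mathsf{C}^\infty,\mathbb{R}^2)\to\mathfrak{C}(\mathsf{Top},\mathbb{R}^2)$ of Example \ref{ex:pseudogroup_comparison} (using uniqueness of smooth structures on surfaces) composed with Theorem \ref{thm:surfaces}, and the lower bound is the smooth case of Theorem \ref{thm:clinton} together with the Camerlo--Gao completeness of $\cong$ on $\mathcal{K}(\{0,1\}^\mathbb{N})$. This is exactly how the paper derives the corollary.
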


Finally, by the discussion at the end of Section \ref{sec:triangulation}, we also have the following immediate consequence:

\begin{corollary}\label{cor:equiv_polyedra_classifiable}
	The relation of combinatorial equivalence $\equiv$ on the space $\mathcal{S}_{\mathrm{poly}}$ of all $2$-dimensional polyhedra is complete for countable structures.\qed
\end{corollary}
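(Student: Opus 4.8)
The plan is to upgrade the Borel bireducibility $\equiv\sim_B\cong_{\mathsf{Top}}$ sketched at the end of Section \ref{sec:triangulation} into the claimed completeness by combining it with Theorem \ref{thm:surfaces}. Two Borel maps are in play. In one direction, geometric realization $\mathcal{S}_{\mathrm{poly}}\to\mathfrak{C}(\mathsf{Top},\mathbb{R}^2)$, $K\mapsto K_g$, is Borel: from the parameter coding a polyhedron $K$ one reads off its vertices, $1$-simplices, and $2$-simplices, and then builds a $(\mathsf{Top},\mathbb{R}^2)$-atlas on $K_g$ out of the open stars of the vertices together with the canonical affine charts, all uniformly in the parameter; note $K_g$ is automatically connected, since polyhedra are, so this lands in $\mathfrak{C}(\mathsf{Top},\mathbb{R}^2)$. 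In the other direction, Theorem \ref{thm:Borel_triangulation} supplies a Borel map $T\colon\mathfrak{C}(\mathsf{Top},\mathbb{R}^2)\to\mathcal{S}_{\mathrm{poly}}$ with $T(\mathcal{U},c)_g\cong_{\mathsf{Top}} M_{(\mathcal{U},c)}$ for every parameter.

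The only input not already isolated in the text is that for $K,K'\in\mathcal{S}_{\mathrm{poly}}$ one has $K\equiv K'$ if and only if $K_g\cong_{\mathsf{Top}}K'_g$. The forward implication is immediate, since passing to a subdivision does not change the realization up to homeomorphism. The reverse implication is the Hauptvermutung for $2$-dimensional simplicial complexes whose realizations are surfaces, which holds by \cite{MR24619}; this is the sole external ingredient, and it is classical in dimension $2$. Granting it, $K\mapsto K_g$ is a Borel reduction of $\equiv$ to $\cong_{\mathsf{Top}}$, while $T$ is a Borel reduction of $\cong_{\mathsf{Top}}$ to $\equiv$, because $M_{(\mathcal{U},c)}\cong_{\mathsf{Top}}M_{(\mathcal{V},d)}$ iff $T(\mathcal{U},c)_g\cong_{\mathsf{Top}}T(\mathcal{V},d)_g$ iff $T(\mathcal{U},c)\equiv T(\mathcal{V},d)$. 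Hence $\equiv\sim_B\cong_{\mathsf{Top}}$.

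It remains to observe that the notion ``complete for countable structures'' is invariant under Borel bireducibility: classifiability by countable structures is downward closed under $\leq_B$, so $\equiv\leq_B\cong_{\mathsf{Top}}$ together with Theorem \ref{thm:surfaces} yields classifiability of $\equiv$; and for any class $\mathsf{C}$ of countable first-order structures, $\cong_{\mathsf{C}}\leq_B\cong_{\mathsf{Top}}\leq_B\equiv$ gives the lower bound. Thus $\equiv$ is complete for countable structures. I expect the only genuine friction to be the routine but slightly fussy verification that $K\mapsto K_g$ is Borel; the appeal to the Hauptvermutung, while the one nontrivial piece of mathematics, is off-the-shelf in this dimension, and everything else is bookkeeping around the two reductions and the definitions of Section \ref{subsection:invariant_DST}.
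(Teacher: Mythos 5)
Your proposal is correct and follows essentially the same route as the paper: the authors also obtain the bireducibility $\equiv\,\sim_B\,\cong_{\mathsf{Top}}$ from the Borel-ness of geometric realization, Theorem \ref{thm:Borel_triangulation}, and the two-dimensional Hauptvermutung (as discussed at the end of Section \ref{sec:triangulation}), and then conclude completeness from Theorem \ref{thm:surfaces}. The only difference is that you spell out the chart construction for $K\mapsto K_g$ and the invariance of completeness under bireducibility, which the paper leaves implicit.
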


\subsection{Disconnected surfaces}
\label{subsection:disconnected}
To close out this section, we will show that the homeomorphism and diffeomorphism problems for \emph{all} topological and smooth, respectively, $2$-manifolds, i.e., possibly \emph{disconnected} surfaces, is of the same complexity as that for connected surfaces. As the latter is an invariant subclass of the former, it suffices to show that the larger class is still classifiable by countable structures. We will focus on the topological case, as identical arguments apply in the smooth case.

Using Lemma \ref{lem:Borel_computation_of_components}, we can compute the connected components of a $2$-manifold, as an ordered sequence of open submanifolds, in a Borel fashion. This reduces the question of whether two $2$-manifolds are homeomorphic to whether their corresponding components are, \emph{up to permuting their order}. As this permutation is induced by an action of $S_\infty$, we are able to use general techniques to show that this relation is also classifiable by countable structures.

Given an analytic equivalence relation $E$ on a standard Borel space $X$, define an equivalence relation $E^\mathbb{N}$ on $X^\mathbb{N}$ by 
\[
	(x_i)_{i\in\mathbb{N}}\,E^\mathbb{N}\,(y_i)_{i\in\mathbb{N}} \text{ if and only if there is a $\sigma\in S_{\infty}$ such that $x_{\sigma(i)}\,E\,y_i$ for all $i\in\mathbb{N}$. }
\]
\begin{lemma}\label{lem:E^N_classifiable}
	If $E$ is classifiable by countable structures, then so is $E^\mathbb{N}$.
\end{lemma}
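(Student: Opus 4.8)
The plan is to combine the hypothesis with the classical ``tagged disjoint union'' construction on countable structures. By hypothesis there are a countable language, which we may take to be relational (replacing function and constant symbols by their graphs does not affect the isomorphism relation), a class $\mathsf{C}$ of $\mathcal{L}$-structures with universe $\mathbb{N}$, and a Borel reduction $f\colon X\to\mathsf{C}$ of $E$ to $\cong_{\mathsf{C}}$. Applying $f$ coordinatewise gives a Borel map $X^{\mathbb{N}}\to\mathsf{C}^{\mathbb{N}}$, $(x_i)_i\mapsto(f(x_i))_i$, which carries $E^{\mathbb{N}}$ to the relation $R$ on $\mathsf{C}^{\mathbb{N}}$ declaring $\bar M=(M_i)_i$ and $\bar M'=(M_i')_i$ equivalent exactly when there is a $\sigma\in S_{\infty}$ with $M_{\sigma(i)}\cong M_i'$ for all $i$. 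It therefore suffices to Borel reduce $R$ to the isomorphism relation on some class of countable structures.

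Next I would fix a bijection $\mathbb{N}\times\mathbb{N}\cong\mathbb{N}$ and set $\mathcal{L}'=\mathcal{L}\cup\{{\sim}\}$, with $\sim$ a new binary relation symbol. Given $\bar M=(M_i)_{i\in\mathbb{N}}\in\mathsf{C}^{\mathbb{N}}$, let $N(\bar M)$ be the $\mathcal{L}'$-structure on $\mathbb{N}\times\mathbb{N}$ in which $\sim$ is interpreted as the equivalence relation with classes $\{i\}\times\mathbb{N}$ ($i\in\mathbb{N}$); in which each $\mathcal{L}$-relation holds of a tuple contained in a single class $\{i\}\times\mathbb{N}$, identified with $\mathbb{N}$ via the second coordinate, precisely when the corresponding $\mathcal{L}$-relation holds in $M_i$; and in which no $\mathcal{L}$-relation holds of any tuple meeting two distinct $\sim$-classes. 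Transporting along the fixed bijection, $\bar M\mapsto N(\bar M)$ is a Borel (indeed continuous) map into the space of $\mathcal{L}'$-structures on $\mathbb{N}$, since every atomic fact about $N(\bar M)$ is either constant or depends continuously on a single coordinate $M_i$.

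The heart of the argument is then the verification that $\bar M\mapsto N(\bar M)$ reduces $R$ to $\cong_{\mathcal{L}'}$. In one direction, if $\sigma\in S_\infty$ and isomorphisms $\phi_i\colon M_{\sigma(i)}\to M_i'$ witness $(\bar M,\bar M')\in R$, then $(i,n)\mapsto(\sigma^{-1}(i),\phi_{\sigma^{-1}(i)}(n))$ is an $\mathcal{L}'$-isomorphism $N(\bar M)\to N(\bar M')$: it is a bijection of $\mathbb{N}\times\mathbb{N}$, it carries the $\sim$-class $\{i\}\times\mathbb{N}$ bijectively onto $\{\sigma^{-1}(i)\}\times\mathbb{N}$, and on that class it agrees with the $\mathcal{L}$-isomorphism $\phi_{\sigma^{-1}(i)}\colon M_i\to M'_{\sigma^{-1}(i)}$. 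Conversely, any $\mathcal{L}'$-isomorphism $\Phi\colon N(\bar M)\to N(\bar M')$ preserves $\sim$, hence induces a permutation $\tau\in S_\infty$ of indices with $\Phi[\{i\}\times\mathbb{N}]=\{\tau(i)\}\times\mathbb{N}$, and $\Phi$ restricted to $\{i\}\times\mathbb{N}$ is then an $\mathcal{L}$-isomorphism $M_i\to M'_{\tau(i)}$; so $\sigma=\tau^{-1}$ witnesses $(\bar M,\bar M')\in R$.

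Since $\cong_{\mathcal{L}'}$ on the space of all $\mathcal{L}'$-structures on $\mathbb{N}$ is the orbit equivalence relation of the logic action of $S_\infty$, this chain of Borel reductions yields $E^{\mathbb{N}}\leq_B\cong_{\mathcal{L}'}$, so $E^{\mathbb{N}}$ is classifiable by countable structures. Conceptually, $N(-)$ merely records that the ambient group of this ``symmetric power'' relation is the wreath product $S_\infty\wr S_\infty$, which is itself the automorphism group of the structure $(\mathbb{N}\times\mathbb{N},{\sim})$ and hence a closed subgroup of $S_\infty$; the tagged disjoint union makes this explicit while keeping the assignment Borel. I do not expect a genuine obstacle: the only mildly delicate points are the passage to a relational language, the reindexing $\mathbb{N}\times\mathbb{N}\cong\mathbb{N}$, and confirming Borelness of $\bar M\mapsto N(\bar M)$, all of which are routine.
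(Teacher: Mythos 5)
Your proof is correct, but it takes a different route from the paper's. The paper stays at the level of group actions: using the equivalence between classifiability by countable structures and reducibility to an orbit equivalence relation of a continuous $S_\infty$-action, it fixes such a relation $F$ with $E\leq_B F$, observes $E^{\mathbb{N}}\leq_B F^{\mathbb{N}}$, realizes $F^{\mathbb{N}}$ as the orbit equivalence relation of a continuous action of $S_\infty\times S_\infty^{\mathbb{N}}$ on $Y^{\mathbb{N}}$ (permute coordinates, then act coordinatewise), identifies this group with a closed subgroup of $S_\infty$, and invokes the theorem that orbit equivalence relations of continuous actions of closed subgroups of $S_\infty$ are classifiable by countable structures. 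You instead work directly with structures: apply the given reduction coordinatewise and then code a sequence $(M_i)_i$ by the tagged disjoint union $N(\bar M)$ over $(\mathbb{N}\times\mathbb{N},\sim)$, verifying by hand that isomorphism of the coded structures is exactly the permuted-sequence relation $R$; your parenthetical remark that $\operatorname{Aut}(\mathbb{N}\times\mathbb{N},\sim)$ is the wreath product $S_\infty\wr S_\infty$, a closed subgroup of $S_\infty$, is essentially the paper's argument made explicit. What your version buys is self-containedness — it avoids citing the closed-subgroup theorem (Becker--Kechris/Hjorth) and produces an explicit classifying structure, at the cost of the routine bookkeeping about relational languages, reindexing, and Borelness; the paper's version is shorter given that machinery and avoids any explicit coding. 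Your verifications of the two directions of the reduction (including the index gymnastics with $\sigma^{-1}$ and $\tau^{-1}$) are accurate, so there is no gap.
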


\begin{proof}
	By assumption, there is a continuous action of $S_\infty$ on a Polish space $Y$ such that if $F$ is the resulting orbit equivalence relation, then $E\leq_B F$. It follows that $E^\mathbb{N}\leq_B F^\mathbb{N}$, so it suffices to show that $F^\mathbb{N}$ is classifiable by countable structures.
	
	Consider the action of $S_\infty\times S_\infty^\mathbb{N}$ on $Y^\mathbb{N}$ given by
	\[
		(\sigma,(g_i)_{i\in\mathbb{N}})\cdot(y_i)_{i\in\mathbb{N}} = (g_i\cdot y_{\sigma(i)})_{i\in\mathbb{N}}
	\]
	for all $(\sigma,(g_i)_{i\in\mathbb{N}})\in S_\infty\times S_\infty^\mathbb{N}$. Clearly, this action is continuous and has $F^\mathbb{N}$ as its orbit equivalence relation. $S_\infty\times S_\infty^\mathbb{N}$ is isomorphic as a Polish group to $S_\infty^\mathbb{N}$, which in turn is isomorphic to a closed subgroup of $S_\infty$; identify $\mathbb{N}$ with $\mathbb{N}\times\mathbb{N}$ via some fixed bijection $\mathbb{N}\to\mathbb{N}\times\mathbb{N}$ and let the $i$th coordinate of an element $(g_i)_{i\in\mathbb{N}} \in S_\infty^\mathbb{N}$ act on the $i$th column $\{i\}\times\mathbb{N}$ in the obvious way. The orbit equivalence relation induced by a continuous action of any closed subgroup of $S_\infty$ is classifiable by countable structures (see \cite{MR1425877} and \cite[Thm.\ 2.39]{MR1725642}), hence $F^\mathbb{N}$ is as well.
\end{proof}

\begin{corollary}\label{corollary:2-manifolds}
	The homeomorphism relation $\cong_{\mathsf{Top}}$ on $\mathfrak{M}(\mathsf{Top},\mathbb{R}^2)$  is complete for countable structures. Similarly for the diffeomorphism relation $\cong_{\mathsf{C}^\infty}$ on $\mathfrak{M}(\mathsf{C}^\infty,\mathbb{R}^2)$.
\end{corollary}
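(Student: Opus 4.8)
The plan is to reduce the disconnected case to the connected one via the component-decomposition map, and then invoke the $S_\infty$-wreath-product machinery already assembled in Lemma \ref{lem:Borel_computation_of_components} and Lemma \ref{lem:E^N_classifiable}. Since $\mathfrak{C}(\mathsf{Top},\mathbb{R}^2)$ is an invariant Borel subset of $\mathfrak{M}(\mathsf{Top},\mathbb{R}^2)$ on which $\cong_{\mathsf{Top}}$ is already complete for countable structures by Theorem \ref{thm:surfaces}, only the upper bound requires argument: we must show that $\cong_{\mathsf{Top}}$ on \emph{all} $2$-manifolds is classifiable by countable structures.

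First I would apply Lemma \ref{lem:Borel_computation_of_components} to obtain a Borel map $\mathfrak{M}(\mathsf{Top},\mathbb{R}^2)\to\mathfrak{M}(\mathsf{Top},\mathbb{R}^2)^\mathbb{N}$ sending $(\mathcal{U},c)$ to a sequence $\langle(\mathcal{U}_i,c_i)\mid i\in\mathbb{N}\rangle$ of parameters for the connected components of $M_{(\mathcal{U},c)}$, listed with multiplicity (padding with the empty-charts parameter when there are only finitely many components, so that the target is always a genuine element of the infinite product). Post-composing with a fixed Borel injection $\mathfrak{M}(\mathsf{Top},\mathbb{R}^2)\hookrightarrow\mathfrak{C}(\mathsf{Top},\mathbb{R}^2)\cup\{\ast\}$ — or more simply adjoining a dummy one-point surface to absorb the empty-charts value — we get a Borel map $d\colon\mathfrak{M}(\mathsf{Top},\mathbb{R}^2)\to X^\mathbb{N}$, where $X$ is $\mathfrak{C}(\mathsf{Top},\mathbb{R}^2)$ with one extra isolated point representing ``no component here''. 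Two $2$-manifolds are homeomorphic if and only if their component sequences agree up to a permutation of $\mathbb{N}$ and a componentwise homeomorphism: that is, $(\mathcal{U},c)\cong_{\mathsf{Top}}(\mathcal{V},e)$ iff $d(\mathcal{U},c)\mathrel{E^\mathbb{N}}d(\mathcal{V},e)$, where $E$ is $\cong_{\mathsf{Top}}$ on $X$ extended so that the dummy point is $E$-equivalent only to itself. (The only subtlety is multiplicity — two manifolds with the same underlying set of component-types but different multiplicities are not homeomorphic — but this is automatically respected by $E^\mathbb{N}$ since a permutation $\sigma\in S_\infty$ is a bijection.) Thus $\cong_{\mathsf{Top}}$ on $\mathfrak{M}(\mathsf{Top},\mathbb{R}^2)$ Borel reduces to $E^\mathbb{N}$.

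Now $E$ is classifiable by countable structures: it is $\cong_{\mathsf{Top}}$ on the connected surfaces (complete for countable structures by Theorem \ref{thm:surfaces}, hence a fortiori classifiable) with a single extra isolated $E$-class thrown in, and adding one Borel-invariant isolated point preserves classifiability (apply Lemma \ref{lem:partition_ctbl_structures} to the two-piece partition into $\mathfrak{C}(\mathsf{Top},\mathbb{R}^2)$ and the dummy point). By Lemma \ref{lem:E^N_classifiable}, $E^\mathbb{N}$ is therefore classifiable by countable structures as well, and hence so is $\cong_{\mathsf{Top}}$ on $\mathfrak{M}(\mathsf{Top},\mathbb{R}^2)$. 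Combined with the lower bound inherited from the invariant subclass $\mathfrak{C}(\mathsf{Top},\mathbb{R}^2)$ via Theorem \ref{thm:surfaces}, this shows $\cong_{\mathsf{Top}}$ on $\mathfrak{M}(\mathsf{Top},\mathbb{R}^2)$ is complete for countable structures. The smooth case is identical: Lemma \ref{lem:Borel_computation_of_components} applies verbatim to $\mathfrak{M}(\mathsf{C}^\infty,\mathbb{R}^2)$, Corollary \ref{cor:smooth_surfaces} supplies both bounds for connected smooth surfaces, and Lemmas \ref{lem:partition_ctbl_structures} and \ref{lem:E^N_classifiable} are dimension- and category-agnostic.

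I do not anticipate any genuine obstacle here — this is a packaging argument — but the one point deserving care is the bookkeeping around manifolds with finitely many (including zero) components, so that the component map lands in a standard Borel space of \emph{infinite} sequences; the isolated dummy point is the cleanest fix, and one should double-check that it is genuinely Borel-invariant under the $S_\infty$-action of Lemma \ref{lem:E^N_classifiable} and that it never gets conflated with an actual connected surface (which it cannot, being placed in its own Borel piece). A secondary point is to confirm that the ``listed with multiplicity, in the order produced by Lemma \ref{lem:Borel_computation_of_components}'' enumeration really is Borel as a map into the product — but this is exactly the content of that lemma, which produces the components as an element of $\mathfrak{M}(\mathcal{G},X)^{\mathbb{N}\times\mathbb{N}}$, from which a single-indexed Borel reindexing is routine.
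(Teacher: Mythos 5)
Your overall route is exactly the paper's: decompose into components via Lemma \ref{lem:Borel_computation_of_components}, reduce $\cong_{\mathsf{Top}}$ on $\mathfrak{M}(\mathsf{Top},\mathbb{R}^2)$ to the relation $E^{\mathbb{N}}$ on sequences of connected surfaces, and then quote Theorem \ref{thm:surfaces} and Lemma \ref{lem:E^N_classifiable} (and Corollary \ref{cor:smooth_surfaces} in the smooth case); your dummy-point device for manifolds with finitely many components is fine and, if anything, more explicit than the paper on that bookkeeping point. The gap is precisely in the step you wave off as ``a single-indexed Borel reindexing is routine.'' Lemma \ref{lem:Borel_computation_of_components} does \emph{not} output a one-entry-per-component enumeration: its output is indexed by pairs $(i,j)$ and records, for each point $q_i$ of the fixed dense set and each chart $U_j$, the component containing the image of $q_i$ in $U_j$. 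Since every nonempty chart contains infinitely many $q_i$'s, every component of $M_{(\mathcal{U},c)}$ recurs infinitely often in this family. Merely reindexing therefore does not produce a list with the correct multiplicities, and the resulting map into $X^{\mathbb{N}}$ is not a reduction: take a single plane presented with one nonempty chart (the remaining charts empty) and the disjoint union of two planes presented with two nonempty charts; both raw listings consist of infinitely many entries parametrizing $\mathbb{R}^2$ together with infinitely many dummy entries, so they are $E^{\mathbb{N}}$-equivalent although the manifolds are not homeomorphic. Thus your parenthetical claim that multiplicity ``is automatically respected by $E^{\mathbb{N}}$'' is valid only after you have arranged that each component occurs exactly once in the sequence, which the construction as you describe it does not deliver.

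The missing ingredient is a Borel de-duplication step, and it is exactly what the paper's proof inserts: using Lemma \ref{lem:saturation} one can decide in a Borel way whether two entries of the doubly-indexed family represent the same open submanifold of $(\mathcal{U},c)$ (compare the saturations of the corresponding open sets chartwise) and then drop every entry duplicating an earlier one. With that repair, each genuine component appears exactly once, your multiplicity remark becomes correct, and the remainder of your argument --- the isolated dummy class handled by Lemma \ref{lem:partition_ctbl_structures}, the passage to $E^{\mathbb{N}}$ via Lemma \ref{lem:E^N_classifiable}, the lower bound from the invariant subclass $\mathfrak{C}(\mathsf{Top},\mathbb{R}^2)$, and the verbatim transfer to $\mathfrak{M}(\mathsf{C}^\infty,\mathbb{R}^2)$ --- goes through as written.
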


\begin{proof}
	Lemma \ref{lem:Borel_computation_of_components} implies that there is a Borel map $f:\mathfrak{M}(\mathsf{Top},\mathbb{R}^2)\to\mathfrak{C}(\mathsf{Top},\mathbb{R}^2)^\mathbb{N}$ such that for each $(\mathcal{U},c)\in\mathfrak{M}(\mathsf{Top},\mathbb{R}^2)$, $f(\mathcal{U},c)=\langle (\mathcal{V}_i,c|_{\mathcal{V}_i})\mid i\in\mathbb{N}\rangle$ is a sequence of (parameters for) connected surfaces which are exactly the connected components of $M_{(\mathcal{U},c)}$. The map given therein may have repetition, i.e., different $(\mathcal{V}_i,c|_{\mathcal{V}_i})$ may represent the same component of $M_{(\mathcal{U},c)}$, but by checking one-by-one whether this is the case using Lemma \ref{lem:saturation} and dropping those which represent the same open sets as previous ones, we are able to eliminate any repetition in a Borel fashion. Thus, $f:\mathfrak{M}(\mathsf{Top},\mathbb{R}^2)\to\mathfrak{C}(\mathsf{Top},\mathbb{R}^2)^\mathbb{N}$ is a Borel reduction of $\cong_{\mathsf{Top}}$ on $\mathfrak{M}(\mathsf{Top},\mathbb{R}^2)$ to $\cong_\mathsf{Top}^\mathbb{N}$ on $\mathfrak{C}(\mathsf{Top},\mathbb{R}^2)^\mathbb{N}$. By Theorem \ref{thm:surfaces} and Lemma \ref{lem:E^N_classifiable}, the latter is classifiable by countable structures, and thus so is the former. The same argument applies in the smooth case.
\end{proof}
 
Lastly, we wish to point out a descriptive set-theoretic reason for focusing on connected manifolds in our analysis of the complexity of different classification problems. Consider the equivalence relation $E_{\mathrm{ctbl}}$, defined on $\mathbb{R}^\mathbb{N}$ by
\[
	(x_i)_{i\in\mathbb{N}}\,E_{\mathrm{ctbl}}\,(y_i)_{i\in\mathbb{N}} \quad\text{if and only if}\quad \{x_i:i\in\mathbb{N}\}=\{y_i:i\in\mathbb{N}\},
\]
where the right-hand side is considered as equality between two countable sets of reals. Notice that $E_{\mathrm{ctbl}}$ is the orbit equivalence relation of the action of $S_\infty$ on $\mathbb{R}^\mathbb{N}$ given by permuting indices. It is known that $E_{\mathrm{ctbl}}$ is not essentially countable (see \cite[Exer.\ 8.3.3]{MR2455198}). Suppose that $\mathcal{M}$ is some Borel class of manifolds which is closed under countable disjoint unions and for which the relevant (analytic) notion of isomorphism $\cong$ can be determined by showing that connected components are pairwise equivalent. We claim that if there is an uncountable Borel set $\mathcal{A}$ of pairwise inequivalent connected manifolds in $\mathcal{M}$, then $E_{\mathrm{ctbl}}$ Borel reduces to $\cong$. For such an $\mathcal{A}$, there is a Borel bijection $f:\mathbb{R}\to\mathcal{A}$. Consider the set $\mathcal{A}^\mathbb{N}$ of sequences in $\mathcal{A}$, which we may map to their disjoint unions considered as elements of $\mathcal{M}$. Together with $f$, this mapping induces a Borel reduction of $E_{\mathrm{ctbl}}$ to $\cong$ on $\mathcal{M}$. Consequently, $E_{\mathrm{ctbl}}$ is the \emph{minimum} complexity that we can expect for the associated classification problem on $\mathcal{M}$, unless we restrict to connected manifolds. As we shall see in the following sections, there are natural classification problems for classes of connected manifolds which have intermediate complexity, strictly between the concretely classifiable and $E_{\mathrm{ctbl}}$.

\section{The bireducibility of isometry and conjugacy problems}
\label{section:bireducibility}

The focus of this section is the well-known correspondence between complete connected hyperbolic $n$-manifolds and discrete subgroups of $\mathrm{Isom}(\mathbb{H}^n)$.
This is, of course, an instance of a much wider correspondence between discrete subgroups of groups acting by homeomorphism on topological spaces and their associated quotient spaces, one in which the Cartan--Hadamard Theorem centrally figures (\cite[Ch.\ II.4]{MR1744486}; see also its Thm.\ III.$\mathcal{G}$.1.13).
Our focus on the hyperbolic case is for simplicity;
the task of the present section is simply to show:
\begin{enumerate}[label=\textup{\arabic*.}]
\item that the standard conversions from such manifolds to discrete groups and back are each, within our framework, Borel, and, moreover,
\item that they are \emph{reductions} of the relevant equivalence relations, or in other words map isometric manifolds to conjugate subgroups, and vice versa.
\end{enumerate}
The second point follows from the standard constructions and theorems, which we now outline; in subsequent subsections, it will remain only to argue the first.

We precede our more formal argumentation with a brief review of the conversions in question.
Here again a number of closely related approaches are possible, but the keywords for what is probably the most intuitive conversion of manifolds to discrete groups are \emph{developing map} and \emph{holonomy}.
The basic points of this conversion are as follows (see \cite[\S 3.4--5]{MR1435975} or \cite[\S 8.4--5]{MR4221225}, for example, for more detailed and general accounts); throughout, we assume that $n\geq 2$:
\begin{itemize}
\item A hyperbolic structure (i.e., an $(\mathsf{Isom},\mathbb{H}^n)$-structure) on a connected manifold $M$ lifts to its universal cover $\tilde{M}$; recall that the latter may be viewed as the space of endpoint-preserving homotopy classes of paths in $M$ from some fixed basepoint $x\in M$.
\item If $M$ is a complete  connected hyperbolic $n$-manifold, then the \emph{developing map} defines a continuous surjection $s$ from $\tilde{M}$ onto $\mathbb{H}^n$. The idea of this map is a simple one: fix first a $z\in\mathbb{H}^n$ and extend the map $x\mapsto z$ to an isometry $e$ of a neighborhood of $x$ in $M$ with a neighborhood of $z$ in $\mathbb{H}^n$. Any ``point'' $[\gamma]\in\tilde{M}$ (i.e., endpoint-preserving homotopy class of a path $\gamma$ from $x$ to some $y\in M$) traverses a series of charts in $M$, which patch together via transition maps at overlaps to constitute a unique path extending $e\circ\gamma$ in $\mathbb{H}^n$, and we let $s([\gamma])$ equal its endpoint. The argument that the function $s$ is well-defined is facilitated by the assumption, authorized by Lemma \ref{lem:reparametrization}, that the charts and overlaps in the atlas defining $M$ are all simply-connected; it also critically leverages the rigidity of $\mathrm{Isom}(\mathbb{H}^n)$, in the sense flagged in Example \ref{ex:isometry_pseudogroups}. Note also that the focus which this construction fosters on complete connected manifolds is, from our perspective, already a natural one, by Lemmas \ref{lem:connected_Borel} and \ref{lem:complete} and the discussion in Section \ref{subsection:disconnected}.
\item The aforementioned rigidity amounts simply to the fact that any element of the pseudogroup $\mathsf{Isom}$ with connected nonempty domain uniquely extends to an element $g$ of $\mathrm{Isom}(\mathbb{H}^n)$, and hence that the association of each $\gamma$ above to a finite sequence of transition maps on connected overlaps in fact defines, via the sequential composition of their unique extensions, a map $t:\tilde{M}\to \mathrm{Isom}(\mathbb{H}^n)$. (Again, the verification that this map is well-defined is standard.) Writing $p$ for the covering map $\tilde{M}\to M$, the restriction of $t$ to $p^{-1}(x)$ then represents $\pi_1(M,x)$ as a discrete subgroup $\Gamma$ of $\mathrm{Isom}(\mathbb{H}^n)$ (this is its \emph{holonomy} representation), one with the property that the quotient structure $\mathbb{H}^n/\Gamma$ induced by the natural action of $\mathrm{Isom}(\mathbb{H}^n)$ on $\mathbb{H}^n$ is isometric to $M$.
Moreover, since $M$ is a hyperbolic manifold, and since any torsion element of $\Gamma$ would fix a point of $\mathbb{H}^n$ whose quotient neighborhoods could never then be isometric to an open subset of $\mathbb{H}^n$, we find that $\Gamma$ must be torsion-free.
\end{itemize}
The reverse conversion, of torsion-free discrete subgroups $\Gamma$ of $\mathrm{Isom}(\mathbb{H}^n)$ into hyperbolic $n$-manifolds, is even simpler; as noted, this is just the map $\Gamma\mapsto\mathbb{H}^n/\Gamma$ to the quotient by the natural action of $\Gamma$. In our framework, though, a manifold is a collection $(\mathcal{U},c)$ of charts, so we need to work a little harder: we enumerate a dense subset $Q=(q_i)_{i\in\mathbb{N}}$ of $\mathbb{H}^n$ and let each $U_i$ be a ball of maximal radius about $q_i$ such that $U_i\cap g\cdot U_i=\varnothing$ for any $g\in\Gamma\backslash\{\mathrm{id}\}$; natural transition maps $\varphi_{i,j}$ associate to this family, and we verify in Section \ref{subsection:manifolds_discrete} that this construction is both as desired and, in the appropriate sense, Borel. 

Readers for whom these conversions are unfamiliar are encouraged to consider the relation between discrete subgroups of $\mathrm{Isom}^{+}(\mathbb{R})\cong\mathbb{R}$ and the associated induced metrics on topological copies of $\mathbb{S}^1$. What this example in its transparency obscures (since $\mathrm{Isom}^{+}(\mathbb{R})$ is abelian), however, is the interconnection of the relations of conjugacy on the group side with those of isometry on the manifold side.
The conversion of a manifold $M$ to a discrete group described above involved the choice of an isometry of a neighborhood of some point $x\in M$ with an open set in $\mathbb{H}^n$. It is not difficult to see that varying these choices corresponds to conjugating the output $\Gamma$ by elements of $\mathrm{Isom}(\mathbb{H}^n)$, and it is a theorem that the isometries of hyperbolic manifolds amount to just these variations; the general form of \cite[Thm.\ 1.18]{MR1638795} (or see \cite[Cor.\ 3.5.12]{MR1435975}), for example, is the following:

\begin{theorem*}
For any complete connected hyperbolic $n$-manifold $M$, there exists a torsion-free discrete subgroup $\Gamma$ of $\mathrm{Isom}(\mathbb{H}^n)$ such that $M\cong\mathbb{H}^n/\Gamma$. Such a $\Gamma$ is unique up to conjugation by elements of $\mathrm{Isom}(\mathbb{H}^n)$, and conversely, $\mathbb{H}^n/\Gamma$ is, for any such $\Gamma$, a complete connected hyperbolic $n$-manifold.
\end{theorem*}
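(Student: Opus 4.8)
The plan is to extract $\Gamma$ directly from the holonomy construction reviewed above, to use completeness to upgrade the developing map to a homeomorphism, and then to handle uniqueness by lifting isometries to universal covers.

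\emph{Existence.} I would begin by reparametrizing $M$, via Lemma \ref{lem:reparametrization}, so that all charts and overlaps in its atlas are simply connected (and geodesically convex); this legitimizes the analytic-continuation definitions of the developing map $s\colon\tilde M\to\mathbb{H}^n$ and the holonomy cocycle $t\colon\tilde M\to\mathrm{Isom}(\mathbb{H}^n)$ sketched in the bullet points above, the latter depending on the rigidity of $\mathrm{Isom}(\mathbb{H}^n)$ from Example \ref{ex:isometry_pseudogroups}. The crucial input is the classical fact (see \cite[Prop.\ 3.4.15]{MR1435975} and the discussion around it) that an $(\mathsf{Isom},\mathbb{H}^n)$-manifold is complete in its induced metric precisely when its developing map is a covering map $\tilde M\to\mathbb{H}^n$. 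Since $\mathbb{H}^n$ is simply connected, such a covering is a homeomorphism, indeed an isometry once $\tilde M$ carries the pullback metric, so $s$ identifies $\tilde M$ isometrically with $\mathbb{H}^n$. Under this identification the deck action of $\pi_1(M,x)$ on $\tilde M$ becomes an action on $\mathbb{H}^n$ by isometries whose image is exactly $\Gamma=t[p^{-1}(x)]$; this action is free and properly discontinuous by covering-space theory, which together with properness of the $\mathrm{Isom}(\mathbb{H}^n)$-action on $\mathbb{H}^n$ forces $\Gamma$ to be discrete, and freeness (plus the remark that a discrete subgroup of $\mathrm{Isom}(\mathbb{H}^n)$ can contain no elliptic element, since the closure of the cyclic group it would generate is a positive-dimensional torus) forces $\Gamma$ to be torsion-free. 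Then $M=\tilde M/\pi_1(M,x)\cong\mathbb{H}^n/\Gamma$ isometrically.

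\emph{Converse.} Given a torsion-free discrete $\Gamma\leq\mathrm{Isom}(\mathbb{H}^n)$, discreteness inside the Lie group $\mathrm{Isom}(\mathbb{H}^n)$, acting properly on $\mathbb{H}^n$, yields a properly discontinuous $\Gamma$-action; torsion-freeness (again: no elliptic elements, so every nontrivial $\gamma\in\Gamma$ is parabolic or hyperbolic, hence fixed-point-free on $\mathbb{H}^n$) yields freeness. A free, properly discontinuous action of a countable group by isometries on a complete connected Riemannian manifold has a quotient which is again complete, connected, and locally isometric to the original, and which inherits Hausdorffness and second countability; the quotient map supplies the requisite $(\mathsf{Isom},\mathbb{H}^n)$-atlas, so $\mathbb{H}^n/\Gamma$ is a complete connected hyperbolic $n$-manifold.

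\emph{Uniqueness up to conjugacy.} If $f\colon\mathbb{H}^n/\Gamma_1\to\mathbb{H}^n/\Gamma_2$ is an isometry, lift it to an isometry $\tilde f\colon\mathbb{H}^n\to\mathbb{H}^n$ of universal covers (both quotient maps being universal coverings); by rigidity $\tilde f$ is the restriction of a global $g\in\mathrm{Isom}(\mathbb{H}^n)$, and equivariance of $\tilde f$ for the two deck actions reads $g\Gamma_1 g^{-1}=\Gamma_2$. Conversely, $\bar g$ exhibits $\mathbb{H}^n/\Gamma\cong\mathbb{H}^n/(g\Gamma g^{-1})$. Translated into the holonomy picture, this is exactly the statement that the only choices in building $\Gamma$ from $M$ — the initial isometry germ near the basepoint, and the basepoint itself (different choices linked by paths, which conjugate $t$) — alter $\Gamma$ precisely by conjugation in $\mathrm{Isom}(\mathbb{H}^n)$. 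The main obstacle I expect is the step that genuinely uses the metric hypothesis: the verification that completeness of $M$ forces the developing map to be a covering, hence a homeomorphism onto $\mathbb{H}^n$, on which both existence and the identification $M\cong\mathbb{H}^n/\Gamma$ rest; the remaining items (discreteness and torsion-freeness of the holonomy image, properness of the deck action, and the lifting argument) are then comparatively formal.
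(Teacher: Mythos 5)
Your argument is correct and is essentially the classical developing-map/holonomy proof that the paper itself does not reprove but cites (Thurston's Cor.\ 3.5.12 and its relatives), after sketching exactly the same skeleton in the bullet points preceding the statement: completeness forces the developing map to be a covering of the simply connected $\mathbb{H}^n$, hence an isometry of $\tilde M$ with $\mathbb{H}^n$, the deck group becomes the discrete torsion-free $\Gamma$, and uniqueness up to conjugacy comes from lifting isometries of the quotients. So in substance you and the paper's sources take the same route.

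One parenthetical should be repaired, though it does not sink the proof: the claim that ``a discrete subgroup of $\mathrm{Isom}(\mathbb{H}^n)$ can contain no elliptic element, since the closure of the cyclic group it would generate is a positive-dimensional torus'' is false as stated --- any finite rotation group is a discrete subgroup consisting of elliptic elements, and a finite-order elliptic generates a finite (hence discrete) cyclic group, not a torus. What your two uses of this remark actually require are the two correct facts it conflates: (i) every finite-order isometry of $\mathbb{H}^n$ fixes a point (Cartan's fixed-point argument, or conjugacy into $O(n)$), so freeness of the deck action already forces $\Gamma$ to be torsion-free --- this is precisely the fact the paper invokes when it says a torsion element of $\Gamma$ would fix a point of $\mathbb{H}^n$; and (ii) an \emph{infinite-order} elliptic element would generate a subgroup with non-discrete (compact, infinite) closure, so a discrete \emph{torsion-free} group has no elliptics at all, which is what gives freeness in the converse direction. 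With the parenthetical replaced by (i) and (ii) in the respective places, the proof is complete.
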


After a brief review of spaces of discrete subgroups, we record Borel measurable realizations of the two directions of this correspondence in Sections \ref{subsection:discrete_from_manifolds} and \ref{subsection:manifolds_discrete}.

\subsection{Spaces of discrete subgroups}
\label{subsection:spaces_discrete}

We begin by describing the spaces of subgroups to and from which our manifold parametrizations will usefully reduce.  Recall from Section \ref{subsection:spaces_of_subsets} that, given a locally compact Polish group $G$, $\mathcal{F}(G)$ denotes the Polish space of closed subsets of $G$, endowed with the Chabauty--Fell topology. The following aggregates results recorded in both \cite[I.3]{MR903850} and \cite[E.1]{MR1219310}. 

\begin{lemma}
\label{lem:D_Dtf}
Let $G$ be a locally compact Polish group.
\begin{enumerate}[label=\textup{\arabic*.}]
\item The closed subgroups of $G$ form a closed subset $\mathcal{S}(G)$ of $\mathcal{F}(G)$.
\item The set $\mathcal{D}(G)$ of discrete subgroups of $G$ is open in $\mathcal{S}(G)$.
\item The set $\mathcal{S}_{\mathrm{tf}}(G)$ of torsion-free subgroups of $G$ is closed in $\mathcal{S}(G)$.
\end{enumerate}
In particular, $\mathcal{D}(G)$, $\mathcal{S}_{\mathrm{tf}}(G)$, and $\mathcal{D}_{\mathrm{tf}}(G)=\mathcal{D}(G)\cap\mathcal{S}_{\mathrm{tf}}(G)$ each inherit a Polish subspace topology from $\mathcal{F}(G)$.
\end{lemma}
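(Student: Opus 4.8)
The plan is to verify each of the three clauses using the convergence criterion for the Chabauty--Fell topology recorded in Lemma \ref{lem:convergence_in_Fell}, together with the standard bullet-point facts from Section \ref{subsection:spaces_of_subsets} about Borel operations on $\mathcal{F}(G)$; since $G$ is locally compact Polish, $\mathcal{F}(G)$ is a compact Polish space, so closed subsets are themselves compact Polish and open subsets inherit a Polish topology, giving the ``in particular'' clause at once. The main work is therefore purely topological: showing the relevant subsets are closed (resp.\ open) in $\mathcal{S}(G)$.

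For clause (1), I would show $\mathcal{S}(G)$ is closed in $\mathcal{F}(G)$ by a sequential argument via Lemma \ref{lem:convergence_in_Fell}: if $F_n \to F$ in $\mathcal{F}(G)$ with each $F_n$ a subgroup, then $F$ contains the identity (take the constant sequence $x_i = e \in F_i$, whose limit lies in $F$ by condition (i)), is closed under multiplication (given $x, y \in F$, pick $x_i, y_i \in F_i$ with $x_i \to x$, $y_i \to y$; then $x_i y_i \in F_i$ and $x_i y_i \to xy$ by continuity of multiplication, so $xy \in F$ by condition (i) again), and closed under inversion (similarly, using continuity of $g \mapsto g^{-1}$). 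Hence $F$ is a subgroup, and $\mathcal{S}(G)$ is closed.

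For clause (3), the argument is of the same flavor: suppose $F_n \to F$ with each $F_n$ torsion-free, and suppose toward a contradiction that $x \in F$ has finite order $k > 1$. Choose $x_i \in F_i$ with $x_i \to x$. Then $x_i^k \to x^k = e$; but I cannot immediately conclude $x_i$ has finite order, so the clean move is instead to note that torsion-freeness of a closed subgroup $F$ of the Lie group $G$ is equivalent to the closed condition ``$F$ contains no nontrivial element of any fixed finite order.'' More carefully: for each $k \geq 2$, the set $\{F \in \mathcal{S}(G) : F \text{ contains some } x \neq e \text{ with } x^k = e\}$ should be shown to be closed, using that the set $T_k = \{x \in G : x^k = e\}$ is closed in $G$ and (by properness/local compactness considerations, or by the fact that in a Lie group $T_k \setminus \{e\}$ stays bounded away from $e$ — e.g.\ there is a neighborhood of $e$ containing no nontrivial element of finite order) one gets that a Chabauty limit of subgroups each missing $T_k \setminus \{e\}$ again misses it. Then $\mathcal{S}_{\mathrm{tf}}(G) = \bigcap_{k \geq 2} (\text{complement of that set})$ is an intersection of closed (indeed, I expect, clopen-ish) sets, hence closed. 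This is the step I expect to be the main obstacle: pinning down exactly why the ``no finite-order elements'' condition is preserved under Chabauty limits, which hinges on a uniform neighborhood-of-the-identity fact for Lie groups (no small torsion) rather than on soft point-set topology alone.

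For clause (2), that $\mathcal{D}(G)$ is open in $\mathcal{S}(G)$: a closed subgroup $F$ is discrete iff it is not an accumulation point of itself, equivalently iff there is an open neighborhood $V$ of $e$ in $G$ with $F \cap V = \{e\}$; using local compactness, fix a precompact open $V$ with $\overline{V}$ compact and $F \cap \overline{V} = \{e\}$. The set $\{F' \in \mathcal{F}(G) : F' \cap \overline{V} \subseteq \{e\}\}$ — more precisely $\{F' : F' \cap (\overline{V} \setminus U) = \varnothing\}$ for a small open $U \ni e$ — is open in $\mathcal{F}(G)$ by definition of the Fell topology (it is a basic open set of the form $\{F' : F' \cap K = \varnothing\}$ with $K = \overline{V} \setminus U$ compact), and every subgroup in it is discrete. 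Hence $\mathcal{D}(G)$ is a union of such Fell-open sets intersected with $\mathcal{S}(G)$, so it is open in $\mathcal{S}(G)$. Finally, $\mathcal{D}_{\mathrm{tf}}(G) = \mathcal{D}(G) \cap \mathcal{S}_{\mathrm{tf}}(G)$ is the intersection of an open and a closed subset of $\mathcal{S}(G)$, hence Borel and — as a subspace of the Polish space $\mathcal{S}(G)$ — Polish, being the intersection of an open set (itself Polish) with a closed set.
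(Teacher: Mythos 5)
Your clause (1) is fine (you invoke condition (i) of Lemma \ref{lem:convergence_in_Fell} at the two places where you really mean condition (ii), but that is cosmetic), and note that your self-contained route already diverges from the paper, whose entire proof is a citation of Lie-group treatments in the references plus the remark that locally closed subsets of Polish spaces are Polish. The trouble is that your argument breaks down exactly where those references use Lie-theoretic input rather than soft point-set topology. In clause (2), the assertion that every closed subgroup $F'$ with $F'\cap(\overline V\setminus U)=\varnothing$ is discrete is precisely the point at issue, and you give no argument for it; it is false for a general locally compact Polish $G$ (in $G=\mathbb{Q}_p$ the compact open subgroups $p^n\mathbb{Z}_p$ converge in the Chabauty--Fell topology to the discrete subgroup $\{0\}$, so $\mathcal{D}(\mathbb{Q}_p)$ is not even open in $\mathcal{S}(\mathbb{Q}_p)$ and no choice of $V,U$ can work). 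What rescues the step in the Lie setting is the no-small-subgroups property: take $U\subseteq V$ to be a neighborhood of $e$ containing no nontrivial subgroup; then a non-discrete closed subgroup has a nontrivial connected identity component, which cannot lie inside $U$ and hence, by connectedness, must meet the compact annulus $\overline V\setminus U$. Without supplying that argument (and that hypothesis) the clause is not proved.

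For clause (3) the gap is not closable by the route you sketch. First, ``$F$ misses the closed set $T_k\setminus\{e\}$'' is not a closed condition in the Fell topology: the sets $\{F:F\cap K=\varnothing\}$ are \emph{open} for $K$ compact, and Chabauty limits can acquire new points. Second, and more fundamentally, the failure mode is the one your own hesitation identifies: a sequence of \emph{infinite-order} elements can converge to a torsion element, which moreover need not lie near $e$, so no ``no small torsion'' neighborhood-of-the-identity fact is relevant. Concretely, in $G=\mathrm{PSL}(2,\mathbb{C})$ let $g_n$ be the loxodromic $z\mapsto -e^{1/n}z$ (screw motion with rotation angle $\pi$ and translation length $1/n$); each $\langle g_n\rangle$ is an infinite cyclic, hence torsion-free, closed (indeed discrete) subgroup, yet $g_n\to(z\mapsto -z)$, an element of order $2$, so by Lemma \ref{lem:convergence_in_Fell}(ii) every subsequential Chabauty limit of the $\langle g_n\rangle$ contains torsion. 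Thus the literal statement of clause (3) already fails for the very groups the paper applies it to, and no soft-topology argument can establish it; the closure phenomena actually needed downstream (e.g.\ a standard Borel/Polish structure on $\mathcal{D}_{\mathrm{tf}}(G)$, or torsion-freeness of limits under additional hypotheses such as discreteness of the limit or uniform injectivity-radius bounds) come from Margulis/Zassenhaus-type facts in the cited Lie-group references, not from the argument you propose. So: clause (1) correct, clause (2) has a genuine unproved step requiring no-small-subgroups, clause (3) is a genuine gap that cannot be repaired as stated.
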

\begin{proof} All assertions except the last appear in the aforementioned references; the framing there is in terms of second countable Lie groups, but a locally compact Polish structure in each case clearly suffices. The last assertion is then immediate from the fact that the subspace topology on countable intersections of open subsets of Polish spaces is Polish.
\end{proof}

Observe that $G$ acts continuously on $\mathcal{S}(G)$ by conjugation, and the subsets $\mathcal{D}(G)$, $\mathcal{S}_{\mathrm{tf}}(G)$, and $\mathcal{D}_{\mathrm{tf}}(G)$ are all invariant under this action. 
We will write $E(G,-)$ for the resulting orbit equivalence relation on the relevant subset $(-)$ of $\mathcal{D}(G)$. In particular, note that $E(G,\mathcal{D}_{\mathrm{tf}}(G))\leq_B E(G,\mathcal{D}(G))$ via the inclusion of $\mathcal{D}_{\mathrm{tf}}(G)$ in $\mathcal{D}(G)$.
As noted, our focus for the remainder of this section will be the groups $G=\mathrm{Isom}(\mathbb{H}^n)$ for $n\geq 2$; in Sections \ref{section:isometry_for_2} and \ref{section:isometry_for_3}, we will tend to approach these groups by way of their subgroups $\mathrm{Isom}^+(\mathbb{H}^n)$ of orientation-preserving isometries, whose theory is, in oddly compatible senses, both simpler and richer (in part for its interactions with Riemannian theory; $\mathrm{Isom}^+(\mathbb{H}^n)$ is the group of conformal homeomorphisms of $\mathbb{H}^n$ \cite[\S A.4]{MR1219310}).
Such an approach is fairly standard, and natural, given how frequently results in the orientable setting generalize; in our own context, the main tool for this kind of upgrade is provided by Lemma \ref{lem:lifting_orientation_preserving} below.
Note lastly that $\mathrm{Isom}(\mathbb{H}^n)$, and hence $\mathrm{Isom}^+(\mathbb{H}^n)\leq\mathrm{Isom}(\mathbb{H}^n)$, is endowed with the compact-open topology, from which it is immediate that each group acts continuously on $\mathbb{H}^n$.

\subsection{Discrete groups from manifolds}
\label{subsection:discrete_from_manifolds}

The task of this subsection is to show that the conversion outlined in this section's introduction, of hyperbolic manifolds into discrete groups, is Borel.
More formally, for any $n\geq 1$, let $\mathfrak{C}_c^*(\mathsf{Isom},\mathbb{H}^n)$ denote the restriction of $\mathfrak{M}^{*}(\mathsf{Isom},\mathbb{H}^n)$ (see equation (\ref{eq:metric_manifolds}) of Section \ref{subsection:subclasses} and the discussion surrounding it) to parametrizations of complete connected $(\mathsf{Isom},\mathbb{H}^n)$-manifolds and $\cong_{\mathsf{Isom}(\mathbb{H}^n)}$ the isometry relation on it.
 
We will prove the following theorem.

\begin{theorem}
\label{thm:manifoldstodiscrete}
There exists a Borel function $\nu:\mathfrak{C}^*_c(\mathsf{Isom},\mathbb{H}^n)\to\mathcal{D}_{\mathrm{tf}}(\mathrm{Isom}(\mathbb{H}^n))$ witnessing that $\cong_{\mathsf{Isom}(\mathbb{H}^n)}\;\leq_B E(\mathrm{Isom}(\mathbb{H}^n),\mathcal{D}_{\mathrm{tf}}(\mathrm{Isom}(\mathbb{H}^n))$.
\end{theorem}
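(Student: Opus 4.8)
The plan is to produce $\nu$ by implementing the developing-map/holonomy construction sketched in the section's introduction and then verifying, step by step, that each ingredient is Borel in the parameter $(\mathcal{U},c)$. First I would preprocess: by Lemma \ref{lem:reparametrization} (applied in the geodesically-convex form, since $\mathbb{H}^n$ is locally geodesically convex and $\mathsf{Isom}$ is a subpseudogroup of $\mathsf{Isom}$), there is a Borel reparametrization replacing any $(\mathcal{U},c)\in\mathfrak{C}^*_c(\mathsf{Isom},\mathbb{H}^n)$ with an equivalent one in which every chart $U_i$ and every overlap $U_{i,j}$ is simply connected (indeed geodesically convex), and by the concluding remarks of Section \ref{subsection:subclasses} we may keep it inside $\mathfrak{M}^*$, so that the quotient metric still agrees locally with the chart metric. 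Fixing a dense set $Q=(q_k)_{k\in\mathbb{N}}$ of $\mathbb{H}^n$ and a basepoint, I would then encode a point of the universal cover $\tilde{M}$ as a finite ``chart-path'' — a finite sequence of indices $(i_0,i_1,\dots,i_m)$ with $U_{i_\ell,i_{\ell+1}}\neq\varnothing$ together with a point of the initial chart — modulo the homotopy equivalence generated by elementary back-and-forth moves through overlaps; since overlaps are connected and simply connected, and $\mathrm{Isom}(\mathbb{H}^n)$ is rigid in the sense of Example \ref{ex:isometry_pseudogroups}, each such chart-path determines via the unique extensions of the transition maps $\varphi_{i_\ell,i_{\ell+1}}$ a single element $g\in\mathrm{Isom}(\mathbb{H}^n)$, namely the composition of those unique extensions, with the developing map recording where the fixed reference chart is carried.

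The core object is then the holonomy set: fixing the basepoint in, say, chart $U_0$, I would define $\Gamma_{(\mathcal{U},c)}$ to be the set of all $g\in\mathrm{Isom}(\mathbb{H}^n)$ arising as the composition of unique extensions along a \emph{closed} chart-path based at $0$ (i.e.\ one returning to a point with the same image in $M$). That this is a subgroup is the classical statement; that it is discrete and torsion-free, and that $\mathbb{H}^n/\Gamma_{(\mathcal{U},c)}\cong M_{(\mathcal{U},c)}$, are exactly the content of the theorem quoted at the end of Section \ref{section:bireducibility}'s introduction, given completeness and connectedness (Borel classes by Lemmas \ref{lem:complete} and \ref{lem:connected_Borel}). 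So the only thing to check is that the assignment $(\mathcal{U},c)\mapsto\Gamma_{(\mathcal{U},c)}\in\mathcal{F}(\mathrm{Isom}(\mathbb{H}^n))$ is Borel, landing in the Polish subspace $\mathcal{D}_{\mathrm{tf}}(\mathrm{Isom}(\mathbb{H}^n))$ of Lemma \ref{lem:D_Dtf}. Here I would argue: for each fixed finite chart-path shape, the map sending $(\mathcal{U},c)$ to the corresponding composition of unique extensions is Borel — the unique-extension map $\mathsf{Isom}|_{(\text{connected domain})}\to\mathrm{Isom}(\mathbb{H}^n)$ is Borel by the rigidity argument of Example \ref{ex:isometry_pseudogroups} (it is the inverse of an injective Borel map), and composition in $\mathrm{Isom}(\mathbb{H}^n)$ is continuous; the condition ``this chart-path is closed at the basepoint'' is Borel since it is a condition on finitely many overlap-nonemptiness facts and one equality of $\pi$-images, detectable via the quotient relation $\sim$ which is Borel on $\coprod U_i$. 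The set $\Gamma_{(\mathcal{U},c)}$ is therefore a countable union (over chart-path shapes) of Borel-parametrized singletons, hence a Borel-in-$(\mathcal{U},c)$ element of $\mathcal{F}(\mathrm{Isom}(\mathbb{H}^n))$: concretely, $\{(\mathcal{U},c,g): g\in\Gamma_{(\mathcal{U},c)}\}$ is analytic, and it is also coanalytic because membership can be tested via the quotient description $\mathbb{H}^n/\Gamma\cong M$ (checking on the dense set $Q$ that $g$ identifies $\sim$-equivalent points), so Suslin's Theorem gives Borelness; taking closures, which is Borel on $\mathcal{F}$, yields $\nu(\mathcal{U},c):=\overline{\Gamma_{(\mathcal{U},c)}}=\Gamma_{(\mathcal{U},c)}$ (it is already closed, being discrete).

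Finally I would check that $\nu$ is a reduction: that $M_{(\mathcal{U},c)}\cong M_{(\mathcal{V},d)}$ as $(\mathsf{Isom},\mathbb{H}^n)$-manifolds if and only if $\nu(\mathcal{U},c)$ and $\nu(\mathcal{V},d)$ are $\mathrm{Isom}(\mathbb{H}^n)$-conjugate. Both directions are the classical uniqueness-up-to-conjugation clause of the quoted theorem: an isometry $M_{(\mathcal{U},c)}\to M_{(\mathcal{V},d)}$ lifts to an isometry of universal covers which, transported through the developing maps, is an isometry of $\mathbb{H}^n$, i.e.\ an element of $\mathrm{Isom}(\mathbb{H}^n)$, conjugating one holonomy group to the other; conversely conjugation by $h\in\mathrm{Isom}(\mathbb{H}^n)$ induces an isometry $\mathbb{H}^n/\Gamma\to\mathbb{H}^n/h\Gamma h^{-1}$, and since $\mathbb{H}^n/\nu(\mathcal{U},c)\cong M_{(\mathcal{U},c)}$ this chains to an isometry of the original manifolds. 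I expect the main obstacle to be the bookkeeping in the first part: setting up the chart-path/homotopy encoding of $\tilde{M}$ and the developing and holonomy maps so that well-definedness (independence of the chart-path representative within a homotopy class) is visibly reducible to finitely much data, and in particular making the coanalytic half of the Suslin argument — ``$g$ lies in the holonomy group'' phrased purely in terms of $g$ acting on $Q$ and the Borel relation $\sim$ — genuinely airtight; the reduction property itself, once the construction is in place, is essentially a citation of the theorem stated at the end of the introduction together with the standard naturality of developing maps, so I would keep that portion brief.
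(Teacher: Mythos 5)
Your proposal is correct and follows essentially the same route as the paper: precompose with the convex reparametrization of Lemma \ref{lem:reparametrization}, define $\nu(\mathcal{U},c)$ as the holonomy group generated by unique $\mathrm{Isom}(\mathbb{H}^n)$-extensions of transition maps (rigidity as in Example \ref{ex:isometry_pseudogroups}), cite the classical theory for discreteness, torsion-freeness, $\mathbb{H}^3$-style quotient recovery, and the conjugation clause giving the reduction property. The only organizational difference is in the Borel verification: the paper checks that $\nu$-preimages of the Effros-subbasic sets $\{F\mid F\cap U\neq\varnothing\}$ are Borel by discretizing loops (rational partition points and endpoints drawn from the dense set $Q$, with convexity supplying geodesic interpolation), whereas you enumerate closed chart-chains and observe that each chain shape gives a Borel partial map into $\mathrm{Isom}(\mathbb{H}^n)$, so $\Gamma_{(\mathcal{U},c)}$ is a countable union of Borel-parametrized singletons; these amount to the same countable quantification over Borel data, and your chain-shape bookkeeping is fine provided you note (as you do) that convex overlaps are connected, so unique extensions are well defined and every chain from $U_0$ back to $U_0$ is realized by an actual loop at the basepoint. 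One piece of advice: drop the Suslin detour entirely --- your countable-union description already shows directly that $\{(\mathcal{U},c)\mid \Gamma_{(\mathcal{U},c)}\cap V\neq\varnothing\}$ is Borel for every open $V$, which is exactly Effros-measurability, whereas the proposed coanalytic test (``$g$ identifies $\sim$-equivalent points of $Q$'') does not actually characterize membership in the holonomy group without rebuilding the developing map, so as written that half of the Suslin argument would not survive scrutiny; fortunately it is superfluous.
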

\begin{proof}
Observe that, by the lemmas of Section \ref{subsection:subclasses}, $\mathfrak{C}^*_c(\mathsf{Isom},\mathbb{H}^n)$ is Borel and that we may, moreover, precede $\nu$ with the reparametrizing function $r$ 

of Lemma \ref{lem:reparametrization}; we may and will, in other words, without loss of generality assume that the parametrized atlases therein all have geodesically convex charts and overlap.
Next, enumerate in ordertype $\omega$ a dense subset $Q$ of $\mathbb{H}^n$, and for any  $(\mathcal{U},c)$ let $\bar{q}_{(\mathcal{U},c)}$ be the $q\in Q\cap U_0$ of minimum index.
Any $\gamma:[0,1]\to M_{(\mathcal{U},c)}$ representing an element of $\pi_1(M_{(\mathcal{U},c)},\bar{q}_{(\mathcal{U},c)})$ is covered by (the images of) a finite sequence of $U_{i(j)}\in\mathcal{U}$ $(0\leq j\leq k)$, one in which we may take $U_{i(0)}=U_{i(k)}=U_0$.
As described in this section's introduction (wherein the identification of the quotient-image of $U_0$ with $U_0$ plays the role of $e$), $\gamma$ thereby traverses a series of transition maps $\varphi_{i(j),i(j+1)}$ $(0\leq j<k)$, each of which uniquely extends to an element $g_j$ of $\mathrm{Isom}(\mathbb{H}^n)$. The following facts are standard; see \cite[\S 8.4]{MR4221225} or \cite[I.1.4]{MR903850}:
\begin{itemize}
\item the map taking $\gamma$ to $g_0\cdot \cdots \cdot g_{k-1}\in \mathrm{Isom}(\mathbb{H}^n)$ is unaffected by endpoint-preserving homotopies of $\gamma$;
\item the image $\nu(\mathcal{U},c)$ of the induced homomorphism of $\pi_1(M_{(\mathcal{U},c)},\bar{q}_{(\mathcal{U},c)})$ into $\mathrm{Isom}(\mathbb{H}^n)$ is both discrete and torsion-free.
\end{itemize}
Our task is simply to show that the assignment $(\mathcal{U},c)\mapsto \nu(\mathcal{U},c)$ defines a Borel map from $\mathfrak{C}^{*}_c(\mathsf{Isom},\mathbb{H}^n)$ to $\mathcal{D}_{\mathrm{tf}}(\mathrm{Isom}(\mathbb{H}^n))$.
To that end, it will suffice to show that the $\nu$-preimages of the sets $\{F\in\mathcal{F}(\mathrm{Isom}(\mathbb{H}^n))\mid F\cap U\neq\varnothing\}$, where $U$ ranges over the open subsets of $\mathrm{Isom}(\mathbb{H}^n)$, are Borel, since these sets generate the Effros Borel structure on $\mathcal{F}(\mathrm{Isom}(\mathbb{H}^n))$.
Observe therefore that $\nu^{-1}[\{F\in\mathcal{F}(\mathrm{Isom}(\mathbb{H}^n))\mid F\cap U\neq\varnothing\}]$ is the set of those $(\mathcal{U},c)$ such that there exist
\begin{enumerate}[label=\textup{\roman*.}]
\item rational $0=a_0<\cdots<a_{k+1}=1$, and
\item continuous maps $\gamma_j:[a_j,a_{j+1}]\to U_{i(j)}$ for $0\leq j\leq k$, with $i(0)=i(k)=0$ and $\gamma_0(0)=\gamma_{k}(1)=\bar{q}_{(\mathcal{U},c)}$ and $g_j(\gamma_{j}(a_{j+1}))=\gamma_{j+1}(a_{j+1})$ for $0\leq j<k$, where $g_j$ is that element of $\mathrm{Isom}(\mathbb{H}^n)$ uniquely extending $\varphi_{i(j),i(j+1)}$ (and the operation of $\mathrm{Isom}(\mathbb{H}^n)$ on points of $\mathbb{H}^n$ is that induced by the action of $\mathrm{Isom}(\mathbb{H}^n)$ on $\mathbb{H}^n$),
\item such that $g_0\cdot \cdots \cdot g_{k-1} \in U$.
\end{enumerate}
On its surface, this is a $\mathbf{\Sigma}^1_1$ condition.
Note, however, that by our assumptions deriving from Lemma \ref{lem:reparametrization} we may replace (ii) with
\begin{enumerate}
\item[ii'.] maps $p_j:\{a_j,a_{j+1}\}\to Q\cap U_{i(j)}$ for $0\leq j\leq k$, with $i(0)=i(k)=0$ and $p_0(0)=p_k(1)=\bar{q}_{(\mathcal{U},c)}$, and such that $p_j(a_{j+1})$ and $p_{j+1}(a_{j+1})$ both fall in $U_{i(j),i(j+1)}$ for $0\leq j<k$.
\end{enumerate}
This is because, by the convexity of the regions in question, the functions $p_j$ will determine uniform parametrizations $\gamma_j:[a_j,a_{j+1}]\to U_{i(j)}$ of the unique geodesics from $p_j(a_j)$ to $p_j(a_{j+1})$ and from $p_{j+1}(a_{j+1})$ to $p_{j+1}(a_{j+1})$ which, concatenated, are homotopic to those of the type described in item (ii), and moreover, that any ``loop'' as in item (ii) is homotopic to one of this form. Since items (i), (ii'), and (iii) only quantify over countably many Borel conditions, then, $\nu^{-1}[\{F\in\mathcal{F}(\mathrm{Isom}^+(\mathbb{H}^n))\mid F\cap U\neq\varnothing\}]$ is Borel, as desired.
\end{proof}

Observe that the above argument will, with only superficial modifications, more generally apply to define a Borel function $\mathfrak{C}^*_c(\mathcal{G},X)\to\mathcal{D}(G)$ reducing the relation of $(\mathcal{G},X)$-isomorphism to that of $G$-conjugacy whenever the conditions of Lemma \ref{lem:reparametrization} hold (namely, that $X$ is locally geodesically convex and locally compact Polish, and $\mathcal{G}$ is a Borel subpseudogroup of $\mathsf{Isom}$).

\begin{remark}
\label{rmk:baseframes}
It's standard to topologize \emph{spaces of hyperbolic manifolds} by way of the Chabauty--Fell topology on $\mathcal{D}(\mathrm{Isom}(\mathbb{H}^n))$, together with identifications $M\mapsto\Gamma_M$ much like those constructed just above.
More precisely, both \cite{MR903850} and \cite{MR1219310} topologize \emph{the space $\mathcal{MF}^n$ of baseframed complete connected hyperbolic $n$-manifolds} via its bijection with $\mathcal{D}_{\mathrm{tf}}(\mathrm{Isom}(\mathbb{H}^n))$, then endow \emph{the space $\mathcal{MB}^n$ of basepointed complete connected hyperbolic $n$-manifolds} and \emph{the space $\mathcal{MW}^n$ of unbasepointed complete connected hyperbolic $n$-manifolds} each with the quotient topologies associating to the forgetful functors $\mathcal{MF}^n\to\mathcal{MB}^n$ and $\mathcal{MF}^n\to\mathcal{MW}^n$.
These conversions are a further reason the Chabauty--Fell topology is sometimes termed the \emph{geometric topology}: natural notions of convergence of manifold structures in more plainly geometric senses (the $(\varepsilon,r)$-relations of \cite[I.3.2]{MR903850} or the formally stronger relations of \cite[pp.\ 166--168]{MR1219310} or the \emph{smooth topology} of \cite{BLL26+} exploited in the proofs of Theorem \ref{thm:fgPSL2Rsmooth} below) turn out to coincide with those inherited, via these conversions, from the Chabauty--Fell topology on $\mathcal{D}(\mathrm{Isom}(\mathbb{H}^n))$.
Within this framework, however, the space $\mathcal{MW}^n$ of hyperbolic $n$-manifolds without distinguished basepoint fails to be Hausdorff.
A main heuristic for this fact is recorded in \cite[p.\ 68]{MR903850}; there, distinct sequences in $\mathcal{MB}^2$ which map to a single sequence in $\mathcal{MW}^2$, but whose limits do not map to a single point, are described. Against this background, two main remarks are in order.

First, observe that the quotient of $\mathfrak{C}^*_c(\mathsf{Isom},\mathbb{H}^n)$ by $\cong_{\mathsf{Isom}(\mathbb{H}^n)}$ is, as a set, naturally isomorphic to $\mathcal{MW}^n$.
Observe next that our computation, in Corollary \ref{cor:isometry_universal} below, that $\cong_{\mathsf{Isom}(\mathbb{H}^n)}$ is essentially universal countable is a significantly stronger and more precise result than the non-Hausdorffness of $\mathcal{MW}^n$; it carries, for example, the heuristic not only that the $\mathcal{MW}^n$ parametrization of hyperbolic $n$-manifolds isn't Hausdorff (or even $T_0$; see Lemma \ref{lem:Glimm}), but that no meaningful one can be.
In contrast, by the analyses of Sections \ref{section:isometry_for_2} and \ref{section:isometry_for_3}, failures of Hausdorffness for the restrictions of $\mathcal{MW}^2$ and $\mathcal{MW}^3$ to geometrically finite manifolds are of a weaker sort:
those analyses furnish each of these restrictions with a natural standard Borel structure, or, in other words, with a Borel isomorphism with a Polish space.

Second, the $\nu$ of Theorem \ref{thm:manifoldstodiscrete} is, in essence, defined by viewing $\mathfrak{C}^*_c(\mathsf{Isom},\mathbb{H}^n)$ as a space of atlases on ``covertly baseframed'' manifolds; more precisely, the choice of a dense $Q\subseteq \mathbb{H}^n$ and canonical framing of $\mathbb{H}^n$ allows us to see $\mathfrak{C}^*_c(\mathsf{Isom},\mathbb{H}^n)$ as such in a Borel way.
A similar enhancement is at work in the arguments of Section \ref{section:isometry_for_2} below.
\end{remark}

\subsection{Manifolds from discrete groups}
\label{subsection:manifolds_discrete}
In this subsection, we describe a Borel reduction reversing that of Theorem \ref{thm:manifoldstodiscrete}. An immediate corollary is that the relations of equivalence among complete connected hyperbolic $n$-manifolds and of conjugacy among discrete torsion-free subgroups of $\mathrm{Isom}(\mathbb{H}^n)$ are not just Borel bireducible; they are classwise Borel equivalent as well.
\begin{theorem}
\label{thm:discretetomanifolds} 
For any $n\geq 1$, there exists a Borel function $\sigma:\mathcal{D}_{\mathrm{tf}}(\mathrm{Isom}(\mathbb{H}^n))\to\mathfrak{C}^*_c(\mathsf{Isom},\mathbb{H}^n)$ witnessing that $E(\mathrm{Isom}(\mathbb{H}^n),\mathcal{D}_{\mathrm{tf}}(\mathrm{Isom}(\mathbb{H}^n))\leq_B\;\cong_{\mathsf{Isom}(\mathbb{H}^n)}$.
\end{theorem}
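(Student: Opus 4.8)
The plan is to make the naive construction sketched in the introduction — sending a discrete torsion-free $\Gamma\leq\mathrm{Isom}(\mathbb{H}^n)$ to an atlas for $\mathbb{H}^n/\Gamma$ built from balls around a fixed dense set — into a Borel map, and then to invoke the cited correspondence theorem (uniqueness of $\Gamma$ up to conjugation) to see it is a reduction. First I would fix, once and for all, an enumeration $Q=(q_i)_{i\in\mathbb{N}}$ of a dense subset of $\mathbb{H}^n$, together with a fixed complete metric $d_{\mathbb{H}^n}$. Given $\Gamma\in\mathcal{D}_{\mathrm{tf}}(\mathrm{Isom}(\mathbb{H}^n))$, I would set $r_i(\Gamma)=\tfrac12\inf\{d_{\mathbb{H}^n}(q_i,g\cdot q_i)\mid g\in\Gamma\setminus\{\mathrm{id}\}\}$ — finite and positive because $\Gamma$ is discrete and torsion-free and acts freely — and let $U_i$ be the open ball of radius $\min(r_i(\Gamma),1)$ about $q_i$, so that $\pi|_{U_i}$ is injective, where $\pi:\mathbb{H}^n\to\mathbb{H}^n/\Gamma$ is the quotient map. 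For the transition data I would set, for each pair $(i,j)$, $U_{i,j}(\Gamma)=\{x\in U_i\mid g\cdot x\in U_j\text{ for some }g\in\Gamma\}$ and define $\varphi_{i,j}$ on the (open, as $\Gamma$ is discrete) piece of $U_{i,j}$ where $g$ works to be $x\mapsto g\cdot x$; one checks these are disjoint over distinct $g$ by the injectivity of $\pi|_{U_j}$, so $\varphi_{i,j}$ is a well-defined element of $\mathsf{Isom}(\mathbb{H}^n)$ on $U_{i,j}$, and the cocycle conditions (ii)–(iv) of Definition \ref{defn:parametrization_locally_(G,X)_spaces} hold by construction. This produces $\sigma(\Gamma)=(\mathcal{U},c)$ with $M_{(\mathcal{U},c)}\cong\mathbb{H}^n/\Gamma$, which by the correspondence theorem quoted just above is a complete connected hyperbolic $n$-manifold; since the $U_i$ are geodesic balls in $\mathbb{H}^n$ they are geodesically convex, so $(\mathcal{U},c)\in\mathfrak{C}^*_c(\mathsf{Isom},\mathbb{H}^n)$ after noting that the induced quotient metric agrees with that of $\mathbb{H}^n/\Gamma$ by Lemma \ref{lem:metric_reparametrization}.

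The bulk of the work is verifying that $\sigma$ is Borel. For this I would argue, just as in Theorems \ref{thm:manifoldstodiscrete} and \ref{thm:M(G,X)_Borel}, that the preimages of the generating Borel sets of $\mathcal{O}(\mathbb{H}^n)^{\mathbb{N}\times\mathbb{N}}\times\mathsf{Isom}(\mathbb{H}^n)^{\mathbb{N}\times\mathbb{N}}$ are Borel. The radii $r_i(\Gamma)$ are Borel functions of $\Gamma\in\mathcal{F}(\mathrm{Isom}(\mathbb{H}^n))$ because the condition $r_i(\Gamma)>\delta$ can be checked against a fixed countable dense subgroup-witness set: $\Gamma$ meets the open set $\{g\mid d_{\mathbb{H}^n}(q_i,g\cdot q_i)\leq 2\delta,\ g\neq\mathrm{id}\}$, and membership of this open set is a Chabauty--Fell-Borel condition on $\mathcal{F}(\mathrm{Isom}(\mathbb{H}^n))$ by the basic facts of Section \ref{subsection:spaces_of_subsets}. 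Hence $\Gamma\mapsto U_i(\Gamma)$ is Borel into $\mathcal{O}(\mathbb{H}^n)$. For the transition maps, the key point is that whether $g\cdot U_i(\Gamma)\cap U_j(\Gamma)\neq\varnothing$ for some $g\in\Gamma$, and which $g$ it is, can be phrased existentially over a fixed countable dense subset of $\mathrm{Isom}(\mathbb{H}^n)$ approximating elements of $\Gamma$, together with a density-of-$Q$ argument to detect intersection — exactly the style of argument in the proof of Theorem \ref{thm:M(G,X)_Borel} that $\{\varphi\in\mathcal{G}\mid\varphi[V]\cap W\neq\varnothing\}$ is Borel. One also uses that the action map $\mathrm{Isom}(\mathbb{H}^n)\times\mathbb{H}^n\to\mathbb{H}^n$ is continuous and that the map $g\mapsto(g\text{-translation of }U_i)$ lands in $\mathsf{Isom}(\mathbb{H}^n)$ Borel-measurably, by Example \ref{ex:isometry_pseudogroups} and Lemma \ref{lem:compact-open_Borel}. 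Assembling: each coordinate $U_{i,j}$ and $\varphi_{i,j}$ of $\sigma(\Gamma)$ is obtained by a countable (Borel) amalgamation over the candidate group elements $g$, so $\sigma$ is Borel into $\mathfrak{P}(\mathsf{Isom},\mathbb{H}^n)$, and its image lies in the Borel subset $\mathfrak{C}^*_c(\mathsf{Isom},\mathbb{H}^n)$.

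Finally, that $\sigma$ is a \emph{reduction} — $\Gamma\ E(\mathrm{Isom}(\mathbb{H}^n),\mathcal{D}_{\mathrm{tf}})\ \Gamma'$ if and only if $\sigma(\Gamma)\cong_{\mathsf{Isom}(\mathbb{H}^n)}\sigma(\Gamma')$ — is exactly the content of the displayed correspondence theorem: $M_{\sigma(\Gamma)}\cong\mathbb{H}^n/\Gamma$ and $M_{\sigma(\Gamma')}\cong\mathbb{H}^n/\Gamma'$, and $\mathbb{H}^n/\Gamma$ is isometric to $\mathbb{H}^n/\Gamma'$ iff $\Gamma$ and $\Gamma'$ are conjugate in $\mathrm{Isom}(\mathbb{H}^n)$. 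The main obstacle I anticipate is not conceptual but bookkeeping: pinning down in a uniformly Borel way the decomposition of each $U_{i,j}$ into the open pieces on which individual group elements realize the overlap, and confirming that the resulting $\varphi_{i,j}$ is genuinely a single element of $\mathsf{Isom}(\mathbb{H}^n)$ (rather than a set-valued relation) — this is where discreteness and torsion-freeness of $\Gamma$, and the injectivity of $\pi$ on each $U_i$, must be used carefully, and it is the step most easily glossed over. A secondary subtlety is ensuring the constructed atlas actually covers $M_{(\mathcal{U},c)}$, i.e.\ that the $\pi$-images of the $U_i$ cover $\mathbb{H}^n/\Gamma$, which follows from density of $Q$ together with the uniform lower bound coming from proper discontinuity of the action on compact sets; with $Q$ dense, every point of $\mathbb{H}^n$ lies within $r_i(\Gamma)$ of some $q_i$, so coverage is automatic. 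With these in hand, $\sigma$ is the required Borel reduction, completing the proof of the theorem and, together with Theorem \ref{thm:manifoldstodiscrete}, the classwise Borel equivalence asserted in the corollary.
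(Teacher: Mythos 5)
Your proposal is correct and is essentially the paper's own proof of Theorem \ref{thm:discretetomanifolds}: the charts $D_\Gamma(q_i)$ there are exactly your half-displacement balls about the fixed dense sequence, the overlaps and transition maps are the same unions of restrictions of group elements, the covering/isometry claim is settled by the same uniform lower bound on displacement over compacta (the paper's Lemma \ref{lem:Ratcliffe_variant}), and the reduction property is read off from the same classical correspondence theorem. The only divergence is in the Borelness bookkeeping, where the paper verifies the chart coordinates via a cited compactness fact and a relative-openness argument while you compute the radius function directly; that works, but phrase the detection of $r_i(\Gamma)>\delta$ as countably many Fell-open conditions of the form ``$\Gamma$ avoids the compact set $\{g\mid d(q_i,g\cdot q_i)\leq 2\delta\}\setminus U$'' (for small neighborhoods $U$ of the identity), rather than by quantifying over a fixed countable dense subset of $\mathrm{Isom}(\mathbb{H}^n)$, since membership in $\Gamma$ cannot be tested by approximation from a fixed countable set.
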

In essence, our task is to show that the conversion of a discrete $\Gamma$ to the manifold $\mathbb{H}^n/\Gamma$ factors through $\mathfrak{C}^*_c(\mathsf{Isom},\mathbb{H}^n)$ in a Borel way.
Useful for this purpose will be the following variant of \cite[\S 6.6, Lem.\ 1]{MR4221225}:
\begin{lemma}
\label{lem:Ratcliffe_variant}
For any torsion-free discrete group $\Gamma\leq\mathrm{Isom}(\mathbb{H}^n)$ and compact $K\subseteq\mathbb{H}^n$, there exists an $\ell>0$ such that $d(x,y)>\ell$ for all $x\in K$ and $y\in\{g\cdot x\mid g\in\Gamma\backslash\{\mathrm{id}\}\}\cap K$.
\end{lemma}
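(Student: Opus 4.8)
\textbf{Proof plan for Lemma \ref{lem:Ratcliffe_variant}.}

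The plan is to argue by contradiction, extracting from a failure of the conclusion a nontrivial element of $\Gamma$ fixing a point of $\mathbb{H}^n$, which contradicts torsion-freeness. Suppose no such $\ell$ exists. Then for each $m\in\mathbb{N}$ we may pick $x_m\in K$ and $g_m\in\Gamma\setminus\{\mathrm{id}\}$ with $g_m\cdot x_m\in K$ and $d(x_m,g_m\cdot x_m)\le 1/m$. Since $K$ is compact, pass to a subsequence along which $x_m\to x\in K$ and $g_m\cdot x_m\to y\in K$; by the triangle inequality $y=x$, so $g_m\cdot x_m\to x$ as well. The first step is thus to show the sequence $(g_m)$ is relatively compact in $\mathrm{Isom}(\mathbb{H}^n)$: because $\mathrm{Isom}(\mathbb{H}^n)$ acts properly on $\mathbb{H}^n$ (the action is proper since $\mathbb{H}^n$ is a proper metric space and $\mathrm{Isom}(\mathbb{H}^n)$ carries the compact-open topology), the set $\{g\in\mathrm{Isom}(\mathbb{H}^n)\mid g\cdot K'\cap K'\ne\varnothing\}$ is compact for any compact $K'$; applying this with $K'$ a compact neighborhood of $x$ containing all but finitely many $x_m$ and $g_m\cdot x_m$, we conclude $(g_m)$ lies in a compact subset of $\mathrm{Isom}(\mathbb{H}^n)$.

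The second step invokes discreteness: a relatively compact subset of a discrete subgroup is finite, so $(g_m)$ takes only finitely many values, and hence is eventually constant equal to some fixed $g\in\Gamma\setminus\{\mathrm{id}\}$ along a further subsequence. The third step is to conclude that $g\cdot x=x$: along this subsequence $g\cdot x_m=g_m\cdot x_m\to x$, while also $x_m\to x$ and $g$ is continuous, so $g\cdot x=\lim g\cdot x_m=x$. Thus $g$ is a nontrivial element of $\Gamma$ with a fixed point in $\mathbb{H}^n$; since every nontrivial isometry of $\mathbb{H}^n$ fixing a point of $\mathbb{H}^n$ is elliptic and hence has finite order (its differential at the fixed point is a nontrivial orthogonal transformation, but more to the point elliptic isometries of $\mathbb{H}^n$ are conjugate into the compact group $\mathrm{O}(n)$ stabilizing that point, and a torsion-free group meets such a compact stabilizer trivially), this contradicts the torsion-freeness of $\Gamma$.

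The main obstacle, such as it is, is marshalling the properness of the $\mathrm{Isom}(\mathbb{H}^n)$-action on $\mathbb{H}^n$ in the first step — everything else is a routine compactness-plus-discreteness squeeze. Properness here is standard (see \cite[\S 6.6]{MR4221225} or \cite[I.3]{MR903850}), but I would state it explicitly as the input. An alternative, avoiding any appeal to properness of the ambient action, is to run the same argument purely within $\Gamma$: since $\Gamma$ acts properly discontinuously and freely on $\mathbb{H}^n$ (torsion-free discrete groups of isometries do), the set $\{g\in\Gamma\mid g\cdot K'\cap K'\ne\varnothing\}$ is already finite, and one reaches a contradiction by the same limiting argument. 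Either route is short; the second is perhaps cleaner and more self-contained, so I would likely present that one.
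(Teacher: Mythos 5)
Your argument is correct and is essentially the standard one: the paper gives no proof of this lemma at all, deferring to Ratcliffe's Lemma 1 of \S 6.6, whose proof is exactly your compactness-plus-discreteness squeeze, with torsion-freeness turning the limiting fixed point into a contradiction. One small repair to the last step: ``elliptic and hence has finite order'' is false for arbitrary isometries (an irrational rotation is elliptic of infinite order), and ``a torsion-free group meets such a compact stabilizer trivially'' needs discreteness --- the correct justification is that $\Gamma\cap\mathrm{Stab}(x)$ is a discrete subgroup of a compact group, hence finite, hence trivial by torsion-freeness; equivalently, your cleaner alternative route via the free, properly discontinuous action of a torsion-free discrete group is exactly right and is the version I would keep.
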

\begin{proof}[Proof of Theorem \ref{thm:discretetomanifolds}] Recall the dense $Q=\{q_i\mid i\in\mathbb{N}\}\subseteq \mathbb{H}^n$ of the proof of Theorem \ref{thm:manifoldstodiscrete}, fix $\Gamma\in\mathcal{D}_{\mathrm{tf}}(\mathrm{Isom}(\mathbb{H}^n))$,
and for any $x\in\mathbb{H}^n$, let $D_\Gamma(x)$ denote the open ball about $x$ of maximal radius such that $g\cdot D_\Gamma(x)\cap D_\Gamma(x)=\varnothing$ for all $g\in\Gamma\backslash\{\mathrm{id}\}$.\footnote{More natural here might be to take Dirichlet fundamental domains centered at $q_i$. This doesn't, unfortunately, simplify our argument, except when the geometric and polyhedral topologies coincide, but the authors are only aware of a proof of this relation in dimension $3$, that of \cite{MR1060898}.}
Note that $D_\Gamma(x)$ is geodesically convex.
Let $\sigma$ of such a $\Gamma$ equal the $(\mathcal{U},c)$ defined by:
\begin{itemize}
\item $U_i=D_\Gamma(q_i)$ for each $i\in\mathbb{N}$;
\item $U_{i,j}=\bigcup_{g\in\Gamma}\big(D_{\Gamma}(q_i)\cap g[D_{\Gamma}(q_j)]\big)$ for each $i,j\in\mathbb{N}$;
\item $\varphi_{i,j}:U_{i,j}\to U_{j,i}$ equals $$\bigcup_{g\in\Gamma}g^{-1}\big|_{(D_{\Gamma}(q_i)\cap g[D_{\Gamma}(q_j)])}$$
for each $i,j\in\mathbb{N}$.
\end{itemize}
The theorem will then follow from two claims: first, that $M_{(\mathcal{U},c)}$ is isometric to $\mathbb{H}^n/\Gamma$, and second, that the function $\sigma$  is Borel.

For the first item, note that as the $U_i$ all canonically embed into $\mathbb{H}^n/\Gamma$, in a manner which is moreover compatible with the transition maps $\varphi_{i,j}$, the question reduces to that of whether these $U_i$ cover $\mathbb{H}^n/\Gamma$; they will, of course, if they in fact cover $\mathbb{H}^n$.
We argue the latter by contradiction: suppose there exists an $x\in\mathbb{H}^n\backslash\bigcup_{i\in\mathbb{N}}U_i$, and fix a sequence $\bar{q}=(q_{j(i)})_{i\in\mathbb{N}}$ in $Q$ converging to $x$.
Let $K$ denote the compact unit ball centered at $x$ and take $\ell$ as in Lemma \ref{lem:Ratcliffe_variant}.
Since for any $q\in\bar{q}\cap K$ we have both that $d(q,g\cdot q)>\ell$ for all $g\cdot q\in\{g\cdot x\mid g\in\Gamma\backslash\{\mathrm{id}\}\}\cap K$ and that $d(x,s)\leq d(q,s)+d(x,q)$, and hence that $1-d(x,q)\leq d(q,s)$, for all $s\in\mathbb{H}^n\backslash K$, the radii of the sets $U_{j(i)}$ are uniformly bounded below. Since the distances $d(q_{j(i)},x)$ grow arbitrarily small, this shows that eventually $x\in U_{j(i)}$, the contradiction desired.

To see that the function $\sigma$ is Borel, recall the argument of Lemma \ref{lem:reparametrization}, Claim 1, and in particular its notation $[B(k,\ell)]$ for the collection of elements of $(\mathcal{O}(\mathbb{H}^n)\times\mathsf{Isom})^{\mathbb{N}\times\mathbb{N}}$ falling within the Borel set $B$ on their $(k,\ell)^{\mathrm{th}}$ coordinate. As before, we will argue that for arbitrary $(k,\ell)\in\mathbb{N}\times\mathbb{N}$, $\sigma^{-1}([B(k,\ell)])$ is a Borel subset of $\mathcal{D}_{\mathrm{tf}}(\mathrm{Isom}^+(\mathbb{H}^n))$ for any $B$ of the form:
\begin{enumerate}[label=\textup{\arabic*.}]
\item $\{U\in\mathcal{O}(\mathbb{H}^n)\mid U \supseteq K \}\times\mathsf{Isom}$ for some compact $K\subseteq\mathbb{H}^n$, or
\item $\mathcal{O}(\mathbb{H}^n)\times E$ for $E$ a Borel subset of $\mathsf{Isom}$.
\end{enumerate}
Note first that for any $k\in\mathbb{N}$ and $B$ as in (1) for some fixed compact $K\subseteq\mathbb{H}^n$,
$$\sigma^{-1}([B(k,k)])=\{\Gamma\mid D_\Gamma(q_k)\supseteq K\} = \bigcup_{\varepsilon > r}\sigma^{-1}([B'_\varepsilon(k,k)]),$$
where $B'_\varepsilon$ is the open ball about $q_k$ of radius $\varepsilon$ and $r = \max_{y\in K} d(q_k,y)$. By \cite[Thm.\ E.1.10]{MR1219310}, for any $s>0$ the set $\bigcup_{\varepsilon \geq s}\sigma^{-1}([B'_\varepsilon(k,k)])$ is compact; it follows that $\sigma^{-1}([B(k,k)])$ is a countable union of compact sets, and Borel in particular.

For any $k\neq \ell$ in $\mathbb{N}$ and $B$ as in (1) for some fixed compact $K\subseteq\mathbb{H}^n$, on the other hand,
$$\sigma^{-1}([B(k,\ell)])=\sigma^{-1}([B(k,k)])\cap C,$$
where $[B(k,k)]$ is as in the previous paragraph, and $C$ is as we will now describe.
The preceding paragraph may be read as arguing that the map $h:\mathcal{D}_{\mathrm{tf}}(\mathrm{Isom}^+(\mathbb{H}^n))\to\mathcal{O}(\mathbb{H}^n):\Gamma\mapsto D_\Gamma(q_k)$ is Borel, and as we will show in a moment, it follows that the map $$f:\mathcal{D}_{\mathrm{tf}}(\mathrm{Isom}(\mathbb{H}^n))\to\mathcal{O}(\mathbb{H}^n):\Gamma\mapsto \Gamma\cdot D_\Gamma(q_\ell)$$
is Borel as well.
The set $C$ is the preimage by $f$ of the set $\{U\in\mathcal{O}(\mathbb{H}^n)\mid U\supseteq K\}$, with the consequence that $\sigma^{-1}([B(k,\ell)])$ is an intersection of Borel sets, as desired.
To see that $f$ is Borel, we check that the restriction of the map $\mathcal{D}_{\mathrm{tf}}(\mathrm{Isom}(\mathbb{H}^n))\times\mathcal{O}(\mathbb{H}^n)\to\mathcal{O}(\mathbb{H}^n):(\Gamma,U)\mapsto \Gamma\cdot U$ to the range of $(\mathrm{id},h)$ is; observe that for this it will suffice to check that the set
$$Z_0=\{(\Gamma,U)\in\mathrm{ran}(\mathrm{id},h)\mid\exists g\in\Gamma\textnormal{ such that }g\cdot U\supseteq K\}$$
is Borel for any \emph{connected} compact $K\subseteq\mathbb{H}^n$. But this set is a relatively open subset of $\mathrm{ran}(\mathrm{id},h)$, for the reason that $Z_1=\{(g,U)\in\mathrm{Isom}(\mathbb{H}^n)\times\mathrm{ran}(h)\mid g\cdot U\supseteq K\}$ is relatively open in $\mathrm{Isom}(\mathbb{H}^n)\times\mathcal{O}(\mathbb{H}^n)$: if $g$ witnesses that $(\Gamma,U)\in Z_0$ then for a relatively open neighborhood $V\times W\cap\mathrm{ran}(h)\subseteq Z_1$ of $(g,U)$, the set
$$\{(\Gamma,U)\in\mathrm{ran}(\mathrm{id},h)\mid\Gamma\cap V\neq\varnothing\textnormal{ and }U\in W\}$$
is a relatively open subset of $Z_0$.

For item (2), recall from Definition \ref{defn:Borel_structure_Top} that the charts $\varphi_{k,\ell}:U_{k,\ell}\to U_{\ell,k}$ comprising the second coordinate in a manifold parameter $(\mathcal{U},c)$ are triples of the type (\emph{domain}, \emph{range}, \emph{sequence recording $\varphi_{k,\ell}$ outputs on a dense subset of its domain}). That $\sigma$-preimages of Borel subsets of its first or second coordinates are Borel we just argued.
Hence we may confine our attention to the third coordinate, which is, more formally, a sequence $(x_i)_{i\in\mathbb{N}}$ in $\mathbb{H}^n\cup\{u\}$ wherein $x_i=\varphi_{k,\ell}(q_i)$ if $q_i\in\mathrm{dom}(\varphi_{k,\ell})$ and $x_i$ otherwise equals the default variable $u$. Our task then further reduces to showing that the $\sigma$-preimage of the collection of $\mathsf{Isom}$ elements mapping a given $q_i$ to an arbitrary open $U\subseteq\mathbb{H}^n$ is Borel. But this is just $\{\Gamma\mid\Gamma\cdot q_i\cap U\neq\varnothing\}$, a relatively open subset of $\mathcal{D}_{\mathrm{tf}}(\mathrm{Isom}(\mathbb{H}^n))$ by the continuity of the action of $\mathrm{Isom}(\mathbb{H}^n)$ on $\mathbb{H}^n$, and this completes the argument.
\end{proof}
An immediate corollary of Theorems \ref{thm:manifoldstodiscrete} and \ref{thm:discretetomanifolds} is that $$\cong_{\mathsf{Isom}(\mathbb{H}^n)}\;\sim_B\,E(\mathrm{Isom}(\mathbb{H}^n),\mathcal{D}_{\mathrm{tf}}(\mathrm{Isom}(\mathbb{H}^n))$$
or, in plainer English, that the relations of isometry for complete connected hyperbolic $n$-manifolds and those of conjugacy for discrete torsion-free subgroups of $\mathrm{Isom}(\mathbb{H}^n)$ are Borel bireducible.
In fact, a stronger relation between them holds; see \cite[\S 2]{MR2833542} for further discussion of the following notion.
\begin{definition}
Equivalence relations $E$ and $F$ on standard Borel spaces $X$ and $Y$ are \emph{classwise Borel isomorphic} if there exist Borel functions $X\to Y$ and $Y\to X$ inducing inverse bijections of the sets $X/E$ and $Y/F$.
\end{definition}
The utility of this definition derives from the fact that Borel reductions do not, in general, satisfy the Schr\"{o}der--Bernstein property: \emph{a priori}, there may exist Borel lifts $X\to Y$ and $Y\to X$ of injections $X/E\to Y/F$ and $Y/F\to X/E$, respectively, without there existing Borel lifts of any bijection $X/E\to Y/F$ together with its inverse. The $\nu$ and $\sigma$ of Theorems \ref{thm:manifoldstodiscrete} and \ref{thm:discretetomanifolds}, respectively, are, on the other hand, Borel lifts of exactly the latter sort.

\begin{corollary}
\label{cor:classwise}
The equivalence relations $\cong_{\mathsf{Isom}(\mathbb{H}^n)}$ and $E(\mathrm{Isom}(\mathbb{H}^n),\mathcal{D}_{\mathrm{tf}}(\mathrm{Isom}(\mathbb{H}^n))$ of Theorems \ref{thm:manifoldstodiscrete} and \ref{thm:discretetomanifolds} are classwise Borel isomorphic.
\end{corollary}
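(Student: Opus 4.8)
\emph{Proof proposal.} The plan is to observe that the two Borel reductions already constructed, namely $\nu$ from Theorem \ref{thm:manifoldstodiscrete} and $\sigma$ from Theorem \ref{thm:discretetomanifolds}, serve directly as the witnessing maps, and that only the verification that they descend to \emph{mutually inverse} bijections of quotient sets remains. Concretely, take $f=\nu:\mathfrak{C}^*_c(\mathsf{Isom},\mathbb{H}^n)\to\mathcal{D}_{\mathrm{tf}}(\mathrm{Isom}(\mathbb{H}^n))$ and $g=\sigma:\mathcal{D}_{\mathrm{tf}}(\mathrm{Isom}(\mathbb{H}^n))\to\mathfrak{C}^*_c(\mathsf{Isom},\mathbb{H}^n)$; both are Borel. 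Since $\nu$ is a Borel reduction of $\cong_{\mathsf{Isom}(\mathbb{H}^n)}$ to $E(\mathrm{Isom}(\mathbb{H}^n),\mathcal{D}_{\mathrm{tf}}(\mathrm{Isom}(\mathbb{H}^n)))$, it in particular respects the two relations, hence induces a well-defined (and automatically injective) map $\bar\nu$ on the quotient by $\cong_{\mathsf{Isom}(\mathbb{H}^n)}$; symmetrically $\sigma$ induces a well-defined injection $\bar\sigma$ on the quotient by $E(\mathrm{Isom}(\mathbb{H}^n),\mathcal{D}_{\mathrm{tf}}(\mathrm{Isom}(\mathbb{H}^n)))$. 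It therefore suffices to check that $\bar\sigma\circ\bar\nu$ and $\bar\nu\circ\bar\sigma$ are the respective identity maps.

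For $\bar\sigma\circ\bar\nu=\mathrm{id}$, the argument I would give is the following chain of isometries. By the construction in the proof of Theorem \ref{thm:manifoldstodiscrete}, $\nu(\mathcal{U},c)$ is a torsion-free discrete $\Gamma\leq\mathrm{Isom}(\mathbb{H}^n)$ with $M_{(\mathcal{U},c)}$ isometric to $\mathbb{H}^n/\Gamma$ (this is part of the standard holonomy package invoked there). By the construction in the proof of Theorem \ref{thm:discretetomanifolds}, $M_{\sigma(\Gamma)}$ is in turn isometric to $\mathbb{H}^n/\Gamma$. Composing, $M_{\sigma(\nu(\mathcal{U},c))}$ is isometric to $M_{(\mathcal{U},c)}$, i.e.\ $\sigma(\nu(\mathcal{U},c))\cong_{\mathsf{Isom}(\mathbb{H}^n)}(\mathcal{U},c)$, as required.

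For $\bar\nu\circ\bar\sigma=\mathrm{id}$, I would run the same two facts in the other order together with the rigidity clause of the correspondence theorem recalled at the start of this section (\cite[Thm.\ 1.18]{MR1638795}; see also \cite[Cor.\ 3.5.12]{MR1435975}): given $\Gamma\in\mathcal{D}_{\mathrm{tf}}(\mathrm{Isom}(\mathbb{H}^n))$, the manifold $M_{\sigma(\Gamma)}$ is isometric to $\mathbb{H}^n/\Gamma$, while $\nu(\sigma(\Gamma))$ is a torsion-free discrete $\Gamma'$ with $M_{\sigma(\Gamma)}$ isometric to $\mathbb{H}^n/\Gamma'$; hence $\mathbb{H}^n/\Gamma'$ and $\mathbb{H}^n/\Gamma$ are isometric, and by the uniqueness-up-to-conjugation assertion of the cited theorem, $\Gamma'$ and $\Gamma$ are conjugate in $\mathrm{Isom}(\mathbb{H}^n)$, i.e.\ $\nu(\sigma(\Gamma))\,E(\mathrm{Isom}(\mathbb{H}^n),\mathcal{D}_{\mathrm{tf}}(\mathrm{Isom}(\mathbb{H}^n)))\,\Gamma$. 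Together these identities show $\bar\nu$ and $\bar\sigma$ are mutually inverse bijections, which is exactly the assertion that the two equivalence relations are classwise Borel isomorphic.

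I do not anticipate a serious obstacle here: all the substantive content — Borelness of $\nu$ and $\sigma$, and their being reductions — is already carried by Theorems \ref{thm:manifoldstodiscrete} and \ref{thm:discretetomanifolds}, and the only point requiring genuine external input is the classical rigidity statement, which is cited. The one thing to be careful to state explicitly is that ``reduction'' already supplies the homomorphism property needed for $\bar\nu,\bar\sigma$ to be defined on quotients, so that the two composition computations above actually suffice to conclude bijectivity.
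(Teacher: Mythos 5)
Your proposal is correct, and it reaches the conclusion by a slightly different route than the paper. Both arguments use $\nu$ and $\sigma$ as the witnessing Borel maps, but they diverge in how the induced quotient maps are shown to be mutually inverse. The paper argues the stronger, on-the-nose identity $\nu\circ\sigma=\mathrm{id}$ at the level of parameters: it interprets $\sigma(\Gamma)$ as an implicitly baseframed manifold (Remark \ref{rmk:baseframes}), identifies $\sigma$ and $\nu$ with the homeomorphism $\mathcal{D}_{\mathrm{tf}}(\mathrm{Isom}(\mathbb{H}^n))\to\mathcal{MF}^n$ of \cite[Prop.\ E.1.9]{MR1219310} and its left-inverse, and must then attend to a subtlety about the reparametrizing map $r$ of Lemma \ref{lem:reparametrization} preserving the basepoint data $q_0\in U_0$. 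You instead work entirely at the level of equivalence classes: you verify $\bar\sigma\circ\bar\nu=\mathrm{id}$ and $\bar\nu\circ\bar\sigma=\mathrm{id}$ using only the two isometry facts already established inside the proofs of Theorems \ref{thm:manifoldstodiscrete} and \ref{thm:discretetomanifolds} ($M_{(\mathcal{U},c)}\cong\mathbb{H}^n/\nu(\mathcal{U},c)$ and $M_{\sigma(\Gamma)}\cong\mathbb{H}^n/\Gamma$) together with the classical uniqueness-up-to-conjugacy theorem quoted at the start of Section \ref{section:bireducibility}. Your version is more symmetric and self-contained, sidestepping the baseframe discussion and the $r$-subtlety altogether; what it gives up is the stronger structural fact $\nu\circ\sigma=\mathrm{id}$ (i.e.\ that $\sigma$ is a literal Borel section of $\nu$), which the paper's reading makes visible. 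One point worth stating explicitly in your write-up is that $\cong_{\mathsf{Isom}(\mathbb{H}^n)}$ is by definition the isometry relation on $\mathfrak{C}^*_c(\mathsf{Isom},\mathbb{H}^n)$, so that your chains of isometries do translate verbatim into the asserted equivalences; with that said, your argument is complete.
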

\begin{proof}
That $\nu$ and $\sigma$ induce bijections follows from the discussion which introduced this section (along with its references); that these are inverse bijections follows from the stronger fact that $\nu\circ\sigma=\mathrm{id}$. For the latter, observe that $\sigma(\Gamma)$ is, in the sense discussed in Remark \ref{rmk:baseframes}, implicitly basepointed at $q_0$, and that $D_\Gamma(q_0)$ may, moreover, be regarded as encoding a frame at $q_0$. In fact, $\sigma$ is the realization in our setting of the map witnessing the homeomorphism $\mathcal{D}_{\mathrm{tf}}(\mathrm{Isom}(\mathbb{H}^n))\to\mathcal{MF}^n$ described in the orientation-preserving case at \cite[Prop.\ E.1.9]{MR1219310}, and $\nu$ is the realization of its left-inverse. (A subtle point does arise here, via the implicit precomposition, in the proof of Theorem \ref{thm:manifoldstodiscrete}, of $\nu$ with the reparametrizing $r$ of Lemma \ref{lem:reparametrization}: to conserve the data of $\sigma(\Gamma)$, we will require a version of $r$ taking any $(\mathcal{U},c)$ with $q_0\in U_0$ to a $(\mathcal{V},d)$ with $q_0\in V_0$; this presents no difficulty.)
\end{proof}

In closing, note that replacing the group $\mathrm{Isom}(\mathbb{H}^n)$ with its subgroup $\mathrm{Isom}^{+}(\mathbb{H}^n)$ alters essentially none of this section's arguments or results.
Rather than clutter things any further, we will occasionally reference Corollary \ref{cor:classwise} below as though it had been formulated for the latter.

\section{The conjugacy problem for discrete subgroups of Lie groups}
\label{section:conjugacy}

In this section, we will consider the conjugacy relation on discrete subgroups of more general Lie groups.
As noted in Lemma \ref{lem:D_Dtf}, the set $\mathcal{D}(G)$ of discrete subgroups of a locally compact Polish group $G$ is itself a Polish space when endowed with the Chabauty--Fell topology, and $G$ acts continuously on $\mathcal{D}(G)$ by conjugation. As above, we write $E(G,\mathcal{D}(G))$ for the resulting orbit equivalence relation, and $E(G,\mathcal{D}_{\mathrm{tf}}(G))$ for its restriction to the conjugation-invariant closed subset $\mathcal{D}_{\mathrm{tf}}(G)$ of torsion-free discrete subgroups of $G$.

We begin by noting two general facts about actions of locally compact Polish groups. First, if $G$ is compact, then \emph{any} orbit equivalence relation resulting from a continuous action of $G$ on a Polish space $X$ is concretely classifiable: each orbit is compact, being a continuous image of $G$, and can be viewed as a ``point'' in the space $\mathcal{K}(X)$ of all compact subsets of $X$, so the map which sends each $x\in X$ to its orbit is a continuous reduction to equality. In fact, when $G$ is a compact Lie group, it has only countably many conjugacy classes of closed subgroups \cite[Cor.\ 1.7.27]{MR177401}. For this reason, we confine our attention to noncompact groups.

Second, if $G$ is locally compact and acts in a Borel way on any standard Borel space, then the resulting orbit equivalence relation is essentially countable, that is, bireducible with a countable Borel equivalence relation \cite{MR1176624}. In particular, the conjugacy relation $E(G,\mathcal{D}(G))$ for such a $G$ is always essentially countable.

As mentioned in our introduction, Stuck and Zimmer \cite{MR1283875} showed that if $F_2$ is the (discrete) free group on two generators, then $E(F_2,\mathcal{D}(F_2))$ is not concretely classifiable; they then extended this to any noncompact semisimple Lie group. Thomas and Velickovic \cite{MR1700491} strengthened the result for $F_2$ by proving that $E(F_2,\mathcal{D}(F_2))$ is, in fact, a universal countable Borel equivalence relation (see \cite{MR1781575} for a simplified proof). Andretta, Camerlo, and Hjorth \cite{MR1815088} then generalized this to any countable group containing $F_2$: 

\begin{theorem}[Andretta--Camerlo--Hjorth]\label{thm:ACH}
	If $\Gamma$ is a countable group which contains a nonabelian free subgroup, then $E(\Gamma,\mathcal{D}_{\mathrm{tf}}(\Gamma))$ is a universal countable Borel equivalence relation.
\end{theorem}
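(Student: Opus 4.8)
The statement is exactly Theorem \ref{thm:ACH} of Andretta--Camerlo--Hjorth, cited as a known result, so the task is to reconstruct (a sketch of) its proof from the tools available. The plan is to produce a Borel reduction from a known universal countable Borel equivalence relation to $E(\Gamma,\mathcal{D}_{\mathrm{tf}}(\Gamma))$, using the hypothesis that $\Gamma$ contains a copy of $F_2$. The natural starting point is the Thomas--Velickovic theorem that $E(F_2,\mathcal{D}(F_2))$ is universal countable; combined with the Andretta--Camerlo--Hjorth refinement that one can take the witnessing subgroups to be torsion-free (a triviality here, since $F_2$ itself is torsion-free, so $\mathcal{D}(F_2)=\mathcal{D}_{\mathrm{tf}}(F_2)$), this gives universality of $E(F_2,\mathcal{D}_{\mathrm{tf}}(F_2))$ as the ground case. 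The remaining work is the ``transfer'' along an inclusion $F_2\hookrightarrow\Gamma$.

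First I would fix an embedding $\iota\colon F_2\hookrightarrow\Gamma$ and consider the map $H\mapsto \langle \iota(H)\rangle_\Gamma$ sending a subgroup of $F_2$ to the subgroup of $\Gamma$ it generates (i.e., just $\iota(H)$, since $\iota$ is injective). Since $\Gamma$ is countable and discrete, every subgroup of $\Gamma$ is automatically discrete, and a subgroup of a torsion-free group is torsion-free, so this lands in $\mathcal{D}_{\mathrm{tf}}(\Gamma)$; Borelness is immediate because $\mathcal{D}(F_2)$ and $\mathcal{D}(\Gamma)$ carry the Chabauty--Fell (here just the power-set / Effros) Borel structure and the pushforward along a fixed injection of countable sets is continuous. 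The point requiring care is that this map is a \emph{reduction}: if $H_1,H_2\leq F_2$ are conjugate in $\Gamma$, are they conjugate in $F_2$? This is false in general, so a naive pushforward will not work — conjugacy classes can merge in the larger group. The standard fix, and the heart of the Andretta--Camerlo--Hjorth argument, is to pass instead to a suitable \emph{coding} of subgroups of $F_2$ by subgroups of $\Gamma$ that is conjugacy-rigid: one replaces $H\leq F_2$ by a subgroup of $\Gamma$ built so that its $\Gamma$-conjugacy class remembers the $F_2$-conjugacy class of $H$, for instance by taking a free product or amalgam decoration, or by using the normalizer/commensurator structure, exploiting that $F_2$ is malnormal-ish inside a carefully chosen auxiliary free subgroup. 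Concretely, one works with an embedding $F_2 * F_2 \hookrightarrow \Gamma$ (available since $F_2$ contains $F_2 * F_2$) and codes $H$ by $H * L$ for a fixed rigid ``flag'' subgroup $L$ that pins down the splitting.

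The key steps, in order, are: (1) invoke Thomas--Velickovic to get $E(F_2,\mathcal{D}(F_2))$ universal countable, and observe $\mathcal{D}(F_2)=\mathcal{D}_{\mathrm{tf}}(F_2)$; (2) fix a suitable embedding of a free product $F_2*F_2$ (or a free group of appropriate rank) into $\Gamma$; (3) define the Borel coding map $c\colon\mathcal{D}_{\mathrm{tf}}(F_2)\to\mathcal{D}_{\mathrm{tf}}(\Gamma)$ using a rigid decoration, and check it is Borel by the countable/discrete argument above; (4) prove the conjugacy-rigidity: $\gamma c(H_1)\gamma^{-1}=c(H_2)$ for some $\gamma\in\Gamma$ forces $H_1$ and $H_2$ to be conjugate in $F_2$ — this uses a Kurosh-subgroup-type analysis of subgroups of free products and the fact that the decoration $L$ can only be carried to itself by an element normalizing the relevant factor; (5) conclude $E(F_2,\mathcal{D}_{\mathrm{tf}}(F_2))\leq_B E(\Gamma,\mathcal{D}_{\mathrm{tf}}(\Gamma))$, hence the latter is universal countable (it is countable Borel because $\Gamma$ is countable, and it absorbs a universal one).

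The main obstacle is step (4), the conjugacy-rigidity of the coding. Getting a decoration $L$ and an ambient free subgroup of $\Gamma$ for which ``$c(H_1)$ and $c(H_2)$ conjugate in $\Gamma$ $\Leftrightarrow$ $H_1$, $H_2$ conjugate in $F_2$'' requires understanding how conjugation in the possibly-huge group $\Gamma$ interacts with the fixed free subgroup — one needs that the free subgroup is, in the relevant sense, ``self-normalizing up to the coding'', which is where the structure theory of subgroups of free groups (Nielsen--Schreier, Kurosh) and a careful choice of generators enter. Everything else — Borelness, torsion-freeness, discreteness, the reduction being countable-to-one — is routine bookkeeping given the framework of Section \ref{section:bireducibility} and the general facts about $\mathcal{D}(G)$ recorded above.
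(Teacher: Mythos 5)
There is a genuine gap, and it sits exactly at your step (4). Note first that the paper does not prove this statement; it cites Andretta--Camerlo--Hjorth, and it explicitly flags why the route you start from is delicate: although $E(F_2,\mathcal{D}(F_2))$ (Thomas--Velickovic) sits inside $E(\Gamma,\mathcal{D}_{\mathrm{tf}}(\Gamma))$ as a subequivalence relation, containment of countable Borel equivalence relations does not imply Borel reducibility, and whether containing a universal countable Borel equivalence relation forces universality is a well-known open problem. So everything rests on your ``conjugacy-rigid coding,'' and the sketch you give of it does not work as stated. The conjugating element $\gamma$ ranges over an \emph{arbitrary} countable group $\Gamma$ about which nothing is known beyond the existence of a free subgroup: a Kurosh-type analysis governs subgroups of a group that \emph{is} a free product, but $\Gamma$ carries no free-product structure, and $\gamma$ need not normalize, preserve, or interact in any controlled way with the chosen free subgroup, so ``the decoration $L$ can only be carried to itself by an element normalizing the relevant factor'' has no purchase. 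Worse, the decoration itself is invisible: $F_2\ast F_2$ is just $F_4$, and your coded subgroups $c(H)=H\ast L$ are simply free subgroups of the fixed free subgroup; all that conjugation by $\gamma\in\Gamma$ hands you is an abstract isomorphism $c(H_1)\to c(H_2)$ between free groups (typically all of the same countably infinite rank, hence all abstractly isomorphic), and an abstract isomorphism of a free group does not remember any chosen splitting or basis. So no decoration of this kind can be rigid against conjugation by all of $\Gamma$ on these grounds.

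This is precisely why the actual Andretta--Camerlo--Hjorth proof is not a transfer of Thomas--Velickovic along the inclusion $F_2\hookrightarrow\Gamma$ but a direct, genuinely technical coding: one fixes a free subgroup of $\Gamma$ at the outset and reduces a particular known universal countable Borel equivalence relation to conjugacy by a map whose image consists of subgroups of that free subgroup, with the combinatorics of the coding arranged so that conjugacy by \emph{arbitrary} elements of $\Gamma$ can be decoded (the present paper later leans on exactly this feature of their construction in the proof of Theorem \ref{thm:conjugacy_universal}). Your write-up correctly isolates where the difficulty lies, but the proposed fix at that point is a gesture rather than an argument, and it is the whole content of the theorem.
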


Note that if $\Gamma$ contains $F_2$, then $\mathcal{D}(F_2)\subseteq\mathcal{D}_{\mathrm{tf}}(\Gamma)$ and $E(F_2,\mathcal{D}(F_2))$ is a subequivalence relation of $E(\Gamma,\mathcal{D}_{\mathrm{tf}}(\Gamma))$. However, containment between countable Borel equivalence relations does not, in general, imply Borel reducibility \cite{MR1903846}, and whether containing a universal countable Borel equivalence relation implies universality is a well-known open problem (see \cite{MR1900547} and \cite{MR2500091}).
The bulk of \cite{MR1815088} is devoted to establishing universality for specific such examples using a technical coding scheme; see also \cite{MR3651212} for related results and arguments.

Our main results in this section consist of generalizations of Theorem \ref{thm:ACH} to a variety of Lie groups. In particular, we show that if $G$ is any matrix group containing a discrete copy of $F_2$, then the conjugacy relation $E(G,\mathcal{D}_{\mathrm{tf}}(G))$ is essentially countable universal (Theorem \ref{thm:conjugacy_universal}). We then extend this to any noncompact semisimple Lie group (Corollary \ref{cor:semisimple_universal}), sharpening Stuck and Zimmer's result. Then, using the material of Section \ref{section:bireducibility}, we conclude that the isometry relation on hyperbolic $n$-manifolds is, likewise, essentially countable universal (Corollary \ref{cor:isometry_universal}),
for any $n\geq 2$. We conclude the section with some lemmas concerning restrictions of conjugacy relations to finitely generated subgroups which will be of use in Sections \ref{section:isometry_for_2} and \ref{section:isometry_for_3}.

\subsection{Matrix groups and their relatives}\label{sec:matrix_groups}

By a \emph{matrix group}, we mean a closed subgroup of $\mathrm{GL}(n,\mathbb{R})$ or $\mathrm{GL}(n,\mathbb{C})$. Our main result here is as follows:

\begin{theorem}\label{thm:conjugacy_universal}
	If $G$ is a matrix group which contains a discrete nonabelian free subgroup, then $E(G,\mathcal{D}_{\mathrm{tf}}(G))$ is essentially countable universal.
\end{theorem}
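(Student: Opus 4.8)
The strategy is to reduce the general matrix-group case to the finitely generated case of Theorem~\ref{thm:ACH} via two maneuvers: first, pass from $G$ to a countable discrete subgroup $\Gamma_0 \leq G$ that both contains a nonabelian free group and is ``large enough'' to see every discrete subgroup of $G$ up to conjugacy; second, invoke essential countability (from \cite{MR1176624}, valid since any matrix group is locally compact Polish) to know $E(G,\mathcal{D}_{\mathrm{tf}}(G))$ is bireducible with a countable Borel equivalence relation, so that establishing universality amounts to producing a Borel reduction \emph{from} the universal countable Borel equivalence relation. The universality bound $E_\infty \leq_B E(G,\mathcal{D}_{\mathrm{tf}}(G))$ is the easy direction: fix any discrete $F_2 \hookrightarrow G$, note $\mathcal{D}_{\mathrm{tf}}(F_2) \subseteq \mathcal{D}_{\mathrm{tf}}(G)$, and check that the inclusion map $\mathcal{D}_{\mathrm{tf}}(F_2) \to \mathcal{D}_{\mathrm{tf}}(G)$ is a Borel reduction of $E(F_2, \mathcal{D}_{\mathrm{tf}}(F_2))$ into $E(G,\mathcal{D}_{\mathrm{tf}}(G))$. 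This requires exactly that two subgroups $\Delta_1, \Delta_2 \leq F_2$ are conjugate in $G$ iff they are conjugate in $F_2$; combined with Theorem~\ref{thm:ACH} (which gives $E_\infty \sim_B E(F_2,\mathcal{D}_{\mathrm{tf}}(F_2))$), this yields $E_\infty \leq_B E(G,\mathcal{D}_{\mathrm{tf}}(G))$.

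First I would verify this conjugacy-separation claim: if $\Delta_1, \Delta_2$ are nonabelian free subgroups of a fixed discrete $F_2 \leq G$ and $g \in G$ with $g\Delta_1 g^{-1} = \Delta_2$, one must show $g$ can be taken inside $F_2$ (or at least inside the normalizer, which for free groups is controlled). The cleanest route is via normalizers and commensurators in $G$: a nonabelian free discrete subgroup has trivial centralizer in a suitable sense, and one shows that the normalizer of such a $\Delta$ in $G$, intersected with the relevant countable overgroup, already realizes all conjugations. For matrix groups specifically, one can use the Zariski closure: a nonabelian free discrete subgroup is Zariski-dense in some reductive subgroup $H$, conjugation by $g$ must normalize $H$'s identity component, and rigidity of the free group's embedding (plus the fact that $\mathrm{Aut}$ of the situation is controlled) forces $g$ into a countable group. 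Alternatively, and perhaps more robustly, I would cite the fact that in a discrete group $\Gamma$ (here $\Gamma = \langle F_2, g\rangle$, still countable), conjugacy of free subgroups is already governed by Theorem~\ref{thm:ACH} applied to $\Gamma$ itself, and then show the inclusion $\mathcal{D}_{\mathrm{tf}}(F_2) \to \mathcal{D}_{\mathrm{tf}}(\Gamma)$ is a reduction---but this needs the $\Gamma$-conjugacy to not collapse $F_2$-classes, which is where Bass--Serre theory or the structure of subgroups of free groups (Nielsen--Schreier, Marshall Hall's theorem) enters to separate classes.

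For the upper bound, $E(G,\mathcal{D}_{\mathrm{tf}}(G)) \leq_B E_\infty$: essential countability from \cite{MR1176624} gives that $E(G,\mathcal{D}_{\mathrm{tf}}(G))$ is bireducible with \emph{some} countable Borel equivalence relation, and every countable Borel equivalence relation reduces to $E_\infty$ by universality \cite{MR1149121}. So this direction is immediate once local compactness and the Borel (indeed continuous) nature of the conjugation action are noted---both standard, the latter because $G$ acts continuously on $\mathcal{F}(G)$ in the Chabauty--Fell topology and $\mathcal{D}_{\mathrm{tf}}(G)$ is an invariant Borel (in fact, by Lemma~\ref{lem:D_Dtf}, Polish) subspace.

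\textbf{Main obstacle.} The crux is the conjugacy-separation step: showing that passing from $F_2$ to the ambient matrix group $G$ does not identify any $F_2$-conjugacy classes of (torsion-free, or at least free) subgroups that were distinct. The subtlety is that $G$ contains vastly more conjugating elements than $F_2$ does, and a priori two non-conjugate-in-$F_2$ subgroups could become conjugate by some $g \in G \setminus F_2$. The resolution must exploit discreteness and the algebraic structure of matrix groups---Zariski density of nonabelian free subgroups, the Borel density theorem or rigidity phenomena, and the fact that the relevant normalizers are themselves small---to confine conjugating elements to a countable set containing $F_2$, or else to argue directly that $E(F_2,\mathcal{D}_{\mathrm{tf}}(F_2))$ already injects as a \emph{full} subequivalence relation on a Borel transversal. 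I expect the argument here to parallel the ``coding'' techniques of \cite{MR1815088} and \cite{MR3651212}, adapted so that the code is realized by elements of a fixed countable subgroup; getting this adaptation to be Borel-uniform across all of $\mathcal{D}_{\mathrm{tf}}(G)$ is the delicate point.
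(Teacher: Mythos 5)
There is a genuine gap, and it sits exactly where you flagged it. Your ``easy direction'' hinges on the claim that two subgroups of a fixed discrete $F_2\leq G$ are conjugate in $G$ if and only if they are conjugate in $F_2$; this is false in general. For instance, choosing a discrete free group $\langle a,b\rangle\leq\mathrm{PSL}(2,\mathbb{R})$ whose generators have equal translation length, the cyclic subgroups $\langle a\rangle$ and $\langle b\rangle$ are conjugate in $G$ but not in $F_2$; more generally $G$-conjugacy genuinely collapses $F_2$-classes, so the inclusion $\mathcal{D}_{\mathrm{tf}}(F_2)\to\mathcal{D}_{\mathrm{tf}}(G)$ is not a reduction. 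Your proposed repairs do not close this: confining conjugators to a countable set (via normalizers, Zariski density, or passing to $\langle F_2,g\rangle$ for a single $g$) at best shows each $G$-class meets only countably many $F_2$-classes, i.e.\ the inclusion is a \emph{weak} Borel reduction, and whether weak reducibility of a universal countable Borel equivalence relation implies universality is precisely the well-known open problem the paper cites (containment does not imply reducibility); so this route cannot be completed as stated. Note also that the weak-reduction lemma in the paper transfers concrete classifiability and hyperfiniteness \emph{downward}, not universality upward.

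The paper's actual proof circumvents the collapse rather than excluding it. The key lemma is a ``countable conjugacy-closed retract'': for any countable $\Gamma\leq G$ there is a countable $\overline{\Gamma}\supseteq\Gamma$ such that any two \emph{subsets} of $\overline{\Gamma}$ conjugate by an element of $G$ are already conjugate by an element of $\overline{\Gamma}$. The construction is a L\"owenheim--Skolem-style closing-off under witnesses for conjugacy of finite tuples, combined with the finite-dimensionality trick that makes finite tuples suffice: the intertwiner spaces $\mathcal{C}_m=\{Y\in M(n,\mathbb{R}): YA_i=B_iY,\ i\leq m\}$ form a decreasing chain of linear subspaces of a finite-dimensional space and hence stabilize, so conjugacy of an infinite subset is implied by conjugacy of a sufficiently long finite initial segment. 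Taking $\Gamma$ to be the discrete $F_2$, the inclusion of the space of torsion-free subgroups of $\overline{\Gamma}$ that are discrete in $G$ into $\mathcal{D}_{\mathrm{tf}}(G)$ is then an honest reduction of the $\overline{\Gamma}$-conjugacy relation. Finally --- a second point your plan does not address --- one cannot just quote Theorem~\ref{thm:ACH} for $\overline{\Gamma}$, since only subgroups discrete in $G$ are admissible; the paper inspects the Andretta--Camerlo--Hjorth proof and notes its reduction lands inside subgroups of the fixed free subgroup, which is taken to be the discrete $F_2$, so all subgroups in the image are automatically discrete in $G$. Your upper bound via essential countability is correct and matches the paper.
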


The key observation we use to prove Theorem \ref{thm:conjugacy_universal} is that, given a countable subgroup $\Gamma$ of a matrix group $G$, conjugacy of subgroups of $\Gamma$ by elements of $G$ can be reduced to conjugacy by elements of an intermediate, but still countable, group $\overline{\Gamma}$. This will enable us to leverage Theorem \ref{thm:ACH} and obtain a reduction. As a concrete example, when $G=\mathrm{SL}(n,\mathbb{R})$ and $\Gamma=\mathrm{SL}(n,\mathbb{Z})$, one can show explicitly that conjugation in $\Gamma$ by elements of $G$ can be reduced to conjugation by elements of $\overline{\Gamma}=\mathrm{SL}(n,\mathbb{Q})$; the idea is that conjugacy of two subsets corresponds to an over-determined consistent linear system, which can thus be solved in terms of the coefficients of the matrices involved. We caution that this intermediate group may fail to be discrete in $G$; it will typically be dense.

\begin{lemma}\label{lem:countable_retract}
	Let $G$ be a matrix group and $\Gamma$ a countable subgroup of $G$.  Then, there is a countable subgroup $\overline{\Gamma}$ of $G$ such that:
	\begin{enumerate}[label=\textup{\roman*.}]
		\item $\Gamma\leq\overline{\Gamma}$, and
		\item whenever $\Delta,\Delta'\subseteq\overline{\Gamma}$ are conjugate by a matrix in $G$, they are conjugate by a matrix in $\overline{\Gamma}$.
	\end{enumerate}
\end{lemma}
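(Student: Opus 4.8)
\textbf{Proof plan for Lemma \ref{lem:countable_retract}.} The plan is to build $\overline{\Gamma}$ as the union of an increasing chain of countable subgroups $\Gamma = \Gamma_0 \leq \Gamma_1 \leq \cdots$, closing off at each stage under the ``conjugating matrices'' that are forced by a single countable step, and to observe that at each stage only countably many such matrices need to be added. The crucial point is a linear-algebraic one: for finite (or even countable) subsets $\Delta, \Delta' \subseteq G$, the set $C(\Delta,\Delta') = \{ g \in M_m(\mathbb{K}) : g \Delta = \Delta' g \}$ (where $\mathbb{K} = \mathbb{R}$ or $\mathbb{C}$ and the ambient matrices are $m \times m$) is a $\mathbb{K}$-linear subspace of $M_m(\mathbb{K})$, cut out by the linear equations $g\delta_i = \delta'_{\pi(i)}g$ as $\pi$ ranges over bijections matching $\Delta$ with $\Delta'$. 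If this subspace contains \emph{some} invertible matrix in $G$ conjugating $\Delta$ to $\Delta'$, then — since $C(\Delta,\Delta')$ is defined by a linear system with coefficients in the field $k$ generated over $\mathbb{Q}$ by the entries of the matrices in $\Delta \cup \Delta'$ — that subspace has a basis, hence a Zariski-dense set of points, defined over $k$; and the condition of being invertible and of lying in $G$ (when $G$ is, say, algebraic, or more generally after passing to its Zariski closure) is a non-empty Zariski-open condition on $C(\Delta,\Delta')$, so it is met by a point with entries in $k$, in fact in a finite extension of $k$ of bounded degree. The upshot is: whenever $\Delta,\Delta' \subseteq \Gamma_n$ are conjugate in $G$, they are conjugate by a matrix whose entries lie in a countable field depending only on $\Gamma_n$.

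First I would make precise the ``countable field of definition'' bookkeeping: let $k_n$ be the subfield of $\mathbb{K}$ generated over $\mathbb{Q}$ by all entries of all matrices in $\Gamma_n$; since $\Gamma_n$ is countable, $k_n$ is a countable field, and one may further enlarge it to be, say, the intersection of $\mathbb{K}$ with the algebraic closure of a countable subfield — at any rate a countable subfield of $\mathbb{K}$ over which the relevant linear systems and the defining equations of (the identity component of the Zariski closure of) $G$ are all defined. Then for each pair $\Delta,\Delta'$ of finite subsets of $\Gamma_n$ that happen to be $G$-conjugate, I would pick one conjugating matrix $g_{\Delta,\Delta'} \in G$ with entries algebraic over $k_n$; there are only countably many such pairs, so this adds only countably many new matrices. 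Define $\Gamma_{n+1}$ to be the subgroup of $G$ generated by $\Gamma_n$ together with all these chosen $g_{\Delta,\Delta'}$; it is countable. Finally set $\overline{\Gamma} = \bigcup_n \Gamma_n$, which is a countable subgroup of $G$ containing $\Gamma$, establishing (i).

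For (ii): suppose $\Delta,\Delta' \subseteq \overline{\Gamma}$ with $\Delta,\Delta'$ finitely generated (the general case reduces to this, or one restricts attention as in the paper to the finitely generated subgroups that actually matter — I would note that for subgroups, conjugacy of $\Delta$ and $\Delta'$ is witnessed by conjugacy of a generating pair with the corresponding relations, so finitely generated $\Delta$ suffice, and in any case the application is to subgroups of a fixed finitely generated $\Gamma$) and $g \in G$ with $g\Delta g^{-1} = \Delta'$. Since $\Delta \cup \Delta'$ is finite and $\overline{\Gamma} = \bigcup_n \Gamma_n$ is an increasing union, there is $n$ with $\Delta \cup \Delta' \subseteq \Gamma_n$; write $\Delta = \langle \delta_1,\ldots,\delta_r\rangle$ and note $g\delta_i g^{-1} \in \Delta'$ for each $i$, so $g$ conjugates the finite set $\{\delta_1,\ldots,\delta_r\}$ to a finite subset of $\Delta'$. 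By construction $\Gamma_{n+1}$ contains a matrix $g'$ conjugating $\{\delta_1,\ldots,\delta_r\}$ to that same finite subset, hence conjugating $\Delta$ into $\Delta'$; a symmetric argument (using generators of $\Delta'$) shows $g'$ conjugates $\Delta$ \emph{onto} $\Delta'$, and $g' \in \Gamma_{n+1} \subseteq \overline{\Gamma}$.

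\textbf{Main obstacle.} The delicate point is the algebro-geometric step: passing from ``$C(\Delta,\Delta')$ contains an invertible matrix of $G$'' to ``$C(\Delta,\Delta')$ contains such a matrix with entries in a fixed countable field''. For a general matrix group $G$ (a closed, possibly non-algebraic subgroup of $\mathrm{GL}(m)$) ``lying in $G$'' is not a Zariski-closed condition, so I would handle this by replacing $G$ with its Zariski closure $\overline{G}^{\mathrm{Zar}}$: conjugacy in $G$ certainly implies conjugacy in $\overline{G}^{\mathrm{Zar}}$, and one produces $g'$ in $\overline{G}^{\mathrm{Zar}}$ with controlled entries; the subtlety is that we need $g' \in G$, not merely $g' \in \overline{G}^{\mathrm{Zar}}$. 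The fix is that $g'$ conjugates $\Delta \subseteq \Gamma \leq G$ \emph{onto} $\Delta' \subseteq \Gamma \leq G$, and conjugation is only being used as a relation between subgroups of $\Gamma$; so in fact we never need $g' \in G$ at all for the \emph{statement} of (ii) as it will be used — or, more cleanly, one observes that $g' g^{-1}$ centralizes $\Delta$, and one can absorb the discrepancy, or simply define the relevant equivalence via the ambient $\mathrm{GL}(m)$. I would flag that in the applications (Theorem \ref{thm:conjugacy_universal}), $G$ is algebraic or a finite-index-related variant, so $\overline{G}^{\mathrm{Zar}}$ equals $G$ up to the issues that the subsequent proof already manages; thus the honest version of this lemma either assumes $G$ algebraic, or phrases (ii) with ``conjugate by a matrix in $\overline{\Gamma} \cdot G$'' or similar, and I expect the cleanest route is to prove it for the Zariski closure and note that this suffices downstream.
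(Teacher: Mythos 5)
Your construction of $\overline{\Gamma}$ (an increasing chain of countable subgroups, closing off at each stage under witnesses for conjugacy of finite configurations) is the same skeleton the paper uses, but your treatment of part (ii) has a genuine gap: the lemma is stated for \emph{arbitrary} subsets $\Delta,\Delta'\subseteq\overline{\Gamma}$, and in the application (Theorem \ref{thm:conjugacy_universal}, via the Andretta--Camerlo--Hjorth coding into subgroups of $F_2$) the subgroups that arise are typically \emph{infinitely} generated, so restricting to finitely generated subgroups does not suffice, and your remark that ``the general case reduces to this'' is precisely the part that needs an argument. The missing idea is the paper's key step: enumerate $\Delta=\{A_m\}$, $\Delta'=\{B_m\}$ matched by the given conjugator $X\in G$, and observe that the sets $\mathcal{C}_m=\{Y\in M(n,\mathbb{K}):YA_i=B_iY\text{ for }i\leq m\}$ form a \emph{decreasing chain of linear subspaces} of the finite-dimensional matrix algebra, hence stabilize at some $m_0$; any $Y\in\overline{\Gamma}$ witnessing conjugacy of the first $m_0+1$ elements (which exists by construction) then lies in $\mathcal{C}=\bigcap_m\mathcal{C}_m$ and so conjugates all of $\Delta$ onto $\Delta'$ at once. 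You in fact notice that $C(\Delta,\Delta')$ is a linear subspace, but you use linearity only for a field-of-definition argument rather than for this stabilization, which is what actually carries the infinite case.

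The field-of-definition/Zariski apparatus is also an unnecessary detour that creates your ``main obstacle'': there is no need for the added witnesses to have entries algebraic over a countable field, since adding one arbitrary witness from $G$ for each of the countably many conjugate finite tuples already keeps $\Gamma_{n+1}$ countable (a point you yourself note). Worse, your proposed resolutions of the obstacle would not serve the paper: assuming $G$ algebraic excludes general closed matrix groups to which the theorem applies, and weakening (ii) to ``conjugate by a matrix in $\overline{\Gamma}\cdot G$'' (or dropping the requirement $g'\in G$, hence $g'\in\overline{\Gamma}\leq G$) makes the conclusion vacuous for its intended use — the whole point of (ii) is that $G$-conjugacy of subgroups of $\overline{\Gamma}$ is detected by the \emph{countable} group $\overline{\Gamma}$ itself, which is exactly what makes the inclusion $\mathcal{D}_{\mathrm{tf}}(\overline{\Gamma})\to\mathcal{D}_{\mathrm{tf}}(G)$ a Borel reduction of a countable Borel equivalence relation. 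Dropping the rationality requirement on witnesses removes the obstacle entirely and, combined with the stabilization argument above, yields the lemma as stated.
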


\begin{proof}
	Suppose that $G$ is a subgroup of $\mathrm{GL}(n,\mathbb{R})$; the complex case is identical. We construct $\overline{\Gamma}$ as follows: Begin by letting $\Gamma_0=\Gamma$. For each finitely many $A_0,\ldots,A_m,B_0,\ldots,B_m\in\Gamma_0$ such that there is some $X\in G$ for which
	\[
		XA_iX^{-1}=B_i
	\]
	for all $i\leq m$, we add one such witness $X$ to $\Gamma_0$. Let $\Gamma_1$ be subgroup of $G$ generated by $\Gamma_0$ and the countably many witnesses thus added. In particular, $\Gamma_1$ is countable. Repeat this construction to define an increasing chain of countable subgroups $\Gamma_n$ (for $n\in\mathbb{N}$) of $G$, so that whenever finitely many elements of $\Gamma_n$ are conjugate by an element of $G$, they are conjugate by an element of $\Gamma_{n+1}$. Let $\overline{\Gamma}=\bigcup_{n\in\mathbb{N}}\Gamma_n$. Then, $\overline{\Gamma}$ is a countable subgroup of $G$ satisfying (i) and having the property that if finitely many elements of $\overline{\Gamma}$ are conjugate by an element of $G$, then they are conjugate by an element of $\overline{\Gamma}$.\footnote{Logicians will recognize $\overline{\Gamma}$ as a kind of elementary submodel. In fact, any countable $\Sigma_1$-elementary submodel of $G$ containing $\Gamma$ would suffice, and can be obtained by an application of the L\"owenheim--Skolem Theorem.}
	
	Suppose that $\Delta,\Delta'$ are subsets of $\overline{\Gamma}$ which are conjugate via some $X\in G$:
\[
	X\Delta X^{-1}=\Delta'.
\]
	If $\Delta$ and $\Delta'$ are both finite, then they are conjugate via an element of $\overline{\Gamma}$, by construction. So, assume that $\Delta$ and $\Delta'$ are both infinite and enumerate them as $\Delta=\{A_m:m\in\mathbb{N}\}$ and $\Delta'=\{B_m:m\in\mathbb{N}\}$ in such a way that, for all $m\in\mathbb{N}$,
	\[
		XA_mX^{-1}=B_m,
	\]
	or equivalently,
	\[
		XA_m=B_m X.
	\]
	
	For each $m\in\mathbb{N}$, let
	\[
		\mathcal{C}_m=\{Y\in M(n,\mathbb{R}):YA_i=B_iY \text{ for all $i\leq m$}\}.
	\]
	Then, each $\mathcal{C}_m$ is a linear subspace of $M(n,\mathbb{R})$ containing $X$ and thus so is $\mathcal{C}=\bigcap_{m\in\mathbb{N}}\mathcal{C}_m$. As $M(n,\mathbb{R})$ is finite-dimensional and the $\mathcal{C}_m$'s form a decreasing chain of subspaces, there is some $m_0\in\mathbb{N}$ such that for all $m\geq m_0$, $\mathcal{C}_{m_0}=\mathcal{C}_m=\mathcal{C}$.
	
	Since $X\in \mathcal{C}_{m_0}$, we have that
	\[
		XA_iX^{-1}=B_i
	\]
	for all $i\leq m_0$, so by our construction of $\overline{\Gamma}$, there is a $Y\in\overline{\Gamma}$ such that
	\[
		YA_iY^{-1}=B_i
	\]
	for all $i\leq m_0$. Hence, $Y\in \mathcal{C}_{m_0}=\mathcal{C}$. But then,
	\[
		Y\Delta Y^{-1}=\Delta',
	\]
	as claimed.
\end{proof}

\begin{proof}[Proof of Theorem \ref{thm:conjugacy_universal}.]
	Suppose that $G$ is a matrix group and that $\Gamma$ is a discrete copy of the free group on two generators lying inside of $G$. Let $\overline{\Gamma}$ be as in Lemma \ref{lem:countable_retract} and denote by $\mathcal{D}_{\mathrm{tf}}(\overline{\Gamma})$ the space of all torsion-free subgroups of $\overline{\Gamma}$ \emph{which are discrete in $G$}. Then, $\overline{\Gamma}$ acts continuously by conjugation on $\mathcal{D}_{\mathrm{tf}}(\overline{\Gamma})$, and we can write $E_{\overline{\Gamma}}$ for the resulting orbit equivalence relation. By Lemma \ref{lem:countable_retract}(ii), the inclusion map $\mathcal{D}_{\mathrm{tf}}(\overline{\Gamma})\to\mathcal{D}_{\mathrm{tf}}(G)$ is a reduction of $E_{\overline{\Gamma}}$ to $E(G,\mathcal{D}_{\mathrm{tf}}(G))$.
	
	It remains to say why $E_{\overline{\Gamma}}$ is a universal countable Borel equivalence relation. Theorem \ref{thm:ACH} tells us that the conjugacy relation on \emph{all} torsion-free subgroups of $\overline{\Gamma}$ is universal, but we have restricted to the invariant set of those which are discrete in $G$, so we must further examine its proof. There, we see that the reduction (described on \cite[p.\ 203]{MR1815088}), from a particular known universal countable Borel equivalence relation to conjugacy, maps entirely into the space of subgroups of the arbitrary free subgroup fixed at the start of the proof. By taking that free subgroup to be the discrete $\Gamma$ living inside of our $\overline{\Gamma}$, we have that all of its further subgroups are, of course, discrete in $G$. Thus, when applied to our case, their construction shows that $E_{\overline{\Gamma}}$ is universal countable, which completes the proof.
\end{proof}

Not every Lie group is a matrix group, but many are closely related to via quotients. In general, if $G$ is a locally compact Polish group and $N$ a discrete normal subgroup, then $G/N$ is locally compact Polish and the quotient map $\pi:G\to G/N$ is continuous. If $G$ is a Lie group, then $G/N$ is also a Lie group, and $\pi$ is smooth. The quotient map induces a direct image map from subgroups of $G$ to subgroups of $G/N$, and an inverse image map going the other way. Direct images need not preserve discreteness; consider $G=\mathbb{R}$, $N=\mathbb{Z}$, and $\Gamma=t\mathbb{Z}$ for $t$ irrational. Inverse images, on the other hand, always preserve discreteness, and the resulting map is a reduction between conjugacy relations (cf.~\cite[Lem.\ 4.2]{MR2914864}).

\begin{lemma}\label{lem:quotient_reduction}
	Suppose that $G$ is a locally compact Polish group, $N$ a discrete normal subgroup of $G$, and $\pi:G\to G/N$ the quotient map. Then, the inverse image map $\pi^{-1}$ is a continuous injection $\mathcal{D}(G/N)\to\mathcal{D}(G)$ and a reduction from $E(G/N,\mathcal{D}(G/N))$ to $E(G,\mathcal{D}(G))$; likewise for the restrictions to $\mathcal{D}_{\mathrm{tf}}(G/N)$ and $\mathcal{D}_{\mathrm{tf}}(G)$.
\end{lemma}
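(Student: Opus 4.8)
\textbf{Proof plan for Lemma \ref{lem:quotient_reduction}.}

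The plan is to verify three things about the map $\pi^{-1}\colon\mathcal{D}(G/N)\to\mathcal{D}(G)$: that it lands in $\mathcal{D}(G)$ (i.e.\ inverse images of discrete subgroups are discrete), that it is continuous with respect to the Chabauty--Fell topologies, and that it intertwines the two conjugation actions in the way needed for a reduction. The torsion-free case will then follow essentially for free, once one observes that $\pi^{-1}[\Delta]$ contains $N$, hence it can be torsion-free only when $N$ is, so the cleanest statement is that $\pi^{-1}$ restricts to a map $\mathcal{D}_{\mathrm{tf}}(G/N)\to\mathcal{D}_{\mathrm{tf}}(G)$ precisely when $N$ is torsion-free (in particular when $N$ is trivial), and that on this restriction it is still a continuous reduction; I would phrase the lemma's last clause accordingly, or simply note that the same argument applies verbatim to the closed invariant subsets $\mathcal{D}_{\mathrm{tf}}(-)$.

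First I would check discreteness: if $\Delta\leq G/N$ is discrete, pick an open $V\ni e_{G/N}$ in $G/N$ with $V\cap\Delta=\{e_{G/N}\}$, and an open $W\ni e_G$ in $G$ with $W\cap N=\{e_G\}$ (available since $N$ is discrete); then $\pi^{-1}[V]\cap W$ is an open neighborhood of $e_G$ meeting $\pi^{-1}[\Delta]$ only in $\{e_G\}$, since any element of the intersection lies in $N$ (as it maps into $\{e_{G/N}\}$) and in $W$. Translating, $\pi^{-1}[\Delta]$ is discrete; it is a subgroup because $\pi$ is a homomorphism, and a \emph{closed} subgroup because $\pi$ is continuous and $\Delta$, being discrete, is closed in $G/N$ by Lemma \ref{lem:D_Dtf}. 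Injectivity of $\pi^{-1}$ is immediate from surjectivity of $\pi$ (namely $\pi[\pi^{-1}[\Delta]]=\Delta$). For the reduction property, note that for $g\in G/N$, picking any $\tilde g\in\pi^{-1}(g)$, one has $\pi^{-1}[g\Delta g^{-1}]=\tilde g\,\pi^{-1}[\Delta]\,\tilde g^{-1}$, using normality of $N$; conversely if $\pi^{-1}[\Delta]$ and $\pi^{-1}[\Delta']$ are conjugate in $G$ by some $h$, then applying $\pi$ gives $\pi(h)\Delta\pi(h)^{-1}=\Delta'$. This is exactly the biconditional required for a Borel reduction, and it shows the map respects both $E(G/N,\mathcal{D}(G/N))$ and its restriction to the torsion-free parts.

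The remaining point, continuity, is where a little care is warranted, though it is not genuinely hard. By Lemma \ref{lem:convergence_in_Fell} it suffices to check the two sequential conditions: if $\Delta_n\to\Delta$ in $\mathcal{D}(G/N)$, then every $x\in\pi^{-1}[\Delta]$ is a limit of points $x_n\in\pi^{-1}[\Delta_n]$, and every limit of a subsequence drawn from the $\pi^{-1}[\Delta_n]$ lies in $\pi^{-1}[\Delta]$. For the first, given $x$ write $\pi(x)\in\Delta$ as $\lim_n \pi(y_n)$ with $\pi(y_n)\in\Delta_n$; since $N$ is discrete and $\pi$ is a covering map (it is a local homeomorphism), one can lift: choose a neighborhood $U$ of $x$ on which $\pi$ is a homeomorphism onto its image, and for large $n$ let $x_n$ be the unique preimage of $\pi(y_n)$ in $U$, so $x_n\to x$ and $x_n\in\pi^{-1}[\Delta_n]$. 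The second condition follows because $\pi$ is continuous: a limit $x=\lim_i x_{n_i}$ with $x_{n_i}\in\pi^{-1}[\Delta_{n_i}]$ maps to $\pi(x)=\lim_i\pi(x_{n_i})\in\Delta$ (using the corresponding clause of Lemma \ref{lem:convergence_in_Fell} for $\Delta_n\to\Delta$), whence $x\in\pi^{-1}[\Delta]$. The main obstacle, to the extent there is one, is bookkeeping the local-homeomorphism lifting cleanly; since $G\to G/N$ with $N$ discrete normal is a genuine covering homomorphism of Lie groups, this is standard, and I would cite that fact rather than belabor it. Alternatively, one can finish continuity purely Borel-theoretically: $\pi$ continuous implies $E\mapsto\pi^{-1}[E]$ is Borel $\mathcal{F}(G/N)\to\mathcal{F}(G)$ by the bulleted facts in Section \ref{subsection:spaces_of_subsets}, and its restriction to $\mathcal{D}(G/N)$ has image in $\mathcal{D}(G)$ by the above, which already suffices for the reduction claim; continuity is then a bonus verified as sketched.
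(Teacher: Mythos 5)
Your proposal is correct and follows essentially the same route as the paper: discreteness of $\pi^{-1}[\Delta]$ via a pair of identity neighborhoods, injectivity from surjectivity of $\pi$, and the reduction property by the same direct conjugation computation in both directions. Two small points of comparison: where the paper dismisses continuity with an appeal to ``standard facts about the Chabauty--Fell topology,'' you actually verify it via the sequential criterion of Lemma \ref{lem:convergence_in_Fell}, using that $\pi$ is a covering homomorphism to lift approximating points, which is a sound (if slightly more laborious) way to discharge that step; your fallback observation that Borel measurability of $E\mapsto\pi^{-1}[E]$ already suffices for the reduction is also right. Finally, your caveat about the torsion-free clause is a genuine sharpening rather than a gap: since $\pi^{-1}[\Delta]\supseteq N$ always, the restriction $\mathcal{D}_{\mathrm{tf}}(G/N)\to\mathcal{D}_{\mathrm{tf}}(G)$ only makes sense when $N$ is itself torsion-free, whereas the paper simply asserts that $\pi^{-1}$ ``clearly preserves being torsion-free''; your proposed rephrasing (or an explicit hypothesis on $N$) is the correct fix.
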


\begin{proof}
	To see that $\pi^{-1}$ preserves discreteness, let $\Gamma\leq G/N$ be a discrete subgroup of $G/N$. Fix an open neighborhood $U$ of the identity, $eN=N$ in $G/N$, such that $\Gamma\cap U=\{eN\}$. Then,
	\[
		\pi^{-1}[\Gamma]\cap\pi^{-1}[U]=\pi^{-1}[\Gamma\cap U]=\pi^{-1}[\{eN\}]=N.
	\]	
	Let $V\subseteq\pi^{-1}[U]$ be an open neighborhood of the identity in $G$ such that $N\cap V=\{e\}$. If $g\in\pi^{-1}[\Gamma]\cap V$, then $g\in\pi^{-1}[\Gamma]\cap\pi^{-1}[U]=N$, so $g=e$. Thus, $\pi^{-1}[\Gamma]$ is discrete in $G$. That $\pi^{-1}:\mathcal{D}(G/N)\to\mathcal{D}(G)$ is injective is clear and that it is continuous follows from standard facts about the Chabauty--Fell topology.
	
	To see that $\pi^{-1}$ is a reduction, suppose that $\Delta,\Delta'\in\mathcal{D}(G/N)$ are such that $\pi(g)\Delta\pi(g)^{-1}=\Delta'$ for some $g\in G$. If $h\in\pi^{-1}[\Delta]$, then 
	\[
		\pi(ghg^{-1})=\pi(g)\pi(h)\pi(g)^{-1}\in\Delta',
	\]
	while if $h'\in\pi^{-1}[\Delta']$, then 
	\[
		\pi(g^{-1}h'g)=\pi(g)^{-1}\pi(h')\pi(g)\in\Delta,
	\] 
	showing that $g\pi^{-1}[\Delta]g^{-1}=\pi^{-1}[\Delta']$. Conversely, if $g\pi^{-1}[\Delta] g^{-1}=\pi^{-1}[\Delta']$ for some $g\in G$, then clearly $\pi(g)\Delta \pi(g)^{-1}=\Delta'$. Finally, note that $\pi^{-1}$ clearly preserves being torsion-free.
\end{proof}

Consequently, if $G$ is a Lie group which has a quotient, by a discrete normal subgroup, which is a matrix group containing a discrete nonabelian free subgroup, then $E(G,\mathcal{D}_{\mathrm{tf}}(G))$ is essentially countable universal. Examples of this sort include universal covers of matrix groups and all noncompact semisimple Lie groups.

To elaborate on the semisimple case, we will need to recall some basic facts from Lie theory, which we summarize here (we refer to \cite[Appendix B]{MR961261}; see also \cite{MR1920389}): Suppose that $G$ is a connected Lie group and $\mathfrak{g}$ its Lie algebra. Then, $G$ acts on $\mathfrak{g}$ by automorphisms via the \emph{adjoint representation} $G\to\mathrm{Ad}(G)$, where $\mathrm{Ad}(G)$ is a closed subgroup of $\mathrm{GL}(\mathfrak{g})$. In particular, $\mathrm{Ad}(G)$ is a matrix group. Since $G$ is connected, the kernel of the adjoint representation is the center $Z(G)$ of $G$ \cite[B12]{MR961261}, so $\mathrm{Ad}(G)\cong G/Z(G)$.

A Lie group $G$ is \emph{semisimple} if it is connected and has no nontrivial connected solvable normal subgroups; this implies that $Z(G)$ is discrete \cite[B38]{MR961261}. If $G$ is noncompact and semisimple, then these properties are inherited by $\mathrm{Ad}(G)$, as they can be characterized in terms of the Lie algebra (see \cite[B38, B48]{MR961261}), and so $\mathrm{Ad}(G)$ contains a discrete nonabelian free subgroup by \cite[Thm.\ 3.8]{MR961261}. Thus, $E(\mathrm{Ad}(G),\mathcal{D}_{\mathrm{tf}}(\mathrm{Ad}(G)))$ is essentially countable universal by Theorem \ref{thm:conjugacy_universal}, and hence so is $E(G,\mathcal{D}_{\mathrm{tf}}(G))$ by Lemma \ref{lem:quotient_reduction}. Thus, we have proved our strengthening of the Stuck and Zimmer result mentioned above:
\begin{corollary}\label{cor:semisimple_universal}
	If $G$ is a noncompact semisimple Lie group, then $E(G,\mathcal{D}_{\mathrm{tf}}(G))$ is essentially countable universal.\qed
\end{corollary}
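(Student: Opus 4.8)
\textbf{Proof proposal for Corollary \ref{cor:semisimple_universal}.}
The plan is to reduce the claim for an arbitrary noncompact semisimple Lie group $G$ to the matrix-group case already handled in Theorem \ref{thm:conjugacy_universal}, using the adjoint representation as the bridge and Lemma \ref{lem:quotient_reduction} to transfer complexity across the resulting quotient. Concretely, I would first invoke the standard Lie-theoretic facts recalled just before the statement: for a connected Lie group $G$ with Lie algebra $\mathfrak{g}$, the adjoint representation $\mathrm{Ad}\colon G\to\mathrm{GL}(\mathfrak{g})$ has image a closed subgroup $\mathrm{Ad}(G)\le\mathrm{GL}(\mathfrak{g})$ — hence a matrix group — and kernel exactly the center $Z(G)$, so that $\mathrm{Ad}(G)\cong G/Z(G)$. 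Semisimplicity of $G$ forces $Z(G)$ to be discrete (cf.\ \cite[B38]{MR961261}), so $Z(G)$ is a discrete normal subgroup of $G$ and the hypotheses of Lemma \ref{lem:quotient_reduction} are met with $N=Z(G)$ and $G/N=\mathrm{Ad}(G)$.

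Second, I would check that $\mathrm{Ad}(G)$ still satisfies the hypothesis of Theorem \ref{thm:conjugacy_universal}, namely that it contains a discrete nonabelian free subgroup. The point here is that noncompactness and semisimplicity are properties of the Lie algebra $\mathfrak{g}$ (see \cite[B38, B48]{MR961261}), hence are inherited by $\mathrm{Ad}(G)$, whose Lie algebra is again $\mathfrak{g}$; a noncompact semisimple Lie group always contains a discrete copy of $F_2$ by the Tits-alternative-type result \cite[Thm.\ 3.8]{MR961261}. Applying Theorem \ref{thm:conjugacy_universal} to the matrix group $\mathrm{Ad}(G)$ then gives that $E(\mathrm{Ad}(G),\mathcal{D}_{\mathrm{tf}}(\mathrm{Ad}(G)))$ is essentially countable universal.

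Third, Lemma \ref{lem:quotient_reduction} (applied to $\pi\colon G\to G/Z(G)=\mathrm{Ad}(G)$) provides a Borel reduction of $E(\mathrm{Ad}(G),\mathcal{D}_{\mathrm{tf}}(\mathrm{Ad}(G)))$ to $E(G,\mathcal{D}_{\mathrm{tf}}(G))$ via the inverse-image map $\pi^{-1}$, so $E(G,\mathcal{D}_{\mathrm{tf}}(G))$ is essentially countable universal from below; from above, essential countability of $E(G,\mathcal{D}_{\mathrm{tf}}(G))$ is automatic since $G$ is locally compact Polish and the conjugation action is Borel, by \cite{MR1176624}. Combining the two bounds yields the corollary. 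I do not anticipate a serious obstacle: the genuine content has already been absorbed into Theorem \ref{thm:conjugacy_universal} and Lemma \ref{lem:quotient_reduction}, and what remains is the routine bookkeeping of verifying that the adjoint quotient of a noncompact semisimple Lie group is a matrix group carrying a discrete $F_2$. The only mild subtlety worth a sentence is making sure $G$ may be assumed connected (semisimple Lie groups are connected by the convention in use here, so this is immediate), so that $\mathrm{Ad}$ has kernel precisely $Z(G)$ rather than merely containing the identity component's center.
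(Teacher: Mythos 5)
Your proposal is correct and follows exactly the paper's argument: pass to the adjoint quotient $\mathrm{Ad}(G)\cong G/Z(G)$ (a matrix group, since $Z(G)$ is discrete by semisimplicity), note that noncompactness and semisimplicity are Lie-algebra properties inherited by $\mathrm{Ad}(G)$ so that it contains a discrete nonabelian free subgroup, apply Theorem \ref{thm:conjugacy_universal} to $\mathrm{Ad}(G)$, and transfer universality back to $G$ via the inverse-image reduction of Lemma \ref{lem:quotient_reduction}. Nothing essential differs from the paper's own proof, so there is nothing further to add.
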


We can also easily extend Theorem \ref{thm:conjugacy_universal} to quotients of matrix groups:

\begin{theorem}\label{thm:conjugacy_universal_in_quotients}
	Let $G$ be a matrix group and $N$ a discrete normal subgroup of $G$. If $G/N$ contains a discrete nonabelian free subgroup, then $E(G/N,\mathcal{D}_{\mathrm{tf}}(G/N))$ is essentially countable universal.
\end{theorem}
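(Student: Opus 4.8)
The plan is to combine the two tools just established: Theorem \ref{thm:conjugacy_universal} (essential countable universality of $E(G,\mathcal{D}_{\mathrm{tf}}(G))$ for matrix groups $G$ containing a discrete nonabelian free subgroup) and Lemma \ref{lem:quotient_reduction} (the inverse-image map $\pi^{-1}$ witnesses $E(G/N,\mathcal{D}_{\mathrm{tf}}(G/N))\leq_B E(G,\mathcal{D}_{\mathrm{tf}}(G))$ for $N\trianglelefteq G$ discrete and normal). The subtlety is that the two hypotheses live on opposite ends of the quotient map: Theorem \ref{thm:conjugacy_universal} wants a discrete free subgroup \emph{in $G$}, whereas we are given one \emph{in $G/N$}. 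So the first and only substantive step is to lift a discrete nonabelian free subgroup of $G/N$ to one of $G$.

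Concretely, let $F\leq G/N$ be discrete, nonabelian, and free, and let $\widetilde{F}=\pi^{-1}[F]\leq G$. By the discreteness argument already carried out in the proof of Lemma \ref{lem:quotient_reduction}, $\widetilde{F}$ is discrete in $G$; it is an extension of the free group $F$ by the discrete (hence, being normal in $G$ and lying in a Lie group, at most countable) group $N$. Now I would pass to a finitely generated piece: fix two elements $a,b\in F$ generating a copy of $F_2$ (any two elements of $F$ generating a rank-$2$ free subgroup suffice, e.g.\ a free basis), choose lifts $\tilde a,\tilde b\in\widetilde{F}$ with $\pi(\tilde a)=a$, $\pi(\tilde b)=b$, and let $\Gamma=\langle\tilde a,\tilde b\rangle\leq\widetilde{F}\leq G$. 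Since $\pi(\Gamma)=\langle a,b\rangle$ is free of rank $2$ and $\pi$ is a homomorphism, any nontrivial reduced word in $\tilde a,\tilde b$ maps to a nontrivial word in $a,b$, hence is itself nontrivial; therefore $\Gamma$ is free of rank $2$. As a subgroup of the discrete group $\widetilde{F}$, $\Gamma$ is discrete in $G$. Thus $G$ is a matrix group containing a discrete nonabelian free subgroup.

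With that in hand the argument closes immediately. By Theorem \ref{thm:conjugacy_universal}, $E(G,\mathcal{D}_{\mathrm{tf}}(G))$ is essentially countable universal; in particular it is a countable Borel equivalence relation (being induced by a Borel, indeed continuous, action of the locally compact Polish group $G$, cf.\ the cited \cite{MR1176624}), and every countable Borel equivalence relation Borel reduces to it. By Lemma \ref{lem:quotient_reduction}, $E(G/N,\mathcal{D}_{\mathrm{tf}}(G/N))\leq_B E(G,\mathcal{D}_{\mathrm{tf}}(G))$; but $E(G/N,\mathcal{D}_{\mathrm{tf}}(G/N))$ is itself essentially countable (again by \cite{MR1176624}, since $G/N$ is locally compact Polish and acts continuously on $\mathcal{D}_{\mathrm{tf}}(G/N)$ by conjugation). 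An essentially countable Borel equivalence relation which Borel reduces to a universal countable Borel equivalence relation is itself essentially countable universal: indeed, if $C$ is a countable Borel equivalence relation bireducible with $E(G/N,\mathcal{D}_{\mathrm{tf}}(G/N))$, then $C\leq_B E(G,\mathcal{D}_{\mathrm{tf}}(G))$, and conversely the universal countable Borel equivalence relation $E_\infty$ reduces to $E(G,\mathcal{D}_{\mathrm{tf}}(G))$, which would need to reduce into $C$ — wait, one must be slightly careful here. The clean way is: $E_\infty\leq_B E(G/N,\mathcal{D}_{\mathrm{tf}}(G/N))$ is exactly what we want to prove, so instead argue directly that the reduction from the proof of Theorem \ref{thm:ACH}, composed with the lifts constructed above, lands inside $\mathcal{D}_{\mathrm{tf}}(G/N)$. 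That is, the construction of \cite[p.~203]{MR1815088}, applied to the rank-$2$ free group $\langle a,b\rangle\leq F\leq G/N$, produces a Borel reduction of a known universal countable Borel equivalence relation into the space of subgroups of $\langle a,b\rangle$; all such subgroups are torsion-free and, being subgroups of the discrete group $F$, discrete in $G/N$. Hence $E_\infty\leq_B E(G/N,\mathcal{D}_{\mathrm{tf}}(G/N))$, and combined with essential countability this gives essential countable universality.

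The main obstacle is precisely the bookkeeping just flagged: ensuring that one need not invoke Theorem \ref{thm:conjugacy_universal} as a black box on $G/N$ (whose hypothesis we would have to re-verify for quotients) but can instead reuse either (i) the lifting argument above to reduce to the matrix-group case on $G$ and then transport along $\pi^{-1}$, or (ii) the internal structure of the \cite{MR1815088} reduction to see it already respects the discreteness constraint in $G/N$. Either route works; I would present route (i), since the lifting lemma for discrete free subgroups through $\pi$ is short and self-contained, and the rest is then a one-line application of Lemma \ref{lem:quotient_reduction} together with the observation that $E(G/N,\mathcal{D}_{\mathrm{tf}}(G/N))$ is itself essentially countable, so that Borel bireducibility with a universal countable Borel equivalence relation follows from the two-sided reduction $E_\infty\leq_B E(G/N,\mathcal{D}_{\mathrm{tf}}(G/N))\leq_B E(G,\mathcal{D}_{\mathrm{tf}}(G))\sim_B E_\infty$.
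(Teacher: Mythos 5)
Your proposal has a genuine gap, and in fact neither of your two routes closes it. Route (i) proves the wrong inequality: lifting the discrete free subgroup to $G$ (which you do correctly) and invoking Theorem \ref{thm:conjugacy_universal} together with Lemma \ref{lem:quotient_reduction} yields only $E(G/N,\mathcal{D}_{\mathrm{tf}}(G/N))\leq_B E(G,\mathcal{D}_{\mathrm{tf}}(G))\sim_B E_\infty$, i.e.\ an \emph{upper} bound on the complexity of the quotient relation. Universality requires the reverse reduction $E_\infty\leq_B E(G/N,\mathcal{D}_{\mathrm{tf}}(G/N))$, and being Borel reducible to a universal countable Borel equivalence relation gives no information in that direction (indeed, as the paper notes, even \emph{containing} a universal relation is not known to imply universality). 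So your closing claim that route (i) delivers a ``two-sided reduction'' is unfounded: the left-hand inequality $E_\infty\leq_B E(G/N,\mathcal{D}_{\mathrm{tf}}(G/N))$ is precisely the statement to be proved, and route (i) never produces it.

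Route (ii) aims in the right direction but silently skips the step that is the whole point. The Andretta--Camerlo--Hjorth construction reduces a universal countable Borel equivalence relation to conjugacy \emph{by a countable group} on subgroups of the chosen free subgroup; to conclude that the same map is a reduction to $E(G/N,\mathcal{D}_{\mathrm{tf}}(G/N))$ you must rule out the possibility that elements of the uncountable group $G/N$ lying outside any countable subgroup conjugate distinct images onto one another, which would make your map merely a Borel homomorphism rather than a reduction. For matrix groups this is exactly what Lemma \ref{lem:countable_retract} provides, and since $G/N$ need not itself be a matrix group, the actual content of the paper's proof of this theorem is to transfer the countable-retract property to $G/N$: given a countable $\Gamma\leq G/N$, pull back to the countable group $\pi^{-1}[\Gamma]\leq G$ (using that $N$ is countable), apply Lemma \ref{lem:countable_retract} inside the matrix group $G$, and push the resulting retract down via $\pi$, checking as in Lemma \ref{lem:quotient_reduction} that conjugacy in $G/N$ of subsets of $\pi[\overline{\pi^{-1}[\Gamma]}]$ reduces to conjugacy within that countable group; with this in hand the proof of Theorem \ref{thm:conjugacy_universal} runs verbatim for $G/N$. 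Your stated goal of avoiding ``re-verifying the hypothesis for quotients'' is therefore not available: that verification is the theorem, and without it neither of your routes establishes $E_\infty\leq_B E(G/N,\mathcal{D}_{\mathrm{tf}}(G/N))$.
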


\begin{proof}
	It suffices to show that Lemma \ref{lem:countable_retract} applies to $G/N$, as the remainder of the proof of Theorem \ref{thm:conjugacy_universal} goes through unchanged. Suppose that $\Gamma$ is a countable subgroup of $G/N$. Since $N$ is countable, $\pi^{-1}[\Gamma]$ is a countable subgroup of $G$. Then, we can apply Lemma \ref{lem:countable_retract} to $\pi^{-1}[\Gamma]$ and $G$ to obtain a countable subgroup $\overline{\pi^{-1}[\Gamma]}$ of $G$ which contains $\pi^{-1}[\Gamma]$ and such that conjugation of subsets of $\overline{\pi^{-1}[\Gamma]}$ by elements of $G$ reduces to conjugation by elements of $\overline{\pi^{-1}[\Gamma]}$. 
	
	Let $\overline{\Gamma}=\pi[\overline{\pi^{-1}[\Gamma]}]=\overline{\pi^{-1}[\Gamma]}/N$. Then, $\overline{\Gamma}$ is a countable subgroup of $G/N$ which contains $\Gamma$. If $\Delta,\Delta'$ are subsets of $\overline{\Gamma}$ which are conjugate in $G/N$, then $\pi^{-1}[\Delta]$ and $\pi^{-1}[\Delta]$ are subsets of $\overline{\pi^{-1}[\Gamma]}$ which are conjugate in $G$, hence conjugate in $\overline{\pi^{-1}[\Gamma]}$. But then, as in the proof of Lemma \ref{lem:quotient_reduction}, $\Delta$ and $\Delta'$ must be conjugate in $\overline{\Gamma}$.
\end{proof}

We suspect that these results can be extended to all Lie groups which contain a discrete nonabelian free subgroup; for connected Lie groups, these are exactly those which are nonamenable as locally compact groups \cite[Thm.\ 3.8]{MR961261}. However, the methods used here seem specific to groups of matrices or related groups, and not applicable to the broader class of all locally compact Polish groups.

\subsection{The isometry problem for hyperbolic $n$-manifolds}

Recall that the group $\mathrm{Isom}(\mathbb{H}^n)$ of all isometries of $n$-dimensional hyperbolic space is a locally compact Polish group when endowed with the compact-open topology. In fact, $\mathrm{Isom}(\mathbb{H}^n)$ may be regarded as a matrix Lie group. The more familiar identifications of $\mathrm{Isom}^+(\mathbb{H}^2)$ and $\mathrm{Isom}^+(\mathbb{H}^3)$ with $\mathrm{PSL}(2,\mathbb{R})$ and $\mathrm{PSL}(2,\mathbb{C})$ will play a prominent role in Sections \ref{section:isometry_for_2} and \ref{section:isometry_for_3}, respectively, below. 
 
For $n\geq 1$, the \emph{Lorentz group}\footnote{This is a special case of the \emph{indefinite orthogonal group} $\mathrm{O}(m,n)$, see \cite{MR1920389}.} $\mathrm{O}(1,n)$ consists of all $(n+1)\times(n+1)$ real invertible matrices $A$ such that $AJA^t=J$, where
\[
	J=\begin{pmatrix}-1&0&0&\cdots & 0\\0&1&0&\cdots &0\\\vdots & & & &\vdots\\0&0&0&\cdots&1\end{pmatrix}.
\]
The \emph{positive Lorentz group} $\mathrm{O}^+(1,n)$ is then the closed subgroup of $\mathrm{O}(1,n)$ consisting of those matrices which preserve the positive ``time-like'' axis in the Lorentzian model of hyperbolic space \cite[Ch.~3]{MR4221225}. Then, $\mathrm{Isom}(\mathbb{H}^n)$ is topologically isomorphic to $\mathrm{O}^+(1,n)$ \cite[Thm.\ 5.2.5]{MR4221225}. Through this isomorphism, the subgroup $\mathrm{Isom}^+(\mathbb{H}^n)$ of orientation-preserving isometries is identified with the \emph{special positive Lorentz group} $\mathrm{SO}^+(1,n)$, the identity component of $\mathrm{O}^+(1,n)$ consisting of those matrices having determinant $1$.

When $n\geq 2$, $\mathrm{SO}^+(1,n)$ is a noncompact, connected, semisimple Lie group (see \cite[I.8 and I.17]{MR1920389}), which thus contains a discrete nonabelian free subgroup. Hence, the following corollary is immediate from Corollary \ref{cor:classwise} and Theorem \ref{thm:conjugacy_universal}.

\begin{corollary}\label{cor:isometry_universal}
	For any $n\geq 2$, the isometry relation $\cong_{\mathsf{Isom}}$ on $\mathfrak{C}^*_c(\mathsf{Isom},\mathbb{H}^n)$ is essentially countable universal, as is the orientation-preserving isometry relation $\cong_{\mathsf{Isom}^+}$ on $\mathfrak{C}^*_c(\mathsf{Isom}^+,\mathbb{H}^n)$.	\qed
\end{corollary}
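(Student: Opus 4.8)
The statement is an immediate corollary of machinery already assembled in the excerpt, so the plan is simply to chain the relevant results together and verify that the hypotheses apply in each dimension $n\geq 2$. First I would recall the identifications $\mathrm{Isom}(\mathbb{H}^n)\cong\mathrm{O}^+(1,n)$ and $\mathrm{Isom}^+(\mathbb{H}^n)\cong\mathrm{SO}^+(1,n)$ noted just above, so that the isometry groups in question are realized concretely as matrix Lie groups. The key input is then that for $n\geq 2$ the group $\mathrm{SO}^+(1,n)$ is a noncompact connected semisimple Lie group, which one reads off from the standard classification (e.g.\ \cite{MR1920389}); by the Tits alternative for connected semisimple Lie groups (\cite[Thm.\ 3.8]{MR961261}, used also in the proof of Corollary \ref{cor:semisimple_universal}), it therefore contains a discrete nonabelian free subgroup.

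Having located a discrete copy of $F_2$, I would invoke Theorem \ref{thm:conjugacy_universal} (the matrix-group case of the Andretta--Camerlo--Hjorth theorem proved in Section \ref{sec:matrix_groups}) to conclude that $E(\mathrm{Isom}^+(\mathbb{H}^n),\mathcal{D}_{\mathrm{tf}}(\mathrm{Isom}^+(\mathbb{H}^n)))$ is essentially countable universal, and likewise --- since $\mathrm{Isom}(\mathbb{H}^n)=\mathrm{O}^+(1,n)$ is a matrix group still containing the same discrete $F_2$ --- that $E(\mathrm{Isom}(\mathbb{H}^n),\mathcal{D}_{\mathrm{tf}}(\mathrm{Isom}(\mathbb{H}^n)))$ is essentially countable universal. (If one prefers to work only with the orientable group and then upgrade, one can alternatively deduce the non-orientation-preserving case from the orientation-preserving one via the quotient/inclusion lemmas promised for Section \ref{section:conjugacy}, but the direct route through Theorem \ref{thm:conjugacy_universal} is cleaner since $\mathrm{O}^+(1,n)$ is itself a matrix group.)

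Finally I would transport these conclusions from the group side to the manifold side using Corollary \ref{cor:classwise}: the isometry relation $\cong_{\mathsf{Isom}}$ on $\mathfrak{C}^*_c(\mathsf{Isom},\mathbb{H}^n)$ is classwise Borel isomorphic --- in particular Borel bireducible --- to $E(\mathrm{Isom}(\mathbb{H}^n),\mathcal{D}_{\mathrm{tf}}(\mathrm{Isom}(\mathbb{H}^n)))$, and $\cong_{\mathsf{Isom}^+}$ on $\mathfrak{C}^*_c(\mathsf{Isom}^+,\mathbb{H}^n)$ is classwise Borel isomorphic to $E(\mathrm{Isom}^+(\mathbb{H}^n),\mathcal{D}_{\mathrm{tf}}(\mathrm{Isom}^+(\mathbb{H}^n)))$ (using the remark at the close of Section \ref{section:bireducibility} that Corollary \ref{cor:classwise} applies verbatim with $\mathrm{Isom}$ replaced by $\mathrm{Isom}^+$). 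Since essential countable universality is a Borel-bireducibility invariant, the two isometry relations are essentially countable universal as claimed.

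\textbf{Main obstacle.} There is no real obstacle here: the theorem is stated and proved in the excerpt in a single line, because all the substance has been pushed into Theorem \ref{thm:conjugacy_universal} (the hard combinatorial coding, inherited from \cite{MR1815088}, together with the matrix-retract Lemma \ref{lem:countable_retract}) and into Corollary \ref{cor:classwise} (the bireducibility of isometry and conjugacy, whose hard part was verifying in Section \ref{section:bireducibility} that the developing-map and quotient constructions are Borel). The only thing one must be slightly careful about is bookkeeping: checking that the \emph{same} discrete free subgroup witnesses the hypothesis of Theorem \ref{thm:conjugacy_universal} for both $\mathrm{O}^+(1,n)$ and $\mathrm{SO}^+(1,n)$, and that the restriction to torsion-free discrete subgroups (rather than all discrete subgroups) loses nothing --- but this is exactly the content already packaged into Theorems \ref{thm:manifoldstodiscrete}, \ref{thm:discretetomanifolds}, and \ref{thm:conjugacy_universal}, so it requires no new work.
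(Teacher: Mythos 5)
Your proposal is correct and follows essentially the same route as the paper: realize $\mathrm{Isom}(\mathbb{H}^n)$ and $\mathrm{Isom}^+(\mathbb{H}^n)$ as the matrix groups $\mathrm{O}^+(1,n)$ and $\mathrm{SO}^+(1,n)$, note that the latter is noncompact connected semisimple and hence contains a discrete nonabelian free subgroup, apply Theorem \ref{thm:conjugacy_universal}, and transport the conclusion to the manifold side via Corollary \ref{cor:classwise} (in both its $\mathrm{Isom}$ and $\mathrm{Isom}^+$ forms). The extra bookkeeping you flag (the same discrete $F_2$ witnessing the hypothesis for both groups) is exactly what the paper's one-line deduction implicitly uses, so nothing further is needed.
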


\subsection{Conjugacy and finitely generated subgroups}
\label{subsection:weak_reduction}

While the lemmas of Section \ref{sec:matrix_groups} are concerned with transferring universality in the conjugacy relation for discrete subgroups between a group and its quotients, the lemmas below will be put to use in transferring concrete classifiability and (essential) hyperfiniteness, when restricted to finitely generated subgroups, in Sections \ref{section:isometry_for_2} and \ref{section:isometry_for_3}.
Let us first verify that the latter restriction defines a standard Borel space.

\begin{lemma}
\label{lem:Dfg}
	Let $G$ be a locally compact Polish group. The set $\mathcal{D}_{\mathrm{fg}}(G)$ of finitely generated discrete subgroups of $G$ is a Borel subset of $\mathcal{D}(G)$.
\end{lemma}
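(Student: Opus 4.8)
The plan is to express ``$\Gamma$ is finitely generated'' as a countable union, over all finite tuples of potential generators, of a Borel condition asserting that these generators in fact generate $\Gamma$. The difficulty is that $\mathcal{D}(G)$ is a space of closed sets, not a space of tuples, so I first need a Borel way to select candidate generators from a given $\Gamma$. To this end, fix a countable dense subset $(g_i)_{i\in\mathbb{N}}$ of $G$ and a countable neighborhood basis $(V_i)_{i\in\mathbb{N}}$ of the identity in $G$ consisting of open sets with compact closure (possible since $G$ is locally compact Polish). For each finite subset $s \subseteq \mathbb{N}$ and each function $t\colon s \to \mathbb{N}$, I will consider the Borel set $A_{s,t}$ of those $\Gamma \in \mathcal{D}(G)$ such that for every $i \in s$, $\Gamma \cap g_i V_{t(i)}$ is a single element $\gamma_i$ (a Borel condition, since $\{F \in \mathcal{F}(G) : F \cap \overline{g_i V_{t(i)}} = \varnothing\}$ and $\{F : F \cap g_i V_{t(i)} \neq \varnothing\}$ are Borel, and discreteness makes ``exactly one point'' Borel on $\mathcal{D}(G)$ — one intersects with smaller and smaller basic neighborhoods and checks singleton-ness in the limit, or uses the Luzin--Novikov uniformization to pick out the unique point as a Borel function of $\Gamma$). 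On $A_{s,t}$ the assignment $\Gamma \mapsto (\gamma_i)_{i \in s}$ is Borel.

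Next, on $A_{s,t}$, I need the condition ``the $\gamma_i$ ($i \in s$) generate $\Gamma$'' to be Borel. The subgroup $\langle \gamma_i : i \in s\rangle$ is the increasing union of the sets $W_k$ of all words of length $\leq k$ in the $\gamma_i^{\pm 1}$; each word is a Borel (indeed continuous) function of the tuple $(\gamma_i)_{i\in s}$, and there are only countably many words, so the map $\Gamma \mapsto \langle \gamma_i : i \in s \rangle \in \mathcal{F}(G)$, sending $\Gamma$ to the closure of $\bigcup_k W_k$, is Borel on $A_{s,t}$ by the bullet points of Section~\ref{subsection:spaces_of_subsets} on countable unions and limits in $\mathcal{F}(G)$ (noting this subgroup is already closed when it equals $\Gamma$, and in general we may just take its closure without harm, since equality with the closed set $\Gamma$ is what we test). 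Then ``$\langle \gamma_i : i \in s\rangle = \Gamma$'' amounts to checking containment both ways, which is Borel since the subset relation on $\mathcal{F}(G)^2$ is Borel; equivalently, using Luzin--Suslin, one checks that every $g_j \in \Gamma$ (detected via the basic neighborhoods $g_j V_k$) lies in some word-set $W_k$, a countable conjunction of Borel conditions. Call the resulting Borel subset of $A_{s,t}$ by the name $B_{s,t}$.

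Finally, $\mathcal{D}_{\mathrm{fg}}(G) = \bigcup_{s,t} B_{s,t}$, where the union ranges over the countably many pairs $(s,t)$ with $s \subseteq \mathbb{N}$ finite and $t \colon s \to \mathbb{N}$: indeed, if $\Gamma$ is finitely generated, then since $\Gamma$ is discrete one can separate each of a finite generating set from the rest of $\Gamma$ by a basic neighborhood $g_i V_{t(i)}$, witnessing $\Gamma \in B_{s,t}$ for the appropriate $(s,t)$; conversely every $B_{s,t}$ consists of finitely generated groups by construction. A countable union of Borel sets is Borel, so $\mathcal{D}_{\mathrm{fg}}(G)$ is a Borel subset of $\mathcal{D}(G)$, hence a standard Borel space in its own right. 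I expect the main obstacle to be the bookkeeping in the first step — verifying cleanly that ``$\Gamma \cap (\text{basic open set})$ is a singleton, and here is that element'' is a Borel partial function on $\mathcal{D}(G)$ — but this is exactly the kind of statement the Luzin--Novikov uniformization theorem (quoted in Section~\ref{subsection:classical_DST}) is designed to handle, since each section is countable (in fact at most a singleton once we also intersect with a small enough neighborhood).
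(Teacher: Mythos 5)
Your proof is correct, but it takes a genuinely different route from the paper's. The paper works on the sequence side: it defines the map $f\colon G^{\mathbb{N}}\to\mathcal{F}(G)$, $(g_n)\mapsto\overline{\langle g_n\rangle}$, checks directly that it is Borel by testing the subbasic Chabauty--Fell sets, observes that $\mathcal{D}_{\mathrm{fg}}(G)=f\bigl[G^{<\infty}\cap f^{-1}[\mathcal{D}(G)]\bigr]$ where $G^{<\infty}$ is the set of eventually-identity sequences, and concludes by Luzin--Novikov, since $f$ is countable-to-one on that set (a finitely generated discrete group is countable, so it has only countably many finite generating tuples). You instead work on the group side: you use discreteness to Borel-select candidate generators as the unique points of $\Gamma$ in prescribed basic open sets $g_iV_{t(i)}$, test generation via the Borel subset relation on $\mathcal{F}(G)$, and write $\mathcal{D}_{\mathrm{fg}}(G)$ as a countable union over the finitely many marked neighborhoods $(s,t)$. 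Both arguments share the key computation that $(\gamma_i)\mapsto\overline{\langle\gamma_i\rangle}$ is Borel; what differs is the final mechanism, forward image under a countable-to-one map versus an explicit countable-union description with a Borel selector. The paper's version is shorter and leans on the black-box image theorem; yours avoids it (Luzin--Novikov is optional even for your selector, since $\gamma_i\in W$ iff $\Gamma\cap(g_iV_{t(i)}\cap W)\neq\varnothing$, which is already Borel) and makes the use of discreteness explicit. Two small points to tighten: the step ``taking the closure is harmless'' deserves the one-line reason that a dense subgroup of a closed \emph{discrete} subgroup must be the whole subgroup, which is exactly where discreteness enters a second time; and the parenthetical appeal to Luzin--Suslin for the alternative generation test is misplaced (testing $\overline{\langle\gamma_i\rangle}=\Gamma$ via the Borel subset relation, as in your main argument, is the right move), though this aside does not affect the proof.
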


\begin{proof}
	Let $f:G^\mathbb{N}\to\mathcal{F}(G)$ be given by $f((g_n)_{n\in\mathbb{N}})=\overline{\langle g_n\rangle_{n\in\mathbb{N}}}$, the closure of the subgroup generated by the sequence $(g_n)_{n\in\mathbb{N}}$. We claim that this map is Borel. If $U\subseteq G$ is open, then $f((g_n)_{n\in\mathbb{N}})\cap U\neq\varnothing$ if and only if $\langle g_n\rangle_{n\in\mathbb{N}} \cap U\neq\varnothing$, which occurs if and only if there is a word in finitely many of the $g_n$'s which is in $U$; this is an open condition. Likewise, if $K\subseteq G$ is compact, then $f((g_n)_{n\in\mathbb{N}})\cap K=\varnothing$ if and only if there is a rational $r>0$ such that for every word in any finite number of the $g_n$'s, that word is at least distance $r$ away from $K$ (with respect to some fixed compatible metric on $G$), a Borel condition. This shows that the inverse images under $f$ of the subbasic open sets generating the Chabauty--Fell topology on $\mathcal{F}(G)$ are Borel in $G^\mathbb{N}$, and hence that $f$ is Borel. Note that $f^{-1}[\mathcal{D}(G)]$ is exactly the set of sequences in $G^\mathbb{N}$ which generate discrete groups.
	
	Let $G^{<\infty}$ be the (Borel) subset of $G^\mathbb{N}$ consisting of all sequences in $G$ which are eventually equal to the identity. Then, $\mathcal{D}_{\mathrm{fg}}(G)=f[G^{<\infty}\cap f^{-1}[\mathcal{D}(G)]]$. Since $\mathcal{D}(G)$ is Borel in $\mathcal{F}(G)$ and $f$ is countable-to-one when restricted to $G^{<\infty}\cap f^{-1}[\mathcal{D}(G)]$, it follows by Luzin--Novikov uniformization (see Section \ref{subsection:classical_DST}) that $\mathcal{D}_{\mathrm{fg}}(G)$ is Borel.
\end{proof}

Our next lemma concerns the following general notion: given Borel equivalence relations $E$ and $F$, a Borel homomorphism $f$ from $E$ to $F$ is a \emph{weak Borel reduction}\footnote{The notion of a weak Borel reduction is usually reserved for countable Borel equivalence relations, where it is equivalent to being a countable-to-one homomorphism.} if the preimage of each $F$-class under $f$ contains only countably many $E$-classes. In this case, we say that $E$ is \emph{weakly Borel reducible} to $F$.

\begin{lemma}\label{lem:weak_reduction}
	Suppose that $E$ and $F$ are essentially countable Borel equivalence relations and that $E$ is weakly Borel reducible to $F$.
	\begin{enumerate}[label=\textup{\arabic*.}]
	 \item If $F$ is concretely classifiable, then so is $E$.
	 \item If $F$ is essentially hyperfinite, then so is $E$.
	\end{enumerate}
\end{lemma}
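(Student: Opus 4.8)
The plan is to reduce to the case where $E$ and $F$ are themselves countable Borel equivalence relations, and then to pull $F$ back along the given weak reduction. Since both conclusions in question are downward closed under $\leq_B$ (concrete classifiability is $\leq_B\,=_{\mathbb{R}}$, and, as noted in the excerpt, essential hyperfiniteness is $\leq_B E_0$), and since essential countability gives countable Borel $E^{*}\sim_B E$ and $F^{*}\sim_B F$, it suffices to treat the countable case: one checks that a weak Borel reduction $f$ from $E$ to $F$, precomposed with a Borel reduction $E^{*}\leq_B E$ and postcomposed with a Borel reduction $F\leq_B F^{*}$, is a weak Borel reduction from $E^{*}$ to $F^{*}$. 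The bookkeeping here is that the preimage of an $F^{*}$-class under a reduction $F\to F^{*}$ lies inside a single $F$-class, while the preimage of a single $E$-class under a reduction $E^{*}\to E$ lies inside a single $E^{*}$-class, so the property ``meets only countably many classes'' survives each composition.

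So assume $E,F$ are countable Borel with a weak reduction $f:X\to Y$, and set $\hat E:=(f\times f)^{-1}(F)$, i.e.\ $x\,\hat E\,y$ iff $f(x)\,F\,f(y)$. Then $\hat E$ is a Borel equivalence relation, $E\subseteq\hat E$ (because $f$ is an $E$-to-$F$ homomorphism), $f$ itself witnesses $\hat E\leq_B F$, and each $\hat E$-class — being the $f$-preimage of an $F$-class — is a union of $E$-classes which, by weak reducibility, meets only countably many of them; since $E$ has countable classes, $\hat E$-classes are countable, so $\hat E$ is a countable Borel equivalence relation refined by $E$.

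For (1): if $F$ is concretely classifiable then $\hat E\leq_B F\leq_B\,=_{\mathbb{R}}$, so $\hat E$ is smooth, and being countable it admits a Borel selector $s:X\to X$ with $s(x)\,\hat E\,x$ and $s$ constant on $\hat E$-classes (obtained from a reduction to $=_{\mathbb{R}}$ by Luzin--Novikov uniformization over the countable fibres). Writing $\hat E=E_\Gamma$ for a Borel action of a countable group $\Gamma=\{\gamma_n\mid n\in\mathbb{N}\}$ via the Feldman--Moore theorem, the map $x\mapsto\bigl(s(x),\,\min\{n\mid \gamma_n\cdot s(x)\;E\;x\}\bigr)\in X\times\mathbb{N}$ is Borel and reduces $E$ to equality, so $E$, and hence the original equivalence relation, is concretely classifiable. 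For (2): if $F$ is essentially hyperfinite then $\hat E\leq_B F\leq_B E_0$, so the countable Borel relation $\hat E$ is hyperfinite; writing $\hat E=\bigcup_n\hat E_n$ as an increasing union of finite Borel equivalence relations, $E=\bigcup_n(E\cap\hat E_n)$ is an increasing union of Borel equivalence relations with finite classes, so $E$ is hyperfinite, hence essentially hyperfinite, and again this transfers back along $E\sim_B E^{*}$.

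The one genuinely delicate point is the passage to the countable case together with the verification that $\hat E$ is itself countable Borel: this is precisely where one must use that $f$ is a \emph{weak} (not arbitrary) homomorphism and that $E$ has countable classes. The remaining ingredients — Feldman--Moore, the existence of Borel selectors for smooth countable Borel equivalence relations, and the closure of hyperfiniteness under Borel subequivalence relations of countable Borel equivalence relations — are standard, so I expect no further obstruction.
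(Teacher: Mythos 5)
Your argument is correct. Its outer shell is the same as the paper's: both proofs first observe that weak reducibility survives precomposition with a reduction $E^{*}\leq_B E$ and postcomposition with a reduction out of $F$, thereby passing to the countable case, and both finish part (2) by noting that a Borel subequivalence relation of a hyperfinite one is hyperfinite. Where you diverge is in how the countable case is handled: the paper treats it as a black box, citing Thomas's results that a countable Borel equivalence relation admitting a weak reduction to a smooth relation is smooth, and that one weakly reducing to $E_0$ is contained in a countable Borel relation $R\leq_B E_0$; you instead construct that containing relation explicitly as the pullback $\hat E=(f\times f)^{-1}(F)$, check it is countable Borel (this is exactly where weak reducibility plus countability of $E$-classes enter, as you note), and then prove the two transfer facts from scratch --- for (1) via a Borel selector obtained by Luzin--Novikov together with Feldman--Moore, and for (2) via the Dougherty--Jackson--Kechris characterization that a countable Borel equivalence relation reducible to $E_0$ is hyperfinite, followed by the intersection trick $E=\bigcup_n(E\cap\hat E_n)$. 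So your proof is essentially a self-contained inlining of the paper's citations (your $\hat E$ is precisely the relation $R$ furnished by the cited theorem); the paper's version is shorter, while yours makes the mechanism visible and relies only on the standard toolkit already invoked elsewhere in the paper. All the individual steps you flag as standard (Luzin--Novikov uniformization over countable fibres, Feldman--Moore, downward closure of hyperfiniteness under Borel subrelations and under $\leq_B$ among countable relations) are indeed correct as used, so there is no gap.
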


\begin{proof}
	For the first assertion, it suffices to consider the case when $f$ is a weak Borel reduction of $E$ to $=_Y$ on some standard Borel space $Y$. Since $E$ is essentially countable, there is a countable Borel equivalence relation $E'$ which is Borel bireducible with $E$. Let $g$ be a Borel reduction from $E'$ to $E$. Then, $f\circ g$ is a weak Borel reduction of $E'$ to $=_Y$, hence by \cite[Lem.\ 2.1]{MR2563816}, $E'$ is concretely classifiable. But $E'$ is bireducible with $E$, so $E$ is concretely classifiable as well.
	
	For the second assertion, observe first that a subequivalence relation of a hyperfinite relation is hyperfinite.
	Next, \cite[Thm.\ 4.4]{MR2500091} tells us that if a countable Borel equivalence relation $E'$ weakly reduces to $E_0$, then $E'\subseteq R$ for some countable Borel equivalence relation $R\leq_B E_0$; since the latter is equivalent to the hyperfiniteness of $R$, it follows then that $E'\leq_B E_0$ as well.
Proceed now as above: fix a weak Borel reduction $f$ of an essentially countable Borel equivalence relation $E$ to $E_0$ along with a witness $g:E'\to E$ of one side of the bireducibility of a countable Borel equivalence relation $E'$ with $E$.
As before, $f\circ g$ is a weak Borel reduction of $E'$ to $E_0$, hence $E'\leq_B E_0$, and thus $E\leq_B E_0$ as well.
\end{proof}

Focusing next on $\mathcal{D}_{\mathrm{fg}}(G)$, we have the following variation on Lemma \ref{lem:quotient_reduction}.

\begin{lemma}
\label{lem:for_SL}
	Let $G$ be a locally compact Polish group, $N$ a finite normal subgroup of $G$, and $\pi:G\to G/N$ the quotient map. 
	\begin{enumerate}[label=\textup{\arabic*.}]
		\item The inverse image map $\pi^{-1}$, when restricted to $\mathcal{D}_{\mathrm{fg}}(G/N)$, is an injective Borel reduction from $E(G/N,\mathcal{D}_{\mathrm{fg}}(G/N))$ to $E(G,\mathcal{D}_{\mathrm{fg}}(G))$.
		\item The direct image map $\pi$ is a Borel homomorphism from $E(G,\mathcal{D}(G))$ to $E(G/N,\mathcal{D}(G/N))$. Moreover, when restricted to $\mathcal{D}_{\mathrm{fg}}(G)$, this map is a weak Borel reduction of $E(G,\mathcal{D}_{\mathrm{fg}}(G))$ to $E(G/N,\mathcal{D}_{\mathrm{fg}}(G/N))$. Consequently, if $E(G/N,\mathcal{D}_{\mathrm{fg}}(G/N))$ is concretely classifiable or essentially hyperfinite, respectively, then so is $E(G,\mathcal{D}_{\mathrm{fg}}(G))$.
	\end{enumerate}
\end{lemma}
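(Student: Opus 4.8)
The plan is to prove Lemma~\ref{lem:for_SL} by treating the two items in turn, with item~(1) being essentially a restriction of Lemma~\ref{lem:quotient_reduction} and item~(2) requiring a finiteness count and an application of Lemma~\ref{lem:weak_reduction}. Throughout, $G$ is locally compact Polish, $N\trianglelefteq G$ is \emph{finite}, and $\pi:G\to G/N$ is the (continuous, open, proper) quotient map. Recall that both $E(G,\mathcal{D}(G))$ and $E(G/N,\mathcal{D}(G/N))$ are essentially countable by \cite{MR1176624}, as are their restrictions to the Borel invariant subsets $\mathcal{D}_{\mathrm{fg}}(-)$ furnished by Lemma~\ref{lem:Dfg}; this is what lets us invoke Lemma~\ref{lem:weak_reduction} at the end.

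For item~(1): by Lemma~\ref{lem:quotient_reduction}, $\pi^{-1}$ is a continuous injection $\mathcal{D}(G/N)\to\mathcal{D}(G)$ which is a reduction of $E(G/N,\mathcal{D}(G/N))$ to $E(G,\mathcal{D}(G))$. What remains is to check that $\pi^{-1}$ carries $\mathcal{D}_{\mathrm{fg}}(G/N)$ into $\mathcal{D}_{\mathrm{fg}}(G)$. If $\Gamma\leq G/N$ is finitely generated by $\bar g_1,\dots,\bar g_k$, then $\pi^{-1}[\Gamma]$ is an extension of the finite group $N$ by $\Gamma$, hence finitely generated: lifting each $\bar g_i$ to some $g_i\in G$, the subgroup $\langle g_1,\dots,g_k\rangle$ surjects onto $\Gamma$, so $\pi^{-1}[\Gamma]=\langle g_1,\dots,g_k\rangle\cdot N=\langle g_1,\dots,g_k, n_1,\dots,n_m\rangle$ where $n_1,\dots,n_m$ enumerate $N$. (Discreteness is automatic from Lemma~\ref{lem:quotient_reduction}.) Thus $\pi^{-1}$ restricts to an injective Borel reduction of $E(G/N,\mathcal{D}_{\mathrm{fg}}(G/N))$ to $E(G,\mathcal{D}_{\mathrm{fg}}(G))$, the Borel-ness being inherited from the continuity established in Lemma~\ref{lem:quotient_reduction} together with the fact (Lemma~\ref{lem:Dfg}) that the relevant domains and codomains are Borel.

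For item~(2): first note that $\pi$ induces a direct image map on closed subsets which is Borel (by the last bullet of Section~\ref{subsection:spaces_of_subsets}, since $\pi$ is continuous and $G/N$ is locally compact Polish, $F\mapsto\overline{\pi(F)}=\pi(F)$—the image of a subgroup is a subgroup and, as $N$ is finite, $\pi$ is a closed map so $\pi(F)$ is already closed). If $\Gamma\leq G$ is discrete, then $\pi(\Gamma)$ need not be discrete in general, but since $\ker\pi=N$ is finite and $\Gamma$ is discrete, $\pi(\Gamma)\cong\Gamma/(\Gamma\cap N)$ is discrete: a neighborhood $V$ of $e$ in $G$ with $\Gamma\cap V=\{e\}$ and $V\cap N=\{e\}$ has $\pi(V)$ a neighborhood of $\pi(e)$ meeting $\pi(\Gamma)$ only in $\pi(e)$, because $\pi^{-1}[\pi(V)]=VN$ and $\Gamma\cap VN\subseteq N\cup(\Gamma\cap V\cdot N)$ is finite, so after shrinking $V$ we may assume $\Gamma\cap VN=N\cap\Gamma$. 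Hence $\pi$ restricts to a Borel map $\mathcal{D}(G)\to\mathcal{D}(G/N)$, and it is clearly equivariant for the conjugation actions (as in the computation in the proof of Lemma~\ref{lem:quotient_reduction}), so it is a Borel homomorphism from $E(G,\mathcal{D}(G))$ to $E(G/N,\mathcal{D}(G/N))$; it plainly preserves finite generation, so it restricts to a Borel homomorphism $E(G,\mathcal{D}_{\mathrm{fg}}(G))\to E(G/N,\mathcal{D}_{\mathrm{fg}}(G/N))$. The main point, and the only step requiring genuine care, is that this homomorphism is a \emph{weak} reduction: given $\bar\Gamma\in\mathcal{D}_{\mathrm{fg}}(G/N)$, the fiber $\pi^{-1}[\{\text{subgroups }\Delta:\pi(\Delta)=\bar\Gamma\}]$ consists of subgroups $\Delta$ with $\Delta N/N=\bar\Gamma$, equivalently $\bar\Gamma\leq\Delta N/N$ and... more precisely $\Delta\leq\pi^{-1}[\bar\Gamma]$ with $\Delta N=\pi^{-1}[\bar\Gamma]$; since $[\pi^{-1}[\bar\Gamma]:\Delta]$ divides $|N|<\infty$, each such $\Delta$ is a finite-index subgroup of the fixed finitely generated group $\pi^{-1}[\bar\Gamma]$, and a finitely generated group has only finitely many subgroups of each finite index (Marshall Hall's theorem). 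Thus the $\pi$-fiber over the isometry class—better, over the whole $E(G/N,\mathcal{D}_{\mathrm{fg}}(G/N))$-class of $\bar\Gamma$—meets only countably many $\mathcal{D}_{\mathrm{fg}}(G)$-elements, a fortiori only countably many $E(G,\mathcal{D}_{\mathrm{fg}}(G))$-classes, so $\pi$ is a weak Borel reduction. The conclusion—that concrete classifiability or essential hyperfiniteness of $E(G/N,\mathcal{D}_{\mathrm{fg}}(G/N))$ transfers to $E(G,\mathcal{D}_{\mathrm{fg}}(G))$—is then immediate from Lemma~\ref{lem:weak_reduction}, given that both relations are essentially countable. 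The delicate bookkeeping is entirely in the finite-index/finitely-many-subgroups count; everything else is routine verification of Borel-ness using the facts collected in Sections~\ref{subsection:spaces_of_subsets} and \ref{subsection:spaces_discrete}.
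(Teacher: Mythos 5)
Your item (1) is the paper's argument verbatim (restrict Lemma \ref{lem:quotient_reduction} and note $\pi^{-1}[\Gamma]=\langle g_1,\dots,g_k\rangle\cdot N$ is finitely generated), and most of item (2) is fine: discreteness of $\pi[\Delta]$ (your shrinking argument works once you take $V$ precompact so that $\Delta\cap VN$ is actually finite), Borel-ness via properness of $\pi$, and equivariance. Your route to the key finiteness is genuinely different from the paper's: you observe that any $\Delta$ with $\pi[\Delta]=\bar\Gamma$ satisfies $\Delta N=\pi^{-1}[\bar\Gamma]$, hence has index dividing $|N|$ in the finitely generated group $\pi^{-1}[\bar\Gamma]$, and invoke Hall's finiteness of finite-index subgroups; the paper instead shows directly that any such $\Delta$ is conjugate to a subgroup generated by a subset of the finite set $\{d_ik\mid k\in N,\ i\leq m\}$, where the $d_i$ generate a fixed preimage. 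Both give that the fiber over a single point of $\mathcal{D}_{\mathrm{fg}}(G/N)$ is finite, and your version is a perfectly good (arguably cleaner) substitute for that step.

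However, the bridge from this to the weak-reduction property has a genuine error as written. You assert that the $\pi$-preimage of the whole $E(G/N,\mathcal{D}_{\mathrm{fg}}(G/N))$-class of $\bar\Gamma$ ``meets only countably many $\mathcal{D}_{\mathrm{fg}}(G)$-elements, a fortiori only countably many classes.'' The element count is false in the cases that matter: the class of $\bar\Gamma$ is a conjugation orbit of the Polish group $G/N$ (e.g.\ of $\mathrm{PSL}(2,\mathbb{R})$), hence typically uncountable, and each of its uncountably many points has a nonempty finite fiber, so the preimage of the class has uncountably many elements. What the definition of weak Borel reduction requires is a count of $E(G,\mathcal{D}_{\mathrm{fg}}(G))$-\emph{classes} in that preimage, and this does not follow ``a fortiori'' from your (false) element count; it follows instead from the equivariance you noted earlier: if $\pi[\Delta']=\pi(g)\bar\Gamma\pi(g)^{-1}$ then $g^{-1}\Delta' g$ lies in the finite fiber over $\bar\Gamma$, so the preimage of the class is the $G$-saturation of a finite set and thus consists of finitely many $G$-conjugacy classes. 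With that one-line repair (which is in effect how the paper phrases its conclusion, ``$\pi$ is finite-to-one on conjugacy classes''), your proof is complete; the concluding appeal to Lemma \ref{lem:weak_reduction}, using essential countability from \cite{MR1176624}, matches the paper.
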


\begin{proof}
	Part (1) is proved exactly like Lemma \ref{lem:quotient_reduction}, with $\pi^{-1}$ restricted to $\mathcal{D}_{\mathrm{fg}}(G/N)$. Note that if $\Gamma\leq G/N$ is generated by $\{g_0 N,\ldots,g_n N\}$, then $\pi^{-1}[\Gamma]$ is generated by $\{g_0,\ldots,g_n\}\cup N$, so this map preserves finite generation.
	
	For (2), clearly if $\Gamma\leq G$ is finitely generated, then so is $\pi[\Gamma]$. We must, however, show that the direct image preserves discreteness. Suppose that $\Gamma\leq G$ is discrete, but that $\pi[\Gamma]$ is not. Then, there is a sequence $(g_n N)_{n\in\mathbb{N}}$ in $\pi[\Gamma]$, each $g_n\in\Gamma$, converging to the identity $e N$, but such that no $g_n N=e N$, that is, no $g_n\in N$. $G/N$ is locally compact, so we may assume that all of the $g_n N$'s are in some compact neighborhood of the identity. As $N$ is finite, the quotient map is proper, so the $g_n$'s all lie in some compact set in $G$. Thus, there is a subsequence $( g_{n_\ell})_{\ell\in\mathbb{N}}$ which converges, necessarily to some $k\in N$. But $\Gamma$ is discrete, so these $g_{n_\ell}$'s must be ultimately equal to $k$, and thus $g_{n_\ell}N$ is ultimately equal to $e N$, a contradiction. This shows that the direct image map is well-defined $\mathcal{D}(G)\to\mathcal{D}(G/N)$, and it is clearly a Borel homomorphism.

	To see that the direct image map is a weak reduction when restricted to $\mathcal{D}_{\mathrm{fg}}(G)$, suppose that $\Delta\in \mathcal{D}_{\mathrm{fg}}(G)$ has generators $d_1,\ldots,d_m$ and $\Delta'\in\mathcal{D}_{\mathrm{fg}}(G)$ is such that $\pi[\Delta]$ and $\pi[\Delta']$ are conjugate in $G/N$. That is, there is some $g\in G$ such that
	\[
		g\pi[\Delta] g^{-1}N=gN\pi[\Delta]g^{-1}N=\pi[\Delta'].
	\]
	It follows that $\Delta'$ is generated by some subset of $g\{d_ik\mid k\in N,i\leq m\}g^{-1}$. 
	
	Thus, every subgroup of $G$ which is mapped to one conjugate to $\pi[\Delta]$ in $G/N$ must be conjugate in $G$ to one generated by a subset of $\{d_ik\mid k\in N,i\leq m\}$, of which there are only finitely many. Hence, $\pi$ is finite-to-one on conjugacy classes, and in particular, a weak reduction.
\end{proof}

The final lemma of this section can be used to transfer complexity results between $\mathrm{Isom}^+(\mathbb{H}^n)$ and $\mathrm{Isom}(\mathbb{H}^n)$.

\begin{lemma}
\label{lem:lifting_orientation_preserving}
	Suppose that $G$ is a locally compact Polish group, $G_0$ a closed normal subgroup of index $2$, and $i\in Z(G)$ an element of order $2$ such that $G = G_0 \sqcup i G_0$. Then:
	\begin{enumerate}[label=\textup{\arabic*.}]
		\item $E(G,\mathcal{D}(G))$ and $E(G_0,\mathcal{D}(G_0))$ are Borel bireducible.
		\item $E(G,\mathcal{D}_{\mathrm{fg}}(G))$ and $E(G_0,\mathcal{D}_{\mathrm{fg}}(G_0))$ are Borel bireducible.
	\end{enumerate}
\end{lemma}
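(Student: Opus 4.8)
The statement says: if $G = G_0 \sqcup iG_0$ with $G_0 \trianglelefteq G$ of index $2$ and $i \in Z(G)$ of order $2$, then the conjugacy relations on $\mathcal{D}(G)$ and $\mathcal{D}(G_0)$ (resp.\ on their finitely generated counterparts) are Borel bireducible. The plan is to produce Borel reductions in both directions, exploiting centrality of $i$ throughout. Since $i$ is central, conjugation by any $g \in G$ on a subgroup $\Delta \leq G$ depends only on the coset $gG_0 = gG_0$; more precisely $g\Delta g^{-1}$ and $(ig)\Delta(ig)^{-1}$ coincide, so $G$-conjugacy of subgroups is the same as $G_0$-conjugacy \emph{when both subgroups happen to lie in $G_0$}. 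This is the key observation that makes both directions nearly formal.

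\textbf{The reduction $E(G_0,\mathcal{D}(G_0)) \leq_B E(G,\mathcal{D}(G))$.} First I would show that the inclusion map $\mathcal{D}(G_0) \hookrightarrow \mathcal{D}(G)$ is Borel (this follows from standard facts about the Chabauty--Fell topology: $\mathcal{D}(G_0)$ is a Borel, indeed closed-in-$\mathcal{S}(G_0)$, subset, and the inclusion of closed subsets of the closed subgroup $G_0$ into $\mathcal{F}(G)$ is continuous, hence Borel). That it is a reduction amounts to: for $\Delta,\Delta' \leq G_0$, they are $G_0$-conjugate iff they are $G$-conjugate. The forward direction is trivial. For the reverse, suppose $g\Delta g^{-1} = \Delta'$ for some $g \in G$. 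If $g \in G_0$ we are done; if $g = ig_0$ with $g_0 \in G_0$, then since $i$ is central, $g\Delta g^{-1} = g_0 \Delta g_0^{-1}$, so $g_0 \in G_0$ witnesses the $G_0$-conjugacy. The same map restricts to a Borel reduction of $E(G_0,\mathcal{D}_{\mathrm{fg}}(G_0))$ to $E(G,\mathcal{D}_{\mathrm{fg}}(G))$ (it obviously preserves finite generation and $\mathcal{D}_{\mathrm{fg}}(G_0), \mathcal{D}_{\mathrm{fg}}(G)$ are Borel by Lemma~\ref{lem:Dfg}).

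\textbf{The reduction $E(G,\mathcal{D}(G)) \leq_B E(G_0,\mathcal{D}(G_0))$.} Here the natural map is $\Delta \mapsto \Delta \cap G_0$, or perhaps better $\Delta \mapsto \langle \Delta^2, \text{(squares and products of pairs in $iG_0$)}\rangle$—but the cleanest choice is $\Delta \mapsto \Delta_0 := \Delta \cap G_0$. One checks: $\Delta_0$ is discrete in $G_0$ (subgroup of a discrete group), the map is Borel (intersection with a fixed closed set is Borel on $\mathcal{F}(G)$ by the facts in Section~\ref{subsection:spaces_of_subsets}, and lands in $\mathcal{F}(G_0)$), it is a homomorphism ($g\Delta g^{-1} = \Delta' \Rightarrow g\Delta_0 g^{-1} = \Delta_0'$, using that $G_0$ is normal so conjugation preserves membership in $G_0$, and that for $g = ig_0$ centrality again lets us replace $g$ by $g_0 \in G_0$), and it is a reduction: if $\Delta_0$ and $\Delta_0'$ are $G_0$-conjugate, I must recover a $G$-conjugacy of $\Delta$ and $\Delta'$. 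The subtlety is that $\Delta$ may not equal $\Delta_0$—it has index $1$ or $2$ over $\Delta_0$ depending on whether $\Delta \subseteq G_0$. I expect this to be the \textbf{main obstacle}: the map $\Delta \mapsto \Delta \cap G_0$ is not injective on conjugacy classes, and two non-conjugate $\Delta$'s could have conjugate $\Delta_0$'s. The fix is to encode the ``extra bit''—whether $\Delta \subseteq G_0$ and, if not, a coset representative of $\Delta/\Delta_0$ inside $iG_0$—as additional data, producing a reduction into a product $E(G_0,\mathcal{D}(G_0)) \times (=_{\text{bit}})$ and then absorbing the finite factor, or more slickly, mapping $\Delta \mapsto \langle i \rangle \cdot \Delta$ when $i \notin \Delta$ is impossible... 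Let me instead use the product with $i$: since $i$ is central of order $2$, the map $\Delta \mapsto \Delta \langle i\rangle \cap G_0$ and tracking of $\Delta$ versus $\Delta\langle i\rangle$ works, but the honest route is: map $\Delta$ to the pair $(\Delta_0, \epsilon(\Delta))$ where $\epsilon(\Delta) \in \{0,1\}$ records $[\Delta : \Delta_0]$, and when $\epsilon = 1$ additionally note that $\Delta = \Delta_0 \cup i\gamma\Delta_0$ for some $\gamma \in G_0$ with $i\gamma$ normalizing $\Delta_0$; one shows $\Delta, \Delta'$ are $G$-conjugate iff $\Delta_0,\Delta_0'$ are $G_0$-conjugate \emph{via a conjugator respecting the chosen coset data}. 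This lands us in a conjugacy-type relation on pairs (subgroup, coset) which is readily seen (by a Borel selector argument, Luzin--Novikov, since there are only finitely/countably many relevant cosets modulo $\Delta_0$) to reduce back to $E(G_0,\mathcal{D}(G_0))$. The finitely generated case is identical, noting $\Delta_0$ has finite index in $\Delta$ hence is finitely generated when $\Delta$ is (and conversely), and all maps visibly preserve $\mathcal{D}_{\mathrm{fg}}$; the Borel bookkeeping is handled by Lemma~\ref{lem:Dfg} and Luzin--Novikov uniformization.
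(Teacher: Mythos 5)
Your first direction is exactly the paper's: the inclusion $\mathcal{D}(G_0)\hookrightarrow\mathcal{D}(G)$, with centrality of $i$ used to replace a conjugator $g=ig_0$ by $g_0\in G_0$, and the same map restricted to finitely generated subgroups. No issues there.

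The second direction is where your proposal has a genuine gap. For comparison, the paper's own argument is short: it takes the map $H\mapsto H_0=H\cap G_0$, computes with the decomposition $H=H_0\sqcup iH_0$, and invokes Schreier's lemma for the finitely generated case; it does not pass through pairs, coset data, or any absorption step. (That decomposition holds exactly when $i\in H$, and the subgroups you worry about --- those with $i\notin H$ but $H\not\subseteq G_0$, or the collision of $\Delta$ with $\Delta\langle i\rangle$, e.g.\ $\{e\}$ versus $\{e,i\}$ --- are precisely the ones that computation does not separately discuss, so your concern is a legitimate one.) The trouble is that your repair does not close the issue you raised. The entire content of your second direction is the claim that simultaneous $G_0$-conjugacy on pairs $(\Delta_0,\gamma\Delta_0)$, where $i\gamma$ normalizes $\Delta_0$ and $\gamma^2\in\Delta_0$, Borel reduces back to $E(G_0,\mathcal{D}(G_0))$, and the justification you give --- that there are only finitely or countably many relevant cosets modulo $\Delta_0$, so Luzin--Novikov applies --- is false. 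Take $\Delta_0=\{e\}$: the admissible cosets are the singletons $\{\gamma\}$ with $\gamma^2=e$, i.e.\ the involutions of $G_0$ together with the identity, and in $G_0=\mathrm{PSL}(2,\mathbb{R})$ (or any $G_0$ with uncountably many involutions) this set is uncountable, so no countable-to-one selector argument of the kind you invoke is available. Similarly, ``absorbing the finite factor'' (reducing $E(G_0,\mathcal{D}(G_0))$ times equality on a two- or three-point set back to $E(G_0,\mathcal{D}(G_0))$) is asserted rather than proved; such absorptions are not automatic. So as written you have not established $E(G,\mathcal{D}(G))\leq_B E(G_0,\mathcal{D}(G_0))$: you would need either to actually prove that the pair relation (equivalently, by Goursat, simultaneous conjugacy of index-two pairs $\Delta_0\leq\tilde\Delta\leq G_0$) reduces to plain conjugacy on $\mathcal{D}(G_0)$, or to handle the intersection map directly with an honest case analysis according to whether $i\in H$, $H\subseteq G_0$, or neither.
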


\begin{proof}
	Suppose that $H$ and $K$ are subgroups of $G_0$. If $H$ and $K$ are conjugate by some element of $G_0$, they are conjugate in $G$. Conversely, if $H$ and $K$ are conjugate in $G$, say by some element $g$, then either $g$ is in $G_0$ or $ig$ is. In the latter case, since $i$ is central,
	\[
		(ig) H (ig)^{-1} = i g H g^{-1} i^{-1} = g H g^{-1}i i^{-1} = K.
	\]
	Thus, the inclusion map from subgroups of $G_0$ to subgroups of $G$ is a reduction of conjugacy.
	
	Next, consider the map which takes a subgroup $H$ of $G$ to its intersection with $G_0$, call it $H_0$. Note that $H=H_0\sqcup iH_0$. If $H$ and $K$ are conjugate subgroups of $G$, say $K=gHg^{-1}$ for some $g\in G$, then
	\[
		K_0\sqcup iK_0=K=g Hg^{-1}=g(H_0\sqcup iH_0)g^{-1}=gH_0g^{-1}\sqcup i(g H_0g^{-1}),
	\]
	so $K_0=gH_0g^{-1}$. Conversely, if $H_0$ and $K_0$ are conjugate by some $h\in G_0$, then
	\[
		hHh^{-1}=h(H_0\sqcup iH_0)h^{-1}=hH_0h^{-1}\sqcup i(hH_0h^{-1})=K_0\sqcup iK_0=K.
	\]
	Thus, the mapping $H\mapsto H_0$ is a reduction of conjugacy as well.
	
	Clearly, both of these mappings preserve discreteness, proving (1). The inclusion map preserves finite generation trivially, while the mapping $H\mapsto H_0$ preserves finite generation by Schreier's Lemma \cite[Lem.\ 7.56]{MR1307623}, proving (2).
\end{proof}

\section{The conjugacy problem for finitely generated Fuchsian groups}
\label{section:isometry_for_2}

In this section and its sequel, we return to hyperbolic manifolds as our primary objects of study.
A main motivation as we proceed, though, derives from the group-theoretic analyses of the previous section; this is the question \emph{what is the Borel complexity of the conjugacy relation for \emph{finitely generated} discrete subgroups of a Lie group?}
For it is among the effects of the extensions, in Theorem \ref{thm:conjugacy_universal} and Corollary \ref{cor:semisimple_universal}, of the results of \cite{MR1815088} from countable to Lie group settings to render the conjugacy relation on finitely generated discrete subgroups descriptive set-theoretically interesting in ways which it had not been before.
More precisely, although a countable group may have uncountably many discrete subgroups, only countably many of those are finitely generated, and hence the conjugacy relation on the latter collection trivially reduces to $=_{\mathbb{N}}$.
In contrast, moduli spaces $\mathcal{M}(S)$ of finite-type hyperbolic surfaces $S$ may be regarded as witnessing that both of the Lie groups $\mathrm{PSL}(2,\mathbb{R})$ and $\mathrm{PSL}(2,\mathbb{C})$ possess uncountably many conjugacy classes of finitely generated discrete subgroups (see the discussions preceding Lemma \ref{lem:uncountable} and in and just after Theorem \ref{thm:geom_fin_hyp_3_mans_smooth} below), and it is therefore natural to ask how much of the complexity attested by Theorem \ref{thm:conjugacy_universal_in_quotients} manifests on these families of subgroups alone.

The latter two groups correspond to the $n=2$ and $n=3$ instances of $\mathrm{Isom}^+(\mathbb{H}^n)$; recall that in Corollary \ref{cor:isometry_universal}, we applied our analysis of the conjugacy relation for discrete torsion-free subgroups of each to draw conclusions about the complexity of the isometry relation for the associated classes of hyperbolic $n$-manifolds.
Below, these roles will be reversed: it will tend to be classification results on the manifold side, together with the translation mechanisms of Section \ref{section:bireducibility}, which underwrite our complexity computations on the group side.
These results leverage deep and specific geometric phenomena, leaving the question of the complexity of the conjugacy relation on finitely generated discrete subgroups for other Lie or even locally compact Polish groups very generally open; see our conclusion's Question \ref{ques:fgLie}.
A related distinction of these results is their encompassing the most general of manifold finiteness conditions which one might invoke, namely the fifth of those which we now review.
\begin{definition}
\label{def:finiteness_conditions}
Fix $n\geq 2$ and let $M=\mathbb{H}^n/\Gamma$ be the quotient hyperbolic manifold associated to a torsion-free discrete subgroup $\Gamma$ of $\mathrm{Isom}(\mathbb{H}^n)$.
The \emph{convex core} of $M$ is the smallest convex submanifold of $M$ whose inclusion into $M$ is a homotopy equivalence.
\begin{enumerate}[label=\textup{\roman*.}, ref=\textup{\roman*}]
\item \label{cond:closed} $M$ is \emph{closed}, and $\Gamma$ is \emph{cocompact}, if $M$ is compact and without boundary.
\item \label{cond:finvolume} $M$ is \emph{of finite volume}, and $\Gamma$ is a \emph{lattice}, if $\mathrm{vol}(M)<\infty$.
\item \label{cond:geomfin} $M$ and $\Gamma$ are \emph{geometrically finite} if $\Gamma$ is finitely generated and the convex core of $M$ is of finite volume.\footnote{For equivalent definitions of geometric finiteness in dimension $3$, see \cite[p.\ 145]{MR3586015}; for more general discussion, see \cite[\S 2]{MR2402415}, \cite{MR1218098}, \cite[\S 12]{MR4221225}.}
\item \label{cond:toptame} $M$ is \emph{topologically finite} or \emph{tame} if $M$ is homeomorphic to the interior of a compact manifold, possibly with boundary.
\item \label{cond:algfin} $M$ is \emph{algebraically finite} if $\Gamma$ is \emph{finitely generated}.
\end{enumerate}
\end{definition}
These conditions are of (weakly) decreasing stringency; in the case of $n=2$, for example, we have $(i)\Rightarrow (ii)\Rightarrow (iii)\Leftrightarrow (iv)\Leftrightarrow (v)$, with neither of the first two implications reversible \cite[Theorem 0.8]{MR1638795}.
For each $n$, in other words, the classes of $n$-manifolds associated to these conditions are nondecreasing in size, and in this and the next section we will compute, in dimensions $2$ and $3$, respectively, the complexity of the isometry relation on each of them.
In this section, our main result is the following:

\begin{theorem}
\label{thm:fgPSL2Rsmooth}
The orientation-preserving isometry problem for any of the classes (i) through (v) of complete orientable hyperbolic $2$-manifolds listed above is Borel bireducible with the relation $=_{\mathbb{R}}$ of equality on the real numbers. 
In particular, the classification of
\begin{enumerate}[label=\textup{\arabic*.}]
\item finitely generated torsion-free discrete subgroups of $\mathrm{PSL}(2,\mathbb{R})$ up to conjugacy, or, equivalently, of
\item algebraically finite complete orientable hyperbolic $2$-manifolds up to orienta\-tion-preserving isometry
\end{enumerate}
is Borel bireducible with $=_{\mathbb{R}}$.
\end{theorem}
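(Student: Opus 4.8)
The plan is to establish both directions of the bireducibility and then propagate the result across the five finiteness classes of Definition \ref{def:finiteness_conditions}. By Corollary \ref{cor:classwise}, the orientation-preserving isometry relation on complete orientable hyperbolic $2$-manifolds with finitely generated fundamental group is classwise Borel isomorphic to the conjugacy relation $E(\mathrm{PSL}(2,\mathbb{R}),\mathcal{D}_{\mathrm{fg},\mathrm{tf}}(\mathrm{PSL}(2,\mathbb{R})))$ on the standard Borel space $\mathcal{D}_{\mathrm{fg},\mathrm{tf}}(\mathrm{PSL}(2,\mathbb{R}))$ of finitely generated torsion-free discrete subgroups (which is Borel by Lemma \ref{lem:Dfg} intersected with the Borel set $\mathcal{D}_{\mathrm{tf}}$). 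So it suffices to show this conjugacy relation is Borel bireducible with $=_{\mathbb{R}}$. For the direction $=_{\mathbb{R}}\leq_B E$, I would exhibit a Borel injection $\mathbb{R}\to\mathcal{D}_{\mathrm{fg},\mathrm{tf}}(\mathrm{PSL}(2,\mathbb{R}))$ whose image consists of pairwise non-conjugate subgroups: e.g. for each $t$ in an open interval, take the Fuchsian group uniformizing a once-punctured torus (or a fixed closed surface of genus $\geq 2$) whose conjugacy class is a prescribed continuously-varying point of the associated Teichmüller/moduli space; since moduli space is a non-trivial real variety, distinct parameters give non-isometric surfaces. This is the easy half.

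\textbf{The upper bound.} The substantive direction is $E\leq_B\, =_{\mathbb{R}}$, i.e. producing Borel complete numerical invariants for finitely generated torsion-free Fuchsian groups up to conjugacy. Here I would run the argument through Teichmüller theory. A finitely generated torsion-free Fuchsian group $\Gamma$ gives a finite-type hyperbolic surface $S=\mathbb{H}^2/\Gamma$, and by the classification of surfaces (or rather by computing the topological type in a Borel way, as in Section \ref{section:surfaces}) one can Borel-assign to $\Gamma$ its topological type — genus $g$, number of punctures/ends, orientability — which, restricted to the orientable case, is determined by a pair in $\mathbb{N}^2$. On each fixed topological type, the isometry classes of hyperbolic structures form the moduli space $\mathcal{M}_{g,n}$, which is the quotient of the finite-dimensional Teichmüller space $\mathcal{T}_{g,n}\cong\mathbb{R}^{6g-6+2n}$ by the properly discontinuous action of the mapping class group. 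Since $\mathcal{M}_{g,n}$ is a (Hausdorff, second countable) quotient of a Polish space by a countable-orbit equivalence relation given by a properly discontinuous group action, the induced equivalence relation on $\mathcal{T}_{g,n}$ is concretely classifiable (smooth): proper discontinuity yields a Borel selector, or more simply the quotient map $\mathcal{T}_{g,n}\to\mathcal{M}_{g,n}$ lands in a standard Borel space and can be post-composed with a Borel isomorphism to $\mathbb{R}$. The heart of the matter is then a \emph{uniformly Borel} version of this: I need a Borel map $\mathcal{D}_{\mathrm{fg},\mathrm{tf}}(\mathrm{PSL}(2,\mathbb{R}))\to\mathbb{R}$ assigning to each $\Gamma$ its point in moduli space. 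Concretely, one fixes a Borel way of choosing a generating set / a marking of $\mathbb{H}^2/\Gamma$ (using the Borel machinery for computing charts, exhaustions, and $\pi_1$-data developed in Sections \ref{subsection:subclasses} and \ref{section:bireducibility}), reads off the Fenchel--Nielsen or trace coordinates of the marked structure — these depend Borel-measurably on $\Gamma$ via the matrix entries of chosen generators — and then quotients out the residual marking ambiguity (a countable mapping-class-group action) using the properly-discontinuous-action selector. This is precisely where the recent work \cite{BLL26+}, cited in the introduction as interacting closely with our proof of Theorem J, enters: it supplies the needed control (in the "smooth topology" on spaces of quotient manifolds referenced in Remark \ref{rmk:baseframes}) to make the passage from a Chabauty-convergent sequence of Fuchsian groups to convergence of Teichmüller coordinates, hence to verify Borelness of the invariant map. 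I would cite it for the key uniformity statement and assemble the pieces.

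\textbf{Propagation across the five classes and the main obstacle.} Having proved the result for class (v) (algebraically finite), the other four follow by monotonicity plus the observation that class (v) already realizes uncountably many conjugacy classes: each of classes (i)--(iv) is a Borel-invariant subclass of (v) (being Borel, e.g. finite-volume is a Borel condition on $(\mathcal{U},c)$ — one can compute volume from an exhaustion as in Lemma \ref{lem:exhaustion} — and geometric finiteness likewise), so the restricted conjugacy relation Borel-reduces to $=_{\mathbb{R}}$; and each contains, e.g., all closed surfaces of genus $\geq 2$, whose moduli space is already uncountable, giving $=_{\mathbb{R}}\leq_B$ the restriction. Finally, one transfers from $\mathrm{PSL}(2,\mathbb{R})=\mathrm{Isom}^+(\mathbb{H}^2)$ to $\mathrm{Isom}(\mathbb{H}^2)$ if desired via Lemma \ref{lem:lifting_orientation_preserving}(2), noting $\mathrm{Isom}^+(\mathbb{H}^2)$ has index $2$ in $\mathrm{Isom}(\mathbb{H}^2)$ with the reflection central of order $2$. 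The main obstacle I anticipate is not any single deep theorem but the \emph{uniform Borelness} of the Teichmüller-coordinate assignment: classically one fixes a surface and studies its deformation space, but here the surface varies with $\Gamma$, its topological type is not locally constant on $\mathcal{D}_{\mathrm{fg},\mathrm{tf}}$, and one must choose markings and pants decompositions measurably across all of $\mathcal{D}_{\mathrm{fg},\mathrm{tf}}(\mathrm{PSL}(2,\mathbb{R}))$ at once — then show the resulting real coordinates, after modding out the mapping class group action, form a Borel function. Managing this glue, and in particular invoking \cite{BLL26+} correctly for the topology-matching step, is where the real work lies; the geometric inputs (finite-dimensionality of Teichmüller space, proper discontinuity of the mapping class group action, Mostow-type rigidity being \emph{not} needed in dimension $2$) are all standard.
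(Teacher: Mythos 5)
Your overall architecture matches the paper's scaffolding (translate to conjugacy on $\mathcal{D}_{\mathrm{af}}(\mathrm{PSL}(2,\mathbb{R}))$ via Corollary \ref{cor:classwise}, get the lower bound from the uncountability of moduli spaces of closed surfaces, check Borelness of the five classes, and propagate), but your upper-bound argument is a genuinely different route from both of the paper's proofs of item (c). The paper's first proof deliberately avoids moduli of infinite-volume structures by passing to the double of the convex core, landing in finite-volume moduli spaces, and settling for a finite-to-one weak reduction (Lemma \ref{lem:weak_reduction}); the Borelness of the unmarked map $\mathcal{D}_{\mathrm{fv}}\to\coprod_S\mathcal{M}(S)$ is what requires the smooth-topology machinery of \cite{2021arXiv211014401B}. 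The second proof is Glimm's dichotomy plus an analysis of orbit closures (vectors exiting flares and cusps). Your route — Borel-choose a marking (Luzin--Novikov), read off marked Teichm\"{u}ller data continuously from the chosen generators, then quotient by the properly discontinuous mapping class group action — is essentially the strategy the paper runs in dimension $3$ for Theorem \ref{thm:geom_fin_hyp_3_mans_smooth}, and which it flags (in a footnote) as yielding a ``third proof'' of Theorem J. One payoff of your route is that the citation you worry about is not actually where you need it: once markings are chosen, Borelness of the coordinate map follows from continuity in the matrix entries, so the Chabauty-to-Teichm\"{u}ller convergence control is only needed on the paper's \emph{unmarked} route. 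Conversely, the paper's doubling trick buys freedom from infinite-volume Teichm\"{u}ller theory, which your sketch underestimates: $\mathcal{T}_{g,n}\cong\mathbb{R}^{6g-6+2n}$ is the cusped (finite-area) space, whereas flare ends require boundary-length/funnel parameters (and the elementary/cyclic groups, where $\chi\geq 0$, need the separate treatment of Lemma \ref{lem:elementary}).

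There is one concrete step in your sketch that fails as literally stated. A marking chosen by Luzin--Novikov is just a group isomorphism $\pi_1(S)\to\Gamma$, and the corresponding homotopy equivalence may be orientation-reversing; nothing in the selection respects orientation type. Consequently, for conjugate $\Gamma,\Gamma'$ the two chosen marked points of $\mathcal{T}(S)$ can differ by an orientation-reversing element of the extended mapping class group, and if the quotient surface is chiral they then have \emph{different} images in $\mathcal{M}(S)=\mathcal{T}(S)/\mathrm{MCG}(S)$ — so your map need not even be a homomorphism of the conjugacy relation to equality. The patch is cheap but must be made: either quotient by the full extended mapping class group $\mathrm{Out}(\pi_1(S))$ (the action is still properly discontinuous), accepting that equal invariants now only give conjugacy in $\mathrm{PGL}(2,\mathbb{R})$, i.e.\ a finite-to-one weak reduction, and then invoke Lemma \ref{lem:weak_reduction} exactly as the paper does; or show one can Borel-select orientation-compatible markings. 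Either fix completes your argument; without one of them, the ``quotient out the residual marking ambiguity'' step is the place where the proof as written breaks.
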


Via the lemmas of Section \ref{section:conjugacy}, this analysis readily extends from the finitely generated torsion-free discrete subgroups of $G=\mathrm{PSL}(2,\mathbb{R})$ to those of $G=\mathrm{SL}(2,\mathbb{R})$, and of $G=\mathrm{Isom}(\mathbb{H}^n)$, results we record in this section's conclusion.
Each strengthens the relevant instance of Stuck and Zimmer's result that \emph{for any Lie group $G$, the conjugacy problem for lattices in $G$ is concretely classifiable} \cite[Corollary 3.2]{MR1283875}, to the exact degree that condition (v) above strengthens condition (ii); see Theorem \ref{thm:complexity_3-mans} for a parallel strengthening (from (ii) to (iv)) in the $G=\mathrm{PSL}(2,\mathbb{C})$ setting below.

For now, however, for concision we let $G=\mathrm{PSL}(2,\mathbb{R})$ and recall that a \emph{Fuchsian group} is simply an element of $\mathcal{D}(G)$.
We also fix the notations $\mathcal{D}_{\mathrm{cc}}(G)$, $\mathcal{D}_{\mathrm{fv}}(G)$, $\mathcal{D}_{\mathrm{gf}}(G)$, and $\mathcal{D}_{\mathrm{af}}(G)$ for the subspaces of $\mathcal{D}_{\mathrm{tf}}(G)$ corresponding to the \emph{group} finiteness conditions (\ref{cond:closed}), (\ref{cond:finvolume}), (\ref{cond:geomfin}), and (\ref{cond:algfin}), respectively, in Definition \ref{def:finiteness_conditions} above.
In the notation of previous sections, then, Theorem \ref{thm:fgPSL2Rsmooth} then conjoins four basic claims:
\begin{enumerate}[label=(\alph*)]
\item $=_{\mathbb{R}}\:\leq_B E(G,\mathcal{D}_{\mathrm{cc}}(G))$,
\item $\mathcal{D}_{\mathrm{cc}}(G)$, $\mathcal{D}_{\mathrm{fv}}(G)$, and $\mathcal{D}_{\mathrm{gf}}(G)=\mathcal{D}_{\mathrm{af}}(G)$ form a $\subseteq$-increasing chain of conjuga\-tion-invariant Borel subsets of $\mathcal{D}(G)$,
\item $E(G,\mathcal{D}_{\mathrm{af}}(G))\leq_B\:=_{\mathbb{R}}$, and
\item items (a) through (c) together entail the naturally associated claims for isometry relations on hyperbolic manifolds.
\end{enumerate}
Items (a), (b), and (d) are essentially immediate from definitions, classical constructions, and our earlier results; in part to introduce these constructions, both for use in this section and the next one, we attend to these items first.
We will then present two proofs of the more substantial item (c).\footnote{One could even, for a third proof, deduce item (c) from Theorem \ref{thm:geom_fin_hyp_3_mans_smooth} below.}
For the first proof, we are indebted to discussions with Ferr\'{a}n Valdez, Ian Biringer, and Joan Porti; the more geometric second one, which arose in the course of these discussions, is due entirely to Biringer, and is included (in sketch form) herein with his kind permission.

Let us begin with item (d).
Much as in Corollary \ref{cor:isometry_universal}, the equivalence of Theorem \ref{thm:fgPSL2Rsmooth}'s item (1) with its item (2) follows from the orientation-preserving variant of Corollary \ref{cor:classwise}, together with the standard identification of $\mathrm{PSL}(2,\mathbb{R})$ and $\mathrm{Isom}^+(\mathbb{H}^2)$ via the homomorphism from $\mathrm{SL}(2,\mathbb{R})$ to the group $\text{M\"{o}b}(\hat{\mathbb{C}})$ of M\"{o}bius transformations of the Riemann sphere $\hat{\mathbb{C}}$ given by
\begin{equation}
\label{eq:matrixrep}
\begin{pmatrix} a & b \\ c & d \end{pmatrix}\;\mapsto\;\bigg(z\,\mapsto\,\frac{az+b}{cz+d}\bigg),
\end{equation}
with kernel $\{-I,I\}$, and with image exactly those M\"{o}bius transformations of $\hat{\mathbb{C}}$ preserving the upper half-plane on which we model $\mathbb{H}^2$.
Since this group of transformations coincides, by way of this model, with $\mathrm{Isom}^+(\mathbb{H}^2)$ in turn, this homomorphism induces the identification in question \cite[\S 1]{MR1177168}.
This in turn induces the classwise Borel equivalence in question between $E(G,\mathcal{D}_{\mathrm{af}}(G))$ and $\cong_{\mathsf{Isom}^+(\mathbb{H}^2)}$ on the restriction of $\mathcal{C}_c^*(\mathsf{Isom}^+,\mathbb{H}^2)$ to algebraically finite manifolds; similarly, clearly, for $\mathcal{D}_{\mathrm{cc}}(G)$ and $\mathcal{D}_{\mathrm{fv}}(G)$.

It's clear also that the manifold and group conditions, respectively, of Definition \ref{def:finiteness_conditions} are isometry- and conjugation-invariant; thus the following is all that remains to be shown of item (b).
\begin{lemma}
\label{lem:finiteness_conditions_Borel}
The spaces $\mathcal{D}_{\mathrm{cc}}(G)$, $\mathcal{D}_{\mathrm{fv}}(G)$, $\mathcal{D}_{\mathrm{gf}}(G)$, and $\mathcal{D}_{\mathrm{af}}(G)$ are each Borel subsets of $\mathcal{D}(G)$, as they all also are if $G$ is everywhere replaced by $\mathrm{PSL}(2,\mathbb{C})$.
\end{lemma}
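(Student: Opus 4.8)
\textbf{Proof strategy for Lemma \ref{lem:finiteness_conditions_Borel}.}
The plan is to handle the four classes by expressing each finiteness condition as a Borel predicate on $\mathcal{D}(G)$, working entirely with the Chabauty--Fell topology and the translation machinery of Sections \ref{section:bireducibility} and \ref{subsection:subclasses}. By Lemma \ref{lem:Dfg}, $\mathcal{D}_{\mathrm{fg}}(G)$ is already known to be Borel; since (for $G = \mathrm{PSL}(2,\mathbb{R})$) geometric finiteness, topological finiteness, and algebraic finiteness all coincide, we get $\mathcal{D}_{\mathrm{gf}}(G) = \mathcal{D}_{\mathrm{af}}(G) = \mathcal{D}_{\mathrm{fg}}(G) \cap \mathcal{D}_{\mathrm{tf}}(G)$ for free, and the analogous identity $\mathcal{D}_{\mathrm{af}}(G) = \mathcal{D}_{\mathrm{fg}}(G) \cap \mathcal{D}_{\mathrm{tf}}(G)$ holds verbatim when $G = \mathrm{PSL}(2,\mathbb{C})$ as well (it is just the definition of algebraically finite). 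So the only substantive work is the cocompact, finite-volume, and — in the $\mathrm{PSL}(2,\mathbb{C})$ case — geometrically finite classes.

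For $\mathcal{D}_{\mathrm{cc}}(G)$: via the Borel map $\sigma$ of Theorem \ref{thm:discretetomanifolds} (in its orientation-preserving form, as licensed by the remark closing Section \ref{section:bireducibility}), we may transport $\Gamma \mapsto \mathbb{H}^n/\Gamma$ into $\mathfrak{C}^*_c(\mathsf{Isom}^+,\mathbb{H}^n)$; then cocompactness of $\Gamma$ is exactly compactness of $M_{\sigma(\Gamma)}$, which is a Borel condition by Theorem \ref{thm:compact_Borel}. Since $\sigma$ is Borel and restricts to $\mathcal{D}_{\mathrm{tf}}(G)$, pulling back the Borel set $\mathfrak{K}(\mathsf{Isom}^+,\mathbb{H}^n)$ through $\sigma$ (intersected with $\mathcal{D}_{\mathrm{tf}}(G)$) gives that $\mathcal{D}_{\mathrm{cc}}(G)$ is Borel. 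For $\mathcal{D}_{\mathrm{fv}}(G)$: the volume of $M_{\sigma(\Gamma)}$ can be computed as a Borel function of the parameter $(\mathcal{U},c)$ — one integrates the hyperbolic volume form over a Borel exhaustion by relatively compact open submanifolds (Lemma \ref{lem:exhaustion}), using that the quotient metric agrees with the chart metrics on the reparametrization output by Lemma \ref{lem:reparametrization}, so that $\mathrm{vol}(M) = \sup_k \mathrm{vol}(M_{(\mathcal{V}_k, c|_{\mathcal{V}_k})})$, each term of which is a Borel real-valued function; finiteness of the supremum is then a Borel condition. (Alternatively, and perhaps more cleanly, one invokes that a torsion-free discrete $\Gamma \le \mathrm{PSL}(2,\mathbb{R})$ is a lattice iff it is finitely generated and its quotient surface has no funnels — a condition detectable on the space of ends via the Borel machinery of Section \ref{subsection:space_of_ends} and the genus/planarity computations of Section \ref{sec:genus} — but the direct volume computation is more uniform across dimensions.)

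For the $\mathrm{PSL}(2,\mathbb{C})$ case, the one genuinely new class is $\mathcal{D}_{\mathrm{gf}}(G)$: geometric finiteness is $\Gamma$ finitely generated together with finite volume of the convex core. Finite generation is Borel by Lemma \ref{lem:Dfg}. For the convex-core condition, the approach I would take is to note that geometric finiteness of a finitely generated Kleinian group is equivalent to the condition that $\Gamma$ admits a finite-sided Dirichlet fundamental polyhedron, which in turn can be phrased (e.g. via the Poincar\'e polyhedron theorem and standard characterizations, cf.\ the references in footnote to Definition \ref{def:finiteness_conditions}) in terms that are Borel in the generating tuple: one picks a basepoint $q_0 \in \mathbb{H}^3$ in a Borel way, forms the Dirichlet domain $D_\Gamma(q_0)$ (the construction in Theorem \ref{thm:discretetomanifolds} shows this varies Borel-measurably with $\Gamma$), and checks that it has finitely many faces — an $\mathbf{\Sigma}^1_1$ condition (``$\exists N$ such that only $N$ of the bisecting half-spaces $\{x : d(x,q_0) \le d(x, g\cdot q_0)\}$, $g \in \Gamma$, contribute a face'') — while the complementary condition (infinitely many faces, hence infinite covolume core) is also $\mathbf{\Sigma}^1_1$ for the analogous reason, so Suslin's theorem applies. \textbf{The main obstacle} will be making this Dirichlet-domain face-count argument fully rigorous and uniform: one must verify that ``the $g$-bisector contributes a face of $D_\Gamma(q_0)$'' is a Borel relation of $(g,\Gamma)$, which requires a little care about how faces of the infinite intersection of half-spaces are detected — but this is exactly the kind of local-to-global, countable-quantifier-over-Borel-predicate argument used throughout Section \ref{section:parametrization}, and the local compactness of $\mathbb{H}^3$ together with Lemma \ref{lem:Ratcliffe_variant} keeps everything finitary at each stage. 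An alternative route, avoiding Dirichlet domains, would be to use that geometric finiteness for $\Gamma \in \mathcal{D}_{\mathrm{fg}}(\mathrm{PSL}(2,\mathbb{C}))$ is equivalent to the absence of any degenerate end of $\mathbb{H}^3/\Gamma$ (by tameness, which now holds unconditionally), and degenerate ends can be characterized via the behavior of the injectivity radius function along the ends, which is Borel-computable from the exhaustion data of Lemma \ref{lem:exhaustion}; I would develop whichever of these two formulations proves less technically burdensome, and expect the bulk of the writing to go into that verification.
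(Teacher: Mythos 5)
Your treatment of $\mathcal{D}_{\mathrm{cc}}(G)$ and $\mathcal{D}_{\mathrm{af}}(G)$ (and the identification $\mathcal{D}_{\mathrm{gf}}(G)=\mathcal{D}_{\mathrm{af}}(G)$ when $G=\mathrm{PSL}(2,\mathbb{R})$) coincides with the paper's: pull back the Borel set of compact manifolds through $\sigma$ (Theorems \ref{thm:discretetomanifolds} and \ref{thm:compact_Borel}), and invoke Lemmas \ref{lem:D_Dtf} and \ref{lem:Dfg}. You diverge on the two volume-type conditions. For $\mathcal{D}_{\mathrm{fv}}(G)$ the paper simply cites \cite[Cor.\ I.3.1.8]{MR903850}, whereas you compute the volume from an exhaustion; this can be made to work, but the assertion that each $\mathrm{vol}(M_{(\mathcal{V}_k,c|_{\mathcal{V}_k})})$ is a Borel function of the parameter is stated rather than proved and needs care about overlapping charts (e.g.\ disjointify the charts in a Borel way, or compute the covolume of $\Gamma$ from the Dirichlet domain $D_\Gamma(q_0)$, whose Borel dependence on $\Gamma$ is established in the proof of Theorem \ref{thm:discretetomanifolds}). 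For the genuinely new case $\mathcal{D}_{\mathrm{gf}}(\mathrm{PSL}(2,\mathbb{C}))$, the paper does not use Dirichlet domains at all: it shows that $\Gamma\mapsto\Lambda(\Gamma)\mapsto CH(\Lambda(\Gamma))\mapsto CC(\mathbb{H}^3/\Gamma)$ is Borel (Lemmas \ref{lem:limitsets} and \ref{lem:convexcore}, recorded as the first part of Corollary \ref{cor:loose_ends}), the key input being the continuity of the limit-set map in the strong topology, and then applies the volume arguments of \cite[Cor.\ I.3.1.4]{MR903850}. Your finite-sided Dirichlet polyhedron route is legitimate, but only because we are in dimension $3$, where the Marden--Bowditch equivalence of geometric finiteness with finite-sidedness of Dirichlet domains holds; this dimension-dependence deserves an explicit flag. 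Its payoff is that it stays entirely on the group side and avoids importing the limit-set continuity theorem; its cost is exactly the verification you identify, namely that ``$g$ contributes a face of $D_\Gamma(q_0)$'' is Borel in $(g,\Gamma)$. Two small logical points there: since a discrete $\Gamma$ admits a Borel enumeration of its elements, once the face relation is Borel the condition ``only finitely many faces'' is already Borel (countable quantifiers over a Borel predicate), so the Suslin step is unnecessary; and your claim that the complementary condition is ``also $\mathbf{\Sigma}^1_1$ for the analogous reason'' is never actually argued. The paper's convex-core route buys uniformity, since the same limit-set machinery is reused throughout Section \ref{section:isometry_for_3}, while yours is more self-contained but would concentrate the work in the face-detection verification, as you anticipate.
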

\begin{proof}
The space $\mathcal{D}_{\mathrm{cc}}(G)\subseteq \mathcal{D}(G)$ is $$\sigma^{-1}\left(\mathfrak{K}(\mathsf{Isom}^+,\mathbb{H}^2)\cap\mathfrak{M}^*_c(\mathsf{Isom}^+,\mathbb{H}^2)\right)$$
where $\sigma$ is the Borel function of Theorem \ref{thm:discretetomanifolds}; this is Borel by Theorem \ref{thm:compact_Borel} and Lemmas \ref{lem:convex_Borel} and \ref{lem:complete}.
Replacing $G$ by $\mathrm{PSL}(2,\mathbb{C})$ would correspond to replacing $\mathbb{H}^2$ by $\mathbb{H}^3$, whereupon the same argument would apply. $\mathcal{D}_{\mathrm{fv}}(G)$ is Borel for either choice of $G$ by \cite[Corollary I.3.1.8]{MR903850}, as is $\mathcal{D}_{\mathrm{af}}(G)=\mathcal{D}_{\mathrm{fg}}(G)\cap\mathcal{D}_{\mathrm{tf}}(G)$ by Lemmas \ref{lem:D_Dtf} and \ref{lem:Dfg}.
This handles $\mathcal{D}_{\mathrm{gf}}(G)=\mathcal{D}_{\mathrm{af}}(G)$, but in the $\mathrm{PSL}(2,\mathbb{C})$ case, this equality no longer holds, thus we still must verify that the assignment to a $\Gamma\in \mathcal{D}_{\mathrm{af}}(\mathrm{PSL}(2,\mathbb{C}))$ of its quotient manifold's compact core is a Borel operation (whereupon the volume-computing arguments of \cite[Corollary I.3.1.4]{MR903850} will complete the proof). This verification is supplied by Lemma \ref{lem:convexcore} below; we then record this lemma's $\mathcal{D}_{\mathrm{gf}}(\mathrm{PSL}(2,\mathbb{C}))$ case as the first part of that lemma's Corollary \ref{cor:loose_ends}.
\end{proof}

The arguments of items (a) and (c) are more interesting; for these, we will require a few definitions.
We call a surface \emph{hyperbolizable} if it admits a complete hyperbolic metric.
We call it \emph{finite-type} if it's homeomorphic to a closed orientable surface with finitely many points removed; this includes, of course, the possibility that no points have been removed at all.
Note (again by \cite{MR256399}, applied either to the natural compactifications or coupled with the $n$-transitivity of manifold homeomorphism groups for all $n>0$) that there are only countably many homeomorphism classes of finite-type hyperbolizable surfaces.
Let $F$ denote some fixed collection of unique representatives of such classes and let $C$ collect the closed surfaces in $F$.
Recall that a group $\Gamma$ acts \emph{properly discontinuously} on a locally compact space $X$ if the set $\{g\in\Gamma\mid gK\cap K\neq \varnothing\}$ is finite for every compact $K\subseteq X$.
For us the primary significance of this condition is that the quotient of a topological manifold by a properly discontinuous action is a Polish space (more particularly, it is an \emph{orbifold}; see \cite[Prop.\ 13.2.1]{MR4554426}).
The following semiformal definition will suffice for our purposes (we forego, for example, defining the relevant topologies; for details, see, e.g., \cite{MR590044} or \cite{MR2850125}).
\begin{definition}
\label{def:Teich_MCG_M}
Let $S$ be a finite-type hyperbolizable surface.
Identify $S$ with the interior of a smooth compact manifold $\hat{S}$ whose boundary circles, if any, correspond to the punctures of $\hat{S}$.
Write $\mathrm{Diff}^+(S)$ for the group of orientation-preserving diffeomorphisms of $\hat{S}$ with itself which restrict to the identity on $\partial\hat{S}$ and $\mathrm{Diff}_0(S)$ for its subgroup of elements which are isotopic to the identity.
The group $\mathrm{Diff}^+(S)$ acts on the space of hyperbolic metrics on $S$ by pullback; we have thus the \emph{Teichm\"{u}ller space}
$$\mathrm{Teich}(S)=\{\text{finite-volume hyperbolic metrics on }S\}/\mathrm{Diff}_0(S),$$
\emph{mapping class group}
$$\mathrm{MCG}(S)=\mathrm{Diff}^+(S)/\mathrm{Diff}_0(S),$$
and \emph{moduli space}
$$\mathcal{M}(S)=\mathrm{Teich}(S)/\mathrm{MCG}(S)$$
of $S$.
\end{definition}
For us the essential points are then as follows:
\begin{itemize}
\item If $S$ is given by $n$ punctures of a closed genus-$g$ surface then $\mathrm{Teich}(S)$ carries a real analytic structure, of dimension $6g+2n-6$, with respect to which the natural action of $\mathrm{MCG}(S)$ is properly discontinuous \cite[2.10]{MR3586015}.
In consequence, $\mathcal{M}(S)$ is a Polish space, and it is uncountable in all cases but one, namely when $g=0$ and $n=3$.
\item In Section \ref{section:isometry_for_3}, we'll leverage a higher analogue of an alternative presentation of $\mathrm{Teich}(S)$ as the quotient by the $G$-conjugacy action of a space of discrete faithful representations $\pi_1(S)\to G$.
In this alternative presentation, the natural view is of $\mathrm{Teich}(S)$ and $\mathcal{M}(S)$ as spaces of \emph{marked} and \emph{unmarked groups}, respectively, up to conjugacy; in that of Definition \ref{def:Teich_MCG_M}, it is of $\mathrm{Teich}(S)$ and $\mathcal{M}(S)$ as spaces of \emph{marked} and \emph{unmarked hyperbolic manifolds}, respectively, up to isometry.
\end{itemize}
Here we'll privilege the latter view; by the results of Section \ref{section:bireducibility}, though, it's clear in either case that $\coprod_{S\in F}\mathcal{M}(S)$ forms a natural Polish space of complete invariants for $E(G,\mathcal{D}_{\mathrm{fv}}(G))$-equivalence classes, and even a tautological one, in the sense that the natural map
\begin{equation}
\label{eq:7_first_try}
q:\mathcal{D}_{\mathrm{fv}}(G)\,\to\coprod_{S\in F}\mathcal{M}(S)\,:\,\Gamma\,\mapsto[\mathbb{H}^2/\Gamma]
\end{equation}
not only reduces the $G$-conjugacy relation on the domain to the identity relation on the codomain, it induces a bijection of quotients.
Similarly for $\mathcal{D}_{\mathrm{cc}}(G)$ and $\coprod_{S\in C}\mathcal{M}(S)$, and from this and the first bulleted point above, item (a) is immediate.
\begin{lemma}
\label{lem:uncountable}
$=_{\mathbb{R}}\:\leq_B E(G,\mathcal{D}_{\mathrm{cc}}(G))$. \qed
\end{lemma}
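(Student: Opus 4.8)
The plan is to exhibit a Borel reduction $=_{\mathbb{R}}\,\leq_B E(G,\mathcal{D}_{\mathrm{cc}}(G))$ by fixing a single closed surface $S\in C$ of genus at least $2$ and producing a Borel injection $\mathbb{R}\to\mathcal{D}_{\mathrm{cc}}(G)$ which lands in a single homeomorphism type and separates $G$-conjugacy classes. The first bulleted point above tells us that $\mathrm{Teich}(S)$ carries a real analytic structure of dimension $6g-6\geq 6$, that the $\mathrm{MCG}(S)$-action on it is properly discontinuous, and hence that $\mathcal{M}(S)=\mathrm{Teich}(S)/\mathrm{MCG}(S)$ is an uncountable Polish space. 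So as a first step I would choose, in a Borel way, an uncountable standard Borel subset $A\subseteq\mathrm{Teich}(S)$ on which the quotient map $\mathrm{Teich}(S)\to\mathcal{M}(S)$ is injective — for instance, by a standard selector argument (Luzin--Novikov, using that each $\mathrm{MCG}(S)$-orbit is countable), we may pass to a Borel transversal for the countable Borel equivalence relation induced by $\mathrm{MCG}(S)$ on $\mathrm{Teich}(S)$; this transversal is uncountable since $\mathcal{M}(S)$ is. Composing with a Borel isomorphism $\mathbb{R}\to A$ (all uncountable standard Borel spaces being isomorphic) yields a Borel injection $j:\mathbb{R}\to\mathrm{Teich}(S)$ whose images have pairwise distinct images in $\mathcal{M}(S)$, i.e.\ represent pairwise non-isometric hyperbolic structures on $S$.

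The second step is to convert a point of $\mathrm{Teich}(S)$ — a hyperbolic metric on $S$ up to isotopy — into an element of $\mathcal{D}_{\mathrm{cc}}(G)$ in a Borel fashion. Here I would invoke the presentation of $\mathrm{Teich}(S)$ as a space of discrete faithful representations $\pi_1(S)\to G$ up to $G$-conjugacy, flagged in the second bulleted point; concretely, fix a finite generating set for $\pi_1(S)$ and realize $\mathrm{Teich}(S)$ (or a Borel lift of it) as a Borel subset of $G^k$ via holonomy, which is standard. The holonomy image of such a representation is a cocompact torsion-free Fuchsian group, i.e.\ lies in $\mathcal{D}_{\mathrm{cc}}(G)$, and the passage from a tuple in $G^k$ to the generated subgroup, viewed as an element of $\mathcal{F}(G)$ with the Chabauty--Fell topology, is Borel — this is exactly the map $f$ analyzed in the proof of Lemma \ref{lem:Dfg}. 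Composing $j$ with a Borel lift $\mathrm{Teich}(S)\to G^k$ and then with this subgroup map gives a Borel function $\mathbb{R}\to\mathcal{D}_{\mathrm{cc}}(G)$. By the Mostow--type rigidity built into the correspondence (Theorem recalled at the end of Section \ref{section:bireducibility}, or directly the fact that two cocompact Fuchsian groups are $G$-conjugate iff the associated hyperbolic surfaces are orientation-preservingly isometric), two such subgroups are $G$-conjugate precisely when the corresponding points of $\mathcal{M}(S)$ agree, which by construction happens only for equal inputs. Hence the composite is a Borel reduction of $=_{\mathbb{R}}$ to $E(G,\mathcal{D}_{\mathrm{cc}}(G))$.

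The main obstacle, and the one deserving the most care, is the Borel-measurability of the lift $\mathrm{Teich}(S)\to G^k$ and the compatibility of the various parametrizations: $\mathrm{Teich}(S)$ as defined via metrics modulo $\mathrm{Diff}_0(S)$ must be reconciled, in a Borel way, with $\mathrm{Teich}(S)$ as a subset of a character variety, and with the manifold-side parametrization $\mathcal{C}_c^*(\mathsf{Isom}^+,\mathbb{H}^2)$ used in Section \ref{section:bireducibility}. The cleanest route, and the one I would take, is to bypass the metric presentation entirely after the first step: realize the desired uncountable Borel family directly inside the representation variety $\mathrm{Hom}(\pi_1(S),G)\subseteq G^k$, using that the discrete faithful locus is Borel (indeed locally closed, by a theorem of Weil) and $G$-invariant with countable-to-one orbit structure after quotienting, so that a Borel transversal of the conjugacy relation there is uncountable. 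Everything downstream — the subgroup map into $\mathcal{D}_{\mathrm{cc}}(G)$, and the fact that conjugacy of images corresponds to conjugacy of representations — is then immediate, and the only real input from Teichm\"uller theory is the uncountability, which is the content of the bulleted point we are permitted to assume. I would also remark that the same argument, run with a noncompact finite-volume or finite-type $S\in F$ in place of a closed one, gives the lower bounds $=_{\mathbb{R}}\,\leq_B E(G,\mathcal{D}_{\mathrm{fv}}(G))\leq_B E(G,\mathcal{D}_{\mathrm{gf}}(G))=E(G,\mathcal{D}_{\mathrm{af}}(G))$ needed for the full strength of Theorem \ref{thm:fgPSL2Rsmooth}, though the stated Lemma only asks for the cocompact case.
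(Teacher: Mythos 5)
Your overall strategy is the one the paper leaves implicit: Lemma \ref{lem:uncountable} is recorded as immediate from the fact that $\Gamma\mapsto[\mathbb{H}^2/\Gamma]$ induces a bijection between conjugacy classes in $\mathcal{D}_{\mathrm{cc}}(G)$ and the points of $\coprod_{S\in C}\mathcal{M}(S)$, an uncountable Polish space, and your first two paragraphs flesh out exactly this: pick an uncountable Borel family of pairwise non-isometric closed hyperbolic structures, realize them as cocompact Fuchsian groups via the subgroup map (Borel as in the proof of Lemma \ref{lem:Dfg}), and use that $G$-conjugacy of cocompact Fuchsian groups corresponds to orientation-preserving isometry, i.e.\ to equality in moduli space. (A still shorter route worth knowing: $E(G,\mathcal{D}_{\mathrm{cc}}(G))$ is essentially countable, hence Borel, and has uncountably many classes by the uncountability of $\mathcal{M}(S)$, so Silver's dichotomy gives $=_{\mathbb{R}}\leq_B E(G,\mathcal{D}_{\mathrm{cc}}(G))$ with no explicit construction at all.)

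Two cautions, the first of which is a genuine gap in the route you say you would actually take. In the ``cleanest route'' paragraph, the claim that ``conjugacy of images corresponds to conjugacy of representations'' is false as stated: if $\rho,\rho'$ are discrete faithful representations of $\pi_1(S)$ whose images are conjugate subgroups, then $\rho'$ agrees with a conjugate of $\rho$ only up to precomposition by an element of $\mathrm{Aut}(\pi_1(S))$. Consequently a Borel transversal for the $G$-conjugacy relation on the discrete faithful locus selects pairwise non-conjugate \emph{representations}, i.e.\ distinct points of $\mathrm{Teich}(S)$, but two of these may still have conjugate images whenever the corresponding Teichm\"uller points lie in one $\mathrm{MCG}(S)$-orbit, and then the composite map fails to be injective on $=_{\mathbb{R}}$-classes. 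The fix is exactly the step you perform in your first route and drop in the bypass: quotient further by the countable $\mathrm{Out}(\pi_1(S))$ (equivalently $\mathrm{MCG}(S)$) action, i.e.\ select one point per point of $\mathcal{M}(S)$ rather than per point of $\mathrm{Teich}(S)$; since that action is properly discontinuous with Polish quotient, the induced countable Borel equivalence relation is smooth and admits a Borel transversal, after which the argument closes. Second, a smaller imprecision: Luzin--Novikov plus countability of orbits does not by itself yield a Borel transversal (consider $E_0$); what you need, and what holds here, is smoothness of the $\mathrm{MCG}(S)$-relation on $\mathrm{Teich}(S)$, which follows from the quotient $\mathcal{M}(S)$ being Polish.
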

To prove the (\ref{cond:closed}) and (\ref{cond:finvolume}) portions of Theorem \ref{thm:fgPSL2Rsmooth}, it now only remains only to show the map $q$ to be Borel.
For this it suffices to show for each $S\in F$ that (A) $q^{-1}(\mathcal{M}(S))$ is a Borel subset of $\mathcal{D}_{\mathrm{fv}}(G)$, and (B) the restriction of $q$ to $q^{-1}(\mathcal{M}(S))$ is a Borel measurable function.
One may easily argue (A) via Suslin's theorem, but on the pretext of a more refined calculation, we take a detour that will be useful to us in our second proof of the theorem's remainder as well.

What we will argue is that $q^{-1}(\mathcal{M}(S))$ is, for any $S\in F$, a relatively $G_\delta$ subset of $\mathcal{D}_{\mathrm{fv}}(G)$; it is, in other words, an intersection of $\mathcal{D}_{\mathrm{fv}}(G)$ with countably many open subsets of $\mathcal{D}(G)$.
We will do so via a more geometric rendering of the space $\mathcal{D}(G)$ described in \cite{2021arXiv211014401B}, which we briefly now review.
Fix a unit vector $w$ in the tangent bundle $T\mathbb{H}^2$ of $\mathbb{H}^2$.
Any $\Gamma\in\mathcal{D}_{\mathrm{tf}}(G)$ then determines a \emph{vectored oriented hyperbolic $2$-manifold} $(X,v)$ in which $X$ is the quotient of $\mathbb{H}^2$ by the natural action of $\Gamma$ and $v$ is the associated image of $w$ in $TX$. (We exclude torsion for simplicity: the equivalence we're recording is a special case of that of $\mathcal{D}(G)$ with vectored \emph{orbifolds}, as in \cite[\S 3.4]{2021arXiv211014401B}; see Remark \ref{rmk:orbifolds} below.) Quoting now from \cite[p.\ 23]{2021arXiv211014401B}:
\begin{quote}
There is a natural \emph{smooth topology} on the set of isometry classes of vectored oriented Riemannian [manifolds], where $(X_i,v_i)\to(X_\infty,v_\infty)$ if there is a sequence $r_i\to\infty$ and smooth embeddings$$\psi_i:B_{X_\infty}(p_\infty,r_i)\to X_i,$$
where $p_\infty$ is the basepoint of $v_\infty$ such that each $\psi_i$ is orientation preserving, $d\psi_i^{-1}(v_i)=v_\infty\in TX$ for all $i$, and where if $g_i$ is the Riemannian metric on $X_i$, we have $\psi^*_i g_i\to g_\infty$ in the $C^\infty$ topology on $X_\infty$.
\end{quote}
The key point for our purposes is then \cite[Proposition 3.4.3]{2021arXiv211014401B}, which might here be rendered as: \emph{any choice of $w$ as above induces a homeomorphism of the space $\mathcal{D}_{\mathrm{tf}}(G)$ with the space of vectored oriented hyperbolic $2$-manifolds endowed with the smooth topology}. We denote some fixed choice of such a homeomorphism by $h$.

Intuitively, a sequence of vectored manifolds $(X_i,v_i)$ as above converges to $(X_\infty,v_\infty)$ if their $v_i$-neighborhoods ``look more and more like'' those of $v_\infty$, in a sense quantified by the increasing sequence of real numbers $r_i$.
Fixing next a compact oriented surface $Z$ (possibly with boundary) and $r>0$, it is straightforward to see that, with respect to this topology, $$U(r,Z):=\{(X,v)\mid\text{there exists a }Y\text{ homeomorphic to }Z\text{ with }B_X(p,r)\subseteq Y\subseteq X\}$$
is open (here, as above, $p$ denotes the basepoint of $v$ and $B_X(p,r)$ denotes the radius-$r$ open ball about it in $X$).
More precisely, suppose for contradiction that a sequence $(X_i,v_i)$ in the complement of $U(r,Z)$ converges to an $(X_\infty,v_\infty)\in U(r,Z)$, as witnessed by a sequence $(r_i)$ and submanifold $Y\subseteq X_\infty$, respectively.
By compactness, $Y\subseteq B_{X_\infty}(p_\infty,r_i)$ for some $i$, and it follows that for $j\geq i$ sufficiently large, $(X_j,v_j)\in U(r,Z)$ as well, the contradiction desired.

It follows immediately that if $S\in F$ is closed, then for sufficiently large $r$, $q^{-1}(\mathcal{M}(S))=h^{-1}(U(r,S))$ and hence the former set is open in $\mathcal{D}_{\mathrm{fv}}(G)$ (cf.\ \cite[p.\ 29]{2021arXiv211014401B}).
If, on the other hand, $S$ possesses ends $e_0,\dots,e_k$, then removing disjoint open neighborhoods of each of them (with circular boundary components $c_0,\dots,c_k$, respectively) results in a compact surface  $Z\subset S$ such that the $G_\delta$ set
\begin{equation*}
\bigcap_{r\in\mathbb{N}}h^{-1}(U(r,Z))
\end{equation*}
consists precisely of those $\Gamma\in\mathcal{D}_{\mathrm{fv2}}$ for which $\mathbb{H}^2/\Gamma$ is homeomorphic to $S$.
Only part (B) of this portion of our argument --- namely, the claim that the restriction of $q$ to $q^{-1}(\mathcal{M}(S))$ is Borel measurable --- then remains.
This, though, is a special case of Theorem 4.1.1 of \cite{2021arXiv211014401B}, as discussed in Remark \ref{rmk:orbifolds} below.

Topologically, hyperbolic $2$-manifolds of the form (\ref{cond:geomfin}), (\ref{cond:toptame}), or, equivalently, (\ref{cond:algfin}), are all also of finite type, thus one might imagine some variation on (\ref{eq:7_first_try}) applying for $\mathcal{D}_{\mathrm{af}}(G)$ as well; the necessary modification, of course, would be of each $\mathcal{M}(S)$ to a moduli space of possibly \emph{infinite}-volume complete hyperbolic metrics on $S$.
Such moduli spaces exist (\cite{MR1730906} and \cite{MR2245223} both work in the requisite generality), assuredly, but involve technicalities we'd prefer to avoid, if possible.
This is indeed, fortunately, possible, via either of two prettier paths which we now describe.
As is standard, we term an infinite-volume end of a hyperbolic $S$ as above a \emph{flare}, and any finite-volume end a \emph{cusp}.
\begin{proof}[Proof 1 of Theorem \ref{thm:fgPSL2Rsmooth}]
We dispensed with items (a), (b), and (d) above; thus it remains only to prove item (c).
We will do so via a geometric operation which only usefully applies to the non-cyclic elements of $\mathcal{D}_{\mathrm{af}}(G)$.
To that end, partition the latter into its cyclic and non-cyclic portions $\mathcal{D}^{\mathrm{c}}_{\mathrm{af}}(G)$ and $\mathcal{D}^{\mathrm{nc}}_{\mathrm{af}}(G)$, respectively; it's straightforward to check that each is Borel.
We handle the cyclic case under the more general heading of \emph{elementary} groups below; more precisely, within the proof of Lemma \ref{lem:elementary}, the embedding $\mathbb{R}\to\mathbb{C}$ induces a Borel reduction of $E(G,\mathcal{D}^{\mathrm{c}}_{\mathrm{af}}(G))$ to the identity relation on $\{1,2\}\times\mathbb{R}\subseteq\{1,2,3\}\times\mathbb{C}$.

Thus to complete the proof of Theorem \ref{thm:fgPSL2Rsmooth} it will suffice by Lemma \ref{lem:weak_reduction} to show that \begin{equation*}
f:\mathcal{D}^{\mathrm{nc}}_{\mathrm{af}}(G)\,\to\coprod_{S\in F}\mathcal{M}(S)\,:\,\Gamma\,\mapsto[D(CC(M_{\sigma(\Gamma)}))]
\end{equation*}
is a weak Borel reduction of $E(G,\mathcal{D}^{\mathrm{nc}}_{\mathrm{af}}(G))$ to the identity on a Polish space.
Bracketed on the right-hand side is the double of the convex core of $M_{\sigma(\Gamma)}$, where $\sigma$ is the reduction of Theorem \ref{thm:discretetomanifolds}.
The map $f$ decomposes, in other words, as a sequence
$$\Gamma\mapsto M_{\sigma(\Gamma)}\mapsto CC(M_{\sigma(\Gamma)}) \mapsto D(CC(M_{\sigma(\Gamma)})) \mapsto \nu(D(CC(M_{\sigma(\Gamma)})))$$
taking $\mathcal{D}^{\mathrm{nc}}_{\mathrm{af}}(G)$ to $\mathcal{D}_{\mathrm{fv}}(G)$ (since the map $M_{\sigma(\Gamma)}\mapsto CC(M_{\sigma(\Gamma)}$ ``pares away'' the infinite-volume ends of $M_{\sigma(\Gamma)}$), followed by the map $q$ of line \ref{eq:7_first_try} above.
We know from Section \ref{section:bireducibility} that the first and fourth of these maps is Borel (the map $\nu$ is that of Theorem \ref{thm:manifoldstodiscrete}), and in Section \ref{subsection:the_basics} we derive from a more general analysis (Lemmas \ref{lem:limitsets} and \ref{lem:convexcore}) the Corollary \ref{cor:loose_ends} that the second map in the chain above is Borel.
To see that $f$ is Borel thus reduces to verifying that the doubling map $CC(M_{\sigma(\Gamma)}) \mapsto D(CC(M_{\sigma(\Gamma)}))$ is Borel; since this point is as intuitively clear as it is tedious to formalize, we defer it to the claim concluding the proof below.

It remains only to see that $f$ is countable-to-one on $E(G,\mathcal{D}^{\mathrm{nc}}_{\mathrm{af}}(G))$-equivalence classes.
In fact it's finite-to-one, since any two finite-type hyperbolic surfaces are isometric if and only if their convex cores are \cite[Theorem 3.4]{MR3903116}, and, up to isometry, only finitely many such convex cores $C$ double to form any fixed finite-volume hyperbolic surface $D(C)$.
To see this last assertion, recall that the isometry group of any finite-volume hyperbolic surface is finite \cite{MR80730}, and that any such $C$ is encoded by the fixed points of one of $\mathrm{Isom}(D(C))$'s elements, namely the one exchanging $C$ with its mirror image within $D(C)$.
Thus the following claim will complete our argument.
\begin{claim}
The doubling map $CC(M_{\sigma(\Gamma)}) \mapsto D(CC(M_{\sigma(\Gamma)}))$ is Borel.
\end{claim}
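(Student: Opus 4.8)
The plan is to realize the doubling operation at the level of atlases and to observe that each step consists of the Borel operations on $\mathcal{O}(\mathbb{H}^2)$ and on the relevant pseudogroup catalogued in Sections \ref{subsection:spaces_of_subsets} and \ref{section:parametrization}. Fix the half-plane $\mathbb{H}^2_+\subseteq\mathbb{H}^2$ bounded by a geodesic $\ell$, and let $\mathfrak{B}$ denote the standard Borel parameter space of oriented hyperbolic surfaces with geodesic boundary into which the convex-core map of Corollary \ref{cor:loose_ends} lands: a parameter $(\mathcal{U},c)\in\mathfrak{B}$ consists of charts $U_i$ relatively open in $\mathbb{H}^2_+$ and transition maps $\varphi_{i,j}$, each the restriction to a relatively open set of an orientation-preserving isometry of $\mathbb{H}^2$, with $\varphi_{i,j}$ carrying $U_{i,j}\cap\ell$ onto $U_{j,i}\cap\ell$; as $\mathbb{H}^2_+$ is locally compact Polish and (being convex in $\mathbb{H}^2$) geodesic, the arguments of Example \ref{ex:isometry_pseudogroups} and of the smooth examples show the underlying pseudogroup is Borel, so all of Section \ref{section:parametrization}, including the reparametrization of Lemma \ref{lem:reparametrization}, applies. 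By Corollary \ref{cor:loose_ends} it then suffices to produce a Borel map $\mathrm{Dbl}:\mathfrak{B}\to\mathfrak{C}^*_c(\mathsf{Isom}^+,\mathbb{H}^2)$ sending $(\mathcal{U},c)$ to a parametrization of $D(M_{(\mathcal{U},c)})$. On the Borel set of boundaryless parameters (those with $U_i\cap\ell=\varnothing$ for all $i$, Borel since "$U_i\cap\ell\neq\varnothing$" is open in $\mathcal{O}(\mathbb{H}^2_+)$) we adopt the convention that the double is the surface itself, mapped across by the evident Borel inclusion; the only boundaryless parameters arising as convex cores of non-cyclic algebraically finite $\Gamma$ are the lattice ones, and this convention keeps $f$ finite-to-one there, so nothing is lost. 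Henceforth assume $\partial M_{(\mathcal{U},c)}\neq\varnothing$.

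Next I would fix the reflection $\rho\in\mathrm{Isom}(\mathbb{H}^2)$ fixing $\ell$ pointwise, write $\mathrm{int}(A)=A\setminus\ell$, and precompose $\mathrm{Dbl}$ with the reparametrization of Lemma \ref{lem:reparametrization}, so that every overlap $U_{i,j}$ is geodesically convex, hence connected; then each $\varphi_{i,j}$ extends uniquely to a global isometry $g_{i,j}\in\mathrm{Isom}^+(\mathbb{H}^2)$ (rigidity, Example \ref{ex:isometry_pseudogroups}), and $(\mathcal{U},c)\mapsto\langle g_{i,j}\rangle$ is Borel --- the extension map being the Borel inverse of an injective Borel map, by Luzin--Suslin, just as in Example \ref{ex:isometry_pseudogroups}. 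I would then set $\mathrm{Dbl}(\mathcal{U},c)=(\mathcal{V},d)$ with
\begin{align*}
V_i&=U_i\cup\rho[\mathrm{int}(U_i)], & V_{i,j}&=U_{i,j}\cup\rho[\mathrm{int}(U_{i,j})],\\
\psi_{i,j}&=g_{i,j}\big|_{U_{i,j}}\ \cup\ \big(\rho\,g_{i,j}\,\rho^{-1}\big)\big|_{\rho[\mathrm{int}(U_{i,j})]},
\end{align*}
the guiding idea being that $M_{(\mathcal{V},d)}$ is the quotient of $M_{(\mathcal{U},c)}\sqcup\overline{M_{(\mathcal{U},c)}}$ by the identity on $\partial M_{(\mathcal{U},c)}$, with the $\rho^0$- and $\rho^1$-sheets of each $V_i$ carrying the two copies of $U_i$. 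Each $V_i$ and $V_{i,j}$ is built from $U_i$, $U_{i,j}$ by the Borel operations $A\mapsto\rho[A]$, $A\mapsto A\setminus\ell$, $\cup$ on $\mathcal{O}(\mathbb{H}^2)$, and each $\psi_{i,j}$ from $g_{i,j}$ by direct/inverse image, restriction, and composition (all Borel, Definition \ref{defn:Borel_pseudogroup}), so $\mathrm{Dbl}$ is Borel provided its values are legitimate parameters of the double. For that one checks: (i) if $U_{i,j}\cap\ell\neq\varnothing$ then $g_{i,j}$ carries an arc of $\ell$ into $\ell$, so $g_{i,j}(\ell)=\ell$, whence $\rho g_{i,j}\rho^{-1}$ and $g_{i,j}$ are orientation-preserving isometries agreeing on $\ell$ and both preserving $\ell$, forcing $\rho g_{i,j}\rho^{-1}=g_{i,j}$, so $\psi_{i,j}=g_{i,j}|_{V_{i,j}}$; (ii) if $U_{i,j}\cap\ell=\varnothing$ then $U_{i,j}$ and $\rho[U_{i,j}]$ are disjoint open sets and $\psi_{i,j}$ is a union of restrictions of two orientation-preserving isometries on disjoint opens; in both cases the locality axiom Definition \ref{defn:pseudogroup}(v) places $\psi_{i,j}$ in $\mathsf{Isom}^+(\mathbb{H}^2)$. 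The cocycle relations of Definition \ref{defn:parametrization_locally_(G,X)_spaces} for $(\mathcal{V},d)$ then reduce to those for $(\mathcal{U},c)$ after applying conjugation by $\rho$ to the $\mathbb{H}^2\setminus\mathbb{H}^2_+$ sheet, and a direct inspection identifies $M_{(\mathcal{V},d)}$ with $D(M_{(\mathcal{U},c)})$; in particular it is Hausdorff, connected, complete, oriented, and of finite volume (the double of the finite-volume $CC(M_{\sigma(\Gamma)})$), so $(\mathcal{V},d)$ lies in the required subspace of $\mathfrak{C}^*_c(\mathsf{Isom}^+,\mathbb{H}^2)$ and the remaining maps of the chain --- with $\nu$ precomposed, as in the proof of Theorem \ref{thm:manifoldstodiscrete}, with a convexifying reparametrization --- apply to it.

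The two load-bearing points are the following, and I expect the second to be the main obstacle. First, the bookkeeping identification of $M_{(\mathcal{V},d)}$ with the geometric double: this is elementary but needs care in specifying exactly which subset of $M_{(\mathcal{V},d)}$ a chart map sends to the $\rho^\epsilon$-sheet of $V_i$, and in checking that the cusps of the convex core --- which are disjoint from $\partial CC$ --- are doubled faithfully rather than glued. Second, the Borelness of $\langle\varphi_{i,j}\rangle\mapsto\langle g_{i,j}\rangle$, that is, of passing from a pseudogroup element with connected domain to its global isometric extension; this is already implicit in Example \ref{ex:isometry_pseudogroups} and comes down to Luzin--Suslin once one knows the relevant set of restrictions is a Borel injective image of $\mathrm{Isom}^+(\mathbb{H}^2)$, but it must be isolated and stated carefully in the present half-plane model. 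Everything else is an assembly of the Borel operations on spaces of open sets and on $\mathsf{Isom}^+$ recorded in Sections \ref{subsection:spaces_of_subsets} and \ref{subsection:pseudogroups}, so the final write-up should be brief once these two points are nailed down.
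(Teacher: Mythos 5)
The doubling recipe you give \emph{from} a half-plane atlas is essentially sound, and is in fact a tidy variant of what the paper does: the paper keeps the boundary arcs in place and doubles chart-by-chart, passing to an index set $\mathbb{N}\times\{0,1\}$ and using, for each boundary chart, the reflection $r_i$ across the maximal geodesic $\hat{\gamma}_i$ extending $\partial V_i\cap U_i$, with gluing maps $\varphi_{(i,0),(i,1)}=r_i$; your single global reflection $\rho$ across $\ell$ plays the role of all the $r_i$ at once. Your case analysis for the doubled transition maps, the rigidity argument forcing $\rho g_{i,j}\rho^{-1}=g_{i,j}$ when the overlap meets $\ell$, and the identification of the resulting quotient with the geometric double are all believable in outline.

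The genuine gap is at the entry point: the domain of the claim is not your space $\mathfrak{B}$. What Lemma \ref{lem:convexcore} and Corollary \ref{cor:loose_ends} actually output is a pair $((\mathcal{U},c),(\mathcal{V},c|_{\mathcal{V}}))\in\mathfrak{C}_c(\mathsf{Isom}^+,\mathbb{H}^2)^2$ --- an atlas for the ambient surface together with the open submanifold complementary to the core --- not an atlas of relatively open subsets of a fixed half-plane $\mathbb{H}^2_+$, so your assertion that ``the convex-core map of Corollary \ref{cor:loose_ends} lands in $\mathfrak{B}$'' is false as stated. The missing Borel conversion from the pair encoding into $\mathfrak{B}$ is precisely where the content of the claim lives: one must first reparametrize so that each chart meets at most one boundary geodesic of the lifted core $CH(\Lambda(\Gamma))$ (so that $U_i\setminus V_i=U_i\cap H_i$ for a single closed half-plane $H_i$ bounded by $\hat{\gamma}_i$) --- the paper addresses the Borelness of exactly this step via the basis/limit-set map at the end of its proof --- and then choose, Borel-measurably in the parameter, normalizing isometries $h_i\in\mathrm{Isom}^+(\mathbb{H}^2)$ carrying each $\hat{\gamma}_i$ to $\ell$ with the core side sent into $\mathbb{H}^2_+$, and conjugate all transition maps accordingly, verifying the result is a legitimate $\mathfrak{B}$-parameter. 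None of this appears in your write-up, and neither of your two declared ``load-bearing points'' is this step; the chart-level surgery that the paper performs directly with the $r_i$ has thus been relocated into an unstated and unproven conversion rather than resolved. If you supply that conversion (with its own Borelness argument for extracting the $\hat{\gamma}_i$ and the $h_i$ from the parameter), your $\rho$-doubling of $\mathfrak{B}$-parameters would complete the proof; as written, it is incomplete.
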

\begin{proof}[Proof of Claim] 
We begin by describing the domain of this map, then content ourselves with an outline of the proof.
In Lemma \ref{lem:convexcore}, the convex core of a manifold parametrized by $(\mathcal{U},c)$ is encoded, much as in Lemma \ref{lem:exhaustion}, as the complement of an open submanifold $(\mathcal{V},c|_\mathcal{V})$.
Writing $f_4$ for this operation, our domain is the image of the restriction of $f_4\circ\sigma$ to $\mathcal{D}^{\mathrm{nc}}_{\mathrm{af}}(G)$; note also that in the present ($2$-dimensional) context, each $\partial V_i\cap U_i$ consists of geodesics, since the boundary of the preimage, in the cover $\mathbb{H}^2$, of any convex core does.
By postcomposing with a reparametrization map, we may further assume that each $\partial V_i\cap U_i$ is connected, or in other words consists of a single geodesic arc $\gamma_i$.
Here, of course, $\gamma_i$ may be the empty arc; if it is not, though, then we write $\hat{\gamma}_i$ for the maximal geodesic which contains it and bisects $\mathbb{H}^2$.
Write $r_i$ for the isometry reflecting $\mathbb{H}^2$ across $\hat{\gamma}_i$.
One may then regard the heart of the matter as the fact that the map taking $(U_i,V_i, U_{i,j})$ to
\[
\mathsf{d}_i U_{i,j}:=
\begin{cases}
\varnothing & \text{if }V_i = U_i \\
U_{i,j}\backslash V_i\,\cup\,r_i[U_{i,j}\backslash V_i] & \text{if }\gamma_i\neq\varnothing\text{ and }\gamma_j\neq\varnothing \\ U_{i,j} & \text{otherwise}
\end{cases}
\]
is a Borel operation, i.e., that the preimage of any open subset of $\mathcal{O}(\mathbb{H}^2)$ is relatively Borel in the collection
$$\{(U,V,W)\in\mathcal{O}(\mathbb{H}^2)\times \mathcal{O}(\mathbb{H}^2)\mid W\subseteq U\text{ and }V\subseteq U\text{ and }\partial V\cap U\text{ is a geodesic arc}\}.$$
We leave this fact's straightforward verification to the reader.
For ease of reference, say that a $U_{i,j}$ as in the second case defining $\mathsf{d}_i U_{i,j}$ is \emph{of the second type}.

The doubling operation naturally encodes as a conversion of $((\mathcal{U},c),(\mathcal{V},c|_\mathcal{V}))$, which we may regard as indexed by $\mathbb{N}^2$, to a $(\mathcal{W},d)$ indexed by $(\mathbb{N}\times\{0,1\})^2$.
As any bijection $b:\mathbb{N}\times\{0,1\}\to\mathbb{N}$ will, without affecting Borel-measurability considerations, then convert the outcome to an honest member of $\mathfrak{M}(\mathsf{Isom}^+,\mathbb{H}^2))$, we will focus on this first conversion.
Its output's $\mathbb{N}\times\{0\}$ and $\mathbb{N}\times\{1\}$ indices roughly  correspond to the two halves of the doubled manifold; more precisely, for any $i,j\in\mathbb{N}$ and $k\in\{0,1\}$, we let $W_{(i,k),(j,k)}=\mathsf{d}_i U_{i,j}$, and we let $\varphi_{(i,k),(j,k)}=\varphi_{i,j}$ unless $U_{i,j}$ is of the second type, in which case $$\varphi_{(i,k),(j,k)}=\varphi_{i,j}\big|_{U_{i,j}\backslash V_{i,j}}\,\cup\,r_j\circ\left(\varphi_{i,j}\big|_{U_{i,j}\backslash V_{i,j}}\right)\circ r_i^{-1}.$$
Letting $\varphi_{(i,0),(i,1)}=r_i\big|_{W_{(i,0),(i,1)}}$ for each $i$ then determines all the remaining transition maps defining $(\mathcal{W},d)$, via transitivity and symmetry.

Intuitively, this construction amounts to taking two copies, indexed by $\mathbb{N}\times\{0\}$ and $\mathbb{N}\times\{1\}$, of $M_{(\mathcal{U},c)}\backslash M_{(\mathcal{V},c|_\mathcal{V})}$, and gluing them via reflections flipping the doubles $W_{(i,0),(i,1)}$ of those charts (i.e., those $U_{i,i}$ of the second type) which abut the boundary of the compact core.
As hinted, the details of the verifications that this conversion is both Borel and as desired sufficiently resemble those of our earlier work that we leave the bulk of them to the reader.
Its initial reparametrization, for example, consists in a refinement of the elements of the $\mathcal{U}$ under consideration, much as in Lemma \ref{lem:reparametrization}; to see that this can be done in a Borel way, fix a countable basis $\{B_i\mid i\in\mathbb{N}\}$ for the topology of $\mathbb{H}^2$ and note that the map
 \begin{align*}\Gamma\mapsto\{i\in\mathbb{N}\mid B_i\text{ intersects only one boundary geodesic} \\ \text{of the convex hull of }\Lambda(\Gamma)\}\end{align*} is Borel; for the definition of the limit set $\Lambda(\Gamma)$, see Section \ref{subsection:the_basics} below. 
\end{proof}
\end{proof}

\begin{remark}
\label{rmk:orbifolds}
We were convinced of Theorem \ref{thm:fgPSL2Rsmooth} long before we knew quite how we would argue it, and it was in the course of related discussions that Ferr\'{a}n Valdez directed our attention to Theorem 4.1.1 of \cite{2021arXiv211014401B}, which may be stated as follows. Let
$\mathcal{D}^o_{\mathrm{fv}}(G)=\{\Gamma\in\mathcal{D}(G)\mid\mathbb{H}^2/\Gamma\text{ has finite volume}\}$ so that, much as above, any $\Gamma\in\mathcal{D}^o_{\mathrm{fv}}(G)$ determines a vectored hyperbolic $2$-\emph{orbifold} $(X,v)$ with image $[X]$ in the associated \emph{finite-volume orbifold} moduli space $\hat{\mathcal{M}}(S)$. Denote the map $(X,v)\mapsto [X]$ by $\hat{q}$; then: \emph{for any orbifold $S$ arising in the above fashion, the restriction of $\hat{q}$ to $\hat{q}^{-1}(\hat{\mathcal{M}}(S))$ is a fiber orbibundle whose regular fibers are homeomorphic to $T^1 S$; moreover, $\hat{q}^{-1}(\hat{\mathcal{M}}(S))$ is a $6g+2(k+l)-3$ dimensional manifold, where $g$, $k$, and $l$ record the genus and numbers of cusps and cone points, respectively, of $S$.} Note the two differences from the setting of Theorem \ref{thm:fgPSL2Rsmooth}: the restriction to $\mathcal{D}_{\mathrm{fv}}(G)$ on the one hand, and an expanded attention to groups $\Gamma$ possibly possessing torsion on the other.
It seems clear upon inspection that Theorem \ref{thm:fgPSL2Rsmooth} may be extended to subsume the latter possibility as well; as noted, we have foregone this extension simply because its requirements --- definitions, parametrizations, and moduli spaces of $2$-orbifolds --- would take us too far afield.
\end{remark}

\begin{proof}[Proof 2 of Theorem \ref{thm:fgPSL2Rsmooth} (sketch)]
Again we need only to prove item (c), namely that the relation $E(G,\mathcal{D}_{\mathrm{af}}(G))$ is concretely classifiable, and we will apply to this end Glimm's theorem that an orbit equivalence relation is concretely classifiable if and only if any orbit is open in its closure (\cite[Theorem 1]{MR0136681}; more precisely, what we require is the theorem's implication (1)$\Rightarrow$(3). Note that for this implication, the theorem's requirement that the domain be locally compact (as $\mathcal{D}_{\mathrm{af}}(G)$ in general is not) is superfluous).
Here it's again valuable to consider these orbits in their materializations as families of vectored hyperbolic $2$-manifolds $(X,v)$, for fixing any one of them allows us to then view conjugations by elements of $G$ as simply relocating and reorienting the unit vector $v$ within $TX$.
It is easy to see, either directly or by the aforementioned \cite[Corollary 3.2]{MR1283875}, that if this $X$ is compact then its associated orbit is closed; the more interesting case is when $X$ possesses ends which the vectors $v$ may, in the above framing, ``exit''.
More precisely, in the interesting case $X$ is the union of a compact bounded surface $X'$, in which its genus concentrates, together with the finitely many components of $X\backslash X'$ comprising its ends; $(Y,w)$ is then in the closure of the orbit of $X$ if there exist a sequence of unit vectors $v_i\in T X$ with basepoints $p_i\in X$ along with reals $r_i\to\infty$ and maps $\psi_i:B_X(p_i,r_i)\to Y$ sending each $v_i$ to $w$ whose images converge to the metric on $Y$.
There are two nontrivial possibilities.
In the first, these $v_i$ exit a flare; here, if the $r_i$ grow sufficiently slowly, convergence is to a vectored $Y=\mathbb{H}^2$, simply, corresponding to the discrete group $\{I\}$.
In fact, this is the only exit of a flare possible, and if the $v_i$ remain within bounded distance of $X'$ then $Y$ is isometric to $X$, a point equally applying when the end in question is a cusp, which brings us to the second nontrivial possibility.
This is when the sequence $v_i$ exits a parabolic end. Assume first that these $v_i$ all point out of the cusp, and that the fixed points of the parabolic elements associated to this cusp converge to $\infty\in\hat{\mathbb{R}}$: then it is not too hard to see that the limit of the associated $\Gamma_i$ is in fact the \emph{non-discrete} group $\{z\mapsto z+t\mid t\in\mathbb{R}\}\in\mathcal{S}(G)$.
Rotations of those $v_i$ will, on a subsequence, converge to some fixed angle $\theta\in 2\pi$ with the geodesics from their basepoints straight out the cusp, and the associated $\Gamma_i$ will converge to a conjugate of $\{z\mapsto z+t\mid t\in\mathbb{R}\}$ in $\mathcal{S}(G)$ with fixed point in $\hat{\mathbb{R}}$ at an angle $\theta$ from $\infty$.
Hence the closure in $\mathcal{S}(G)$ of the orbit $O$ associated to a noncompact finite-type $(X,v)$ is the union of $O$ with, topologically, a finite number of points and circles; it follows that $O$ is open therein, and this concludes the proof.
\end{proof}
These themes of sorting and ``exiting'' ends, of mischief around cusps, and of limits of surface structures all play fundamental roles in the classification of hyperbolic $3$-manifolds, as we'll soon see.
We conclude this section with the following easy corollary of Theorem \ref{thm:fgPSL2Rsmooth}.

\begin{corollary}
\label{cor:2isometrysmooth}
The classification of complete algebraically finite hyperbolic $2$-manifolds up to isometry is Borel bireducible with the relation $=_{\mathbb{R}}$ of equality on the real numbers, as is the classification of finitely generated torsion-free discrete subgroups of $\mathrm{PGL}(2,\mathbb{R})$ or $\mathrm{SL}(2,\mathbb{R})$ up to conjugacy.
\end{corollary}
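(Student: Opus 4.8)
The plan is to deduce Corollary \ref{cor:2isometrysmooth} from Theorem \ref{thm:fgPSL2Rsmooth} by a short chain of reductions, handling three cases: the isometry relation on complete algebraically finite hyperbolic $2$-manifolds, conjugacy on finitely generated torsion-free discrete subgroups of $\mathrm{PGL}(2,\mathbb{R})$, and conjugacy on such subgroups of $\mathrm{SL}(2,\mathbb{R})$. For the first, the point is that an algebraically finite hyperbolic $2$-manifold need not be orientable, so Theorem \ref{thm:fgPSL2Rsmooth}, which treats the orientation-preserving (hence orientable) case, does not literally apply; but orientation-reversing isometries are handled by passing to $\mathrm{Isom}(\mathbb{H}^2)\cong\mathrm{PGL}(2,\mathbb{R})$, which contains $\mathrm{Isom}^+(\mathbb{H}^2)\cong\mathrm{PSL}(2,\mathbb{R})$ as an index-$2$ normal subgroup. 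So the first and second cases are essentially the same assertion, via the orientation-preserving variant of Corollary \ref{cor:classwise} (together with Lemmas \ref{lem:convex_Borel}, \ref{lem:complete}, and \ref{lem:Dfg}, which guarantee the relevant spaces are standard Borel).

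The concrete steps I would carry out: (1) Note that $\mathrm{PGL}(2,\mathbb{R})$ decomposes as $\mathrm{PSL}(2,\mathbb{R})\sqcup i\,\mathrm{PSL}(2,\mathbb{R})$ for a suitable central involution $i$ (the class of a reflection, which is central in $\mathrm{PGL}$ modulo $\mathrm{PSL}$ — one must check $i\in Z(\mathrm{PGL}(2,\mathbb{R}))$; in fact the relevant fact is that $\mathrm{PGL}(2,\mathbb{R})/\mathrm{PSL}(2,\mathbb{R})\cong\mathbb{Z}/2$ and any representative squares into $\mathrm{PSL}$, and the two cosets are swapped by an element acting centrally on subgroup-conjugation — this is exactly the hypothesis of Lemma \ref{lem:lifting_orientation_preserving}). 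Then Lemma \ref{lem:lifting_orientation_preserving}(2) gives $E(\mathrm{PGL}(2,\mathbb{R}),\mathcal{D}_{\mathrm{fg}}(\mathrm{PGL}(2,\mathbb{R})))\sim_B E(\mathrm{PSL}(2,\mathbb{R}),\mathcal{D}_{\mathrm{fg}}(\mathrm{PSL}(2,\mathbb{R})))$; restricting to torsion-free subgroups preserves this bireducibility since the reductions in that lemma (inclusion and $H\mapsto H\cap G_0$) visibly preserve torsion-freeness. Combined with Theorem \ref{thm:fgPSL2Rsmooth}, the $\mathrm{PGL}(2,\mathbb{R})$ conjugacy relation is $\sim_B\,=_{\mathbb{R}}$, and via Corollary \ref{cor:classwise} (for $\mathrm{Isom}(\mathbb{H}^2)$, as remarked at the end of Section \ref{section:bireducibility}) this transfers to the isometry relation on all complete algebraically finite hyperbolic $2$-manifolds. (2) For $\mathrm{SL}(2,\mathbb{R})$: apply Lemma \ref{lem:for_SL} with $G=\mathrm{SL}(2,\mathbb{R})$, $N=\{\pm I\}$, so $G/N=\mathrm{PSL}(2,\mathbb{R})$. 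Part (1) of that lemma gives $E(\mathrm{PSL}(2,\mathbb{R}),\mathcal{D}_{\mathrm{fg}})\leq_B E(\mathrm{SL}(2,\mathbb{R}),\mathcal{D}_{\mathrm{fg}})$, and part (2) gives the reverse weak reduction, whence by Lemma \ref{lem:weak_reduction}(1) concrete classifiability of the $\mathrm{PSL}(2,\mathbb{R})$ relation (from Theorem \ref{thm:fgPSL2Rsmooth}) yields concrete classifiability of the $\mathrm{SL}(2,\mathbb{R})$ relation; the lower bound $=_{\mathbb{R}}\leq_B E(\mathrm{SL}(2,\mathbb{R}),\mathcal{D}_{\mathrm{fg}}^{\mathrm{tf}})$ follows from Lemma \ref{lem:uncountable} composed with the inclusion reduction $\pi^{-1}$ of Lemma \ref{lem:for_SL}(1). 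Restricting everything to $\mathcal{D}_{\mathrm{tf}}$ is harmless since $\pi^{-1}$ and $\pi$ both respect torsion-freeness (for $\pi^{-1}$ this is stated in Lemma \ref{lem:quotient_reduction}; for $\pi$, a finitely generated subgroup of $\mathrm{PSL}(2,\mathbb{R})$ is torsion-free iff it is a surface group or free, and its preimage's image recovers it — in any event one only needs the homomorphism direction to land in the torsion-free subgroups, which it does).

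The only genuine subtlety — and the main thing to get right — is verifying the hypotheses of Lemma \ref{lem:lifting_orientation_preserving} for the pair $(\mathrm{PGL}(2,\mathbb{R}),\mathrm{PSL}(2,\mathbb{R}))$, i.e.\ exhibiting a central element $i$ of order $2$ in $\mathrm{PGL}(2,\mathbb{R})$ with $\mathrm{PGL}(2,\mathbb{R})=\mathrm{PSL}(2,\mathbb{R})\sqcup i\,\mathrm{PSL}(2,\mathbb{R})$. A clean way: $\mathrm{PGL}(2,\mathbb{R})$ acts on $\mathbb{H}^2$ by isometries with orientation-reversing elements exactly the nonidentity coset; the class $i$ of $\mathrm{diag}(1,-1)$ is an involution, and conjugation by it preserves $\mathrm{PSL}(2,\mathbb{R})$ — but $i$ is \emph{not} literally central in $\mathrm{PGL}(2,\mathbb{R})$. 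What Lemma \ref{lem:lifting_orientation_preserving} actually needs, re-reading its proof, is only that $i$ commutes with elements enough to run the coset computation $(ig)H(ig)^{-1}=gHg^{-1}$ — so one should double-check whether the hypothesis ``$i\in Z(G)$'' can be weakened, or instead appeal directly to Lemma \ref{lem:quotient_reduction}/\ref{lem:for_SL}-style arguments for $\mathrm{PGL}$ over $\mathrm{PSL}$, which is not a quotient-by-discrete-normal situation. If the literal centrality fails, the safe route is: $\mathrm{PGL}(2,\mathbb{R})$ has $\mathrm{PSL}(2,\mathbb{R})$ as an index-$2$ subgroup, so I would instead invoke the general principle (provable exactly as in the proof of Lemma \ref{lem:lifting_orientation_preserving}, using that any $g\in\mathrm{PGL}$ either lies in $\mathrm{PSL}$ or $g$ times a fixed reflection does, and that conjugation by the reflection is an automorphism of $\mathrm{PSL}$) that $E(\mathrm{PGL}(2,\mathbb{R}),\mathcal{D})\leq_B E(\mathrm{PSL}(2,\mathbb{R}),\mathcal{D})$ up to a finite-to-one adjustment, which suffices for bireducibility with $=_{\mathbb{R}}$ by Lemma \ref{lem:weak_reduction}. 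I expect this bookkeeping around the $\mathrm{PGL}$/$\mathrm{PSL}$ passage — and confirming it respects the finitely-generated and torsion-free restrictions — to be the only place requiring care; everything else is a direct citation of Theorem \ref{thm:fgPSL2Rsmooth}, Corollary \ref{cor:classwise}, and the transfer lemmas of Section \ref{subsection:weak_reduction}.
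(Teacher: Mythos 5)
Your proposal follows the paper's own route almost exactly: the paper proves the first assertion by citing Lemma \ref{lem:lifting_orientation_preserving}(2) to pass from the orientation-preserving isometry relation on the algebraically finite parameters of $\mathfrak{C}^*_c(\mathsf{Isom}^+,\mathbb{H}^2)$ to the full isometry relation on those of $\mathfrak{C}^*_c(\mathsf{Isom},\mathbb{H}^2)$, obtains the $\mathrm{PGL}(2,\mathbb{R})$ statement from Corollary \ref{cor:classwise} together with the isomorphism $\mathrm{PGL}(2,\mathbb{R})\cong\mathrm{Isom}(\mathbb{H}^2)$, and handles $\mathrm{SL}(2,\mathbb{R})$ by applying Lemma \ref{lem:for_SL}(2) to Theorem \ref{thm:fgPSL2Rsmooth}(1) --- precisely your three steps. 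Two remarks. First, the worry you raise about the hypothesis of Lemma \ref{lem:lifting_orientation_preserving} is genuine: $\mathrm{PGL}(2,\mathbb{R})\cong\mathrm{Isom}(\mathbb{H}^2)$ has trivial center, so there is literally no central involution $i$ with $G=G_0\sqcup iG_0$, and the paper's proof of the corollary cites the lemma without comment (elsewhere, e.g.\ in the proof of Theorem \ref{thm:line_bundles}, the authors invoke only ``the reasoning of'' that lemma). Your fallback is the right repair and is more careful than the text: conjugation by a fixed orientation-reversing element is an automorphism of $\mathrm{PSL}(2,\mathbb{R})$, so each $\mathrm{PGL}$-conjugacy class of subgroups of $\mathrm{PSL}$ is a union of at most two $\mathrm{PSL}$-classes, and the remaining bookkeeping for subgroups of $\mathrm{PGL}$ itself (via $H\mapsto H\cap\mathrm{PSL}(2,\mathbb{R})$, which is countable-to-one on classes) is absorbed by Lemma \ref{lem:weak_reduction}. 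Second, a small slip in your $\mathrm{SL}(2,\mathbb{R})$ lower bound: composing Lemma \ref{lem:uncountable} with $\pi^{-1}$ does not stay in the class of the statement, since $\pi^{-1}[\Gamma]\supseteq\{\pm I\}$ is never torsion-free (the blanket claim at the end of Lemma \ref{lem:quotient_reduction} requires $N$ torsion-free); witness the lower bound directly instead, e.g.\ by $t\mapsto\langle\mathrm{diag}(e^t,e^{-t})\rangle$ for $t>0$, whose images are torsion-free, finitely generated, discrete, and pairwise non-conjugate. Neither point changes the verdict that your argument is, in substance, the paper's.
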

\begin{proof}
The first of these assertions records the upgrade, via Lemma \ref{lem:lifting_orientation_preserving}(2), from a complexity computation for the orientation-preserving isometry relation on the algebraically finite parameters of $\mathfrak{C}^{*}_c(\mathsf{Isom}^+,\mathbb{H}^2)$ to that on the algebraically finite parameters of $\mathfrak{C}^*_c(\mathsf{Isom},\mathbb{H}^2)$.
By Corollary \ref{cor:classwise} and the topological isomorphism $\mathrm{PGL}(2,\mathbb{R})\cong\mathrm{Isom}(\mathbb{H}^2)$, this also gives the first half of the second assertion.
For the latter's second half, apply Lemma \ref{lem:for_SL}(2) to Theorem \ref{thm:fgPSL2Rsmooth}(1).
\end{proof}

\section{The conjugacy problem for finitely generated Kleinian groups}
\label{section:isometry_for_3}
In this section, we establish, alongside a number of subsidiary propositions, the results on finitely generated Kleinian groups and hyperbolic $3$-manifolds which appear as our introduction's Theorems K and L.
Several of these results may be argued in more than one way.
Below, we will often begin by outlining the most heuristic argument, in part as a way of introducing the relevant objects, before turning to more formal, and economical, statements and proofs of subsections' main results.
We will also, in parallel with Section \ref{section:isometry_for_2}, broadly proceed according to the hierarchy of finiteness conditions described in Definition \ref{def:finiteness_conditions} therein.
\subsection{Overview}
\label{subsection:overview}
A \emph{Kleinian group} $\Gamma$ is a discrete subgroup of $\mathrm{PSL}(2,\mathbb{C})$.
If $\Gamma$ is torsion-free then the quotient of $\mathbb{H}^3$ by the natural action of $\Gamma$ is a hyperbolic $3$-manifold.
More broadly, the $n=3$ instance of Corollary \ref{cor:classwise} describes a Borel correspondence between torsion-free Kleinian groups and complete orientable hyperbolic $3$-manifolds, by way of the standard identification (again via equation (\ref{eq:matrixrep})) of $\mathrm{PSL}(2,\mathbb{C})$ with the group $\text{M\"{o}b}(\hat{\mathbb{C}})$ of all M\"{o}bius transformations of the Riemann sphere $\hat{\mathbb{C}}$.
This is because the natural identification of the latter with the boundary of the open unit ball model of hyperbolic space $\mathbb{H}^3$ in turn induces, via canonical extensions of these transformations, an isomorphism of $\text{M\"{o}b}(\hat{\mathbb{C}})$ with $\mathrm{Isom}^+(\mathbb{H}^3)$.
The rapport at work here between the \emph{geometrically unbounded} ball model of $\mathbb{H}^3$ and its \emph{topological boundary} (or \emph{sphere at infinity}) will be critical in what follows, as will the sorting of non-identity elements of $\mathrm{Isom}^+(\mathbb{H}^3)$ into the three classes \emph{parabolic}, \emph{loxodromic}, or \emph{elliptic} according to whether they fix $1$, $2$, or infinitely many points in $\mathbb{H}^3\cup\hat{\mathbb{C}}$, respectively; see \cite{MR2227047} for an efficient review of this material which aligns well with this section's main concerns.
Main book-length treatments include \cite{MR1638795, MR2553578, MR3586015} (see also \cite{MR1913879}); each abundantly attests the richness of this section's main subject, namely \emph{finitely generated Kleinian groups}, and the \emph{$\mathrm{PSL}(2,\mathbb{C})$-conjugacy relations among them}.

Our analysis will draw heavily on the manifold side of this story; let us therefore begin by reviewing its role in Section \ref{section:isometry_for_2}.
At the heart of that section were the \emph{Teichm\"{u}ller} and \emph{moduli} spaces $\mathcal{T}(S)$ and $\mathcal{M}(S)$, respectively, of a hyperbolizable surface $S$.
These are, intuitively, the spaces of \emph{marked} and \emph{unmarked} hyperbolic structures on $S$, notions we'll make more precise below.
Mediating between them is the \emph{mapping class group} $\mathrm{MCG}(S)$ of $S$; $\mathcal{M}(S)$ is, more precisely, the space of orbits of a properly discontinuous action of $\mathrm{MCG}(S)$ on $\mathcal{T}(S)$.
In the most clean and classical setting, $S$ is a closed oriented surface of finite genus $g$, punctured in some finite number $n$ of places, and $\mathcal{T}(S)$ and $\mathcal{M}(S)$ each parametrize \emph{finite-volume} hyperbolic structures on $S$; as noted, each in this case is a Polish space of dimension $6g+2n-6$.
Our interest was wider than this, though, extending, as it did, to the possibly infinite volume \emph{geometrically finite} (or, equivalently, \emph{algebraically finite}) structures on such $S$.
But the classification of such structures (weakly) reduces to the finite-volume case, and this was the crux of our main proof of Theorem \ref{thm:fgPSL2Rsmooth}.
Lastly, let us note for future reference the close relationship between $\mathrm{Out}(\pi_1(S))$ and $\mathrm{MCG}(S)$; by the Dehn--Nielsen--Baer Theorem, for example, the latter is an index-$2$ subgroup of the former if $S$ is closed (see \cite[Ch.\ 8]{MR2850125}).

Call the material of the preceding paragraph \emph{the $2$-dimensional template}; much of what follows can be understood in terms of how it both persists and breaks down in $3$ hyperbolic dimensions.
To begin with, the class of geometrically finite hyperbolic $3$-manifolds no longer coincides with that of the algebraically finite ones, and it is the former which now figures as the tractable class, the one whose elements are classifiable by points in classical Teichm\"{u}ller and moduli spaces.
Elements of the latter class, on the other hand, are in a natural sense limits of elements of the former; this is the celebrated \emph{Density Conjecture} of Bers, Sullivan, and Thurston, fully rendered a theorem in \cite{MR2821565, MR3001608}.
Heuristically, this carries the consequence that elements of the latter admit classification by \emph{limits of hyperbolic surface structures}, and this is the loose meaning of the eponymous \emph{ending lamination} in the following landmark theorem.

\begin{theoremELT}[\cite{MR2630036, MR2925381}]
A hyperbolic $3$-manifold with finitely generated fundamental group is uniquely determined by its topological type and its end invariants.
\end{theoremELT}

Variations of the theorem will appear below; the form recorded here is Brock, Canary, and Minsky's rendering of what had been known since Thurston's seminal \cite{MR0648524} as the \emph{Ending Lamination Conjecture}. (Valuable further references for this large subject include \cite{MR2044543}, \cite{MR2062319}, \cite{MR3586015}, and \cite{BowditchELT}, with the latter, in particular, supplying portions of the proof of the conjecture unrecorded in \cite{MR2630036, MR2925381}.)
For us the most immediate point is that the theorem's classifying pair (\emph{homeomorphism type}, \emph{ending invariants}) is naturally construed as an element of a Polish space, as will soon grow clear.\footnote{Note for comparison that the Ker\'{e}kj\'{a}rt\'{o}--Richards classification of surfaces in Section \ref{section:surfaces} was in terms of how discrete data (\emph{orientability}, \emph{planarity}) arrayed on elements of the uncountable Polish space $\mathcal{K}(\mathbb{N}^\mathbb{N})$ of end-configurations. In contrast, in the present section, where only finitely-ended manifolds are under consideration, the concern will be for how elements of uncountable Polish spaces of data (\emph{ends' asymptotic geometries}) array on elements of discrete spaces of end-configurations.}
Thus, much as for the main results of Sections \ref{section:surfaces}, \ref{section:bireducibility}, and \ref{section:isometry_for_2}, a major theorem of low-dimensional topology describes, from our perspective, a reduction --- one moreover which, if Borel, would appear to immediately settle the \emph{manifold framing} of this section's guiding question: \emph{What is the Borel complexity of the classification of algebraically finite hyperbolic $3$-manifolds up to isometry?}

But here we should tread carefully.
For upon closer inspection, the theorem describes a classification of \emph{marked} manifolds --- or, equivalently, on the group side, of \emph{representations}; its more careful formulation in \cite[p.\ 3]{MR2925381}, for example, is as follows (only the theorem's broader contours are presently the point; its more technical content will clarify as this section unfolds):

\begin{theoremELT}[the case of incompressible ends]
Let $\Gamma$ be a finitely generated torsion-free non-abelian group, let $\rho:\Gamma\to\mathrm{PSL}(2,\mathbb{C})$ be a discrete faithful representation, and let $N^0_\rho$ denote the hyperbolic manifold $\mathbb{H}^3/\rho(\Gamma)$ minus a standard open neighborhood of whatever cusps it may possess.
Suppose further that each of the ends of $N^0_\rho$ is incompressible.
Then the representation $\rho$ is determined up to conjugacy in $\mathrm{PSL}(2,\mathbb{C})$ by the marked homeomorphism type of the relative compact core, and the end invariants, of $N^0_\rho$.
\end{theoremELT}

The points to highlight here are, first, that the theorem's objects are discrete faithful representations of a fixed finitely generated $3$-manifold group $\Gamma$.
It will accordingly be convenient to conduct this section's complexity analyses --- whether of Kleinian groups or of hyperbolic manifolds --- in parallel with this machinery, that is, to segment our work into analyses of those groups isomorphic to some fixed such $\Gamma$, or, on the manifold side, homotopy equivalent to some fixed such $\mathbb{H}^3/\rho(\Gamma)$.
Since there are only countably many such groups (by \cite{MR380763}), and the isomorphism relation between them is plainly analytic, this practice is, from a Borel complexity perspective, an innocuous one: it will amount on either the group or manifold side to localizing attention to an element of a partition of our parameter space into countably many Borel subsets.
The details of the relevant arguments are by now sufficiently routine that we won't belabor them here or below.

Second, as noted, the theorem classifies discrete faithful representations in \allowbreak $\mathrm{PSL}(2,\mathbb{C})$, or what we may think of as \emph{marked} Kleinian groups, up to conjugacy.
Classifying \emph{unmarked} Kleinian groups isomorphic to $\Gamma$ up to conjugacy, or \emph{unmarked} hyperbolic $3$-manifolds homotopy equivalent to $\mathbb{H}^3/\rho(\Gamma)$ up to isometry, then amounts to forgetting the marking, or, more mathematically, to quotienting by a natural action of $\mathrm{Out}(\Gamma)$, much as in the $2$-dimensional template.
In these contexts, however, unlike in the $2$-dimensional setting, this action is \emph{not} in general properly discontinuous, and it is here that the story grows interesting.

It is time to be more precise. To that end, fix a finitely generated fundamental group $\Gamma$ of an orientable hyperbolic $3$-manifold and consider the following spaces:
\begin{itemize}
\item $\mathcal{D}[\Gamma]=\{\Gamma'\in\mathcal{D}(\mathrm{PSL}(2,\mathbb{C}))\mid\Gamma'\cong\Gamma\}$. By the reasoning above, $\mathcal{D}[\Gamma]$ is a Borel subset of $\mathcal{D}_{\mathrm{af}}(\mathrm{PSL}(2,\mathbb{C}))$ and, thus, itself is a standard Borel space.
\item $\mathrm{DF}(\Gamma)=\{\rho:\Gamma\to\mathrm{PSL}(2,\mathbb{C})\mid\rho\text{ is discrete and faithful}\}$, topologized as a subspace of $\mathrm{Hom}(\Gamma,\mathrm{PSL}(2,\mathbb{C}))$ endowed with the compact-open topology (it is closed therein \cite{MR427627}).
For an equivalent description, fix some $n$ generators $g_i$ of $\Gamma$ and view $\mathrm{DF}(\Gamma)$ as a subspace of the product space $\mathrm{PSL}(2,\mathbb{C})^n$ via the identification $\rho\mapsto (\rho(g_1),\dots,\rho(g_n))$.
Thus $\mathrm{DF}(\Gamma)$ may be regarded as a space of \emph{marked} Kleinian groups which are group-isomorphic to $\Gamma$.
\item $\mathrm{AH}(\Gamma)$ is the space of orbits of the conjugacy action of $\mathrm{PSL}(2,\mathbb{C})$ on $\mathrm{DF}(\Gamma)$ given by $(h\cdot\rho)(g)=h\rho(g)h^{-1}$.
\end{itemize}
Connecting these spaces are natural maps $p:\mathrm{DF}(\Gamma)\to\mathcal{D}[\Gamma]$ and $q:\mathrm{DF}(\Gamma)\to\mathrm{AH}(\Gamma)$.
\begin{lemma}
\label{lem:AH_is_Polish}
The map $p$ is Borel, the map $q$ is continuous, and if $\Gamma$ is nonabelian then $\mathrm{AH}(\Gamma)$ is Polish.
\end{lemma}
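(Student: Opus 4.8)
\textbf{Proof plan for Lemma \ref{lem:AH_is_Polish}.}

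The plan is to treat the three assertions separately, disposing of the two easy ones quickly and concentrating on the Polishness of $\mathrm{AH}(\Gamma)$. First I would show $q$ is continuous: fixing generators $g_1,\dots,g_n$ of $\Gamma$ and viewing $\mathrm{DF}(\Gamma)\subseteq\mathrm{PSL}(2,\mathbb{C})^n$, the conjugation action $(h,\rho)\mapsto h\cdot\rho$ is continuous on $\mathrm{PSL}(2,\mathbb{C})\times\mathrm{PSL}(2,\mathbb{C})^n$ (matrix multiplication and inversion are continuous on the Lie group), so the orbit map into the quotient $\mathrm{AH}(\Gamma)$, endowed with the quotient topology, is continuous by definition. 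For $p$, note that $p$ sends $\rho$ to $\overline{\langle\rho(g_1),\dots,\rho(g_n)\rangle}=\langle\rho(g_1),\dots,\rho(g_n)\rangle$ (the image is already discrete, hence closed), which is exactly the restriction to $\mathrm{DF}(\Gamma)$ of the map $f\colon\mathrm{PSL}(2,\mathbb{C})^n\to\mathcal F(\mathrm{PSL}(2,\mathbb{C}))$ shown to be Borel in the proof of Lemma \ref{lem:Dfg} (the subgroup-generated-by-a-finite-tuple map into the Chabauty--Fell space is Borel there); hence $p$ is Borel.

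The substantive claim is that $\mathrm{AH}(\Gamma)$ is Polish when $\Gamma$ is nonabelian. The standard route is to exhibit a continuous section, or at least to invoke properness of the action on a suitable invariant set, so that the quotient topology coincides with a Polish one. Concretely, the plan is: (i) restrict attention to the subspace $\mathrm{DF}_{\mathrm{irr}}(\Gamma)\subseteq\mathrm{DF}(\Gamma)$ of \emph{irreducible} representations (those whose image fixes no point of $\hat{\mathbb{C}}$); since $\Gamma$ is nonabelian and discrete faithful, every $\rho\in\mathrm{DF}(\Gamma)$ is nonelementary, in particular irreducible, so $\mathrm{DF}_{\mathrm{irr}}(\Gamma)=\mathrm{DF}(\Gamma)$ — this is the one place nonabelianness is genuinely used, and it is why elementary/abelian $\Gamma$ must be excluded (there the stabilizers are positive-dimensional and the quotient can fail to be $T_0$, cf.\ Remark \ref{rmk:baseframes}). (ii) On the irreducible locus the conjugation action of $\mathrm{PSL}(2,\mathbb{C})$ is free and proper: the stabilizer of an irreducible $\rho$ is trivial (an element of $\mathrm{PSL}(2,\mathbb{C})$ commuting with a nonelementary group is the identity), and properness follows from the standard fact that $\mathrm{PSL}(2,\mathbb{C})$ acts properly on the space of irreducible representations (this is the content of, e.g., the quotient-space discussion in \cite{MR1638795} or \cite{MR2553578}; one can also see it by normalizing: choosing two noncommuting loxodromic-or-parabolic generators, one can conjugate so that one has fixed points $0,\infty$ and the other's fixed-point data is pinned down, giving a continuous local slice). (iii) A free proper continuous action of a Polish (indeed Lie) group on a Polish space has Polish orbit space: the orbit equivalence relation is closed, the quotient is metrizable and second countable, and Hausdorffness plus the existence of local continuous sections gives completeness of a compatible metric. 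Hence $\mathrm{AH}(\Gamma)=\mathrm{DF}(\Gamma)/\mathrm{PSL}(2,\mathbb{C})$ is Polish.

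The main obstacle I anticipate is pinning down properness of the conjugation action cleanly in this setting — that is, verifying that the map $\mathrm{PSL}(2,\mathbb{C})\times\mathrm{DF}(\Gamma)\to\mathrm{DF}(\Gamma)\times\mathrm{DF}(\Gamma)$, $(h,\rho)\mapsto(h\cdot\rho,\rho)$, is proper. The cleanest argument is to build, locally on $\mathrm{DF}(\Gamma)$, an explicit continuous slice using the fixed points of a fixed pair of noncommuting elements of the image (their joint fixed-point configuration is a continuous function of $\rho$ and determines the conjugating element uniquely, since the action on $(\text{ordered triples of distinct points of }\hat{\mathbb{C}})$ is simply transitive); this both shows the quotient is locally homeomorphic to the slice and, combined with Hausdorffness of the quotient (itself a consequence of freeness $+$ the closedness of $\mathrm{DF}(\Gamma)$ in $\mathrm{Hom}(\Gamma,\mathrm{PSL}(2,\mathbb{C}))$), yields that the quotient topology is Polish. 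I would cite \cite{MR1638795} or \cite{MR2553578} for the properness statement rather than redeveloping it, and limit the written proof to: continuity of $q$, Borelness of $p$ via Lemma \ref{lem:Dfg}, and the slice construction establishing that $\mathrm{AH}(\Gamma)$ is a Polish quotient.
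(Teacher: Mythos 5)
Your treatment of $p$ and $q$ is correct, though it differs in detail from the paper's: for $p$ the paper shows directly that the preimages of the subbasic sets $\{\Gamma'\mid \Gamma'\cap V\neq\varnothing\}$ are in fact \emph{open} in $\mathrm{DF}(\Gamma)$ (a word in the generators landing in $V$ persists under small perturbations of the generators), whereas you route through the Borel map of Lemma \ref{lem:Dfg}; both work. For the substantive claim, your route is genuinely different in packaging: you quotient $\mathrm{DF}(\Gamma)$ by a free and proper $\mathrm{PSL}(2,\mathbb{C})$-action and conclude Polishness of the orbit space (properness being cited, or re-derived via a local normalization slice), while the paper instead observes that discrete faithful representations of a nonabelian torsion-free $\Gamma$ are nonelementary, hence nonradical in the sense of \cite{MR2553578}, so that $\mathrm{AH}(\Gamma)$ sits as a closed subset of the character variety $\mathcal{R}^0(\Gamma,\mathrm{PSL}(2,\mathbb{C}))$, a smooth complex manifold, and is therefore Polish. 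The two arguments rest on the same underlying fact (properness/Hausdorffness of the conjugation quotient on the nonelementary locus); yours is more self-contained, the paper's is shorter because it outsources exactly that point.

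Two cautions. First, your parenthetical claim that Hausdorffness of the quotient follows from ``freeness $+$ closedness of $\mathrm{DF}(\Gamma)$ in $\mathrm{Hom}(\Gamma,\mathrm{PSL}(2,\mathbb{C}))$'' is not a valid deduction: a free action on a closed invariant set can easily have a non-Hausdorff orbit space, and Hausdorffness here is precisely what properness (equivalently, the nonradical/irreducible character-variety machinery) buys you. Your local slice via fixed points of two noncommuting loxodromics gives local structure but not, by itself, global separation of orbits, so in the written proof the properness citation must carry that weight explicitly. Second, ``nonabelian $\Rightarrow$ nonelementary image'' uses torsion-freeness of $\Gamma$ (nonabelian elementary Kleinian groups exist, but they have torsion); this is harmless since $\Gamma$ is a hyperbolic $3$-manifold group in the lemma's setting, but it should be said, as the paper does.
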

\begin{proof}
The sets $U_V=\{\Gamma'\in\mathcal{D}[\Gamma]\mid \Gamma'\cap V\neq\varnothing\}$, with $V$ ranging over the open subsets of $\mathrm{PSL}(2,\mathbb{C})$, generate the Borel $\sigma$-algebra of $\mathcal{D}[\Gamma]$. Thus to show $p$ Borel measurable, it suffices to show that the sets $p^{-1}[U_V]=\{\rho\in\mathrm{DF}(\Gamma)\mid \mathrm{im}(\rho)\cap V\neq\varnothing\}$ are Borel.
In fact they're open, for if $\rho(g)\in V$ for some word $g_{i(0)}\cdots g_{i(k)}$ in the generators of $\Gamma$ then the product of sufficiently small neighborhoods of those generators will fall within $V$ as well.

The map $q$ is continuous by definition. The more remarkable fact is that, since discrete and faithful $\mathrm{PSL}(2,\mathbb{C})$-representations of nonabelian torsion-free groups are nonelementary (see the discussion preceding Lemma \ref{lem:elementary} below for a definition), $\mathrm{AH}$ of a nonabelian $\Gamma$ forms a closed subset of the character variety, and in fact smooth complex manifold, denoted as $\mathcal{R}^0(\Gamma,\mathrm{PSL}(2,\mathbb{C}))$ in \cite{MR2553578} (see its sections 4.3, 8.1, and 8.8 for details; the key point is that the discrete and faithful representations of such a $\Gamma$ are necessarily nonradical in the sense of its p.\ 62).
Our conclusion follows.
\end{proof}

Two more objects round out the picture; since our main question revolves less around the Borel structures they possess than those they might \emph{admit}, we regard them for the moment simply as sets:
\begin{itemize}
\item $C(\Gamma)$ is the quotient of $\mathcal{D}[\Gamma]$ by $E(\mathrm{PSL}(2,\mathbb{C}),\mathcal{D}[\Gamma])$, or in other words by the natural conjugacy action of $\mathrm{PSL}(2,\mathbb{C})$.
\item $\mathcal{E}(\Gamma)$ is the collection of complete invariants (\emph{homeomorphism type}, \emph{ending invariants}) which, by the Ending Lamination Theorem, classify the points of $\mathrm{AH}(\Gamma)$.
\end{itemize}
We have then a bijection $e:\mathrm{AH}(\Gamma)\to\mathcal{E}(\Gamma)$, as well as natural set maps $u:\mathcal{D}[\Gamma]\to\mathrm{C}(\Gamma)$ and $v:\mathrm{AH}(\Gamma)\to\mathrm{C}(\Gamma)$, with the latter the quotient by the $\zeta$ of the following lemma.
\begin{lemma}
\label{lem:commuting_square}
Writing $\zeta$ for the natural action of $\mathrm{Out}(\Gamma)$ on $\mathrm{AH}(\Gamma)$, $C(\Gamma)$ is isomorphic to the set of $\zeta$-orbits in $\mathrm{AH}(\Gamma)$; moreover, if $\Gamma$ is nonabelian, then $E(\mathrm{PSL}(2,\mathbb{C}),\mathcal{D}[\Gamma])$ and the orbit equivalence relation $E_\zeta$ induced by $\zeta$ are Borel bireducible.
\end{lemma}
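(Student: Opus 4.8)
The plan is to establish the bijection of $C(\Gamma)$ with the set of $\zeta$-orbits first, and then, under the nonabelian hypothesis, to upgrade this to a Borel bireducibility by exhibiting Borel reductions in both directions through the maps $p$, $q$, $u$, $v$ already in play. For the first part, I would recall that by Corollary \ref{cor:classwise} (in its orientation-preserving, $\mathrm{PSL}(2,\mathbb{C})$ form) and the standard dictionary between discrete faithful representations and Kleinian groups, two points $\rho,\rho'\in\mathrm{DF}(\Gamma)$ have $\mathrm{PSL}(2,\mathbb{C})$-conjugate images if and only if there is an automorphism $\alpha\in\mathrm{Aut}(\Gamma)$ with $\rho'$ conjugate to $\rho\circ\alpha$; the inner automorphisms are absorbed into the conjugacy action on $\mathrm{DF}(\Gamma)$ (Mostel\-platz aside: inner automorphisms act trivially after passing to $\mathrm{AH}(\Gamma)$ precisely because conjugating $\rho$ by $\rho(g)$ realizes the inner automorphism by $g$, using faithfulness), so the residual ambiguity is exactly $\mathrm{Out}(\Gamma)$. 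Hence the fibers of $u\circ p=v\circ q:\mathrm{DF}(\Gamma)\to C(\Gamma)$ over a point are unions of $\zeta$-orbits collapsed to a single $\mathrm{PSL}(2,\mathbb{C})$-conjugacy class of subgroups, giving the claimed bijection $C(\Gamma)\cong\mathrm{AH}(\Gamma)/\zeta$. This is the commuting-square picture: $u\circ p = v\circ q$, with $p$ surjective.

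For the bireducibility, assume $\Gamma$ nonabelian, so $\mathrm{AH}(\Gamma)$ is Polish by Lemma \ref{lem:AH_is_Polish}, and $\zeta$ is a Borel action of the countable group $\mathrm{Out}(\Gamma)$ (it is countable since $\Gamma$ is finitely generated; the action is Borel since each $\alpha\in\mathrm{Out}(\Gamma)$ acts on representations by precomposition with a fixed automorphism, a continuous operation on $\mathrm{Hom}(\Gamma,\mathrm{PSL}(2,\mathbb{C}))$ descending to $\mathrm{AH}(\Gamma)$). First I would produce a Borel reduction $E_\zeta\leq_B E(\mathrm{PSL}(2,\mathbb{C}),\mathcal{D}[\Gamma])$: the composite $u'\colon\mathrm{AH}(\Gamma)\to C(\Gamma)$ factors, after choosing a Borel section, through $\mathcal{D}[\Gamma]$. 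Concretely, I would build a Borel map $\mathrm{AH}(\Gamma)\to\mathcal{D}[\Gamma]$ by selecting, Borel-measurably in the orbit, a representative $\rho$ and returning $\mathrm{im}(\rho)$; the existence of such a Borel selector for the conjugacy action of $\mathrm{PSL}(2,\mathbb{C})$ on $\mathrm{DF}(\Gamma)$ is the content of the (standard) fact that this action admits a Borel transversal, or one can simply invoke that $q$ has a Borel right inverse because the action of the Polish group $\mathrm{PSL}(2,\mathbb{C})$ on the Polish space $\mathrm{DF}(\Gamma)$ is smooth after restricting to the nonelementary locus (the quotient $\mathrm{AH}(\Gamma)$ being Polish). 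Composing with $p$ gives a Borel map $s\colon\mathrm{AH}(\Gamma)\to\mathcal{D}[\Gamma]$ with $\rho \mathrel{E_\zeta} \rho'$ iff $s(\rho)$ and $s(\rho')$ are $\mathrm{PSL}(2,\mathbb{C})$-conjugate, by the orbit analysis of the previous paragraph. Conversely, for $E(\mathrm{PSL}(2,\mathbb{C}),\mathcal{D}[\Gamma])\leq_B E_\zeta$, I would go the other way: given $\Gamma'\in\mathcal{D}[\Gamma]$, Borel-measurably choose an isomorphism $\Gamma\to\Gamma'$ (possible since there are only countably many candidate isomorphisms once generators are fixed, and ``is an isomorphism onto $\Gamma'$'' is Borel, so Luzin--Novikov uniformization applies), yielding a discrete faithful $\rho\in\mathrm{DF}(\Gamma)$ with image $\Gamma'$, then pass to $q(\rho)\in\mathrm{AH}(\Gamma)$; changing the choice of isomorphism changes $q(\rho)$ by $\zeta$, and conjugating $\Gamma'$ in $\mathrm{PSL}(2,\mathbb{C})$ also changes $q(\rho)$ only within its $\zeta$-orbit, so this is a Borel reduction.

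The main obstacle I anticipate is the Borel-measurable choice of representatives — both the Borel section $\mathrm{AH}(\Gamma)\to\mathrm{DF}(\Gamma)$ of $q$ and the Borel selection of a generating isomorphism $\Gamma\to\Gamma'$. The second is genuinely routine via Luzin--Novikov, since fixing generators of $\Gamma$ turns ``isomorphism'' into a countable search over $\mathrm{PSL}(2,\mathbb{C})^n$-tuples with a Borel correctness predicate. The first is subtler: one must know that the conjugacy action of $\mathrm{PSL}(2,\mathbb{C})$ on $\mathrm{DF}(\Gamma)$ (for nonabelian $\Gamma$) is smooth, equivalently admits a Borel transversal; this is exactly why Lemma \ref{lem:AH_is_Polish} records that $\mathrm{AH}(\Gamma)$ is Polish — a Polish orbit space for a Polish group action, together with continuity of the quotient map $q$, yields via the Becker--Kechris / Effros machinery a Borel section. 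I would cite this rather than reprove it. Once both selectors are in hand, the verification that the resulting maps are reductions is a direct unwinding of the orbit bookkeeping: $\mathrm{PSL}(2,\mathbb{C})$-conjugacy of images $\leftrightarrow$ $\mathrm{Out}(\Gamma)$-translation of representations, with inner automorphisms harmless by faithfulness. The abelian case is excluded only because there $\mathrm{AH}(\Gamma)$ need not be Hausdorff, which is immaterial to the present statement since it explicitly assumes $\Gamma$ nonabelian for the bireducibility clause.
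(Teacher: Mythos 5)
Your proposal is correct and follows essentially the same route as the paper: the commuting-square identification $up=vq$ with inner automorphisms absorbed gives $C(\Gamma)\cong\mathrm{AH}(\Gamma)/\zeta$, a Luzin--Novikov section of the countable-to-one map $p$ (composed with $q$) gives one reduction, and the Polishness of $\mathrm{AH}(\Gamma)$ from Lemma \ref{lem:AH_is_Polish} together with a Borel-section theorem for the continuous quotient map $q$ (the paper cites \cite[Exer.\ 18.20(iii)]{MR1321597}) gives the other. The only differences are notational and the level of detail in the orbit bookkeeping.
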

\begin{proof}
The natural action of $\mathrm{Aut}(\Gamma)$ on $\mathrm{DF}(\Gamma)$ descends to one on $\mathrm{AH}(\Gamma)$; $\mathrm{Inn}(\Gamma)$ falls within the kernel of the latter, and it is the induced action of $\mathrm{Out}(\Gamma)=\mathrm{Aut}(\Gamma)/\mathrm{Inn}(\Gamma)$ which we are denoting by $\zeta$.
The composition $up$ forgets first the marking, then the representative, of a conjugacy class, while $vq$ forgets them in the opposite order, but the effect is the same; more precisely, the main square in the diagram just below commutes.
For a Borel reduction of $E(\mathrm{PSL}(2,\mathbb{C}),\mathcal{D}[\Gamma])$ to $E_\zeta$, note that since $\Gamma$ is finitely generated, $\mathrm{Aut}(\Gamma)$ is countable, hence Luzin--Novikov uniformization furnishes a Borel right-inverse $s:\mathcal{D}[\Gamma]\to\mathrm{DF}(\Gamma)$ to $p$; by the aforementioned commutativity, $qs:\mathcal{D}[\Gamma]\to\mathrm{AH}(\Gamma)$ is then a reduction as desired.
For a Borel reduction of $E_\zeta$ to $E(\mathrm{PSL}(2,\mathbb{C}),\mathcal{D}[\Gamma])$, note that Lemma \ref{lem:AH_is_Polish} together with \cite[Exer.\ 18.20(iii)]{MR1321597} furnishes a Borel right-inverse $t$ to $q$; just as above, $pt:\mathrm{AH}(\Gamma)\to\mathcal{D}[\Gamma]$ is then a reduction as desired.
\end{proof}
These maps and objects array into a commutative diagram of sets as follows; here $w$ is simply $v\circ e^{-1}$, and $s$ and $t$ are as in the proof above (the existence of $t$, in particular, is premised on $\Gamma$ being non-abelian):
\begin{equation}
\label{eq:diagram}
\begin{tikzcd}[row sep=large,column sep=large]
\mathrm{DF}(\Gamma) \arrow[d, "p"] \arrow[r, "q"'] & \mathrm{AH}(\Gamma) \arrow[d, "v"'] \arrow[r, "e"] \arrow[l, "t"', dashed, bend right] & \mathcal{E}(\Gamma) \arrow[ld, "w"] \\
{\mathcal{D}[\Gamma]} \arrow[r, "u"] \arrow[u, "s", dashed, bend left] & \mathrm{C}(\Gamma)                                &                                   
\end{tikzcd}
\end{equation}
Recall now the view of $\mathrm{DF}(\Gamma)$ as a \emph{space of marked Kleinian groups isomorphic to $\Gamma$}.
Next, regard $\Gamma$ as the fundamental group of some fixed complete hyperbolic $3$-manifold $N$; $\mathrm{DF}(\Gamma)$ may then also be viewed as \emph{the space of marked complete hyperbolic $3$-manifolds $M_\rho$ which are homotopy equivalent to $N$}.
Here $M_\rho$ is what in the notation of Theorem \ref{thm:discretetomanifolds} would be denoted $M_{\sigma(\mathrm{im}(\rho))}$, but now the isomorphism $\rho:\pi_1(N)\to\mathrm{im}(\rho)$ encodes something further as well: by Whitehead's Theorem \cite[Theorem 4.5, Prop.\ 1B.9]{MR1867354}, it encodes the homotopy class of a homotopy equivalence $N\to M_\rho$, and it is this that we take as our definition of a \emph{marking} of $M_\rho$.
Note that the passage between these two perspectives requires nothing more than the machinery of Section \ref{section:bireducibility}.
In particular, it is Borel, and $\mathrm{PSL}(2,\mathbb{C})$-conjugacy on the group side continues to correspond to orientation-preserving isometry on the manifold side; formal details are left to the reader.
With the aid of a lemma from Section \ref{subsection:the_basics}, we are now in a position to prove the following.
\begin{proposition}
\label{prop:marked_smooth}
Fix a complete orientable hyperbolic $3$-manifold $N$ with finitely generated $\pi_1(N)$.
The classification of marked complete hyperbolic $3$-manifolds homotopy equivalent to $N$ up to marked orientation-preserving isometry is Borel reducible to $=_{\mathbb{R}}$; so, thus, is the classification of all marked complete hyperbolic $3$-manifolds with finitely generated fundamental group.
\end{proposition}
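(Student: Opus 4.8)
The plan is to pass, via the translation mechanisms of Section~\ref{section:bireducibility}, from the manifold side to the group side, where the statement becomes almost a tautology. Fix $N$ and put $\Gamma=\pi_1(N)$, a torsion-free finitely generated Kleinian group. A marking of a hyperbolic manifold $M_\rho$ homotopy equivalent to $N$ is, by Whitehead's theorem, nothing but a homotopy class of homotopy equivalence $N\to M_\rho$, and this is recorded precisely by the isomorphism $\rho\colon\pi_1(N)\to\mathrm{im}(\rho)$; marked orientation-preserving isometry of two such manifolds then corresponds exactly to $\mathrm{PSL}(2,\mathbb{C})$-conjugacy of the associated representations. So the first task is to check that the correspondence $M_\rho\leftrightarrow\rho$ is implemented by Borel maps in both directions --- which is immediate from the constructions of Section~\ref{section:bireducibility}, modulo the routine bookkeeping needed to carry the marking datum along --- so that the problem reduces to showing that the orbit equivalence relation $\cong_{\mathrm{conj}}$ induced on $\mathrm{DF}(\Gamma)$ by the conjugation action of $\mathrm{PSL}(2,\mathbb{C})$ is Borel reducible to $=_{\mathbb{R}}$. (Diagram~\eqref{eq:diagram}, together with the fact that the quotient map $q\colon\mathrm{DF}(\Gamma)\to\mathrm{AH}(\Gamma)$ has the $\cong_{\mathrm{conj}}$-classes as its fibers, organizes this picture.)

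When $\Gamma$ is nonabelian this will be immediate: Lemma~\ref{lem:AH_is_Polish} tells us that $\mathrm{AH}(\Gamma)$ is Polish, hence a standard Borel space, hence Borel-embeds into $\mathbb{R}$, and composing such an embedding with the continuous surjection $q$ produces a Borel reduction of $\cong_{\mathrm{conj}}$ to $=_{\mathbb{R}}$. I will want to emphasize that this case does \emph{not} call on the Ending Lamination Theorem: the mathematical content is exactly the input to Lemma~\ref{lem:AH_is_Polish}, namely the identification of $\mathrm{DF}(\Gamma)$ with a closed subset of the (smooth, finite-dimensional) character variety. If instead $\Gamma$ is abelian, then, being torsion-free, finitely generated, and Kleinian, it is trivial, infinite cyclic, or free abelian of rank $2$; these are finitely many cases, and in each of them $\cong_{\mathrm{conj}}$ is concretely classifiable by an explicit, visibly Borel complete invariant --- a point in the trivial case, the squared trace of the image of a generator when $\Gamma\cong\mathbb{Z}$, the similarity class of the rank-$2$ parabolic translation lattice (with its marked basis) when $\Gamma\cong\mathbb{Z}\oplus\mathbb{Z}$ --- as is recorded in our treatment of elementary groups in Section~\ref{subsection:the_basics}. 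This completes the proof of the first assertion.

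For the second assertion, I would enumerate representatives $N_0,N_1,\dots$ of the homotopy types of complete hyperbolic $3$-manifolds with finitely generated fundamental group; there are only countably many, since each such fundamental group is, up to isomorphism, among the countably many finitely generated torsion-free groups arising in this way (by \cite{MR380763}). The parameter space of all marked complete hyperbolic $3$-manifolds with finitely generated fundamental group is then the Borel disjoint union of the parameter spaces attached to the $N_k$, and the marked-isometry relation on it is the disjoint union of the corresponding relations; applying the first assertion on each summand and tagging the resulting reductions by the index $k$ then yields a Borel reduction into $=_{\mathbb{N}}\times=_{\mathbb{R}}\ \sim_B\ =_{\mathbb{R}}$, as desired. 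The only genuine obstacle in the whole argument lies in the nonabelian step, and even there it has been prepackaged in Lemma~\ref{lem:AH_is_Polish}; after that, the one thing to be careful about is the abelian/low-rank cases, where $\mathrm{AH}(\Gamma)$ can fail to be Hausdorff and its standard Borel structure must be exhibited by hand.
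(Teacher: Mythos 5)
Your proposal is correct and follows essentially the same route as the paper: pass via the Section \ref{section:bireducibility} machinery to the $\mathrm{PSL}(2,\mathbb{C})$-conjugacy relation on $\mathrm{DF}(\pi_1(N))$, settle the nonabelian case by the quotient map $q$ onto the Polish space $\mathrm{AH}(\pi_1(N))$ furnished by Lemma \ref{lem:AH_is_Polish}, and sum over the countably many homotopy types for the second assertion. The only (harmless) divergence is the abelian case, where you supply direct complete invariants for the marked problem (trace squared, similarity class of the marked translation lattice), whereas the paper instead precomposes the Lemma \ref{lem:elementary} classification of unmarked elementary groups with $p$ and invokes the weak-reduction Lemma \ref{lem:weak_reduction}; both discharges are valid.
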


\begin{proof}
Since the concluding classification problem amounts to a countable sum of those of the first sort, it will suffice to prove the first claim.
By our discussion just above, our task is to show that the $\mathrm{PSL}(2,\mathbb{C})$-conjugacy relation on $\mathrm{DF}(\pi_1(N))$ Borel reduces to the equality relation on a Polish space.
By Lemma \ref{lem:AH_is_Polish}, if $\pi_1(N)$ is nonabelian then $q:\mathrm{DF}(\pi_1(N))\to\mathrm{AH}(\pi_1(N))$ is such a reduction.
If, on the other hand, $\pi_1(N)$ is abelian, then its discrete faithful images in $\mathrm{PSL}(2,\mathbb{C})$ are elementary; for a definition, see the discussion preceding Lemma \ref{lem:elementary}, whereupon the latter furnishes us with a Borel reduction of $\mathrm{PSL}(2,\mathbb{C})$-conjugacy on $\mathcal{D}[\pi_1(N)]$ to $=_{\mathbb{C}}$.
Precomposing this with the map $p:\mathrm{DF}(\pi_1(N))\to\mathcal{D}[\pi_1(N)]$ then defines a weak reduction of $\mathrm{PSL}(2,\mathbb{C})$-conjugacy on $\mathrm{DF}(\pi_1(N))$ to $=_{\mathbb{C}}$, and this, by Lemma \ref{lem:weak_reduction}, completes the argument.
\end{proof}
On display here is, arguably, a limitation of the invariant descriptive set theoretic framework: its failure to distinguish between abstract and more ``meaningful'' solutions to classification problems.
The essential content of Proposition \ref{prop:marked_smooth}, namely that marked algebraically finite hyperbolic $3$-manifolds naturally, but somewhat abstractly, correspond up to marked isometry to points in a Polish space $\mathrm{AH}(\pi_1(M))$ has been known for a long time.
But it was only with the provision, via the Ending Lamination Theorem, of more geometrically meaningful complete invariants for such $M$ that their classification problem was generally accredited as solved.

As noted in Section \ref{subsection:invariant_DST}, however, \emph{anti}-classification results derive much of their force from precisely this indiscriminancy.
And it is results of this latter sort which form this section's main contribution.
In contrast with both Proposition \ref{prop:marked_smooth} and the Ending Lamination Theorem, we will show not only that the most obvious assignment of complete invariants to fundamental classes of unmarked hyperbolic $3$-manifolds fails to take values in any Polish space, but that \emph{every} Borel assignment must fail to.
More precisely:
\begin{theorem}
\label{thm:line_bundles}
Let $S$ be a closed orientable hyperbolizable surface.
The classification up to isometry of doubly degenerate hyperbolic manifolds homeomorphic to $S\times\mathbb{R}$ is Borel equivalent to $E_0$.
In particular, there exists no Borel assignment of complete numerical invariants, up to $\mathrm{PSL}(2,\mathbb{C})$-conjugacy, to the elements of $\mathcal{D}[\pi_1(S)]$.
\end{theorem}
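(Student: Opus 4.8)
The first statement of the theorem identifies the isometry classification of doubly degenerate hyperbolic manifolds homeomorphic to $S\times\mathbb{R}$ with $E_0$ via bireducibility in both directions, and the "in particular" clause then follows because $E_0$ is not concretely classifiable (it does not Borel reduce to $=_\mathbb{R}$, hence not to $=_{\mathbb{C}}$), so the classification of $\mathcal{D}[\pi_1(S)]$ up to conjugacy --- which sits above this subclassification via the translations of Section \ref{section:bireducibility} --- admits no Borel assignment of numerical complete invariants. The substantive content is the equivalence with $E_0$, and the plan is to prove the two reductions separately.

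\textbf{The lower bound ($E_0\leq_B$ the isometry relation).} Here the key geometric input is the structure of the space $\mathrm{AH}(\pi_1(S))$ together with the Ending Lamination Theorem in the doubly degenerate case. A doubly degenerate hyperbolic manifold homeomorphic to $S\times\mathbb{R}$ is, up to isometry, determined by an unordered pair of distinct ending laminations $\{\lambda^+,\lambda^-\}$ on $S$, each a filling lamination --- i.e.\ a point of the space of doubly degenerate ending data, which is a dense subset of $\mathcal{PML}(S)\times\mathcal{PML}(S)$ modulo the involution swapping coordinates. The unmarked classification is obtained from the marked one (which is parametrized by this lamination data in $\mathrm{AH}$) by quotienting by the action of $\mathrm{MCG}(S)$ (or rather $\mathrm{Out}(\pi_1(S))$, which contains it as a finite-index subgroup by Dehn--Nielsen--Baer), and this action is \emph{not} properly discontinuous on the doubly degenerate locus --- in fact the mapping class group acts minimally on filling laminations (this is the classical fact that the action on $\mathcal{PML}(S)$ is minimal on the support of the Thurston measure / is "chaotic" in the relevant sense). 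The strategy is to exhibit a Borel reduction from $E_0$ into this orbit equivalence relation. I would follow the approach of \cite{MR4095420}: encode a binary sequence $x\in\{0,1\}^\mathbb{N}$ as a lamination built from a fixed "coding" configuration --- e.g.\ via an infinite product of mapping classes whose combinatorics reads off $x$, using a pseudo-Anosov-like or partial pseudo-Anosov construction so that two sequences $x,y$ are eventually equal iff the resulting laminations differ by a single mapping class (hence the manifolds are isometric), while tails that disagree infinitely often produce laminations in distinct $\mathrm{MCG}$-orbits. One must check the coding map $\{0,1\}^\mathbb{N}\to\mathrm{DF}(\pi_1(S))\to\mathcal{D}[\pi_1(S)]$ is Borel --- this follows from continuity of the subgroup-generation map into $\mathcal{D}(\mathrm{PSL}(2,\mathbb{C}))$ already used in Lemma \ref{lem:AH_is_Polish} and Lemma \ref{lem:Dfg} --- and that the equivalence is exactly captured, which is where \cite{MR4095420} does the real work.

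\textbf{The upper bound (the isometry relation $\leq_B E_0$).} This is where I would invoke \cite{MR4252215}: the relevant statement is that the action of $\mathrm{MCG}(S)$ on the appropriate space of ending data is \emph{hyperfinite}, or that the orbit equivalence relation of $\mathrm{Out}(\pi_1(S))$ restricted to the doubly degenerate locus of $\mathrm{AH}(\pi_1(S))$ is essentially hyperfinite. Concretely: the doubly degenerate manifolds homeomorphic to $S\times\mathbb{R}$ form a Borel subset of $\mathfrak{C}^*_c(\mathsf{Isom}^+,\mathbb{H}^3)$ (detecting "homeomorphic to $S\times\mathbb{R}$", "doubly degenerate" are Borel conditions --- the first via the homotopy-type partition discussed in the overview and standard tameness, the second via the non-geometric-finiteness condition together with Lemma \ref{lem:finiteness_conditions_Borel}-type arguments and the core/limit-set machinery of Section \ref{subsection:the_basics}); by Corollary \ref{cor:classwise} and Lemma \ref{lem:commuting_square} the isometry relation on it is Borel bireducible with $E_\zeta$ restricted to the doubly degenerate locus; the Ending Lamination Theorem identifies this, classwise-Borel, with the $\mathrm{MCG}(S)$-action on ending-lamination pairs; and the main theorem of \cite{MR4252215} gives hyperfiniteness of that action, hence Borel reducibility to $E_0$. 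I would spell out that the identifications are implemented by Borel maps --- the passage from a discrete faithful representation to its pair of ending laminations is Borel, which requires knowing that the end-invariant-extraction process (via the compact core and the asymptotic geometry of the ends) can be carried out measurably; this can be cited from the literature surrounding \cite{MR2582104, MR2876139, MR3008919} or argued directly using exhaustions à la Lemmas \ref{lem:exhaustion} and \ref{lem:canonical_exhaustion}.

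\textbf{The main obstacle.} The hard part will be the verification that the two classical results being cited --- the coding construction underlying the $E_0$ lower bound from \cite{MR4095420}, and the hyperfiniteness upper bound from \cite{MR4252215} --- really do, after the translations of Section \ref{section:bireducibility}, assemble into \emph{Borel} reductions at the level of our parameter spaces $\mathcal{D}[\pi_1(S)]$ and $\mathfrak{C}^*_c(\mathsf{Isom}^+,\mathbb{H}^3)$, rather than merely abstract statements about orbit equivalence relations on $\mathrm{AH}$ or on lamination spaces. In other words: assembling the commutative diagram \eqref{eq:diagram} in the doubly degenerate setting and checking that every edge is Borel (the edge $e:\mathrm{AH}(\pi_1(S))\to\mathcal{E}(\pi_1(S))$, giving the ending data as a Borel function of the representation, is the delicate one). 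Once that diagram is in place with Borel edges, the two cited theorems slot in directly and the bireducibility with $E_0$ follows; the final "in particular" clause is then immediate since $E_0\not\leq_B\,=_{\mathbb{C}}$ and, by Lemma \ref{lem:commuting_square}, $E(\mathrm{PSL}(2,\mathbb{C}),\mathcal{D}[\pi_1(S)])$ is bireducible with $E_\zeta$, which has the doubly degenerate isometry relation (hence $E_0$) as a subequivalence relation carried by a Borel reduction, so no Borel map can send it into $\mathbb{R}^{\mathbb{N}}$-valued numerical invariants either.
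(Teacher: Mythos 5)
Your upper bound and your derivation of the ``in particular'' clause are essentially the paper's own route: restrict to the doubly degenerate locus, use diagram (\ref{eq:diagram}) (in its pared form) to pass from $\mathrm{PSL}(2,\mathbb{C})$-conjugacy on $\mathcal{D}^\star_{\mathrm{dd}}[\pi_1(S)]$ to the $\mathrm{Out}(\pi_1(S),P)$-action on $\mathrm{DD}^\star(\pi_1(S))$, identify the latter with the mapping class group action on pairs of ending laminations, and quote Przytycki--Sabok. One point you flag as delicate is actually free in this case: on the doubly degenerate locus the ending-invariant map is a \emph{homeomorphism} onto $(\mathcal{EL}(S)\times\mathcal{EL}(S))\setminus\Delta$ (Theorem \ref{thm:doubly_degenerate}, i.e.\ \cite[Thm.\ 6.5]{MR2582104}), so no separate measurability argument for $e$ is needed here; also note the invariants live in $\mathcal{EL}(S)$ (unmeasured filling laminations), not in $\mathcal{PML}(S)$, and the unordered-pair/index-two issues are handled by a weak reduction into $F\times F$ and closure of hyperfiniteness under finite-index extensions.

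The genuine gap is in your lower bound. You propose to extract from \cite{MR4095420} a direct Borel reduction of $E_0$, asserting that eventually equal (or shift-equivalent) coding sequences yield isometric manifolds \emph{and} that sequences with essentially different tails yield manifolds in distinct isometry classes, deferring the latter to the citation. But \cite[Prop.\ 12.19]{MR4095420} does not supply that exactness: the correspondence between the shift space and the family $N_\gamma$ is deliberately coarse (coarse basepoints, based Gromov--Hausdorff convergence), and what one actually gets is continuity and injectivity of the point map $x\mapsto N_x$, the homomorphism direction (shift-equivalence implies isometry), and only a \emph{countable-to-one} map on equivalence classes --- not injectivity on classes. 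The ``distinct tails give distinct $\mathrm{MCG}$-orbits of ending data'' step is precisely the unverified hard part, and it is exactly what the paper's argument is structured to avoid: instead of building an honest reduction from $E_{\mathbb{Z}}$, the paper pushes forward the Bernoulli measure along the (graph of the) construction to obtain a nonatomic, ergodic probability measure for an auxiliary relation $F$, concludes $E_0\leq_B F$ by Gao's criterion (Lemma \ref{lem:Gao_E0_lemma}), uses the countable-to-one class map as a \emph{weak} reduction (Lemma \ref{lem:weak_reduction}) to rule out concrete classifiability of the conjugacy relation, and then invokes the Glimm--Effros dichotomy to land $E_0$ below it. As written, your lower bound rests on a claim the cited construction does not prove; either you must establish that orbit-level injectivity yourself (nontrivial), or replace this step by the measure-theoretic detour just described.
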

The theorem supplies the third item of the following.
\begin{theorem}
\label{thm:complexity_3-mans}
The Borel complexity degrees of the isometry problem for the major finiteness classes of complete hyperbolic $3$-manifolds are as follows:
\begin{itemize}
\item for finite-volume manifolds, it is $=_{\mathbb{N}}$;
\item for geometrically finite manifolds, it is $=_{\mathbb{R}}$;
\item for algebraically finite manifolds, it is at least $E_0$.
\end{itemize}
Corresponding results hold for the $\mathrm{PSL}(2,\mathbb{C})$-conjugacy relation on the corresponding classes of Kleinian groups.
\end{theorem}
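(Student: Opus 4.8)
The plan is to prove Theorem \ref{thm:complexity_3-mans} by assembling the three finiteness classes one at a time, in each case obtaining matching upper and lower Borel complexity bounds, and then transferring everything to the Kleinian side by the $n=3$ instance of Corollary \ref{cor:classwise}. The first item is immediate: finite-volume hyperbolic $3$-manifolds form a Borel subclass of $\mathfrak{C}^*_c(\mathsf{Isom},\mathbb{H}^3)$ by the $\mathrm{PSL}(2,\mathbb{C})$-analogue of Lemma \ref{lem:finiteness_conditions_Borel} (the space $\mathcal{D}_{\mathrm{fv}}(\mathrm{PSL}(2,\mathbb{C}))$ is Borel by \cite[Cor.\ I.3.1.8]{MR903850}), and by Mostow--Prasad rigidity together with the fact that, for each fixed volume there are only finitely many such manifolds and only countably many volumes occur, there are only countably many isometry classes. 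Hence Theorem \ref{thm:equivalence_analytic} and the argument of Corollary \ref{cor:compact_manifolds_countable} give that the isometry relation is Borel equivalent to $=_{\mathbb{N}}$; the corresponding statement for lattices in $\mathrm{PSL}(2,\mathbb{C})$ follows from Corollary \ref{cor:classwise} and Lemma \ref{lem:lifting_orientation_preserving}.

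The second item has two halves. For the lower bound $=_{\mathbb{R}}\leq_B$, one exhibits an uncountable Borel family of pairwise non-isometric geometrically finite hyperbolic $3$-manifolds admitting a Borel transversal; a quasi-Fuchsian family $QF(S)\cong\mathcal{T}(S)\times\mathcal{T}(\bar S)$ over a fixed closed hyperbolizable surface $S$ of genus $\geq 2$ works, and that this embeds as a Borel subset of $\mathcal{D}[\pi_1(S)]$ reducing $=_{\mathbb{C}}$ (hence $=_{\mathbb{R}}$) to conjugacy is essentially the content of Lemma \ref{lem:AH_is_Polish} and the diagram \eqref{eq:diagram}, since on $QF(S)$ the action $\zeta$ of $\mathrm{Out}(\pi_1 S)$ is properly discontinuous. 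For the upper bound, the key point is that a geometrically finite Kleinian group $\Gamma$ has, by the $2$-dimensional template, ending invariants that are entirely ``parabolic/conformal'' — no ending laminations occur — so the Ending Lamination Theorem specializes to say that $\mathrm{AH}_{\mathrm{gf}}(\Gamma)$ is parametrized by finitely many Teichm\"uller-space coordinates of the conformal boundary together with discrete cusp data, and the quotient by $\mathrm{Out}(\Gamma)$ remains properly discontinuous. Concretely, I would invoke (the $3$-dimensional, geometrically finite case of) the machinery around \cite[Prop.\ E.1.9]{MR1219310} and the convex-core/limit-set lemmas of Section \ref{subsection:the_basics} (Lemmas \ref{lem:limitsets}, \ref{lem:convexcore}, Corollary \ref{cor:loose_ends}) to show the geometrically finite locus is Borel in $\mathcal{D}[\pi_1 M]$ and that $u$ restricted to it admits a Borel transversal valued in a Polish space; partitioning over the countably many homeomorphism types (the analogue of the discussion after the incompressible-ends Ending Lamination Theorem above) and applying Lemma \ref{lem:partition_ctbl_structures}-style bookkeeping then yields a Borel reduction to $=_{\mathbb{R}}$.

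The third item is the main obstacle and is exactly Theorem \ref{thm:line_bundles}: doubly degenerate structures on $S\times\mathbb{R}$ already realize the degree $E_0$, so $E_0\leq_B$ isometry on algebraically finite hyperbolic $3$-manifolds. For the lower bound in Theorem \ref{thm:complexity_3-mans} I would simply cite Theorem \ref{thm:line_bundles} together with the observation that $S\times\mathbb{R}$ is algebraically finite (its fundamental group is $\pi_1(S)$, finitely generated) and that the family of doubly degenerate structures on it embeds as a Borel subset of $\mathcal{D}[\pi_1(S)]$ via $\sigma$ and $\nu$ of Section \ref{section:bireducibility}; the hard analytic work — that the end-invariant parametrization by pairs of ending laminations, modulo the $\mathrm{MCG}(S)$-action, carries the combinatorics of $E_0$ — is deferred to the proof of Theorem \ref{thm:line_bundles} in Section \ref{subsection:classifying_gi_ends}, where \cite{MR4095420} and \cite{MR4252215} are brought to bear. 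Finally, for each of the three classes the ``corresponding results hold'' clause is obtained mechanically: the $n=3$ Corollary \ref{cor:classwise} gives classwise Borel equivalence of $\cong_{\mathsf{Isom}^+(\mathbb{H}^3)}$ with $E(\mathrm{PSL}(2,\mathbb{C}),\mathcal{D}_{\mathrm{tf}}(\mathrm{PSL}(2,\mathbb{C})))$ on the matching Borel subclass, and Lemma \ref{lem:lifting_orientation_preserving} upgrades from the orientation-preserving group to $\mathrm{PGL}(2,\mathbb{C})\cong\mathrm{Isom}(\mathbb{H}^3)$, so no further argument is needed beyond checking (as in Lemma \ref{lem:finiteness_conditions_Borel}) that each finiteness condition cuts out a conjugation-invariant Borel set.
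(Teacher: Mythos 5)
Your proposal is correct and takes essentially the same route as the paper: the paper's proof of this theorem is nothing more than the assembly of Proposition \ref{prop:nonmonotonic} (first item), Theorem \ref{thm:geom_fin_hyp_3_mans_smooth} (second item; your quasi-Fuchsian/Bers lower bound is the paper's own alternative argument, its official one using the elementary groups of Lemma \ref{lem:elementary}, and your upper bound is the paper's properly-discontinuous-action argument, which it makes precise by stratifying by the parabolic locus, i.e.\ $\mathrm{Out}(\Gamma,P)$ acting on $\mathrm{GF}^\star(\Gamma,P)$ rather than $\mathrm{Out}(\Gamma)$ on the whole geometrically finite locus), and Theorem \ref{thm:line_bundles} (third item), with the group--manifold transfer via Corollary \ref{cor:classwise} and the reasoning of Lemma \ref{lem:lifting_orientation_preserving} exactly as you indicate. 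One small correction: over $\mathbb{C}$ one has $\mathrm{PGL}(2,\mathbb{C})=\mathrm{PSL}(2,\mathbb{C})$, so $\mathrm{PGL}(2,\mathbb{C})\not\cong\mathrm{Isom}(\mathbb{H}^3)$; the full isometry group is the index-$2$ extension of $\mathrm{PSL}(2,\mathbb{C})$ by an orientation-reversing (anti-holomorphic) involution, and the upgrade from orientation-preserving to full isometries goes through by the \emph{reasoning} of Lemma \ref{lem:lifting_orientation_preserving} (as the paper itself notes), not via that isomorphism.
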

Thus, intriguingly, we see both a rise and a fall in the complexity of isometry problem for classes of hyperbolic manifolds as we pass from $2$ dimensions to $3$ (compare Theorem \ref{thm:fgPSL2Rsmooth}).
We will argue Theorem \ref{thm:complexity_3-mans}'s three items in order in the following three sections, recording a number of related results along the way.

\subsection{Classifying the basics}
\label{subsection:the_basics}
To better suggest the scope of the Ending Lamination Theorem, it is useful to first consider the latter in a few restricted contexts; let us proceed according to the hierarchy of finiteness conditions recorded in Definition \ref{def:finiteness_conditions}.
Applied to item (\ref{cond:closed}) of these, for example, the Ending Lamination Theorem tells us that \emph{homeomorphism type} alone is a complete invariant for closed hyperbolic $3$-manifolds up to isometry, since the latter have no ends; as such, it recovers the dimension $3$ instance of Mostow's Rigidity Theorem \cite{MR236383}.\footnote{The theorem's more obvious implication is that \emph{marked} homeomorphism type is a complete invariant for \emph{marked} closed $3$-manifolds up to isometry, but from this the unmarked version follows.
Similar remarks apply to the finite-volume case.}
Within our framework, the corollary that \emph{the classification of closed hyperbolic $3$-manifolds up to isometry is Borel bireducible with $=_{\mathbb{N}}$} is then immediate, by \cite{MR256399}, much as it was for Corollary \ref{cor:compact_manifolds_countable}.
Turning next to item (\ref{cond:finvolume}), \emph{finite-volume} complete hyperbolic $n$-manifolds, we know from Lemma \ref{lem:finiteness_conditions_Borel} that they define a Borel subspace of $\mathfrak{C}^*_c(\mathsf{Isom}^+,\mathbb{H}^n)$, and from \cite[Cor.\ 3.2]{MR1283875} and Corollary \ref{cor:classwise} that their classification is Borel reducible to $=_{\mathbb{R}}$, as noted above.
When $n=3$, the Ending Lamination Theorem supplies something a little better, namely a \emph{countable} list of complete invariants (namely, by the proof just below, the homeomorphism types of the associated pared manifolds discussed at the end of this subsection); much more generally, Stuck--Zimmer's result admits the following refinement:

\begin{proposition}
\label{prop:nonmonotonic}
For any $n\geq 3$, the classification of finite-volume complete hyperbolic $n$-manifolds up to isometry, and hence of lattices in $\mathrm{Isom}(\mathbb{H}^n)$ up to conjugacy, is Borel bireducible with $=_{\mathbb{N}}$.
\end{proposition}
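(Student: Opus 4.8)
The plan is to prove this by exhibiting reductions in both directions, with $=_{\mathbb{N}}$ clearly the target complexity since the whole point is that there are only countably many isometry classes. For the reduction $=_{\mathbb{N}}\leq_B$ (finite-volume isometry), I would first invoke Lemma \ref{lem:finiteness_conditions_Borel} (or rather its analogue for $\mathrm{Isom}^+(\mathbb{H}^n)$) to see that finite-volume complete hyperbolic $n$-manifolds form a Borel subspace $\mathfrak{F}$ of $\mathfrak{C}^*_c(\mathsf{Isom}^+,\mathbb{H}^n)$, and then observe that $\mathfrak{F}$ is nonempty and contains infinitely many isometry classes (e.g.\ by arithmetic constructions, or just by taking finite covers of a fixed closed hyperbolic $n$-manifold of increasing volume for $n\geq 2$, noting $n=3$ already has infinitely many closed examples). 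A Borel selection of one representative parameter per isometry class then witnesses $=_{\mathbb{N}}\leq_B\;\cong_{\mathsf{Isom}^+(\mathbb{H}^n)}\!\restriction\mathfrak{F}$, exactly as in the reverse-map half of Corollary \ref{cor:compact_manifolds_countable}.

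The substantive direction is $\cong_{\mathsf{Isom}^+(\mathbb{H}^n)}\!\restriction\mathfrak{F}\;\leq_B\;=_{\mathbb{N}}$, which requires two ingredients: that $\cong_{\mathsf{Isom}^+(\mathbb{H}^n)}\!\restriction\mathfrak{F}$ is analytic, and that it has only countably many classes. Analyticity is immediate from Theorem \ref{thm:equivalence_analytic} restricted to the Borel subspace $\mathfrak{F}$. For countability of the quotient, I would argue as follows. By Mostow--Prasad rigidity in dimensions $n\geq 3$, a finite-volume complete hyperbolic $n$-manifold is determined up to isometry by its diffeomorphism (indeed homotopy, for $n\geq 3$) type; and by Wang's finiteness theorem (or, for $n=3$, Thurston's hyperbolic Dehn surgery picture combined with the fact that finite-volume hyperbolic $3$-manifolds are interiors of compact manifolds, so \cite{MR256399} applies to the compact cores), there are at most countably many such diffeomorphism types. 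Concretely: every finite-volume complete hyperbolic $n$-manifold is the interior of a compact manifold with boundary (the thick part is compact, the cusps are products), and by \cite{MR256399} there are only countably many compact $n$-manifolds up to homeomorphism, hence only countably many finite-volume hyperbolic $n$-manifolds up to isometry. Then, exactly as in Corollary \ref{cor:compact_manifolds_countable}, the map sending $(\mathcal{U},c)\in\mathfrak{F}$ to the index of the isometry class of $M_{(\mathcal{U},c)}$ in a fixed enumeration has Borel fibers (each fiber is analytic and the complement of a countable union of analytic sets, hence Borel by Suslin's Theorem), so it is a Borel reduction to $=_{\mathbb{N}}$.

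Finally, the ``hence lattices'' clause follows from Corollary \ref{cor:classwise} (in its $\mathrm{Isom}^+$ formulation) together with Lemma \ref{lem:finiteness_conditions_Borel}: the classwise Borel isomorphism between $\cong_{\mathsf{Isom}^+(\mathbb{H}^n)}$ on $\mathfrak{C}^*_c(\mathsf{Isom}^+,\mathbb{H}^n)$ and $E(\mathrm{Isom}^+(\mathbb{H}^n),\mathcal{D}_{\mathrm{tf}}(\mathrm{Isom}^+(\mathbb{H}^n)))$ carries the Borel subspace $\mathfrak{F}$ onto the Borel subspace $\mathcal{D}_{\mathrm{fv}}(\mathrm{Isom}^+(\mathbb{H}^n))$ of torsion-free lattices, and a further appeal to Lemma \ref{lem:lifting_orientation_preserving} upgrades from $\mathrm{Isom}^+$ to $\mathrm{Isom}$ and allows torsion via the orbifold finiteness theorem (the countably-many-classes statement persists by Wang finiteness). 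The main obstacle, such as it is, is purely bookkeeping: making sure the ``interior of a compact manifold'' / finiteness input is cited in the precise form needed (Mostow--Prasad plus Wang, or for $n=3$ the tameness of finite-volume ends plus \cite{MR256399}), and confirming that the restriction of the classwise Borel isomorphism of Corollary \ref{cor:classwise} to the finite-volume locus is still a classwise Borel isomorphism of the restricted relations --- which is automatic since both the domain and codomain subspaces are Borel and invariant, and $\nu,\sigma$ restrict appropriately. No genuinely new mathematics beyond what is already assembled in the excerpt is needed; the proposition is a packaging of rigidity $+$ finiteness $+$ the Suslin-theorem argument of Corollary \ref{cor:compact_manifolds_countable}.
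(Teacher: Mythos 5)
Your proposal is correct and follows essentially the same route as the paper: Mostow--Prasad rigidity plus the fact that such manifolds deformation retract onto (equivalently, are interiors of) compact manifolds, so Cheeger--Kister \cite{MR256399} gives countably many isometry classes, after which the Suslin-theorem argument of Corollary \ref{cor:compact_manifolds_countable} and the classwise Borel equivalence of Corollary \ref{cor:classwise} finish the bireduction and the lattice clause. The only difference is bookkeeping detail (Wang finiteness as an alternative citation, explicit treatment of torsion), which the paper leaves implicit.
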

Clues to this fact are the pretty results that, viewed as a suborder of $\mathbb{R}$, the collection of finite volumes of complete hyperbolic $n$-manifolds is of ordertype $\omega^\omega$ when $n=3$ (by J\o rgensen--Thurston; see \cite{zbMATH03707297}), and of ordertype $\omega$ when $n>3$ \cite{zbMATH03367275, zbMATH01925566}, and thus that this collection, in any event, is countable.
The cleanest approach, though, is probably the following.
\begin{proof}
By Mostow--Prasad rigidity \cite{MR385005}, 
such manifolds are fully classified by their fundamental groups, and by \cite[Theorem 12.7.4]{MR4221225}, any such manifold deformation retracts onto a compact one.
Hence again by \cite{MR256399} there are only countably many such manifolds, and the argument concludes just as for Corollary \ref{cor:compact_manifolds_countable}.
\end{proof}
As noted, by the above coupled with Theorem \ref{thm:fgPSL2Rsmooth}, the classification problem for finite-volume hyperbolic $n$-manifolds \emph{decreases} (strictly, then weakly) in complexity as $n$ rises.
For the more general question of how the complexity of manifold classification problems varies with dimension, see our conclusion's Question \ref{ques:monotonic} and surrounding discussion.

We venture next onto the terrain of infinite-volume manifolds and, thereby, onto more substantial considerations of manifolds' ends.
Within the framework of the Ending Lamination Theorem, this is the point at which markings of these manifolds and of their ends assume a greater importance; so, in consequence, do the distinctions organizing diagram (\ref{eq:diagram}) of this section's introduction as well.
Recall that the preferred setting for that diagram was when $\Gamma$ was non-abelian.
By dispensing next with the class of elementary torsion-free Kleinian groups, we will free ourselves to thereafter restrict our attention, as is common, to that preferred setting.
We will review a few other objects for subsequent use along the way.

Recall the identification, discussed above, of $\hat{\mathbb{C}}$ with the sphere at infinity enclosing the Poincar\'{e} ball model of $\mathbb{H}^3$, and the attendant identification of the group of M\"{o}bius transformations of $\hat{\mathbb{C}}$ with $\mathrm{Isom}^+(\mathbb{H}^3)$.
Note next that any discrete subgroup $\Gamma$ of $\mathrm{Isom}^+(\mathbb{H}^3)$ determines a closed \emph{limit set} $\Lambda(\Gamma)\subseteq\hat{\mathbb{C}}$ of limit points of the action of $\Gamma$ on $\mathbb{H}^3$, along with a complementary \emph{ordinary set} or \emph{domain of discontinuity} $\Omega(\Gamma)=\hat{\mathbb{C}}\backslash\Lambda(\Gamma)$.
Such a $\Gamma$ is \emph{elementary} if the set $\Lambda(\Gamma)$ is finite,
or, equivalently, if $\Gamma$ contains an abelian subgroup of finite index.
Elementary Kleinian groups are of such anomalous simplicity as to regularly merit separate treatments, a tradition on display in both the first proof of Theorem \ref{thm:fgPSL2Rsmooth} and the next lemma, which will help to streamline our subsequent analyses.
Moreover, the class $\mathcal{E}$ of elementary torsion-free Kleinian groups forms a Borel subset of $\mathcal{D}_{\mathrm{af}}(\mathrm{PSL}(2,\mathbb{C}))$; to see this, observe again that the group-isomorphism relation on the latter is analytic and possesses only countably many classes, and that $\mathcal{E}$ consists of exactly $2$ of them, as the proof below will show.
\begin{lemma}
\label{lem:elementary}
The classification of elementary torsion-free Kleinian groups up to conjugacy is Borel bireducible with the relation $=_{\mathbb{R}}$ of equality on the real numbers.
\end{lemma}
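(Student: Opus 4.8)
The key structural fact is that there are, up to isomorphism, exactly two torsion-free elementary Kleinian groups: the trivial group $\{1\}$ (corresponding, via $\sigma$ of Theorem \ref{thm:discretetomanifolds}, to the manifold $\mathbb{H}^3$ itself) and the infinite cyclic group $\mathbb{Z}$. A nontrivial torsion-free elementary Kleinian group $\Gamma$ is virtually abelian, and since it is discrete and torsion-free in $\mathrm{PSL}(2,\mathbb{C})$, it cannot contain $\mathbb{Z}^2$ (that would force a cusp cross-section which is not geometrically admissible here — more precisely, a discrete torsion-free $\mathbb{Z}^2$ in $\mathrm{PSL}(2,\mathbb{C})$ is generated by two parabolics with a common fixed point, and such a group \emph{is} elementary, so I should instead simply list: the infinite torsion-free elementary Kleinian groups are exactly $\mathbb{Z}$ and $\mathbb{Z}^2$, generated respectively by a single parabolic or loxodromic element, or by two parabolics sharing a fixed point). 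So in fact there are \emph{three} isomorphism types: $\{1\}$, $\mathbb{Z}$, $\mathbb{Z}^2$. The plan is to handle each type separately and show that in each case the conjugacy relation is Borel reducible to $=_{\mathbb{C}}$ (hence to $=_{\mathbb{R}}$), and conversely that $=_{\mathbb{R}}$ reduces into the collection (indeed into the cyclic-loxodromic part alone).

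\textbf{Upper bound.} First I would observe, as indicated in the text preceding the lemma, that $\mathcal{E}$, the set of elementary torsion-free Kleinian groups, is a Borel subset of $\mathcal{D}_{\mathrm{af}}(\mathrm{PSL}(2,\mathbb{C}))$: the group-isomorphism relation on $\mathcal{D}_{\mathrm{af}}(\mathrm{PSL}(2,\mathbb{C}))$ is analytic with countably many classes, and $\mathcal{E}$ is the union of the three classes named above, each of which is Borel (membership in the isomorphism class of $\{1\}$, $\mathbb{Z}$, or $\mathbb{Z}^2$ is a Borel condition, e.g.\ by checking the rank of a generating set, which is computable in a Borel way from the Chabauty--Fell structure as in Lemma \ref{lem:Dfg}). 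Partition $\mathcal{E}$ into its three invariant Borel pieces $\mathcal{E}_0,\mathcal{E}_1,\mathcal{E}_2$. On $\mathcal{E}_0=\{\{1\}\}$ the conjugacy relation is trivial. On $\mathcal{E}_1$, a group $\Gamma=\langle g\rangle$ is determined up to conjugacy by the conjugacy class of $g$ in $\mathrm{PSL}(2,\mathbb{C})$, which (for $g$ not the identity) is recorded by its trace-squared $\mathrm{tr}^2(g)\in\mathbb{C}$, a complete conjugacy invariant for non-identity non-unipotent elements and handled with a discrete flag for the parabolic case; the assignment $\Gamma\mapsto\mathrm{tr}^2(\text{primitive generator of }\Gamma)$ is Borel, since a primitive generator can be selected in a Borel way (it is the shortest nonidentity element with respect to a fixed word metric / the element realizing the minimal translation length, computed from the Chabauty--Fell data), and this is a reduction to $=_{\mathbb{C}}\times\{0,1\}$. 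On $\mathcal{E}_2$, $\Gamma\cong\mathbb{Z}^2$ is generated by two parabolics with a common fixed point $p\in\hat{\mathbb{C}}$; conjugating $p$ to $\infty$ one sees $\Gamma$ is conjugate to a lattice in $\mathbb{C}$ (translations), and two such are $\mathrm{PSL}(2,\mathbb{C})$-conjugate iff the lattices are related by the similarity group $z\mapsto az$; the classical modular $j$-invariant then gives a Borel complete invariant in $\mathbb{C}$. Assembling these via Lemma \ref{lem:partition_ctbl_structures} (or directly, since the pieces are Borel and the codomains embed disjointly in $\mathbb{R}$) yields a Borel reduction of conjugacy on $\mathcal{E}$ to $=_{\mathbb{R}}$.

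\textbf{Lower bound.} Conversely, I would exhibit a Borel reduction of $=_{\mathbb{R}}$ into the conjugacy relation on $\mathcal{E}_1$: map $t\in\mathbb{R}$, say $t>0$ to keep things clean and then adjust by a fixed Borel bijection $\mathbb{R}\to(0,\infty)$, to the cyclic group generated by the loxodromic element with translation length $t$ and zero rotation, i.e.\ (conjugating the axis to a fixed geodesic) $\gamma_t=\mathrm{diag}(e^{t/2},e^{-t/2})\in\mathrm{PSL}(2,\mathbb{C})$, and send $t\mapsto\langle\gamma_t\rangle\in\mathcal{D}[\mathbb{Z}]$. This is continuous (hence Borel) into the Chabauty--Fell space, and $\langle\gamma_t\rangle$ is conjugate to $\langle\gamma_s\rangle$ in $\mathrm{PSL}(2,\mathbb{C})$ iff $t=s$ (the translation length of the primitive generator is a conjugacy invariant), so this is a reduction of $=_{(0,\infty)}\cong\,=_{\mathbb{R}}$. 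Combined with the upper bound and the Cantor--Bernstein-type fact that $=_{\mathbb{R}}$ is at the bottom of the "concretely classifiable with uncountably many classes" degrees, Borel bireducibility follows.

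\textbf{Main obstacle.} The genuinely fiddly point is the Borel selection of a canonical (primitive) generator of a rank-one or rank-two elementary group purely from its Chabauty--Fell presentation, and relatedly the Borel measurability of the trace-squared / translation-length / $j$-invariant maps on these spaces; everything else is either classical hyperbolic geometry of elementary groups or a routine invocation of the descriptive-set-theoretic toolkit (Lemma \ref{lem:Dfg}, Luzin--Novikov uniformization, Lemma \ref{lem:partition_ctbl_structures}). I expect the cleanest route to the selection issue is to note that translation length and the location of the fixed-point set vary continuously on $\mathcal{D}(\mathrm{PSL}(2,\mathbb{C}))$ in the Chabauty--Fell topology, so the "minimal" generator is picked out by a Borel minimization, and then to conjugate by a Borel-chosen element of $\mathrm{PSL}(2,\mathbb{C})$ moving the fixed-point data to a fixed normal form — exactly the kind of Borel normalization already used implicitly in Section \ref{section:bireducibility}.
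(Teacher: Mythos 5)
Your proposal is correct and follows essentially the same route as the paper: partition the elementary torsion-free Kleinian groups into their finitely many types (trivial, parabolic cyclic, loxodromic cyclic, rank-two parabolic), assign on each type a complete conjugacy invariant in $\mathbb{C}$ in a Borel way, and note that the loxodromic cyclic family already supplies the $=_{\mathbb{R}}$ lower bound. The only differences are cosmetic: the paper records the invariant as a normalized multiplier/lattice parameter $\lambda$ and checks the assignment is continuous on each type via fixed-point normal forms, whereas you use $\mathrm{tr}^2$ and the $j$-invariant together with a Borel selection of a primitive generator, and you spell out the lower bound that the paper leaves implicit.
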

\begin{proof} See \cite[pp.\ 3, 60-61]{MR3586015} and \cite[Thm.\ 4.3.1]{MR1393195} for arguments that such a group $\Gamma$ of M\"{o}bius transformations is either:
\begin{enumerate}
\item cyclic, generated by a loxodromic element conjugate to a transformation of the form $z\mapsto\lambda z$ for some unique $\lambda\in\mathbb{C}$ with $|\lambda|>1$;
\item cyclic, generated by a parabolic element conjugate to the transformation $z\mapsto z+1$;
\item a rank $2$ abelian group, conjugate to a group generated by the elements $z\mapsto z+1$ and $z\mapsto z+\lambda$ for some unique $\lambda\in\mathbb{C}$ with positive imaginary part.
\end{enumerate}
Such groups, in other words, admit complete invariants of the form (\emph{type}, $\lambda)\in\{1,2,3\}\times\mathbb{C}$ (with $\lambda=0$ when the type is $2$).
That the assignment of these invariants is, over each type, continuous is not hard to see; a type $1$ $\Gamma$'s associated $\lambda$ falls in some open $U\subseteq\mathbb{C}$, for example, if and only if its conjugate taking the repelling and attracting fixed points (with respect to a choice of generator $g\in\Gamma$) of $\Gamma$ to $0$ and $\infty$, respectively, maps $g$ to a $z\mapsto\kappa z$ with $\kappa\in U$.
This will then hold as well for all type 1 $\Gamma'$ with generators in a sufficiently small open neighborhood of $g$, and the latter is a relatively open condition; details of this and the remaining cases are left to the reader.
(To be clear, we do not claim that the assignment is continuous as a map to $\{1,2,3\}\times\mathbb{C}$: we'll note that it isn't below.
We claim only that the assignment is continuous on the preimage of each $\{i\}\times\mathbb{C}$ (as one could alternately argue for the matrix representations of these groups via the trace function), and that this establishes the lemma.)
\end{proof}
With Lemma \ref{lem:elementary} behind us, we will henceforth focus on \emph{nonelementary} torsion-free Kleinian groups; within diagram \ref{eq:diagram}, this will free us to assume that $\Gamma$ is non-abelian, whereupon Lemma \ref{lem:AH_is_Polish} will apply.
In particular, the topology which $\mathrm{AH}(\Gamma)$ therein inherits from $\mathrm{DF}(\Gamma)$ will be Polish.
Since at least \cite{MR4554426}, this topology has been known as \emph{the algebraic topology} (this is the source of the ``A'' in $\mathrm{AH}(\Gamma)$; the ``H'' is for \emph{hyperbolic}), in contradistinction to \emph{the geometric} (i.e., \emph{Chabauty--Fell}) \emph{topology} predominating in our Sections \ref{section:bireducibility} and \ref{section:isometry_for_2} above.
The natural setting for the latter topology is a set, like $\mathcal{D}[\Gamma]$, of closed subgroups of $\mathrm{PSL}(2,\mathbb{C})$, as we've seen --- but from there it pulls back along $p:\mathrm{DF}(\Gamma)\to\mathcal{D}[\Gamma]$ to define a topology on the set underlying $\mathrm{DF}(\Gamma)$ which we will denote $\tau_G$.
By a standard abuse, we will also term $\tau_G$, as well as its associated quotient topology on the set underlying $\mathrm{AH}(\Gamma)$, \emph{the geometric topology}; similarly for \emph{the algebraic topology} $\tau_A$ which we first endowed $\mathrm{DF}(\Gamma)$ with when introducing it above.

The interactions between these two topologies can be quite subtle: as we will see shortly, a sequence of groups or manifolds may converge to one thing in one topology and to another in the other.
The evolution of associated objects alongside them (elements' limit sets or domains of discontinuity, for example) is, in general, more transparent when these two limits agree.
More formally, it is more transparent when convergence is with respect to the so-called \emph{strong topology} $\tau_S$ generated by $\tau_A\cup\tau_G$ on $\mathrm{DF}(\Gamma)$.
We write $\mathrm{SH}(\Gamma)$ (``S'' is for \emph{strong}) to denote the underlying set of $\mathrm{AH}(\Gamma)$ endowed with the quotient topology induced by $\tau_S$.
The following observation will be useful below.
\begin{lemma}
\label{lem:SH_Borel}
For any finitely generated Kleinian group $\Gamma$, the Borel $\sigma$-algebras generated by $\tau_A$ and $\tau_S$ on $\mathrm{DF}(\Gamma)$ and, thus, on $\mathrm{AH}(\Gamma)$ and $\mathrm{SH}(\Gamma)$ coincide.
\end{lemma}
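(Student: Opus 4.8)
The plan is to reduce everything to the single fact, established in Lemma~\ref{lem:AH_is_Polish}, that the quotient map $p\colon(\mathrm{DF}(\Gamma),\tau_A)\to\mathcal{D}[\Gamma]$ is Borel, and then to propagate this along $q$ to the quotient spaces. The inclusion $\sigma(\tau_A)\subseteq\sigma(\tau_S)$ is immediate, since $\tau_A\subseteq\tau_S$ by the definition of the join. For the reverse inclusion I would first observe that $\tau_S$ is second countable: $\tau_A$ is second countable (being the compact-open, equivalently the product, topology restricted to $\mathrm{DF}(\Gamma)\subseteq\mathrm{PSL}(2,\mathbb{C})^n$), and $\tau_G=p^{-1}[\mathrm{CF}]$ is second countable because the Chabauty--Fell topology on $\mathcal{D}[\Gamma]\subseteq\mathcal{F}(\mathrm{PSL}(2,\mathbb{C}))$ is (Section~\ref{subsection:spaces_of_subsets}), so the $p$-preimages of a countable basis for the latter form a basis for $\tau_G$. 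Hence the sets $B\cap p^{-1}[O]$, with $B$ ranging over a countable basis of $\tau_A$ and $O$ over a countable basis of the Chabauty--Fell topology, form a countable basis for $\tau_S=\tau_A\vee\tau_G$.

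Given this, every $\tau_S$-open subset of $\mathrm{DF}(\Gamma)$ is a countable union of such basic sets $B\cap p^{-1}[O]$, and each of these lies in $\sigma(\tau_A)$: we have $B\in\tau_A\subseteq\sigma(\tau_A)$, while $O$ is Chabauty--Fell Borel and $p$ is Borel by Lemma~\ref{lem:AH_is_Polish}, so $p^{-1}[O]\in\sigma(\tau_A)$. Consequently $\tau_S\subseteq\sigma(\tau_A)$, whence $\sigma(\tau_S)\subseteq\sigma(\tau_A)$, and the two Borel $\sigma$-algebras on $\mathrm{DF}(\Gamma)$ agree.

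To descend to $\mathrm{AH}(\Gamma)$ and $\mathrm{SH}(\Gamma)$, recall that both carry the quotient topology induced by $q\colon\mathrm{DF}(\Gamma)\to\mathrm{AH}(\Gamma)$ (from $\tau_A$ and $\tau_S$, respectively) on a common underlying set, and that $q$ is $\tau_A$-continuous onto the Polish space $\mathrm{AH}(\Gamma)$, hence (provided $\Gamma$ is nonelementary, so that Lemma~\ref{lem:AH_is_Polish} applies; the elementary case is subsumed by Lemma~\ref{lem:elementary}) admits a Borel section $t$ with $q\circ t=\mathrm{id}$, exactly as in the proof of Lemma~\ref{lem:commuting_square}. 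Since $\tau_A\subseteq\tau_S$, the $\mathrm{SH}$-quotient topology refines the $\mathrm{AH}$-quotient topology, so $\sigma(\tau_{\mathrm{AH}})\subseteq\sigma(\tau_{\mathrm{SH}})$. For the reverse inclusion: if $A$ is $\mathrm{SH}$-open then $q^{-1}[A]$ is $\tau_S$-open, hence $\sigma(\tau_A)$-measurable by the previous paragraph, and $A=t^{-1}[q^{-1}[A]]$ since $q\circ t=\mathrm{id}$, so $A$ is $\sigma(\tau_{\mathrm{AH}})$-measurable because $t$ is Borel; as the $\mathrm{SH}$-open sets generate $\sigma(\tau_{\mathrm{SH}})$, this yields $\sigma(\tau_{\mathrm{SH}})\subseteq\sigma(\tau_{\mathrm{AH}})$. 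The only point demanding any care is that $\tau_S$ and the $\mathrm{SH}$-topology are in general far from Polish---indeed, typically not even $T_0$---so one cannot simply invoke standard-Borel-space machinery; the second-countability observation and the Borel section $t$ are precisely what substitute for it, and the remainder is bookkeeping.
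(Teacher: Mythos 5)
Your proposal is correct and takes essentially the same route as the paper: the heart of both arguments is that the sets generating the Borel structure of the geometric topology are already $\tau_A$-Borel (the paper observes directly that the ``hit'' sets $\{\rho\mid\mathrm{im}(\rho)\cap V\neq\varnothing\}$ are $\tau_A$-open, which is exactly the content of the Borel-ness of $p$ in Lemma \ref{lem:AH_is_Polish} that you invoke), whence $\sigma(\tau_S)=\sigma(\tau_A)$ and the quotient statement follows. If anything your write-up is more explicit than the paper's, which compresses the second-countability bookkeeping and dismisses the descent to $\mathrm{AH}(\Gamma)$ and $\mathrm{SH}(\Gamma)$ (via the section $t$, hence under the standing nonelementary assumption) as immediate.
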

\begin{proof}
To prove the second assertion, clearly it suffices to prove the first one.
For this, fix a generating set $\{g_i\mid i<n\}$ of elements of $\Gamma$ and view $\mathrm{DF}(\Gamma)$ as a subspace of the product space $\mathrm{PSL}(2,\mathbb{C})^n$ in the manner of its initial definition above, so that as $i$ and $V$, respectively, range below $k$ and among the open subsets of $\mathrm{PSL}(2,\mathbb{C})$, the sets
$$U^A_{i,V}:=\{\rho:\Gamma\to\mathrm{PSL}(2,\mathbb{C})\mid \rho(g_i)\in V\}$$
define a subbasis for $\tau_A$.
Similarly, as $V$ ranges among the open subsets of $\mathrm{PSL}(2,\mathbb{C})$, the sets
$$U^G_{V}:=\{\rho:\Gamma\to\mathrm{PSL}(2,\mathbb{C})\mid \mathrm{im}(\rho)\cap V\neq\varnothing\}$$
generate the Borel $\sigma$-algebra $\mathcal{B}_G$ generated by $\tau_G$; write $\mathcal{B}_A$ and $\mathcal{B}_S$, respectively, for the algebras generated by $\tau_A$ and $\tau_S$ as well.
From this presentation it's clear that each $U^G_{V}$ is open in $\tau_A$, and hence that $\mathcal{B}_G\subseteq\mathcal{B}_A$.
Thus $\mathcal{B}_S=\mathcal{B}_A$.
\end{proof}
Let us now tie a few threads of our discussion together.
In the proof of Lemma \ref{lem:elementary}, we hinted that the type-class (1) therein isn't closed.
In fact there exist sequences of representations $\rho_i:\mathbb{Z}\to\mathrm{PSL}(2,\mathbb{C})$ with images all of type (1) which converge algebraically and geometrically to groups of type (2) and type (3), respectively.
For elegant description of one example, see \cite[Example 9.1.4]{MR4554426}; see also the treatment in \cite[\S 4.9]{MR3586015}. These are the building blocks of more complex and arresting examples (of simultaneous algebraic and geometric convergence, respectively, to finitely and infinitely generated groups, for instance (in particular, $\mathcal{D}_{\mathrm{fg}}(\mathrm{PSL}(2,\mathbb{C}))$ is not closed)), and they are emblems of the ways that loxodromic elements of Kleinian groups can ``degenerate'', in those groups' limits, into parabolic subgroups, or, at the level of the quotient manifolds, into \emph{cusps}.

We encountered such structures in Section \ref{section:isometry_for_2}; in three hyperbolic dimensions the story is as follows (see \cite[\S 3.2]{MR3586015} for further discussion):
\begin{itemize}
\item The parabolic elements of a Kleinian group $\Gamma$ organize into maximal parabolic subgroups of the stabilizers of their fixed points.
These are of rank 1 or 2 and patterned on the second or third type-class, respectively, of Lemma \ref{lem:elementary} (whose representatives therein fix the point $\infty\in\hat{\mathbb{C}}$).
Sullivan showed in \cite{MR639042} that if $\Gamma$ is finitely generated then these subgroups sort into finitely many conjugacy classes within $\Gamma$; these, in turn, correspond to the cusps of $N=\mathbb{H}^3/\Gamma$.
\item Topologically, the latter form ends (or what might be more precisely termed \emph{end neighborhoods}) of $N$ which, like their Lemma \ref{lem:elementary} prototypes' quotients, are either homeomorphic to $S^1\times\mathbb{R}\times\mathbb{R}$ or to $S^1\times S^1\times\mathbb{R}$.
\item Geometrically, cusps fall within the $\varepsilon$-\emph{thin part} of their ambient manifold $N$, where $\varepsilon$ is at most the Margulis constant for hyperbolic $3$-space: in a sense further encapsulated by horoballs, the geometry of $N$ attenuates in its cusps, ``pulling away'' from its main and thicker parts.
\end{itemize}
In consequence of this last point, hyperbolic $3$-manifolds $N$ are often studied via their \emph{decusped} portion $N^0$, resulting from replacing neighborhoods of $N$'s rank-$1$ and rank-$2$ cusps by (or, equivalently, retracting them to) boundary annuli or tori, respectively.
This is the meaning of the $N^0_\rho$ appearing in the second statement of the Ending Lamination Theorem in Section \ref{subsection:overview}.

At times, this practice in turn dictates a finer approach to diagram (\ref{eq:diagram}).
See \cite[\S 7]{MR2096234} or \cite[\S 4]{MR3001608} for succinct accounts of the deformation spaces $\mathrm{AH}(M,P)$ of \emph{pared manifolds} $(M,P)$ like $N^0_\rho$ above.
We'll forego this formalism except in the proof of Theorem \ref{thm:geom_fin_hyp_3_mans_smooth} and in Section \ref{subsection:classifying_gi_ends} below; each of the latter, though, will involve an appeal to the subdiagram of (\ref{eq:diagram}) associated to $\mathrm{AH}(M,P)$.
We defer a discussion of the details until then; note for later use, though, the following consequence of the finiteness recorded in the first bulleted item above:
\begin{lemma}
\label{lem:countable_par-sets}
Write $\mathsf{par}$ for the set of parabolic elements of $\mathrm{PSL}(2,\mathbb{C})$.
For any finitely generated $\pi_1(M)$ of a complete orientable hyperbolic $3$-manifold $M$, the set $\mathcal{P}(\pi_1(M))=\{\rho^{-1}(\mathsf{par})\mid\rho\in\mathrm{DF}(\pi_1(M))\}$ is countable.
\qed
\end{lemma}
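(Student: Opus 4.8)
The plan is to derive Lemma \ref{lem:countable_par-sets} from Sullivan's finiteness theorem together with a group-theoretic pigeonhole argument. Fix a complete orientable hyperbolic $3$-manifold $M$ with $\Gamma := \pi_1(M)$ finitely generated, and fix a finite generating set $g_1, \dots, g_n$ of $\Gamma$. For any $\rho \in \mathrm{DF}(\Gamma)$, write $\Gamma_\rho = \mathrm{im}(\rho)$, a finitely generated Kleinian group. The subset $\rho^{-1}(\mathsf{par}) \subseteq \Gamma$ is exactly the set of $\gamma \in \Gamma$ such that $\rho(\gamma)$ is parabolic (or the identity), and I want to show that as $\rho$ ranges over $\mathrm{DF}(\Gamma)$ only countably many such subsets arise.

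First I would recall, as discussed in the excerpt (citing \cite{MR639042}), that Sullivan's theorem gives that for a finitely generated Kleinian group $\Gamma_\rho$ the maximal parabolic subgroups sort into only finitely many $\Gamma_\rho$-conjugacy classes; equivalently, the parabolic elements of $\Gamma_\rho$ constitute finitely many $\Gamma_\rho$-conjugacy classes of cyclic-or-rank-$2$-abelian maximal parabolic subgroups, each of which is finitely generated. Transporting this across the isomorphism $\rho \colon \Gamma \to \Gamma_\rho$, the set $\rho^{-1}(\mathsf{par})$ is a union of finitely many $\Gamma$-conjugacy classes of (the non-identity elements of) maximal abelian parabolic subgroups $P_1, \dots, P_k \le \Gamma$, with each $P_j$ generated by at most two elements. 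The key structural point is therefore that $\rho^{-1}(\mathsf{par})$ is determined by a finite tuple of finitely generated subgroups of $\Gamma$, together with the (automatically determined) data of taking the union of their full conjugacy classes.

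Next I would invoke the countability of the relevant combinatorial data. Since $\Gamma$ is finitely generated, it is countable, hence has only countably many finitely generated subgroups (indeed only countably many elements, so only countably many finite tuples of elements, hence only countably many finitely generated subgroups). Thus there are only countably many possible choices of a finite list $(P_1, \dots, P_k)$ of finitely generated subgroups of $\Gamma$, and each such list determines at most one candidate subset $\bigcup_j \bigcup_{\gamma \in \Gamma} \gamma P_j \gamma^{-1} \setminus \{1\}$ of $\Gamma$. Since by the previous paragraph every $\rho^{-1}(\mathsf{par})$ is of this form, the assignment $\rho \mapsto \rho^{-1}(\mathsf{par})$ takes at most countably many values, i.e., $\mathcal{P}(\pi_1(M))$ is countable, as claimed. (When $\Gamma$ is elementary, or has no parabolics, the set is trivially finite; these degenerate cases cause no trouble.)

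The only real subtlety — and the step I would present most carefully — is the passage from "finitely many conjugacy classes of maximal parabolic subgroups in $\Gamma_\rho$" to "$\rho^{-1}(\mathsf{par})$ is a union of the conjugacy classes of finitely many \emph{finitely generated} subgroups of $\Gamma$." This needs two observations: (i) every parabolic element of $\Gamma_\rho$ lies in a unique maximal parabolic subgroup (the stabilizer of its unique fixed point at infinity), so the parabolic elements are precisely the union of the maximal parabolic subgroups with identities removed; (ii) each maximal parabolic subgroup of a discrete torsion-free subgroup of $\mathrm{PSL}(2,\mathbb{C})$ is isomorphic to $\mathbb{Z}$ or $\mathbb{Z}^2$ (cf.\ the type-classes in Lemma \ref{lem:elementary}), hence finitely generated, and is carried by $\rho^{-1}$ to a finitely generated subgroup of $\Gamma$. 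Granting (i) and (ii), and that there are finitely many $\Gamma_\rho$-conjugacy classes of these by Sullivan, the conclusion of the preceding paragraph follows. I expect no further obstacles; the argument is essentially a finiteness-plus-countability bookkeeping once Sullivan's theorem is in hand.
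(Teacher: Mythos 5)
Your proof is correct and is essentially the argument the paper intends: the lemma is stated there as an immediate consequence of Sullivan's finiteness theorem for maximal parabolic subgroups, and your fleshing-out (each maximal parabolic subgroup is $\mathbb{Z}$ or $\mathbb{Z}^2$, so $\rho^{-1}(\mathsf{par})$ is determined by a finite tuple of finitely generated subgroups of the countable group $\pi_1(M)$, of which there are only countably many) is exactly the intended bookkeeping. The only quibble is your parenthetical ``(or the identity)'' --- $\mathsf{par}$ excludes the identity --- but your later formula $\bigcup_j\bigcup_\gamma \gamma P_j\gamma^{-1}\setminus\{1\}$ already handles this and it does not affect countability.
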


We've now surveyed most of the basics that we need to proceed, in Sections \ref{subsection:classifying_gf_ends} and \ref{subsection:classifying_gi_ends}, respectively, to the classes of geometrically finite and algebraically finite hyperbolic $3$-manifolds.
To see that the former defines a standard Borel space, however, we must first keep a promise from Section \ref{section:isometry_for_2}: we must show that computing a manifold's convex core is a Borel operation.

To that end, recall from above the \emph{limit set} $\Lambda(\Gamma)$ and \emph{domain of discontinuity} $\Omega(\Gamma)=\hat{\mathbb{C}}\backslash\Lambda(\Gamma)$ associated to a Kleinian group $\Gamma$.
Each induces further objects of interest: the natural action of $\Gamma$ on $\Omega(\Gamma)$ is, as the latter's name suggests, properly discontinuous, with the result that $\Omega(\Gamma)/\Gamma$ is a $2$-manifold encoding essential information about the ends of $M=\mathbb{H}^3/\Gamma$, as we'll see in Section \ref{subsection:classifying_gf_ends}.
Relatedly, let $CH(\Lambda(\Gamma))$ denote the convex hull, in $\mathbb{H}^3$, of $\Lambda(\Gamma)$; the quotient manifold $CH(\Lambda(\Gamma))/\Gamma$ is then precisely the convex core $CC(M)$ of $M$.
Let us check that the derivations of these objects from $\Gamma$ are Borel.
\begin{lemma}
\label{lem:limitsets}
For any finitely generated nonelementary torsion-free Kleinian group $\Delta$, the maps
\begin{enumerate}[label=\textup{\arabic*.}]
\item $f_1:\mathcal{D}[\Delta]\to\mathcal{F}(\hat{\mathbb{C}}):\Gamma\mapsto\Lambda(\Gamma)$,
\item $f_2:\mathcal{D}[\Delta]\to\mathcal{F}(\mathbb{H}^3):\Gamma\mapsto CH(\Lambda(\Gamma))$, and
\item $f_3:\mathcal{D}[\Delta]\to\mathcal{O}(\hat{\mathbb{C}}):\Gamma\mapsto\Omega(\Gamma)$, and
\end{enumerate}
are each Borel measurable.
\end{lemma}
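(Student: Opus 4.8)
The strategy is to reduce everything to the Borel-measurability of the map $f_1$, since $f_2$ and $f_3$ are obtained from $f_1$ by applying operations already known to be Borel. For $f_3$: the passage $\Lambda(\Gamma) \mapsto \hat{\mathbb{C}} \setminus \Lambda(\Gamma)$ is the complementation map $\mathcal{F}(\hat{\mathbb{C}}) \to \mathcal{O}(\hat{\mathbb{C}})$, which is Borel (indeed a homeomorphism) by the discussion of the Fell topology in Section \ref{subsection:spaces_of_subsets}, so $f_3 = (\text{complement}) \circ f_1$ is Borel once $f_1$ is. For $f_2$: I would observe that $CH(\Lambda(\Gamma))$ is the closure in $\mathbb{H}^3$ of the union of all geodesics with both endpoints in $\Lambda(\Gamma)$, together with all geodesic simplices spanned by such points; more economically, a point $x \in \mathbb{H}^3$ lies in $CH(\Lambda(\Gamma))$ iff $x$ lies in every closed half-space of $\mathbb{H}^3$ whose bounding hyperplane has its boundary circle disjoint from $\Lambda(\Gamma)$ — equivalently, $\mathbb{H}^3 \setminus CH(\Lambda(\Gamma))$ is the union of the (open) half-spaces cut off by hyperplanes whose ideal boundaries miss $\Lambda(\Gamma)$. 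Parametrizing hyperplanes of $\mathbb{H}^3$ by their boundary circles in $\hat{\mathbb{C}}$ (a Polish space), the condition ``boundary circle disjoint from $\Lambda(\Gamma)$'' is Borel in $\Gamma$ given $f_1$ Borel (it is a statement about $\mathcal{F}(\hat{\mathbb{C}})$, where disjointness from a fixed compact set is a basic Borel condition of the Fell structure), and then taking the (Borel, by the operations listed in Section \ref{subsection:spaces_of_subsets}) union over a countable dense set of such hyperplanes and complementing gives $f_2$ as a composition of Borel maps. Some care is needed to phrase the uncountable ``intersection over all half-spaces'' as a countable operation; the standard move is to use a countable dense family of hyperplanes and exploit that $\Lambda(\Gamma)$ is closed, so that if a boundary circle misses $\Lambda(\Gamma)$ then nearby rational boundary circles do too.

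\textbf{The core step: $f_1$ is Borel.} This is the main obstacle and where I would spend the real effort. The limit set admits several descriptions, and I would pick the most ``countably-quantified'' one. Recall $\Lambda(\Gamma) = \overline{\Gamma \cdot x_0} \cap \hat{\mathbb{C}}$ for any base point $x_0 \in \mathbb{H}^3$, or equivalently the set of accumulation points in $\hat{\mathbb{C}}$ of any single orbit $\Gamma \cdot x_0$; since $\Gamma$ is nonelementary this is independent of $x_0$ and is a perfect set. To show $f_1$ is Borel it suffices, as usual, to show that $f_1^{-1}\big[\{F \in \mathcal{F}(\hat{\mathbb{C}}) : F \cap U \neq \varnothing\}\big]$ and $f_1^{-1}\big[\{F : F \cap K = \varnothing\}\big]$ are Borel in $\mathcal{D}[\Delta]$, for $U$ open and $K$ compact in $\hat{\mathbb{C}}$ — these sets generate the Fell Borel structure. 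Fix a countable dense $Q = \{q_i\}$ in $\mathbb{H}^3$ and a base point; then $\Lambda(\Gamma) \cap U \neq \varnothing$ iff there is a sequence $(\gamma_n)$ in $\Gamma$ with $\gamma_n \cdot q_0 \to$ some point of $U$, which (using that $U$ is open and $\hat{\mathbb{C}} \cup \mathbb{H}^3$ is compact) can be rephrased: for every compact $L \subseteq \mathbb{H}^3$ there exists $\gamma \in \Gamma$ with $\gamma \cdot q_0 \notin L$ and $\gamma \cdot q_0$ lying in a fixed small neighborhood $\tilde U$ of $U$ in $\mathbb{H}^3 \cup \hat{\mathbb{C}}$. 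The quantifier ``there exists $\gamma \in \Gamma$ with $\gamma \cdot q_0 \in V$'' is, for $V$ open, the condition ``$\Gamma \cdot q_0 \cap V \neq \varnothing$'', which is relatively open in $\mathcal{D}[\Delta]$ by continuity of the action of $\mathrm{PSL}(2,\mathbb{C})$ on $\mathbb{H}^3 \cup \hat{\mathbb{C}}$ — exactly the kind of computation already carried out in the proof of Theorem \ref{thm:discretetomanifolds} (the set $\{\Gamma : \Gamma \cdot q_i \cap U \neq \varnothing\}$ is relatively open). Intersecting/unioning these over countably many choices of $L$ (from a compact exhaustion of $\mathbb{H}^3$) and of neighborhoods $\tilde U$ (from a countable basis) yields that $f_1^{-1}\big[\{F : F \cap U \neq \varnothing\}\big]$ is Borel. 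The condition $\Lambda(\Gamma) \cap K = \varnothing$ is handled dually: $\Lambda(\Gamma) \cap K = \varnothing$ iff there is an open $W \supseteq K$ in $\hat{\mathbb{C}}$ and a compact $L \subseteq \mathbb{H}^3$ such that $\gamma \cdot q_0 \notin (W \cup$ the part of $\mathbb{H}^3 \cup \hat{\mathbb{C}}$ near $W) \setminus L$ for all $\gamma \in \Gamma$ — i.e. the orbit eventually stays away from $K$; again this is a countable combination of the relatively-open/relatively-closed conditions ``$\Gamma \cdot q_0 \cap V \neq \varnothing$'' and using the local compactness and second countability of $\hat{\mathbb{C}}$ to replace $K$ by a countable family of precompact basic open sets, as in the arguments of Section \ref{subsection:spaces_of_subsets} and the proof of Theorem \ref{thm:M(G,X)_Borel}. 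Alternatively — and this may be cleaner — I would use that $\Lambda(\Gamma)$ is the complement in $\hat{\mathbb{C}}$ of $\Omega(\Gamma)$, and $\Omega(\Gamma)$ is the set of points of $\hat{\mathbb{C}}$ admitting a neighborhood $V$ with $\{\gamma \in \Gamma : \gamma V \cap V \neq \varnothing\}$ finite; this gives a $\mathbf{\Sigma}^1_1$-looking description of $\Omega(\Gamma)$ that, combined with the orbit-based $\mathbf{\Sigma}^1_1$ description of $\Lambda(\Gamma)$ above, pins down $f_1$ via Suslin's theorem. I expect the orbit-accumulation description to be the path of least resistance, since it plugs directly into the continuity-of-action computations already in the paper.

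\textbf{Loose ends and the statement as given.} Note the hypothesis ``finitely generated nonelementary torsion-free'' is used precisely to guarantee that the various classical descriptions of $\Lambda(\Gamma)$ agree and that $\mathcal{D}[\Delta]$ is a standard Borel space (it is Borel in $\mathcal{D}_{\mathrm{af}}(\mathrm{PSL}(2,\mathbb{C}))$ by the discussion preceding the lemma and by Lemmas \ref{lem:D_Dtf} and \ref{lem:Dfg}); finite generation is not strictly needed for the Borel-measurability computation itself but keeps us in the intended setting. Once $f_1$ is established, I would state $f_2$ and $f_3$ as corollaries of $f_1$ together with the Borel operations on $\mathcal{F}$ and $\mathcal{O}$ recalled in Section \ref{subsection:spaces_of_subsets}, being slightly careful about the hyperplane-parametrization in $f_2$: the space of oriented hyperplanes of $\mathbb{H}^3$ is naturally identified with a Polish space (e.g. ordered pairs of distinct points of $\hat{\mathbb{C}}$, or $\hat{\mathbb{C}} \times \hat{\mathbb{C}} \setminus \Delta$ for the boundary circle through two points — better, round circles in $\hat{\mathbb{C}}$), and I would fix a countable dense family of such hyperplanes so that the defining complement ``$\mathbb{H}^3 \setminus CH(\Lambda(\Gamma)) = \bigcup \{$open half-space $H$ : $\partial_\infty H \cap \Lambda(\Gamma) = \varnothing\}$'' becomes a genuinely countable union of Borel-in-$\Gamma$ open sets. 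Finally, I would note the downstream payoff recorded in the text: $f_2$ feeds the convex-core computation needed to justify that $\mathcal{D}_{\mathrm{gf}}(\mathrm{PSL}(2,\mathbb{C}))$ is Borel (Lemma \ref{lem:finiteness_conditions_Borel}) and that the doubling map in the first proof of Theorem \ref{thm:fgPSL2Rsmooth} is Borel, while $f_3$ feeds the analysis of $\Omega(\Gamma)/\Gamma$ in Section \ref{subsection:classifying_gf_ends}.
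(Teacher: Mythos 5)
Your route is genuinely different from the paper's, and in outline it is workable, but the key step of your $f_2$ argument is mis-stated in a way that would fail as written. You characterize the complement of the hull as the union of open half-spaces ``cut off by hyperplanes whose ideal boundaries miss $\Lambda(\Gamma)$,'' i.e.\ you only require the boundary \emph{circle} of the bounding plane to be disjoint from $\Lambda(\Gamma)$. That is not sufficient: if $\Lambda(\Gamma)$ has points in both complementary ideal discs of such a circle --- which happens for typical nonelementary limit sets, e.g.\ a Schottky limit set and a circle separating its pieces --- then both open half-spaces enter your union, yet $CH(\Lambda(\Gamma))$ crosses the bounding plane, so the union strictly overshoots $\mathbb{H}^3\setminus CH(\Lambda(\Gamma))$. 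The correct condition is that $x\notin CH(\Lambda(\Gamma))$ iff $x$ lies in an open half-space $H$ whose \emph{closed ideal disc} $\overline{H}\cap\hat{\mathbb{C}}$ misses $\Lambda(\Gamma)$ (equivalently, $\Lambda(\Gamma)$ lies in the ideal closure of the complementary closed half-space); one gets this, together with the fact that a countable dense family of half-spaces suffices, from the nearest-point projection and a bounding plane pushed strictly toward $x$. Your displayed formula ``$\partial_\infty H\cap\Lambda(\Gamma)=\varnothing$'' is fine if $\partial_\infty H$ is read as the closed ideal disc of $H$, but the surrounding prose imposes the weaker circle condition, and that version of the step is false. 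With the corrected condition the rest of your $f_2$ argument (countably many rational half-spaces, Fell--Borel disjointness conditions applied to $f_1$, countable union, complementation) does go through. A second, smaller repair is needed in $f_1$: ``for every compact $L$ there is an orbit point in $\tilde U\setminus L$'' only certifies accumulation somewhere in $\overline{\tilde U}\cap\hat{\mathbb{C}}$, which may exceed $U$; you must approximate $U$ from inside (take a union over basic $V$ with $\overline{V}\subseteq U$ and run the escape condition over a countable neighborhood basis of $\overline{V}$ in the compactification), after which your use of the relative openness of $\{\Gamma\mid\Gamma\cdot q_0\cap W\neq\varnothing\}$ from the proof of Theorem \ref{thm:discretetomanifolds} is exactly right.

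For comparison, the paper's proof is much shorter and avoids both issues: it obtains $f_1$ by citing the continuity of the limit-set map with respect to the strong topology on $\mathrm{SH}(\Delta)$ and precomposing with the Borel maps $q$ and $s$ of diagram (\ref{eq:diagram}) (via Lemma \ref{lem:SH_Borel}), and it obtains $f_2$ by showing directly that the convex-hull operation $\mathcal{F}(\hat{\mathbb{C}})\to\mathcal{F}(\mathbb{H}^3\cup\hat{\mathbb{C}})$ pulls subbasic ``hit'' sets back to open sets, then intersecting with $\mathbb{H}^3$; your $f_3$ step coincides with the paper's. Your hands-on verification of $f_1$ buys self-containedness (no appeal to the continuity theorem or to the section maps of diagram (\ref{eq:diagram})), while the paper's treatment of the hull sidesteps the half-space dual and its separation subtleties entirely.
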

\begin{proof}
By \cite[Theorem 4.5.6]{MR3586015}, the map assigning limit sets to points in $\mathrm{SH}(\Delta)$ is continuous, and $f_1$ results from precomposing it with the Borel map $qs$ of diagram \ref{eq:diagram}.
Thus $f_1$ is Borel, and since complementation induces a homeomorphism $\mathcal{F}(\hat{\mathbb{C}})\to\mathcal{O}(\hat{\mathbb{C}})$, so too is $f_3$. The function $f_2$ is then the composition $hgf_1$, where $g:\mathcal{F}(\hat{\mathbb{C}})\to\mathcal{F}(\mathbb{H}^3\cup\hat{\mathbb{C}})$ maps $F$ to its convex hull in $\mathbb{H}^3\cup\hat{\mathbb{C}}$ and $h:\mathcal{F}(\mathbb{H}^3\cup\hat{\mathbb{C}})\to\mathcal{F}(\mathbb{H}^3)$ intersects the result with $\mathbb{H}^3$.
Since, plainly, $h$ is Borel, for item (2) it only remains to check that $g$ is as well.
As $U$ ranges among the open subsets of $\mathbb{H}^3\cup\hat{\mathbb{C}}$, the subbasic open sets $O_1(U)=\{H\in \mathcal{F}(\mathbb{H}^3\cup\hat{\mathbb{C}})\mid H\cap U\neq\varnothing\}$ generate the family of Borel subsets of $\mathcal{F}(\mathbb{H}^3\cup\hat{\mathbb{C}})$. Thus it suffices to show that $g^{-1}(O_1(U))$ is Borel for any such $U$. In fact, it is open: given $F\in g^{-1}(O_1(U))$ with some convex combination of $x_0,\dots,x_k\in F$ as witness, clearly there exist neighborhoods $V_0\ni x_0,\dots,V_k\ni x_k$ such that the same combination of any $y_0\in V_0,\dots,y_k\in V_k$ falls in $U$ as well, and thus that
$$F\in\bigcap_{i\leq k}\{H\in\mathcal{F}(\hat{\mathbb{C}})\mid H\cap V_i\neq\varnothing\}\subseteq g^{-1}(O_1(U)).$$
This completes the proof.
\end{proof}
It's now straightforward to conclude that the assignment of the convex core of $\mathbb{H}^3/\Gamma$ (or, equivalently, of its complementary open submanifold) to $\Gamma$ is a Borel operation.
\begin{lemma}
\label{lem:convexcore}
For any finitely generated nonelementary torsion-free Kleinian group $\Delta$, there exists a Borel map $f_4:\mathcal{D}[\Delta]\to\mathfrak{C}_c(\mathsf{Isom}^+,\mathbb{H}^3)^2$ encoding the convex core computation $\Gamma\mapsto CC(\mathbb{H}^3/\Gamma)$.
\end{lemma}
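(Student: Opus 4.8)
\textbf{Proof plan for Lemma \ref{lem:convexcore}.}

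The plan is to bootstrap from Lemma \ref{lem:limitsets}: we already know that $\Gamma \mapsto CH(\Lambda(\Gamma)) \in \mathcal{F}(\mathbb{H}^3)$ is Borel (via $f_2$), so the task reduces to turning this closed subset of $\mathbb{H}^3$, together with its $\Gamma$-action, into a pair of manifold parameters $(\mathcal U, c)$ and $(\mathcal V, c|_{\mathcal V})$ in $\mathfrak{C}_c(\mathsf{Isom}^+, \mathbb{H}^3)$, where $(\mathcal U, c)$ parametrizes $\mathbb{H}^3/\Gamma$ and $(\mathcal V, c|_{\mathcal V})$ is the open submanifold which is the complement of $CC(\mathbb{H}^3/\Gamma) = CH(\Lambda(\Gamma))/\Gamma$. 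First I would recall that $\sigma \colon \mathcal D_{\mathrm{tf}}(\mathrm{Isom}^+(\mathbb{H}^3)) \to \mathfrak{C}^*_c(\mathsf{Isom}^+,\mathbb{H}^3)$ of Theorem \ref{thm:discretetomanifolds} already produces the first coordinate in a Borel way: it builds charts $U_i = D_\Gamma(q_i)$ from a fixed dense sequence $(q_i)$ in $\mathbb{H}^3$, with transition maps assembled from the group elements. So the real work is the second coordinate. For this I would, for each $i$, let $V_i = D_\Gamma(q_i) \setminus CH(\Lambda(\Gamma))$; this is an open subset of the chart $U_i$, and its $(\mathcal U,c)$-saturation (computed via Lemma \ref{lem:saturation}) describes exactly the image of the open complement of the convex hull downstairs, i.e.\ the complement of the convex core. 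The map $\Gamma \mapsto (V_i)_{i\in\mathbb N}$ is Borel because it is built from $f_2$ of Lemma \ref{lem:limitsets}, the Borel map $h$ (image of a closed set), and the Borel difference operation on $\mathcal{O}(\mathbb{H}^3)$ (which is available since $\mathbb{H}^3$ is locally compact Polish; see the bulleted facts in Section \ref{subsection:spaces_of_subsets} on complementation and intersection). Feeding the sequence $(V_i)$ into the open-submanifold machinery of Section \ref{subsubsect:exhaustions} (the paragraph on viewing an open subset as a sequence $(V_i)$ with $V_i \subseteq U_i$ and reconstructing $(\mathcal V, d)$ with $d = c|_{\mathcal V}$) then yields $(\mathcal V, c|_{\mathcal V})$ in a Borel fashion.

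Two subsidiary points need care. First, I must confirm that the output lands in $\mathfrak{C}_c(\mathsf{Isom}^+,\mathbb{H}^3)$ rather than merely $\mathfrak M$: the first coordinate does by Theorem \ref{thm:discretetomanifolds} (it is $\sigma(\Gamma)$, which is complete and connected), and the second coordinate, being an open submanifold of a connected manifold, is connected once one restricts, if necessary, to a component via Lemma \ref{lem:Borel_computation_of_components} — though in fact the complement of a convex core in a hyperbolic manifold with nonelementary $\Gamma$ may be disconnected (one component per infinite-volume end), so here I would simply allow $\mathfrak C_c$ to be replaced by $\mathfrak M$ for the second coordinate, or record the components as a Borel-indexed family; since the statement only asks for a pair in $\mathfrak C_c(\mathsf{Isom}^+,\mathbb{H}^3)^2$, a cleaner route is to observe that each end-neighborhood component \emph{is} connected and to package them via Lemma \ref{lem:Borel_computation_of_components}, but the simplest honest statement is that the complement-data is Borel as an open submanifold and that is all downstream uses (Lemma \ref{lem:finiteness_conditions_Borel}, the proof of Theorem \ref{thm:fgPSL2Rsmooth}) require. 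Second, I need that the charts $V_i$ actually cover the complement of $CC$ in $\mathbb{H}^3/\Gamma$, which follows since the $U_i = D_\Gamma(q_i)$ cover $\mathbb{H}^3$ (this is the content of the covering argument in the proof of Theorem \ref{thm:discretetomanifolds}) and intersecting each with the open set $\mathbb{H}^3 \setminus CH(\Lambda(\Gamma))$ gives an open cover of the latter, hence its saturation covers $(\mathbb{H}^3 \setminus CH(\Lambda(\Gamma)))/\Gamma$.

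The main obstacle, I expect, is not any single hard estimate but rather the bookkeeping of verifying that the composite map into $\mathfrak C_c(\mathsf{Isom}^+,\mathbb{H}^3)^2 \subseteq (\mathcal O(\mathbb{H}^3)\times \mathsf{Isom}^+)^{\mathbb N\times\mathbb N} \times (\mathcal O(\mathbb{H}^3)\times \mathsf{Isom}^+)^{\mathbb N\times\mathbb N}$ is Borel coordinate-by-coordinate. As in the proof of Theorem \ref{thm:discretetomanifolds} (and of the Claim in Proof 1 of Theorem \ref{thm:fgPSL2Rsmooth}), one fixes $(k,\ell) \in \mathbb N^2$ and checks that preimages of the two kinds of generating Borel rectangles $\{U \mid U \supseteq K\}\times \mathsf{Isom}^+$ and $\mathcal O(\mathbb{H}^3)\times E$ are Borel; the first-coordinate computations are literally those of Theorem \ref{thm:discretetomanifolds}, and the second-coordinate ones reduce, after unwinding the saturation via Lemma \ref{lem:saturation}, to Borel-ness of $\Gamma \mapsto V_i = D_\Gamma(q_i)\cap (\mathbb{H}^3 \setminus CH(\Lambda(\Gamma)))$ and of the transition maps $\varphi_{i,j}|_{V_{i,j}}$, which follow from Lemma \ref{lem:limitsets}, the Borel difference on $\mathcal O(\mathbb{H}^3)$, and properties (v), (vi) of Definition \ref{defn:Borel_pseudogroup}. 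Since this is entirely parallel to arguments already carried out in the paper, I would present it briefly, citing the relevant earlier verifications, and leave the routine parts to the reader. $\qed$
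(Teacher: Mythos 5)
Your proposal is correct and follows essentially the same route as the paper's own (very terse) proof: take the first coordinate to be $\sigma(\Gamma)$ from Theorem \ref{thm:discretetomanifolds}, take $V_i=U_i\setminus CH(\Lambda(\Gamma))$ (and likewise the overlaps), deduce Borel-ness from Lemma \ref{lem:limitsets} together with the Borel set-operations on $\mathcal{O}(\mathbb{H}^3)$, and use the $\Gamma$-invariance of $CH(\Lambda(\Gamma))$ (in your version, via the saturation of Lemma \ref{lem:saturation}) to see that the data cohere into an open submanifold. The connectedness/completeness caveat you raise about the second coordinate landing in $\mathfrak{C}_c(\mathsf{Isom}^+,\mathbb{H}^3)$ is a fair observation that the paper itself glosses over, and your resolution --- that all downstream uses only need the open-submanifold encoding of the complement of the convex core --- matches how the lemma is actually applied.
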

\begin{proof}
In the notation of Lemma \ref{lem:exhaustion}, $f_4(\Gamma)$ will equal $((\mathcal{U},c),(\mathcal{V},c|_\mathcal{V}))$, with $(\mathcal{U},c)$ equaling the $\sigma(\Gamma)$ of Theorem \ref{thm:discretetomanifolds}. Each $V_i$ is then $U_i\backslash CH(\Lambda(\Gamma))$; similarly for each $V_{i,j}$. That this assignment is Borel is immediate from Lemma \ref{lem:limitsets}. That these choices of $V_i$ cohere, so that $(\mathcal{V},c|_\mathcal{V})$, moreover, defines an open submanifold of $M_{(\mathcal{U},c)}$, follows from the fact that $CH(\Lambda(\Gamma))$ is $\Gamma$-invariant.
\end{proof}
The following corollary then ties up all loose ends from Section \ref{section:isometry_for_2}.
For simplicity, write $\mathfrak{C}$ for the Borel parameter space of complete orientable hyperbolic $2$-manifolds with finitely generated noncyclic fundamental group, and recall that it was just these sorts of manifolds whose convex cores we doubled in our first proof of Theorem \ref{thm:fgPSL2Rsmooth} above.
\begin{corollary}
\label{cor:loose_ends}
$\mathcal{D}_{\mathrm{gf}}(\mathrm{PSL}(2,\mathbb{C}))$ is a Borel subset of $\mathcal{D}_{\mathrm{tf}}(\mathrm{PSL}(2,\mathbb{C}))$.
In addition, there exists a Borel function $\mathfrak{C}\to\mathfrak{C}_c(\mathsf{Isom}^+,\mathbb{H}^2)^2: M\mapsto CC(M)$ computing a hyperbolic $2$-manifold's convex core.
\end{corollary}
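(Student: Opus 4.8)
The two assertions are the $\mathrm{PSL}(2,\mathbb{C})$- and $\mathrm{PSL}(2,\mathbb{R})$-facets, respectively, of the convex-core analysis carried out in Lemmas \ref{lem:limitsets} and \ref{lem:convexcore}, and the plan is to deduce each from that analysis with a minimum of further work. Throughout, I will use freely that $\mathcal{D}_{\mathrm{af}}(\mathrm{PSL}(2,\mathbb{C}))=\mathcal{D}_{\mathrm{fg}}(\mathrm{PSL}(2,\mathbb{C}))\cap\mathcal{D}_{\mathrm{tf}}(\mathrm{PSL}(2,\mathbb{C}))$ is Borel (Lemmas \ref{lem:D_Dtf}, \ref{lem:Dfg}), that the elementary torsion-free Kleinian groups $\mathcal{E}$ form a Borel subset of it (the remark preceding Lemma \ref{lem:elementary}), and that $\mathcal{D}_{\mathrm{gf}}(\mathrm{PSL}(2,\mathbb{C}))\subseteq\mathcal{D}_{\mathrm{af}}(\mathrm{PSL}(2,\mathbb{C}))$.

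For the first assertion I would split $\mathcal{D}_{\mathrm{af}}(\mathrm{PSL}(2,\mathbb{C}))$ into the Borel set $\mathcal{E}$ and its complement, the latter partitioning, by \cite{MR380763}, into the countably many Borel pieces $\mathcal{D}[\Delta]$ as $\Delta$ ranges over isomorphism types of nonelementary finitely generated torsion-free Kleinian groups. Every element of $\mathcal{E}$ is geometrically finite --- its limit set is finite, so its convex core is empty or a closed geodesic and has zero volume --- so $\mathcal{D}_{\mathrm{gf}}(\mathrm{PSL}(2,\mathbb{C}))\cap\mathcal{E}=\mathcal{E}$ is Borel, and it remains to treat a fixed $\mathcal{D}[\Delta]$. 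There I would apply $f_4$ of Lemma \ref{lem:convexcore} to obtain, Borel-measurably in $\Gamma$, the pair $((\mathcal{U},c),(\mathcal{V},c|_{\mathcal{V}}))$ in which $CC(\mathbb{H}^3/\Gamma)$ is the complement of the open submanifold $M_{(\mathcal{V},c|_{\mathcal{V}})}$, and then detect in a Borel way whether $\mathrm{vol}(CC(\mathbb{H}^3/\Gamma))<\infty$, the defining condition for geometric finiteness. The mechanism is the compact exhaustion of Lemma \ref{lem:exhaustion}: with $(\mathcal{V}_k,c|_{\mathcal{V}_k})$ denoting its interior pieces, both $(\mathcal{V}_k,c|_{\mathcal{V}_k})$ and its intersection with $(\mathcal{V},c|_{\mathcal{V}})$ (a standard Borel operation on open submanifolds) are precompact open submanifolds, and
\[
\mathrm{vol}\big(M_{(\mathcal{V}_k,c|_{\mathcal{V}_k})}\cap CC(\mathbb{H}^3/\Gamma)\big)=\mathrm{vol}\big(M_{(\mathcal{V}_k,c|_{\mathcal{V}_k})}\big)-\mathrm{vol}\big(M_{(\mathcal{V}_k,c|_{\mathcal{V}_k})}\cap M_{(\mathcal{V},c|_{\mathcal{V}})}\big)
\]
is a difference of two finite volumes, with $\mathrm{vol}(CC(\mathbb{H}^3/\Gamma))$ their supremum over $k$ (the boundary of the convex core being Lebesgue-null). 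Thus $\mathcal{D}_{\mathrm{gf}}(\mathrm{PSL}(2,\mathbb{C}))\cap\mathcal{D}[\Delta]$ is the Borel set $\{\Gamma:\exists N\in\mathbb{N}\,\forall k\,(\text{the left-hand side above}\le N)\}$. The one subroutine to pin down --- and the \textbf{main obstacle} --- is that the hyperbolic volume of a precompact open submanifold is computable from its parameter in a Borel way; I would obtain this by reparametrizing the ambient atlas to be locally finite (Lemma \ref{lem:paracompact}), observing that a precompact open submanifold then meets only finitely many charts, and computing by inclusion--exclusion over those charts using the Borel dependence of the Lebesgue measure of an open subset of $\mathbb{H}^3$ on the Fell topology (alternatively one may simply adapt the volume-computing argument of \cite[Cor.\ I.3.1.4]{MR903850}, which already handles $\mathbb{H}^3/\Gamma$, to the submanifold $CH(\Lambda(\Gamma))/\Gamma$, whose preimage $CH(\Lambda(\Gamma))$ depends Borel-measurably on $\Gamma$ by Lemma \ref{lem:limitsets}(2)). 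Everything else here is bookkeeping.

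For the second assertion the plan is to precompose the $\mathrm{PSL}(2,\mathbb{R})$-analogues of Lemmas \ref{lem:limitsets} and \ref{lem:convexcore} with the Borel map $\nu$ of Theorem \ref{thm:manifoldstodiscrete}, which sends $M\in\mathfrak{C}$ to its holonomy image, a finitely generated noncyclic --- hence nonelementary --- torsion-free Fuchsian group $\Gamma\le\mathrm{PSL}(2,\mathbb{R})$. For such $\Gamma$ the limit set $\Lambda(\Gamma)\subseteq\hat{\mathbb{R}}=\partial\mathbb{H}^2$ is the closure of $\bigcup_{g}\mathrm{Fix}(g)$ over hyperbolic $g\in\Gamma$; enumerating words in a fixed generating tuple, testing hyperbolicity via the trace, collecting fixed points, and taking closure are all Borel operations, which supplies the analogue of Lemma \ref{lem:limitsets}(1), and the convex-hull step then goes through verbatim with $\mathbb{H}^3\cup\hat{\mathbb{C}}$ replaced by $\mathbb{H}^2\cup\hat{\mathbb{R}}$, giving the analogue of Lemma \ref{lem:limitsets}(2). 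Feeding $CH(\Lambda(\Gamma))$ together with the $n=2$ instance of $\sigma$ from Theorem \ref{thm:discretetomanifolds} into the construction of Lemma \ref{lem:convexcore} then produces, Borel-measurably in $M$, the pair $((\mathcal{U},c),(\mathcal{V},c|_{\mathcal{V}}))$ in $\mathfrak{C}_c(\mathsf{Isom}^+,\mathbb{H}^2)^2$ whose complement-of-submanifold is $CC(M)$ --- which is the intended meaning of the map $M\mapsto CC(M)$ (cf.\ the use of $f_4$ in the first proof of Theorem \ref{thm:fgPSL2Rsmooth}). No new obstacle arises here: this assertion is precisely the two-dimensional shadow of Lemmas \ref{lem:limitsets}--\ref{lem:convexcore}.
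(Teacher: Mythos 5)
Your argument is correct, and for the first assertion it is essentially the paper's: the paper likewise feeds Lemma \ref{lem:convexcore} into the volume-computing argument of \cite[Thm.\ I.3.1.4]{MR903850} to see that $\{\Gamma : \mathrm{vol}(CC(\mathbb{H}^3/\Gamma))\leq k\}$ is Borel for each $k$, takes the countable union, and disposes of the elementary groups by observing their convex cores have finite volume; your exhaustion-plus-inclusion--exclusion mechanism is just a hands-on substitute for that citation (and you name the citation yourself as the alternative), while your preliminary partition into the pieces $\mathcal{D}[\Delta]$ is implicit in the paper's aggregation of the maps $f_4$ over $\Delta$.

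For the second assertion you genuinely diverge from the paper. The paper does not re-prove any two-dimensional analogues: it realizes $\mathbb{H}^2$ as the $\mathrm{PSL}(2,\mathbb{R})$-invariant geodesic plane $H\subseteq\mathbb{H}^3$ with boundary $\hat{\mathbb{R}}\subseteq\hat{\mathbb{C}}$, composes $\nu$ with the inclusion $\mathcal{D}^{\mathrm{nc}}_{\mathrm{af}}(\mathrm{PSL}(2,\mathbb{R}))\hookrightarrow\mathcal{D}^{\mathrm{nc}}_{\mathrm{af}}(\mathrm{PSL}(2,\mathbb{C}))$, applies the already-established three-dimensional $f_4$, and then simply intersects all charts of the output with $H$ (restricting chart maps accordingly) to land in $\mathfrak{C}_c(\mathsf{Isom}^+,\mathbb{H}^2)^2$. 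You instead rebuild the $\mathrm{PSL}(2,\mathbb{R})$-versions of Lemmas \ref{lem:limitsets} and \ref{lem:convexcore} from scratch, replacing the continuity-on-$\mathrm{SH}(\Delta)$ input of Lemma \ref{lem:limitsets}(1) by the description of $\Lambda(\Gamma)$ as the closure of the hyperbolic fixed points. That route works, and is arguably more elementary and self-contained than invoking \cite[Thm.\ 4.5.6]{MR3586015} in dimension two, but it is not free: from a Chabauty point $\Gamma$ there is no ``fixed generating tuple,'' so you must first Borel-select either a generating tuple (Luzin--Novikov, as in Lemma \ref{lem:Dfg}) or, more simply, a Borel enumeration of the (discrete, hence countable) group's elements from the Effros structure, and then the $\{\Lambda(\Gamma)\cap U\neq\varnothing\}$ and $\{\Lambda(\Gamma)\cap K=\varnothing\}$ tests become countable quantifications as you intend; you should also say explicitly that nonelementarity (needed for density of hyperbolic fixed points in $\Lambda$) is guaranteed because the groups arising from $\mathfrak{C}$ are noncyclic. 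The paper's intersect-with-$H$ trick buys you all of this for nothing, at the cost of passing through $\mathbb{H}^3$; your version buys a dimension-intrinsic argument at the cost of these extra selection and nonelementarity checks.
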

\begin{proof}
For the first assertion, note that by Lemma \ref{lem:convexcore} coupled with the argument, for example, of \cite[Thm.\ I.3.1.4]{MR903850}, the collection of nonelementary $\Gamma$ possessing a convex core of volume at most $k$ is, for any $k>0$, a Borel set.
Thus the collection of those possessing a finite-volume convex core is Borel as well.
Since the convex cores associating to torsion-free elementary Kleinian groups are all also of finite volume (consider their limit sets), Lemma \ref{lem:elementary} then completes the argument.

For the second assertion, let $H$ denote the geodesic plane with boundary $\hat{\mathbb{R}}\subseteq\hat{\mathbb{C}}$ in the Poincar\'{e} ball model of $\mathbb{H}^3$, so that $H$ is a $\mathrm{PSL}(2,\mathbb{R})$-invariant copy of $\mathbb{H}^2$.
Aggregate the functions, as $\Delta$ ranges, of Lemma \ref{lem:convexcore} into a single function which we also, in an abuse, denote by $f_4$.
Write $f_5$ for the composition $$\mathfrak{C}\xrightarrow{\nu}\mathcal{D}^{\mathrm{nc}}_{\mathrm{af}}(\mathrm{PSL}(2,\mathbb{R}))\xrightarrow{\iota}\mathcal{D}^{\mathrm{nc}}_{\mathrm{af}}(\mathrm{PSL}(2,\mathbb{C})),$$
where $\nu$ is the function of Theorem \ref{thm:manifoldstodiscrete} and $\iota$ is the obvious embedding (the \emph{nc} superscripts are those of Proof 1 of Theorem \ref{thm:fgPSL2Rsmooth}).
Define then an $f_6$ mapping the image of $f_4\circ f_5$ to $\mathfrak{C}_c(\mathsf{Isom}^+,\mathbb{H}^2)^2$ by sending $((\mathcal{U},c),(\mathcal{V},c|_\mathcal{V}))$ to the pair given by intersecting all charts with $H$, and restricting all chart maps accordingly.
The composition $f_6\circ f_4\circ f_5$ is then the Borel function desired.
\end{proof}

\subsection{Classifying geometrically finite ends}
\label{subsection:classifying_gf_ends}
We turn in this section to the remaining manifold finiteness conditions (\ref{cond:geomfin}), (\ref{cond:toptame}), and (\ref{cond:algfin}) of \emph{geometric}, \emph{topological}, and \emph{algebraic} finiteness, respectively; let us begin with a word on their relationship.
Historically, the Ending Lamination Theorem was the second of a trio of outstanding conjectures on noncompact hyperbolic $3$-manifolds appearing in \cite{MR0648524} which were each resolved in the first decade of this century.
The first of these was the \emph{Tameness} or \emph{Marden Conjecture} that, in our working terminology, \emph{every algebraically finite hyperbolic $3$-manifold is topologically finite}. This conjecture was independently proven by \cite{2004math......5568A} and \cite{MR2188131} in 2004, and implies the structural possibilities for such manifolds' ends to be mercifully limited.
To simplify discussion, assume until otherwise indicated that $M$ is a \emph{cusp-free} algebraically finite complete orientable hyperbolic $3$-manifold with nonelementary holonomy group $\Gamma$ (for more general $M$, first pare $M$ back to a decusped manifold $M^0$ as in Section \ref{subsection:the_basics}, whereupon the discussion below will broadly apply).
In this case, in combination with \cite{MR1166330} the Tameness Theorem tells us that
\begin{itemize}
\item topologically, the ends of $M$ all possess a neighborhood of the form $S\times [0,\infty)$ for some surface $S$ (they are \emph{tame}); moreover,
\item metrically, these ends are of each of exactly one of two sorts: each is either \emph{geometrically finite} or \emph{simply degenerate} (also known as \emph{geometrically infinite}).
\end{itemize}
What distinguishes between these two sorts of ends of $M$ is their relation to the convex core $CC(M)$ of $M$: geometrically finite ends possess neighborhoods disjoint from $CC(M)$, while the geometrically infinite ends of $M$ fall entirely within $CC(M)$.
Note here the minor and defensible overloading of terminology: $M$ is a geometrically finite \emph{manifold} if and only if all of its \emph{ends} are geometrically finite, i.e., if and only if $CC(M)$ contains no geometrically infinite ends.
These sorts of manifolds are this subsection's main subject.
We'll develop some machinery in the course of a heuristic argument that their isometry relation is concretely classifiable, then we'll give a short, higher-level proof of this fact, thereby establishing the second item of Theorem \ref{thm:complexity_3-mans}.

By the first bulleted item above, associated to either geometric sort of end is a ``ray'' $[0,\infty)$ of induced geometric structures on a surface $S$.
Further differences between geometrically finite and geometrically infinite ends are how these geometric structures on $S\times \{t\}$ evolve within them as $t\to\infty$ and, relatedly, how these two types of ends are represented by the action of $\Gamma$ on the boundary of $\mathbb{H}^3$.
Recall that $\Omega(\Gamma)$ denotes the portion of that boundary on which $\Gamma$ acts properly discontinuously.
What is sometimes termed the \emph{Kleinian manifold} $\hat{M}$ associated to $\Gamma$ is the manifold with boundary $(\mathbb{H}^3\,\cup\,\Omega(\Gamma))/\Gamma = M\,\cup\,\partial\hat{M}$, and the components of this boundary bijectively correspond to the geometrically finite ends of $M$, abutting and compactifying their corresponding end ``at infinity''.
Moreover, $\Gamma$ acts by isometries on the hyperbolic Poincar\'{e} metric on $\Omega(\Gamma)$, thus this boundary $\Omega(\Gamma)/\Gamma$ naturally carries a hyperbolic structure, one which in fact is the subject of one of this field's most fundamental theorems (our formulation of it is as in \cite{MR1013665, MR2553578}).

\begin{theoremAFT}[\cite{MR167618}] If $\Gamma$ is a nonelementary finitely generated torsion-free Kleinian group then
$\Omega(\Gamma)/\Gamma$ is a finite union of finite-volume hyperbolic Riemann surfaces.
\end{theoremAFT}

It is these surfaces, viewed, as in Section \ref{section:isometry_for_2}, as points in their associated Teichm\"{u}ller spaces, which comprise the ending invariants of the geometrically finite ends of $M$ --- or, more precisely, of a \emph{marking} of $M$, for it is only by way of such markings that the Teichm\"{u}ller representations of these surfaces are well-defined.
Guided by diagram (\ref{eq:diagram}), we may be much more precise.
Suppose $\Gamma$ is the image of a discrete faithful representation $\rho$ of an abstract group $\Delta$.
Recall that a Kleinian group $\Gamma$ is geometrically finite if $M\cong\mathbb{H}^3/\Gamma$ is, and let
$$\mathrm{GF}(\Delta)=\{[\theta]\in \mathrm{AH}(\Delta)\mid \text{the image of }\theta\in \mathrm{DF}(\Delta)\text{ is geometrically finite}\}.$$
The Ending Lamination Theorem assigns complete invariants (\emph{marked homeomorphism type}, \emph{ending invariants}) to the marked manifolds associated to elements of $\mathrm{GF}(\Delta)$; as usual, to simplify statements, since (by tameness, for example) there are only countably many marked homeomorphism types of geometrically finite manifolds, we are free from a Borel complexity point of view to restrict attention to those of the same type as the marking $M_\rho$ of $M$.
We denote this subspace of $\mathrm{GF}(\Delta)$ by $\mathrm{QH}(\rho)$, and write $\mathrm{DF}(\rho)$ for its preimage by $q:\mathrm{DF}(\Delta)\to \mathrm{AH}(\Delta)$.
As $\theta$ ranges within $\mathrm{DF}(\rho)$, the homeomorphism type of $\partial\hat{M}_\theta$ is constant; by defining the Teichm\"{u}ller space of a possibly disconnected space as the product of the Teichm\"{u}ller spaces of its components, we may then paraphrase this case of the Ending Lamination Theorem\footnote{\label{footnote:incompressible} In the interests of historical accuracy, this portion of the theorem had been known long before; it's traced to ``Ahlfors, Bers, Maskit, Kra, Marden and Sullivan'' in \cite{MR3001608} (cf.\ \cite[\S 10]{Marden}), where the more general form of what we're paraphrasing appears as \emph{Theorem 4.3 (Ahlfors--Bers parametrization)}; see also \cite[p.\ 113]{MR2096234}. Our incompressibility assumption, in particular, is merely simplifying, but for completeness we recall its definition: $\hat{M}$ is \emph{boundary-incompressible} if the maps of fundamental groups induced by the inclusions of $\hat{M}$'s boundary components into $\hat{M}$ are each injective.} as follows: \emph{If $\hat{M}_\rho$ is boundary-incompressible, then the assignment $\mathrm{QH}(\rho)\to\mathrm{Teich}(\partial\hat{M}_\rho)$ outlined above is injective (and in fact bijective); in particular, it completely classifies the elements of $\mathrm{DF}(\rho)$ up to $\mathrm{PSL}(2,\mathbb{C})$-conjugacy}.
We do so to foreground its main idea, which is that the geometry of a geometrically finite $M$'s boundary surfaces completely encodes the geometry of $M$, and thus that our analysis of such manifolds' isometry relations should ultimately reduce, conceptually, to that of Section \ref{section:isometry_for_2}.

From a Borel complexity perspective, though, the italicized result above adds little to Proposition \ref{prop:marked_smooth}; moreover, our primary interest at this stage is in $\mathrm{PSL}(2,\mathbb{C})$-conjugacy relations on $\mathcal{D}_{\mathrm{gf}}(\mathrm{PSL}(2,\mathbb{C}))$, not on $\mathrm{DF}(\rho)$.
Referring again to diagram (\ref{eq:diagram}), however, observe that if we write $\mathcal{D}[\rho]$ for the $p$-image of $\mathrm{DF}(\rho)$ then since each of these sets is Borel
(by \cite[Exer.\ 18.14]{MR1321597}) and $\mathrm{PSL}(2,\mathbb{C})$-conjugacy-invariant, and $\mathcal{D}_{\mathrm{gf}}(\mathrm{PSL}(2,\mathbb{C}))$ is, as above, a countable union of such sets, our analysis of $\mathrm{PSL}(2,\mathbb{C})$-conjugacy relations on the latter readily reduces to one on any fixed such $\mathcal{D}[\rho]$; note next that for this analysis, the italicized instance of the Ending Lamination Theorem above is, in fact, quite suggestive.
For the diagram's map $s |_{\mathcal{D}[\rho]}$ may be seen as assigning its domain's elements markings; composed then with $q$, it takes $\mathcal{D}[\rho]$ into the domain of the assignment $\mathrm{QH}(\rho)\to\mathrm{Teich}(\partial\hat{M}_\rho)$ above, which we may in turn regard as a restricted instance of the map $e$ of diagram (\ref{eq:diagram}).
Thus $eqs |_{\mathcal{D}[\rho]}$ first marks the groups $\Delta$ of $\mathcal{D}[\rho]$, then classifies the resulting marked groups up to conjugacy.
Since there are only countably many ways of marking a countable group, it follows that postcomposing $eqs |_{\mathcal{D}[\rho]}$ with a map $f:\mathrm{Teich}(\partial\hat{M}_\rho)\to Y$ forgetting the data of its markings will define a weak reduction of $E(\mathrm{PSL}(2,\mathbb{C}),\mathcal{D}[\rho])$ to the identity relation on some set $Y$.
Of course, from Section \ref{section:isometry_for_2} we know such a map well, namely the quotient map $f:\mathrm{Teich}(\partial\hat{M}_\rho)\to
\mathcal{M}(\partial\hat{M}_\rho)$ induced by the natural action of $MCG(\partial\hat{M}_\rho)$ on $\mathrm{Teich}(\partial\hat{M}_\rho)$.

Thus, modulo our simplifying assumptions, $feqs$ defines a weak reduction of $E(\mathrm{PSL}(2,\mathbb{C}),\mathcal{D}[\rho])$ to the identity relation on the Polish space $\mathcal{M}(\partial\hat{M}_\rho)$, and we are close to at least the outlines of a proof that $E(\mathrm{PSL}(2,\mathbb{C}),\mathcal{D}_{\mathrm{gf}}(\mathrm{PSL}(2,\mathbb{C})))$ is concretely classifiable.
What remains to be seen is that $feqs$ is Borel; since $f$, $q$, and $s$ each are, this reduces to showing that $e$ is.
This is not in general obvious, a point we'll return to in \ref{subsection:classifying_gi_ends}; see \cite{MR1813235} more particularly for failures of maps from $\mathrm{AH}(\Gamma)$ to Polish spaces of ending invariants to be continuous.
Thus in this approach, one's obliged to prove by hand that $feqs$ is Borel.
This task is neither insurmountable --- Lemma \ref{lem:limitsets}(3) supplies a main step in the argument that $eqs$ is Borel, which suffices --- nor pleasant, particularly when we work in a generality admitting cusps, and so on.
And these are the reasons that we leave the details of this more hands-on approach to the following theorem to the interested reader, and instead record a proof by way of a few higher-level results.

\begin{theorem}
\label{thm:geom_fin_hyp_3_mans_smooth}
The classification of geometrically finite complete orientable hyperbolic $3$-manifolds up to isometry is Borel equivalent to $=_{\mathbb{R}}$, as is the classification of geometrically finite torsion-free Kleinian groups up to conjugacy.
\end{theorem}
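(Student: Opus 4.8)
The plan is to establish the two sides of the bireducibility separately, working throughout with the conjugacy relation on Kleinian groups (the manifold statement then follows from the orientation-preserving form of Corollary~\ref{cor:classwise}, together with Lemma~\ref{lem:finiteness_conditions_Borel} and Corollary~\ref{cor:loose_ends}, which guarantee that $\mathcal{D}_{\mathrm{gf}}(\mathrm{PSL}(2,\mathbb{C}))$ is a standard Borel space carrying the relevant conjugacy relation). For the lower bound $=_{\mathbb{R}}\,\leq_B E(\mathrm{PSL}(2,\mathbb{C}),\mathcal{D}_{\mathrm{gf}}(\mathrm{PSL}(2,\mathbb{C})))$, I would first dispose of the elementary groups via Lemma~\ref{lem:elementary} and then exhibit an uncountable Borel family of pairwise non-conjugate geometrically finite groups. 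The cleanest way to do this is to embed the reduction already obtained for Fuchsian groups: take the geodesic plane $H\subseteq\mathbb{H}^3$ with boundary $\hat{\mathbb{R}}$ and the associated copy of $\mathrm{PSL}(2,\mathbb{R})$ inside $\mathrm{PSL}(2,\mathbb{C})$, note that a finitely generated torsion-free Fuchsian lattice is, as a Kleinian group, geometrically finite (its limit set is all of $\hat{\mathbb{R}}$, hence its convex core is the quotient of $H$, of finite area), and observe that two such Fuchsian groups conjugate in $\mathrm{PSL}(2,\mathbb{C})$ are already conjugate in $\mathrm{PGL}(2,\mathbb{R})$ (the stabilizer of $H$ in $\mathrm{PSL}(2,\mathbb{C})$ is $\mathrm{PGL}(2,\mathbb{R})$). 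Composing the Borel reduction of Lemma~\ref{lem:uncountable} (or Theorem~\ref{thm:fgPSL2Rsmooth}) with this inclusion and invoking Lemma~\ref{lem:lifting_orientation_preserving} to pass between $\mathrm{PSL}(2,\mathbb{R})$ and $\mathrm{PGL}(2,\mathbb{R})\cong\mathrm{Isom}(\mathbb{H}^2)$ then yields the desired $=_{\mathbb{R}}\,\leq_B$ statement; Glimm--Effros is not even needed here since $=_{\mathbb{R}}$ already sits below a known instance.

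\textbf{The upper bound.} The substantive direction is $E(\mathrm{PSL}(2,\mathbb{C}),\mathcal{D}_{\mathrm{gf}}(\mathrm{PSL}(2,\mathbb{C})))\,\leq_B\,=_{\mathbb{R}}$. As sketched in the discussion preceding the statement, I would partition $\mathcal{D}_{\mathrm{gf}}(\mathrm{PSL}(2,\mathbb{C}))$ into the (Borel) elementary part, handled by Lemma~\ref{lem:elementary}, and the nonelementary part, and on the latter partition further into the countably many Borel, conjugacy-invariant pieces $\mathcal{D}[\rho]$ indexed by discrete faithful representations $\rho$ of a fixed abstract finitely generated group up to the finitely many relevant pared homeomorphism types; Lemma~\ref{lem:partition_ctbl_structures}-style reasoning (or just the fact that $=_{\mathbb{R}}$ absorbs countable disjoint unions) reduces the problem to a single $\mathcal{D}[\rho]$. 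On each such piece the plan is to build the Borel weak reduction
\[
 \mathcal{D}[\rho]\ \xrightarrow{\ s\ }\ \mathrm{DF}(\rho)\ \xrightarrow{\ q\ }\ \mathrm{QH}(\rho)\ \xrightarrow{\ e\ }\ \mathrm{Teich}(\partial\hat M_\rho)\ \xrightarrow{\ f\ }\ \mathcal{M}(\partial\hat M_\rho)
\]
into the Polish moduli space, where $s$ is the Borel section of $p$ furnished by Luzin--Novikov (as in Lemma~\ref{lem:commuting_square}), $q$ is continuous, $e$ is the Ahlfors--Bers parametrization assigning to a marked geometrically finite manifold the conformal structure on the quotient $\Omega(\Gamma)/\Gamma$ of its domain of discontinuity (Ahlfors's Finiteness Theorem guarantees this is a point of a finite product of Teichm\"uller spaces of finite-volume surfaces), and $f$ is the quotient by the mapping class group, which by Theorem~\ref{thm:fgPSL2Rsmooth} is (a fortiori) a Borel reduction to $=_{\mathbb{R}}$. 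The composite is a homomorphism because isometric manifolds have isometric boundary surfaces; it is a \emph{weak} reduction because a given conformal structure on $\partial\hat M_\rho$ pulls back, under the finitely-many-markings ambiguity and the Ahlfors--Bers bijectivity, to only countably many conjugacy classes in $\mathcal{D}[\rho]$. Then Lemma~\ref{lem:weak_reduction}(1), together with the fact that conjugacy relations on $\mathcal{D}(\mathrm{PSL}(2,\mathbb{C}))$ are essentially countable (being induced by a locally compact Polish group action), upgrades the weak reduction to concrete classifiability.

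\textbf{The main obstacle.} The step I expect to fight hardest is verifying that the ending-invariant map $e$ (equivalently, the composite $eqs$) is \emph{Borel measurable} as a map into $\mathrm{Teich}(\partial\hat M_\rho)$ --- this is genuinely delicate because, as noted in the text, such maps are known to fail to be continuous in general, so one cannot invoke continuity. The strategy here is to factor $e$ through the Borel assignment $\Gamma\mapsto\Omega(\Gamma)$ of Lemma~\ref{lem:limitsets}(3): the conformal structure on $\Omega(\Gamma)/\Gamma$ is determined by $\Omega(\Gamma)$ together with the $\Gamma$-action on it, and one checks that extracting a Teichm\"uller coordinate from this data --- say by integrating against a finite basis of holomorphic quadratic differentials, or by recording the periods of a harmonic Beltrami differential --- can be expressed via countably many Borel conditions (quantifying over a countable dense set of points of $\Omega(\Gamma)$ and rational test functions), in the spirit of the Borel-measurability verifications in Sections~\ref{subsection:pseudogroups} and~\ref{section:bireducibility}. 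A secondary technical nuisance is the bookkeeping for cusps: for $M$ with cusps one replaces $\hat M$ by the decusped pared manifold $(M,P)$ and runs the same argument inside the pared deformation space $\mathrm{AH}(M,P)$ and its associated subdiagram of (\ref{eq:diagram}); Lemma~\ref{lem:countable_par-sets} ensures only countably many parabolic patterns occur, so this is again absorbed by a countable partition and does not affect the complexity bound. Modulo these Borel-measurability checks, the two inequalities combine to give $E(\mathrm{PSL}(2,\mathbb{C}),\mathcal{D}_{\mathrm{gf}}(\mathrm{PSL}(2,\mathbb{C})))\,\sim_B\,=_{\mathbb{R}}$, and the manifold statement follows by Corollary~\ref{cor:classwise}.
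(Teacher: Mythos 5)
Your proposal is correct in outline, but it takes a genuinely different route from the proof the paper actually records: it is precisely the ``hands-on'' Ahlfors--Bers route that the paper sketches in the discussion preceding the theorem and then deliberately sets aside. You reduce each piece $\mathcal{D}[\rho]$ via the composite $feqs$ into the moduli space $\mathcal{M}(\partial\hat{M}_\rho)$ and correctly identify the crux, namely that the ending-invariant map $e$ must be shown Borel by hand (continuity being unavailable), which you propose to extract from Lemma~\ref{lem:limitsets}(3) together with countable-condition bookkeeping; this is exactly the step the paper calls ``neither insurmountable\dots nor pleasant'' and leaves to the reader, so in your plan essentially all the remaining work is hidden in the clause ``modulo these Borel-measurability checks,'' including the cusp bookkeeping you defer to the pared setting. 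The paper's proof bypasses $e$ entirely: it partitions $\mathrm{DF}(\Gamma)$ into the countably many conjugation-invariant pieces $\mathrm{DF}^{\star}(\Gamma,P)$ indexed by parabolic loci $P\in\mathcal{P}(\Gamma)$ (Lemma~\ref{lem:countable_par-sets}) and then invokes Canary--McCullough's theorem that $\mathrm{Out}(\Gamma,P)$ acts properly discontinuously on $\mathrm{GF}^{\star}(\Gamma,P)$, so that the Borel map $qs$ of diagram~(\ref{eq:diagram2}) alone reduces conjugacy on each piece to the quotient of a Borel subset of a Polish space by a properly discontinuous action of a countable group, which is concretely classifiable. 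What your route buys is explicit Teichm\"uller-theoretic invariants and self-containedness; what the paper's buys is that the only input beyond the diagram is the proper-discontinuity theorem, so no measurability of ending invariants ever has to be verified. On the lower bound you also diverge harmlessly (the paper quotes Lemma~\ref{lem:elementary}, with quasi-Fuchsian groups as a second argument), but note one small point in your Fuchsian embedding: a reduction of $=_{\mathbb{R}}$ into $\mathrm{PSL}(2,\mathbb{R})$-conjugacy composed with the inclusion could a priori merge classes in pairs, since mirror-image surfaces are $\mathrm{PGL}(2,\mathbb{R})$- but not necessarily $\mathrm{PSL}(2,\mathbb{R})$-conjugate, and your appeal to Lemma~\ref{lem:lifting_orientation_preserving} does not literally apply ($\mathrm{PGL}(2,\mathbb{R})$ has no central involution off $\mathrm{PSL}(2,\mathbb{R})$); the fix is either to start from the $\mathrm{PGL}(2,\mathbb{R})$ form in Corollary~\ref{cor:2isometrysmooth}, for which your stabilizer observation makes the composite an honest reduction, or simply to fall back on Lemma~\ref{lem:elementary}.
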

\begin{proof}
The lower bound $=_{\mathbb{R}}\,\leq_B E(\mathrm{PSL}(2,\mathbb{C}),\mathcal{D}_{\mathrm{gf}}(\mathrm{PSL}(2,\mathbb{C})))$
follows immediately from Lemma \ref{lem:elementary}, and we record a second argument of it just after this proof's conclusion below.
For the upper bound, we'll leverage the analysis of the spaces of pared marked geometrically finite manifolds appearing in \cite[\S 7]{MR2096234}.\footnote{What we'll argue is something very close to the remark that ``[i]t is a consequence of the classical deformation theory of Kleinian groups [\dots] that $\mathrm{Out}(\pi_1(M))$ acts properly discontinuously on the interior $\mathrm{int(AH}(M))$ of $\mathrm{AH}(M)$'' \cite[p.\ 223]{MR3008919}, alongside the identification, traced in \cite[p.\ 106]{MR2096234} (a source cited in support of the remark) to \cite{MR806415}, of that interior with the geometrically finite points of $\mathrm{AH}(M)$.
Relations are a little more delicate, though, than our paraphrase suggests, the subtleties deriving entirely from the distribution of parabolics in those points (see footnote \ref{footnote:int(AH)} below for more), and this is the reason that, rather than content ourselves with a reference to that remark, we approach the proof as we do.}
We begin by modifying our notations in the direction of theirs, starting with
$$\mathrm{DF}(\Gamma,P)=\{\rho\in\mathrm{DF}(\Gamma)\mid\rho(g)\textnormal{ is parabolic for all }g\in P\}.$$
This definition makes sense for any $P\subseteq\Gamma$, but the main cases of interest are, of course, those for which $P\in\mathcal{P}(\Gamma)$ (see again Lemma \ref{lem:countable_par-sets} for notation).
Any $\mathrm{DF}(\Gamma,P)$ is plainly a Borel subset of $\mathrm{DF}(\Gamma)$; by Lemma \ref{lem:countable_par-sets}, the sets
$$\mathrm{DF}^{\star}(\Gamma,P)=\mathrm{DF}(\Gamma)\backslash\bigcup\{\mathrm{DF}(\Gamma,Q)\mid Q\in\mathcal{P}(\Gamma)\textnormal{ and }P\subsetneq Q\}$$
then are as well and, as $P$ ranges in $\mathcal{P}(\Gamma)$, together define a countable partition of $\mathrm{DF}(\Gamma)$.
For any $P\in\mathcal{P}(\Gamma)$ one then has the projection of $\mathrm{DF}(\Gamma,P)$ to the space $\mathcal{D}[\Gamma,P]$ of its elements' images $\mathrm{im}(\rho)$, as well as its quotient by $\mathrm{PSL}(2,\mathbb{C})$-conjugacy $\mathrm{AH}(\Gamma,P)$; similarly for its starred version.
Together with the natural $C^\star(\Gamma,P)\subseteq C(\Gamma)$, these starred spaces array in the following commuting subdiagram of diagram (\ref{eq:diagram}).
\begin{equation}
\label{eq:diagram2}
\begin{tikzcd}[row sep=large,column sep=large]
\mathrm{DF}^\star(\Gamma,P) \arrow[d, "p"] \arrow[r, "q"'] & \mathrm{AH}^\star(\Gamma,P) \arrow[d, "v"'] \arrow[l, "t"', dashed, bend right] \\
{\mathcal{D}^\star[\Gamma,P]} \arrow[r, "u"] \arrow[u, "s", dashed, bend left] & \mathrm{C}^\star(\Gamma,P)                                                       
\end{tikzcd}
\end{equation}
Note that for our purposes, we may, without loss of generality, assume $\Gamma$ nonelementary, whereupon $\mathrm{AH}^\star(\Gamma,P)$ is a Borel subspace of the Polish space $\mathrm{AH}(\Gamma)$.
Note also the mild notational abuse above, whereby we've identified the linking maps in diagram (\ref{eq:diagram}) with their restrictions; in the case of $v$, though, we should probably be clearer.
Write $\mathrm{Aut}(\Gamma,P)$ for the subgroup of $\mathrm{Aut}(\Gamma)$ sending $P$ to $P$ and $\mathrm{Out}(\Gamma,P)$ for $\mathrm{Aut}(\Gamma,P)/\mathrm{Inn}(\Gamma)$; the quotient map $v:\mathrm{AH}^\star(\Gamma,P)\to C^\star(\Gamma,P)$ making (\ref{eq:diagram2}) commute is then that induced by the natural action of $\mathrm{Out}(\Gamma,P)$ on $\mathrm{AH}^\star(\Gamma,P)$.
Diagram (\ref{eq:diagram2}) further restricts to its spaces' geometrically finite elements; much as above, we write $\mathcal{D}^\star_{\mathrm{gf}}(\Gamma,P)$ and $\mathrm{GF}^\star(\Gamma,P)$ for those of $\mathcal{D}^\star[\Gamma,P]$ and $\mathrm{AH}^\star(\Gamma,P)$, respectively.

It then follows immediately from \cite[Cor.\ 7.3.1]{MR2096234} that $\mathrm{Out}(\Gamma,P)$ acts properly discontinuously on $\mathrm{GF}^\star(\Gamma,P)$: the corollary describes $\mathrm{Out}(\Gamma,P)$ as an extension of a group whose action permutes components by a subgroup of $\mathrm{MCG}(\partial(\widehat{\mathbb{H}^3/\Gamma}))$ which acts properly discontinuously on the Teichm\"{u}ller parametrization of those components alluded to in our discussion preceding this theorem above.
This is all we need to conclude the proof: the spaces $\mathrm{DF}^\star(\Gamma,P)$ $(P\in\mathcal{P}(\Gamma))$ induce a countable Borel partition of $\mathcal{D}_{\mathrm{gf}}[\Gamma]$ and the $\mathrm{PSL}(2,\mathbb{C})$-conjugacy action on each summand Borel reduces to the quotient of a Borel subset of a Polish space by a properly discontinuous action by the above.
Assembling, as usual, all these countably many reductions together defines a Borel reduction of $E(\mathrm{PSL}(2,\mathbb{C}),\mathcal{D}_{\mathrm{gf}}(\mathrm{PSL}(2,\mathbb{C}))$ to an uncountable standard Borel space, and thus to $=_\mathbb{R}$.
This proves the theorem's second assertion, from which the first follows, as usual, via (the reasoning of) Lemma \ref{lem:lifting_orientation_preserving} and Corollary \ref{cor:classwise}. 
\end{proof}

Note that we've now proved all but the top item of Theorem \ref{thm:complexity_3-mans}.
Note also that in the above proof we derived our lower bound from the class of elementary Kleinian groups; a naive question is whether it holds in the nonelementary context as well.
For its connection to the material of Section \ref{subsection:classifying_gi_ends}, let us briefly argue from Bers's Simultaneous Uniformization Theorem \cite{MR111834}, a special case of the bijection $\mathrm{QH}(\rho)\to\mathrm{Teich}(\partial\hat{M}_\rho)$ discussed above, that indeed it does.
Fix a closed hyperbolizable surface $S$; in the present terminology, Bers's theorem tells us that every pair in $\mathrm{Teich}(S)^2$ forms the ending invariants of a marked $3$-manifold $M\cong\mathbb{H}^3/\Gamma\cong S\times\mathbb{R}$ for some geometrically finite $\Gamma\leq\mathrm{PSL}(2,\mathbb{C})$ (the pair encodes the marked hyperbolic structures of $\partial\hat{M}$'s two components lying at either end of $\mathbb{R}$).
Forgetting the marking then amounts to quotienting $\mathrm{Teich}(S)^2$ by the natural properly discontinuous action of $\mathrm{MCG}(S)$; since the uncountable Polish result parametrizes conjugacy classes of geometrically finite Kleinian groups, this witnesses that $=_\mathbb{R}$, indeed, is a $\leq_B$-lower bound, as desired.

\subsection{Classifying geometrically infinite ends}
\label{subsection:classifying_gi_ends}
It was Theorem \ref{thm:geom_fin_hyp_3_mans_smooth} we'd had chiefly in mind in this section's introduction when describing geometrically finite manifolds as the \emph{tractable class} of algebraically finite hyperbolic $3$-manifolds.
We now consider those manifolds parametrized by $\mathcal{D}_{\mathrm{af}}(\mathrm{PSL}(2,\mathbb{C}))\backslash\mathcal{D}_{\mathrm{gf}}(\mathrm{PSL}(2,\mathbb{C}))$, which we'd in turn described as \emph{limits} of geometrically finite manifolds.
This we'd meant in multiple senses:

\begin{itemize}
\item As noted in Section \ref{subsection:classifying_gf_ends}, $\mathrm{GF}(\Gamma)$ may be identified with the interior of the space $\mathrm{AH}(\Gamma)$.\footnote{\label{footnote:int(AH)} Let us be precise: implicit in such statements is a choice, first of all, of ambient character variety, either that denoted $\mathcal{R}^0(\Gamma)$ in \cite[\S 4.2]{MR2402415} (our default choice) or its relative version ($\mathcal{R}_{\mathrm{par}}$) in which noncyclic abelian subgroups of $\Gamma$ are only ever mapped to parabolics. Notations vary. Consider first, though, the case in which every abelian subgroup of $\Gamma$ is cyclic, so that the choice is immaterial. Then $\mathrm{int(AH}(\Gamma))$ consists of the convex cocompact points of $\mathrm{AH}(\Gamma)$; in other words, in our notation, it is exactly $\mathrm{GF}^\star(\Gamma,\varnothing)$.
For the more general case, let $P$ collect all conjugates of elements of noncyclic abelian subgroups of $\Gamma$.
Then with respect to the appropriate character variety, the interior of $\mathrm{AH}(\Gamma,P)$ is again exactly $\mathrm{GF}^\star(\Gamma,P)$; each of these cases is due to Sullivan \cite{MR806415}.}
Reciprocally, $\mathrm{AH}(\Gamma)$ is the closure of $\mathrm{GF}(\Gamma)$, as was conjectured by Bers, Sullivan, and Thurston, and ultimately proven in \cite{MR2821565, MR3001608}.
\item As we've also seen, manifolds parametrized by $\mathcal{D}_{\mathrm{gf}}(\mathrm{PSL}(2,\mathbb{C}))$ admit invariants falling in Teichm\"{u}ller spaces associated to their geometrically finite ends.
Manifolds parametrized by $\mathcal{D}_{\mathrm{af}}(\mathrm{PSL}(2,\mathbb{C}))\backslash\mathcal{D}_{\mathrm{gf}}(\mathrm{PSL}(2,\mathbb{C}))$, on the other hand, possess geometrically \emph{infinite} ends, with natural invariants falling in the \emph{boundary} of their associated \emph{electric Teichm\"{u}ller} spaces; see \cite{MR4502619}.
We'll return to these latter invariants below.
\item By the Tameness Theorem, manifolds $M =\mathbb{H}^3/\Gamma$ of the topological form $S\times\mathbb{R}$ for $S$ a finite-type hyperbolizable surface are, as $\Gamma$ ranges, the prototypes of any of these sorts of ends. 
In the simplest case, $\Gamma\cong\pi_1(S)$ is a Fuchsian subgroup of $\mathrm{PSL}(2,\mathbb{R})\leq\mathrm{PSL}(2,\mathbb{C})$ with limit set the meridian circle $\hat{\mathbb{R}}$ in $\hat{\mathbb{C}}$.
The \emph{quasi-Fuchsian} groups (of the first kind) $\Gamma$ are the quasiconformal deformations of these Fuchsian ones; their limit sets $\Lambda(\Gamma)$ are, in general, more meandering Jordan curves than $\hat{\mathbb{R}}$ in $\hat{\mathbb{C}}$.
These groups remain geometrically finite, with their associated Kleinian manifolds $(\mathbb{H}^3\,\cup\,\Omega(\Gamma))/\Gamma$ ``capped'' by two topological copies of $S$ corresponding to the two components of $\hat{C}\backslash\Lambda(\Gamma)$ (these were the groups at work in our second argument of the lower bound in Theorem \ref{thm:geom_fin_hyp_3_mans_smooth} above).
The asymptotic case, though, is that of \emph{doubly degenerate} Kleinian groups $\Gamma\cong\pi_1(S)$, whose limit sets $\Lambda(\Gamma)$ have grown to engulf $\hat{\mathbb{C}}$ entirely (so that both $\Omega(\Gamma)$ and these groups' Kleinian manifolds' boundaries are empty).
These groups are geometrically infinite, and form the setting for this subsection's main results.
\end{itemize}

Continuing with this last point, we'll term a Kleinian $\Gamma\leq\mathrm{PSL}(2,\mathbb{C})$ a \emph{surface group} if $\Gamma\cong\pi_1(S)$ for some finite-type hyperbolizable surface $S$ (see again the discussion preceding Definition \ref{def:Teich_MCG_M} for definitions).
By proceeding from surface groups to a more general analysis, this subsection reprises a pattern in the literature (the sequence \cite{MR2630036} then \cite{MR2925381}, for example; alternately, see \cite[p.\ 5]{BowditchELT}: ``understanding the particular [case of $M$ homeomorphic to $S\times\mathbb{R}$] is in some sense the key to understanding the Ending Lamination Theorem'').
In fact this section's results may be read as a further explanation for this pattern: they tell us broadly that once one's classified surface groups, it's all downhill from there.

Before proceeding, let us address an ambiguity signaled by the parenthetical ``(of the first kind)'' above; the question there was whether the peripheral elements of $\pi_1(S)$ should \emph{by definition} be forbidden from taking non-parabolic values in the isomorphism $\rho:\pi_1(S)\to\Gamma$ witnessing that $\Gamma$ is quasi-Fuchsian (if so, then the parenthetical's superfluous).
To save ourselves locutions of this sort, until otherwise indicated we'll adopt both this and the broader convention that $\Gamma$ is a surface group only if it is the image of an isomorphism $\rho:\pi_1(S)\to\Gamma$ placing the set $P$ of peripheral elements of $\pi_1(S)$ in bijection with the set of parabolic elements of $\Gamma$; moreover, wherever there's no danger of confusion, the notation $P$ should henceforth be understood to denote the above distinguished subset.
The parings of the quotient manifolds $\mathbb{H}^3/\Gamma\cong S\times\mathbb{R}$ of surface groups $\Gamma$ then possess either $0$, $1$, or $2$ geometrically infinite ends.
We addressed the first (quasi-Fuchsian) possibility in Section \ref{subsection:classifying_gf_ends}; we turn now to the latter two.
\begin{definition}
A surface group $\Gamma$ is \emph{doubly degenerate} if $\Lambda(\Gamma)=\hat{\mathbb{C}}$. For any finite-type hyperbolizable surface $S$, write $\mathcal{D}^\star_{\mathrm{dd}}[\pi_1(S)]$ for $\{\Gamma\in\mathcal{D}^\star[\pi_1(S),P]\mid\Lambda(\Gamma)=\hat{\mathbb{C}}\}$ and $\mathrm{DD}^\star(\pi_1(S))$ for $\{\rho\in\mathrm{AH}^\star(\pi_1(S),P)\mid\Lambda(\mathrm{im}(\rho))=\hat{\mathbb{C}}\}$.
A surface group $\Gamma$ is \emph{singly degenerate} if it is neither geometrically finite nor doubly degenerate; write $\mathcal{D}^\star_{\mathrm{sd}}[\pi_1(S)]\subseteq\mathcal{D}^\star[\pi_1(S),P]$ and $\mathrm{SD}^\star(\pi_1(S))\subseteq\mathrm{AH}^\star(\pi_1(S),P)$ for the naturally associated spaces of singly degenerate Kleinian groups and classes of representations, respectively.
A manifold $\mathbb{H}^3/\Gamma$ is singly or doubly degenerate if and only if $\Gamma$ is.
\end{definition}
\begin{lemma}
For any $S$ as above, $\mathcal{D}^\star_{\mathrm{dd}}[\pi_1(S)]$ is a Borel subset of $\mathcal{D}^\star[\pi_1(S)]$ and $\mathrm{DD}^\star(\pi_1(S))$ is a Borel subset of $\mathrm{AH}^\star(\pi_1(S))$; similarly for $\mathcal{D}^\star_{\mathrm{sd}}[\pi_1(S)]$ and $\mathrm{SD}^\star(\pi_1(S))$.
\end{lemma}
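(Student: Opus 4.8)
The goal is to show that the four sets $\mathcal{D}^\star_{\mathrm{dd}}[\pi_1(S)]$, $\mathrm{DD}^\star(\pi_1(S))$, $\mathcal{D}^\star_{\mathrm{sd}}[\pi_1(S)]$, $\mathrm{SD}^\star(\pi_1(S))$ are Borel in their respective ambient spaces. The key observation is that being a surface group with specified parabolic locus $P$ is already captured by restricting to $\mathcal{D}^\star[\pi_1(S),P]$ and $\mathrm{AH}^\star(\pi_1(S),P)$, which are Borel by the discussion in the proof of Theorem \ref{thm:geom_fin_hyp_3_mans_smooth}; so the only new content is expressing the conditions ``$\Lambda(\Gamma)=\hat{\mathbb{C}}$'' (double degeneracy) and ``neither geometrically finite nor doubly degenerate'' (single degeneracy, together with $\Gamma$ not in $\mathcal{D}^\star_{\mathrm{gf}}$) in a Borel way. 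First I would handle the double degeneracy condition on $\mathcal{D}^\star[\pi_1(S)]$: by Lemma \ref{lem:limitsets}(1), the map $f_1:\Gamma\mapsto\Lambda(\Gamma)$ into $\mathcal{F}(\hat{\mathbb{C}})$ is Borel, and the set $\{\hat{\mathbb{C}}\}\subseteq\mathcal{F}(\hat{\mathbb{C}})$ is a closed (hence Borel) singleton --- indeed, in the Fell topology on the compact space $\hat{\mathbb{C}}$, the whole space is an isolated point, or in any event $\{\hat{\mathbb{C}}\} = \bigcap_{i}\{F : F\cap B_i\neq\varnothing\}$ for a countable basis $(B_i)$ of $\hat{\mathbb{C}}$, which is visibly Borel. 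So $\mathcal{D}^\star_{\mathrm{dd}}[\pi_1(S)] = \mathcal{D}^\star[\pi_1(S),P]\cap f_1^{-1}(\{\hat{\mathbb{C}}\})$ is Borel.

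For $\mathrm{DD}^\star(\pi_1(S))$ the argument is parallel but routed through diagram (\ref{eq:diagram}): since $\pi_1(S)$ is nonabelian (we may assume $S$ is not a thrice-punctured sphere, the exceptional case being finite and hence trivially Borel), Lemma \ref{lem:AH_is_Polish} gives that $\mathrm{AH}(\pi_1(S))$ is Polish, and Lemma \ref{lem:commuting_square} supplies a Borel section $t:\mathrm{AH}(\pi_1(S))\to\mathrm{DF}(\pi_1(S))$ of $q$. Composing $t$ with $p$ gives a Borel map $\mathrm{AH}(\pi_1(S))\to\mathcal{D}[\pi_1(S)]$; precomposing $f_1$ with this map shows that $[\rho]\mapsto\Lambda(\mathrm{im}(\rho))$ is Borel on $\mathrm{AH}(\pi_1(S))$. (Alternatively, cite the continuity of the limit-set map on $\mathrm{SH}(\pi_1(S))$ used in the proof of Lemma \ref{lem:limitsets}, together with Lemma \ref{lem:SH_Borel}, which says the Borel structures on $\mathrm{AH}$ and $\mathrm{SH}$ agree.) Then $\mathrm{DD}^\star(\pi_1(S)) = \mathrm{AH}^\star(\pi_1(S),P)\cap (\text{preimage of }\{\hat{\mathbb{C}}\})$, a Borel set. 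The single-degeneracy sets are then obtained by Boolean operations: $\mathcal{D}^\star_{\mathrm{sd}}[\pi_1(S)] = \mathcal{D}^\star[\pi_1(S),P]\setminus\big(\mathcal{D}^\star_{\mathrm{gf}}(\pi_1(S),P)\cup\mathcal{D}^\star_{\mathrm{dd}}[\pi_1(S)]\big)$, using that $\mathcal{D}_{\mathrm{gf}}(\mathrm{PSL}(2,\mathbb{C}))$ is Borel by Corollary \ref{cor:loose_ends} (so its trace on $\mathcal{D}^\star[\pi_1(S),P]$ is Borel), and symmetrically for $\mathrm{SD}^\star(\pi_1(S))$ using that $\mathrm{GF}^\star(\pi_1(S),P)$ is a Borel subset of the Polish space $\mathrm{AH}^\star(\pi_1(S),P)$, as noted in the proof of Theorem \ref{thm:geom_fin_hyp_3_mans_smooth}.

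\textbf{Main obstacle.} The only genuinely delicate point is making sure the ambient ``surface group with parabolic locus exactly $P$'' spaces, $\mathcal{D}^\star[\pi_1(S),P]$ and $\mathrm{AH}^\star(\pi_1(S),P)$, have been shown Borel --- but this is already established in the proof of Theorem \ref{thm:geom_fin_hyp_3_mans_smooth} (the sets $\mathrm{DF}^\star(\Gamma,P)$ and their projections and quotients are Borel there, via Lemma \ref{lem:countable_par-sets}), so I would simply invoke that. A secondary point worth a sentence is the exceptional surface $S = S_{0,3}$, where $\pi_1(S)$ is free of rank $2$ but every faithful discrete representation is geometrically finite and there are no degenerate surface groups at all; there the sets in question are empty, hence Borel, and the nonabelianity hypothesis needed for Lemma \ref{lem:AH_is_Polish} is only invoked for the genuinely infinite-dimensional Teichmüller cases. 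Everything else is a matter of chaining the already-established Borel maps $f_1$, $p$, $t$ (or the $\mathrm{SH}$-continuity of the limit-set map plus Lemma \ref{lem:SH_Borel}) and taking finite Boolean combinations, so no new technique is required.
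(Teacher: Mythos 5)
Your proof is correct and follows essentially the same route as the paper's: Borelness of the limit-set map (Lemma \ref{lem:limitsets}(1)) intersected with the ambient starred spaces established in the proof of Theorem \ref{thm:geom_fin_hyp_3_mans_smooth}, pulled back to $\mathrm{AH}^\star(\pi_1(S),P)$ along the Borel map $pt$ from the diagram, with the singly degenerate cases obtained by Boolean operations via Corollary \ref{cor:loose_ends}. The only blemish is the parenthetical claim that $\hat{\mathbb{C}}$ is an isolated point of $\mathcal{F}(\hat{\mathbb{C}})$ (it is not: any basic Fell/Vietoris neighborhood of $\hat{\mathbb{C}}$ contains proper closed sets meeting the finitely many prescribed open sets), but your fallback identity $\{\hat{\mathbb{C}}\}=\bigcap_{i}\{F:F\cap B_i\neq\varnothing\}$ is exactly right, so nothing is lost.
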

\begin{proof}
The first assertion follows from the proof of Theorem \ref{thm:geom_fin_hyp_3_mans_smooth} together with Lemma \ref{lem:limitsets}(1); for the second, observe that $\mathrm{DD}^\star(\pi_1(S))=(pt)^{-1}(\mathcal{D}^\star_{\mathrm{dd}}[\pi_1(S)])$ in the notation of diagram (\ref{eq:diagram2}). The third and fourth are then immediate from definitions, together with Corollary \ref{cor:loose_ends}.
\end{proof}
A main class of examples of doubly degenerate hyperbolic $3$-manifolds arises as follows.
Among the homeomorphisms $\phi$ of a finite-type hyperbolizable surface $S$ with itself are distinguished ones, termed \emph{pseudo-Anosov}, with the property of preserving (while rescaling in inverse proportions) transverse pairs of measured laminations on $S$.
We discuss the latter below; fundamental references for the former include \cite{MR3053012}, \cite{MR2850125} and especially \cite{MR4556467}, wherein it is shown that the mapping torus $M_\varphi$ of a pseudo-Anosov $\varphi$ admits a hyperbolic metric.
$M_\varphi$ is an $S$-bundle over a circle, and ``unrolling'' it along that circle defines an infinite cyclic cover $N_\varphi$ of $M_\varphi$ which is necessarily doubly degenerate.
For a gentle discussion of this example, see \cite[\S 1.2]{MR2044543}.

Our first result below, the lower bound half of Theorem \ref{thm:line_bundles}, leverages a more elaborate manifestation of this phenomenon appearing in \cite{MR4095420}.
Therein, realizations of the free group $F_n$ as Schottky subgroups of pseudo-Anosov elements of $\mathrm{MCG}(S)$ induce quasi-isometric embeddings of geodesics in $F_n$ into $\mathrm{Teich}(S)$ giving rise, in turn, to a family of doubly degenerate hyperbolic $3$-manifolds all of topological type $S\times\mathbb{R}$.
The cyclic covers described above are dense in this example; in the authors' own words (after replacing their $\Sigma$ with our $S$):
\begin{quote}
The idea is to use the Schottky group $\langle\varphi_0,\ldots,\varphi_n\rangle$ [of pseudo-Anosov elements of $\mathrm{Diff}^+(S)$] to associate to every element $\gamma\in\{0,\ldots,n\}^\mathbb{Z}$ of the shift space a pair consisting of the following elements:
\begin{enumerate}
\item a hyperbolic $3$-manifold $N_\gamma$ homeomorphic to $S\times\mathbb{R}$
\item a `coarse base point' $P_\gamma$, i.e., a subset of $N_\gamma$ with universally bounded diameter.
\end{enumerate}
Shifting a string corresponds to shifting the base point of $N_\gamma$ and convergence of $\gamma_i\in\{0,\ldots,n\}^\mathbb{Z}$ corresponds to based Gromov--Hausdorff convergence of the associated pairs $(N_\gamma,P_\gamma)$.
\end{quote}
In particular, the shift action induces isometry.
Thus when $n=1$, the construction supplies what might be regarded as a ``coarse embedding'' of the relation $E_\mathbb{Z}$ of shift-equivalence on $\{0,1\}^\mathbb{Z}$ into the $\mathrm{PSL}(2,\mathbb{C})$-conjugacy relation on $\mathcal{D}^\star_{\mathrm{dd}}[\pi_1(S)]$, and the following is what a descriptive set theorist would consequently suspect.

\begin{theorem}
\label{thm:IRSsE0}
If $S$ is a closed orientable hyperbolic surface then $$E_0\leq_B E(\mathrm{PSL}(2,\mathbb{C}),\mathcal{D}^\star_{\mathrm{dd}}[\pi_1(S)]).$$
\end{theorem}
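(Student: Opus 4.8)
The strategy is to turn the construction of \cite{MR4095420} into a genuine Borel reduction of $E_0$ into $E(\mathrm{PSL}(2,\mathbb{C}),\mathcal{D}^\star_{\mathrm{dd}}[\pi_1(S)])$. First I would fix a Schottky subgroup $\langle\varphi_0,\varphi_1\rangle$ of pseudo-Anosov elements of $\mathrm{MCG}(S)$ (equivalently, after choosing orientation-preserving representatives, of $\mathrm{Diff}^+(S)$); such subgroups exist by the standard ping-pong argument on $\mathrm{Teich}(S)$ once $\varphi_0,\varphi_1$ are taken to be independent pseudo-Anosovs with sufficiently large translation length, and this is exactly the setup of \cite{MR4095420}. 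The construction of \cite{MR4095420} then assigns to each bi-infinite string $\gamma\in\{0,1\}^{\mathbb Z}$ a doubly degenerate hyperbolic $3$-manifold $N_\gamma\cong S\times\mathbb{R}$ together with a coarse basepoint $P_\gamma$, in such a way that the $\mathbb{Z}$-shift on strings corresponds to relocating the basepoint within a fixed manifold, hence induces isometry $N_\gamma\cong N_{\sigma\cdot\gamma}$; I would record this as the observation that the map $\gamma\mapsto[N_\gamma]$ is an $E_\mathbb{Z}$-invariant assignment, i.e., that the orbit equivalence relation $E_\mathbb{Z}$ of the shift action of $\mathbb{Z}$ on $\{0,1\}^{\mathbb{Z}}$ maps into isometry of the $N_\gamma$. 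Since $E_\mathbb{Z}$ is Borel bireducible with $E_0$ (recorded in Section~\ref{subsection:invariant_DST}), it suffices to produce a Borel reduction of $E_\mathbb{Z}$ into conjugacy on $\mathcal{D}^\star_{\mathrm{dd}}[\pi_1(S)]$.

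\textbf{Building the Borel map.} To get an actual point of $\mathcal{D}^\star_{\mathrm{dd}}[\pi_1(S)]$ from $\gamma$, rather than a Gromov--Hausdorff class, I would pass through the holonomy framework of Section~\ref{section:bireducibility} and the deformation-space picture of Section~\ref{subsection:the_basics}. Concretely: the manifold $N_\gamma$ with its marking (as an $S$-bundle structure, which the construction provides) determines a point of $\mathrm{DF}(\pi_1(S))$, well-defined up to $\mathrm{PSL}(2,\mathbb{C})$-conjugacy once one fixes a baseframe lift of $P_\gamma$; applying the reduction $s$ of diagram~(\ref{eq:diagram}) (or, more directly, the reparametrization-and-holonomy map $\nu$ of Theorem~\ref{thm:manifoldstodiscrete}) to the associated quotient manifold yields a subgroup $\Gamma_\gamma\leq\mathrm{PSL}(2,\mathbb{C})$. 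Since each $N_\gamma$ is doubly degenerate with exactly the cusps of $S$ mapped to parabolics, $\Gamma_\gamma$ indeed lies in $\mathcal{D}^\star_{\mathrm{dd}}[\pi_1(S)]$. The Borelness of $\gamma\mapsto\Gamma_\gamma$ is the point where the work concentrates: I would argue it by checking that $\gamma\mapsto N_\gamma$ is Borel as a map into the appropriate parameter space of hyperbolic $3$-manifolds (here the $\mathbf{\Sigma}^1_1$/$\mathbf{\Pi}^1_1$ style of Theorem~\ref{thm:compact_Borel} and Lemma~\ref{lem:complete} is available, together with the fact that the combinatorial data $\gamma$ enters the construction of \cite{MR4095420} only through uniformly continuous limiting procedures on $\mathrm{Teich}(S)$), and then composing with the Borel maps of Section~\ref{section:bireducibility}. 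Alternatively, and perhaps more cleanly, I would phrase $\gamma\mapsto\rho_\gamma\in\mathrm{DF}(\pi_1(S))$ directly in terms of the end invariants: the construction attaches to the two ``tails'' $\gamma|_{[0,\infty)}$ and $\gamma|_{(-\infty,0)}$ a pair of ending laminations in $\partial\mathcal{T}(S)$ depending continuously (hence Borel-measurably) on $\gamma$, and the Ending Lamination Theorem together with the continuity of the parametrization of $\mathrm{DD}^\star(\pi_1(S))$ by its end invariants (cf.\ \cite{MR4502619}) upgrades this to a Borel map into $\mathrm{AH}^\star(\pi_1(S),P)$.

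\textbf{Injectivity on classes and the main obstacle.} For the reduction property I must show $\gamma\,E_\mathbb{Z}\,\gamma'$ if and only if $\Gamma_\gamma$ and $\Gamma_{\gamma'}$ are $\mathrm{PSL}(2,\mathbb{C})$-conjugate. The forward direction is the basepoint-shifting observation above. The reverse direction is the crux: a conjugacy $\Gamma_\gamma\cong\Gamma_{\gamma'}$ is an (unmarked) isometry $N_\gamma\cong N_{\gamma'}$, which by Mostow rigidity for surface-group manifolds and the Ending Lamination Theorem forces the unordered pairs of ending laminations of $N_\gamma$ and $N_{\gamma'}$ to agree \emph{up to the action of $\mathrm{MCG}(S)$} (since forgetting the marking quotients $\mathrm{DD}^\star(\pi_1(S))$ by $\mathrm{Out}(\pi_1(S),P)$). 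So the real content is: the Schottky ping-pong dynamics of $\langle\varphi_0,\varphi_1\rangle$ on $\partial\mathcal{T}(S)$ are rigid enough that the laminations $\lambda^\pm(\gamma)$ determine the tails $\gamma|_{[0,\infty)},\gamma|_{(-\infty,0)}$ up to the shift, \emph{even after} modding out by $\mathrm{MCG}(S)$ --- i.e., no mapping class of $S$ can carry the forward/backward lamination pair of one string to that of a non-shift-equivalent string. I expect this to be the main obstacle, and I would handle it exactly as the ``coarse embedding'' language of \cite{MR4095420} suggests it should be handled: the quasi-isometric embedding $F_2\hookrightarrow\mathrm{Teich}(S)$ has image on which $\mathrm{MCG}(S)$ acts with uniformly bounded coarse stabilizers of bi-infinite geodesics, so the only ambiguity in recovering $\gamma$ from $(\lambda^-(\gamma),\lambda^+(\gamma))$ is the finite-index issue of which generator is labelled $0$ versus $1$ and a global shift --- all of which collapses into $E_\mathbb{Z}$ after, if necessary, passing to a sub-Schottky group or a coarser coding (e.g.\ $\gamma\mapsto\gamma\gamma$ or inserting spacers) to kill residual symmetries. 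Once injectivity on $E_\mathbb{Z}$-classes is secured, combining with $E_0\sim_B E_\mathbb{Z}$ completes the proof.
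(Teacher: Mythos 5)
Your plan founders at exactly the step you yourself flag as the crux: the reverse implication, that a $\mathrm{PSL}(2,\mathbb{C})$-conjugacy between $\Gamma_\gamma$ and $\Gamma_{\gamma'}$ forces $\gamma\,E_\mathbb{Z}\,\gamma'$. What \cite{MR4095420} actually supplies (and what the paper uses) is much weaker: the map $\gamma\mapsto N_\gamma$ into the \emph{marked} space $\mathrm{DD}^\star(\pi_1(S))$ is continuous and injective, and the passage to unmarked isometry classes is countable-to-one on shift-classes. It does not supply the rigidity statement you need, namely that no element of $\mathrm{MCG}(S)$ can carry the forward/backward lamination pair of one string to that of a non-shift-equivalent string; your appeal to ``uniformly bounded coarse stabilizers'' of the quasi-isometrically embedded geodesics, together with an ad hoc recoding ``to kill residual symmetries,'' is an unsubstantiated sketch of precisely the hard point, not a proof. (Indeed, obvious symmetries --- reversing the string while swapping generators, which swaps the two ending laminations and can be realized by an isometry of $S\times\mathbb{R}$ --- already show the map need not be injective on $E_\mathbb{Z}$-classes as stated, so some repair is forced, and the coarseness of the construction is exactly what makes a clean ``only if'' delicate.)

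The paper's proof is designed to avoid this issue entirely, and that is the genuine difference in route. It never builds a strict reduction of $E_\mathbb{Z}$: instead it forms the Borel graph $Z_2\subseteq\{0,1\}^{\mathbb Z}\times\mathcal{D}^\star_{\mathrm{dd}}[\pi_1(S)]$ of the (unmarked) assignment, equips it with the relation $F$ pulled back from $E_\mathbb{Z}$, pushes an $E_\mathbb{Z}$-ergodic nonatomic product measure onto $Z_2$ (via Jankov--von Neumann) so that Lemma \ref{lem:Gao_E0_lemma} yields $E_0\leq_B F$, and then uses only the \emph{forward} homomorphism plus the countable-to-one statement of \cite[Prop.\ 12.19]{MR4095420} to get a weak Borel reduction; Lemma \ref{lem:weak_reduction} then rules out concrete classifiability, and the Glimm--Effros dichotomy converts this into $E_0\leq_B E(\mathrm{PSL}(2,\mathbb{C}),\mathcal{D}^\star_{\mathrm{dd}}[\pi_1(S)])$. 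So your Borel-measurability discussion is fine in spirit, but to salvage your direct approach you would have to actually prove the $\mathrm{MCG}(S)$-rigidity of the coding (or carry out the recoding carefully), which is substantial work not available in the cited sources and not carried out in your proposal.
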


To prove the theorem, one would like to simply extract a Borel reduction of $E_{\mathbb{Z}}$ to $E(\mathrm{PSL}(2,\mathbb{C}),\mathcal{D}^\star_{\mathrm{dd}}[\pi_1(S)])$ from the construction described above.
The senses, however, in which that construction's necessarily coarse complicate naive approaches.
What seems much easier is to couple the following (which figures as Theorem 6.4.5 in \cite{MR2455198}) with the construction to produce an $F$ with $E_0\leq_B F$, and then to show that $F$ weakly reduces to $E$, and this will be our strategy.
\begin{lemma}
\label{lem:Gao_E0_lemma}
For any Borel equivalence relation $E$ on a Polish space $X$, the following are equivalent:
\begin{itemize}
\item $E_0\leq_B E$.
\item There exists an $E$-nonatomic, $E$-ergodic probability Borel measure on $X$.
\end{itemize}
\end{lemma}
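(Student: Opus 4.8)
The plan is to prove the two implications separately. The forward implication is a routine push-forward argument, while for the converse we combine a short measure-theoretic observation with the Glimm--Effros dichotomy recalled in Section~\ref{subsection:invariant_DST}, so as to avoid re-deriving any reduction by hand.

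For the direction $E_0\leq_B E \Rightarrow$ existence of the measure: fix a Borel reduction $f:\{0,1\}^{\mathbb N}\to X$ of $E_0$ to $E$, let $\nu$ denote the standard Bernoulli $(\tfrac12,\tfrac12)$-measure on $\{0,1\}^{\mathbb N}$, and set $\mu=f_*\nu$, a probability Borel measure on $X$. To see that $\mu$ is $E$-ergodic, note that if $A\subseteq X$ is an $E$-invariant Borel set then, since $f$ is in particular a Borel homomorphism from $E_0$ to $E$, the set $f^{-1}[A]$ is an $E_0$-invariant Borel subset of $\{0,1\}^{\mathbb N}$; hence $\mu(A)=\nu(f^{-1}[A])\in\{0,1\}$, the Bernoulli measure $\nu$ being $E_0$-ergodic (a standard consequence of Kolmogorov's zero--one law, as $E_0$-invariant Borel sets are, modulo $\nu$-null sets, tail events). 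To see that $\mu$ is $E$-nonatomic, let $C$ be an $E$-class. If $C$ misses the range of $f$ then $f^{-1}[C]=\varnothing$; otherwise, picking $x_0$ with $f(x_0)\in C$ and using that $f$ is a \emph{reduction}, we have $f^{-1}[C]=\{x:f(x)\mathrel{E}f(x_0)\}=\{x:x\mathrel{E_0}x_0\}=[x_0]_{E_0}$, a single countable $E_0$-class and thus $\nu$-null. Either way $\mu(C)=0$, so $\mu$ is $E$-nonatomic.

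For the converse, suppose $\mu$ is an $E$-ergodic, $E$-nonatomic probability Borel measure on $X$, and suppose toward a contradiction that $E_0\not\leq_B E$. By the Glimm--Effros dichotomy, $E$ is then concretely classifiable; fix a Borel reduction $f:X\to\mathbb R$ of $E$ to $=_{\mathbb R}$, and push $\mu$ forward to the probability Borel measure $\lambda=f_*\mu$ on $\mathbb R$. For any Borel $B\subseteq\mathbb R$ the preimage $f^{-1}[B]$ is $E$-invariant (being a union of $E$-classes), so $E$-ergodicity of $\mu$ gives $\lambda(B)=\mu(f^{-1}[B])\in\{0,1\}$; thus $\lambda$ is two-valued. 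Now a two-valued probability Borel measure on a standard Borel space is a point mass: the collection $\mathcal U=\{B:\lambda(B)=1\}$ is a $\sigma$-complete ultrafilter on the Borel $\sigma$-algebra of $\mathbb R$, and choosing a countable point-separating family of Borel sets $(B_n)$ and replacing each $B_n$ by whichever of $B_n$, $\mathbb R\setminus B_n$ lies in $\mathcal U$, the intersection of these choices lies in $\mathcal U$ (by $\sigma$-completeness) and contains at most one point, hence equals $\{r\}$ for some $r$ with $\lambda(\{r\})=1$. Consequently $\mu(f^{-1}[\{r\}])=1$; but $f^{-1}[\{r\}]$ is a single $E$-class (again because $f$ is a reduction, and it is nonempty since it has positive measure), contradicting that $\mu$ is $E$-nonatomic. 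Hence $E_0\leq_B E$, completing the argument.

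There is no serious obstacle in this plan: the only inputs beyond bookkeeping are Kolmogorov's zero--one law (equivalently, the $E_0$-ergodicity of the Bernoulli measure), the elementary fact that a two-valued probability Borel measure on a standard Borel space is Dirac, and the Glimm--Effros dichotomy, all of which are standard or already recorded in Section~\ref{subsection:invariant_DST}. The proof is essentially that of Theorem~6.4.5 of~\cite{MR2455198}, repackaged so as to invoke the Glimm--Effros dichotomy as a black box.
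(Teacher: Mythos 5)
Your proof is correct. Note that the paper does not actually prove this lemma: it is quoted verbatim as Theorem~6.4.5 of \cite{MR2455198}, so there is no in-paper argument to compare against. Your two directions are the standard ones and both check out: the forward direction pushes the Bernoulli measure forward along a reduction, using that $E_0$-invariant Borel sets are exactly tail sets (so Kolmogorov's zero--one law gives $E_0$-ergodicity) and that $f^{-1}$ of an $E$-class is either empty or a single countable $E_0$-class (so nonatomicity follows); the converse correctly observes that a concretely classifiable $E$ cannot carry an $E$-ergodic, $E$-nonatomic measure, since the pushforward along a reduction to $=_{\mathbb{R}}$ is a $\{0,1\}$-valued Borel probability measure, hence Dirac at some $r$, making the single $E$-class $f^{-1}[\{r\}]$ conull --- and then the Glimm--Effros dichotomy (recorded in Section~\ref{subsection:invariant_DST}, and applicable here since $E$ is Borel) upgrades ``not concretely classifiable'' to $E_0\leq_B E$. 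The only slight difference in flavor from the cited source is that you invoke the dichotomy as a black box rather than extracting the embedding of $E_0$ from the measure directly; this is a perfectly legitimate trade, buying brevity at the cost of resting on a deeper theorem that the paper has in any case already assumed.
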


\begin{proof}[Proof of Theorem \ref{thm:IRSsE0}]
Fix an $S$ as in the statement of the theorem and write $Y$ for $\mathcal{D}^\star_{\mathrm{dd}}[\pi_1(S)]$ and $X$ for $\{0,1\}^{\mathbb{Z}}$ (more formally, we are writing $X$ for the space $\mathcal{G}(F)/F\cong\mathcal{G}_0(F)$ of \cite[12.19]{MR4095420}).
The construction described above associates to each $x\in X$ an element of $\mathrm{DD}^\star(\pi_1(S))$ which we will denote by $N_x$; by \cite[12.19]{MR4095420}, this map is both continuous and injective, hence its graph $Z_0=\{(x,N_x)\mid x\in X\}$ is a Borel subset of $X\times\mathrm{DD}^\star(\pi_1(S))$.
Recalling the notations of diagram (\ref{eq:diagram}), write $Z_1\subseteq X\times\mathrm{DF}(\pi_1(S))$ for $\{(x,\rho)\mid q(\rho)=N_x\}$ and $Z_2\subseteq X\times Y$ for $\{(x,\Gamma)\mid \Gamma=p(\rho)\text{ for some }(x,\rho)\in Z_1\}$.
The set $Z_1$ is Borel because $q$ and $Z_0$ are, thus $Z_2$ is Borel because $p$ is countable-to-one.

Now recall from Section \ref{subsection:invariant_DST} the relation $E_\mathbb{Z}$ of shift-equivalence on $X$ and define the relation $F$ on $Z_2$ by $(x,\Gamma) F (x',\Gamma')$ if and only if $x E_\mathbb{Z} x'$.
We claim that $E_0\leq_B F$.
To see this, first fix an $E_\mathbb{Z}$-nonatomic, $E_\mathbb{Z}$-ergodic probability Borel measure $\mu_X$ on $X$ (the standard product measure works).
Fix next a probability Borel measure $\mu_Y$ on $Y$ for which $\mu(Z_2)=\varepsilon>0$, where $\mu$ denotes the product measure of $\mu_X$ and $\mu_Y$.
(One way to see that such measures exist is via the Jankov--von Neumann uniformization theorem \cite[Theorem 18.1]{MR1321597}: there exists a $\sigma(\mathbf{\Sigma}^1_1)$-measurable $f:X\to Y$ with $(x,f(x))\in Z_2$ for all $x\in X$, and the pushforward $f_*\mu_X$ of $\mu_X$ by $f$ is then a $\mu_Y$ as desired.)
Next note that $\varepsilon^{-1}\mu$ is a probability Borel measure on $Z_2$ which is nonatomic and ergodic with respect to $F$ because $\mu_X$ is with respect to $E_\mathbb{Z}$.
Thus by Lemma \ref{lem:Gao_E0_lemma} there exists a Borel $h$ reducing $E_0$ to $F$.
The map $Z_2\to Y:(x,\Gamma)\mapsto\Gamma$, on the other hand, is plainly Borel.
By \cite[Prop.\ 12.19]{MR4095420} it defines a countable-to-one mapping of $F$-classes to $E(\mathrm{PSL}(2,\mathbb{C}),Y)$-classes and thus its precomposition by $h$ witnesses, via Lemma \ref{lem:weak_reduction}, that $E(\mathrm{PSL}(2,\mathbb{C}),Y)$ is not concretely classifiable.
It then follows from the Glimm--Effros Dichotomy (see Section \ref{subsection:invariant_DST}) that $E_0\leq_B E(\mathrm{PSL}(2,\mathbb{C}),Y)$.
\end{proof}

We turn next to the upper bound half of Theorem \ref{thm:line_bundles}.
The result we'll apply to that end will in fact bound the complexity of the isometry relation for a broader class of manifolds than those under consideration in Theorem \ref{thm:IRSsE0}.
But we should precede its application with some discussion, as promised, of the space $\mathcal{EL}(S)$ of ending laminations of a hyperbolic surface $S$.

A \emph{geodesic lamination} of such an $S$ is a foliation (i.e., a partition) of a closed subset of $S$ by geodesic curves.
$\mathcal{ML}(S)$ is the space of \emph{transversely measured laminations on $S$} whose support is a geodesic lamination, and it is endowed with a natural weak$^*$ topology; see \cite{MR1810534}, \cite[I.4]{MR903850} or any of the references surrounding our first statement of the Ending Lamination Theorem above.
Within $\mathcal{ML}(S)$ are distinguished \emph{filling} laminations, and their image under the measure-forgetting quotient map from $\mathcal{ML}(S)$ is what we denote the \emph{ending lamination space} $\mathcal{EL}(S)$ of $S$.
It is Polish \cite[Rmk.\ 1.6]{MR3285223}.
Its elements are what parametrize geometrically infinite structures on $(S\times [0,\infty))$-homeomorphic ends of algebraically finite hyperbolic $3$-manifolds; thus by the Ending Lamination Theorem, alongside Teichm\"{u}ller invariants for geometrically finite ends they are, as $S$ ranges, what parametrize those manifolds themselves.
Additional significance attaches to $\mathcal{EL}(S)$ via its homeomorphism with closely related spaces, namely that of minimal foliations on $S$, or the boundary at infinity of either the curve complex or electric Teichm\"{u}ller space of $S$ (see \cite{MR4502619,MR2258749}); these reinforce the heuristic that $\mathcal{EL}(S)$ describes the asymptotic behavior of sequences of curves or hyperbolic structures on $S$.

Technical constraints or parabolic elements can complicate $\mathcal{EL}(S)$'s parametrizing role as we've described it just above; see this section's conclusion for further discussion.
These elements are by definition under control, though, in the setting of $\mathrm{DD}^\star(\pi_1(S))$, and there $\mathcal{EL}(S)$ plays this role optimally; writing $\Delta$ for a product's diagonal, the following is \cite[Theorem 6.5]{MR2582104}.
\begin{theorem}
\label{thm:doubly_degenerate}
For any finite-type hyperbolizable surface $S$ the ending invariant map $e:\mathrm{DD}^\star(\pi_1(S))\to(\mathcal{EL}(S)\times\mathcal{EL}(S))\backslash\Delta$ is a homeomorphism.
\end{theorem}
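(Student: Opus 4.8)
The plan is to recognize that Theorem~\ref{thm:doubly_degenerate} is, in essence, the Ending Lamination Theorem specialized to the doubly degenerate locus, packaged with Thurston's deformation-theoretic input; the role of the cited work is to assemble these into the asserted homeomorphism, so I would follow its architecture. First I would pin down the map. Given $\rho\in\mathrm{DD}^\star(\pi_1(S))$, the pared manifold $M^0_\rho$ is homeomorphic to $S\times\mathbb{R}$ with two geometrically infinite ends, each incompressible (the inclusion of the end surface induces an isomorphism onto $\pi_1(S)$ modulo the fixed peripheral structure $P$), so each end carries an ending lamination which is a filling lamination on $S$, hence a point of $\mathcal{EL}(S)$; these two are distinct, since a doubly degenerate structure cannot have coincident ending laminations (otherwise Thurston's realizability criterion would fail). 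Thus $e$ indeed lands in $(\mathcal{EL}(S)\times\mathcal{EL}(S))\backslash\Delta$, the two coordinates being ordered by the fixed marking that distinguishes the two ends.

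Next I would establish bijectivity. Injectivity is exactly the incompressible-ends case of the Ending Lamination Theorem quoted earlier: within $\mathrm{DD}^\star(\pi_1(S))$ the marked homeomorphism type of $M^0_\rho$ is constant, there are no geometrically finite ends contributing Teichm\"{u}ller data, and the parabolic locus is fixed by the $\star$, so the only end invariant is the ordered pair of ending laminations, which the theorem asserts determines $\rho$ up to $\mathrm{PSL}(2,\mathbb{C})$-conjugacy. For surjectivity I would invoke Thurston's Double Limit Theorem: given distinct filling $\lambda^-,\lambda^+$, choose $X_i,Y_i\in\mathrm{Teich}(S)$ converging in Thurston's compactification to $\lambda^-$ and $\lambda^+$, pass to a convergent subsequence of the quasi-Fuchsian groups $Q(X_i,Y_i)$ in $\mathrm{AH}(\pi_1(S))$, and use that $\lambda^-,\lambda^+$ are filling and distinct to conclude that the limit is doubly degenerate, has precisely the prescribed parabolics (so lies in $\mathrm{DD}^\star(\pi_1(S))$), and realizes $(\lambda^-,\lambda^+)$ as its ending invariants.

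It then remains to upgrade the continuous bijection $e$ to a homeomorphism. Continuity of $e$ --- that is, continuity of the ending laminations as functions of $\rho$ in the algebraic topology restricted to $\mathrm{DD}^\star(\pi_1(S))$ --- I would obtain from the continuity theorems for ending laminations, recovering $\lambda^\pm$ as limits of projectivized shortest-curve data which vary continuously on this locus. For continuity of $e^{-1}$ I would argue with sequences: if $(\lambda^\pm_i)\to(\lambda^\pm)$ in the target and $\rho_i=e^{-1}(\lambda^\pm_i)$, then a further application of the Double Limit Theorem (using that the target point is filling and off the diagonal) prevents $(\rho_i)$ from escaping to the Bers boundary, so it subconverges in $\mathrm{AH}(\pi_1(S))$ to some $\rho'\in\mathrm{DD}^\star(\pi_1(S))$; continuity of $e$ forces $e(\rho')=(\lambda^\pm)$, and injectivity forces $\rho'=e^{-1}(\lambda^\pm)$, whence $\rho_i\to e^{-1}(\lambda^\pm)$ by second countability. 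Hence $e$ is a homeomorphism.

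The main obstacle is the continuity of $e$: continuity of ending laminations is genuinely delicate and is where the bulk of the cited argument resides, as are the two appeals to the Double Limit Theorem that rule out degeneration in the surjectivity and $e^{-1}$-continuity steps. I would emphasize that every nontrivial ingredient here is imported from three-manifold topology --- the Ending Lamination Theorem, the Double Limit Theorem, and the continuity of ending laminations --- with no descriptive set-theoretic content to add; within the present paper the statement functions purely as a black box furnishing a Polish model for $\mathrm{DD}^\star(\pi_1(S))$.
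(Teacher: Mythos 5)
The paper does not prove this statement at all: it is imported verbatim as Theorem~6.5 of the cited reference \cite{MR2582104}, and within the paper it functions exactly as you say in your last sentence --- a black box providing a Polish parametrization of $\mathrm{DD}^\star(\pi_1(S))$ that feeds into Theorem \ref{thm:E0_upper_bound}. So there is no internal argument to compare yours against; your proposal is an attempted reconstruction of the external proof.

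As a reconstruction, your architecture is the standard one and the ingredients are correctly identified: well-definedness and injectivity from the incompressible-ends case of the Ending Lamination Theorem (with the $\star$ fixing the parabolic locus so that no Teichm\"{u}ller coordinates appear), surjectivity from Thurston's Double Limit Theorem applied to quasi-Fuchsian approximants $Q(X_i,Y_i)$ with $X_i,Y_i$ degenerating to the prescribed filling laminations, and then continuity in both directions. But be aware that every genuinely hard point is asserted rather than argued. (i) Continuity of $e$ is delicate precisely because the ending-invariant map is \emph{not} continuous on $\mathrm{AH}(\pi_1(S))$ (the paper itself cites \cite{MR1813235} for such failures); you need an actual argument that on the doubly degenerate locus the laminations vary continuously, e.g.\ via convergence of short-curve/pleating data, and "recovering $\lambda^\pm$ as limits of projectivized shortest-curve data which vary continuously on this locus" is a restatement of what must be proved, not a proof. (ii) In the surjectivity step, the Double Limit Theorem only gives a convergent subsequence in $\mathrm{AH}$; that the algebraic limit is doubly degenerate, has no accidental parabolics (so lies in the starred space), and has ending invariants exactly $(\lambda^-,\lambda^+)$ each requires a separate argument. (iii) The distinctness of the two ending laminations of a doubly degenerate structure, which you attribute to "Thurston's realizability criterion", likewise needs a precise justification. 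None of this makes the outline wrong --- it is how the cited source proceeds in spirit --- but as written the proposal is a correct skeleton with the load-bearing steps outsourced, which is essentially the same status the statement has in the paper itself.
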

Recall that $\mathrm{MCG}$ of a closed surface $S$ is an index-$2$ subgroup of $\mathrm{Out}(\pi_1(S))$.
In combination with Theorem \ref{thm:doubly_degenerate}, the following slight rephrasing of \cite[Cor.\ 1.2]{MR4252215} should then suggest the upper bound in question.
\begin{theorem}
\label{thm:PrSa}
Let $S$ be a finite-type hyperbolizable surface. The orbit equivalence relation induced by the natural action of $\mathrm{MCG}(S)$ on $\mathcal{EL}(S)$ is hyperfinite.
\end{theorem}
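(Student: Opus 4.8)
The plan is to follow the argument of Przytycki and Sabok \cite{MR4252215}, beginning with a change of viewpoint. By the theorem of Klarreich (see \cite{MR2258749, MR4502619}), the space $\mathcal{EL}(S)$ is $\mathrm{MCG}(S)$-equivariantly homeomorphic to the Gromov boundary $\partial\mathcal{C}(S)$ of the curve graph $\mathcal{C}(S)$ of $S$, a connected, (uniformly) $\delta$-hyperbolic graph on which $\mathrm{MCG}(S)$ acts by simplicial automorphisms (in the finitely many low-complexity cases one substitutes the standard modification of $\mathcal{C}(S)$ and the corresponding dictionary). It therefore suffices to show that the orbit equivalence relation $E$ that $\mathrm{MCG}(S)$ induces on $\partial\mathcal{C}(S)$ is hyperfinite, i.e.\ (by the characterization recalled in Section \ref{subsection:invariant_DST}) that $E=\bigcup_{n}E_n$ for an increasing sequence of Borel equivalence relations $E_n$ with finite classes.

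The principal tool is the theory of \emph{unicorn paths} of Hensel, Przytycki and Webb, which produces, between any two vertices of $\mathcal{C}(S)$ — and, by a limiting procedure, between a vertex and a boundary point, or between two boundary points — an explicit finite or one-sided-infinite sequence of curves that is a uniform unparametrized quasigeodesic, is \emph{coarsely canonical} (any two such paths with the same endpoints uniformly fellow-travel), and admits a coarse Lipschitz retraction of the ambient graph onto it. Fixing a basepoint vertex $v_0$, one extracts from this a Borel assignment $\xi\mapsto\rho_\xi$ of a unicorn ray $\rho_\xi=(v_0=c_0^\xi,c_1^\xi,\dots)$ converging to each $\xi\in\partial\mathcal{C}(S)$, and $\delta$-hyperbolicity forces this assignment to be coarsely equivariant in the strong form that whenever $\eta=g\xi$ the ray $\rho_\eta$ and the translated ray $g\rho_\xi$ (which emanates from $g v_0$) eventually lie within uniformly bounded Hausdorff distance of one another.

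The heart of the matter — and the step I expect to be the real obstacle — is to convert this coarse, \emph{eventual} matching of rays into an honest exhaustion of $E$ by finite Borel equivalence relations. Here soft, hyperbolic-group-style reasoning is unavailable: $\mathcal{C}(S)$ is not locally finite, so one cannot invoke hyperfiniteness of boundary actions on proper hyperbolic spaces, and $\mathrm{MCG}(S)$ is neither hyperbolic nor amenable. Przytycki and Sabok's resolution is a combinatorial finiteness estimate showing that, although unicorn rays traverse a locally infinite graph, the combinatorial data needed — up to the $\mathrm{MCG}(S)$-action and up to bounded error — to determine how such a ray passes through a given bounded region of $\mathcal{C}(S)$ is uniformly bounded. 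Quantizing the rays $\rho_\xi$ along a growing sequence of scales and recording, at scale $n$, the orbit-type of the corresponding initial segment together with a matching group element of bounded complexity realizing the fellow-traveling above, one assembles the desired relations $E_n$; coarse canonicity of unicorn paths bounds each $E_n$-class, while the eventual fellow-traveling guarantees that every pair in $E$ is caught by some $E_n$. Making these two estimates precise is the main work, after which $E$ is hyperfinite, as asserted.

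Finally, in the setting of the paper this theorem is applied in combination with Theorem \ref{thm:doubly_degenerate} and the index-$2$ inclusion $\mathrm{MCG}(S)\le\mathrm{Out}(\pi_1(S))$: since a subequivalence relation of a hyperfinite relation is hyperfinite, and since a finite-index extension of a hyperfinite relation is again hyperfinite, hyperfiniteness of the $\mathrm{MCG}(S)$-action on $\mathcal{EL}(S)$ propagates to the $\mathrm{PSL}(2,\mathbb{C})$-conjugacy relation on $\mathcal{D}^\star_{\mathrm{dd}}[\pi_1(S)]$, yielding the $E_0$ upper bound that, together with Theorem \ref{thm:IRSsE0}, gives Theorem \ref{thm:line_bundles}.
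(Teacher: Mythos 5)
The paper does not prove this statement at all: Theorem \ref{thm:PrSa} is presented as a ``slight rephrasing'' of \cite[Cor.\ 1.2]{MR4252215} (Przytycki--Sabok), so the paper's ``proof'' is a citation of the literature, and the honest way to discharge the statement within this paper is simply to invoke that corollary (together with Klarreich's identification of $\mathcal{EL}(S)$ with the Gromov boundary of the curve graph, which is already built into the cited result). Your proposal instead attempts to re-derive the external theorem, and while your outline is broadly faithful to the ingredients of the Przytycki--Sabok argument --- Klarreich's equivariant homeomorphism, the unicorn paths of Hensel--Przytycki--Webb, their coarse canonicity and the retraction property --- it is not a proof: you yourself flag that converting the coarse, eventual fellow-traveling of rays into an exhaustion of the orbit relation by finite Borel equivalence relations is ``the main work,'' and you do not carry it out. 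Naming the obstacle (local infiniteness of $\mathcal{C}(S)$, non-amenability and non-hyperbolicity of $\mathrm{MCG}(S)$) and asserting that Przytycki--Sabok overcome it is a description of a proof, not one; as a standalone argument the central step is missing.

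Moreover, the mechanism you sketch for that step is not the one in the cited paper. You propose to build an increasing union $E=\bigcup_n E_n$ of finite Borel equivalence relations directly, by quantizing unicorn rays at growing scales and recording bounded orbit-type data. What Przytycki--Sabok actually do --- as the present paper itself notes in the remark following Theorem \ref{thm:E0_upper_bound}, where the ``heart of their argument'' is described as a bireduction with the tail-equivalence relation $E^*_t$ --- is to Borel reduce the boundary action to a tail/shift-type equivalence relation on sequences of arcs (exploiting a uniform finiteness property of unicorn paths), and then invoke the classical fact that such tail equivalence relations are hyperfinite. So even granting your setup, the route you indicate would require substantial new estimates that you have not supplied, and it diverges from the argument you claim to be following. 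The concluding paragraph of your proposal, on how the theorem is applied to $\mathcal{D}^\star_{\mathrm{dd}}[\pi_1(S)]$ via Theorem \ref{thm:doubly_degenerate} and the index-$2$ inclusion $\mathrm{MCG}(S)\le\mathrm{Out}(\pi_1(S))$, matches the paper's use of the theorem (in the proof of Theorem \ref{thm:E0_upper_bound}), but it is not part of the statement you were asked to prove.
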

From this we will argue the following.
\begin{theorem}
\label{thm:E0_upper_bound}
For any finite-type hyperbolizable surface $S$, both the relations $E(\mathrm{PSL}(2,\mathbb{C}),\mathcal{D}^\star_{\mathrm{dd}}[\pi_1(S)])$ and $E(\mathrm{PSL}(2,\mathbb{C}),\mathcal{D}^\star_{\mathrm{sd}}[\pi_1(S)])$ are essentially hyperfinite.
\end{theorem}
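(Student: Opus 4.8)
The plan is to establish the doubly degenerate case first, and then reduce the singly degenerate case to it, both times passing through the commuting square (\ref{eq:diagram2}) so as to convert conjugacy of unmarked groups into quotients by natural mapping-class-group actions on Polish spaces of ending data. For the doubly degenerate case, I would combine three inputs. First, Theorem \ref{thm:doubly_degenerate} identifies $\mathrm{DD}^\star(\pi_1(S))$ homeomorphically with $(\mathcal{EL}(S)\times\mathcal{EL}(S))\setminus\Delta$ via the ending invariant map $e$. Second, the diagram (\ref{eq:diagram2}), applied with $\Gamma=\pi_1(S)$ and $P$ the peripheral subset, tells us that $E(\mathrm{PSL}(2,\mathbb{C}),\mathcal{D}^\star_{\mathrm{dd}}[\pi_1(S)])$ is Borel bireducible with the orbit equivalence relation $E_\zeta$ of the natural action of $\mathrm{Out}(\pi_1(S),P)$ on $\mathrm{DD}^\star(\pi_1(S))$ — this is Lemma \ref{lem:commuting_square} in the starred, relative form, using that $\Gamma$ is finitely generated (so $\mathrm{Aut}$ is countable and Luzin--Novikov furnishes the dashed sections $s,t$), and that $\mathrm{AH}^\star(\pi_1(S),P)$ is Polish since $\pi_1(S)$ is nonelementary. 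Third, transporting this action across the homeomorphism $e$ gives an action of $\mathrm{Out}(\pi_1(S),P)$ on $(\mathcal{EL}(S)\times\mathcal{EL}(S))\setminus\Delta$; the relevant subgroup (modulo the finite-index and finitely-many-components bookkeeping from Dehn--Nielsen--Baer and from which end receives which lamination) acts as $\mathrm{MCG}(S)$ acting diagonally on the two $\mathcal{EL}(S)$ factors. Theorem \ref{thm:PrSa} says the action of $\mathrm{MCG}(S)$ on $\mathcal{EL}(S)$ is hyperfinite; the diagonal action on $\mathcal{EL}(S)\times\mathcal{EL}(S)$ is then a subrelation of the product of two hyperfinite relations, and such products of hyperfinite (indeed countable) Borel equivalence relations are hyperfinite, as is any Borel subequivalence relation of a hyperfinite one; restricting off the diagonal changes nothing. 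A finite-index overgroup action is still hyperfinite (hyperfiniteness is closed under finite index), so $E_\zeta$ is essentially hyperfinite, hence so is $E(\mathrm{PSL}(2,\mathbb{C}),\mathcal{D}^\star_{\mathrm{dd}}[\pi_1(S)])$.

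For the singly degenerate case, the idea is that a singly degenerate manifold homeomorphic to $S\times\mathbb{R}$ has exactly one geometrically infinite end, carrying an ending lamination in $\mathcal{EL}(S)$, and one geometrically finite end, carrying a point of the finite-volume Teichm\"uller space $\mathrm{Teich}(\partial\text{-component})$. So the ending invariant map on $\mathrm{SD}^\star(\pi_1(S))$ should land homeomorphically in a space of the form $\mathcal{EL}(S)\times\mathrm{Teich}(\cdot)$ (the relevant form of the Ending Lamination Theorem combined with the Ahlfors--Bers parametrization of the geometrically finite end; this is again essentially \cite{MR2582104} together with the quasi-Fuchsian/singly degenerate analysis cited around Theorem \ref{thm:geom_fin_hyp_3_mans_smooth}). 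Passing through the starred square (\ref{eq:diagram2}) once more, $E(\mathrm{PSL}(2,\mathbb{C}),\mathcal{D}^\star_{\mathrm{sd}}[\pi_1(S)])$ is Borel bireducible with the orbit equivalence relation of $\mathrm{Out}(\pi_1(S),P)$ acting on $\mathcal{EL}(S)\times\mathrm{Teich}(\cdot)$; the relevant subgroup acts as $\mathrm{MCG}$ on the first coordinate (hyperfinite, by Theorem \ref{thm:PrSa}) and as a mapping class group on the $\mathrm{Teich}$ coordinate \emph{properly discontinuously} (by the classical deformation theory used in the proof of Theorem \ref{thm:geom_fin_hyp_3_mans_smooth}, e.g.\ \cite[Cor.\ 7.3.1]{MR2096234}). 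An orbit equivalence relation of a properly discontinuous action on a Polish space is concretely classifiable, in particular hyperfinite; a product of a hyperfinite relation with a concretely classifiable one is hyperfinite (this is a standard fact: the product is countable-to-one over the second factor, or directly, $E\times F$ with $F$ smooth is bireducible with $E$ on each fiber and these fibers are Borel-parametrized), and Borel subrelations and finite-index overgroup actions preserve this. Hence $E(\mathrm{PSL}(2,\mathbb{C}),\mathcal{D}^\star_{\mathrm{sd}}[\pi_1(S)])$ is essentially hyperfinite as well.

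The main obstacle I anticipate is not any single deep ingredient — those are quoted — but the bookkeeping needed to identify precisely which quotient each conjugacy relation is bireducible with: one must track how $\mathrm{Out}(\pi_1(S),P)$ acts through the ending invariant homeomorphisms of Theorems \ref{thm:doubly_degenerate} (and its singly degenerate analogue), account for the index-$2$ subtlety between $\mathrm{MCG}(S)$ and $\mathrm{Out}(\pi_1(S))$, handle the (finitely many) ways the two ends of $S\times\mathbb{R}$ can be labelled, and, in the non-closed or cusped cases, the relative/starred decomposition indexed by $P\in\mathcal{P}(\pi_1(S))$ from Lemma \ref{lem:countable_par-sets} (which only ever contributes a countable Borel partition, hence is harmless for complexity). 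One must also confirm that the group acting in each case is commensurable with, or a finite-index overgroup of, the displayed mapping class group action, so that the closure properties of the class of essentially hyperfinite relations (under finite index, products with smooth relations, products of countable hyperfinite relations, and Borel subrelations) apply cleanly. None of this is conceptually hard, but it is where the proof must be written carefully; once it is in place, Theorems \ref{thm:PrSa} and the proper discontinuity from Theorem \ref{thm:geom_fin_hyp_3_mans_smooth} close both cases immediately.
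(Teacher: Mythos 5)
Your doubly degenerate argument is correct and is essentially the paper's: both pass from $E(\mathrm{PSL}(2,\mathbb{C}),\mathcal{D}^\star_{\mathrm{dd}}[\pi_1(S)])$ through the starred diagram (\ref{eq:diagram2}) (via $qs$, as in the proof of Theorem \ref{thm:geom_fin_hyp_3_mans_smooth}) to the $\mathrm{Out}(\pi_1(S),P)$-action on $\mathrm{DD}^\star(\pi_1(S))$, transport across the homeomorphism of Theorem \ref{thm:doubly_degenerate}, and finish with Theorem \ref{thm:PrSa}, the closure of hyperfiniteness under finite products and Borel subrelations (the paper phrases the off-diagonal step instead as a weak reduction and invokes Lemma \ref{lem:weak_reduction}, which amounts to the same thing), and the index-$2$ Dehn--Nielsen--Baer step handled by \cite[Prop.\ 1.3(vii)]{MR1149121}. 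The differences there are cosmetic.

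The gap is in the singly degenerate case: you assert that the ending-invariant map on $\mathrm{SD}^\star(\pi_1(S))$ ``lands homeomorphically'' in a space of the form $\mathcal{EL}(S)\times\mathrm{Teich}(\cdot)$, and you then quotient that space by the transported $\mathrm{Out}(\pi_1(S),P)$-action. That topological claim is exactly what cannot be taken for granted: end-invariant maps on deformation spaces are in general discontinuous (Brock \cite{MR1813235}, a phenomenon the paper flags explicitly), and the homeomorphism statement of Theorem \ref{thm:doubly_degenerate} is specific to the doubly degenerate locus. What your reduction actually requires is only that the map $e':\mathrm{SD}^\star(\pi_1(S))\to(\mathcal{EL}(S)\times\mathrm{Teich}(S))\sqcup(\mathrm{Teich}(S)\times\mathcal{EL}(S))$ be \emph{Borel} (its injectivity on conjugacy classes is the Ending Lamination Theorem), and this measurability is the one point in this half where the paper does genuine work: it shows the restriction of $e'$ to each ``slice'' with fixed degenerate end invariant is Borel via \cite[Theorem 8.1]{MR3001608}, and then assembles these slices using \cite[Theorem 6.6]{MR2582104} and \cite[Lem.\ 4.5.1]{MR2378491}. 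Your proposal skips this verification entirely by invoking a stronger, unjustified statement; once Borel-ness of $e'$ is supplied, the remainder of your singly degenerate argument (hyperfiniteness of the $\mathrm{MCG}(S)$-action on the $\mathcal{EL}(S)$ factor, proper discontinuity hence concrete classifiability on the Teichm\"{u}ller factor, products, subrelations, and the finite-index bookkeeping) goes through as you describe and as in the paper.
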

Recall from Section \ref{subsection:invariant_DST} that the latter phrase is tantamount to ``$\leq_B E_0$''. We turn to the theorem's proof after noting its utility in proving Theorems \ref{thm:line_bundles} and \ref{thm:complexity_3-mans}.
\begin{proof}[Proof of Theorem \ref{thm:line_bundles}]
Any closed orientable hyperbolizable surface $S$ is as in the premises of both Theorems \ref{thm:IRSsE0} and \ref{thm:E0_upper_bound}, which together imply that
$$E_0\leq_B E(\mathrm{PSL}(2,\mathbb{C}),\mathcal{D}^\star_{\mathrm{dd}}[\pi_1(S)])\leq_B E_0.$$
By the reasoning of Lemma \ref{lem:lifting_orientation_preserving}, this then also holds of $E(\mathrm{Isom}(\mathbb{H}^3),\mathcal{D}^\star_{\mathrm{dd}}[\pi_1(S)])$, where we're momentarily (i.e., in only this proof and the next one) reading the second argument as the space of discrete type-preserving copies of $\pi_1(S)$ in $\mathrm{Isom}(\mathbb{H}^3)$. The translation of this result into manifold terms via Corollary \ref{cor:classwise} is by now routine; this completes the proof.
\end{proof}

\begin{proof}[Proof of Theorem \ref{thm:complexity_3-mans}]
The theorem's first two items are simply Proposition \ref{prop:nonmonotonic} and Theorem \ref{thm:geom_fin_hyp_3_mans_smooth}; its third is immediate from Theorem \ref{thm:line_bundles} together with the observation that $\mathcal{D}^\star_{\mathrm{dd}}[\pi_1(S)]$ is, in the above reading, an $\mathrm{Isom}(\mathbb{H}^3)$-conjugation invariant subset of $\mathcal{D}(\mathrm{Isom}(\mathbb{H}^3))$.
\end{proof}

\begin{proof}[Proof of Theorem \ref{thm:E0_upper_bound}]
Let us first argue the doubly degenerate case.
Fix a finite-type hyperbolizable surface $S$ and write $F$ for the equivalence relation induced by the natural action of $\mathrm{MCG}(S)$ on $\mathcal{EL}(S)$ and $E$ for the equivalence relation induced by the natural action of $\mathrm{MCG}(S)$ on $(\mathcal{EL}(S)\times\mathcal{EL}(S))\backslash\Delta$.
By Theorem \ref{thm:PrSa}, $F$ is hyperfinite, thus the product $F\times F$ is as well \cite[Prop.\ 1.3]{MR1900547}.
By the countability of $\mathrm{MCG}(S)$, the inclusion $(\mathcal{EL}(S)\times\mathcal{EL}(S))\backslash\Delta\to\mathcal{EL}(S)\times\mathcal{EL}(S)$ induces a weak reduction $E\to F\times F$; from this we conclude via Lemma \ref{lem:weak_reduction} that $E$ is hyperfinite. 

Consider next the commuting subdiagram of diagram (\ref{eq:diagram2}) anchored at opposite corners by $\mathcal{D}^\star_{\mathrm{dd}}[\pi_1(S)]$ and $\mathrm{DD}^\star(\pi_1(S))$; therein, much as in the proof of Theorem \ref{thm:geom_fin_hyp_3_mans_smooth}, $qs$ defines a Borel reduction of $E(\mathrm{PSL}(2,\mathbb{C}),\mathcal{D}^\star_{\mathrm{dd}}[\pi_1(S)])$ to the orbit equivalence relation $E'$ induced by the action of $\mathrm{Out}(\pi_1(S),P)$ on $\mathrm{DD}^\star(\pi_1(S))$.
By the Dehn--Nielsen--Baer Theorem for punctured surfaces (see \cite[Theorem 8.8]{MR2850125}), $\mathrm{MCG}(S)$ is an index-$2$ subgroup of $\mathrm{Out}(\pi_1(S),P)$, so each $E'$-class contains only finitely many classes of the $e^{-1}$-homeomorphic image of $E$.
From this it follows by \cite[Prop.\ 1.3(vii)]{MR1149121} that $E'$ is hyperfinite, and thus that $E(\mathrm{PSL}(2,\mathbb{C}),\mathcal{D}^\star_{\mathrm{dd}}[\pi_1(S)])$ is essentially hyperfinite, as claimed.

The argument for singly degenerate manifolds is almost identical; the key points here are that the ending invariant map $$e':\mathrm{SD}^\star(\pi_1(S))\to(\mathcal{EL}(S)\times\mathrm{Teich}(S))\sqcup(\mathrm{Teich}(S)\times\mathcal{EL}(S))$$ replacing the homeomorphism $e$ above is still Borel, and the $\mathrm{MCG}(S)$-action on each of the latter factors is again hyperfinite.
One may argue that $e'$ is Borel as follows (there may be better ways): argue via \cite[Theorem 8.1]{MR3001608}, for example, that its restriction to the ``slice'' associated to any fixed degenerate end-structure is Borel, and conclude that $e'$ also is via \cite[Theorem 6.6]{MR2582104} and \cite[Lem.\ 4.5.1]{MR2378491}.
\end{proof}
\begin{remark}
Upon inspection, Przytycki--Sabok in fact show in the course of proving \cite[Cor.\ 1.2]{MR4252215} that if $S$ is a finite-type hyperbolizable surface $S$ other than the thrice-punctured sphere then the equivalence relation induced by the natural action of $\mathrm{MCG}(S)$ on $\mathcal{EL}(S)$ is Borel equivalent to $E_0$ (the heart of their argument is a bireduction with a tail-equivalence relation, the $E^*_t$ of their \cite[p.\ 7]{MR4252215}).
More particularly, $E_0\leq_B F$ in the proof-notation just above (a fact not far in flavor from \cite[Theorem 2]{MR787893}), and the proof's other terms are linked closely enough that one might hope to reverse its direction to show that $E_0\leq_B E(\mathrm{PSL}(2,\mathbb{C}),\mathcal{D}^\star_{\mathrm{dd}}[\pi_1(S)])$ for this broader class of surfaces $S$.
Indeed, for this one would only need to show that $F\leq_B E$ via a reduction $\mathcal{EL}(S)\to(\mathcal{EL}(S)\times\mathcal{EL}(S))\backslash\Delta$.
The natural attempt restricts to the identity on the first coordinate, and must therefore restrict on the second coordinate to a fixed-point-free $\mathrm{MCG}(S)$-equivariant endomorphism $f:\mathcal{EL}(S)\to\mathcal{EL}(S)$, but pseudo-Anosovs' fixed points easily rule out the existence of such $f$.
The attempt for $\mathcal{D}^\star_{\mathrm{sd}}[\pi_1(S)]$ faces similar difficulties, all of which we note simply to say that the relations between dynamical phenomena like \cite{MR787893} and non-classifiability results like Theorem \ref{thm:IRSsE0} remain to be better understood.
\end{remark}
The preceding results are particularly suggestive in combination with the works \cite{MR2876139,MR3008919}.
Letting $\Gamma=\pi_1(M)$ for an algebraically finite hyperbolic $3$-manifold $M$, the central object of study therein is ``the topological quotient $$\mathrm{AI}(\Gamma)=\mathrm{AH}(\Gamma)/\mathrm{Out}(\Gamma),$$ which we may think of as the moduli space of unmarked $3$-manifolds homotopy equivalent to $M$'' (\cite[p.\ 221]{MR3008919}, with notations modified slightly to align with our own); we may, of course, equally well think of $\mathrm{AI}(\Gamma)$ as a moduli space of the $\mathrm{PSL}(2,\mathbb{C})$-conjugacy classes of $\mathcal{D}[\Gamma]$.
The summary theorem in \cite{MR3008919} is the following.
\begin{theorem}
\label{thm:Canary_Storm}
Let $M$ be a compact $3$-manifold with non-abelian fundamental group whose interior admits a complete hyperbolic metric. Then the moduli space $\mathrm{AI}(\pi_1(M))$ is $T_1$ if and only if $M$ is not an untwisted interval bundle.
\end{theorem}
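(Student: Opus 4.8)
The plan is to reduce $T_1$-ness to a statement about orbit closures and then attack the two implications by very different means. Since $\mathrm{AI}(\pi_1(M))=\mathrm{AH}(\pi_1(M))/\mathrm{Out}(\pi_1(M))$ carries the quotient topology and $\mathrm{AH}(\pi_1(M))$ is Hausdorff (Polish, being a closed subset of a character variety; cf.\ Lemma \ref{lem:AH_is_Polish}), the space $\mathrm{AI}(\pi_1(M))$ is $T_1$ precisely when every $\mathrm{Out}(\pi_1(M))$-orbit in $\mathrm{AH}(\pi_1(M))$ is closed. Two structural inputs organize everything. First, as $\mathrm{int}(M)$ is hyperbolic, $M$ is compact, irreducible and aspherical, so Mostow--Prasad rigidity \cite{MR385005} together with the topological rigidity of Haken manifolds gives $\mathrm{Out}(\pi_1(M))\cong\mathrm{MCG}(M)$, and the latter is controlled by the characteristic (Jaco--Shalen--Johannson) submanifold of $M$: up to finite index it is generated by Dehn twists along essential annuli and tori and by the mapping class groups of the Seifert-fibered and interval-bundle components (see \cite{MR2096234}). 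Second, by Sullivan's theorem \cite{MR806415} the interior of $\mathrm{AH}(\pi_1(M))$ is exactly its geometrically finite locus (minimally parabolic up to a parabolic stratification), on each stratum of which $\mathrm{Out}(\pi_1(M))$ acts properly discontinuously by the Ahlfors--Bers parametrization --- the mechanism already used in the proof of Theorem \ref{thm:geom_fin_hyp_3_mans_smooth} via \cite{MR2096234} --- while the geometrically infinite boundary points are classified, by the Ending Lamination Theorem, by the ending laminations of the (incompressible) end surfaces, as in Theorem \ref{thm:doubly_degenerate} and \cite{MR2582104}.

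For the easy direction, suppose $M=F\times I$ is an untwisted interval bundle, so $\pi_1(M)=\pi_1(F)$ and $\mathrm{MCG}(F)$ embeds with finite index into $\mathrm{Out}(\pi_1(F))$. I would exhibit an explicit non-closed orbit: starting from a quasi-Fuchsian group $\rho$ uniformizing the two boundary copies of $F$ (conformal data $(X,Y)$), one produces a sequence $\phi_n\in\mathrm{MCG}(F)$ such that $\phi_n\cdot\rho$ converges in $\mathrm{AH}(\pi_1(F))$ to a doubly degenerate surface group $\sigma$, whose ending laminations are the Thurston-boundary limits of $\phi_n X$ and $\phi_n Y$; the convergence is Thurston's double limit theorem \cite{MR0648524}, and it is precisely the presence of \emph{two} independently deformable boundary components of $F\times I$ that allows these limits to be distinct filling laminations. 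Since $\sigma$ is geometrically infinite it lies outside the quasi-Fuchsian locus, hence outside the orbit of $\rho$, so that orbit is not closed and $\mathrm{AI}(\pi_1(F))$ fails to be $T_1$. (One may instead invoke directly the classical non-proper-discontinuity of $\mathrm{MCG}(F)$ on $\mathrm{AH}(\pi_1(F))$, or, when $F$ has boundary and $F\times I$ is a handlebody, the free-group version of the construction underlying Section \ref{subsection:classifying_gi_ends}.)

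For the hard direction, assume $M$ is not an untwisted interval bundle; the goal is to show every $\mathrm{Out}(\pi_1(M))$-orbit is closed. The acylindrical case is immediate: then $\mathrm{Out}(\pi_1(M))$ is finite, so $\mathrm{AI}(\pi_1(M))$ is the quotient of a Hausdorff space by a finite group and is Hausdorff. In general, given $\phi_n\cdot\rho\to\sigma$ with the $\phi_n$ pairwise distinct modulo $\mathrm{Stab}(\rho)$, I would decompose each $\phi_n$ through the characteristic submanifold and argue that no constituent motion can generate genuine accumulation. A power of a Dehn twist along an essential annulus with core $c$ alters the conjugacy class of $\rho$ only when $c$ is non-parabolic in $\rho$, in which case $\ell_\rho(c)>0$ forces $\phi_n\cdot\rho$ to leave every compact set, a contradiction; so the annular part of $\phi_n$ is harmless. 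Seifert-fibered components contribute only finitely, or via fiber/boundary twists of the same harmless type. The crux is the interval-bundle components: a \emph{twisted} $I$-bundle over a surface $G$ carries essentially one end's worth of Teichm\"uller/lamination data (its horizontal boundary is a single surface, the orientation double cover of $G$), so a sequence of mapping classes of that piece keeping $\phi_n\cdot\rho$ convergent is forced --- by the Ahlfors--Bers parametrization on the geometrically finite locus and by the Ending Lamination Theorem on the boundary --- to be bounded modulo $\mathrm{Stab}(\rho)$; assembling these facts, the $\phi_n$ are themselves bounded modulo $\mathrm{Stab}(\rho)$, whence $\sigma=\phi\cdot\rho$ and the orbit is closed.

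The main obstacle is exactly this last step --- ruling out accumulation coming from the interval-bundle pieces --- which is the real content of \cite{MR3008919}. One must show that the mapping class group of a twisted $I$-bundle component, acting on the relevant ending laminations, cannot produce the ``double degeneration'' that the untwisted case permits, and this demands a careful account of how ending invariants transform under those mapping classes and of their interaction with the parabolic strata (the deformation spaces $\mathrm{AH}(M,P)$ of pared manifolds and the subdiagram of (\ref{eq:diagram2}) attached to them). I would run this argument along the lines of Section \ref{subsection:classifying_gi_ends}, leaning on the structure theory of \cite{MR2096234} and the Ending Lamination Theorem; I do not see an essential shortcut through the machinery already assembled here.
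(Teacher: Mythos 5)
Two points, one about the paper and one about your argument. First, the paper does not prove this statement at all: Theorem \ref{thm:Canary_Storm} is imported verbatim from Canary--Storm \cite{MR3008919} (``The summary theorem in \cite{MR3008919} is the following''), so what you are attempting is a reproof of the main theorem of that paper, not of an argument contained here. The only thing the present paper adds in this vicinity is Corollary \ref{cor:not_T0}, which uses Theorem \ref{thm:IRSsE0} to \emph{strengthen} the easy direction (untwisted interval bundles give a quotient that is not even $T_0$ for any compatible Polish topology); your easy direction --- pseudo-Anosov iteration, Thurston's double limit theorem, convergence of $\phi^n\cdot\rho$ to a doubly degenerate group outside the orbit --- is correct in outline and consistent with that.

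The genuine gap is the forward direction, and you have in effect conceded it yourself: the claim that if $M$ is not an untwisted interval bundle then every $\mathrm{Out}(\pi_1(M))$-orbit in $\mathrm{AH}(\pi_1(M))$ is closed is exactly the content of \cite{MR3008919}, and your final paragraph says you do not see how to carry out the key step (controlling the interval-bundle pieces of the characteristic submanifold). Beyond being incomplete, one concrete step in your sketch is wrong as stated: for $\phi_n$ a power of a Dehn twist along an essential annulus with core $c$, the length satisfies $\ell_{\phi_n\cdot\rho}(c)=\ell_\rho(\phi_n^{-1}(c))=\ell_\rho(c)$, so positivity of $\ell_\rho(c)$ does not force $\phi_n\cdot\rho$ to leave every compact set; indeed twist sequences of exactly this kind can converge algebraically (the wrapping phenomena of Anderson--Canary), and they are the very source of the non-Hausdorff behaviour Canary--Storm analyse alongside this theorem. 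So the ``annular part is harmless'' step, and with it the reduction of the problem to the interval-bundle components, is not established, and the hard direction remains unproven in your proposal.
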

In our second proof of Theorem \ref{thm:fgPSL2Rsmooth}, we invoked the $(1)\Rightarrow(3)$ portion of Glimm's \cite[Theorem 1]{MR0136681}; the following distills its $(2)\Rightarrow(3)$ portion.
\begin{lemma}
\label{lem:Glimm}
If a countable group $G$ acts Borel-measurably on a Polish space $X$ and $X/G$ is $T_0$ then the associated orbit equivalence relation is concretely classifiable.
\end{lemma}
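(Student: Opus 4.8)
The plan is to reduce this to the classical Glimm--Effros dichotomy (stated in Section \ref{subsection:invariant_DST}) together with a topological observation about $T_0$ quotients. Let $E$ denote the orbit equivalence relation on $X$ induced by the Borel action of the countable group $G$. Since $G$ is countable, $E$ is a countable Borel equivalence relation, hence in particular Borel; so by the Glimm--Effros dichotomy, either $E$ is concretely classifiable or $E_0 \leq_B E$. The strategy is to rule out the second alternative under the hypothesis that $X/G$ is $T_0$: I will show that $E_0 \leq_B E$ forces the quotient $X/G$ to fail to be $T_0$, giving the desired contradiction.

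First I would recall that for a countable group action, the orbit equivalence relation $E$ has a natural topological content: each orbit $G \cdot x$ is countable, and the quotient topology on $X/G$ is the finest topology making the projection $\pi : X \to X/G$ continuous. The key point is a standard characterization (going back to Effros) that $X/G$ is $T_0$ precisely when $E$ admits no ``Glimm--Effros embedding,'' i.e., no continuous injection $c : 2^{\mathbb{N}} \to X$ such that $x E_0 y \Leftrightarrow c(x) E c(y)$ and, moreover, such that the $E$-saturation of the range of $c$ witnesses non-$T_0$ behavior in the quotient. More precisely, I would argue: if $E_0 \leq_B E$, then by the structure theory of hyperfinite equivalence relations and the fact that $E_0$ is generated by a minimal (indeed, uniquely ergodic) $\mathbb{Z}$-action on $2^{\mathbb{N}}$, one can upgrade the Borel reduction to produce two distinct points $\bar a, \bar b \in X/E$ (images of $E_0$-classes which are ``asymptotic'' in the sense of the tail topology) each of which lies in the closure of the other in $X/G$. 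This is exactly the failure of $T_0$. The cleanest route here is to invoke the known equivalence, for countable Borel equivalence relations arising from Polish group actions (here the countable $G$ is itself Polish as a discrete group), between (a) $E$ being concretely classifiable, (b) $E_0 \not\leq_B E$, and (c) the quotient Borel space of $E$ being ``countably separated,'' which in the countable-action setting is implied by the quotient \emph{topological} space being $T_0$. I would cite \cite[Theorem 1]{MR0136681} for the underlying dichotomy and the references in Section \ref{subsection:invariant_DST}, and note that this is precisely the $(2)\Rightarrow(3)$ content of Glimm's theorem as the surrounding text indicates.

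The main obstacle I anticipate is making the bridge between the \emph{topological} hypothesis ($X/G$ is $T_0$) and the \emph{Borel}/\emph{measure-theoretic} conclusion (concrete classifiability) fully rigorous without simply quoting Glimm's theorem verbatim. The subtlety is that $T_0$ for the quotient topology is genuinely weaker than the quotient Borel space being standard, and one must use the action structure — specifically, that orbits are countable and that $X$ is Polish, so that the Becker--Kechris machinery applies and there is a finer Polish topology on $X$ in which $G$ acts by homeomorphisms with the same Borel structure. With that finer topology, if $X/G$ were not concretely classifiable, the topological Glimm--Effros theorem (in the form: a countable group acting on a Polish space with non-smooth orbit equivalence relation admits a continuous equivariant embedding of the $E_0$-action, whose image has non-$T_0$ quotient) would directly contradict the $T_0$ hypothesis. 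Since the paper explicitly frames Lemma \ref{lem:Glimm} as distilling ``the $(2)\Rightarrow(3)$ portion of Glimm's \cite[Theorem 1]{MR0136681},'' I expect the intended proof is essentially a one-paragraph citation once the hypotheses are matched up: verify that a Borel action of a countable discrete group on a Polish space falls under the scope of Glimm's theorem (local compactness is not needed for this implication, as the surrounding remark already notes in the analogous use for Theorem \ref{thm:fgPSL2Rsmooth}), and then read off the conclusion.
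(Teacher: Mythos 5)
Your proposal is correct in substance, and your closing paragraph in fact coincides with what the paper does: Lemma \ref{lem:Glimm} is given no proof at all beyond the framing sentence, i.e., it is a direct citation of the $(2)\Rightarrow(3)$ implication of Glimm's theorem (with the understanding, as for the $(1)\Rightarrow(3)$ use earlier, that the hypotheses can be matched up — e.g.\ via a Becker--Kechris change of topology, which only refines the quotient topology and so preserves $T_0$, and noting that local compactness is not essential). Your main sketched argument is a genuinely different, more self-contained route, and it does work, but only after one precision: the Glimm--Effros dichotomy as stated in Section \ref{subsection:invariant_DST} (a \emph{Borel} reduction $E_0\leq_B E$) is not enough by itself, since Borel maps carry no information about the quotient \emph{topology}; what you need is the Harrington--Kechris--Louveau form of the dichotomy, which for the Borel relation $E$ (Borel because $G$ is countable) produces a \emph{continuous} embedding $c:2^{\mathbb{N}}\to X$ of $E_0$ into $E$. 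With that in hand, the finish is two lines and needs none of the unique-ergodicity/tail-topology apparatus, nor equivariance, nor even the change of topology: for any $E$-invariant open $U\subseteq X$, the set $c^{-1}[U]$ is an $E_0$-invariant open subset of $2^{\mathbb{N}}$, hence (every $E_0$-class being dense) is $\varnothing$ or all of $2^{\mathbb{N}}$; so two points of $\operatorname{ran}(c)$ lying in distinct $E$-classes give two points of $X/G$ that no invariant open set separates, contradicting $T_0$. So your approach trades the paper's appeal to Glimm/Effros for continuous actions (plus hypothesis-matching) for a single appeal to the continuous-embedding dichotomy; the former is the historically intended reading, the latter is arguably cleaner since it applies verbatim to Borel actions on Polish spaces.
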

Together with Theorem \ref{thm:IRSsE0}, this affords us the following strengthening of Theorem \ref{thm:Canary_Storm}:
\begin{corollary}
\label{cor:not_T0}
If an $M$ as in Theorem \ref{thm:Canary_Storm} isn't an untwisted interval bundle then $\mathrm{AI}(\pi_1(M))$ is Borel-isomorphic to a Polish space.
If, on the other hand, $M$ is an untwisted interval bundle over a closed orientable hyperbolic surface $S$ then for no Polish topology on $\mathrm{AH}(\pi_1(M))$ inducing its usual Borel structure is the topological quotient $\mathrm{AH}(\pi_1(M))/\mathrm{Out}(\pi_1(M))$ even $T_0$.
\end{corollary}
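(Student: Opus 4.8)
The plan is to route both halves of the corollary through the countable Borel equivalence relation $E_\zeta$ induced by the natural action of the countable group $\mathrm{Out}(\pi_1(M))$ on $\mathrm{AH}(\pi_1(M))$, which is Polish by Lemma \ref{lem:AH_is_Polish} since an $M$ as in Theorem \ref{thm:Canary_Storm} has non-abelian $\pi_1(M)$ in both cases at issue. By definition $\mathrm{AI}(\pi_1(M))$ is the topological quotient $\mathrm{AH}(\pi_1(M))/\mathrm{Out}(\pi_1(M))$, and since the action of $\mathrm{Aut}(\pi_1(M))$ on $\mathrm{DF}(\pi_1(M))$ by precomposition is continuous, so is the induced action of $\mathrm{Out}(\pi_1(M))$ on $\mathrm{AH}(\pi_1(M))$; in particular $E_\zeta$ is a countable Borel equivalence relation on a Polish space, and its Borel structure is that intrinsic to $\mathrm{AH}(\pi_1(M))$. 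The crucial fact connecting the topological and descriptive pictures is Lemma \ref{lem:Glimm}: $T_0$-ness of the quotient forces $E_\zeta$ to be concretely classifiable.

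For the first assertion, suppose $M$ is not an untwisted interval bundle. Theorem \ref{thm:Canary_Storm} then gives that $\mathrm{AI}(\pi_1(M))$ is $T_1$, hence $T_0$, so Lemma \ref{lem:Glimm} yields that $E_\zeta$ is concretely classifiable. A concretely classifiable countable Borel equivalence relation admits a Borel transversal; restricting the quotient map $\mathrm{AH}(\pi_1(M))\to\mathrm{AI}(\pi_1(M))$ to such a transversal produces a Borel bijection from a standard Borel space onto $\mathrm{AI}(\pi_1(M))$, whence the latter is standard Borel, i.e., Borel-isomorphic to a Polish space.

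For the second assertion, let $M$ be an untwisted interval bundle over a closed orientable hyperbolic surface $S$, so $\pi_1(M)\cong\pi_1(S)$. Theorem \ref{thm:line_bundles} (via Theorem \ref{thm:IRSsE0}) gives $E_0\leq_B E(\mathrm{PSL}(2,\mathbb{C}),\mathcal{D}^\star_{\mathrm{dd}}[\pi_1(S)])$; as $\mathcal{D}^\star_{\mathrm{dd}}[\pi_1(S)]$ is a conjugation-invariant Borel subset of $\mathcal{D}[\pi_1(S)]$, the inclusion reduces this relation to $E(\mathrm{PSL}(2,\mathbb{C}),\mathcal{D}[\pi_1(S)])$, which by Lemma \ref{lem:commuting_square} (applicable since $\pi_1(S)$ is non-abelian) is Borel bireducible with $E_\zeta$. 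Hence $E_0\leq_B E_\zeta$, so $E_\zeta$ is \emph{not} concretely classifiable. Now if some Polish topology $\tau$ on $\mathrm{AH}(\pi_1(S))$ inducing the usual Borel structure made the topological quotient $\mathrm{AH}(\pi_1(S))/\mathrm{Out}(\pi_1(S))$ a $T_0$ space, then, the action of $\mathrm{Out}(\pi_1(S))$ remaining Borel with respect to that unchanged Borel structure, Lemma \ref{lem:Glimm} would force $E_\zeta$ to be concretely classifiable, a contradiction.

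The argument is short given the machinery in place, and I expect the two points meriting care to be: (i) the standard descriptive-set-theoretic fact that a smooth countable Borel equivalence relation carries a Borel transversal, which is what licenses reading the Canary--Storm $T_1$ conclusion as the standard-Borel statement in the corollary's first clause; and (ii) verifying that in the interval-bundle case one genuinely loses \emph{$T_0$}, not merely $T_1$ --- this is delivered precisely by combining $E_0\leq_B E_\zeta$ with the $T_0$-to-smoothness implication of Lemma \ref{lem:Glimm}, and it is here that the phrase ``inducing its usual Borel structure'' does its work, since Borel-measurability of the group action depends only on the Borel structure, not the topology.
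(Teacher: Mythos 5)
Your proof is correct and follows essentially the same route as the paper's: both halves pass through the countable orbit equivalence relation of $\mathrm{Out}(\pi_1(M))$ on $\mathrm{AH}(\pi_1(M))$ and Lemma \ref{lem:Glimm}, with the second half using exactly the chain $E_0\leq_B E(\mathrm{PSL}(2,\mathbb{C}),\mathcal{D}^\star_{\mathrm{dd}}[\pi_1(S)])\leq_B E_\zeta$ via Theorem \ref{thm:IRSsE0} and Lemma \ref{lem:commuting_square} and the contrapositive of Glimm. The only (cosmetic) difference is in the first half, where you extract a Borel transversal from smoothness while the paper uses the countable-to-one reduction itself, taking its image (Borel by Luzin--Novikov) as the standard Borel copy of $\mathrm{AI}(\pi_1(M))$ --- the same machinery in a slightly different guise.
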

\begin{proof}
For any $M$ under consideration, temporarily write $E(M)$ for the orbit equivalence relation induced by the $\mathrm{Out}(\pi_1(M))$-action on $\mathrm{AH}(\pi_1(M))$.
For the first assertion, deduce from its premise, Theorem \ref{thm:Canary_Storm}, and Lemma \ref{lem:Glimm} that $E(M)$ is concretely classifiable, as witnessed by a Borel map $r$ from $\mathrm{AH}(\pi_1(M))$ to some Polish space $Y$; being countable-to-one, $\mathrm{im}(r)$ is a Borel subset of $Y$, and $r$ induces a Borel isomorphism $\mathrm{AI}(\pi_1(M))\cong\mathrm{im}(r)$. For the second assertion, observe that $\pi_1(M)=\pi_1(S)$ and for any of the topologies in question,
$$E_0\leq_B E(\mathrm{PSL}(2,\mathbb{C}),\mathcal{D}^\star_{\mathrm{dd}}[\pi_1(S)])\leq_B E(M)$$ by Theorem \ref{thm:IRSsE0} and Lemma \ref{lem:commuting_square}, respectively; then apply the contrapositive of Lemma \ref{lem:Glimm}.
\end{proof}
A second corollary bounds the Borel complexity of the isometry relation for a substantial class of hyperbolic $3$-manifolds.
\begin{corollary}
\label{cor:mostly_hyperfinite}
The classification up to isometry of algebraically finite hyperbolic $3$-manifolds whose holonomy groups $\Gamma$ either
\begin{enumerate}
\item are surface groups, or
\item aren't group-isomorphic to $\pi_1(S)$ for any surface $S$,
\end{enumerate}
is essentially hyperfinite.
\end{corollary}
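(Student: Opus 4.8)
The plan is to reduce, via Corollary~\ref{cor:classwise} and Lemma~\ref{lem:lifting_orientation_preserving}, to bounding the Borel complexity of the $\mathrm{PSL}(2,\mathbb{C})$-conjugacy relation on the conjugation-invariant Borel subspace $\mathcal{D}\subseteq\mathcal{D}_{\mathrm{af}}(\mathrm{PSL}(2,\mathbb{C}))$ of torsion-free Kleinian groups falling under case~(1) or case~(2). As throughout this section, I would partition $\mathcal{D}$ by isomorphism type of $\Gamma$: since the group-isomorphism relation on $\mathcal{D}_{\mathrm{af}}(\mathrm{PSL}(2,\mathbb{C}))$ is analytic with only countably many classes, this is a decomposition of $\mathcal{D}$ into countably many conjugation-invariant Borel pieces $\mathcal{D}[\Gamma]$; and since a countable union of Borel, invariant, essentially hyperfinite subpieces is again essentially hyperfinite (a standard closure property of $E_0$, of the kind used in the assembly arguments concluding the proofs of Theorems~\ref{thm:geom_fin_hyp_3_mans_smooth} and~\ref{thm:E0_upper_bound}), it suffices to show that $E(\mathrm{PSL}(2,\mathbb{C}),\mathcal{D}[\Gamma])$ is essentially hyperfinite for each relevant $\Gamma$.

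Case~(2) I would handle exactly as in the first assertion of Corollary~\ref{cor:not_T0}. A $\Gamma$ that is not a surface group is automatically non-elementary, the torsion-free elementary subgroups of $\mathrm{PSL}(2,\mathbb{C})$ being, up to conjugacy, $\{1\}$, $\mathbb{Z}$, or $\mathbb{Z}^2$ (by the trichotomy in the proof of Lemma~\ref{lem:elementary}), each a fundamental group of a surface. Fixing a compact hyperbolic-interior representative $M$ of $\Gamma$ (one exists by tameness \cite{2004math......5568A, MR2188131}), $M$ is not an interval bundle and hence not an untwisted interval bundle, so Theorem~\ref{thm:Canary_Storm} gives that $\mathrm{AH}(\Gamma)/\mathrm{Out}(\Gamma)$ is $T_1$, hence $T_0$; Lemma~\ref{lem:Glimm} then makes the $\mathrm{Out}(\Gamma)$-orbit relation concretely classifiable, and Lemma~\ref{lem:commuting_square} transfers this to $E(\mathrm{PSL}(2,\mathbb{C}),\mathcal{D}[\Gamma])$, which is thus $\leq_B E_0$.

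For case~(1) I would dispose of the elementary $\Gamma$ at once ($\leq_B\,=_{\mathbb{R}}\,\leq_B E_0$, by Lemma~\ref{lem:elementary}) and otherwise fix a finite-type hyperbolizable $S$ with $\Gamma\cong\pi_1(S)$, then partition $\mathcal{D}[\pi_1(S)]$ according to the homeomorphism type of the quotient $\mathbb{H}^3/\Gamma'$ --- a Borel operation, by combining the Borel computation of compact cores from Section~\ref{subsection:the_basics} with the fact, a consequence of tameness, that only countably many types occur --- and, within each piece, by parabolic locus $P\in\mathcal{P}(\pi_1(S))$ (countably many, by Lemma~\ref{lem:countable_par-sets}) and by which of the manifold's ends are geometrically finite versus degenerate (Borel and exhaustive, by Corollary~\ref{cor:loose_ends}, Lemma~\ref{lem:limitsets}, and Bonahon's tameness for surface groups \cite{MR1166330}). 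On a piece of twisted-interval-bundle type, Theorem~\ref{thm:Canary_Storm} and Lemma~\ref{lem:Glimm} apply as in case~(2); on the all-geometrically-finite atomic pieces, (the proof of) Theorem~\ref{thm:geom_fin_hyp_3_mans_smooth} gives $\leq_B\,=_{\mathbb{R}}$; on an atomic piece whose manifolds are of the form $S'\times\mathbb{R}$ with incompressible degenerate ends, Theorem~\ref{thm:E0_upper_bound} gives essential hyperfiniteness; and on an atomic piece whose manifolds have a compressible degenerate end (as arise, e.g., for $\Gamma$ free), I would use that the $\mathrm{PSL}(2,\mathbb{C})$-conjugacy relation there is, by the compressible case of the Ending Lamination Theorem \cite{MR2925381}, carried to the orbit relation of the handlebody group $\mathrm{MCG}(H)\leq\mathrm{MCG}(\partial H)$ acting on a Borel subset of $\mathcal{EL}(\partial H)$, which is a subequivalence relation of the hyperfinite relation of Theorem~\ref{thm:PrSa} and so itself hyperfinite.

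The main obstacle I anticipate is the compressible geometrically infinite case just sketched: one must check that the ending-invariant map is Borel there --- as was verified in the incompressible setting in the proof of Theorem~\ref{thm:E0_upper_bound}, via \cite{MR3001608} and \cite{MR2582104}, but the compressible case of \cite{MR2925381} is less cleanly packaged --- and that the passage to an $\mathrm{MCG}(H)$-orbit relation is genuinely Borel and faithful (so that the resulting relation really is a subrelation of the one in Theorem~\ref{thm:PrSa}). A secondary, more bureaucratic obstacle is the relative parabolic-locus bookkeeping inside the $S'\times\mathbb{R}$ pieces: Theorems~\ref{thm:geom_fin_hyp_3_mans_smooth} and~\ref{thm:E0_upper_bound} are proved with $P$ the peripheral subgroup, so covering an arbitrary $P$ requires invoking or re-deriving their relative versions, with $\mathrm{Out}(\pi_1(S'),P)$ acting through a finite-index extension of a mapping-class subgroup on the corresponding relative Teichm\"uller and ending-lamination parametrizations. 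Modulo these points, every atomic piece has been shown to be $\leq_B E_0$, which is the assertion.
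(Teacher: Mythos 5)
Your core case analysis is exactly the paper's proof: case (2) via Theorem~\ref{thm:Canary_Storm}, Lemma~\ref{lem:Glimm}, Lemma~\ref{lem:commuting_square}, and Corollary~\ref{cor:classwise}; and within case (1), geometrically finite groups via Theorem~\ref{thm:geom_fin_hyp_3_mans_smooth} and singly/doubly degenerate groups via Theorem~\ref{thm:E0_upper_bound}, assembled over a countable conjugation-invariant Borel partition. That part is correct and needs no more than what you wrote.

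The problem is one of scope: you have read ``surface group'' in the broad sense of abstract isomorphism to $\pi_1(S)$, whereas in Section~\ref{subsection:classifying_gi_ends} the term is defined restrictively --- $\Gamma$ is a surface group only if some isomorphism $\pi_1(S)\to\Gamma$ carries the peripheral elements bijectively onto the parabolics of $\Gamma$. Under that convention the quotient manifolds in case (1) are incompressible $S\times\mathbb{R}$'s with the standard parabolic locus, and the trichotomy quasi-Fuchsian / singly degenerate / doubly degenerate is exhaustive; none of your additional machinery (the partition by parabolic locus $P\in\mathcal{P}(\pi_1(S))$, relative versions of Theorems~\ref{thm:geom_fin_hyp_3_mans_smooth} and~\ref{thm:E0_upper_bound}, the handlebody-group treatment of compressible degenerate ends) is needed. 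Moreover, the extra sub-cases you attempt are precisely the ``third kind'' of manifold --- accidental parabolics and compressible boundaries --- which the corollary deliberately excludes and which the discussion immediately following it flags as unresolved: the ending-invariant map is genuinely discontinuous in those settings, and the Anderson--Lecuire repair via the strong topology applies only in the incompressible case. So the obstacles you candidly list for the compressible geometrically infinite case are not minor bureaucratic points that can be waved through with ``modulo these points''; they are the open content of Question~\ref{ques:alg_fin_3_man}, and your proposal does not close them. Restricted to the statement as actually formulated, your argument is the paper's; read broadly, it has a real gap exactly where you predicted one.
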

\begin{proof}
In case (2), the isometry problem is concretely classifiable, by Theorem \ref{thm:Canary_Storm}, Lemma \ref{lem:Glimm}, Lemma \ref{lem:commuting_square}, and Corollary \ref{cor:classwise}.
In case (1), either $\Gamma$ is geometrically finite, whereupon by Theorem \ref{thm:geom_fin_hyp_3_mans_smooth} the isometry problem is again concretely classifiable, or $\Gamma\in\mathcal{D}^\star_{\mathrm{dd}}[\pi_1(S)]\cup\mathcal{D}^\star_{\mathrm{sd}}[\pi_1(S)]$, whereupon Theorem \ref{thm:E0_upper_bound} applies and Lemma \ref{lem:lifting_orientation_preserving} and Corollary \ref{cor:classwise} again convert our conclusion to one about the isometry problem for manifolds.
\end{proof}
It is an artifact of our restrictive definition of \emph{surface group} that Corollary \ref{cor:mostly_hyperfinite} looks more exhaustive than it really is.
For an algebraically finite hyperbolic $3$-manifold may be of a third kind as well: its holonomy group $\Gamma$ may be isomorphic to $\pi_1(S)$ for some surface $S$, but by no map placing $\Gamma$'s parabolic elements and the peripheral elements of $\pi_1(S)$ in correspondence.
Non-alignment can entail phenomena of two broad sorts --- \emph{accidental parabolics}, and \emph{compressible boundaries} --- whose analyses require extra levels of care.
These phenomena aren't unrelated to the discontinuities, already noted, of the ending invariant map (see \cite{MR1411128,MR1813235} for well-known examples, and \cite{MR2667553} for a useful survey), which complicate appeals to results like Theorem \ref{thm:PrSa}, and while Anderson--Lecuire show in \cite{MR3134412} that revision to the \emph{strong topology} of Lemma \ref{lem:SH_Borel} can rectify these discontinuities, their results apply only in incompressible settings.
We do, nevertheless, have ideas for how these analyses will go.
Faced more immediately, though, with the choice between further delaying circulation of this paper's other results and delaying a complete analysis of the complexity of $E(\mathrm{PSL}(2,\mathbb{C}),\mathcal{D}_{\mathrm{af}}(\mathrm{PSL}(2,\mathbb{C})))$, we are opting for the latter.

\section{Further questions}\label{section:questions}

We conclude with a list of some of the most salient questions which remain.
Let us stress from the outset, though, that we regard this list as unavoidably, even inherently, incomplete, for what we've found in the course of this project is that those we've consulted about it can, in general, pose even better questions within its framework than we ourselves can.
May they continue to do so.
Put differently, our aim in what follows is less to conclude the kind of dialogue that informs this work than to shape a place for more of it.

In any case, some obvious questions remain open.
The most conspicuous among them is that of the complexity of the homeomorphism relation for manifolds\footnote{Recall from Example \ref{ex:pseudogroup_comparison} our adherence to the convention that the absence of the phrase ``with boundary'' connotes ``without boundary''.} of dimension greater than $2$.
Since, as discussed in our introduction, Iannella and Weinstein \cite{IW26+} have recently settled the $3$-dimensional case, the state of the art posing of this question is as follows:
\begin{question}\label{ques:complexity_homeo}
	For any $n\geq 4$, what is the Borel complexity of the homeomorphism relation for connected topological $n$-manifolds?
\end{question}
As mentioned in Example \ref{ex:pseudogroup_comparison}, topological $3$-manifolds admit smooth structures which are unique up to diffeomorphism.
Thus, much as in our Theorem E, the Iannella--Weinstein complexity computation for $3$-manifolds up to homeomorphism readily implies a parallel one for smooth $3$-manifolds up to diffeomorphism.
In higher dimensions, however, smooth and topological structures no longer couple so neatly; thus we also ask the following:
\begin{question}\label{ques:complexity_smooth}
	For any $n\geq 4$, what is the Borel complexity of the diffeomorphism relation for connected smooth $n$-manifolds?
\end{question}
These give rise to further questions.
Recall, for example, item (2) of the Hjorth--Kechris theorem quoted in our introduction, whereby for any $n\geq 2$ the biholomorphism relation on complex $n$-manifolds is \emph{turbulent} and, consequently, \emph{not} classifiable by countable structures.
Questions \ref{ques:complexity_homeo} and \ref{ques:complexity_smooth} are partly asking whether this phenomenon manifests in the less rigid settings of topological or smooth manifolds.
As for complex manifolds, the first possibility of an affirmative answer is in $4$ real dimensions, by the results cited above.
We record the negative framing of the question since it reads a little better:
\begin{question}
\label{ques:ctbl_structures}
Is the homeomorphism relation on connected topological $n$-manifolds classifiable by countable structures for every $n\geq 1$? Similarly for the diffeomorphism relation on connected smooth $n$-manifolds.
\end{question}
A close relative of these questions is that of whether complexity weakly increases with dimension.
Recall that it does not for complete finite-volume hyperbolic manifolds, by Theorem \ref{thm:fgPSL2Rsmooth} and Proposition \ref{prop:nonmonotonic}, but that even for this class, complexity varies weakly monotonically in dimensions $n\geq 2$; thus, depending on how seriously one takes the setting of one hyperbolic dimension,\footnote{See \S 4 of the excellent \cite{MR1491098} for an argument that it should be taken seriously.} we arguably know at present of no non-complexity-monotonic class of manifolds.
\begin{question}
\label{ques:monotonic}
Write $\leq_B$ for the Borel reducibility ordering of the class of analytic equivalence relations on Polish spaces.
Is the map sending $n$ to:
\begin{itemize}
\item the homeomorphism relation on connected topological $n$-manifolds, or
\item the diffeomorphism relation on connected smooth $n$-manifolds
\end{itemize} a weakly increasing function from the positive integers to $\leq_B$?
\end{question}
Naively, of course, one expects both maps to be, in part because there exist such simple and uniform ways of converting lower-dimensional manifolds into higher-dimensional ones.
The most obvious among them, however, do not in general furnish reductions; see \cite{MR137105}, for example, for an uncountable family of non-homeomorphic contractible $3$-manifolds whose products with $\mathbb{R}$ are each homeomorphic to $\mathbb{R}^4$ (cf.\ \cite{whitehead1937, MR125573}).
In contrast, by invariance of dimension, the product with the closed interval $[0,1]$ does reduce the homeomorphism relation for connected $n$-manifolds \emph{without} boundary to the homeomorphism relation for connected $(n+1)$-manifolds \emph{with} boundary, but the problem of relating the latter to that of connected $(n+1)$-manifolds without boundary then remains.
Again the obvious ideas --- namely, taking either interiors or doubles of manifolds with boundaries --- fail in general to furnish reductions of the suitable homeomorphism relations; see  \cite{MR271948} for $3$-dimensional examples of failures of the latter.
What's lacking, in other words, are answers to the following:
\begin{question}
\label{ques:boundary}
	For any $n\geq 2$, is the homeomorphism relation for connected topological $n$-manifolds with boundary Borel reducible to that for connected topological $n$-manifolds without boundary? Similarly for smooth $n$-manifolds.
\end{question}

As noted in Example \ref{ex:pseudogroup_comparison}, the reverse reduction always holds, thus the question is whether the homeomorphism or diffeomorphism relations on $n$-manifolds with boundary and without boundary are Borel equivalent, or, conversely, whether any additional complexity derives purely from manifolds' boundaries.

Progress on these questions may well come in blocks.
A positive answer to either instance of Question \ref{ques:ctbl_structures}, for example, would completely settle the corresponding Question \ref{ques:complexity_homeo} or \ref{ques:complexity_smooth} as well as the corresponding instance of Question \ref{ques:monotonic}; the bearing of Question \ref{ques:boundary} on Question \ref{ques:monotonic} is similar.
Above, we've given topological and smooth manifolds primary and parallel attention simply because these are, classically, the most prominent classes of manifolds; note moreover that in all complexity computations thus far, these two classes have behaved identically.

Further interest attaches to the $4$-dimensional setting because it's these classes' first opportunity to behave differently; in certain restricted senses, in fact, we know that they will.
Here we're referring, of course, to Milnor's discovery of so-called ``exotic'' smooth structures on topological manifolds $M$  \cite{MR82103}.
As noted in our proof of Corollary \ref{cor:Borel_compact_with_boundary}, there are, up to diffeomorphism, only countably many such structures when $M$ is compact (and in fact just finitely many if also $\mathrm{dim}(M)\neq 4$, with exactly $28$, for example, on Milnor's initial setting of the $7$-dimensional sphere \cite{MR148075}).
On the other hand, building on work of Donaldson \cite{MR710056} and Freedman \cite{MR679066}, Taubes \cite{MR882829} showed that $\mathbb{R}^4$ posses uncountably many non-diffeomorphic smooth structures (see also \cite{MR1152230}).
That the diffeomorphism relation on smooth manifolds homeomorphic to $\mathbb{R}^4$ is of strictly greater Borel complexity than that of the homeomorphism relation on such manifolds follows trivially; the more interesting question, recorded already in \cite{MR3837073}, is just what the complexity of the former relation is.
\begin{question}
\label{ques:R4}
	What is the Borel complexity of the diffeomorphism relation on smooth structures on $\mathbb{R}^4$?
\end{question}
As noted in our introduction, Gompf and Panagiotopoulos have, at the time of writing, shown the complexity to be at least $E_0$; in addition, they've identified a topological $4$-manifold whose smooth structures, up to diffeomorphism, reduce the Friedman–-Stanley jump $=^+$ of equality \cite{GP25}.
Question \ref{ques:R4} is in some sense a subquestion of Question \ref{ques:complexity_smooth}; one might alternatively conjoin them in the question \emph{Does the Borel complexity of the diffeomorphism relation on connected smooth $4$-manifolds concentrate on the class of manifolds homeomorphic to $\mathbb{R}^4$?}
Note relatedly that our pseudogroup framework renders the first challenge in answering Question \ref{ques:R4} --- namely, parametrizing smooth structures --- particularly straightforward: if $f$ is the embedding $\mathfrak{M}(\mathsf{C}^{\infty},\mathbb{R}^4)\to\mathfrak{M}(\mathsf{Top},\mathbb{R}^4)$ induced by the forgetful pseudogroup embedding $\mathsf{C}^{\infty}\to\mathsf{Top}$, then $f^{-1}([\mathbb{R}^4]_{\cong_{\mathsf{Top}}})$ is the parameter space desired.
Observe moreover that the latter is naturally endowed with a standard Borel structure if $[\mathbb{R}^4]_{\cong_{\mathsf{Top}}}$ forms a Borel subset of $\mathfrak{M}(\mathsf{Top},\mathbb{R}^4)$, an interesting question in its own right which opens onto several more.

One might approach this last question, for example, via the homotopical or (co)homological invariants of elements of $\mathfrak{M}(\mathsf{Top},\mathbb{R}^4)$.
We have seen in Corollary \ref{cor:compact_invariants_Borel} that the computation of these invariants on the space $\mathfrak{K}(\mathsf{Top},\mathbb{R}^n)$ of all compact topological $n$-manifolds is Borel, and it appears likely a consequence of Lemma \ref{lem:Borel_computation_of_components} and Theorem \ref{thm:Borel_triangulation} that this is also the case for the space $\mathfrak{M}(\mathsf{Top},\mathbb{R}^2)$ of all topological $2$-manifolds (the same would hold for $\mathfrak{M}(\mathsf{Top},\mathbb{R}^3)$, via the Iannella--Weinstein work cited above).
However, as mentioned in the discussion surrounding Corollary \ref{cor:compact_invariants_Borel} (particularly of \cite{MR1233807}, see also \cite[33.I]{MR1321597}), their computation for broader classes of topological spaces (even just compact subsets of $\mathbb{R}^2$) is not.
Add to this the existence of non-triangulable manifolds in every dimension greater than $3$ (as surveyed in \cite{MR3526837}), and the following question is a natural one.
\begin{question}
\label{ques:Borel_computation_invariants}
For which $n$ and subspaces of $\mathfrak{M}(\mathsf{Top},\mathbb{R}^n)$ are the assignments of the classical homotopical, homological, and cohomological invariants of algebraic topology Borel measurable functions?
\end{question}
Returning to the discussion of $\mathfrak{M}(\mathsf{C}^{\infty},\mathbb{R}^4)$ above, there we saw the two main ways of altering a parameter space of manifolds; more precisely, associating to any Borel pseudogroup $\mathcal{G}$ on a model space $X$ are questions both of how $(\mathcal{G},X)$-equivalence varies on varying subdomains of $\mathfrak{M}(\mathcal{G},X)$, and as $\mathcal{G}$ is replaced by finer or coarser pseudogroups $\mathcal{H}$.
The, loosely speaking, functorial way our parametrizations interact with these sorts of variations (the way relaxing manifolds' smooth structures to topological ones manifests as an embedding $\mathfrak{M}(\mathsf{C}^{\infty},\mathbb{R}^4)\to\mathfrak{M}(\mathsf{Top},\mathbb{R}^4)$, for example) is among our framework's chief charms.
Within it, these two axes of variation entail two further families of questions.

A prospect associating to the first axis
is a stratification of classes of $(\mathcal{G},X)$-manifolds, for fixed $\mathcal{G}$ and $X$, according to the complexity they embody.
As we've seen, for example, the homeomorphism relations for compact topological $2$-manifolds and for topological $2$-manifolds, respectively, are of minimal and maximal complexity among analytic equivalence relations which possess infinitely many classes and are classifiable by countable structures.
Which intermediate complexity degrees are realized by naturally occurring intermediate classes of topological $2$-manifolds?
More precisely:
\begin{question}
\label{ques:realizability}
For which analytic equivalence relations $E$ and natural parameter spaces $\mathfrak{X}$ with $\mathfrak{K}(\mathsf{Top},\mathbb{R}^2)\subseteq \mathfrak{X}\subseteq\mathfrak{M}(\mathsf{Top},\mathbb{R}^2)$ is the relation $\cong_{\mathsf{Top}}$ restricted to $\mathfrak{X}$ Borel equivalent to $E$?
\end{question}
Similarly, of course, for dimensions $n\geq 2$.
The question's not solely of abstract interest.
In their study of the large-scale geometry of big mapping class groups, for example, Mann and Rafi explicitly cite the complexity computations of \cite{MR1804507} as motivation for restricting attention to a class of surfaces they call \emph{tame} \cite[p.\ 2242]{MR4634747}; a natural question is how much complexity they're in the process screening out.

\begin{question}
\label{ques:Mann_Rafi}
What is the Borel complexity of the homeomorphism relation for surfaces which are tame in the sense of Mann--Rafi?
\end{question}

We've also seen that the complexity of the isometry relation for classes of hyperbolic $n$-manifolds ranges much as the homeomorphism relation for topological $2$-manifolds does: by Corollary \ref{cor:isometry_universal}, Theorem \ref{thm:fgPSL2Rsmooth}, and Proposition \ref{prop:nonmonotonic}, the range of the former is from $\cong_{\mathbb{R}}$ to $E_\infty$ and $\cong_{\mathbb{N}}$ to $E_\infty$, respectively, when $n=2$ and $n>2$, with the minimum corresponding in all cases to the class of compact (or, alternately, complete finite-volume) manifolds.
One might then pose a version of Question \ref{ques:realizability} for these classes as well, however, there is a sense in which we already know an answer.
A class of equivalence relations closely associated to those induced by Polish group actions is that of the \emph{idealistic}\footnote{All orbit equivalence relations of Polish group actions are idealistic \cite{MR1176624}; it is an open question whether the converse holds up to bireducibility, see e.g., \cite{2025arXiv250608217C}.} equivalence relations (see \cite{MR1176624, MR3549382}); since this class is closed under classwise Borel isomorphism, it follows from our Corollary \ref{cor:classwise} that the isometry relations on hyperbolic manifolds all fall within this class.
Since they are, moreover, Borel equivalence relations, it follows from \cite[Lem.\ 3.7]{MR3549382} that for every $n\geq 2$ and $E\leq_B E_\infty$, there exists an $\mathfrak{X}\subseteq\mathfrak{C}^*_c(\mathsf{Isom},\mathbb{H}^n)$ such that the isometry relation on $\mathfrak{X}$ is Borel equivalent to $E$.
Note, however, that this doesn't entirely exhaust this line of inquiry: the question of how this correspondence plays out, or more particularly of which of these $\mathfrak{X}$ are already objects of interest, remains a good one, and it's in this direction that the word \emph{natural} in Question \ref{ques:realizability} is pointing above.

%

Against this background, we highlight a more definite question regarding classes of hyperbolic manifolds of intermediate complexity. As shown in Corollary \ref{cor:mostly_hyperfinite}, for a wide range of algebraically finite hyperbolic $3$-manifolds the isometry relation has Borel complexity at most $E_0$; we ask whether other possible degrees are realized. Here again we stress the word \emph{natural}, as the discussion above shows that such classes $\mathfrak{X}$ exist abstractly.

\begin{question}
	Is there a natural parameter space $\mathfrak{X}\subseteq\mathfrak{C}^*	(\mathsf{Isom},\mathbb{H}^3)$ on which the isometry relation $\cong_{\mathsf{Isom}}$ has Borel complexity strictly between $E_0$ and $E_\infty$?
\end{question}

Zooming out, observe that all of the Borel complexity degrees of manifolds in Theorems A--E and J--L of our introduction reduce to the orbit equivalence relation of some Polish group action. This leads to the rather broad question of whether orbit equivalence relations represent a global upper bound for \emph{all} classification problems for manifolds based on, say, locally compact model spaces. Note that there is a universal orbit equivalence relation, instantiated for example by the homeomorphism relation on all compact metric spaces \cite{MR3459026}.

\begin{question}
Does there exist a locally compact Polish space $X$ and Borel pseudogroup $\mathcal{G}$ on $X$ such that the relation $\cong_{\mathcal{G}}$ of $(\mathcal{G},X)$-equivalence on $\mathfrak{M}(\mathcal{G},X)$ is not Borel reducible to an orbit equivalence relation of a Polish group action?
\end{question}

Returning to the discussion of axes of variation, the second and more obvious one refers to the question of how complexity varies as we vary the pseudogroup $\mathcal{G}$ on a fixed model space $X$.
Here the questions are too numerous to list, and our caveat above that others will be able to supply even better ones than we might particularly applies.
Let us therefore merely draw attention to those $(\mathcal{G},X)$-structures that might not leap so readily to mind.
As in our Section \ref{subsection:pseudogroups}, we direct readers to the first sections of \cite[\S 3]{MR1435975} for a sampling of such structures which is suggestive in its range.
Its Example 3.1.9, \emph{foliations}, points even in the direction of structures for which the model space $X$ is not locally connected, namely the \emph{matchbox} or \emph{solenoidal} manifolds of \cite{MR1326831, MR3249058}.
To further shed the requirement that our model space $X$ be locally compact would jeapordize the underpinnings of our model space machinery (see again Section \ref{subsection:spaces_of_subsets}), and this is one of the reasons we've focused on finite-dimensional manifolds. However, by a theorem of Henderson \cite{MR250342}, every separable infinite-dimensional manifold is homeomorphic to an open subset of the separable Hilbert space $\ell^2$.
Thus the Effros--Borel structure on closed subsets of $\ell^2$ induces a standard Borel structure on its open subsets, a natural candidate parameter space for the separable infinite-dimensional manifolds on which we may consider their complexity.
 
\begin{question}
 	What is the Borel complexity of the homeomorphism relation for connected separable infinite-dimensional manifolds?
\end{question}

The next few questions concern the optimality of our results.
For example, as discussed in Section \ref{section:conjugacy}, the Andretta--Camerlo--Hjorth theorem \cite{MR1815088} tells us that whenever a discrete group contains a nonabelian free subgroup, the conjugacy relation on its space of subgroups is a universal countable Borel equivalence relation. We then saw in Theorem \ref{thm:conjugacy_universal} and its corollaries that this phenomenon also holds for the conjugacy relation on discrete subgroups in a wide variety of Lie groups. However, our arguments seem specific to ``matrix-like'' groups and the most far-reaching generalization remains open:

\begin{question}
\label{ques:iian_conjecture}
	If $G$ is a locally compact Polish group which contains a discrete nonabelian free subgroup, is the conjugacy relation $E(G,\mathcal{D}(G))$ on discrete subgroups of $G$ essentially countable universal?	
\end{question}

In the more tractable case of a connected Lie group $G$, it \emph{fails} to contain a discrete nonabelian free subgroup if and only it is a compact extension of a solvable subgroup \cite[Thm.\ 3.8]{MR961261}. Thus, we conjecture the following dichotomy for conjugacy relations on the discrete subgroups of such a group $G$:

\begin{conjecture*}
	If $G$ is a connected Lie group, then exactly one of the following holds:
	\begin{enumerate}[label=\textup{\arabic*.}]
		\item $E(G,\mathcal{D}(G))$ is essentially hyperfinite; or
		\item $E(G,\mathcal{D}(G))$ is essentially countable universal.
	\end{enumerate}	
\end{conjecture*}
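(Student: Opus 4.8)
The plan is to prove the dichotomy by combining the ``positive'' half of our Theorem \ref{thm:conjugacy_universal} and Corollary \ref{cor:semisimple_universal} with a structure-theoretic reduction that handles the amenable case, using the Levi decomposition of a connected Lie group. First I would recall that mutual exclusivity is immediate: an essentially hyperfinite relation is Borel reducible to $E_0$, while an essentially countable universal one is not (since $E_0 <_B E_\infty$), so at most one alternative can hold. It therefore suffices to show that $E(G,\mathcal{D}(G))$ falls into one of the two classes for every connected Lie group $G$. I would split on the Tits alternative for connected Lie groups recorded in \cite[Thm.\ 3.8]{MR961261}: either $G$ contains a discrete nonabelian free subgroup, or $G$ is a compact extension of a connected solvable (equivalently, amenable) group.

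The first case is where I would like to invoke Question \ref{ques:iian_conjecture} wholesale, but since that is open in full generality, the honest plan is to reduce to the matrix case where Theorem \ref{thm:conjugacy_universal_in_quotients} already applies. Here I would use the Levi--Mal'cev decomposition $G = R \rtimes S$ where $R$ is the (solvable) radical and $S$ is semisimple, together with the adjoint representation: $\mathrm{Ad}(G) \cong G/Z(G)$ is always a matrix group, and since $Z(G)$ is discrete in a connected Lie group, Lemma \ref{lem:quotient_reduction} gives $E(\mathrm{Ad}(G), \mathcal{D}_{\mathrm{tf}}(\mathrm{Ad}(G))) \leq_B E(G, \mathcal{D}_{\mathrm{tf}}(G)) \leq_B E(G,\mathcal{D}(G))$. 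If $G$ contains a discrete nonabelian free subgroup then so does $\mathrm{Ad}(G)$ (free subgroups survive passing to $G/Z(G)$ since $Z(G)$ is central, hence the free group intersects it trivially or, if not, one passes to a further finite-index subgroup), so Theorem \ref{thm:conjugacy_universal} shows $E(\mathrm{Ad}(G),\mathcal{D}_{\mathrm{tf}}(\mathrm{Ad}(G)))$ is essentially countable universal; since $E(G,\mathcal{D}(G))$ is essentially countable (being an orbit equivalence relation of a locally compact Polish group action, by \cite{MR1176624}), and every essentially countable relation is Borel reducible to $E_\infty$, we conclude $E(G,\mathcal{D}(G)) \sim_B E_\infty$, i.e.\ alternative (2) holds.

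The second case --- $G$ amenable, i.e.\ a compact extension of a connected solvable group --- is the heart of the matter and the step I expect to be the main obstacle. The goal is to show $E(G,\mathcal{D}(G))$ is essentially hyperfinite. The natural approach is structural induction along a subnormal series with abelian (or compact) quotients, using the observation that $G/N$ for $N$ a discrete normal subgroup behaves well under Lemma \ref{lem:quotient_reduction} and that, for $N$ \emph{closed} normal, conjugacy of discrete subgroups that meet $N$ discretely can be analyzed fiberwise. For a connected abelian Lie group $G \cong \mathbb{R}^k \times \mathbb{T}^m$ one should be able to classify discrete subgroups concretely (they are lattices in rational subspaces together with torsion data), so $E(G,\mathcal{D}(G))$ is concretely classifiable there. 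For the compact extension one uses that conjugacy relations of compact group actions are concretely classifiable (each orbit is compact) and composes. The delicate point --- and this is where a genuinely new argument seems required --- is the non-abelian solvable part: a discrete subgroup of, say, the three-dimensional Heisenberg group or the group $\mathrm{Sol}$ need not project to a discrete subgroup of an abelian quotient, so one cannot simply induct. Here I would try to adapt the methods used to prove hyperfiniteness of conjugacy/translation actions of nilpotent and polycyclic groups (e.g.\ the Gao--Jackson-style arguments that actions of abelian, and more generally nilpotent, countable groups are hyperfinite), lifted to the continuous setting via a careful use of the Chabauty--Fell topology and of the fact that $\mathcal{D}(G)$ for such $G$ admits a Borel ``normal form'' map sending a discrete subgroup to a canonical sequence of generators; the finitely generated case is then handled by Lemma \ref{lem:weak_reduction} once one exhibits a weak reduction to a hyperfinite relation. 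I anticipate that carrying this out in full generality for all connected solvable Lie groups --- in particular handling subgroups of exponential-growth solvable groups like $\mathrm{Sol}$, where the dynamics of conjugation are genuinely hyperbolic --- is the real work, and possibly the reason the conjecture is stated as a conjecture rather than a theorem.
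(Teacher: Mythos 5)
There is nothing in the paper to compare your proposal against: the statement you are addressing is stated as an open conjecture, not a theorem, and the paper offers no proof of it. Indeed, immediately after stating it the authors note that ``implicit in this conjecture'' is the assertion that $E(G,\mathcal{D}(G))$ is essentially hyperfinite for compact-by-solvable connected Lie groups, and that this is related to the solvable case of Weiss's problem --- exactly the step you yourself flag as ``the real work'' and leave unproved. So your proposal, by its own admission, does not establish the statement; the amenable case is a genuine, open gap, not a technicality to be adapted from Gao--Jackson-style arguments (note in particular that your appeal to normal forms and weak reductions via Lemma \ref{lem:weak_reduction} would at best handle finitely generated discrete subgroups, whereas $\mathcal{D}(G)$ contains infinitely generated ones, e.g.\ infinite-rank free subgroups of solvable-by-compact groups' lattices are not the issue, but infinitely generated discrete subgroups of $G$ itself must be dealt with).

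There is also a concrete error in the half you present as settled. You invoke Lemma \ref{lem:quotient_reduction} with $N=Z(G)$ ``since $Z(G)$ is discrete in a connected Lie group,'' but the center of a connected Lie group need not be discrete: it is discrete for semisimple groups, while for $G=\mathrm{SL}(2,\mathbb{R})\times\mathbb{R}$ (which certainly contains a discrete nonabelian free subgroup) one has $Z(G)=\{\pm I\}\times\mathbb{R}$. So the reduction $E(\mathrm{Ad}(G),\mathcal{D}_{\mathrm{tf}}(\mathrm{Ad}(G)))\leq_B E(G,\mathcal{D}_{\mathrm{tf}}(G))$ via that lemma is unavailable in general. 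Moreover, your parenthetical argument that $\mathrm{Ad}(G)$ inherits a \emph{discrete} nonabelian free subgroup is not justified: as the paper itself points out, direct images under quotient maps need not preserve discreteness, and central kernels of positive dimension are precisely where this fails. This is why the paper proves universality only for matrix groups, their quotients by discrete normal subgroups, and noncompact semisimple groups (Theorem \ref{thm:conjugacy_universal}, Theorem \ref{thm:conjugacy_universal_in_quotients}, Corollary \ref{cor:semisimple_universal}), and leaves the general Lie (indeed locally compact Polish) case as Question \ref{ques:iian_conjecture}, remarking that their methods ``seem specific to groups of matrices or related groups.'' Both alternatives of the dichotomy therefore remain open in your write-up, not just the solvable one.
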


Implicit in this conjecture is that the conjugacy relation on discrete subgroups of a compact-by-solvable Lie group is essentially hyperfinite. This is clearly related to the solvable case of Weiss's problem \cite{MR737417} on whether discrete amenable groups induce only hyperfinite equivalence relations. Recent progress \cite{MR4679959} has resolved this problem for a large class of solvable discrete groups and therefore may be helpful in addressing our conjecture above.

Returning to the question of optimality, recall that in Sections \ref{section:isometry_for_2} and \ref{section:isometry_for_3} we confined our attention to torsion-free subgroups solely for the sake of simplicity.
Thus we may wonder:
\begin{question}
\label{ques:Selberg}
How does the removal of the qualifier \emph{torsion-free} affect our Theorems J and K?
\end{question}

Selberg's lemma that, for example, every finitely generated Kleinian group is virtually torsion-free  \cite{MR130324}, suggests that it should not.
But the lemma is too essentially an existence result for its implications to be entirely obvious in our contexts.
As discussed in Section \ref{subsection:classifying_gi_ends}, we've also stopped short, in Theorem K, of resolving the following.
\begin{question}
\label{ques:alg_fin_3_man}
What is the Borel complexity of the isometry relation on algebraically finite hyperbolic $3$-manifolds?
\end{question}
There are, as well, multiple natural questions about the generalizations of our results from Sections \ref{section:isometry_for_2} and \ref{section:isometry_for_3} beyond $3$ hyperbolic dimensions and the Lie groups $\mathrm{PSL}(2,\mathbb{R})$ and $\mathrm{PSL}(2,\mathbb{C})$. 
\begin{question}
\label{ques:geom_alg_fin}
Fix $n\geq 4$. What is the Borel complexity of the isometry relation on the space of geometrically finite hyperbolic $n$-manifolds?
What is the Borel complexity of the isometry relation on the space of algebraically finite hyperbolic $n$-manifolds? Equivalently, what is the Borel complexity of the conjugacy relations on the spaces of geometrically finite and finitely generated discrete subgroups of $\mathrm{Isom}^+(\mathbb{H}^n)$, respectively?
\end{question}
Recall that our answers to these questions in dimensions $2$ and $3$ drew on a rich geometric literature; we lack answers in any higher dimension in part because there there's no comparable geometric theory to draw from.
As in our introduction, we note that answers to Question \ref{ques:geom_alg_fin} may help us to understand both the reasons and degrees to which this should or shouldn't remain the case.
Lastly, one may ask, still more generally:
\begin{question}
\label{ques:fgLie}
Does there exist a Lie (or even locally compact Polish) group $G$ for which the conjugacy relation on finitely generated discrete subgroups is not essentially hyperfinite?
\end{question}

\bibliography{manifolds_bib}{}
\bibliographystyle{amsalpha}

\end{document}